\documentclass[oneside, 11pt]{article}
\usepackage{setspace} 
\usepackage[utf8]{inputenc}
\usepackage[T1]{fontenc}
\usepackage[english]{babel}
\usepackage{lmodern}
\usepackage{amsmath}
\usepackage{amsthm}
\usepackage{amssymb}
\usepackage{bm}
\usepackage{units}
\usepackage{amscd}
\usepackage{geometry}
\usepackage{graphicx}
\usepackage{tikz}
\usetikzlibrary{positioning,calc,shapes.geometric}
\usepackage{tikz-cd}
\usepackage{pst-plot}
\usepackage{enumitem}
\usetikzlibrary{arrows,automata}
\usepackage{graphicx}
\usepackage{url}
\usepackage{mathtools}
\usepackage{hyperref}
\usepackage{comment}
\usepackage[capitalize]{cleveref}
\usepackage{xspace}
\usepackage{bbm}
\usepackage{xparse}
\usepackage[Tol,OkabeIto]{colorblind}
\usepackage{fontawesome}
\usepackage{authblk}
\graphicspath{{.},{Figures/}}

\newcommand{\Ran}{\mathrm{Ran}}
\newcommand{\Comp}{\mathcal{P}_\mathrm{compact}}
\newcommand{\Uni}{\mathrm{Uni}}
\DeclareMathOperator{\card}{card}
\DeclareMathOperator{\merg}{rad}
\DeclareMathOperator{\diam}{diam}
\DeclareMathOperator{\e}{e}
\newcommand{\R}{\mathbb{R}}
\newcommand{\N}{\mathbb{N}}
\newcommand{\B}{\mathcal{B}}

\newcommand{\dH}{\ensuremath{d_\mathrm{H}}\xspace}
\newcommand{\dm}{d}
\newcommand{\dsum}{d_\Sigma}
\newcommand{\Conf}{\mathrm{Conf}}
\newcommand{\Conford}{\mathrm{Conf}^\mathrm{ord}}
\newcommand{\norm}[1]{\left\lVert#1\right\rVert}
\newcommand{\abs}[1]{\left\lvert#1\right\rvert}
\newcommand{\Nsum}[1]{\norm{#1}_{\Sigma}}

\newcommand{\emb}{\mathrm{emb}}
\newcommand{\con}{\mathrm{con}}
\newcommand{\Map}{\mathrm{Map}}

\newcommand{\tH}{\ensuremath{\tau_\mathrm{H}}\xspace}
\newcommand{\tom}{\ensuremath{\tau^{\omega}\xspace}}
\newcommand{\tchi}{\ensuremath{\tau^{\chi}\xspace}}
\newcommand{\tpi}{\ensuremath{\tau_\mathrm{fin}}\xspace}
\newcommand{\BH}{B_\mathrm{H}}
\newcommand{\BHc}{\overline{B}_\mathrm{H}}

\newcommand{\Bom}{B^\omega}
\newcommand{\Som}{S^\omega}
\newcommand{\Bchi}{B^{\chi}}
\newcommand{\Schi}{S^{\chi}}
\newcommand{\Bsum}{B_\Sigma}

\newcommand{\W}{\mathbb{W}}
\newcommand{\op}{\mathrm{op}}
\newcommand{\Top}{\mathrm{Top}}
\newcommand{\Id}{\mathrm{Id}}
\newcommand{\elo}{\ell^{\omega}}
\newcommand{\dom}{d^{\omega}}
\newcommand{\dchi}{d^{\chi}}

\newcommand{\cP}{\mathcal{P}}
\newcommand{\Exit}{\mathrm{Exit}}
\DeclareMathOperator{\Shift}{Shift}

\DeclareMathOperator{\Scale}{Scale}
\DeclareMathOperator{\ScaleX}{\Scale_{\mathnormal{X}}}
\DeclareMathOperator{\Scalex}{\Scale_{\underline{\mathnormal{x}}}}
\DeclareMathOperator{\Mid}{Mid}
\DeclareMathOperator{\MidX}{\Mid_{\underline{\mathnormal{x}}}}
\DeclareMathOperator{\Clust}{Clust}
\DeclareMathOperator{\ClustX}{\Clust_{\underline{\mathnormal{x}}}}
\DeclareMathOperator{\scale}{\times}
\DeclareMathOperator{\pr}{pr}
\DeclareMathOperator{\Ima}{Im}

{\theoremstyle{definition}\newtheorem{defin}{Definition}[section]}
{\newtheorem{prop}[defin]{Proposition}\newtheorem{corol}[defin]{Corollary}\newtheorem{lem}[defin]{Lemma}}
{\theoremstyle{remark}\newtheorem{rem}[defin]{Remark}\newtheorem{exa}[defin]{Example}}
{\newtheorem{theo}[defin]{Theorem}
\newtheorem{question}{Question}}
\newtheorem{theo*}{Theorem}
\newtheorem*{statement}{Statement}

\makeatletter
\def\cref@thmoptarg[#1]#2#3#4{%
    \ifhmode\unskip\unskip\par\fi%
    \normalfont%
    \trivlist%
    \let\thmheadnl\relax%
    \let\thm@swap\@gobble%
    \thm@notefont{\fontseries\mddefault\upshape}%
    \thm@headpunct{.}
    \thm@headsep 5\p@ plus\p@ minus\p@\relax%
    \thm@space@setup%
    #2
    \@topsep \thm@preskip               
    \@topsepadd \thm@postskip           
    \def\@tempa{#3}\ifx\@empty\@tempa%
      \def\@tempa{\@oparg{\@begintheorem{#4}{}}[]}%
    \else%
      \refstepcounter[#1]{#3}
      \@namedef{cref@#3@alias}{#1}
      \def\@tempa{\@oparg{\@begintheorem{#4}{\csname the#3\endcsname}}[]}%
    \fi%
    \@tempa}%
\makeatother

\crefname{defin}{Definition}{Definitions}
\Crefname{defin}{Definition}{Definitions}
\crefname{prop}{Proposition}{Propositions}
\Crefname{prop}{Proposition}{Propositions}
\crefname{corol}{Corollary}{Corollaries}
\Crefname{corol}{Corollary}{Corollaries}
\crefname{lem}{Lemma}{Lemma}
\Crefname{lem}{Lemma}{Lemma}
\crefname{rem}{Remark}{Remarks}
\Crefname{rem}{Remark}{Remarks}
\crefname{theo}{Theorem}{Theorems}
\Crefname{theo}{Theorem}{Theorems}
\crefname{exa}{Example}{Examples}
\Crefname{exa}{Example}{Examples}
\crefname{question}{Question}{Questions}

\crefname{enumi}{}{}
\setlist[enumerate,1]{label=(\alph*)}
\setlist[enumerate,2]{label=(\roman*),ref=\theenumi.(\roman*)}

\title{Old and new structures on Ran spaces\\
\large Length structures, completeness, and conicality}

\author[1]{Sylvain Douteau}
\affil[1]{IRIF, Universit\'e Paris Cit\'e}
\author[2]{Marie Labeye}
\affil[2]{CPCV, Ecole Normale Sup\'erieure, PSL University, Sorbonne Universit\'e, CNRS}

\begin{document}

\maketitle
\begin{abstract}
    We study topologies on Ran spaces. In the literature, two distinct topologies frequently appear: the Hausdorff topology, and a finer one constructed as a colimit, that we call the final topology. In this work, given a metric space $M$, we construct new metric topologies on $\Ran(M)$, called weighted topologies. They interpolate between the Hausdorff and final topologies, the later being recovered as a limit in the category of spaces. This structure equips the final topology with a uniformity, which we show to be complete. Finally we study the Ran spaces as stratified spaces. We show that, whenever $M$ is a Riemannian manifold, the weighted topologies are conically stratified, while the final topology is only so in a weak sense.
\end{abstract}
\tableofcontents

\clearpage

\section*{Introduction}
\addcontentsline{toc}{section}{Introduction}
Given a topological space, the spaces formed by considering collections of its finite subsets have been studied for a very long time. Already, in the 1930's Borsuk and Ulam \cite{Borsuk-Ulam-symmetric-product}, studied the spaces of subsets of bounded cardinality, under the names symmetric products \footnote{Today, symmetric products refer to the related, but quite distinct, notion where $SP_n(M)=M^n/\Sigma_n$. However in the original paper of Borsuk and Ulam, there is quite a bit of confusion between $SP_n(M)$ and the truncations $\Ran_{\leq n}(M)$, and both intuitions seem to coexist.}, and later symmetric potency \cite{borsuk-circle}.  From then on, the study of those spaces continued in algebraic topology, under many names, though they were commonly called plainly "spaces of finite subsets", see for example \cite{TanreSpacesOfFiniteSubsetes,HandelTruncationsHomotopy}. When the cardinality of the subsets is fixed, this yield the (unordered) configurations spaces of $M$ which have also been thoroughly studied (see e.g. \cite{IdrissiBook}).  More recently, the space of all finite subsets has played a central role in the theory of factorization algebras. In their foundational book \cite{ChiralAlgebras}, Beilinson and Drinfeld attribute the introduction of this space in algebraic geometry to Ran \cite{ZivRan}, and thus coined the name Ran space. As a set, the Ran space of a topological space $M$ is given by
\begin{equation*}
    \Ran(M)=\{X\subset M\mid 0< \card (X)<\infty\}
\end{equation*}
and, as we have seen it has also been studied through its truncations:
\begin{equation*}
    \Ran_{\leq n}(M)=\{X\subset M\mid 0< \card(X)\leq n\}.
\end{equation*}

Given its long history, it should come as no surprise that the Ran space appears in various context. First, in mathematical physics, factorization algebras can be understood as certain algebraic objects built on Ran spaces, as explained in \cite[Chapter 3.4]{ChiralAlgebras}. The Ran space has also been studied through the lens of its natural stratification, $\card\colon\Ran(M)\to \N_{\geq 1}$, by studying constructible sheaves \cite{Guglielmo}, and its infinity category of exit paths \cite{CepekExit}. Additionally, a surprising application to topological data analysis was outlined by Lazovskis in \cite{JanisStratificationRan}, in which he constructs a poset stratification $\varphi\colon\Ran(M)\times \R_{\geq 0}\to P$, where $P$ is a poset encoding all homotopy types of finite simplicial complexes, and the map $\varphi$ sends a cluster $X\subset M$ and a radius $r>0$ to the associated \v{C}ech complex.

\subsection*{Ambiguous topologies}
\addcontentsline{toc}{subsection}{Ambiguous topologies}
While all of those examples are discussing "the" Ran space, they are in fact considering varying (and sometimes, ambiguous) topologies. The article \cite{CepekLejayExponential} shines some light on the situation. In their paper \v{C}epek and Lejay study three distinct topologies on $\Ran(M)$ (or rather $\mathrm{exp(M)}$ the set of possibly empty finite subsets) and point out that each of them is used for the study of factorization algebras by one of the key reference in the field. First, Beilinson and Drinfeld, in their book Chiral algebra \cite{ChiralAlgebras}, consider on $\Ran(M)$ the final topology with respect to the collection of maps $M^n\to \Ran(M)$ sending a $n$-tuple $(x^{(1)},\dots,x^{(n)})$ to the set $X=\{x^{(1)},\dots,x^{(n)}\}$, in $\Ran(M)$. We will call this topology the \textbf{final} topology on $\Ran(M)$, see \cref{subsec:FinalTopology}. Second, in Lurie's Higher algebra \cite{HigherAlgebra} the topology considered on $\Ran(M)$ is a strictly coarser one called the Hausdorff topology. This is the topology induced by the Hausdorff distance whenever the underlying space $M$ is equipped with a metric (see \cref{def:Hausdorff}). Third, an even coarser topology, that we won't study in this text, is considered by Costello and Gwilliam in \cite{CostelloGwilliam}.

This ambiguity between topologies was already present in the work of Borsuk and Ulam. Indeed, in \cite{Borsuk-Ulam-symmetric-product}, they define $\Ran(M)$ (implicitly) as the increasing union of the $\Ran_{\leq n}(M)$, and claim that its closure is the space of compact subsets of $M$. This suggests a confusion between the set-theoretic definition ($\Ran(M)$ is indeed the increasing union of its truncations as a set) and the topological definition (which would be needed to provide a precise statement). And while the statement $\overline{\Ran(M)}=\Comp(M)$ is meaningful for the Hausdorff topology (see \cref{prop:compact sets completion of ran hausdorff}), $\Ran(M)$ equipped with the final topology, i.e. the topology induced by the increasing union, has the topology of a complete uniform space, see (\cref{sec:completeness}), and cannot be identified with a subspace of $\Comp(M)$. 

This is not the only mistake in the history of Ran spaces. Notably, in 1949 \cite{borsuk-circle}, proves that $\Ran_{\leq 3}(S^1)$ is homeomorphic to $S^1\times S^2$, only for Bott to correct his mistake a few years later and prove that $\Ran_{\leq 3}(S^1)\simeq S^3$ \cite{Bott-circle}. This particular computation has been revisited many times since, with improvements such as the identification of the chain of inclusions $S^1=\Ran_{\leq 1}(S^1)\hookrightarrow \Ran_{\leq 2}(S^1)\hookrightarrow \Ran_{\leq 3}(S^1)$ with the inclusion of $S^1$ as the boundary of a (triple twisted) Möbius band into $S^3$, whose image is a trefoil knot \cite{Mostovoy-lattices}.

What those results should serve to illustrate is that computations on Ran spaces are difficult, and rich. Whether they are computation in low cardinality, or on the overall space.

\subsection*{Differences at infinity}
\addcontentsline{toc}{subsection}{Differences at infinity}
It is now time to turn to the heart of this paper, which is the study of topologies on $\Ran(M)$. From now on, we will always assume that the space $M$ is equipped with a metric $d$. Any reasonable topology on $\Ran(M)$ should allow for the merging of points. That is, if $x,y\in M$ are two points that are close in $M$, i.e. if $d(x,y)$ is small, then the configurations $\{x,y\}$ and $\{x\}$ should be close in $\Ran(M)$, and a similar claim should hold for configurations of larger cardinality. This will be a feature of all the topologies we consider, and this intuition is sufficient to arrive at an unambiguous topology on $\Ran_{\leq n}(M)$ for all $n\geq 1$. The topology we will obtain on $\Ran_{\leq n}(M)$ will be that of the quotient $M^n\to \Ran_{\leq n}(M)$. The ambiguity then arises "at infinity", and can be summarized by the following question.

\begin{question}
\label{quest:Converge_or_not}
If $(X_n)_{n\geq 0}$ is a sequence of configurations, containing increasingly many points, which are increasingly closer to some point $x\in M$. Should the sequence converge to $\{x\}$, because of proximity, or diverge, because of cardinality? Or should it depend on the sequence?
\end{question}

A choice of topology on $\Ran(M)$ is then, in particular, a choice of answer to this question. We already mentioned two possible choices of topology, let us recall some of their properties here.

A topology on $\Ran(M)$ is that of the increasing union, $\Ran(M)=\cup_{n\geq 1}\Ran_{\leq n}(M)$. We called it the final topology, and we will denote it $\tpi$. For this topology, the answer to \cref{quest:Converge_or_not} is: the sequence will diverge (see \cref{prop:unbounded cardinal diverges in final}).

On the other hand, there is a very direct way of equipping $\Ran(M)$ with a metric, which is to consider the Hausdorff distance, defined directly on $\Comp(M)$ as follows:
\begin{equation*}
    \dH(X,Y)=\max\{\min_{x\in X}\{d(x,Y)\},\min_{y\in Y}\{d(X,y)\}\}
\end{equation*}
this turns $\Ran(M)\subset\Comp(M)$ into a metric space, and we recover the quotient topology on the subspace $\Ran_{\leq n}(M)$. The answer to \cref{quest:Converge_or_not} is then: yes, the sequence converges to $\{x\}$.

We now have two topologies on $\Ran(M)$, the final topology, $\tpi$, and the topology $\tH$ induced by the Hausdorff distance $\dH$. Let us summarize in which way they are different, we omit the varying hypothesis on $M$, for clarity.

\begin{itemize}
    \item $(\Ran_{\leq n}(M),\tpi)=(\Ran_{\leq n}(M),\tH)$, $\forall n\geq 1$.
    \item Convergence in $\tpi$ is very fine, while it is very coarse in $\tH$.
    \item $(\Ran(M),\dH)$ is a metric space, while $(\Ran(M),\tpi)$ is non-metrizable (\cref{prop:final not metrizable}), though it admits the structure of an uniform space (\cref{sec:completeness}).
    \item $(\Ran(M),\dH)$ is not complete, while $(\Ran(M),\tpi)$, equipped with the uniformity defined in \cref{sec:completeness}, is a complete uniform space (\cref{cor:ran finale complet}).
    \item $(\Ran(M),\dH)$ is defined as a subspace $\Ran(M)\subset \Comp(M)$, while $(\Ran(M),\tpi)$ can be defined both as a colimit of the $\Ran_{\leq n}(M)$, and as a limit of metric spaces (\cref{theo:topologie limite}).
    \item The topology $\tpi$ refines $\tH$ (\cref{prop:Final_refines_Hausdorff}).
\end{itemize}

Once again, the first point should remind us that those two topologies are only distinct "at infinity",  
however this distinction should not be taken lightly. In particular, while checking the continuity of maps $(\Ran(M),\tpi)\to N$ is easy (one just needs to check that each restriction $\Ran_{\leq n}(M)\to N$ is continuous) checking that maps into $(\Ran(M),\tpi)$ are continuous is highly non-trivial. On the other hand, $(\Ran(M),\dH)$ being a metric space provides a systematic way of checking continuity of maps to and from it. While one may consider this observation as a good reason \textbf{not} to work with the final topology, we argue that, on the contrary, this rigidity is a good reason to \textbf{prefer it} to the Hausdorff topology. Indeed, if one wants to treat the space $\Ran(M)$ as an invariant of $M$, then one should be interested in \textbf{all} maps $\Ran(M)\to \Ran(N)$. Now, the answer to the question, "what are those maps?" will depend, a priori, on whether one considers the final or the Hausdorff topology. Intuitively, the fact that the final topology is very rigid at infinity will force those maps to be closer to maps coming from $M\to N$ than could be the case for the Hausdorff topology. 

\subsection*{Maps between Ran spaces and spaces of embeddings}
One reason to be interested in the systematic study of maps between Ran spaces, is their relation to spaces of embeddings. Given two manifolds $M$ and $N$, let us denote $\emb(M,N)$ the space of embeddings from $M$ to $N$. The study of the homotopy type of $\emb(M,N)$ has a long history, that we will not attempt to retell here. However, a modern development in this history is the study of configuration categories.  
Those were introduced by Andrade in his thesis \cite{AndradeThesis}, and studied more systematically by Boavida and Weiss who showed \cite{BoavidaWeissConfigurationCategories}  that under some conditions on $M$ and $N$, the homotopy type of $\emb(M,N)$ could be recovered from the homotopy type of some derived mapping space between $\con(M)$ and $\con(N)$, the configuration categories associated to $M$ and $N$ (see also \cite{BoavidaWeissTorusTrick}, which illustrates how metric techniques can be leveraged to study configurations categories).

The configuration category associated to a manifold $M$ is an $\infty$-category, of which Boavida and Weiss give several models. One of which, the "particle model" \cite[section 3.1]{BoavidaWeissConfigurationCategories}, is closely related to (the opposite of) the $\infty$-category of exit paths associated to $\Ran(M)$, which was studied by Lurie in \cite{HigherAlgebra} in the context of factorization algebras, and that we will discuss in a moment.

From this relation, we see that a possible topological translation for the derived mapping space $\Map(\con(M),\con(N))$ would be the (derived) mapping space $\Map(\Ran(M),\Ran(N))$, in the category of stratified spaces. And while we may expect that the latter derived mapping space does not depend on the choice of topology on $\Ran(M)$, it is to be expected that the strict mapping space does, and for that reason, we would prefer to consider a topology as fine as possible on $\Ran$.

Thus, we are left trying to understand continuous maps to $(\Ran(M),\tpi)$, which is a very non-trivial endeavor. One way to alleviate this problem would be to find a basis for the topology $\tpi$. The search for such a basis was our initial motivation in introducing weighted distances and topologies on $\Ran(M)$.

\subsection*{Weighted topologies as a technical tool}
\addcontentsline{toc}{subsection}{Weighted topologies as a technical tool}

Weighted topologies arise when your choice of answer for \cref{quest:Converge_or_not} is "it depends". It should be immediately clear that there is more than a single way to make the answer depend on the sequence. Weights, which are non-decreasing sequences $\omega\colon \N_{\geq 1}\to [1,\infty)$, each corresponds to a criterion to distinguish between those sequences that will converge and those that will not. 

Intuitively, the criterion should be that $X_n$ converges if and only if points in $X_n$ are getting close to $x$ faster than they are being added into $X_n$ (or faster than $\card(X_n)$ grows), and how much faster is given by $\omega$. A very crude approximation of this idea would be that $X_n$ converges to $\{x\}$ if and only if $\card(X_n)\dH(X_n,\{x\})=o\left(\frac{1}{\omega(\card(X_n))}\right)$.

What we do is slightly more subtle, but the previous intuition should be a reasonable guide. Given two configurations $X$ and $Y$, we may consider a way of going from one to the other as a relation $R\subset X\times Y$. A point $x\in X$ may be split into several points in $Y$, say $y$ and $y'$, which will translate to $(x,y)$ and $(x,y')$ both being in $R$, and symmetrically, several points in $X$ may merge in $Y$. We should ask that points do not vanish nor appear out of thin air, and thus every point in $X$ and $Y$ should be in at least one of the pairs. To rephrase the previous condition, we may say that $R$ must be the graph of a surjective multivalued function. One may think of it as an intermediary state, where some points in $X$ have split, and some of them will merge in $Y$ at the end of their trip. We may then define the length of $R$, with respect to $\omega$ as
\begin{equation*}
    \elo(R)=\omega(\card(R))\sum_{(x,y)\in R}d(x,y)
\end{equation*}
where the term $\omega(\card(R))$ encodes the weight, together with the fact that, in between $X$ and $Y$, $\card(R)$ many "intermediary points" exist. We may then consider concatenations of such jumps which start at $X$ and terminate at $Y$ and define a distance $\dom(X,Y)$ by taking it to be the minimum total length among such chains (see \cref{def:relation length omega,def:weighted-distance}).

This approach gives rise to infinitely many topologies on $\Ran(M)$, that we denote $\tom$. They are not all distinct, for example if $\omega$ and $\chi$ are two weights, such that $\omega(n)=\chi(n)$ for sufficiently large $n$, then $\tom=\tchi$ (see \cref{lem:Topology_Weights_Bounded}). Nevertheless, the set of weighted topologies is uncountable, (see \cref{prop:omega_strictement_plus_fine,rem:Uncountably_many_topologies})

This infinite collection of topologies (and their associated metrics) then allows us to answer our initial question, at least when $M$ is locally compact (see \cref{theo:topologie limite}).
\begin{theo*}
\label{theointro:limit}
Let $(M,d)$ be a locally compact metric space. A basis for the final topology, $\tpi$, on $\Ran(M)$ is given by
\begin{equation*}
    \{\Bom(X,r)\mid X\in \Ran(X),\  r>0,\ \text{$\omega$ is a weight}\}
\end{equation*}
\end{theo*}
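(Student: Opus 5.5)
To prove the statement I would show two inclusions of topologies. First, every weighted ball $\Bom(X,r)$ is $\tpi$-open. Second, every $\tpi$-open set $U$ containing a configuration $X$ contains some ball $\Bom(X,r)$. Together these say the family of weighted balls is a basis for $\tpi$. The intersection axiom then comes for free: the balls are open in the topology they generate, so a base-type property holds automatically.

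\textbf{First inclusion.} Since $\tpi$ is the colimit topology of the truncations, it suffices to show that $\Bom(X,r)\cap\Ran_{\leq n}(M)$ is open in $\Ran_{\leq n}(M)$ for each $n$. This reduces to showing that $\dom$ restricted to $\Ran_{\leq n}(M)$ is continuous for the quotient (equivalently Hausdorff) topology there. For this I would use the triangle inequality for $\dom$. I would also bound $\dom(Y,Y')$ for $Y,Y'\in\Ran_{\leq n}(M)$ that are Hausdorff-close. Take the relation $R=\{(y,y')\mid d(y,y')\leq \dH(Y,Y')\}$ when $\dH(Y,Y')$ is smaller than half the minimal separation of $Y$. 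This $R$ is surjective with $\card(R)\leq n^2$, giving
\begin{equation*}
\dom(Y,Y')\leq \omega(n^2)\,n^2\,\dH(Y,Y').
\end{equation*}
That estimate gives continuity at each point. (Presumably the paper already has a comparison lemma of this sort, which I would invoke.)

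\textbf{Second inclusion (the main step).} Let $U$ be $\tpi$-open and $X\in U$. For each $n\geq\card(X)$, the set $U\cap\Ran_{\leq n}(M)$ is open in the Hausdorff metric. Using local compactness, I would inductively choose radii $\varepsilon_n>0$, nonincreasing in $n$, with $\varepsilon_n$ less than the local-compactness radius around $X$. The goal is that every configuration of cardinality at most $n$ reachable from $X$ by chains whose pieces stay in $\overline{N_{\varepsilon_n}(X)}$ and have controlled length lies in $U$. Compactness of $\Ran_{\leq n}$ of a compact neighborhood of $X$ is what lets me choose a uniform $\varepsilon_n$ for which the relevant Hausdorff-neighborhood of $X$, intersected with $\Ran_{\leq n}$, sits inside $U$.

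I would then define the weight $\omega(n)\geq 1$, nondecreasing and large, e.g.
\begin{equation*}
\omega(n)=\max_{k\leq n}\frac{k}{\varepsilon_k},
\end{equation*}
and take $r=1$ (or small). The key lemma to prove is this: if $\dom(X,Y)<r$ and $\card(Y)\leq n$, then every intermediate configuration in a near-optimal chain has cardinality $k$ with $\omega(k)\sum d<r$. Hence every point moves by at most $\varepsilon_k/k$ per step, and the total displacement is controlled by $\varepsilon_N$, where $N$ is the largest intermediate cardinality. So $\dH(X,Y)<\varepsilon_N$, and with $\card(Y)\leq N$ this gives $Y\in U$.

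The obstacle I expect is exactly this lemma. Chains may pass through intermediate relations of large cardinality and come back, so a point's displacement must be bounded by a sum over the chain. The trick is that the weight of the largest intermediate cardinality dominates all the steps. I would handle this by bounding $\dH(X,Y)\leq\sum_i \max_{R_i} d\leq \sum_i\sum_{R_i}d$ and using monotonicity of $\omega$ and of $\varepsilon_n$ to compare everything against $\varepsilon_N$, where $N=\max_i\card(R_i)$. I would also ensure that $U\cap\Ran_{\leq N}$ contains the full Hausdorff $\varepsilon_N$-ball around $X$ in $\Ran_{\leq N}$.
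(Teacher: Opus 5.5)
Your first inclusion is fine. The relation $R$ you describe is surjective, so $\dom(Y,Y')\le\omega(n^2)\,n^2\,\dH(Y,Y')$ on $\Ran_{\le n}(M)$. Since $\tpi$ and $\tH$ agree on truncations, every $\Bom(X,r)$ is $\tpi$-open; the paper gets this from the $\omega(n)$-Lipschitz maps $\pi_n\colon M^n\to(\Ran(M),\dom)$.

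The second inclusion rests on a false lemma. From $\dH(X,Y)\le\sum_i\ell(R_i)$ and $\sum_i\omega(\card R_i)\ell(R_i)<r$ you only learn that a relation of cardinality $k$ contributes less than $r/\omega(k)$. That bound is \emph{largest} for the smallest cardinalities in the chain. So the displacement is governed by $\varepsilon_k$ for small $k$, not by $\varepsilon_N$ for the maximal $N$. Take $M=\R$ and $X=\{0\}$. Move $0$ to $a$ at cardinality $1$, which costs $\omega(1)a<r$ for $a$ close to $r/\omega(1)$. Then split $a$ into $N$ points at tiny mutual distances. The result $Y$ lies in $\Bom(X,r)$ with $\dH(X,Y)\approx a$, which has nothing to do with $\varepsilon_N$.

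Shrinking $r$ does not repair this. A Hausdorff ball \emph{centered at $X$} inside $U\cap\Ran_{\le N}(M)$ says nothing about how much splitting $U$ tolerates near lower-cardinality configurations away from $X$. For example, set $F(Y)=2\max_{y\in Y}|y|+\sum_{k\ge 2}c_k\max_{y\in Y}|y|\,\dH(Y,\Ran_{\le k-1}(\R))$ and $U=\{F<1\}$. This $U$ is $\tpi$-open, since on each truncation $F$ is a finite sum of $\dH$-continuous functions. The radii $\varepsilon_N=\min\{1/4,(4\sum_{2\le k\le N}c_k)^{-1/2}\}$ satisfy your requirement. Now take $c_k=k^{10}$ and any $r>0$, and consider $\{a,a+\eta,\dots,a+(N-1)\eta\}$ with $a$ of order $r/\omega(1)$ and $\eta$ of order $r/(N\omega(N))$. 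These configurations lie in your ball, but $F\ge c_Na\eta/2\to\infty$, so they leave $U$ for large $N$.

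What is missing is the correct uniformity. $\omega(k+1)$ must be large enough that splitting \emph{every} configuration of cardinality $\le k$ already known to lie in the ball, by the amount $\epsilon/\omega(k+1)$, stays in $U$. That requires a uniform neighbourhood of a whole set of configurations, and this is where compactness is genuinely needed. In your sketch compactness only yields a Hausdorff ball around the single point $X$, which needs no compactness at all.

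The paper argues inductively for compact $M$. Suppose $K_k\subset M^k$ is compact with $\Bom_{\le k}(X,\epsilon)\subset\pi_k(K_k)\subset U$. Then $C=\pi_{k+1}^{-1}(\pi_k(K_k))$ is compact and contained in $\pi_{k+1}^{-1}(U)$. Hence some tube $C_i=\{\underline{y}\mid\dsum(\underline{y},C)\le 1/i\}$ still lies in $\pi_{k+1}^{-1}(U)$. Set $K_{k+1}=C_i$ and $\omega(k+1)=\max\{i\epsilon,\omega(k)\}$.

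Now let $Y\in\Bom(X,\epsilon)$ have cardinality $k+1$. An MBS-chain from $Y$ to $X$ of length $<\epsilon$ begins with a merge $Y\to Z$. Here $\card(Z)\le k$ and $\dom(Z,X)<\epsilon$, so $Z\in\pi_k(K_k)$. Also $\omega(k+1)\dsum(\underline{y},\underline{z})<\epsilon$, which puts a lift $\underline{y}$ of $Y$ in $C_i$. Only this single step is estimated, never the total displacement.

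The locally compact case then reduces to the compact one. One shows $\Bom_M(X,\epsilon)=\Bom_K(X,\epsilon)$ for a compact neighbourhood $K$, using that $\dom$ near $X$ can be computed inside $\BH(X,2\epsilon)$.
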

It should be noted that we are not aware of any counter-example to this theorem when $M$ fails to be locally compact, and we expect that a simpler proof of \cref{theo:topologie limite} should exist, without this hypothesis. 

This theorem does two things. First, it gives us access to an explicit description of the open sets of the final topology. And then, we may reinterpret it as a statement that $(\Ran(M),\tpi)$ is a limit in the category of topological spaces (see \cref{theo:topologie limite}). This second statement gives a very explicit condition for checking that maps $N\to (\Ran(M),\tpi)$ are continuous: we just need to check the continuity of $N\to (\Ran(M),\tom)$ for all weights $\omega$. This is made easier by the fact that $(\Ran(M),\tom)$ is a metric space.

Besides, this theorem gives for free extra structure on $(\Ran(M),\tpi)$. Indeed, provided $M$ satisfies the hypothesis, then $(\Ran(M),\tpi)$ can be equipped with a uniformity, just by computing this limit in the category of uniform spaces (see \cref{sec:completeness}). Reasoning on the completions of each of the $(\Ran(M),\tom)$ we can then prove the following (see (\cref{cor:ran finale complet}).
\begin{theo*}
Let $M$ be a locally compact and complete metric space. Then $\Ran(M)$ equipped with the uniformity defined in \cref{def:entourages Ran} is a complete uniform space.
\end{theo*}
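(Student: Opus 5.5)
Since $M$ is locally compact, \cref{theo:topologie limite} identifies $(\Ran(M),\tpi)$ with the limit of the metric spaces $(\Ran(M),\tom)$ over all weights $\omega$. The uniformity of \cref{def:entourages Ran} is the limit uniformity. Its basic entourages are finite intersections of sets $\{(X,Y)\mid \dom(X,Y)<r\}$. Because the weights form a directed family (the pointwise maximum of two weights is a weight whose distance dominates both), a single such set already suffices. So a Cauchy net (or filter) $(X_\alpha)$ in $\Ran(M)$ for this uniformity is exactly a net which is $\dom$-Cauchy for every weight $\omega$ simultaneously. The plan is to show that such a net has bounded cardinality eventually, then conclude by completeness of the truncations.

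\textbf{Step 1: eventual boundedness of cardinality.} Suppose, for contradiction, that for every $n$ and every $\alpha_0$ there is $\alpha\geq\alpha_0$ with $\card(X_\alpha)>n$. Fix an index $\alpha_1$ such that $\dH(X_\alpha,X_\beta)<1$ for $\alpha,\beta\geq\alpha_1$; this uses that $\tpi$ refines $\tH$, more precisely that $\dom\geq\dH$ up to a constant. Then all later $X_\alpha$ lie in a bounded neighbourhood of $X_{\alpha_1}$. I would then build a weight $\omega$ growing so fast that any chain of relations from a configuration of cardinality $\leq n$ to one of cardinality $\geq n'$ has $\omega$-length at least some fixed $\varepsilon>0$. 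The key estimate is that such a chain must contain a relation $R$ with $\card(R)\geq n'$. Moreover, the new points must separate, so the sum $\sum d(x,y)$ over such a chain is at least the minimal pairwise separation among the points involved. That separation can be small, and this is why the weight must be chosen adaptively: choose the sequence $(X_{\alpha_k})$ inductively and pick $\omega(\card(X_{\alpha_k}))$ large compared to the inverse of the minimal distance within $X_{\alpha_k}$. Then $\dom(X_{\alpha_k},X_{\alpha_{k+1}})$ stays bounded below, contradicting Cauchyness for $\omega$. I expect this to be the main obstacle. If the lemmas on the lengths of chains proved alongside \cref{prop:unbounded cardinal diverges in final} and \cref{theo:topologie limite} already give such a lower bound, I will invoke them directly rather than redo the estimate.

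\textbf{Step 2: convergence in a truncation.} Once $\card(X_\alpha)\leq n$ for $\alpha$ large, the net is eventually contained in $\Ran_{\leq n}(M)$. There $\tpi$ and $\tH$ agree, and Cauchyness for any single weight implies $\dH$-Cauchyness. The space $\Ran_{\leq n}(M)$ is $\dH$-complete when $M$ is complete. It is closed in $\Comp(M)$, since Hausdorff limits of sets with at most $n$ points have at most $n$ points, and $\Comp(M)$ is complete by \cref{prop:compact sets completion of ran hausdorff}. So $X_\alpha\to X$ in $\dH$ with $X\in\Ran_{\leq n}(M)$.

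\textbf{Step 3: upgrading to the limit topology.} Since all the topologies agree on $\Ran_{\leq n}(M)$, $X_\alpha\to X$ in $(\Ran_{\leq n}(M),\tpi)$. The inclusion of $\Ran_{\leq n}(M)$ into $\Ran(M)$ is continuous for the final topology, hence $X_\alpha\to X$ in $\tpi$. A Cauchy net with a convergent subnet converges, so this gives completeness. Alternatively, one can note directly that for each $\omega$, $\dom$ and $\dH$ are uniformly equivalent on $\Ran_{\leq n}(M)$.
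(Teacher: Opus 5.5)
Your Step~1 is not just the hard step: as a statement about Cauchy nets or filters it is false, so this strategy cannot work.

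Suppose $M$ has an accumulation point $x$ (e.g.\ $M=\R$), and let $\mathcal{N}$ be the $\tpi$-neighbourhood filter of $X=\{x\}$. Each $\Bom(X,\epsilon/2)$ is $\tpi$-open (\cref{tpi rafine tom}), so it belongs to $\mathcal{N}$, and $\Bom(X,\epsilon/2)\times\Bom(X,\epsilon/2)\subset V^{\omega}_{\epsilon}$. Hence $\mathcal{N}$ is Cauchy for the uniformity of \cref{def:entourages Ran}. Yet, as in the proof of \cref{prop:final not metrizable}, for every $\tpi$-open $U\ni X$ the set $\pi_k^{-1}(U)$ is open around $(x,\dots,x)\in M^k$. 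So $U$ contains configurations of cardinality $k$ for every $k$. No member of this Cauchy filter, i.e.\ no tail of the associated net, has bounded cardinality.

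Your adaptive-weight argument is the proof of \cref{prop:unbounded card does not converge omega}, and it only works for sequences. The indices $\alpha_k$ you extract need not be cofinal in the directed set. Moreover, $\omega$-Cauchyness of the net only constrains indices beyond some $\alpha_0(\omega)$, and that $\alpha_0$ depends on the weight $\omega$ you built \emph{after} choosing the $\alpha_k$. Nothing forces any $\alpha_k$ past it.

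So Steps~1--3 prove only sequential completeness, which is \cref{prop:cauchy_tal_tpi}. That does not give completeness here, because the uniformity is not countably generated: its topology is $\tpi$ (\cref{theo:topologie limite}), which is not first countable (\cref{prop:final not metrizable}).

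The missing idea is to bound the cardinality of the \emph{limit} rather than of the members of the filter. The route is as follows.
\begin{itemize}
\item A Cauchy filter $\mathcal{F}$ is $\dH$-Cauchy, so it has a $\dH$-limit $K\in\Comp(M)$.
\item For each $\omega$, $\mathcal{F}$ converges in the completion $\mathcal{R}_\omega(M)$. The infinite-chain machinery (\cref{prop:same limit cauchy}) identifies that limit with $K$.
\item The adaptive weight is then built from $K$ itself, a fixed object independent of any index set, using a convergent sequence of distinct points of $K$. This is \cref{prop:limits_all_omega_finite}, which needs \cref{lem:Subsets_limits_are_limits,lem:Infinite_Chain_Between_config} to compare $\dom$-limits with large finite subsets of $K$. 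It forces $K\in\Ran(M)$.
\item Then $\mathcal{F}\to K$ for every $\dom$, hence in $\tpi$ by \cref{theo:topologie limite}.
\end{itemize}
The paper packages this as $(\Ran(M),\B)\cong\lim_\omega\mathcal{R}_\omega(M)$ in $\Uni$ (\cref{prop:Final_limit_completion}). It then concludes with Bourbaki's theorem that a limit of complete Hausdorff uniform spaces over a directed set is complete.
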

This somehow gives another reason to prefer working with the final topology rather than the Hausdorff topology. While the latter corresponds to a dense subset of the space of compact subspaces of $M$, the former gives an intrinsic description of the space of finite subsets, with no missing points.

\subsection*{Weighted distances as an object of study}
\addcontentsline{toc}{subsection}{Weighted distances as an object of study}
The weighted distances carry some interesting structure of their own. First, the kind of reasoning that appears when computing distances between configurations involves thinking in terms of which points in a configuration are close to each other and to another target configuration. Leading to a kind of reasoning which merges points in order, which is reminiscent of what happens in topological data analysis, when studying persistent complexes. The computation of the distance in itself is very hard, and leads to optimization problems whose complexity quickly gets out of hand (see \cref{sec:Examples_Computations} for a few examples where we do know how to compute the distance).

While the exact computation of the distances can be unachievable in some cases, we do understand it well enough to provide meta computations. Indeed, the distance $\dom(X,Y)$ is defined as an infimum of length over chains between $X$ and $Y$, but we prove that one only needs to consider chains where the intermediate configurations $X=X_0,X_1,\dots X_n=Y$ appearing are such that the sequence $\card(X_0),\card(X_1),\dots,\card(X_n)$ is at first strictly decreasing, and then strictly increasing (see \cref{theo:MBS} and \cref{def:MBS} for a precise statement). In particular, if $X$ and $Y$ are configurations in $\Ran_{\leq n}(M)$, then the distance can be computed inside $\Ran_{\leq n}(M)$, which considerably simplifies the computation. 

Furthermore, if the metric space $(M,d)$ satisfies the Heine-Borel property, that is, if the closed balls are compact, then we know that the infimum is reached (see \cref{theo:Geodesic_Chain}).

\begin{theo*}
Assume that $M$ satisfies the Heine-Borel property. 
    Then, for any $X,Y\in \Ran(M)$ and any weight $\omega$, there exists a chain of relations $(R_0,\dots,R_{n-1})$ between $X$ and $Y$ such that
    \begin{equation*}
        \dom(X,Y)=\elo(R_0,\dots,R_{n-1})
    \end{equation*}
\end{theo*}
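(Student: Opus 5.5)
We will prove the statement by a compactness argument: reduce to a finite-dimensional optimization problem and then use the Heine--Borel property to extract a minimizer. The first step invokes \cref{theo:MBS}. It says we may restrict to chains whose cardinality sequence $\card(X_0),\dots,\card(X_n)$ first strictly decreases and then strictly increases. All intermediate configurations in such a chain have cardinality at most $N=\max(\card X,\card Y)$, so the length of the chain is bounded by $2N$.

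Second, we fix the \emph{combinatorial type} of a chain. This consists of the length $n$, the cardinalities of the $X_i$, and, after labelling the points of each $X_i$, the incidence pattern of each relation $R_i\subset X_i\times X_{i+1}$ (which labelled pairs belong to $R_i$). Each $R_i$ must satisfy the surjectivity condition in both directions. There are only finitely many such types. For a given type, a chain is determined by a tuple of positions $(x_i^{(j)})\in M^{\sum_i\card X_i}$, with the endpoints fixed to $X$ and $Y$. The intermediate points must be pairwise distinct within each $X_i$, so that the cardinalities are right.

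Third, we bound and optimize within a type. For a fixed type, the quantity $\card(R_i)$ is constant. The length $\elo$ is therefore a continuous function on the parameter space, namely a finite sum of constants times distances. Take a minimizing sequence of chains, with $\elo\to\dom(X,Y)$. Passing to a subsequence, we may assume they all have the same type, since there are finitely many types. Every intermediate point is connected through the relations to a point of $X$ by a path of total distance at most $\elo/\omega(1)\le C$, because $\omega\ge 1$. Hence all intermediate points lie in a closed ball of bounded radius around $X$, which is compact by the Heine--Borel property. We then extract a convergent subsequence of the parameter tuples, and by continuity the limit tuple has length $\dom(X,Y)$.

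The main obstacle is that the limit tuple may not be a valid chain of the same type. Distinct points of some $X_i$ may collide in the limit, changing $\card X_i$ and turning $R_i$ into a smaller relation with a different $\card(R_i)$. We handle this by taking the images: define $X_i'$ as the set of limit points and $R_i'$ as the image of $R_i$ in $X_i'\times X_{i+1}'$. Then $R_i'$ is still bi-surjective, and the endpoints remain $X$ and $Y$, since those are fixed. We have $\card(R_i')\le\card(R_i)$, and $\omega$ is non-decreasing, so the weight can only go down. In the sum, pairs that merge in the image are counted once instead of several times, and they all have the same limit distance, so the sum can only go down too. Therefore $\elo(R_0',\dots,R_{n-1}')$ is at most the limit of the lengths, which is $\dom(X,Y)$. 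Since $\dom(X,Y)$ is an infimum, equality holds. Some $X_i'$ may coincide with $X_{i+1}'$ with $R_i'$ the diagonal; such a step has length $0$ and can simply be dropped or kept harmlessly.
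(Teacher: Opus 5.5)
Your proof is correct and is essentially the paper's argument: you reduce via \cref{theo:MBS} to a bounded number of steps and finitely many combinatorial shapes, parametrize chains by points of $M$, confine them to a compact union of closed balls around $X$ (as in \cref{lem:Sequences_In_Balls}), and extract a minimizer using the Heine--Borel property. The difference is only technical: the paper minimizes a continuous function over the closed set of lifted tuples satisfying $\pi_{n_i}(\underline{x_{2i}})=\pi_{n_{i-1}}(\underline{x_{2i-1}})$, whereas you use a minimizing sequence of labelled types. Your explicit check that collisions in the limit can only decrease $\elo$ is exactly the point the paper passes over when it converts its minimizing tuple back into a chain via the converse of \cref{lem:Sequence_tuples}; there the stated equality of lengths really only holds as an inequality $\leq$, which suffices.
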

It should be noted that unlike \cref{theointro:limit}, the hypothesis of $M$ satisfying the Heine-Borel property does not appear to be entirely superfluous. In particular, in \cref{exa:peace_and_love}, we show that, for $M=\R^2\setminus\{*\}$, some geodesic chains do not exist even though $M$ is locally compact. On the other hand, satisfying the Heine-Borel property is certainly not a necessary condition in general, as can be seen by hand by reasoning on the open interval $M=(0,1)$. This suggest that there exist much sharper versions of this theorem, where the Heine-Borel property is replaced by some form of convexity condition. 

Chains realizing this infimum are of the form discussed above, and deserve the name \textbf{geodesic chains}. They are a combinatorial version of the idea of a shortest path between two configurations. Their existence suggests the study of actual continuous paths which may embody this idea. This leads us to the study of length structures.

\subsection*{Length structures on $\Ran(M)$}
\addcontentsline{toc}{subsection}{Length structures on $\Ran(M)$}

Let us now assume that the metric space $(M,d)$ is equipped with a length structure. That is, a collection of continuous paths, $\mathcal{P}\subset \mathcal{C}^0([a,b],M)$ which we will call \textbf{admissible paths}, together with a length function $\ell\colon \cP\to \R_+$, satisfying some conditions, recalled in \cref{sec:PathMetricSpaces}. The most notable of those condition is of course that this notion of length recovers the distance in the following way.
\begin{equation*}
    d(x,y)=\inf\{\ell(\gamma)\mid \gamma\in \cP, \gamma(a)=x,\gamma(b)=y\}
\end{equation*}

In this case we define a set of admissible path in $\Ran(M)$, $\cP^{\Ran}$, as well as a length $\elo\colon \cP^{\Ran}\to \R_+$, for any weight $\omega$ (see \cref{def:relevements,def:Chemin_C1_Par_Morceaux,def:longueur_rel,def:longueur_chemin}). We then prove (see \cref{theo:PathDistanceEqualCombDistance,cor:ran length space}). 

\begin{theo*}
    If $(M,d)$ is a length space, then the metric on $\Ran(M)$ associated to the length structure given by $\cP^{\Ran}$ and $\elo$ is $\dom$.
\end{theo*}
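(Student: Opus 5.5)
We prove the two inequalities $d_{\ell}\geq \dom$ and $d_{\ell}\leq \dom$ separately. Here $d_{\ell}(X,Y)$ denotes the infimum of $\elo(\Gamma)$ over admissible paths $\Gamma\in\cP^{\Ran}$ from $X$ to $Y$. The guiding picture is that an admissible path in $\Ran(M)$ is, piecewise, the image of a lift: a finite family of admissible paths $\gamma_1,\dots,\gamma_k$ in $M$ indexed by a relation. Its weighted length is $\omega(k)\sum_i \ell(\gamma_i)$ on each piece, and so mirrors the combinatorial length $\elo(R)=\omega(\card(R))\sum_{(x,y)\in R} d(x,y)$ of a relation.

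\textbf{Upper bound $d_{\ell}\leq \dom$.} Fix $\varepsilon>0$ and take a chain of relations $(R_0,\dots,R_{n-1})$ from $X$ to $Y$ with $\elo(R_0,\dots,R_{n-1})<\dom(X,Y)+\varepsilon$. For each $R_j\subset X_j\times X_{j+1}$ and each pair $(x,y)\in R_j$, we use that $M$ is a length space to choose an admissible path $\gamma_{x,y}$ from $x$ to $y$ with $\ell(\gamma_{x,y})<d(x,y)+\varepsilon/(n\,\card(R_j)\,\omega(\card(R_j)))$. The family $(\gamma_{x,y})_{(x,y)\in R_j}$ defines the path $t\mapsto\{\gamma_{x,y}(t)\}$ in $\Ran(M)$. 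Its endpoints are $X_j$ and $X_{j+1}$ precisely because $R_j$ is the graph of a surjective multivalued function. This is a lift in the sense of \cref{def:relevements}, and its length is at most $\omega(\card(R_j))\sum_{(x,y)\in R_j}\ell(\gamma_{x,y})<\elo(R_j)+\varepsilon/n$. The length is at most rather than equal to this quantity because the length in \cref{def:longueur_rel} uses the cardinality of the lift, which here is $\card(R_j)$. Concatenating the $n$ pieces gives an admissible path of length less than $\dom(X,Y)+2\varepsilon$.

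\textbf{Lower bound $d_{\ell}\geq \dom$.} Let $\Gamma$ be admissible. By \cref{def:Chemin_C1_Par_Morceaux} there is a subdivision $a=t_0<\dots<t_m=b$ such that on each $[t_j,t_{j+1}]$ the path $\Gamma$ is given by a lift $(\gamma_1,\dots,\gamma_k)$ with $\Gamma(t)=\{\gamma_i(t)\}$. Set
\[
R_j=\{(\gamma_i(t_j),\gamma_i(t_{j+1}))\mid 1\le i\le k\}\subset\Gamma(t_j)\times\Gamma(t_{j+1}).
\]
This $R_j$ is the graph of a surjective multivalued function, and $\card(R_j)\le k$. Since $\omega$ is non-decreasing and $d\le\ell$, we get $\elo(R_j)\le\omega(k)\sum_i\ell(\gamma_i|_{[t_j,t_{j+1}]})$, which is the length of that piece. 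Summing over $j$ gives a chain of relations from $X$ to $Y$ with combinatorial length at most $\elo(\Gamma)$, hence $\dom(X,Y)\le\elo(\Gamma)$.

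\textbf{Main obstacle.} The delicate step is verifying that the pair $(\cP^{\Ran},\elo)$ really satisfies the axioms of a length structure, so that the induced metric is meaningful. The axioms are closure under restriction and concatenation, additivity of length, continuity of length in the endpoint parameter, and reparametrisation invariance. The key difficulty is that a single path may admit several lift descriptions with different numbers of strands. For instance, two strands may coincide on a subinterval, which changes $\omega(k)$. I would first try to handle this by defining the length of a piece as the infimum over lifts. One must then check that this infimum is additive under subdivision, by refining subdivisions and noting that restricting a lift to a subinterval is again a lift. Continuity in $t$ follows from continuity of $\ell$ on each strand. Combined with the two inequalities above, this yields that the associated metric equals $\dom$.
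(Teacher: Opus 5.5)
Your proposal is correct and follows the paper's proof of \cref{theo:PathDistanceEqualCombDistance} essentially verbatim. For the lower bound you read off the relations $R_i$ from the endpoint values of the strands of a lift, using $\card(R_i)\le n_i$, monotonicity of $\omega$ and $d\le\ell$; for the upper bound you realise each relation of an almost-minimal chain by near-optimal admissible strands in $M^{\card(R_i)}$. One small fix: in the lower bound the chain is bounded by the length of the \emph{chosen lift}, not by $\elo(\Gamma)$ directly, so you must pick a lift within $\varepsilon$ of $\elo(\Gamma)$ (or take the infimum over lifts), as the paper does.

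Your final paragraph also matches the paper. There, $\elo(\Gamma)$ is defined as an infimum over lifts, additivity comes from refining subdivisions, and the one axiom you did not list (paths leaving a neighbourhood have length bounded below) is recovered a posteriori from $d^{\elo}=\dom$ via \cref{rem:checking_length_structure}.
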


Moreover, we may now ask if geodesics exist in the usual sense of paths realizing the distance. We prove that they do under additional hypothesis on $M$ (\cref{cor:existence geodesic path}).

\begin{theo*}
    Assume that the metric space $(M,d)$ is a length space, that it satisfies the Heine-Borel property, and that any two points in $M$ are connected by a geodesic. Then, for any two configurations $X,Y\in \Ran(M)$ and any weight $\omega$, there exists a path $\Gamma\colon [a,b]\to \Ran(M)$ such that $\Gamma(a)=X$, $\Gamma(b)=Y$ and
    \begin{equation*}
        \elo(\Gamma)=\dom(X,Y).
    \end{equation*}
\end{theo*}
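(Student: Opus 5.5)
The plan is to build the geodesic path $\Gamma$ directly from a geodesic chain. Since $M$ satisfies the Heine-Borel property, \cref{theo:Geodesic_Chain} gives a chain of relations $(R_0,\dots,R_{n-1})$ from $X$ to $Y$, with intermediate configurations $X=X_0,X_1,\dots,X_n=Y$, such that $\dom(X,Y)=\elo(R_0,\dots,R_{n-1})=\sum_i \elo(R_i)$. By concatenation it then suffices to treat a single relation: for each $i$, I would build an admissible path $\Gamma_i$ in $\Ran(M)$ from $X_i$ to $X_{i+1}$ with $\elo(\Gamma_i)\le \elo(R_i)=\omega(\card(R_i))\sum_{(x,y)\in R_i}d(x,y)$. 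Concatenating the $\Gamma_i$ gives a path $\Gamma$ from $X$ to $Y$ with $\elo(\Gamma)\le\dom(X,Y)$. The reverse inequality $\elo(\Gamma)\ge\dom(X,Y)$ holds for every admissible path, by \cref{theo:PathDistanceEqualCombDistance}, because $\dom$ is the metric induced by the length structure. So equality holds.

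For a single relation $R\subset X_i\times X_{i+1}$, I would use the hypothesis that any two points of $M$ are joined by a geodesic. For each pair $(x,y)\in R$, pick a geodesic $\gamma_{x,y}\colon[0,1]\to M$ from $x$ to $y$, parametrized proportionally to arc length, with $\ell(\gamma_{x,y})=d(x,y)$. Then set $\Gamma_i(t)=\{\gamma_{x,y}(t)\mid (x,y)\in R\}$.
\begin{itemize}
\item At $t=0$ this gives $\{x\mid (x,y)\in R\}=X_i$, by surjectivity of $R$ onto its first factor.
\item At $t=1$ it gives $X_{i+1}$, by surjectivity onto the second factor.
\end{itemize}
This path admits the obvious lift to $M^{R}$, namely the tuple $(\gamma_{x,y})_{(x,y)\in R}$. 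It should therefore be admissible in the sense of \cref{def:relevements,def:Chemin_C1_Par_Morceaux}. Its length, computed with respect to this lift via \cref{def:longueur_rel,def:longueur_chemin}, should be at most $\omega(\card(R))\sum_{(x,y)\in R}\ell(\gamma_{x,y})=\elo(R)$. The inequality rather than equality comes from two facts: $\card\Gamma_i(t)\le\card R$, and $\omega$ is non-decreasing, so any weight evaluated on the actual cardinality is bounded by $\omega(\card R)$.

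I expect the main obstacle to be matching the formal definitions of admissible paths and their $\omega$-length in $\Ran(M)$. The path length is presumably an infimum over lifts, or over subdivisions, of weighted sums. I must check two things. First, the lift indexed by $R$ is a legal lift, even though several geodesics may coincide at intermediate times and the cardinality of $\Gamma_i(t)$ may vary. Second, the resulting length is bounded by $\omega(\card R)\sum d(x,y)$ rather than by something larger. If the definition only allows lifts indexed by the actual points, I would instead subdivide $[0,1]$ at the finitely many times where collisions change. This is possible once one argues, or arranges by choosing geodesics suitably, that collision times form a finite set. On each open subinterval, the lift is indexed by the distinct points, and the weight is bounded by $\omega(\card R)$ using monotonicity. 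If the collision set is not finite, I would rely directly on the infimum-over-lifts formulation and the bound above, and finally invoke \cref{cor:ran length space} for the lower bound.
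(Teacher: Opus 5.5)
Your proof is correct, and it takes a more direct route than the paper.

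The paper first refines the geodesic simple MBS-chain from \cref{theo:Geodesic_Chain} into atomic merges, bijections and splits via \cref{lem:AtomicMBS}. It then realizes each atomic relation with the relay construction of \cref{lem:AtomicToPath}. There, the geodesic is cut at the times it passes through other points of the configuration, and a different point is moved along each piece. As a result no collision occurs in the interior and the cardinality is piecewise constant.

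You instead move all points of a relation $R$ at once and use the single global lift $(\gamma_{x,y})_{(x,y)\in R}\colon[0,1]\to M^{\card(R)}$. This is legitimate with the paper's definitions:
\begin{itemize}
\item \cref{def:relevements} only requires $\pi_{n_i}\circ\underline{\gamma_i}=\Gamma$ on each piece; it does not require $n_i=\card(\Gamma(t))$.
\item \cref{def:longueur_rel} weights a lift by $\omega(n_i)$, where $n_i$ is the dimension of the lift.
\end{itemize}
So your lift has length exactly $\elo(R)$. The bound $\elo(\Gamma_i)\le\elo(R)$ then holds simply because $\elo(\Gamma_i)$ is an infimum over lifts. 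The monotonicity of $\omega$ and the collision-time subdivision you anticipate are unnecessary. The subdivision would be delicate anyway, since in a general length space two geodesics may meet infinitely often.

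You also only need an affine reparametrization of each geodesic to $[0,1]$, which \cref{def:Length_structure} guarantees preserves admissibility and length. A constant-speed reparametrization is not needed, and it is not covered by those axioms.

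What the paper's longer route buys is control of the cardinality. Its geodesic is crenelated and in fact an MBS-path. That is the stronger form stated in \cref{cor:existence geodesic path}, and it underlies the decomposition of geodesics into exit paths discussed in the introduction. Your path may lose and regain points whenever the chosen geodesics collide. It therefore proves exactly the length statement and nothing about the shape of the geodesic.
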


Note that, crucially, the path $\Gamma$ realizing the distance may depend on the choice of $\omega$. And such a path is geodesic for a single choice of values for $\omega$ up to some $n$ in general, see \cref{exa:peace_and_love}. We can prove once more, however, that their "shape" is always the same. That is, a geodesic path in $\Ran(M)$ for any weight $\omega$, must be a path $\Gamma\colon [a,b]\to \Ran(M)$, such that there exists some $c\in [a,b]$ such that, when restricted to $[a,c]$, $\card(\Gamma)$ is a non-increasing function and when restricted to $[c,b]$, $\card(\Gamma)$ is a non-decreasing function. This is deeply related to the stratification of $\Ran(M)$ given by $\card\colon \Ran(M)\to \N_{\geq 1}$, and its invariants.  

\subsection*{The stratification of $\Ran(M)$}
\addcontentsline{toc}{subsection}{The stratification of $\Ran(M)$}
Indeed, the cardinality map $\card\colon \Ran(M)\to \N_{\geq 1}$ is a stratification on $\Ran(M)$, whether one considers the Hausdorff, weighted or final topology (See \cref{sec:Prelim_Strat}). As such, one may want to look at $\Ran(M)$ not as a space, but as a stratified space, which is a noticeably distinct point of view, at least when doing homotopy theory. 

In such a context, it is important to know if stratified spaces are geometrically well-behaved. Specifically, it is natural to ask of such spaces if they are conically stratified.  We recall the precise definition at the beginning of section \cref{sec:conicity}, but its core idea can be summarized as every point in the stratified space admitting a neighborhood homeomorphic to the product of a cone and a neighborhood in the strata. If they are, then a theorem of Lurie \cite[Theorem A.6.4]{HigherAlgebra} states that a particular invariant, called the $\infty$-category of exit paths can be defined. 

Whether $\Ran(M)$ is a conically stratified space when equipped with the Hausdorff or the final topology has already been investigated. For the former, a proof that it is conical, when $M$ is a Hausdorff space locally homeomorphic to some (possibly infinite dimensional) topological vector spaces is given in \cite[Theorem 4.12]{CepekLejayExponential}. On the other hand, for the final topology, it is easy to show that when $M$ is a manifold of dimension at least $1$, then $\Ran(M)$ cannot be conically stratified. Indeed, if it were, then the topology $\tpi$ would automatically be first countable, yet it is not. A more detailed argument is given in the proof of \cref{prop:Ran_final_Not_Conical}.

While true, the previous statement is somewhat misleading. First of all, even though $(\Ran(M),\tpi)$ is not conically stratified, it still admits an $\infty$-category of exit paths, for the reason that this invariant can be computed only from the data of the truncations (which coincide with those of $(\Ran(M),\tH)$). And secondly, as we show at the very end of \cref{sec:conicity}, when $M$ is a manifold, $(\Ran(M),\tpi)$ fails to be conically stratified on a technicality. Specifically, given a topological space $L$, there are two, a priori distinct, topologies one can equip the quotient $L\times [0,1]/L\times\{0\}$ with. 
The first is the quotient topology. The second, called the teardrop topology, is coarser in general.  Whenever $L$ is a compact space,  those two topologies coincide. Since most geometrically occurring stratified spaces carry compact links, there are few occasions to be made aware of this subtlety. But it turns out that for the definition of conically stratified space to behave correctly, (in particular, for the proof of \cite[Theorem A.6.4]{HigherAlgebra} to go through), one needs to use the teardrop topology. On the other hand, if one were to naively replace the teardrop topology by the quotient topology in the definition of a conically stratified space, then one gets a naive definition of conicality. We show that $(\Ran(M),\tpi)$ satisfies this naive definition whenever $M$ is a manifold (\cref{cor:RanFinalManifoldNaivementConique}).

We also investigate whether the weighted topologies that we introduce turn $\Ran(M)$ into a conically stratified space. We answer in \cref{Corol:Ran_Riemannian_Conical}.
\begin{theo*}
Let $(M,d)$ be a Riemannian manifold, and $\omega$ be a weight. Then $(\Ran(M),\tom)$ is conically stratified.
\end{theo*}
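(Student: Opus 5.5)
To prove that $(\Ran(M),\tom)$ is conically stratified, I will construct, around each configuration $X=\{x_1,\dots,x_k\}$, an explicit stratified open embedding
\begin{equation*}
U\times C(L)\hookrightarrow \Ran(M),
\end{equation*}
where $C(L)$ carries the teardrop topology. The first step is to choose $\epsilon>0$ much smaller than the minimal separation $\min_{i\neq j}d(x_i,x_j)$ and than the injectivity radii at the $x_i$. Then each geodesic ball $B(x_i,\epsilon)$ is identified, via $\exp_{x_i}$, with a ball in $T_{x_i}M\cong\R^m$. A configuration $Y$ that is $\tom$-close to $X$ decomposes as
\begin{equation*}
Y=\bigsqcup_i Y_i,\qquad Y_i=Y\cap B(x_i,\epsilon),
\end{equation*}
with each $Y_i$ nonempty. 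To justify this I will use the comparison between $\dom$ and $\dH$: the final topology refines the Hausdorff one, and by the same argument $\dom\geq \dH$ up to constants, since $\omega\geq 1$. This reduces the problem to the local model $\Ran(\R^m)$ near a point, together with its product structure.

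For the local model near $\{0\}$ in $\R^m$, I would use the maps $\Mid$, $\Shift$ and $\Scale$ suggested by the macros. Each configuration $Y$ near $\{0\}$ is encoded by three pieces of data:
\begin{itemize}
\item its centre $c(Y)$, for instance the midpoint of its bounding box or the barycentre;
\item a radius $r(Y)$, taken to be the Hausdorff radius $\max_{y\in Y}|y-c(Y)|$;
\item a normalized configuration $(Y-c(Y))/r(Y)$, which lies in the link
\begin{equation*}
L=\{Z\in\Ran(\R^m)\mid \card Z\geq 2,\ c(Z)=0,\ r(Z)=1\},
\end{equation*}
equipped with the restriction of the metric $\dom$.
\end{itemize}
The candidate homeomorphism $\R^m\times C(L)\to\Ran(B(0,\epsilon))$ sends $(c,[Z,t])\mapsto c+tZ$, and the cone point to $\{c\}$. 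It is stratified, because cardinality is preserved by dilation and translation and the cone point goes to the singleton stratum.

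The steps, in order, are as follows.
\begin{enumerate}
\item Show that translation and scaling act on $\dom$ in a controlled way in $\R^m$. Since lengths of relations scale linearly, $\dom(tZ,tZ')=t\,\dom(Z,Z')$ and translations are isometries. This gives continuity of the map away from the cone point, together with continuity of its inverse there, since $c$ and $r$ are $\dom$-continuous.
\item Check continuity at the cone point for the teardrop topology. This amounts to showing that $\dom(c+tZ,\{c\})\to 0$ as $t\to0$, uniformly in $Z\in L$ of bounded cardinality. Conversely, $\dom(Y,\{c(Y)\})$ small must force $r(Y)$ small, which follows from $\dom\geq\dH$.
\item Transport the local model to the Riemannian ball $B(x_i,\epsilon)$ via normal coordinates. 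There the metric is bi-Lipschitz to the Euclidean one with constant close to $1$, and I will show that $\dom$ changes only by a bi-Lipschitz factor under a bi-Lipschitz map of $M$. This holds because the length of each relation is scaled by at most the Lipschitz constant.
\item Assemble the product over $i$. I will show that near $X$ the metric $\dom$ is equivalent to a product of the local distances. Chains between decomposed configurations can be split cluster-wise, and relations crossing between different balls cost at least the separation. The caveat is that the weight $\omega(\card R)$ couples the clusters, but this is bounded above and below once cardinalities are bounded.
\end{enumerate}

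The main obstacle is that cardinality is \emph{not} bounded near $X$ in $\tom$. A $\tom$-neighbourhood contains configurations of arbitrarily large cardinality, so the coupling constants $\omega(\card R)$ in the product step, and the uniformity needed in the cone-point step, are not controlled by crude bounds. The key estimate to aim for is a two-sided comparison
\begin{equation*}
\dom\Bigl(\bigsqcup_i Y_i,\ \bigsqcup_i Y'_i\Bigr)\quad\text{versus}\quad \sum_i \dom(Y_i,Y'_i),
\end{equation*}
with constants that work for all cardinalities. For this I would invoke \cref{theo:MBS}, which lets me restrict to chains whose cardinalities first decrease and then increase. I expect this to bound the weight by $\omega$ of the larger endpoint cardinality, and then use monotonicity of $\omega$. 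If a uniform comparison fails, I will fall back on proving only that the two metrics induce the same topology, which suffices because conicality is a topological statement.
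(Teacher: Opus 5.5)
There are two genuine gaps, one in the cone step and one in the product step.

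\textbf{Cone step.} In the teardrop topology the neighbourhoods of the apex are the sets $\{[Z,t] : t<t_0\}$, with $Z$ ranging over \emph{all} of $L$. So continuity of $[Z,t]\mapsto c+tZ$ at the apex requires $\sup_{Z\in L}\dom(Z,\{0\})<\infty$, because $\dom(tZ,\{0\})=t\,\dom(Z,\{0\})$ in $\R^m$. Uniformity only over bounded cardinalities is not what the definition asks for.

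With your link normalized by the Hausdorff radius $r(Z)=1$, this supremum is infinite in general. Indeed, $L$ contains configurations of every cardinality, and \cref{lem:minorant distance} gives $\dom(Z,\{0\})\geq \omega(\card Z)\min_{z\neq z'}d(z,z')$. This lower bound is unbounded on grids of spacing $\sim n^{-1/m}$ in the unit ball as soon as $\omega(n)n^{-1/m}\to\infty$.

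The remedy is to normalize by $\dom$ itself. The paper's link is the $\dom$-unit sphere $\Som(G,1)$ intersected with centred configurations. The identity $\dom(\ScaleX(s,Z),X)=s\,\dom(Z,X)$ (\cref{prop:scaling xlocal equality}) makes the cone coordinate equal to the $\dom$-distance, so apex neighbourhoods are exactly $\dom$-balls. Note also that the barycentre is not continuous under merges; only the bounding-box midpoint works as a centre.

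\textbf{Product step.} This fails at the level of topology, not just of constants, so your fallback is false as well. Take $W=\{0,10\}\subset\R$, the weight $\omega(n)=e^{n-1}$, the configurations $X_k$ of \cref{exa:config_convergente} with $u_k=1/k$, and $Y^k=X_k\sqcup(10+X_k)$.

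Each cluster converges, since $\dom(X_k,\{0\})=u_k\to 0$. However, $\card Y^k=2k$ and the minimal gap of $Y^k$ is $u_k/(k\,\omega(k))$. So \cref{lem:minorant distance} gives $\dom(Y^k,W)\geq \omega(2k)\,u_k/(k\,\omega(k))=e^k/k^2\to\infty$. Thus $\tom$ near an $n$-point configuration is strictly finer than the product of the single-point topologies. The factor $\omega(\card R)$ couples the clusters through the total cardinality, and \cref{theo:MBS} does nothing to remove this coupling. Even granting a product decomposition, $\prod_i(\R^m\times c(L))$ would not be of the form $E\times c(L')$ without a further join argument.

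The paper never splits the cone into clusters. In \cref{prop:homeo strat}:
\begin{itemize}
\item the Euclidean factor $E\subset M^n$ records the $n$ cluster midpoints;
\item one common scaling factor $\epsilon-\dom(\pi_n(\underline{y}),X)$ is applied to all clusters through $\Shift$;
\item the single link is built from a well-separated $n$-point reference configuration $G$;
\item \cref{lem:boule hausdorff locale} guarantees that geodesic chains do not mix clusters, which is what makes the cluster-wise scaling exact.
\end{itemize}
Your step 3, transferring along bi-Lipschitz charts, does agree with the paper's reduction from Riemannian manifolds to $\R^p$.
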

It can be argued that the result holds in much more generality. Due to a result of Sullivan about Lipschitz manifolds \cite{SullivanLipschitzManifold}, provided $M$ is a (topological) manifold of dimension other than $4$, we can meaningfully say that $(\Ran(M),\tom)$ is conically stratified without specifying a metric on $M$. This is a very subtle point and a more detailed discussion can be found in \cref{rem:Lipschitz_manifold_Conical}. 

The above theorem is really a simple corollary of the observation that $(\Ran(M),\tom)$ is conically stratified when $M=\R^p$ is a Euclidean space, \cref{Corol:Ran_Euclidean_Conical}, which is what we spend most of \cref{sec:conicity} proving. We do so in a very explicit way, providing concrete descriptions of the links, and of the local homeomorphisms in terms of geometric constructions in $\Ran(\R^p)$.

Surprisingly, some topological arguments are still needed to prove the naive conicality of $\tpi$, and it cannot be formally deduced from the conicality of the $\tom$. We discuss in \cref{rem:naive_not_limit} why abstract nonsense alone is not sufficient to conclude.

\subsection*{Geodesics and exit paths}
\addcontentsline{toc}{subsection}{Geodesics and exit paths}
For reasons discussed in the previous section, $(\Ran(M),\tpi)$, $(\Ran(M),\tH)$ and $(\Ran(M),\tom)$ all admit an ($\infty$)-category of exit paths (and in fact, \textbf{the same one}). Let us briefly recall what this invariant is.

The motivation behind this invariant stems from the well-known result that, given a reasonable topological space $M$, locally constant sheaves on it are equivalent to functors from its fundamental groupoid, $\Pi_1(M)$. The category $\Pi_1(M)$ can be succinctly described as the category whose objects are points in $M$, and whose morphisms between two points $x,y\in M$ are continuous paths $\gamma\colon [0,1]\to M$ up to boundary preserving homotopies.

The category $\Pi_1(M)$ is an homotopy invariant of $M$, but one cannot recover $M$ from the data of $\Pi_1(M)$ in general. However, one can recover $M$ up to weak equivalence, from the data of $\Pi(M)$, the infinity groupoid of $M$. It is an infinity category in which objects are points of $M$, morphisms are continuous paths $\gamma\colon [0,1]\to M$, but instead of quotienting by homotopies, one considers higher morphisms to be boundary preserving homotopies between morphisms of lower dimension.

The idea of following this construction for stratified spaces seems to date back to MacPherson, though the first published steps in defining the exit path infinity category are due to Treumann \cite{TreumannExit2Cat} and Woolf \cite{WoolfFundamentalCategory}. Given reasonable stratified spaces $\varphi\colon T\to P$, they define $2$-categorical and $1$-categorical versions of the category of exit paths, whose object are points of $T$ and whose morphisms are (equivalence classes of) continuous paths $\gamma\colon [0,1]\to T$ which are exit paths. That is, there must be some $p\leq q\in P$ such that $\varphi(\gamma(0))=p$ and $\varphi(\gamma(t))=q$ for all $0<t\leq 1$. In particular, such a path must visit at most two strata.

Later, Lurie introduced an infinity categorical version of this category \cite[Appendix A]{HigherAlgebra}. Several variations of homotopy theories for stratified spaces have since been introduced \cite{NandLalThese,DouteauCatStratifiedSpace,HaineHomotopyTheoryStrat}. They all satisfy some version of the following statement 
\begin{statement}
    Let $T\to P$ be a stratified space. Provided $\Exit(T)$ is well-defined, then one can recover $T\to P$ up to stratified weak equivalence from the data of $\Exit(T)$.
\end{statement}
Thus, understanding the homotopy type of $\Ran(M)$ as a stratified space amounts to understanding the $\infty$-category $\Exit(\Ran(M))$. The first step being understanding its $1$-morphisms, which are the exit paths in $\Ran(M)$. A first step in this direction can be deduced from results we have already discussed.
\begin{theo*}
    Let $(M,d)$ be a complete Riemannian manifold, $X,Y\in \Ran(M)$ and $\omega$ be a weight. Then there exists a geodesic $\Gamma\colon [0,1]\to \Ran(M)$ between $X$ and $Y$, and $c\in [0,1]$ such that
    \begin{itemize}
        \item $\Gamma_{|[c,1]}$ is a concatenation of exit paths.
        \item $\Gamma_{|[0,c]}$ is a concatenation of opposites of exit paths.
    \end{itemize}
\end{theo*}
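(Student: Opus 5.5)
We will obtain the geodesic by combining three results announced earlier: the existence of geodesic chains (\cref{theo:Geodesic_Chain}), their monotone cardinality profile (\cref{theo:MBS}), and the path-realization of chains (\cref{cor:existence geodesic path}).

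\textbf{Hypotheses.} A complete Riemannian manifold $(M,d)$ with its Riemannian distance is a length space. By Hopf--Rinow it satisfies the Heine--Borel property, and any two points are joined by a minimizing geodesic. So every hypothesis of \cref{theo:Geodesic_Chain} and of \cref{cor:existence geodesic path} holds.

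\textbf{Geodesic chain.} By \cref{theo:Geodesic_Chain} there is a chain of relations $(R_0,\dots,R_{n-1})$ from $X$ to $Y$, through $X=X_0,\dots,X_n=Y$, with $\elo(R_0,\dots,R_{n-1})=\dom(X,Y)$.

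\textbf{Cardinality profile.} We then pass to a geodesic chain of the form of \cref{def:MBS}, meaning $\card(X_0),\dots,\card(X_n)$ strictly decreases and then strictly increases. The reduction in \cref{theo:MBS} should only merge or reorder jumps without increasing length, so geodesicity is preserved. If the argument gives this only as an infimum statement, we instead apply the Heine--Borel compactness argument of \cref{theo:Geodesic_Chain} within the class of such chains.

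\textbf{Realizing each jump.} For each relation $R_i\subset X_i\times X_{i+1}$, realize it by a path $\Gamma_i\colon[t_i,t_{i+1}]\to\Ran(M)$:
\begin{itemize}
\item pick a minimizing Riemannian geodesic $\gamma_{(x,y)}$ from $x$ to $y$ for each $(x,y)\in R_i$;
\item set $\Gamma_i(t)=\{\gamma_{(x,y)}(t)\mid (x,y)\in R_i\}$.
\end{itemize}
This is the construction behind \cref{cor:existence geodesic path}, and it gives $\elo(\Gamma_i)\le\elo(R_i)$.

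\textbf{Concatenation.} Concatenating gives $\elo(\Gamma)\le\dom(X,Y)$. Equality then follows from \cref{theo:PathDistanceEqualCombDistance}, since $\dom$ is the path metric. After reparametrizing to $[0,1]$, let $c$ be the time at which the chain reaches its minimal-cardinality configuration.

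\textbf{Main obstacle: each $\Gamma_i$ is an exit path or the opposite of one.} On an increasing step, $R_i$ is surjective multivalued and $\card(X_{i+1})>\card(X_i)$. We must show that on $(t_i,t_{i+1}]$ the cardinality of $\Gamma_i(t)$ is constant, equal to $\card(X_{i+1})$, with only the initial time in a lower stratum. Constancy can fail if two geodesics $\gamma_{(x,y)}$, $\gamma_{(x',y')}$ cross at an intermediate time. This is the step where we use geodesicity, via a shortcut argument:
\begin{itemize}
\item At a crossing time $s$ one can swap tails, or split the jump at $s$ into two jumps with fewer intermediate points.
\item Since $\omega$ is nondecreasing, the swap does not increase $\sum d$, and the split does not increase the weighted length.
\item Either the crossing is removed, or the chain is replaced by a shorter or equal chain with a worse cardinality profile.
\end{itemize}
The latter contradicts the strict monotonicity chosen in the cardinality-profile step; to make this rigorous we take that chain to minimize $n$ among geodesic chains of that form. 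The same argument handles coincidences in the open interval coming from pairs sharing an endpoint, e.g.\ $(x,y),(x,y')$: distinct geodesics from $x$ that meet again can be shortcut. So during a strictly increasing jump, points split only at $t_i$ and remain distinct afterwards, which makes $\Gamma_i$ an exit path. By symmetry, a decreasing jump is the reverse of an exit path.

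Hence $\Gamma_{|[c,1]}$ is a concatenation of exit paths and $\Gamma_{|[0,c]}$ a concatenation of their opposites. The delicate points are the tie cases, where crossings give equal rather than strictly smaller length; they are handled by the minimality of $n$.
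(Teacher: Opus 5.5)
\textbf{There is a genuine gap in the collision step of your proposal, in exactly the tie cases you flag.}

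\textbf{How the paper argues.} The paper gets the statement directly from \cref{cor:existence geodesic path}, with Hopf--Rinow supplying the hypotheses. The geodesic produced there is an MBS-path: it is crenelated, with $\card$ non-increasing on $[0,c]$ and non-decreasing on $[c,1]$. Since $\card$ can only drop in the limit (\cref{cor:neighborhood_min_cardinal}), each interval of constancy is an exit path or the reverse of one. The paper never excludes collisions by an extremality argument. It avoids them by moving one point at a time (\cref{lem:AtomicMBS}) and by the relay construction of \cref{lem:AtomicToPath}, which reassigns which point travels each portion of the geodesic. So simultaneous motion is not ``the construction behind'' \cref{cor:existence geodesic path}, and your proof rests entirely on your own collision step.

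\textbf{Merges and splits are fine.} There a collision gives a strictly shorter chain, so geodesicity alone rules it out. For example, suppose the two points of a simple merge from $m$ to $m-1$ points meet at some $z\neq y_0$ before reaching their target $y_0$. Merging them at $z$ and then moving $z$ to $y_0$ saves $(2\omega(m)-\omega(m-1))\,d(z,y_0)>0$.

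\textbf{The bijection step is where the argument fails.} A collision at time $s$ only gives the equally long chain $X_l\to\Gamma(s)\to X_{l+1}$. After \cref{lem:Surjections_Are_Merges} (and its mirror) and \cref{lem:Simple_Merges}, this becomes a simple MBS-chain with lower minimal cardinality, that is, with \emph{more} relations. That contradicts nothing if you minimize $n$. Nor does the existence of some non-MBS geodesic chain contradict the shape of the chain you fixed.

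\textbf{A counterexample to the argument as written.} Take $M=\R$, $\omega\equiv 1$, $X=\{0,1,10,11\}$ and $Y=\{1,2,11,13\}$.
\begin{itemize}
\item A covering argument on $\R$ gives $\dom(X,Y)=5$.
\item So the single bijection $0\mapsto 2$, $10\mapsto 13$ (other points fixed) is a geodesic simple MBS-chain with the minimal value $n=1$.
\item Your path $t\mapsto\{2t,1,10+3t,11\}$ has cardinality $3$ at $t=1/3$ and at $t=1/2$, and $4$ elsewhere.
\item A concatenation of exit paths (resp.\ of their opposites) has non-decreasing (resp.\ non-increasing) cardinality, so no $c$ works.
\end{itemize}
Swapping tails happens to fix this instance, but you give no monovariant guaranteeing that repeated swaps terminate.

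\textbf{How to repair it.} Reverse the extremal choice: take a geodesic simple MBS-chain with the \emph{maximal} number of relations, which exists since $n\le\card(X)+\card(Y)-1$. Then bijection-step collisions contradict maximality. Alternatively, simply cite \cref{cor:existence geodesic path} as described above.
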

Thus geodesics can be decomposed into exit paths, which are themselves shorter geodesics. We may attempt to study $\Exit(\Ran(M))$ through subcategories formed by only considering exit paths that are geodesic for some weight $\omega$. While we leave this part of the study for future works, this was a motivation in attempting to prove the existence and characterization of geodesics in $\Ran(M)$.

\subsection*{Invariance properties of $\Ran(M)$ as a space}
\addcontentsline{toc}{subsection}{Invariance properties of $\Ran(M)$ as a space}

Studying Ran spaces, their truncations $\Ran_{\leq n}(M)$, or their strata $\Conf_n(M)$, as invariants of the underlying space $M$, is certainly not a new idea. Though, there are numerous immediate obstructions to doing so. First, and most obvious, if $f\colon M\to N$ is an arbitrary continuous map between two topological spaces, then $f$ does not induce a continuous map $\Conf_n(M)\to \Conf_n(N)$. It does induce a continuous map $\Ran(M)\to \Ran(N)$, sending $X\subset M$ to $f(X)\subset N$, which restricts properly to the truncations, but this map will not be a stratum preserving map in general. 
Indeed, if $f$ fails to be injective, then there are some $X\subset M$ such that $\card(f(X))<\card(X)$.

Still, there have been attempts to interpret those objects as some sort of homotopy invariants of $M$. For configurations spaces in particular, it is immediately obvious that those are not homotopy invariants on the nose. Consider for example that $\Conf_2(\R^0)$ is empty yet $\Conf_2(\R^1)$ is non empty but contractible, and $\Conf_2(\R^2)$ is non contractible.

Yet, in \cite{IdrissiRealHomotopyType}, Idrissi proved that provided $M$ is a simply connected closed manifold, the real homotopy type of $\Conf^{\text{ord}}_n(M)$ can be recovered from the real homotopy type of $M$. Here $\Conf^{\text{ord}}_n(M)$ corresponds to the \textbf{ordered} configuration space, i.e. the open subset of $M^n$ which covers $\Conf_n(M)$, and the real homotopy type roughly refers to the part of the homotopical information about $M$ which survives when tensoring the homotopy groups with $\R$. Which is to say, part of the homotopical information of $\Conf^{\text{ord}}_n(M)$ can be deduced only from homotopical information about $M$ (of the same nature).

On the other hand, the case of the homotopical invariance of $\Ran(M)$ is trivial. Indeed, when $\Ran(M)$ is equipped with the final topology, $\Ran(M)$ is weakly contractible as soon as $M$ is Hausdorff and path connected. (see \cite[Corollary 4.3]{HandelTruncationsHomotopy}). This follows from the observation that any continuous map $S^k\to \Ran_{\leq n}(M)$ can be homotoped to a constant map inside $\Ran_{\leq n'}(M)$ for some $n'\geq n$. Since any map $S^k\to \Ran(M)$ which is continuous for the final topology must factor through some truncations $\Ran_{\leq n}(M)$, this implies that the homotopy groups of $(\Ran(M),\tpi)$ are all trivial.

The same result is known for the Hausdorff topology, and can be obtained by the method outlined in \cite[section 3.4.1]{ChiralAlgebras}, though the authors present their proof for the final topology (see \cite[Section 5.5.1]{HigherAlgebra} for a version of this proof applied to the Hausdorff topology). It should be expected that the same holds for the weighted topologies, and that $(\Ran(M),\tom)$ is also contractible, though we do not investigate this here. One expects that a proof should follow from the observation made above that all of those topologies coincide on truncations, hence they correspond to weakly equivalent stratified spaces.

The last purely topological invariant one could hope for are the truncations of the Ran spaces, which are both homotopy invariant and non-trivial in general. Indeed, as we have seen $\Ran_{\leq 3}(S^1) \simeq S^3$ is non-contractible, and an immediate computation shows that homotopic maps $f,g\colon M\to N$ induce homotopic maps between truncations $\Ran_{\leq n}(M)\to \Ran_{\leq n}(N)$. However, if one tries to assemble each of those invariants for $n\geq 1$, that is, if one takes the increasing union, then one recovers $\Ran(M)$ which is trivial.
Thus, to produce a meaningful invariant out of all the truncations, one needs to consider $\Ran(M)$ as a stratified space\footnote{One may instead choose to consider $\Ran(M)$ as a filtered space. That is, equipped with the filtration given by the increasing union. This was, for a long time, the preferred point of view when discussing objects with singularity. See for example the case of Whitney stratifications. Nowadays, this point of view has been mainly replaced by that of poset stratifications, which we presented here. It should be emphasized that those two points of view do not lend themselves to the study of the same kinds of maps, and as such would not produce the same homotopy theory (if one were to construct one for filtered spaces). Thus, the reason we choose to see $\Ran(M)$ as a stratified space instead of a filtered space is not because the points of view are equivalent (they are not), nor because the poset-stratified point of view is inherently better (it may or may not be) but because only the former homotopy theory has already been developed.}.

\subsection*{Properties of $\Ran(M)$ as a stratified space}

The space $\Ran(M)$ equipped with its stratification $\card\colon \Ran(M)\to \N_{\geq 1}$ has some very peculiar properties as a stratified space, especially when $(M,d)$ is a Riemannian manifold. Indeed, it is an example coming from geometry - in particular not a purpose-built counter example - which is conically stratified (naively so for the final topology) yet does not admit a regular stratum. This is not entirely unique, since any construction of an infinite dimensional object out of finite dimensional strata will fail to have a regular stratum, but is in stark contrast with the classical examples encoding the idea of "manifolds with singularities".

The fact that several topologies on the underlying space $\Ran(M)$ are routinely used, all with the same geometrical intuition - that collision of points should be continuous - and sometimes for the same goal is also striking. Of course, this is in part due to the fact that a lot of focus lies on the truncations, which coincide from one topology to another. Another related explanation of this phenomenon is that, from the point of view of homotopy theory, those topologies are not different since they yield the same (weak) stratified homotopy type. Meaning, from the point of view of stratified homotopy theory, it is unambiguous to speak about the Ran space, without specifying the topology under consideration.

On the other hand, as we have seen, the final topology fails to be conically stratified, it is only naively so. Yet, we have proven that it is a limit of conically stratified space, through a reasonably mild functor (it is indexed by a poset, and all the maps are just refinements of topology). To our knowledge, this is the first example of this kind. Still, the final topology admits an $\infty$-category of exit paths (by virtue of being weakly equivalent to conically stratified spaces).

The stratified homotopy type of $\Ran(M)$ itself is interesting. It is non-trivial in general, even when $M$ is a Euclidean space, yet it is known that the underlying space is contractible. This is absolutely not the first example of this kind, not even the first geometrically occuring, since any stratifications of the infinite dimensional sphere by increasing inclusions of finite dimensional ones shares this property. Yet, it is notable that non-trivial information about any space $M$ can be stored in the contractible space $\Ran(M)$ by recalling the stratification of the latter\footnote{Straying a bit from the philosophy of this example the stratified cone on any space $M$ also shares those properties, in a somewhat tautological way. The stratified homotopy type of the cone is a strict invariant of the homotopy type of $M$ however.}.

And, as discussed, if $M$ is equipped with a length structure, we may hope to recover some description of the homotopical information contained in the stratified space $\Ran(M)$ through the study of geodesics in $(\Ran(M),\dom)$.

Overall, this begs the following question.

\begin{question}
Given a topological space $M$, how much information about its homeomorphism type can be recovered from the (weak) stratified homotopy type of $\Ran(M)$ ?
\end{question}

Clearly, the stratified homotopy type of $\Ran(M)$ fully recovers the homotopy type of $M$, since stratified homotopy equivalences preserve the homotopy type of strata. But since the stratified homotopy type of $\Ran(M)$ is not itself an homotopy invariant of $M$ we may hope to recover quantifiably more than that.

Now, if we look specifically at the spaces $(\Ran(M),\tom)$ for a given weight $\omega$, we may ask what it is an invariant of. The situation is even more drastic than it is for configuration spaces. Indeed, for a continuous map $f\colon (M,d)\to (N,d')$ to induce a continuous stratified map between the Ran spaces of $M$ and $N$ equipped with the topologies associated to $\omega$, it is not even enough to ask that $f$ be injective (see \cref{exa:racine pas relevable sur ran omega}).  One should ask in addition that $f$ be locally Lipschitz, just to get a continuous map between the topologies $\tom$. We prove specifically \cref{corol:Ran_omega_functor}.

\begin{theo*}
    Let $\mathcal{C}$ be the category of metric spaces and locally Lipschitz maps, then for any weight $\omega$, the following functor is well-defined
    \begin{align*}
        \Ran^{\omega}\colon \mathcal{C}&\to \mathcal{C}\\
        (M,d)&\mapsto (\Ran(M),\dom)\\
        f\colon (M,d)\to (N,d')&\mapsto \Ran(f)\colon \Ran^{\omega}(M,d)\to \Ran^{\omega}(N,d')
    \end{align*}
    Furthermore, if $f\colon (M,d)\to (N,d')$ is a map in $\mathcal{C}$ which is injective, then $\Ran^{\omega}(f)$ is stratum preserving.
\end{theo*}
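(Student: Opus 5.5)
Since $(\Ran(M),\dom)$ and $(\Ran(N),\dom)$ are metric spaces, the functor will be a morphism of $\mathcal{C}$ once we show $\Ran(f)\colon X\mapsto f(X)$ is locally Lipschitz for $\dom$. Functoriality is then automatic, because $\Ran(g\circ f)(X)=g(f(X))$ and $\Ran(\Id)=\Id$ at the level of sets. The stratum-preserving claim is immediate: if $f$ is injective then $\card(f(X))=\card(X)$ for every finite $X$.

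The main step is the local Lipschitz estimate, which I would first prove at the level of a single relation. Given a relation $R\subset X\times Y$ that is the graph of a surjective multivalued function, set
\begin{equation*}
f(R)=\{(f(x),f(y))\mid (x,y)\in R\}\subset f(X)\times f(Y).
\end{equation*}
This is again surjective on both sides. Since the map $R\to f(R)$ is onto, $\card(f(R))\leq\card(R)$. Because $\omega$ is non-decreasing, $\omega(\card(f(R)))\leq\omega(\card(R))$. Likewise $\sum_{f(R)}d'\leq\sum_{R}d'(f(x),f(y))$, since dropping repeated pairs only decreases the sum. Hence, if $f$ is $L$-Lipschitz on a set containing all points involved, then $\elo(f(R))\leq L\,\elo(R)$. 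Applying this termwise to a chain $(R_0,\dots,R_{n-1})$ from $X$ to $Y$ gives a chain from $f(X)$ to $f(Y)$ of length at most $L$ times the original. Taking the infimum over chains yields $\dom(f(X),f(Y))\leq L\,\dom(X,Y)$.

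The obstacle is locality: $f$ is only Lipschitz near each point, while chains may wander through intermediate configurations far from $X$ and $Y$. I would fix $X$ and choose $r>0$ so that $f$ is $L$-Lipschitz on $U=\bigcup_{x\in X}B(x,\rho)$. I then need a lemma saying that if $\dom(X,Y)$ is small, then nearly optimal chains stay inside $U$. The key observation is $\elo(R)\geq\sum_{(x,y)\in R}d(x,y)$, because $\omega\geq 1$. So every point of every intermediate configuration is connected to some point of $X$ by a zigzag of pairs whose total $d$-length is at most the length of the chain. Thus, if the chain has length $<\rho$, all intermediate points lie in $U$. I would also check that every pair $(x,y)$ lies in a common ball $B(x_0,\rho)$ on which the Lipschitz constant applies; shrinking $\rho$ so that the balls around the finitely many points of $X$ with constant $L$ cover things handles this.

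Concretely, for $Y,Y'\in\Bom(X,\rho/2)$, chains between $Y$ and $Y'$ of length $<\rho/2$, concatenated with a short chain back to $X$, stay in $U$. This gives $\dom(f(Y),f(Y'))\leq L\,\dom(Y,Y')$ on $\Bom(X,\rho/4)$, so $\Ran(f)$ is locally Lipschitz. For the pairs themselves, I take the Lipschitz constant of $f$ on each ball $B(x_0,2\rho)$ with $x_0\in X$ and let $L$ be the maximum over the finitely many $x_0$.
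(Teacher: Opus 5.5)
Your argument is correct and follows the paper's route (\cref{prop:Lipschitz_Map_lift}). You fix $X$, use that near-optimal chains between configurations close to $X$ stay local, and push them forward relation by relation using monotonicity of $\omega$ and the local Lipschitz bound, exactly as in \cref{lem:local scaling}.

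The only real difference is the locality step. The paper invokes \cref{lem:boules omega locales}, which uses MBS-chains and the merging radius. You instead observe directly that each pair $(y,z)$ in the $Y\to Y'$ chain satisfies $d(y,z)<\rho/2$ and $d(y,X)<3\rho/4$, so it lies in a single ball $B(x_0,2\rho)$ on which $f$ is Lipschitz. This is a slightly more elementary route. You should state this one-line justification explicitly, in place of the unjustified (though salvageable) claim that $f$ is $L$-Lipschitz on all of $U$.
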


Which now suggests that $(\Ran(M),\tom)$ recalls not only information about the space $M$, but also about its metric, even without recalling the induced metric $\dom$, otherwise this is tautological. Since existence of maps between weighted topologies is not a given for maps that would otherwise lift to the final topology, we may also ask:

\begin{question}
Given a metric space $(M,d)$, how much information about its metric can be recovered from the properties of $(\Ran(M),\tom)$ and the (strict) stratified homotopy type of $(\Ran(M),\tom)$ for $\omega$ a weight ?
\end{question}

In particular, there is no reason to believe \textit{a priori} that a Whitehead type theorem should be true for the stratified spaces $(\Ran(M),\tom)$. Hence the strict stratified homotopy type of $(\Ran(M),\tom)$ may contain extra information about the metric, compared to the weak stratified homotopy type of $\Ran(M)$ (which coincide whether one considers the weighted or the final topologies). While we do not attempt to address those questions here, we expect that the tool we developed will help make explicit computations easier for future work.

\section*{Organization of the document}
\addcontentsline{toc}{section}{Organization of the document}
Immediately after this introduction, a very brief summary of notations can be found. Since some of the proofs can be notationally heavy, it may be useful to refer back to \cref{tab:notations} to disambiguate between notations. Outside of possible mistakes, every object appearing in the document should be written in a way consistent with this table.

The rest of the document is organized as follows.

\cref{sec:Preliminaries} contains known results and constructions about Ran spaces that we will need throughout the text, together with classical definitions, such as that of length structure on a metric space, or of stratified spaces. Any reader somewhat familiar with those subjects may choose to skip it, and possibly refer back to it later.

The weighted distances are introduced in \cref{sec:Combinatorial_Distance}. After working through the definition in section \cref{sec:Comb_Distance_def}, the next two sections are spent proving \cref{theo:MBS}, which states that the weighted distance between two configurations $X$ and $Y$ can be computed by only considering chains which first merge points two at a time, and then, after a bijection, splits them two at a time. We leverage this structure in \cref{sec:Geodesic_chains} to prove that in any metric space satisfying the Heine-Borel property, the weighted distance, which is defined as an infimum, is in fact the length of a certain geodesic chain. This is \cref{theo:Geodesic_Chain}.

\cref{sec:PathMetricsOnRan} then focuses on the particular case in which $(M,d)$ carries a length structure. This is the case for example if $(M,d)$ is a Riemannian manifold. In \cref{sec:Lifts}, we construct a length structure on $\Ran(M)$, given any weight $\omega$, and show that the distance it induces is $\dom$, the weighted distance introduced in \cref{sec:Combinatorial_Distance} (\cref{theo:PathDistanceEqualCombDistance}). Then, in \cref{sec:GeodesicPaths}, we show \cref{cor:existence geodesic path}, a continuous analogue of \cref{theo:Geodesic_Chain}. It states that, if we assume in addition to the hypothesis of the latter theorem that any two points in $M$ are connected by a geodesic, then any two configurations in $\Ran(M)$ are also connected by a geodesic, for any distance $\dom$.

\cref{sec:Examples_Computations} is then focused on explicit computations of weighted distances. In \cref{sec:Examples_Examples}, we first go through several illustrated examples, to underline which intuitions are correct, and dissipate some incorrect assumptions. It should be fairly accessible, and readers may find it helpful to read those examples while working through \cref{sec:Combinatorial_Distance}. Examples should make it clear why some apparently trivial statements about the weighted distances do need non-trivial proofs.
\cref{subsec:locality} is then concerned with several lemmas related to computation of distances in the neighborhood of a configuration. Those results will be needed in later sections, and help shed some light on the techniques that are available to compute weighted distances. 
Finally, \cref{subsec:Functoriality} is concerned with the invariance of $(\Ran(M),\tom)$ with respect to maps of metric spaces $f\colon (M,d)\to (N,d')$.

The characterization of the final topology is then the main focus of \cref{sec:Distances_Topologies}. After examining how the weighted topologies compare to each other in \cref{subsec:comparing_weighted_topologies}, we prove \cref{theo:topologie limite} in \cref{subsec:Final_limite}. This states that the final topology on $\Ran(M)$ is obtained as the limit of the weighted topologies in the category of topological spaces, provided that the space $M$ is locally compact.

When the previous theorem holds, $(\Ran(M),\tpi)$ then inherits a uniform structure from being a limit of uniform spaces (\cref{prop:Uniformity_Ran_finale}), which we investigate in \cref{sec:completeness}. Specifically, this section is devoted to proving that whenever $M$ is a complete and locally compact metric space, then $(\Ran(M),\tpi)$ is a complete uniform space (\cref{cor:ran finale complet}). To work through the proof, we first extend the definitions of \cref{sec:Combinatorial_Distance} to also apply to infinite chains in \cref{subsec:Infinite_Chains}. We use this in \cref{subsec:Cauchy_Ran_omega}, to characterize non-converging Cauchy sequences in $(\Ran(M),\dom)$. This allows us to identify the completion of $(\Ran(M),\dom)$, as a set, with a subset of $\Comp(M)$, which we leverage in \cref{subsec:Completeness_proof} to prove that $(\Ran(M),\tpi)$ is complete, by expressing it as a limit of complete spaces.

Finally, \cref{sec:conicity} focuses on conicality. We first prove that $(\Ran(M),\dom)$ is a conically stratified space, whenever $(M,d)$ is a Riemannian manifold (\cref{Corol:Ran_Riemannian_Conical}), and then that $(\Ran(M),\tpi)$ is conically stratified in the naive sense, (\cref{Ran faible conique}). The latter follows easily from the techniques introduced to prove the former, and is done at the very end in \cref{subsec:naive_conical}. The proof of the former is very long, in part due to the fact that we do not prove the mere existence of a local homeomorphism, but provide an explicit construction in terms of geometric maps which we define for this purpose. Given the length and technicality of this proof, we provide a summary in \cref{subsec:sketch}, which follows a reminder about conical stratifications in \cref{subsec:Prelim_conical}.

\section*{Notations and terminology}
\addcontentsline{toc}{section}{Notations and terminology}

\subsection*{About the name "final topology"}
\addcontentsline{toc}{subsection}{About the name "final topology"}

The topology of the increasing union $\Ran(M)=\cup_{n\geq 1}\Ran_{\leq n}(M)$ could be given many names. The most natural thing, for any algebraic topologist, or anyone familiar with CW complexes would be to call it the "weak topology". However, this topology is actually the \textbf{finest} topology among all we will consider (and possibly, the finest topology one may reasonably want to consider on $\Ran(M)$), thus the name would be very misleading. Another possibility would be to call it the "colimit topology", which is factually correct since an increasing union is nothing more than a colimit in the category of topological spaces. However, we will prove that $(\Ran(M),\tpi)$ is also a limit in the category of uniform spaces (see \cref{theo:topologie limite} and \cref{sec:completeness}) we thus fear that this name would lead to confusion. In the end, the name "final topology" refers to the properties of the maps $\pi_n\colon M^n\to\Ran(M)$ which play a central role throughout this paper, and has the added advantage that the "fin" in $\tpi$ stands for both "final topology" and "finest topology".

\subsection*{Notational conventions}
\addcontentsline{toc}{subsection}{Notational conventions}
This brief section aims to help the reader navigate through the notations of this work. Most of the notations are summarized in \cref{tab:notations}.

\renewcommand{\arraystretch}{1.5}
\begin{table}[h]
    \centering
    \begin{tabular}{|l| l | l | l | l | l | l | l |}
         \hline 
         sets & elements & paths & distances & balls &topologies \\
         \hline
         $M$ & $x,y,z,w$ & $\gamma,\lambda,\mu\colon[a,b]\to M$ & $d$ & $B$&\\
         $M^n$ & $\underline{x}=(x^{(1)},\dots,x^{(n)})$ & $\underline{\gamma},\underline{\lambda},\underline{\mu} \colon [a,b]\to M^n$ & $\dsum$ & $\Bsum$& \\
         $\Ran(M)$ & $X,Y,Z,W$ & $\Gamma\colon[a,b]\to \Ran(M)$ & $\dom,\dH$ & $\Bom,\BH$& $\tpi,\tom,\tH$ \\
         $\Ran_{\leq n}(M)$ & & &$\dom,\dH$ & $\Bom_{\leq n}$ &unique\\
         \hline
    \end{tabular}
    \caption{Summary of notations}
    \label{tab:notations}
\end{table}
\renewcommand{\arraystretch}{1}

We will always consider a metric space $(M,d)$. In rare cases, the index $M$ will be specified on the distance $(M,d_M)$ and balls $B_M(x,r)$ to avoid confusion. In the case where $M=\R^p$ for some $p\in\N$, we will write $x=(x_1,\dots,x_p)$ the coordinates of its elements in the canonical basis, while the elements of the Cartesian product will be written $\underline{x}=(x^{(1)},\dots,x^{(n)})\in M^n$.

We will consider the following topologies on $\Ran(M)$, from finest to coarsest:
\begin{itemize}
    \item The final topology $\tpi$ which is the final topology for the quotient applications $\pi_n\colon M^n\to\Ran(M)$, see \cref{def:final topology}
    \item The weighted topologies $\tom,\tchi$ induced by the weighted distances $\dom,\dchi$ for various weights $\omega,\chi\in\W$, introduced in \cref{sec:Combinatorial_Distance}
    \item The Hausdorff topology $\tH$ induced by the Hausdorff distance $\dH$, see \cref{def:Hausdorff}
\end{itemize}
All these topologies coincide on the truncations $\Ran_{\leq n}(M)$ of the Ran space (see \cref{prop:topologie_troncation}) and thus when discussing those, their will be no need to specify the topology.

\section{Preliminaries}
\label{sec:Preliminaries}
This work aims at defining and studying new distances and topologies on Ran spaces. In this section we recall some known facts about Ran spaces and some useful notions of general topology. We first define configuration spaces (\cref{subsec:Conf_Spaces}) before introducing the Ran space together with two of its topologies. The final topology in \cref{subsec:FinalTopology}, which is induced by the description of $\Ran(M)$ as an increasing union, and the Hausdorff topology in \cref{subsec:HausdorffTopology}, which is induced by the Hausdorff distance. We recall the notion of a stratified space in \cref{sec:Prelim_Strat}, and describe the stratified structure of $\Ran(M)$.
In \cref{subsec:prelim_truncations} we recall some properties of the truncations of the Ran space that will be useful throughout this document. Finally, \cref{sec:PathMetricSpaces} discusses length structures in preparation of \cref{sec:PathMetricsOnRan} which will discuss how such structures are inherited by $\Ran(M)$ whenever $M$ is itself a length space.

Unless otherwise stated $(M,d)$ will always be a metric space, and, for any $n\in\N$, we always equip $M^n$ with the following distance, which is compatible with the product topology, 
\begin{align*}
    \dsum\colon M^n \times M^n &\to \R_+\\
    ((x^{(1)},\dots,x^{(n)}),(y^{(1)},\dots,y^{(n)}) )&\mapsto \sum_{i=1}^n d(x^{(i)},y^{(i)})
\end{align*}

\subsection{Configuration spaces}
\label{subsec:Conf_Spaces}
We begin by defining the configuration spaces, both the ordered and unordered ones.
\begin{defin}
    Let $M$ be a topological space, and $n\in\N^*$. The $n^\text{th}$ ordered configuration space of $M$, denoted $\Conford_n(M)$, is the set of $n$-tuples of $M^n$ whose elements are pairwise distinct, i.e.:
    \begin{equation*}
        \Conford_n(M)=\{(x^{(1)},\dots,x^{(n)})\mid x^{(i)}\neq x^{(j)} ,\forall i\neq j\in\{1,\dots,n\}\}\subset M^n,
    \end{equation*}
    equipped with the topology induced by the inclusion.
\end{defin}

\begin{rem}
    If $M$ is a topological manifold of dimension $\geq 1$, one can check that $\Conford_n(M)$ is a dense subset of $M^n$ for any $n\geq 1$.
\end{rem}

\begin{defin}
    Let $M$ be a topological space, and let $m,n\in\N^*$ and $f\colon \{1,\dots,m\}\to\{1,\dots,n\}$. Then, for all $(x^{(1)},\dots,x^{(n)})\in M^n$, write $f\cdot (x^{(1)},\dots,x^{(n)}) = (x^{(f(1))},\dots,x^{(f(m))}))\in M^m$.
\end{defin}

\begin{rem}
    If $n=m$, this defines an action of the symmetric group $\Sigma_n=\{f\colon\{1,\dots,n\}\to\{1,\dots,n\} \mid f\text{ bijective}\} $ on $M^n$ which restricts to an action on $\Conford_n(M)$.
\end{rem}

\begin{defin}
    The $n^\text{th}$ configuration space of $M$, denoted $\Conf_n(M)$ is the quotient of $\Conford_n(M)$ by the action of the permutation group $\Sigma_n$, equipped with the quotient topology. As a set, it is explicitly described as
    \begin{equation*}
        \Conf_n(M)=\{X\subset M\mid \card(X)=n\}.
    \end{equation*}
\end{defin}

We will need the following proposition in \cref{sec:PathMetricsOnRan} to understand paths in $\Ran(M)$ by studying lift through $M^n$.
\begin{prop}
\label{prop:quotient conford to conf covering}
    If $M$ is Hausdorff, then the quotient application $\Conford_n(M)\to\Conf_n(M)$ is a covering. 
\end{prop}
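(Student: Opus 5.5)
The plan is the standard argument that a free action of a finite group on a Hausdorff space is properly discontinuous, so the quotient map is a covering. Write $q\colon \Conford_n(M)\to\Conf_n(M)$ for the quotient map. By definition $\Conf_n(M)$ carries the quotient topology for the action of $\Sigma_n$. First I check that $q$ is open. For $U\subset\Conford_n(M)$ open we have $q^{-1}(q(U))=\bigcup_{\sigma\in\Sigma_n}\sigma\cdot U$. Each $\sigma$ acts by a homeomorphism of $M^n$ (it permutes coordinates) that preserves $\Conford_n(M)$, so this union is open, and hence $q(U)$ is open.

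The main step is to construct, for every $\underline{x}=(x^{(1)},\dots,x^{(n)})\in\Conford_n(M)$, an open neighbourhood $V$ of $\underline{x}$ whose translates $\sigma\cdot V$, $\sigma\in\Sigma_n$, are pairwise disjoint. The points $x^{(i)}$ are pairwise distinct and $M$ is Hausdorff, so for each pair $i\neq j$ we can choose disjoint open sets separating $x^{(i)}$ and $x^{(j)}$. Intersecting these finitely many sets gives pairwise disjoint open sets $U_1,\dots,U_n\subset M$ with $x^{(i)}\in U_i$.

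Set $V=U_1\times\dots\times U_n$. This lies inside $\Conford_n(M)$, because the $U_i$ are disjoint. Now take $\sigma\neq\Id$ and choose $i$ with $\sigma(i)\neq i$. Any point of $\sigma\cdot V$ has its $i$-th coordinate in $U_{\sigma(i)}$, which is disjoint from $U_i$, so $\sigma\cdot V\cap V=\emptyset$. By equivariance, $\sigma\cdot V\cap\tau\cdot V=\emptyset$ whenever $\sigma\neq\tau$.

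To conclude, I take $W=q(V)$, which is open by the first step. Its preimage is $q^{-1}(W)=\bigsqcup_{\sigma}\sigma\cdot V$, a disjoint union of open sets. It remains to see that $q$ restricted to each $\sigma\cdot V$ is a homeomorphism onto $W$:
\begin{itemize}
\item it is surjective by construction;
\item it is injective, since two points of $\sigma\cdot V$ with the same image differ by some $\tau\in\Sigma_n$, and disjointness of the translates forces $\tau=\Id$;
\item it is continuous, as a restriction of $q$;
\item it is open, since $q$ is open and $\sigma\cdot V$ is open.
\end{itemize}
So $W$ is evenly covered, and $q$ is a covering. I expect the only real content to be the construction of the disjoint neighbourhoods $U_i$; this is exactly where the Hausdorff hypothesis enters.
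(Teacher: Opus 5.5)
Your proof is correct and follows the same route as the paper: use the Hausdorff hypothesis to get pairwise disjoint open sets $U_i \ni x^{(i)}$, and observe that the product $U_1\times\dots\times U_n$ is disjoint from all its nontrivial $\Sigma_n$-translates. The only difference is that the paper stops at ``the action is properly discontinuous, hence the quotient is a covering'', whereas you check the evenly-covered condition by hand (openness of $q$, and injectivity and openness on each sheet $\sigma\cdot V$).
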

\begin{proof}
    Since $M$ is Hausdorff, for any $\underline{x}=(x^{(1)},\dots,x^{(n)})\in\Conford_n(M)$ we have a set $(U_i)_{i\in\{1,\dots,n\}}$ of disjoint neighborhoods of the $x^{(i)}$ in $M$. The product $U=\prod_{i=1}^n U_i$ is a neighborhood of $\underline{x}$ in $\Conford_n(M)$ such that, for any $\sigma\in\Sigma_n$, we have $\sigma\cdot U \cap U =\emptyset$. In particular, the action of $\Sigma_n$ on $\Conford_n(M)$ is properly discontinuous. Since $\Sigma_n$ acts by homeomorphism, the quotient application is a covering.
\end{proof}

\subsection{Ran space and final topology}
\label{subsec:FinalTopology}
We are now able to define the underlying set of Ran spaces as the disjoint union of the configuration spaces. But we have to define a topology on this set $\Ran(M)$. We recall two definitions commonly found in the literature: the final topology in this subsection, then the Hausdorff distance and its associated topology in the following one. 

\begin{defin}
    Let $M$ be a topological space, the \textbf{Ran space} of $M$, denoted $\Ran(M)$, has an underlying set composed of the union of the configuration spaces of $M$:
    \begin{align*}
        \Ran(M)=\bigsqcup_{i\geq 1} \Conf_i(M)=\{X\subset M\mid 0< \card (X)<\infty\} .
    \end{align*}
    Let $n\in\N^*$, the $n^\text{th}$ truncation of $\Ran(M)$, denoted $\Ran_{\leq n}(M)$ is the subset defined as
    \begin{align*}
        \Ran_{\leq n}(M)=\bigsqcup_{i= 1}^n \Conf_i(M)=\{X\subset M\mid 0< \card (X)\leq n\}.
    \end{align*}
\end{defin}

\begin{defin}
\label{def:final topology}
    Let $M$ be a topological space, and $n\in\N^*$. We define
    \begin{align*}
        \pi_n\colon M^n &\to \Ran(M)\\
        (x^{(1)},\dots,x^{(n)})&\mapsto \{x^{(1)},\dots,x^{(n)}\}.
    \end{align*}
   The final topology for the $\pi_n$ is called the \textbf{final topology} on $\Ran(M)$, and written $\tpi$.
\end{defin}

\begin{rem}
    We can give another definition of the final topology. For any $n\in\N^*$, we equip $\Ran_{\leq n}(M)$, the $n^\text{th}$ truncation of $\Ran(M)$, with the quotient topology associated to the quotient map $\pi_n^{|\Ran_{\leq n}(M)}\colon M^n\to \Ran_{\leq n}(M)$. We then remark that, for these topologies, the inclusion $\Ran_{\leq n}(M) \to \Ran_{\leq n+1}(M)$ is the inclusion of a closed subspace. The final topology is then that of the increasing union
    \begin{align}
    \label{eq:final topology increasing union of truncation}
        \left(\Ran(M),\tpi \right) = \bigcup_{n\geq 1}\Ran_{\leq n}(M). 
    \end{align}
    
    We easily check that the two definitions are equivalent. If $U\subset\Ran(M)$ is a subset of $\Ran(M)$, it is open for the final topology if and only if $\pi_n^{-1}(U)$ is open in $M^n$ for all $n\geq 1$. Besides $U\cap \Ran_{\leq n}(M)$ is open if and only if $\left(\pi_n^{|\Ran_{\leq n}(M)}\right)^{-1}(U\cap \Ran_{\leq n}(M))$ is open in $M^n$. Since $\Ran_{\leq n}(M)$ is the image of $\pi_n$, the two conditions are identical, and the topologies coincide. In particular, $(\Ran(M),\tpi)$ is directly obtained as the colimit of its truncations.
\end{rem}

\begin{rem}
\label{rem:symmetric space}
    The restriction of $\pi_n$ coincides with the quotient application $\Conford_n(M)\to\Conf_n(M)$ which corresponds to the quotient by the action of the symmetric group $\Sigma_n$ on $\Conford_n(M)$. We emphasize however that the application $\pi_n$ do \textbf{not} correspond to the quotient by the action of $\Sigma_n$ on $M^n$. Indeed, for $n=3$ and for any $x\neq y\in M$, $\pi_3(x,x,y)=\pi_3(x,y,y)$ but $(x,x,y)$ and $(x,y,y)$ are \textbf{not equivalent} under the action of $\Sigma_3$. The quotient of $M^n$ with respect to the action of $\Sigma_n$ is actually the $n^\text{th}$ symmetric product of $M$, and is not, in general, isomorphic to $\Ran_{\leq n}(M)$. See \cite{ChinenKoyamaCircle} for a discussion of the difference between those two spaces in the case where $M=S^1$. Note that Chinen and Koyama use the terminology of symmetric hyperspaces for what we call the truncations of the Ran space. We will just mention here the illustrative example that all the $n^\text{th}$~symmetric products of $S^1$ are homotopy equivalent to $S^1$, and so is their colimit. In particular, the $3^\text{rd}$ symmetric product of $S^1$ is a solid torus \cite{MortonCircleTorus}. On the contrary $\Ran_{\leq 3}(S^1)$ is homeomorphic to $S^3$, and $\Ran(S^1)$ is a contractible space. 
\end{rem}

We recall here a few known properties on $(\Ran(M),\tpi)$, that we will show can be reinterpreted as direct consequences of \cref{theo:topologie limite}. 
\begin{lem}
\label{lem:complementary of sequence open}
Let $(X_k)_{k\in\N}$ be a sequence in $\Ran(M)$. If the sequence $\card(X_k)$ is strictly increasing, then $\Ran(M)\setminus \{X_k\mid k\in\N\}$ is open in {\tpi}.
\end{lem}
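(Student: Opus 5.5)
By the description of $\tpi$ as the final topology for the maps $\pi_n$, the set $U=\Ran(M)\setminus\{X_k\mid k\in\N\}$ is open if and only if $\pi_n^{-1}(U)$ is open in $M^n$ for every $n\geq 1$. Equivalently, $F=\{X_k\mid k\in\N\}$ is closed if and only if $\pi_n^{-1}(F)$ is closed in $M^n$ for all $n$. I will check the latter.

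Fix $n\geq 1$. Since $\card(X_k)$ is a strictly increasing sequence of integers, only finitely many indices satisfy $\card(X_k)\leq n$; call them $k_1,\dots,k_m$. The image of $\pi_n$ is $\Ran_{\leq n}(M)$, so
\begin{equation*}
    \pi_n^{-1}(F)=\bigcup_{j=1}^m \pi_n^{-1}(\{X_{k_j}\}).
\end{equation*}
This is a finite union, so it suffices to show that each fibre $\pi_n^{-1}(\{X\})$ is closed in $M^n$ for a fixed finite set $X$.

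For this step I use that $M$ is a metric space, hence Hausdorff (this is the paper's standing assumption). The fibre is
\begin{equation*}
    \pi_n^{-1}(\{X\})=\{\underline{x}\in M^n\mid \{x^{(1)},\dots,x^{(n)}\}=X\}.
\end{equation*}
It is a finite subset of $M^n$, since every coordinate lies in the finite set $X$. A finite subset of the Hausdorff space $M^n$ is closed, so the fibre is closed.

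Putting these together, $\pi_n^{-1}(F)$ is closed for every $n$, hence $F$ is $\tpi$-closed and $U$ is open. The only point needing care is the reduction to finitely many indices per truncation, which is exactly where the strict monotonicity of $\card(X_k)$ is used. Everything else is routine, so I expect no real obstacle.
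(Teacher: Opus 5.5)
Your proof is correct and follows essentially the same argument as the paper: for each $n$ only finitely many $X_k$ lie in $\Ran_{\leq n}(M)$, and $\pi_n$ has finite fibres, so $\pi_n^{-1}(\{X_k\mid k\in\N\})$ is a finite, hence closed, subset of $M^n$. Your explicit appeal to $M$ being Hausdorff, so that finite sets are closed, spells out a step the paper leaves implicit.
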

\begin{proof}
    Let $K= \{X_k\mid k\in\N\}$ and $U=\Ran(M)\setminus K$. For $n\geq 1$, we have $\pi_n^{-1}(U)=M^n\setminus\pi_n^{-1}(K)=M^n\setminus\pi_n^{-1}(K\cap \Ran_{\leq n}(M))$. Since the cardinality of $X_k$ is increasing, $K$ contains finitely many configurations of cardinality less than $n$. Since $\pi_n$ has finite fibers, $\pi_n^{-1}(K)$ is a finite subset of $M^n$, and thus $\pi_n^{-1}(U)$ is open for all $n\geq 1$. This proves that $U$ is an open set for {\tpi}.
\end{proof}

\begin{prop}
\label{prop:unbounded cardinal diverges in final}
    Let $(X_k)_{n\in \N}$ be a sequence in $\Ran(M)$. Assume that $\card(X_k)$ is unbounded for $k\in \N$, then the sequence $(X_k)$ does not converge for the final topology.
\end{prop}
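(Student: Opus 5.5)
Suppose for contradiction that $(X_k)$ converges in $\tpi$ to some $Y\in\Ran(M)$; I will derive a contradiction using \cref{lem:complementary of sequence open}.

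Since $\card(X_k)$ is unbounded, I extract a subsequence $(X_{k_j})_{j\in\N}$ along which $\card(X_{k_j})$ is strictly increasing. To do this, I choose $k_0$ with $\card(X_{k_0})>\card(Y)$. Inductively, I then choose $k_{j+1}>k_j$ with $\card(X_{k_{j+1}})>\card(X_{k_j})$, which is possible by unboundedness. A subsequence of a convergent sequence converges to the same limit, so $X_{k_j}\to Y$ in $\tpi$.

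Let $K=\{X_{k_j}\mid j\in\N\}$. By \cref{lem:complementary of sequence open}, the set $U=\Ran(M)\setminus K$ is open in $\tpi$. Every element of $K$ has cardinality strictly greater than $\card(Y)$, so $Y\notin K$ and hence $Y\in U$. Thus $U$ is an open neighbourhood of $Y$. By convergence, $X_{k_j}\in U$ for all $j$ large enough. But every $X_{k_j}$ lies in $K$, which is disjoint from $U$. This is a contradiction, so $(X_k)$ does not converge.

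There is no real obstacle here. The only point needing care is to arrange that the limit $Y$ is not itself one of the terms removed. This is handled by starting the subsequence at cardinalities above $\card(Y)$; alternatively one could simply drop the at most one term equal to $Y$.
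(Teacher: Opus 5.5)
Your proof is correct and follows essentially the same route as the paper: extract a subsequence of strictly increasing cardinality, apply \cref{lem:complementary of sequence open} to obtain an open neighbourhood of the limit that avoids every term of the subsequence, and derive a contradiction. Starting the subsequence above $\card(Y)$ is a clean, explicit justification of the paper's brief remark that one may assume the limit is not among the removed terms.
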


\begin{proof}
    By contradiction, assume that $(X_k)_{k\in \N}$ converges toward some limit $X$. We may assume that $\card(X_k)_{k\in \N}$ is a strictly increasing sequence by extracting an appropriate subsequence. Similarly, we may assume that $X\not\in K=\{X_k\mid k\in \N\}$. Then, by \cref{lem:complementary of sequence open} $U=\Ran(M)\setminus K$ is an open set for the final topology.  In particular, $U$ is an open neighborhood of $X$, but $X_k\not\in U$ for all $k\in \N$ which contradicts the assumption that $X_k$ converges to $X$.
\end{proof}

\begin{prop}
\label{prop:final not metrizable}
    Let $M$ be a metric space. Let us suppose that there exists $x\in M$ such that for any $\epsilon>0$ the ball $B(x,\epsilon)$ contains infinitely many points, i.e. $x$ is an accumulation point. Then, the space $(\Ran(M),\tpi)$ is not first countable. In particular, $(\Ran(M),\tpi)$ is not metrizable.
\end{prop}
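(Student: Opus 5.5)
We argue by contradiction with a diagonal argument in the spirit of \cref{prop:unbounded cardinal diverges in final}, using \cref{lem:complementary of sequence open} as the source of open sets. Suppose $(\Ran(M),\tpi)$ is first countable at the point $\{x\}$. Then there is a countable neighborhood basis $(U_k)_{k\geq 1}$ of $\{x\}$, which we may assume to be decreasing by replacing $U_k$ with $U_1\cap\dots\cap U_k$.

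The first step is to show that each $U_k$ contains configurations of arbitrarily large cardinality near $x$. Since $U_k$ is a neighborhood of $\{x\}$ for $\tpi$, the set $\pi_n^{-1}(U_k)$ is a neighborhood in $M^n$ of the diagonal point $(x,\dots,x)$. Hence it contains a product $B(x,\epsilon_{n,k})^n$ for some $\epsilon_{n,k}>0$. Because $x$ is an accumulation point, the ball $B(x,\epsilon_{n,k})$ contains at least $n$ distinct points $y_1,\dots,y_n$. Thus $\pi_n(y_1,\dots,y_n)=\{y_1,\dots,y_n\}$ lies in $U_k$ and has cardinality exactly $n$.

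The second step is the diagonal choice. For each $k$ we pick $X_k\in U_k$ with $\card(X_k)=k+1$; this is possible by the first step with $n=k+1$. The cardinalities $\card(X_k)\geq 2$ are then strictly increasing, and in particular $\{x\}\neq X_k$ for all $k$. By \cref{lem:complementary of sequence open}, the set $V=\Ran(M)\setminus\{X_k\mid k\in\N\}$ is open for $\tpi$, and it contains $\{x\}$. So $V$ is a neighborhood of $\{x\}$, and therefore some $U_k\subset V$. This contradicts $X_k\in U_k$.

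Hence $\tpi$ is not first countable at $\{x\}$. Since every metrizable space is first countable, $(\Ran(M),\tpi)$ is not metrizable. The only mildly delicate point is the first step: we need the preimage in $M^n$ to contain a full product ball around the diagonal point, which uses only that $\dsum$ induces the product topology, and that these $n$ points can be chosen distinct, which is exactly the accumulation hypothesis.
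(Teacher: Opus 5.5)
Your proof is correct and follows essentially the same route as the paper. In both, for each $k$ one picks a configuration $X_k\in U_k$ of strictly increasing cardinality near $x$; the paper uses cardinality $k+2$ and a $\dsum$-ball where you use $k+1$ and a product of balls. Then \cref{lem:complementary of sequence open} gives an open neighborhood of $\{x\}$ containing none of the $X_k$, hence containing none of the $U_k$. Your reduction to a decreasing basis is harmless but unnecessary.
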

\begin{proof}
    Let us fix $(U_k)_{k\in \N}$ a countable family of neighborhood of $\{x\}$ in $(\Ran(M),\tpi)$ and let us show that it cannot be a basis of neighborhood. For any $k\in\N$, the preimage $\pi_{k+2}^{-1}(U_k)\subset M^{k+2}$ is an open neighborhood of $\underline{x_k}=(x,\dots,x)$ in $M^{k+2}$. Therefore, there exists $\epsilon>0$ such that $\Bsum(\underline{x_k},\epsilon)\subset\pi_{k+2}^{-1}(U_k)$. By hypothesis, we can find $k+2$ distinct elements $y^{(1)},\dots,y^{(k+2)}$ in $B(x,\frac{\epsilon}{k+2})\subset M$, so that the $k+2$-tuple $(y^{(1)},\dots,y^{(k+2)})\in \Bsum(\underline{x_k},\epsilon)\subset\pi_{k+2}^{-1}(U_k)$. We thus have constructed a configuration $X_k=\pi_{k+2}(y^{(1)},\dots,y^{(k+2)})\in U_k$ of cardinality $\card(X_k)=k+2$. By \cref{lem:complementary of sequence open} the set $U=\Ran(M)\setminus \{X_k\mid k\in\N\}$ is open in $(\Ran(M),\tpi)$. Moreover, since $\card(X_k)=k+2>1$ for any $k\in\N$, we have $\{x\}\in U$, so that $U$ is an open neighborhood of $\{x\}$. But since, for any $k\in\N$, $X_k\not\in U$ then $U_k\not\subset U$. Therefore, $(U_k)_{k\in \N}$ cannot be a basis of neighborhood.
\end{proof}

Finally, the following property will be useful to demonstrate the continuity of maps defined on $\Ran(M)$.
\begin{prop}
\label{prop:contiunuite sur les troncations}
    Let $M,B$ be two spaces, $A\subset\Ran(M)$ a clopen of $(\Ran(M),\tpi)$ equipped with the subspace topology, and  $f\colon A\to B$ an application. Then $f$ is continuous if and only if its restrictions $f_{\leq n}\colon A_{\leq n}\to B$ to the truncations $A_{\leq n}=\{X\in A\mid \card(X)\leq n\}$ are continuous for any $n\in\N$.
\end{prop}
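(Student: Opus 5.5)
The forward direction is immediate. Each truncation $\Ran_{\leq n}(M)$ carries the subspace topology induced by $\tpi$, because \eqref{eq:final topology increasing union of truncation} exhibits $(\Ran(M),\tpi)$ as the increasing union of the closed subspaces $\Ran_{\leq n}(M)$. Hence $A_{\leq n}=A\cap\Ran_{\leq n}(M)$, with its topology as a subspace of $A$, is a subspace of $\Ran(M)$. The restriction of a continuous map to a subspace is continuous, so each $f_{\leq n}$ is continuous whenever $f$ is.

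For the converse, assume every $f_{\leq n}$ is continuous and let $V\subset B$ be open. I want to show that $f^{-1}(V)$ is open in $A$. Since $A$ carries the subspace topology, I will in fact show the stronger statement that $f^{-1}(V)$ is open in $(\Ran(M),\tpi)$; this is where the hypothesis that $A$ is clopen is used.

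By the definition of the final topology, it suffices to check that $\pi_n^{-1}(f^{-1}(V))$ is open in $M^n$ for every $n\geq 1$. Since $\pi_n$ takes values in $\Ran_{\leq n}(M)$, this preimage equals $\pi_n^{-1}\bigl(f_{\leq n}^{-1}(V)\bigr)$. By continuity of $f_{\leq n}$, the set $f_{\leq n}^{-1}(V)$ is open in $A_{\leq n}$. Therefore $f_{\leq n}^{-1}(V)=W\cap A_{\leq n}=W\cap A\cap \Ran_{\leq n}(M)$ for some open $W\subset\Ran(M)$.

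Taking preimages under $\pi_n$ gives
\begin{equation*}
\pi_n^{-1}\bigl(f^{-1}(V)\bigr)=\pi_n^{-1}(W)\cap\pi_n^{-1}(A).
\end{equation*}
The set $\pi_n^{-1}(W)$ is open because $\pi_n$ is continuous for $\tpi$, and $\pi_n^{-1}(A)$ is open because $A$ is open in $\tpi$. Their intersection is therefore open in $M^n$. As this holds for every $n$, the set $f^{-1}(V)$ is open in $\tpi$, hence open in $A$.

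The only delicate point is the role of the clopen hypothesis. The argument above uses openness of $A$, in order to turn a relatively open subset of $A_{\leq n}$ into an open subset of $M^n$. Closedness is not needed for this direction, but it is harmless, and it guarantees that each $A_{\leq n}$ is closed in $A$. In particular, openness of $A$ is what avoids having to compare the subspace topology on $A$ with the colimit topology of the $A_{\leq n}$ in general.
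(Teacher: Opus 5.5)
Your proof is correct for the statement as literally written. With ``clopen'' in its usual sense you only use that $A$ is $\tpi$-open. The identity $\pi_n^{-1}(f^{-1}(V))=\pi_n^{-1}(W)\cap\pi_n^{-1}(A)$ then even gives the stronger conclusion that $f^{-1}(V)$ is open in all of $(\Ran(M),\tpi)$.

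The paper argues differently and in greater generality. It writes $A=U\cap F$ with $U$ open and $F$ closed, and notes that $f_{\leq n}^{-1}(V)=(W\cap U)\cap F\cap\Ran_{\leq n}(M)$ is relatively open in $F\cap\Ran_{\leq n}(M)$. Hence $(F\setminus f^{-1}(V))\cap\Ran_{\leq n}(M)$ is closed in every truncation, so $F\setminus f^{-1}(V)$ is $\tpi$-closed. Therefore $f^{-1}(V)$ is open in $F$, and so in $A$.

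So the paper's ``clopen'' really means ``locally closed'', and that is the case it needs. In \cref{prop:strat homeo radial cone ball W} the proposition is applied to $\Bchi(X,\epsilon)\cap C_{\underline{x}}$, which is not $\tpi$-open. Indeed, every $\tpi$-neighbourhood of $X$ contains some $\pi_n(\underline{y})$ with $\underline{y}\neq\underline{x}$ close to $\underline{x}$, and $\MidX(\pi_n(\underline{y}))=\underline{y}\neq\underline{x}$. For such an $A$ your step ``$\pi_n^{-1}(A)$ is open'' fails, and $f^{-1}(V)$ is in general not open in $\Ran(M)$ (take $V=B$).

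Your route is shorter and gives a stronger conclusion in the genuinely open case. The paper's relative, closed-complement argument is what covers the locally closed sets used later.
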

\begin{proof}
    There is nothing to prove for the "only if" case. Let us suppose that the restrictions $f_{\leq n}$ are continuous for all $n\in\N^*$. First, let us write $A=U\cap F$, with $U\subset \Ran(M)$ open and $F\subset\Ran(M)$ closed. Let $V\subset B$ and let us show that $f^{-1}(V)\subset A$ is open. Let $n\geq 1$. Then $f_{n}^{-1}(V)\subset A_{\leq n}$ is open in $A_{\leq n}$. There exists $W\subset \Ran(M)$ open such that $f_{n}^{-1}(V)=W\cap A_{\leq n}=W\cap U\cap F \cap \Ran_{\leq n}(M)$, so that $f_{n}^{-1}(V)$ is open in $F \cap \Ran_{\leq n}(M)$. Therefore $F \cap \Ran_{\leq n}(M)\setminus f_{n}^{-1}(V)$ is closed in $F \cap \Ran_{\leq n}(M)$ and thus in $\Ran_{\leq n}(M)$. But since $F \cap \Ran_{\leq n}(M)\setminus f_{n}^{-1}(V) = \left(F\setminus f^{-1}(V)\right) \cap \Ran_{\leq n}(M)$, this proves that $\left(F\setminus f^{-1}(V)\right) \cap \Ran_{\leq n}(M)$ is closed in $\Ran_{\leq n}(M)$ for all $n\geq 1$. Since the final topology is also induced by the increasing union in \cref{eq:final topology increasing union of truncation}, this proves that $F\setminus f^{-1}(V)$ is closed in $\Ran(M)$ and thus it is also closed in $F$. So that finally $f^{-1}(V)$ is open in $F$ and therefore it is also open in $A$.
\end{proof}

\subsection{Ran spaces and Hausdorff topology}
\label{subsec:HausdorffTopology}
The Hausdorff topology on $\Ran(M)$ can be defined for any topological space $M$ \cite[section 5.5.1]{HigherAlgebra}. Nevertheless, when $M$ is a metric space, the Hausdorff topology is induced by a distance, $\dH$, called the Hausdorff distance. Since we mainly focus on distances, we will only give the definition in that case, which is considerably simpler. 

\begin{defin}
\label{def:Hausdorff}
    The Hausdorff distance on $\Ran(M)$, denoted $\dH$ is defined as
    \begin{align*}
        \dH\colon \Ran(M)\times\Ran(M)&\to \R_+\\
        (X,Y)&\mapsto \max\left\{ \max_{x\in X}\{\dm(x,Y)\} , \max_{y\in Y}\{\dm(X,y)\} \right\},
    \end{align*}
    where $\dm(x,Y)=\min_{y\in Y}\{\dm(x,y)\}$. We call Hausdorff topology, denoted $\tH$, the topology induced by the Hausdorff distance on $\Ran(M)$.
    We will write, for any $X\in\Ran(M)$, $r>0$
    \begin{align*}
        \BH(X,r)=\left\{Y\in\Ran(M)\mid \dH(X,Y)<r \right\}.
    \end{align*}
\end{defin}

\begin{rem}
\label{rem:haudorff on PM}
    The Hausdorff distance can be defined more generally on the power set $\cP(M)$, although it is only a pseudo-distance in this context. We get a distance by restraining to the compact sets of $M$. We will write $\Comp(M)$ the set of non-empty compact subsets of $M$. Note that, if $M$ is a complete metric space, then $(\Comp(M),\dH)$ is a complete metric space \cite[Theorem 2.5.2, p72]{Edgar-fractal-geometry}. This is the subject of the following proposition.
\end{rem}

\begin{prop}
\label{prop:compact sets completion of ran hausdorff}
    If $M$ is a complete metric space, then $(\Comp(M),\dH)$ is the completion of $(\Ran(M),\dH)$.
\end{prop}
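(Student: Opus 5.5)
The plan has two ingredients. First, $(\Comp(M),\dH)$ is complete; this is the cited result from \cite[Theorem 2.5.2]{Edgar-fractal-geometry} recalled in \cref{rem:haudorff on PM}. Second, $\Ran(M)$ is dense in $\Comp(M)$ for $\dH$. Since $\dH$ on $\Ran(M)$ is the restriction of $\dH$ on $\Comp(M)$, the inclusion $\Ran(M)\subset\Comp(M)$ is an isometric embedding. A complete metric space containing an isometric copy of $(\Ran(M),\dH)$ as a dense subset is, by uniqueness of the completion up to isometry, the completion. So everything reduces to the density statement.

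For density, fix $K\in\Comp(M)$ and $\epsilon>0$. The open balls $B(x,\epsilon)$, $x\in K$, cover $K$. By compactness there are finitely many points $x^{(1)},\dots,x^{(n)}\in K$ with $K\subset\bigcup_i B(x^{(i)},\epsilon)$, and $n\geq 1$ because $K$ is non-empty. Set $X=\{x^{(1)},\dots,x^{(n)}\}\in\Ran(M)$ and check the two terms in the Hausdorff distance.
\begin{itemize}
\item Every $x^{(i)}$ lies in $K$, so $d(x^{(i)},K)=0$.
\item Every $y\in K$ lies in some ball $B(x^{(i)},\epsilon)$, so $d(y,X)<\epsilon$. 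Since $K$ is compact and $y\mapsto d(y,X)$ is continuous, the supremum over $y\in K$ is attained and is therefore strictly less than $\epsilon$.
\end{itemize}
Hence $\dH(X,K)<\epsilon$, which gives density.

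No step here is a genuine obstacle. The only points to be careful about are bookkeeping: on $\Comp(M)$ the inner $\min$/$\max$ in the definition of $\dH$ are attained by compactness, so the formula in \cref{def:Hausdorff} extends verbatim; and completeness of $\Comp(M)$ is taken from the cited reference rather than reproved. If I had to reprove completeness, I would take a Cauchy sequence $(K_n)$, define $K$ as the set of limits of sequences $x_n\in K_n$ (equivalently $\bigcap_n\overline{\bigcup_{m\geq n}K_m}$), and show $K$ is totally bounded and closed, hence compact since $M$ is complete, and that $K_n\to K$. That argument is the standard one and is the only nontrivial part of the statement.
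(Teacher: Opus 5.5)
Your proof is correct and is essentially the paper's argument: completeness of $(\Comp(M),\dH)$ from Edgar's Theorem 2.5.2, plus density of $\Ran(M)$ obtained from a finite $\epsilon$-net of $K$. The only differences are cosmetic. The paper phrases the first half as ``every $\dH$-Cauchy sequence in $\Ran(M)$ converges in $\Comp(M)$'' and lets its net points lie near $K$ rather than in it, whereas you choose the points in $K$ and invoke uniqueness of completions directly.
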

\begin{proof}
    Let $(X_k)_{k\in\N}$ be a Cauchy sequence in $(\Ran(M),\dH)$. 
    As shown in \cite[Theorem 2.5.2, p72]{Edgar-fractal-geometry}, the set $X=\{ x : \text{ there is a sequence } (x_k) \text{ with } x_k \in X_k \text{ and } x_k \to x \}$ is a non-empty compact subset of $M$ such that $\dH(X_k,X)$ goes to zero. Which shows that the completion of $(\Ran(M),\dH)$ is a subspace of $(\Comp(M),\dH)$.
    
    Let $K\in\Comp(M)$. Since it is compact, it is precompact, so that for any $k\geq 1$ there exists $X_k\in\Ran(M)$ such that $K\subset \bigcup_{x\in X_k} B(x,\frac{1}{k})$, and $K\cap B(x,\frac{1}{k}) \neq \emptyset$ for any $x\in X_k$. We thus have $\dH(X_k,K)\leq \frac{1}{k}$, so that $(X_k)$ converges to $K$, and $(\Comp(M),\dH)$ is a subspace of the completion of $(\Ran(M),\dH)$.
\end{proof}

Let us compare the two topologies that we just defined. 
\begin{prop}
\label{prop:Final_refines_Hausdorff}
    The final topology is finer that the Hausdorff topology.
\end{prop}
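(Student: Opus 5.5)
To show that $\tpi$ refines $\tH$, it suffices to show that the identity map $(\Ran(M),\tpi)\to(\Ran(M),\tH)$ is continuous. Equivalently, every $\dH$-open set $U$ is open in $\tpi$. Since $\tpi$ is the final topology for the maps $\pi_n\colon M^n\to\Ran(M)$, this reduces to checking that each composite $\pi_n\colon (M^n,\dsum)\to(\Ran(M),\dH)$ is continuous. Then $\pi_n^{-1}(U)$ is open in $M^n$ for all $n$, which is exactly the definition of $U$ being open in $\tpi$.

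The key step is a Lipschitz estimate: for $\underline{x},\underline{y}\in M^n$,
\begin{equation*}
    \dH(\pi_n(\underline{x}),\pi_n(\underline{y}))\leq \max_{i} d(x^{(i)},y^{(i)})\leq \dsum(\underline{x},\underline{y}).
\end{equation*}
To prove it, take a point $x$ of $X=\pi_n(\underline{x})$. It equals some $x^{(i)}$, and $y^{(i)}\in Y=\pi_n(\underline{y})$, so $d(x,Y)\leq d(x^{(i)},y^{(i)})$. The same argument applies symmetrically to points of $Y$. Taking maxima in \cref{def:Hausdorff} gives the bound. Hence $\pi_n$ is $1$-Lipschitz, and in particular continuous.

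There is no real obstacle here. The only point needing care is that the index $i$ witnessing $x=x^{(i)}$ may not be unique, because of repetitions in $\underline{x}$. Any choice of $i$ works, so this causes no difficulty. One could also phrase the argument through the truncations: by the remark following \cref{def:final topology}, $\tpi$ is the colimit topology of the truncations $\Ran_{\leq n}(M)$, each carrying the quotient topology. Continuity on each truncation then follows from the same estimate via the universal property of the quotient. The direct argument with $\pi_n$ already suffices, however.
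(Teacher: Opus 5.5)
Your proof is correct and is essentially the paper's argument. The paper fixes a Hausdorff ball $\BH(X,r)$ and shows that $\pi_n^{-1}(\BH(X,r))$ is open, using the same coordinatewise estimate: if $d(y^{(i)},z^{(i)})<\epsilon$ for all $i$, then $\dH(\pi_n(\underline{y}),\pi_n(\underline{z}))<\epsilon$. Your formulation, which says $\pi_n\colon (M^n,\dsum)\to(\Ran(M),\dH)$ is $1$-Lipschitz and hence continuous, packages that same step a little more cleanly.
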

\begin{proof}
Let $X\in\Ran(M)$ and $r>0$. Let us show that $\BH(X,r)$ is open in $\tpi$. To this end, let us fix $n\in\N^*$ and let us show that $\pi^{-1}_n(\BH(X,r))$ is open in $M^n$. Let $\underline{y}=(y^{(1)},\dots,y^{(n)})\in \pi^{-1}_n(\BH(X,r))$, and $Y=\pi_n(\underline{y})$. Let us define $\epsilon = r-\dH(X,Y)>0$ and let us show that $B(y^{(1)},\epsilon)\times\dots\times B(y^{(n)},\epsilon)$ is an open neighborhood of $\underline{y}$ in $\pi^{-1}_n(\BH(X,r))$. Let $\underline{z}=(z^{(1)},\dots,z^{(n)})\in B(y^{(1)},\epsilon)\times\dots\times B(y^{(n)},\epsilon)$. For any $i\in\{1,\dots,n\}$, we have 
\begin{align*}
    \dm(y^{(i)},Z)\leq \dm(y^{(i)},z^{(i)})<\epsilon,
\end{align*}
and
\begin{align*}
    \dm(z^{(i)},Y)\leq \dm(z^{(i)},y^{(i)})<\epsilon.
\end{align*}
So that 
\begin{align*}
    \dH(Y,Z)=\max\{\max_{y\in Y}\{\dm(y,Z)\}, \max_{z\in Z}\{\dm(Y,z)\}\} <\epsilon
\end{align*}
and finally 
\begin{align*}
    \dH(X,Z)\leq \dH(X,Y)+\dH(Y,Z) <\dH(X,Y)+\epsilon <r,
\end{align*}
which proves that $\underline{z}\in \pi^{-1}_n(\BH(X,r))$. We thus found an open neighborhood of $\underline{y}$ in $\pi^{-1}_n(\BH(X,r))$, i.e. $\pi^{-1}_n(\BH(X,r))$ is open in $M^n$ for any $n\in\N^*$, and thus $\BH(X,r)$ is open in \tpi.

In particular, we showed that the identity $\Id\colon (\Ran(M),\tpi)\to(\Ran(M),\tH)$ is continuous.

\end{proof}
\begin{rem}
\label{rem:hausdorff and final differ}
As soon as $M$ has an accumulation point, we showed in \cref{prop:final not metrizable} that the final topology is not first countable, in particular it is not metrizable. Therefore, the two topologies differ, and the final topology is strictly finer that the Hausdorff topology. Besides, note that the proof of \cref{prop:final not metrizable} defines a sequence of increasing cardinality in $\Ran(M)$, which cannot converge for the final topology, by \cref{prop:unbounded cardinal diverges in final}. Nevertheless, it does converge to a singleton for the Hausdorff topology. This important distinction between the final topology and the metric topologies that we can define on $\Ran(M)$ will be at the heart of \cref{sec:completeness}.
\end{rem}

\subsection{The canonical stratification}
\label{sec:Prelim_Strat}
Now that we have topologies on $\Ran(M)$ we can discuss continuous map to and from Ran spaces. The first one we are interested in is the canonical stratification on $\Ran(M)$ given by the cardinality function. Let us first recall what we mean by a stratification.

\begin{defin}
\label{defin:Strat_Space}
Let $P$ be a partially ordered set. It can be equipped with a topology, usually called the Alexandrov topology, whose opens are exactly those sets $U\subset P$ satisfying the property
\begin{equation*}
    \left(p\in U \text{ and } q\geq p\right) \Rightarrow q\in U
\end{equation*}
A stratification on a topological space $T$ is the data of a poset $P$ together with a map $f\colon T\to P$ which is continuous for the Alexandrov topology. Given such a structure, we may say that $T$ is a stratified space, or that $T$ is stratified over the poset $P$.
Given any $p\in P$, the preimage $f^{-1}(p)\subset T$ will be called the $p$ stratum of $T$.
\end{defin}

In the case of $\Ran(M)$ the stratification we will be interested in is given by the poset of positive integers, $\N_{\geq 1}$, together with the map $\card\colon \Ran(M)\to \N_{\geq 1}$ sending a configuration $X\subset M$ to its cardinality. In that example, the open sets of $\N_{\geq 1}$ can be seen to be exactly the sets of the form $\N_{\geq p}=\{n\in \N\mid n\geq p\}$, for all $p\in \N_{\geq 1}$. 

We may now precisely state in what way $\Ran(M)$ is a stratified space.

\begin{prop}
    The cardinality map $\card\colon \Ran(M)\to \N_{\geq 1}$ is continuous on $(\Ran(M),\tH)$ and thus also on $(\Ran(M),\tpi)$. In particular, they both are stratified spaces.
\end{prop}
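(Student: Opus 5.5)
To show that $\card\colon (\Ran(M),\tH)\to \N_{\geq 1}$ is continuous for the Alexandrov topology, we check that preimages of the basic open sets are open. The open sets of $\N_{\geq 1}$ are exactly the sets $\N_{\geq p}$, so it suffices to show that
\begin{equation*}
\card^{-1}(\N_{\geq p})=\{X\in\Ran(M)\mid \card(X)\geq p\}
\end{equation*}
is open in $\tH$ for every $p\geq 1$. In other words, cardinality is lower semicontinuous: configurations near $X$ have at least as many points as $X$.

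Fix $X$ with $\card(X)=n\geq p$. If $n=1$ there is nothing to show, since then $p=1$ and the preimage is all of $\Ran(M)$. Otherwise set
\begin{equation*}
r=\tfrac12\min\{d(x,x')\mid x\neq x'\in X\}>0.
\end{equation*}
The balls $B(x,r)$ for $x\in X$ are pairwise disjoint by the triangle inequality. Let $Y\in\BH(X,r)$, so that $\dH(X,Y)<r$. In particular, for each $x\in X$ we have $d(x,Y)<r$, so there is some $y_x\in Y$ with $y_x\in B(x,r)$. Because the balls are disjoint, the map $x\mapsto y_x$ is injective. Hence $\card(Y)\geq n\geq p$, and $\BH(X,r)\subset\card^{-1}(\N_{\geq p})$. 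This proves that $\card$ is continuous on $(\Ran(M),\tH)$.

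Continuity for $\tpi$ then follows immediately from \cref{prop:Final_refines_Hausdorff}: the identity $(\Ran(M),\tpi)\to(\Ran(M),\tH)$ is continuous, and composing it with $\card$ gives a continuous map. By \cref{defin:Strat_Space}, both $(\Ran(M),\tH)$ and $(\Ran(M),\tpi)$ are therefore stratified over $\N_{\geq 1}$.

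The only step needing care is the choice of $r$ that separates the points of $X$; the rest is immediate.
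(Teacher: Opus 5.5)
Your proof is correct and follows essentially the same route as the paper: both fix $X$ with at least two points, take half the minimal separation of the points of $X$ as the radius, and show that the resulting Hausdorff ball stays in $\card^{-1}(\N_{\geq p})$ because nearby points $y_x$ are forced to be distinct. You phrase that distinctness via disjointness of the balls $B(x,r)$ rather than the paper's explicit triangle-inequality estimate $d(x',y_x)>\epsilon$, and you make explicit the deduction for $\tpi$ from \cref{prop:Final_refines_Hausdorff}, which the paper leaves implicit.
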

\begin{proof}
    Let $n\in\N^*$ and let us show that $\card^{-1}(\N_{\geq n})=\Ran(M)\setminus\Ran_{\leq n-1}(M)$ is open for $\tH$. If $n=1$, there is nothing to prove, otherwise, let $X\in\Ran(M)\setminus\Ran_{\leq n-1}(M)$. We have $\card(X)\geq n >1$, so that we can define $\epsilon=\frac{1}{2}\min_{x\neq y \in X}\{\dm(x,y)\}>0$. Let us show that $\BH(X,\epsilon)\subset \Ran(M)\setminus\Ran_{\leq n-1}(M)$. Let $Y\in\BH(X,\epsilon)$. For any $x\in X$, there exists $y_x\in Y$ such that
    \begin{align*}
        \dm(x,y_x)=\dm(x,Y)\leq \dH(X,Y) <\epsilon.
    \end{align*}
    Moreover, if $x'\neq x \in X$, we have
    \begin{align*}
        \dm(x',y_x)\geq \dm(x',x) - \dm(x,y_x)> \min_{z\neq z'\in X}\{\dm(z,z')\} - \epsilon =\epsilon,
    \end{align*}
    and thus $y_x\neq y_{x'}$. As a result $\card(Y)\geq \card(X)$, so that $Y\in\Ran(M)\setminus\Ran_{\leq n-1}(M)$ and thus $\BH(X,\epsilon)\subset \Ran(M)\setminus\Ran_{\leq n-1}(M)$, which concludes this proof.
\end{proof}

\begin{rem}
    Since the weighted topologies we will introduce in \cref{sec:Combinatorial_Distance} also refine the Hausdorff topology, it will also be the case that the corresponding $(\Ran(M),\tom)$ are stratified spaces.
\end{rem}

\begin{rem}
    The configuration spaces $\Conf_n(M)$ are the strata of the stratified space $\Ran(M)\to\N_{\geq 1}$. Note that the topology on these strata is the same whether we consider the topology $\tH$ or $\tpi$ on $\Ran(M)$.
\end{rem}

We also deduce from the previous proposition the following result
\begin{corol}
\label{cor:truncations closed}
For any $n\geq 1$, the truncation $\Ran_{\leq n}(M)$ is closed in $\Ran(M)$.
\end{corol}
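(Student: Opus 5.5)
The plan is to read the statement off the previous proposition, using the Hausdorff topology first and then passing to the final topology. By definition,
\begin{equation*}
    \Ran(M)\setminus\Ran_{\leq n}(M)=\{X\in\Ran(M)\mid \card(X)\geq n+1\}=\card^{-1}(\N_{\geq n+1}).
\end{equation*}
The set $\N_{\geq n+1}$ is open for the Alexandrov topology on $\N_{\geq 1}$, since it is upward closed.

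By the previous proposition, $\card\colon(\Ran(M),\tH)\to\N_{\geq 1}$ is continuous. Hence $\card^{-1}(\N_{\geq n+1})$ is open in $\tH$, and its complement $\Ran_{\leq n}(M)$ is closed in $(\Ran(M),\tH)$.

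For the final topology, \cref{prop:Final_refines_Hausdorff} shows that $\tpi$ refines $\tH$. So every $\tH$-closed set is also $\tpi$-closed, and $\Ran_{\leq n}(M)$ is closed in $(\Ran(M),\tpi)$ as well. Once the weighted topologies are shown to refine $\tH$, the same argument will apply to $\tom$.

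There is no real obstacle here. The only point requiring care is to note that the claim holds simultaneously for every topology finer than $\tH$, since all of them are considered on $\Ran(M)$.
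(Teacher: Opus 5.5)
Your proposal is correct and follows essentially the paper's route. The paper deduces this corollary directly from the preceding proposition: $\card$ is continuous for the Alexandrov topology on $(\Ran(M),\tH)$, hence on $(\Ran(M),\tpi)$ because $\tpi$ refines $\tH$. So $\Ran_{\leq n}(M)$ is closed as the complement of the open set $\card^{-1}(\N_{\geq n+1})$, exactly as you argue.
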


We will discuss in much more detail the properties of those stratifications in \cref{sec:conicity}, but for now, let us detail some properties of the truncations.

\subsection{Truncations of the Ran space}
\label{subsec:prelim_truncations}
Even though the Hausdorff and final topologies are quite different in general, they happen to coincide on the truncations of $\Ran(M)$, which is given by the following proposition.

\begin{prop}
\label{prop:topologie_troncation}
    Let $n\in\N^*$. The subspace topologies induced by $\tH$ and $\tpi$ on $\Ran_{\leq n}(M)$ coincide.
\end{prop}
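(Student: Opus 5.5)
We need to show that the subspace topology of $\tH$ on $\Ran_{\leq n}(M)$ coincides with that of $\tpi$. Recall that the latter is the quotient topology from $\pi_n\colon M^n\to\Ran_{\leq n}(M)$, since $\Ran_{\leq n}(M)$ is closed in $(\Ran(M),\tpi)$. One inclusion is free: by \cref{prop:Final_refines_Hausdorff} the identity $(\Ran(M),\tpi)\to(\Ran(M),\tH)$ is continuous, so it stays continuous after restricting to $\Ran_{\leq n}(M)$. Thus the induced Hausdorff topology is coarser. The real content is the converse: every set $U\subset\Ran_{\leq n}(M)$ with $\pi_n^{-1}(U)$ open in $M^n$ must be open for $\dH$ restricted to $\Ran_{\leq n}(M)$.

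For the converse, I will work sequentially, since the restricted $\dH$ is a metric. Take $X\in U$ and suppose, for contradiction, that there are $Y_k\in\Ran_{\leq n}(M)\setminus U$ with $\dH(Y_k,X)\to 0$. Write $X=\{x_1,\dots,x_m\}$ with $m\leq n$, and set $\delta=\frac13\min_{i\neq j}d(x_i,x_j)$, or $\delta=\infty$ if $m=1$. For $k$ large we have $\dH(Y_k,X)<\delta$. Each $y\in Y_k$ then lies within $\dH(Y_k,X)$ of a unique $x_i$, and each $x_i$ has some point of $Y_k$ within that distance. So $Y_k$ partitions into nonempty clusters $C_{k,1},\dots,C_{k,m}$, with $C_{k,i}\subset B(x_i,\dH(Y_k,X))$ and $\sum_i\card(C_{k,i})=\card(Y_k)\leq n$.

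Next I build preimages converging in $M^n$. Enumerate $Y_k$ as an $n$-tuple $\underline{y}_k$ in a structured way. First list the elements of each cluster $C_{k,i}$ in the slots of a block $I_{k,i}$. Then fill the remaining $n-\card(Y_k)$ slots by repeating some element of $C_{k,1}$, say. After passing to a subsequence, the finitely many possible block patterns let us fix the index sets $I_i$ independently of $k$. Define $\underline{x}$ by putting $x_i$ in the slots of $I_i$ and $x_1$ in the padding slots. Then $\pi_n(\underline{x})=X$, and
\[
\dsum(\underline{y}_k,\underline{x})\leq n\,\dH(Y_k,X)\to 0 .
\]
Since $\pi_n^{-1}(U)$ is an open neighbourhood of $\underline{x}$, we get $\underline{y}_k\in\pi_n^{-1}(U)$ for large $k$, i.e.\ $Y_k\in U$, a contradiction. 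Hence $U$ is $\dH$-open in $\Ran_{\leq n}(M)$.

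The main obstacle is this matching step. We need a single preimage $\underline{x}$ of $X$ that is simultaneously close to preimages of all the $Y_k$. This is exactly where the bound $\card(Y_k)\leq n$ is used: it makes the set of combinatorial patterns finite, so a constant pattern can be extracted. The same bound guarantees the $n$ slots suffice to hold every cluster together with at least one copy of each $x_i$. Alternatively, one can avoid subsequences by fixing a pattern and showing that $\pi_n$ of the corresponding product of small balls around the $x_i$ is contained in $U$ for every pattern. Since there are finitely many patterns, one then takes the minimum radius, which directly produces a $\dH$-ball around $X$ inside $U$.
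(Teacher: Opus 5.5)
Your proof is correct and uses the same mechanism as the paper. You match a lift of $Y$ with a lift of $X$ by nearest points, so that $\dsum\le n\,\dH(X,Y)$, and then invoke finiteness (of the fiber, or of the cluster patterns). The paper argues directly instead of by sequences: it takes $\epsilon$ to be the minimum, over the finitely many $\underline{x}\in\pi_n^{-1}(X)$, of radii with $\Bsum(\underline{x},\epsilon_{\underline{x}})\subset\pi_n^{-1}(U)$, and for any lift $\underline{y}$ of $Y\in\BH(X,\epsilon/n)$ it reads off $\underline{x}$ coordinatewise by nearest points. That is exactly your subsequence-free alternative, and it avoids the structured enumeration and padding.
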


\begin{proof}
    Since $\tpi$ refines $\tH$, it is enough to show that open subsets of $\tpi$ restricts to open subsets of the subspace topology induced by $\tH$ on any truncation.
    Let $U\subset \Ran(M)$ be an open in $\tpi$. Let $n\in\N^*$, and let us show that $U_{\leq n}=U\cap \Ran_{\leq n}(M)$ is open for the topology induced by $\tH$ on $\Ran_{\leq n}(M)$. Let $X\in U_{\leq n}$, and $r=\min(\{\frac{d(x,x')}{2} \mid x\neq x'\in X\}\cup \{1\})$. By definition of {\tpi}, $\pi_n^{-1}(U)$ is open in $M^n$, so that, for any $\underline{x}\in \pi_n^{-1}(X)$, there exists $0<\epsilon_{\underline{x}}\leq r$ such that $\Bsum(\underline{x},\epsilon_{\underline{x}})\subset \pi_n^{-1}(U)$. Now let us define $\epsilon=\min\{\epsilon_{\underline{x}}\mid  \underline{x}\in \pi_n^{-1}(X)\}$ and let us show that $\BH(X,\frac{\epsilon}{n})\cap\Ran_{\leq n}(M)\subset U_{\leq n}$.
    
    Let $Y\in \BH(X,\frac{\epsilon}{n})\cap\Ran_{\leq n}(M)$, and let $\underline{y}=(y^{(1)},\dots,y^{(n)})\in\pi_n^{-1}(Y)$. For all $1,\leq i \leq n$ there exists $x^{(i)}\in X$ such that $d(x^{(i)},y^{(i)})=d(X,y^{(i)})\leq \dH(X,Y)$. Now observe that, since $\dH(X,Y)<r$, $x^{(i)}$ must be unique. Indeed if $x\neq x^{(i)}\in X$ then
    \begin{align*}
        d(x,y^{(i)})\geq d(x,x^{(i)})-d(x^{(i)},y^{(i)})\geq d(x,x^{(i)}) - \dH(X,Y) > r > \dH(X,Y).
    \end{align*}
    Besides, remark that, for any $x\in X$, there exists $y\in Y$ such that $d(x,y)\leq \dH(X,Y)$, so that $x=x^{(i)}$ for some $1\leq i \leq n$, and thus $\underline{x}=(x^{(1)},\dots,x^{(n)})\in\pi_n^{-1}(X)$. Finally, we have
    \begin{align*}
        \dsum(\underline{x},\underline{y})=\sum_{i=1}^n d(x^{(i)},y^{(i)}) \leq n\dH(X,Y) < \epsilon,
    \end{align*}
    so that $\underline{y}\in \Bsum(\underline{x},\epsilon)\subset \pi_n^{-1}(U)$ and thus $Y=\pi_n(\underline{y})\in U_{\leq n}$ which proves that $\BH(X,\frac{\epsilon}{n})\cap\Ran_{\leq n}(M)\subset U_{\leq n}$ and thus that $U_{\leq n}$ is open for the topology induced by $\tH$ on $\Ran_{\leq n}(M)$.
\end{proof}
\begin{rem}
This result illustrates that most of the interesting subtleties appear only when the cardinality of configurations tends to infinity. In particular, as seen in \cref{prop:unbounded cardinal diverges in final} sequences of unbounded cardinality cannot converge for the final topology but may do so for the Hausdorff topology. This intuitively explains why the comparison of different topologies will often involve sequences of increasing cardinality in the following.  
\end{rem}

Finally, we will need, in \cref{sec:completeness}, the following result to discuss the completeness of the different topologies on $\Ran(M)$. 

\begin{prop}
\label{prop:hausdorff_complete_troncation}
    Let $M$ be a complete metric space, and let $n\geq 1$. The truncation $\Ran_{\leq n}(M)$ equipped with the Hausdorff distance is a complete metric space.
\end{prop}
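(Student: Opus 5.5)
The cleanest route is to use \cref{prop:compact sets completion of ran hausdorff}, or rather the fact it quotes from \cite{Edgar-fractal-geometry}: since $M$ is complete, $(\Comp(M),\dH)$ is a complete metric space. The truncation $\Ran_{\leq n}(M)$ sits inside $\Comp(M)$ isometrically, because we use the same formula for $\dH$. A closed subset of a complete metric space is complete, so it suffices to prove that $\Ran_{\leq n}(M)$ is closed in $(\Comp(M),\dH)$.

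To show closedness, I take a sequence $(X_k)$ in $\Ran_{\leq n}(M)$ converging in $\dH$ to some $K\in\Comp(M)$, and show $\card(K)\leq n$. Suppose instead that $K$ contains $n+1$ distinct points $z_0,\dots,z_n$. Let $\delta=\frac{1}{2}\min_{i\neq j}d(z_i,z_j)>0$ and choose $k$ with $\dH(X_k,K)<\delta$. Each $z_i$ then has some $x_i\in X_k$ with $d(z_i,x_i)<\delta$. If $x_i=x_j$ for some $i\neq j$, then
\[
d(z_i,z_j)<2\delta\leq d(z_i,z_j),
\]
which is a contradiction. So the $x_i$ are $n+1$ distinct points of $X_k$, contradicting $\card(X_k)\leq n$. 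Therefore $K$ is finite, nonempty (it is an element of $\Comp(M)$), and of cardinality at most $n$, that is, $K\in\Ran_{\leq n}(M)$.

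Alternatively, one can stay inside the paper without invoking Edgar's theorem. Take a Cauchy sequence $(X_k)$ and the candidate limit $X=\{x : x=\lim x_k,\ x_k\in X_k\}$ from the proof of \cref{prop:compact sets completion of ran hausdorff}. The pigeonhole argument above shows $\card(X)\leq n$ directly. I would use the closedness argument, since it is shorter. The main thing to get right is that the limit really lies in $\Comp(M)$, where $\dH$ is a genuine metric; completeness of $(\Comp(M),\dH)$ handles this. The cardinality bound itself is a simple separation argument, so I expect no real obstacle.
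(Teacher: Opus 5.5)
Your proposal is correct and follows essentially the same route as the paper. The paper also embeds $\Ran_{\leq n}(M)$ isometrically in the complete space $(\Comp(M),\dH)$, and rules out a limit with $n+1$ points using the half-minimal-separation pigeonhole argument, which is exactly \cref{lem:minoration Hausdorff strate}; your aside about avoiding Edgar's theorem is not quite accurate, since the convergence $\dH(X_k,X)\to 0$ of that candidate limit is precisely what the cited theorem provides, but your main argument does not rely on it.
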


\begin{proof}
Let $(X_k)_{k\in\N}$ be a Cauchy sequence in $(\Ran_{\leq n}(M),\tH)\subset \Comp(M)$. 
Since $(\Comp(M),\dH)$ is complete \cite[Theorem 2.5.2, p72]{Edgar-fractal-geometry}, there exists a compact subset $X\subset M$ such that $\dH(X_k,X)$ goes to zero.
Let us now show that $\card{X}\leq n$. By contradiction, suppose that $\card(X)\geq n+1$. But then, there exists $Y\subset X$ such that $\card(Y)=n+1$, and, by \cref{lem:minoration Hausdorff strate}, there exists $\delta_Y>0$ such that, for any $k\geq 0$ 
\begin{align*}
    \dH(X_k,X)&\geq \sup_{y\in X}\{\dm(X_k,y)\}\\
    &\geq \max_{y\in Y}\{\dm(X_k,y)\}\\
    &\geq \delta_Y
\end{align*}
which is absurd since $\dH(X_k,X)$ converges to zero. We finally deduce that $(X_k)_{k\in\N}$ converges to $X$ in $(\Ran_{\leq n}(M),\tH)$.
\end{proof}

\begin{lem}
\label{lem:minoration Hausdorff strate}
Let $n\geq 1$ and let $X\in\Ran(M)$ such that $\card(X)=n+1$. Then there exists $\delta_X$ such that, for any $Y\in\Ran_{\leq n}(M)$, we have
\begin{align*}
    \max_{x\in X} \{\dm(Y,x)\}\geq \delta_X.
\end{align*}
\end{lem}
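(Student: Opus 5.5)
The natural choice is $\delta_X=\frac{1}{2}\min\{\dm(x,x')\mid x\neq x'\in X\}$. This is strictly positive, since $X$ is finite with $\card(X)=n+1\geq 2$, so the minimum is taken over a finite nonempty set of positive numbers. The argument is a pigeonhole argument: a configuration with at most $n$ points cannot be close to all $n+1$ well-separated points of $X$ simultaneously.

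Fix $Y\in\Ran_{\leq n}(M)$ and argue by contradiction, supposing $\max_{x\in X}\dm(Y,x)<\delta_X$. Since $Y$ is finite, for each $x\in X$ the minimum $\dm(Y,x)=\min_{y\in Y}\dm(y,x)$ is attained. So we can choose $y_x\in Y$ with $\dm(x,y_x)<\delta_X$, which defines a map $X\to Y$, $x\mapsto y_x$. As $\card(X)=n+1>n\geq\card(Y)$, this map is not injective. Hence there are $x\neq x'$ in $X$ with $y_x=y_{x'}=:y$.

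The triangle inequality then gives
\begin{equation*}
\dm(x,x')\leq \dm(x,y)+\dm(y,x')<2\delta_X=\min_{z\neq z'\in X}\dm(z,z'),
\end{equation*}
which contradicts the definition of the minimum. Therefore $\max_{x\in X}\dm(Y,x)\geq\delta_X$ for every $Y\in\Ran_{\leq n}(M)$, and $\delta_X$ depends only on $X$, as required. This mirrors the separation argument already used to prove continuity of $\card$ for $\tH$.

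There is no real obstacle in this argument. The only points needing care are that $\delta_X>0$ uses $n\geq 1$, and that the strict inequality $\dm(x,y_x)<\delta_X$ requires the minimum defining $\dm(Y,x)$ to be attained, which holds because $Y$ is finite.
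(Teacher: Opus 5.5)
Your proof is correct and takes essentially the same approach as the paper: the same $\delta_X=\frac{1}{2}\min_{x\neq x'\in X}\dm(x,x')$, the same pigeonhole argument on the nearest-point map $x\mapsto y_x$, and the same triangle inequality. The only difference is that the paper argues directly, deducing $\max\{\dm(x,y_x),\dm(z,y_z)\}\geq\delta_X$ from $\dm(x,y_x)+\dm(y_z,z)\geq 2\delta_X$, whereas you phrase the identical step as a contradiction.
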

\begin{proof}
    We set $\delta_X=\frac{1}{2}\min_{x\neq x'\in X}\{\dm(x,x')\}>0$. Let $Y\in\Ran_{\leq n}(M)$. For any $x\in X$ there exists $y_x\in Y$ such that $\dm(y_x,x)=\dm(Y,x)$. Since $\card(Y)<\card(X)$ there exists $x\neq z\in X$ such that $y_x=y_z$. But then $\dm(x,y_x)+\dm(y_z,z)\geq \dm(x,z)\geq 2\delta_X$. Hence
    \begin{align*}
        \delta_X\leq \max\{\dm(x,y_x),\dm(z,y_z)\}\leq \max_{w\in X}\{\dm(Y,w)\}.
    \end{align*}
\end{proof}

\begin{rem}
    Even though all its truncations are complete, $\Ran(M)$ equipped with the Hausdorff distance is not, in general, complete. We give later in \cref{exa:Cantor} an example of a sequence in $\Ran(M)$ which is Cauchy for the Hausdorff distance, and that converges to a Cantor set in $\cP(M)$.
\end{rem}

\subsection{Length structures and length spaces}
\label{sec:PathMetricSpaces}
We recall the notions of length structures and length spaces from metric geometry. We follow \cite[chapter 2]{CourseMetricGeometry}, in which numerous examples can be found.

\begin{defin}
\label{def:Length_structure}
    Given a topological space $M$, a \textbf{length structure} on $M$ is the data of 
    \begin{itemize}
        \item a set of admissible paths $\cP$, whose elements are continuous maps $\gamma\colon [a,b]\to M$, with $a,b\in \R$,
        \item a map $\ell\colon \cP\to \R_+$
    \end{itemize} 
    such that, for any continuous path $\gamma\colon [a,b]\to M$, we have 
    \begin{itemize}
        \item     given any $c\in [a,b]$, we have
            \begin{equation*}
        \gamma\in \cP\Leftrightarrow \gamma_{|[a,c]}\in \cP\text{ and }\gamma_{|[c,b]}\in \cP,
    \end{equation*}
    and in that case, 
    \begin{equation*}
        \ell(\gamma)=\ell(\gamma_{|[a,c]})+\ell(\gamma_{|[c,b]})
    \end{equation*}
    \item given any affine homeomorphism $\varphi\colon [c,d]\to [a,b]$, we have
    \begin{equation*}
        \gamma\in \cP\Leftrightarrow \gamma\circ \varphi \in \cP
    \end{equation*}
    and in that case $\ell(\gamma)=\ell(\gamma\circ \varphi)$
    \item Given $\gamma\colon [a,b]\to \R$ in $\cP$, the following map is continuous
    \begin{align*}
        [a,b]&\to \R_+\\
        t&\mapsto \ell(\gamma_{|[a,t]})
    \end{align*}
    \item Given $x\in X$, and $U\subset X$ a neighborhood of $X$, there exists some $\epsilon>0$ such that, for any $\gamma\colon [a,b]\to X$ in $\cP$ such that $\gamma(a)=x$ and $\gamma(b)\not \in U$, $\ell(\gamma)>\epsilon$.
    \end{itemize}
\end{defin}

\begin{exa}
If $M$ is a Riemmanian manifold, then $\cP$ can be taken to be the set of piecewise $C^1$ paths, and $\ell(\gamma)$ is the integral of the norm of the derivative along the path (i.e. its length in the usual sense).
\end{exa}

\begin{defin}
    Let $M$ be a Hausdorff topological space equipped with a length structure $\cP$, $\ell$, then we define the distance
    \begin{equation*}
        d^{\ell}(x,y)=\inf\{\ell(\gamma)\mid \gamma\in \cP, \ x,y\in \Ima(\gamma)\}
    \end{equation*}
\end{defin}

\begin{rem}\label{rem:checking_length_structure}
    The fact that $d^{\ell}$ is a distance, instead of a pseudo-distance is guaranteed by the last condition in \cref{def:Length_structure}, which will further impose that the metric topology refines the topology we started with. Conversely, if we drop this last condition, and assume that $d^{\ell}$ is indeed a distance, and that it refines the topology on $M$ we started with, we can recover this condition as follows. Observe that by hypothesis, any neighborhood of $x$ must contain an open ball of some radius $\epsilon>0$ around $x$ for the distance $d^{\ell}$, but this immediately implies the condition, since any path leaving such a ball must have length at least $\epsilon$. We will use this observation in \cref{sec:PathMetricsOnRan} to define a length structure on $\Ran(M)$.
\end{rem} 

\begin{defin}\label{defin:PathMetricSpaces}
    A \textbf{length space} is a topological space $M$ equipped with a length structure $\cP$, $\ell$, such that $d^{\ell}$ is a distance which induces the topology of $M$. In that case, we say that $d^{\ell}$ is a length metric. 
\end{defin}

\begin{exa}
    Any Riemannian manifold is a length space for the length structure outlined in the previous example, since the topology induced by the Riemannian metric is indeed the underlying topology of the manifold.
\end{exa}

\begin{defin}
    Let $M$ be a topological space equipped with a length structure, $\cP$, $\ell$. For $n\geq 0$, write $\cP ^n=\{(\gamma^{(1)},\dots,\gamma^{(n)})\colon [a,b]\to M^n\mid \gamma_i\in \cP,\ 1\leq i\leq n\} $ and $\ell^n(\gamma^{(1)},\dots,\gamma^{(n)})=\sum_{i=1}^n\ell(\gamma^{(i)})$.
\end{defin}

We immediately get, from the definition, the following proposition, which will be the starting point of \cref{sec:PathMetricsOnRan}.
\begin{prop}\label{prop:Length_structure_Mn}
    Let $M$ be a length space, with length structure $\cP$ and $\ell$. Let $n\geq 1$, then $M^n$ is a length space for the length structure $\cP^n$ and $\ell^n$. We write $\dsum=d^{\ell^n}$ and remark that
    \begin{equation*}
        \dsum((x^{(1)},\dots,x^{(n)}),(y^{(1)},\dots,y^{(n)}))=\sum_{i=1}^nd^\ell(x^{(i)},y^{(i)})
    \end{equation*}
\end{prop}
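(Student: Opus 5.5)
We need to check three things: that the data $(\cP^n,\ell^n)$ satisfy the axioms of \cref{def:Length_structure}, that $d^{\ell^n}$ is the sum formula, and that this distance induces the product topology on $M^n$. The definition of $\cP^n$ implicitly requires all components to be parametrized by a common interval $[a,b]$. This is harmless, because affine reparametrization is allowed by the axioms.

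\textbf{Step 1: the axioms, checked componentwise.} For $\underline{\gamma}=(\gamma^{(1)},\dots,\gamma^{(n)})$ and $c\in[a,b]$, we have $\underline{\gamma}\in\cP^n$ if and only if each $\gamma^{(i)}\in\cP$. By the splitting axiom for $\cP$, this holds if and only if each restriction $\gamma^{(i)}_{|[a,c]}$ and $\gamma^{(i)}_{|[c,b]}$ lies in $\cP$, and additivity of $\ell^n$ follows by summing the additivity of each $\ell$. Invariance under affine reparametrization is identical, applied componentwise. Continuity of $t\mapsto\ell^n(\underline{\gamma}_{|[a,t]})$ holds because it is a finite sum of continuous functions. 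For the last axiom, take $\underline{x}\in M^n$ and a neighborhood $U$, which we may shrink to a product $\prod U_i$. For each $i$ there is an $\epsilon_i$ given by the axiom for $\cP$; set $\epsilon=\min_i\epsilon_i$. If $\underline{\gamma}$ leaves $U$, some component leaves $U_i$, so $\ell^n(\underline{\gamma})\geq\ell(\gamma^{(i)})>\epsilon$.

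\textbf{Step 2: the formula for $d^{\ell^n}$.} We read the infimum as being over admissible paths from $\underline{x}$ to $\underline{y}$. For the inequality $\geq$: any $\underline{\gamma}\in\cP^n$ joining $\underline{x}$ to $\underline{y}$ has components joining $x^{(i)}$ to $y^{(i)}$, so $\ell^n(\underline{\gamma})\geq\sum_i d^\ell(x^{(i)},y^{(i)})$. For the inequality $\leq$: given $\eta>0$, choose for each $i$ a path $\gamma^{(i)}\in\cP$ with $\ell(\gamma^{(i)})<d^\ell(x^{(i)},y^{(i)})+\eta/n$. Reparametrize all of them affinely to a common interval $[0,1]$, which does not change admissibility or length, and assemble them into $\underline{\gamma}\in\cP^n$ of length less than $\sum_i d^\ell+\eta$.

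\textbf{Step 3: topology.} Since $d^\ell$ induces the topology of $M$, the sum metric $\dsum$ induces the product topology on $M^n$. This follows from the standard comparison $\max_i d\leq\dsum\leq n\max_i d$. Hence $M^n$ is a length space.

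The main obstacle is only the bookkeeping about common parameter intervals in Step 2, which affine reparametrization resolves. One should also note that the endpoint convention "$x,y\in\Ima(\gamma)$" in the definition of $d^\ell$ gives the same infimum as requiring $\gamma$ to start at $x$ and end at $y$. Indeed, restricting to the subinterval between the two times only decreases length, and reversal is handled by the affine map $t\mapsto a+b-t$.
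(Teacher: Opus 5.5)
Your proof is correct and is exactly the componentwise verification the paper has in mind when it says the proposition follows immediately from the definitions (the paper writes out no proof). Your handling of common parameter intervals and of the ``$x,y\in\Ima(\gamma)$'' convention supplies the bookkeeping the paper omits. The only spot needing one more word is a coordinate with $x^{(i)}=y^{(i)}$: there, restricting to the subinterval between the two times may give a degenerate interval, which cannot be affinely reparametrized to $[0,1]$. A short there-and-back loop at $x^{(i)}$, admissible by the splitting axiom, handles that case.
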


\section{Combinatorial distance}
\label{sec:Combinatorial_Distance}
In this section we define new distances on $\Ran(M)$ indexed by some non-decreasing sequence $\omega\colon \N\to [1,\infty)$ that we call a weight. First we define surjective relations between two configurations in $\Ran(M)$, to which we attribute a weighted length $\elo$. Then we define combinatorial chains between two configurations as the concatenation of surjective relations, to which we also attribute a weighted length by summing the weighted length of the relations. Finally the weighted distance between two configurations is defined as the infimum of the lengths of combinatorial chains between the two configurations. We show that this indeed defines a distance on $\Ran(M)$, and then, the remaining of the section is dedicated to refining the computation of the distance. Eventually we show in \cref{theo:MBS} that we obtain the same infimum by restricting to chains of a specific shape, that we call MBS (merge-bijection-splitting), see \cref{def:MBS}, where we first merge points together, then possibly move them around and finally split the points. This result allows us to show in \cref{theo:Geodesic_Chain} that, if $M$ satisfies the Heine-Borel property, then this infimum is computed on a compact space, and is thus attained for a given chain that we call geodesic.

Throughout, $(M,d)$ is a metric space.

\subsection{Definitions}
\label{sec:Comb_Distance_def}
In this section, we define the weighted distance on $\Ran(M)$ and show that it is a distance. First we define the combinatorial notions that will be central in the following of this work: surjective relations and combinatorial chains.
\begin{defin}
    Let $X,Y\in \Ran(M)$ be two configurations. A \textbf{relation} between $X$ and $Y$ is the data of a subset $R\subset X\times Y$. We will write $xRy$ for $(x,y)\in R$. It is said to be \textbf{surjective}, if 
    \begin{itemize}
        \item $\forall x\in X$, $\exists y\in Y$, such that $xRy$, and
        \item $\forall y\in Y$, $\exists x\in X$, such that $xRy$.
    \end{itemize}
    The \textbf{cardinality} of the relation $R$, denoted $\card(R)$, is simply the cardinality of the set $R\subset X\times Y$. We will write $R^{\op}\subset Y\times X$ for the relation defined by $yR^{\op}x\Leftrightarrow xRy$. 
\end{defin}
\begin{defin}
\label{def:combinatorial chain}
     Given an integer $k\geq 0$, a \textbf{combinatorial chain} in $M$ (with $k$ terms) is the data of
    \begin{itemize}
        \item for all $0\leq i\leq k$ a configuration $X_i\in \Ran(M)$,
        \item for all $0\leq j\leq k-1$, a surjective relation $R_j$ between $X_j$ and $X_{j+1}$.
    \end{itemize}
    We will write such a chain $((X_0,\dots,X_k),(R_0,\dots,R_{k-1}))$, or simply $(R_0,\dots,R_{k-1})$. And we will say that it is a chain between $X_0$ and $X_k$, or from $X_0$ to $X_k$.
\end{defin}
We can now assign very naturally a \textbf{length} to relations and combinatorial chains, as if they were some sort of generalized "paths", but with their length computed "as the crow flies".
\begin{defin}
\label{def:relation length}
    Given a relation $R$ between $X$ and $Y$ in $M$, we define the \textbf{length} of the relation $R$ as
    \begin{equation*}
        \ell(R)=\sum\limits_{(x,y)\in R}d(x,y).
    \end{equation*}
    Given  $(R_0,\dots,R_{k-1})$, a combinatorial chain on $M$, its length is defined as
    \begin{equation*}
        \ell(R_0,\dots,R_{k-1})=\sum_{j=0}^{k-1}\ell(R_j)
    \end{equation*}
\end{defin}
Recall that, as explained in the introduction when discussing possible answers to \cref{quest:Converge_or_not}, we want some way to modulate our distances depending on the cardinality of the configurations. This is achieved by weighting the length by a cardinality-dependent weight. 
\begin{defin}
    A \textbf{weight} is a sequence $\omega\colon \N^*\to [1,\infty)$, such that
    \begin{itemize}
        \item $\omega(1)=1$,
        \item $\omega$ is non-decreasing.
    \end{itemize}
\end{defin}
\begin{defin}
\label{def:relation length omega}
    Given a relation $R$ between $X$ and $Y$ in $M$, and a weight $\omega$, we define the \textbf{length} of the relation $R$, \textbf{relative to $\omega$} as
    \begin{equation*}
        \elo(R)=\omega(\card(R))\ell(R).
    \end{equation*}
    Given  $(R_0,\dots,R_{k-1})$, a combinatorial chain on $M$, its length relative to $\omega$ is defined as
    \begin{equation*}
        \elo(R_0,\dots,R_{k-1})=\sum_{j=0}^{k-1}\elo(R_j)
    \end{equation*}
\end{defin}
\begin{rem}
    If a relation $R\subset X\times Y$ is the graph of a map $f\colon X\to Y$, then \cref{def:relation length omega} gives
    \begin{equation*}
        \elo(R)=\elo(f)=\omega(\card(X))\sum_{x\in X}\dm(x,f(x)).
    \end{equation*}
\end{rem}
Finally, the weighted distance is computed, quite intuitively, as the weighted length of the "shortest generalized path" between configurations. 
\begin{defin}
\label{def:weighted-distance}
    Let $X,Y\in \Ran(M)$ be two configurations, and $\omega$ some weight. We define the \textbf{weighted distance} between $X$ and $Y$ as the following infimum
    \begin{equation*}
        \dom(X,Y)=\inf\{\elo(R_0,\dots,R_{k-1})\}
    \end{equation*}
    Where the $(R_0,\dots,R_{k-1})$ ranges over all combinatorial chains between $X$ and $Y$.
\end{defin}
There remains to be proven that this indeed defines a distance on $\Ran(M)$.
\begin{prop}
    Given some weight $\omega$, the combinatorial distance $\dom$ on $\Ran(M)$ is a distance.
\end{prop}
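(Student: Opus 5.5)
We must check four things: $\dom$ is finite and non-negative, symmetric, satisfies the triangle inequality, and separates points. All but the last are formal.

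\textbf{Routine axioms.} Finiteness holds because $X\times Y$ is itself a surjective relation between $X$ and $Y$, so the infimum is over a non-empty set. Non-negativity is clear since every term $\omega(\card(R))d(x,y)$ is non-negative. Symmetry follows from reversing chains: if $(R_0,\dots,R_{k-1})$ goes from $X$ to $Y$, then $(R_{k-1}^{\op},\dots,R_0^{\op})$ goes from $Y$ to $X$. Each $R^{\op}$ is surjective with $\card(R^{\op})=\card(R)$ and $\ell(R^{\op})=\ell(R)$, so both chains have the same weighted length. For the triangle inequality, concatenate a chain from $X$ to $Y$ with one from $Y$ to $Z$. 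The result is a chain from $X$ to $Z$ whose length is the sum of the two lengths; taking infima gives $\dom(X,Z)\leq\dom(X,Y)+\dom(Y,Z)$. Also $\dom(X,X)=0$, using the diagonal relation, or the empty chain with $k=0$.

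\textbf{Separation (main step).} We show $\dom(X,Y)>0$ when $X\neq Y$ by bounding it below by the Hausdorff distance, namely $\dom(X,Y)\geq\dH(X,Y)$. Since $\omega\geq 1$, it suffices to prove $\ell(R_0,\dots,R_{k-1})\geq\dH(X_0,X_k)$ for every chain, and then, by the triangle inequality for $\dH$, to prove $\ell(R)\geq\dH(X,Y)$ for a single surjective relation $R$. Fix $x\in X$. By surjectivity there is $y$ with $xRy$, so
\[d(x,Y)\leq d(x,y)\leq\ell(R).\]
Symmetrically, $d(X,y)\leq\ell(R)$ for every $y\in Y$. Taking maxima gives $\dH(X,Y)\leq\ell(R)$. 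Summing over the relations in a chain and applying the triangle inequality for $\dH$ along $X_0,\dots,X_k$ gives $\ell(\text{chain})\geq\dH(X_0,X_k)$. Since $\dH$ is a distance on $\Ran(M)\subset\Comp(M)$, we conclude $\dom(X,Y)\geq\dH(X,Y)>0$ whenever $X\neq Y$.

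The only step needing care is this separation argument, which relies on comparison with $\dH$ and on $\omega\geq1$. Once in place, it also gives for free that $\tom$ refines $\tH$.
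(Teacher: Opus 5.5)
Your proof is correct and has the same structure as the paper's: reversal for symmetry, concatenation for the triangle inequality, and a lower bound for separation. Your use of $R^{\op}$ in the reversal is the precise form of the paper's slightly abbreviated symmetry step.

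The only variation is in separation. The paper follows a single point $x\in X\setminus Y$ along the chain to get $\dom(X,Y)\geq d(x,Y)>0$, using only the triangle inequality in $M$. You instead bound each relation's length below by $\dH$ and invoke the triangle inequality for $\dH$. This relies on the standard fact that $\dH$ is a metric, but it gives $\dom\geq\dH$ immediately, which the paper only establishes later in \cref{prop:dH leq dom} by the point-tracing argument.
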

\begin{proof}
    Let $X$, $Y$ and $Z$ be three configurations in $\Ran(M)$. \begin{itemize}
    \item  Given $(R_0,\dots,R_{k-1})$, a combinatorial chain between $X$ and $Y$ , $(R_{k-1},\dots,R_0)$ is a combinatorial chain between $Y$ and $X$, with $\elo(R_0,\dots,R_{k-1})=\elo(R_{k-1},\dots,R_0)$. Hence, $\dom(X,Y)=\dom(Y,X)$. 
        \item The identity relation on $X$ gives $\dom(X,X)=0$. Conversely, assume that $X\not=Y$. By symmetry, assume that there exists $x\in X$ such that $x\not\in Y$, let $y\in Y$ be such that $d(x,y)\leq d(x,y')$ for any $y'\in Y$. Consider a combinatorial chain between $X$ and $Y$, $((X_0,\dots,X_k),(R_0,\dots,R_{k-1}))$. Surjectivity of each of the $R_j$ implies the existence of a sequence $x_0,\dots,x_k$ such that $x=x_0$,  $x_i\in X_i$, and $x_{i-1}R_{i-1}x_i$, for $1\leq i\leq k$. But then, $\elo(R_0,\dots,R_{k-1})\geq \sum_{i=0}^{k-1}d(x_i,x_{i+1})\geq d(x,x_k)\geq d(x,y)$. Hence, $\dom(X,Y)\geq d(x,y)>0$, which proves that $\dom(X,Y)=0\Leftrightarrow X=Y$.
        \item Finally, given two combinatorial chains $(R_0,\dots,R_{k-1})$ and $(P_0,\dots,P_{l-1})$ between $X$ and $Y$ and between $Y$ and $Z$ respectively, the concatenation $(R_0,\dots,R_{k-1},P_0,\dots,P_{l-1})$ is a chain between $X$ and $Z$ of length
    \begin{equation*}
\elo(R_0,\dots,R_{k-1},P_0,\dots,P_{l-1})=\elo(R_0,\dots,R_{k-1})+\elo(P_0,\dots,P_{l-1})
    \end{equation*}
    Hence, $\dom(X,Z)\leq \dom(Y,Z)+\dom(X,Y)$. Which concludes the proof.
    \end{itemize}
\end{proof}

\subsection{Refining the computation}

We now have a definition for the weighted distance in $\Ran(M)$. However, its definition is, at the same time, very general, but very impractical, so that the distance is difficult to evaluate, or manipulate. Since it is defined as an infimum, finding a "relevant" upper bound is as easy as finding a "relevant" combinatorial chain. On the contrary, finding a relevant lower bound can be much harder, since the set of combinatorial chains between two configurations is quite large. Over the next two subsections, we will show that one may actually restrict the computation of the infimum to very specific combinatorial chains, (see \cref{theo:MBS}). Since we want to reduce the set of combinatorial chains that we need to consider, the strategy is always to construct, from a surjective relation or a combinatorial chain between two configurations, another, shorter, surjective relation or combinatorial chain between the same configurations, and with better properties. In particular, we want the "good" chains to be decomposed into relations that either merge points together, and reduce cardinality, or that do the opposite. These relations are either the graph of a surjective map, or their opposite is the graph of a surjective map. In addition, we also want to control the cardinality of the relations, this is thus another central aspect in the following.

For the rest of this section, we fix $\omega$ a weight.

We first show a quite intuitive lemma that will be useful throughout this article.
\begin{lem}\label{lem:included relation shorter}
Let $R\subset X\times Y$ be a surjective relation between two configurations $X,Y\in \Ran(M)$. If $R'\subset R$ is a surjective relation between $X'\subset X$ and $Y'\subset Y$ then $\elo(R')\leq\elo(R)$. 
\end{lem}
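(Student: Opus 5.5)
The argument is a direct comparison of the two factors in the definition $\elo(R)=\omega(\card(R))\,\ell(R)$ from \cref{def:relation length omega}. We show that each factor for $R'$ is bounded by the corresponding factor for $R$, and then multiply.

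First, the cardinality factor. Since $R'\subset R$ as subsets of $X\times Y$ (recall $X'\times Y'\subset X\times Y$), we have $\card(R')\leq\card(R)$. A weight $\omega$ is non-decreasing by definition, so $\omega(\card(R'))\leq\omega(\card(R))$. Here $\card(R')\geq 1$ because $R'$ is surjective onto the nonempty configuration $X'$, so $\omega(\card(R'))$ is defined.

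Second, the unweighted length. By \cref{def:relation length}, $\ell(R')=\sum_{(x,y)\in R'}d(x,y)$. This is a sub-sum of the sum defining $\ell(R)$. All terms are nonnegative, so $\ell(R')\leq\ell(R)$.

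Finally, all four quantities are nonnegative; in particular $\omega\geq 1>0$. The product of the two inequalities therefore gives $\elo(R')=\omega(\card(R'))\ell(R')\leq\omega(\card(R))\ell(R)=\elo(R)$.

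There is no real obstacle. The only points to state explicitly are that nonnegativity is what allows the inequalities to be multiplied, and that monotonicity of $\omega$ is exactly the weight axiom being used. Surjectivity of $R'$ is not needed for the inequality itself; it only guarantees that $R'$ is a legitimate relation to which $\elo$ is applied in the later arguments.
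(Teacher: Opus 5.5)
Your proof is correct and is essentially the paper's argument. The paper also uses $\card(R')\leq\card(R)$ together with the monotonicity of $\omega$ to bound the weight factor. It then bounds the sum over $R'$ by the sum over $R$, since all terms are nonnegative.
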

\begin{proof}
Since $R'\subset R$, we have $\card(R')\leq\card(R)$. So that we have
    \begin{align*}
            \elo(R')&=\omega(\card(R'))\sum_{(x,y)\in R'}d(x,y)\\
            &\leq\omega(\card(R))\sum_{(x,y)\in R}d(x,y)\\
            &\leq \elo(R).
        \end{align*}
\end{proof}
\begin{rem}
This result being very intuitive, we will most often omit the explicit call to this lemma.
\end{rem}

We then show that we can get an upper bound for the cardinality of the relations of the considered chains.
\begin{lem}\label{lem:Relation_Cardinal_Bounded}
    Let $R\subset X\times Y$ be a surjective relation between two configurations $X,Y\in \Ran(M)$. If $\card(R)>\max\{\card(X),\card(Y)\}$, then there exists a combinatorial chain between $X$ and $Y$, $((Y_0,\dots,Y_k),(P_0,\dots,P_{k-1}))$, such that
    \begin{itemize}
        \item $\card(P_j)<\card(R)$, $0\leq j\leq k-1$,
        \item $\elo(P_0,\dots,P_{k-1})\leq \elo(R)$,
        \item $\card(Y_i)\leq \max(\card(X),\card(Y))$, for all $0\leq i\leq k$.
    \end{itemize}
\end{lem}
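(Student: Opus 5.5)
We construct the chain explicitly by passing through an intermediate configuration built from $X$ or $Y$, and then use the monotonicity of $\omega$ to compare lengths. View $R$ as a bipartite graph on $X\sqcup Y$. Since $R$ is surjective and $\card(R)>\max\{\card(X),\card(Y)\}$, some $x_0\in X$ is related to at least two points of $Y$, and some $y_0\in Y$ is related to at least two points of $X$. Indeed, if every $x$ had exactly one image, then $\card(R)=\card(X)$, and symmetrically for $Y$.

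The plan is to reduce $\card(R)$ by one edge at a time, proceeding by induction on $\card(R)$. If the resulting relations still have cardinality above $\max\{\card(X),\card(Y)\}$, we apply the construction again to them.

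For the basic step, choose a pair $(x_0,y_0)\in R$ such that $x_0$ has another partner $y_1\neq y_0$ and $y_0$ has another partner $x_1\neq x_0$. The first thing to check is that such an edge exists. Suppose every edge has an endpoint of degree one. Then the edges form a disjoint union of stars, and a union of stars satisfies $\card(R)\leq \card(X)+\card(Y)-(\text{number of components})$. That bound alone does not give a contradiction. Instead, note that each star centered in $X$ contributes exactly as many edges as it has $Y$-leaves, so $\card(R)\leq \max(\card X,\card Y)$ fails only if... this counting is delicate. I would therefore handle the star case separately: if $R$ is a union of stars, $\card(R)$ equals $\card(X)+\card(Y)$ minus the number of stars. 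I expect that, in this situation, the hypothesis $\card(R)>\max$ still forces an edge with both endpoints of degree at least two. This is because stars centered in $X$ give $\card(R)\le\card(Y)$ when only they occur, and mixed stars need a separate argument.

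This combinatorial existence step is the main obstacle. If it fails, I would instead factor $R$ as a merge followed by a split, which is the MBS-type factorization. The merge $f\colon X\to Z$ sends all $x$ in one connected component of $R$ to a single point. The split goes from $Z$ back out to $Y$.

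Given the edge $(x_0,y_0)$, set $R'=R\setminus\{(x_0,y_0)\}$. This relation is still surjective, has $\card(R')=\card(R)-1$, and satisfies $\ell(R')\le\ell(R)$. However, $R'$ no longer accounts for the removed term $d(x_0,y_0)$, so $R'$ alone is not a chain from $X$ to $Y$ reproducing $R$. What we actually need is a chain whose length is at most $\elo(R)$.

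The cleaner plan is a two-step chain $X\to Z\to Y$. Here $Z$ is the set of midpoints, or more simply we take $Z=X$ with a single point moved. Concretely, split $R$ as $R_1\subset X\times Y$ and use the intermediate configuration $Z=Y$ with $P_0=R'$. We then need the triangle inequality to absorb the edge $(x_0,y_0)$: since $x_0Ry_1$, $x_1Ry_0$ and $x_1Ry_1$ are all in $R'$, the distance $d(x_0,y_0)$ is paid for via a path through them. This works when such a 4-cycle is available.

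Hence the key step I would try first is the following. Show that if $R$ contains a cycle, then removing one edge of that cycle leaves a surjective relation whose length is at most $\ell(R)$, and use $\omega$ non-decreasing to conclude. If $R$ is acyclic, it is a forest, and then $\card(R)\le\card(X)+\card(Y)-1$. In that case, factor each tree as a merge of all its $X$-points to one $X$-point, which is a surjective map with cardinality $\card(X)$, followed by a split to $Y$ with cardinality $\card(Y)$. Each step then has cardinality at most $\max<\card(R)$, and the triangle inequality along tree edges bounds the lengths.
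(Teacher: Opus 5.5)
There is a genuine gap, and it sits in exactly the case that needs an idea. You expect $\card(R)>\max\{\card(X),\card(Y)\}$ to force an edge $(x_0,y_0)$ whose endpoints both have degree at least two. This is false. In $M=\R$ take $X=\{-1,1,10\}$, $Y=\{0,9,11\}$ and $R=\{(-1,0),(1,0),(10,9),(10,11)\}$: then $\card(R)=4>3$, yet every edge has an endpoint of degree one.

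When such an edge does exist, your worry that $R'=R\setminus\{(x_0,y_0)\}$ ``no longer accounts for'' $d(x_0,y_0)$ is misplaced. $R'$ is surjective, so it is already a one-step chain from $X$ to $Y$. Moreover $\elo(R')\le\elo(R)$, because $\ell(R')\le\ell(R)$ and $\omega$ is non-decreasing. No 4-cycle or triangle inequality is needed; this is precisely the paper's first two cases. So the relevant dichotomy is not cyclic versus acyclic. It is ``some edge has both endpoints of degree at least two'' versus ``$R$ is a disjoint union of stars'', and your treatment of the second alternative fails.

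Your forest construction merges all of $X_T$ onto one point $z_T\in X_T$ and then splits $z_T$ onto $Y_T$. This costs $\sum_{x\in X_T}d(x,z_T)+\sum_{y\in Y_T}d(z_T,y)$. Each term is bounded by a path length in the tree, but summing these paths counts tree edges several times. In the example above with $\omega\equiv1$, one has $\elo(R)=4$, whereas your chain costs $2+3=5$ whichever point of $\{-1,1\}$ you merge onto.

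The missing idea is to leave the stars centered in $Y$ untouched, since their edges already form the graph of a map onto the center. Instead, postpone two leaves of one star centered in $X$:
\begin{itemize}
\item choose $x_0$ related to $y_1\neq y_2$, with no other point of $X$ related to $y_1$ or $y_2$;
\item set $Z=(Y\setminus\{y_1,y_2\})\cup\{x_0\}$;
\item let $P_0$ be $R$ with the two edges at $y_1,y_2$ replaced by the zero-length pair $(x_0,x_0)$;
\item let $P_1$ split $x_0$ into $y_1,y_2$ and be the identity elsewhere.
\end{itemize}
Then $\ell(P_0)+\ell(P_1)=\ell(R)$, $\card(P_0)=\card(R)-1$, $\card(P_1)=\card(Y)<\card(R)$ and $\card(Z)<\card(Y)$. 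Monotonicity of $\omega$ then gives $\elo(P_0,P_1)\le\elo(R)$.

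The paper also first arranges $x_0\notin Y\setminus\{y_1,y_2\}$. It does so by exchanging $(x_3,x_0),(x_0,y_2)$ for $(x_0,x_0),(x_3,y_2)$, which by the triangle inequality increases neither $\ell$ nor the cardinality.
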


\begin{proof}
    By assumption, since $\card(R)>\card (X)$,
    there must exist $x_0\in X$ and $y_1\not=y_2\in Y$ such that $x_0 R y_1$ and $x_0 R y_2$.
    Similarly, since $\card(R)>\card(Y)$,
    there must also exist $x_1\not=x_2\in X$ and $y_0\in Y$
    such that $x_1Ry_0$ and $x_2Ry_0$. By symmetry, we distinguish three cases :
    \begin{itemize}
        \item If $x_0=x_1$ and $y_0=y_1$, then $P=R\setminus\{(x_0,y_0)\}$ is still a surjective relation, since $x_1Py_2$ and $x_2Py_1$, and we can interpret it as a combinatorial chain with a single relation. Furthermore, by construction $\card(P)=\card(R)-1<\card(R)$, and, since $P\subset R$, we have, by \cref{lem:included relation shorter}, $\elo(P)\leq \elo(R)$.
        \item $x_0\not=x_1,x_2$ and $y_0=y_1$. This case is similar to the previous one, and $P=R\setminus\{(x_0,y_0)\}$ fits.
        \item $x_0\not=x_1,x_2$, and $y_0\not=y_1,y_2$. We first consider the subcase where $x_0\in Y\setminus\{y_1,y_2\}$. In that case, surjectivity of $R$ implies the existence of some $x_3\in X$ such that $x_3Rx_0$. Define $Q\subset X\times Y$ as the relation $Q=R\setminus\{(x_3,x_0),(x_0,y_2)\}\cup\{(x_0,x_0),(x_3,y_2)\}$. Note that $Q$ is still a surjective relation, by construction, and that we have $\card(Q)\leq\card(R)$, hence
        \begin{align*}
            \elo(Q)&=\omega(\card(Q))\sum_{(x,y)\in Q}d(x,y)\\
            &\leq \omega(\card(Q))\left(\left(\sum_{(x,y)\in R}d(x,y)\right)+d(x_3,y_2)-d(x_3,x_0)-d(x_0,y_2)\right) \\
            &\leq \omega(\card(Q))\left(\sum_{(x,y)\in R}d(x,y)\right)\\
            &\leq \omega(\card(R))\left(\sum_{(x,y)\in R}d(x,y)\right)\\
            &\leq \elo(R)
        \end{align*}
        Now, either $\card(Q)<\card(R)$ and we are done, or we reduced to the case where $x_0\not\in Y\setminus\{y_1,y_2\}$ by considering $Q$ instead of $R$ and $y_1,x_0$ instead of $y_1,y_2$. 
        
        In the latter case, where $x_0\not\in Y\setminus\{y_1,y_2\}$, we may freely assume that there exists no $x_3\not = x_0\in X$ such that $x_3Ry_1$ or $x_3Ry_2$. Indeed, if such an element of $X$ exists, then up to relabelling the corresponding $y_i$ by $y_0$ and $x_0$ and $x_3$ as $x_1$ and $x_2$, the situation is covered by the first case. So, we can set $Z=Y\setminus\{y_1,y_2\}\cup \{x_0\}$, and define two relations, $P_0\subset X\times Z$ and $P_1\subset Z\times Y$ by :
        \begin{align*}
xP_0z&\Leftrightarrow (z\in Y\setminus\{y_1,y_2\} \text{ and } xRz) \text{ or } (x=z=x_0)\\
zP_1y&\Leftrightarrow (z=y)\text{ or } \left((z=x_0) \text{ and } (y=y_1 \text{ or }y=y_2)\right)
        \end{align*}
        Note that, by construction, $\card(Z)\leq \card(Y)$, 
        and that $\card(P_0)\leq \card(R)-1$, and $\card(P_1)=\card(Y)<\card(R)$. By the above remark, $P_0$ is still a surjective relation, and we compute
        \begin{align*}
            \elo(P_0,P_1)&=\omega(\card(P_0))\left(\sum_{(x,z)\in P_0}d(x,z)\right)+\omega(\card(P_1))\left(\sum_{(z,y)\in P_1}d(z,y)\right)\\
            &\leq \omega(\card(R))\left(\sum_{(x,z)\in P_0}d(x,z)\right)+\omega(\card(R))\left(d(x_0,y_1)+d(x_0,y_2)\right)\\
            &\leq \omega(\card(R))\left(\sum_{(x,y)\in R}d(x,y)\right)\\
            &\leq \elo(R).
        \end{align*}
        This concludes the proof. See \cref{fig:Proof_Relation_Cardinal_Bounded}, for an illustration.  
    \end{itemize}
\end{proof}

\begin{figure}
    \centering
\begin{tikzpicture}
    \filldraw  (0,1) circle (2 pt);
    \filldraw (0,0) circle (2pt);
    \filldraw (2,0) circle (2pt);
    \filldraw (2,1) circle (2pt);
    \draw (0,1)--(2,1);
    \draw (0,0)--(2,0);
    \draw[dashed] (0,1)--(2,0);
    \node at (-0.3,1.3) {$x_1=x_0$};
    \node at (-0.2,-0.4) {$x_2$};
    \node at (2.2,1.3) {$y_2$};
    \node at (2.3,-0.4) {$y_1=y_0$};

    \filldraw[shift={(6,0)}]  (0,1) circle (2 pt);
    \filldraw[shift={(6,0)}] (0,0) circle (2pt);
    \filldraw[shift={(6,0)}] (2,0) circle (2pt);
    \filldraw[shift={(6,0)}] (0,-1) circle (2pt);
    \filldraw[shift={(6,0)}] (2,-1) circle (2pt);
    \draw[shift={(6,0)},dashed] (0,-1)--(2,0);
    \draw[shift={(6,0)}](0,-1)--(2,-1);
    \draw[shift={(6,0)}] (0,0)--(2,0);
    \draw[shift={(6,0)}] (0,1)--(2,0);
    \node[shift={(6,0)}] at (-0.3,1.3) {$x_1$};
    \node[shift={(6,0)}] at (-0.2,-0.3) {$x_2$};
    \node[shift={(6,0)}] at (2.2,-1.3) {$y_2$};
    \node[shift={(6,0)}] at (2.8,-0.3) {$y_1=y_0$};
    \node[shift={(6,0)}] at (-0.2,-1.3) {$x_0$};

    \filldraw[shift={(0,-6)}]  (0,0) circle (2 pt);
    \filldraw[shift={(0,-6)}] (0,1) circle (2pt);
    \filldraw[shift={(0,-6)}] (0,2) circle (2pt);
    \filldraw[shift={(0,-6)}] (0,3) circle (2pt);
    \filldraw[shift={(0,-6)}] (2,1) circle (2pt);
    \filldraw[shift={(0,-6)}] (2,0.5) circle (2pt);
    \filldraw[shift={(0,-6)}] (2,1.5) circle (2pt);
    \filldraw[shift={(0,-6)}] (2,2.5) circle (2pt);
    \draw[shift={(0,-6)}] (0,3)--(2,2.5);
    \draw[shift={(0,-6)}](0,2)--(2,2.5);
    \draw[shift={(0,-6)}] (0,1)--(2,1.5); 
    \draw[shift={(0,-6)}] (0,1)--(2,0.5);
    \draw[shift={(0,-6)}] (0,0)--(2,1);
    \node[shift={(0,-6)}] at (-0.3,1.3) {$x_0$};
    \node[shift={(0,-6)}] at (-0.2,-0.3) {$x_3$};
      \node[shift={(0,-6)}] at (-0.3,2.3) {$x_2$};
    \node[shift={(0,-6)}] at (-0.2,3.3) {$x_1$};
    \node[shift={(0,-6)}] at (2.4,1) {$x_0$};
    \node[shift={(0,-6)}] at (2.4,0.4) {$y_2$};
      \node[shift={(0,-6)}] at (2.4,1.6) {$y_1$};
    \node[shift={(0,-6)}] at (2.4,2.5) {$y_0$};

    \draw[shift={(0,-6)},-implies,double equal sign distance](3.5,1.5)--(5,1.5);

    \filldraw[shift={(6,-6)}]  (0,0) circle (2 pt);
    \filldraw[shift={(6,-6)}] (0,1) circle (2pt);
    \filldraw[shift={(6,-6)}] (0,2) circle (2pt);
    \filldraw[shift={(6,-6)}] (0,3) circle (2pt);
    \filldraw[shift={(6,-6)}] (2,1) circle (2pt);
    \filldraw[shift={(6,-6)}] (2,0.5) circle (2pt);
    \filldraw[shift={(6,-6)}] (2,1.5) circle (2pt);
    \filldraw[shift={(6,-6)}] (2,2.5) circle (2pt);
    \draw[shift={(6,-6)}] (0,3)--(2,2.5);
    \draw[shift={(6,-6)}](0,2)--(2,2.5);
    \draw[shift={(6,-6)}] (0,1)--(2,1.5);
    \draw[shift={(6,-6)}] (0,1)--(2,1);
    \draw[shift={(6,-6)}] (0,0)--(2,0.5) ;
    \node[shift={(6,-6)}] at (-0.3,1.3) {$x_0$};
    \node[shift={(6,-6)}] at (-0.2,-0.3) {$x_3$};
      \node[shift={(6,-6)}] at (-0.3,2.3) {$x_2$};
    \node[shift={(6,-6)}] at (-0.2,3.3) {$x_1$};
    \node[shift={(6,-6)}] at (2.4,1) {$x_0$};
    \node[shift={(6,-6)}] at (2.4,0.4) {$y_2$};
      \node[shift={(6,-6)}] at (2.4,1.6) {$y_1$};
    \node[shift={(6,-6)}] at (2.4,2.5) {$y_0$};

\filldraw[shift={(0,-12)}] (0,1) circle (2pt);
    \filldraw[shift={(0,-12)}] (0,2) circle (2pt);
    \filldraw[shift={(0,-12)}] (0,3) circle (2pt);
    \filldraw[shift={(0,-12)}] (2,0.5) circle (2pt);
    \filldraw[shift={(0,-12)}] (2,1.5) circle (2pt);
    \filldraw[shift={(0,-12)}] (2,2.5) circle (2pt);
    \filldraw[shift={(0,-12)}] (2,2.5) circle (2pt);
    \filldraw[shift={(0,-12)}] (2,2.5) circle (2pt);
    
    \draw[shift={(0,-12)}] (0,3)--(2,2.5);
    \draw[shift={(0,-12)}](0,2)--(2,2.5);
    \draw[shift={(0,-12)}] (0,1)--(2,1.5);
    \draw[shift={(0,-12)}] (0,1)--(2,0.5);
    \node[shift={(0,-12)}] at (-0.3,1.3) {$x_0$};
      \node[shift={(0,-12)}] at (-0.3,2.3) {$x_2$};
    \node[shift={(0,-12)}] at (-0.2,3.3) {$x_1$};
    \node[shift={(0,-12)}] at (2.4,0.4) {$y_2$};
      \node[shift={(0,-12)}] at (2.4,1.6) {$y_1$};
    \node[shift={(0,-12)}] at (2.4,2.5) {$y_0$};

    \draw[shift={(0,-12)},-implies,double equal sign distance](3.5,1.5)--(5,1.5);

    \filldraw[shift={(6,-12)}] (0,1) circle (2pt);
    \filldraw[shift={(6,-12)}] (0,2) circle (2pt);
    \filldraw[shift={(6,-12)}] (0,3) circle (2pt);
    \filldraw[shift={(6,-12)}] (3,0.5) circle (2pt);
    \filldraw[shift={(6,-12)}] (3,1.5) circle (2pt);
    \filldraw[shift={(6,-12)}] (3,2.5) circle (2pt);
    
    \filldraw[shift={(6,-12)}] (1.5,2.5) circle (2pt);
    \filldraw[shift={(6,-12)}] (1.5,1) circle (2pt);
    \filldraw[shift={(6,-12)}] (3,2.5) circle (2pt);
    \filldraw[shift={(6,-12)}] (3,2.5) circle (2pt);
    
    \draw[shift={(6,-12)}] (0,3)--(1.5,2.5);
    \draw[shift={(6,-12)}](0,2)--(1.5,2.5);
    \draw[shift={(6,-12)}] (0,1)--(1.5,1);
    \draw[shift={(6,-12)}] (1.5,1)--(3,1.5);
    \draw[shift={(6,-12)}] (1.5,2.5)--(3,2.5);
    \draw[shift={(6,-12)}] (1.5,1)--(3,0.5);
    \node[shift={(6,-12)}] at (-0.3,1.3) {$x_0$};
      \node[shift={(6,-12)}] at (-0.3,2.3) {$x_2$};
    \node[shift={(6,-12)}] at (-0.2,3.3) {$x_1$};
    \node[shift={(6,-12)}] at (1.5,0.6) {$x_0$};
    \node[shift={(6,-12)}] at (3.4,0.4) {$y_2$};
      \node[shift={(6,-12)}] at (3.4,1.6) {$y_1$};
    \node[shift={(6,-12)}] at (3.4,2.5) {$y_0$};
    \node[shift={(6,-12)}] at (1.5,2.8) {$y_0$};

\end{tikzpicture}
    \caption{An illustration of the proof of Lemma \ref{lem:Relation_Cardinal_Bounded}. The top two pictures represent the first two cases, where the dashed line is the pair $(x,y)$ to be deleted in $R$ to obtain $P$. The second line represent the passage from $R$ to $Q$ in the particular case where $x_0\in Y$. On the last line, the left part is the relation $R$, and the right part is the concatenation of $P_0$ and $P_1$. }
    \label{fig:Proof_Relation_Cardinal_Bounded}
\end{figure}

We deduce that we can indeed decompose the considered chains into relations that either merge points together, or that split some points, while keeping a control over the cardinality of the intermediate configurations of the chain.
\begin{corol}\label{cor:Combinatorial_Chain_Graph}
    Let $((X_0,\dots, X_{k}),(R_0,\dots,R_{k-1}))$ be a combinatorial chain, and $\omega$ be a weight. There exists a combinatorial chain $((Y_0,\dots,Y_l),(P_0,\dots,P_{l-1}))$ such that:
    \begin{itemize}
         \item $Y_0=X_0$ and $Y_l=X_k$,
        \item For all $0\leq i \leq l-1$, either $P_i$ or $P_i^{\op}$ is the graph of a surjective map,
        \item $\elo(P_0,\dots,P_{l-1})\leq \elo(R_0,\dots,R_{k-1})$,
        \item $\card(Y_i)\leq \max\{\card(X_j)\mid 0\leq j\leq k\}$ for all $0\leq i\leq l-1$.
    \end{itemize}
\end{corol}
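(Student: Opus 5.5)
Our approach is to reduce the chain relation by relation, using \cref{lem:Relation_Cardinal_Bounded} repeatedly. It suffices to treat a single surjective relation $R\subset X\times Y$ and to produce a chain from $X$ to $Y$ made only of graphs of surjective maps and their opposites. This chain should have $\elo$-length at most $\elo(R)$ and intermediate configurations of cardinality at most $\max(\card X,\card Y)$. Concatenating the chains obtained for $R_0,\dots,R_{k-1}$ then gives the result, since lengths add under concatenation and $\max(\card X_j,\card X_{j+1})\leq \max_j \card X_j$.

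For a single relation we argue by strong induction on $\card(R)$. Since $R$ is surjective, $\card(R)\geq\max(\card X,\card Y)$, so there are two cases.

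If $\card(R)>\max(\card X,\card Y)$, we apply \cref{lem:Relation_Cardinal_Bounded}. It yields a chain $(P_0,\dots,P_{m-1})$ from $X$ to $Y$ with $\card(P_j)<\card(R)$, total length at most $\elo(R)$, and intermediate configurations of cardinality at most $\max(\card X,\card Y)$. We then apply the induction hypothesis to each $P_j$. The resulting intermediate configurations have cardinality bounded by the maximum of the cardinalities of the endpoints of $P_j$, which is at most $\max(\card X,\card Y)$. The lengths still add up to at most $\elo(R)$.

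If $\card(R)=\max(\card X,\card Y)$, say $\card(R)=\card(X)$ (the other case follows by applying the argument to $R^{\op}$), then every $x\in X$ has exactly one image. Indeed, surjectivity gives each $x$ at least one image, and the count forces exactly one. Hence $R$ is the graph of a map $X\to Y$, which is surjective since $R$ is a surjective relation, and the one-term chain $(R)$ already works.

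The base of the induction is contained in the second case, because the cardinality of a surjective relation is bounded below by $\max(\card X,\card Y)$. The only delicate point is the bookkeeping of the cardinality bound through the nested applications. This bound is guaranteed because \cref{lem:Relation_Cardinal_Bounded} controls every intermediate configuration by the endpoints of the original relation, and we then apply the induction to relations whose endpoints already satisfy that bound. Termination holds because $\card(P_j)<\card(R)$ strictly at each step.
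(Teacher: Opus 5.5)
Your proof is correct and matches the paper's: reduce to a single relation by concatenation, treat $\card(R)=\max(\card X,\card Y)$ directly (there $R$ or $R^{\op}$ is the graph of a surjection), and otherwise shorten via \cref{lem:Relation_Cardinal_Bounded} and recurse. The only difference is that you use a single strong induction on $\card(R)$, whereas the paper nests an induction on $\max(\card X,\card Y)$ around one on $\card(R)$; yours suffices, since the lemma already bounds every intermediate cardinality by the original endpoints.
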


\begin{proof}
    It is sufficient to prove the case where $k=1$,
    since the general result will be obtained by concatenation, so let $R\subset X\times Y$
    be a surjective relation, $n=\max(\card(X),\card(Y))$. We proceed by induction on $n\geq 1$. If $n=1$, then $X$, $Y$ and $R$ are all singletons, and $R$ must be already be the graph of a bijection and there is nothing to prove.
    
    Let us suppose the result known for all value $\leq n_0$ and let suppose $n=n_0+1$.
    We prove the induction step on $n$ by induction on $m=\card(R)$. Since $R$ must be surjective, we have $\card(R)=m\geq n$. If $m=n$, then either $\card(R)=\card(X)$ or $\card(R)=\card(Y)$. In the first case, $R$ is the graph of a map (which is necessarily surjective), and in the later $R^{\op}$ is. In any case, there is nothing to prove.
    Now let us assume that the result is known for all integers $\leq m_0$, and that $m=m_0+1>n$. 
    Then by \cref{lem:Relation_Cardinal_Bounded}, there exists a combinatorial chain $((Y_0,\dots,Y_l),(P_0,\dots,P_{l-1}))$ such that
    \begin{itemize}
        \item $\card(P_j)<\card(R)$ for all $0\leq j\leq l-1$,
        \item $\elo(P_0,\dots,P_{l-1})\leq \elo(R)$,
        \item $Y_0=X$, $Y_l=Y$,
        \item $\card(Y_i)\leq\max(\card(X),\card(Y))$
    \end{itemize}
    But then, since each $P_i$ has cardinality strictly smaller than $m$, and each $Y_i$ has cardinality smaller that $n$ applying both induction hypotheses gives the desired result.  
\end{proof}

\subsection{Merges, bijections and splitting}
In this subsection, we continue to restrict the set of combinatorial chains over which we compute the weighted distance, to arrive at the result of \cref{theo:MBS}. We start by formalizing properly the relations that we want in our "good" combinatorial chains: merges, splittings and bijections.
\begin{defin}
    Let $X,Y\in \Ran(M)$ be two configurations, and $R\subset X\times Y$ a surjective relation between $X$ and $Y$. The relation $R$ is said to be :
    \begin{itemize}
        \item a \textbf{merge} (from $X$ to $Y$) if $R$ is the graph of a map $f\colon X\to Y$ such that
        \begin{itemize}
            \item there exists $y_0\in Y$ such that $\card(f^{-1}(y_0))\geq 2$,
            \item for all $y\not=y_0$, $f^{-1}(y)=\{y\}\subset X$.
        \end{itemize}
        The merge is said to be \textbf{simple} if $\card(f^{-1}(y_0))=2$. In that case, we will say that $R$ is the merge of $f^{-1}(y_0)$ at $y_0$.
        \item a \textbf{bijection} (from $X$ to $Y$) if $R$ is the graph of a bijection $\phi\colon X\to Y$.
        \item a \textbf{splitting} (from $X$ to $Y$) if $R^{\op}$ is a merge (from $Y$ to $X$). It is said to be a \textbf{simple} splitting, if $R^{\op}$ is a simple merge.
    \end{itemize}
\end{defin}
We can finally define the specific shape that we want for the combinatorial chains.
\begin{defin}
\label{def:MBS}
    Let $(R_0,\dots,R_{k-1})$ be a combinatorial chain. We say that it is a \textbf{Merge-Bijection-Splitting-chain} (or \textbf{MBS-chain}) if there exists $0\leq l\leq k-1$ such that
    \begin{itemize}
        \item $R_i$ is a merge, for $0\leq i\leq l-1$,
        \item $R_l$ is a bijection,
        \item $R_j$ is a splitting, for $l+1\leq j \leq k-1$
    \end{itemize}
    or if there exists $0\leq l\leq k$ such that
    \begin{itemize}
        \item $R_i$ is a merge, for $0\leq i\leq l-1$,
        \item $R_j$ is a splitting, for $l\leq j \leq k-1$
    \end{itemize}
    It is said to be a \textbf{simple} MBS-chain if in addition each of the merges and splittings are simple.
\end{defin}

\begin{rem}
    We may always assume that a MBS-chain is of the first form, by adding the identity bijection to a chain of the second form, since this will not change its $\omega$-length. Thus, in the following proofs, we may implicitly assume that we are in the first case without loss of generality.
\end{rem}

We will now spend the rest of the section to prove the following theorem.
\begin{theo}\label{theo:MBS}
    Let $X,Y\in \Ran(M)$ be two configurations, and $\omega$ a weight. Then, the combinatorial distance can be computed as
    \begin{equation*}
        \dom(X,Y)=\inf\{\elo(R_0,\dots,R_{k-1})\}
    \end{equation*}
    where the infimum is computed on simple MBS-chains between $X$ and $Y$.
\end{theo}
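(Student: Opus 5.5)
Since simple MBS-chains are combinatorial chains, the infimum over them is at least $\dom(X,Y)$. For the converse, I will start from an arbitrary combinatorial chain and produce a simple MBS-chain between the same endpoints that is no longer in $\omega$-length. By \cref{cor:Combinatorial_Chain_Graph}, I may assume every relation $R_i$ is either the graph of a surjective map (a "collapse") or the opposite of one (an "expansion"). I then put these into normal form in three stages.

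First, \textbf{decompose into simple steps.} Every collapse $f\colon X_i\to X_{i+1}$ factors as a bijection $X_i\to f(X_i)$, namely $x\mapsto f(x)$ restricted to one chosen representative per fibre, followed by simple merges. This is not quite right, since the points must actually move. Instead I write $f$ as a sequence of simple merges followed by a bijection. In each fibre $f^{-1}(y)$, I merge points pairwise at one chosen point $x_y$ of the fibre, which costs $d(x,x_y)$. I then move $x_y$ to $y$ by a bijection. The triangle inequality $d(x,x_y)+d(x_y,y)$ is not bounded by $d(x,y)$, so I will choose $x_y$ carefully. A better choice is to merge each $x\in f^{-1}(y)\setminus\{x_y\}$ directly onto $x_y$ and then apply the bijection. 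The cost is $\sum_x d(x,x_y)+\sum_y d(x_y,y)$, which again needs comparison with $\sum_x d(x,y)$.

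The alternative is to first apply the bijection that translates only the representatives, then merge each remaining $x$ onto $y$. Here intermediate configurations contain both $x$ and $y$, and the merges cost exactly $d(x,y)$, while the bijection costs $\sum_y d(x_y,y)$. The total is then $\sum_x d(x,y)$ exactly. The weights are also fine, since every intermediate relation has cardinality at most $\card(X_i)$ and $\omega$ is non-decreasing. The subtlety is coincidences: $y$ may already lie in the configuration, which forces a different bookkeeping. I expect to handle this by making $y$ itself the representative when $y\in f^{-1}(y)$. An expansion is treated symmetrically by reversing the chain.

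Second, \textbf{commute steps into MBS order.} After the first stage I have a chain made of simple merges, simple splittings, and bijections. I will prove local exchange lemmas, each of which does not increase the $\omega$-length and preserves the endpoints.
\begin{enumerate}
\item Two consecutive bijections compose into one bijection, with shorter length by the triangle inequality.
\item A splitting followed by a merge can be replaced either by a bijection, if the merge undoes the split, or by a merge followed by a splitting. Since merges decrease cardinality, the relations on the right have cardinality at most that of the left, and $\omega$ monotone handles the weights.
\item A bijection adjacent to a merge or a splitting can be pushed past it, toward the middle.
\end{enumerate}
A bubble-sort argument, using a potential such as the number of inversions, then yields merges, then one bijection, then splittings.

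The main obstacle is the exchange "split then merge" and "bijection then merge". Moving a bijection past a merge changes which points travel, and the length must be controlled. Composing the relations and reapplying the first stage gives a clean proof when the merged pair involves a newly split point. When the split and the merge involve disjoint points, the two operations simply commute, and the relation cardinality goes down. The weight factor is the delicate part. I will argue that after the exchange every relation has cardinality at most the maximum of the original pair. When the exchange goes through a composite relation whose cardinality might grow, I fall back on \cref{lem:Relation_Cardinal_Bounded} to bring it back down. Termination must then be argued by a well-founded measure, such as lexicographic order on cardinalities of intermediate configurations.
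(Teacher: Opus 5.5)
Your skeleton (reduce to graphs of maps and co-maps via \cref{cor:Combinatorial_Chain_Graph}, break these into elementary steps, then sort by length-non-increasing local exchanges) is the paper's. However, the proposal has no working argument for the exchange everything hinges on: pushing a bijection past a merge, and symmetrically a splitting past a bijection.

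Your Stage~1 puts the bijection \emph{before} the merges, so it manufactures exactly this inversion. The only mechanism you offer for undoing it is to compose the two relations and reapply Stage~1. But a bijection followed by a merge composes to the graph of a surjection out of a configuration of the same cardinality. Stage~1 then returns a bijection followed by merges at the same cardinalities, so the exchange loops and no measure decreases. \cref{lem:Relation_Cardinal_Bounded} does not help either, since the composite does not exceed the endpoint cardinalities.

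Stage~1 is also not always executable. Take $X_i=\{p,q,r\}$ with $f(p)=f(q)=p$ and $f(r)=q$. Whichever point of the fibre $\{p,q\}$ you take as representative, the one left in place collides with a target of the bijection ($r\mapsto q$ with $q$ fixed, or $q\mapsto p$ with $p$ fixed). So your proposed fix for coincidences is not enough; here $q$ must first be merged into $p$ to vacate $q$.

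The missing idea is the merge-first order you abandoned, done with the right target. A merge need not land on one of the merged points, so a whole fibre $f^{-1}(y)$ can be merged directly at $y$ at cost exactly $\omega(\card(X))\sum_{x\in f^{-1}(y)}d(x,y)$, with no triangle-inequality loss. If $y$ is currently occupied by a point of another fibre, keep one $z\in f^{-1}(y)$ and merge the rest of the fibre into that occupied point. The remaining map then sends $z$ to $y$ and $y$ to $f(y)$. Induction on the number of non-injective fibres gives ``merges, then one bijection'' without increasing $\elo$ (\cref{lem:Surjections_Are_Merges}). The bijection--merge exchange then becomes immediate (\cref{lem:Merge_Bijections_Exchange}), and simplicity of the merges is a separate easy step (\cref{lem:Simple_Merges}).

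Two smaller points.
\begin{itemize}
\item Your exchange (b) is false as stated when the split pair and the merged pair share one point. In $M=\R$ with $\omega\equiv1$, splitting $0$ in $\{0,3\}$ into $\{-1,1\}$ and then merging $\{1,3\}$ at $2$ costs $4$. Every merge followed by a splitting from $\{0,3\}$ to $\{-1,2\}$ costs at least $6$, while the bijection $0\mapsto-1$, $3\mapsto2$ costs $2$. This is why \cref{lem:switch split merge} outputs (merge or bijection) then (bijection or splitting).
\item Termination should use a multiset order on intermediate cardinalities, where each exchange replaces one intermediate configuration by finitely many of strictly smaller cardinality. A plain lexicographic order on sequences whose length changes is not well-founded.
\end{itemize}
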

We still need some extra lemmas in order to complete the proof. Note that all of them are symmetric in terms of merge/splitting, so that we only prove them for merges. First we prove that we can decompose a relation that "merges" points together into an chain of actual merges, followed by a bijection.
\begin{lem}\label{lem:Surjections_Are_Merges}
    Let $X,Y\in \Ran(M)$ be two configurations, and $R$ be the graph of a surjective map $f\colon X\to Y$. Then, there exists a combinatorial chain $((X_0,\dots,X_k),(R_0,\dots,R_{k-1}))$ from $X$ to $Y$ such that
    \begin{itemize}
        \item $\elo(R_0,\dots,R_{k-1})\leq \elo(R)$,
        \item $R_i$ is a merge, whenever $0\leq i\leq k-2$,
        \item $R_{k-1}$ is a bijection.
    \end{itemize}
\end{lem}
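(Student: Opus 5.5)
We construct the chain by moving every point of $X$ to its image in one bijective step and doing the merges first. Write $Y=\{y_1,\dots,y_m\}$ and let $F_j=f^{-1}(y_j)$ be the fibers, so that $X=\bigsqcup_j F_j$ and $\card(X)=\card(R)$. In each fiber $F_j$ with $\card(F_j)\geq 2$ we fix a representative $x_j\in F_j$, chosen to minimise $d(x,y_j)$ over $F_j$ (any choice works). For one-point fibers, $x_j$ is the unique element.

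The chain merges the fibers one at a time at their representatives. Let $j_1,\dots,j_{k-1}$ enumerate the indices with $\card(F_j)\geq 2$. Set $X_0=X$ and
$$X_i=X_{i-1}\setminus (F_{j_i}\setminus\{x_{j_i}\}).$$
Then $R_{i-1}$ is the graph of the map $X_{i-1}\to X_i$ sending $F_{j_i}$ to $x_{j_i}$ and fixing every other point. This is a merge in the sense of the definition: exactly one point, $x_{j_i}$, has a fiber of cardinality $\geq 2$, and every other point is its own singleton fiber, since distinct fibers are disjoint. After these merges, $X_{k-1}=\{x_1,\dots,x_m\}$, and the last relation $R_{k-1}$ is the graph of the bijection $x_j\mapsto y_j$. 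If no fiber has at least two points, then $f$ is already a bijection and we take $k=1$.

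For the length estimate, each merge $R_{i-1}$ satisfies $\card(R_{i-1})=\card(X_{i-1})\leq\card(X)=\card(R)$, and the bijection satisfies $\card(R_{k-1})=m\leq\card(R)$. By monotonicity of $\omega$, every weight is therefore at most $\omega(\card(R))$, so it suffices to compare unweighted lengths:
$$\sum_{i}\ell(R_{i-1})+\ell(R_{k-1})=\sum_j\Big(\sum_{x\in F_j}d(x,x_j)+d(x_j,y_j)\Big).$$
By the triangle inequality, $d(x,x_j)\leq d(x,y_j)+d(y_j,x_j)$. Hence each fiber contributes at most
$$\sum_{x\in F_j\setminus\{x_j\}}d(x,y_j)+\card(F_j)\,d(x_j,y_j).$$
This is the main obstacle, because the term $d(x_j,y_j)$ is now counted $\card(F_j)$ times instead of once. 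The naive bound gives $\ell(R)+(\card(F_j)-1)\,d(x_j,y_j)$, which may exceed $\ell(R)$.

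The fix I would try first is to merge each fiber at a point $z_j\in F_j$ and send $z_j$ to $y_j$, choosing $z_j$ to minimise $g(z)=\sum_{x\in F_j}d(x,z)+d(z,y_j)$ over $F_j$. The merge targets must lie in $X$, since a merge fixes every non-merged point and must send the fiber into $X_{i}$, so $F_j$ is the natural candidate set. Using the bound above with $x_j$ the point of $F_j$ closest to $y_j$ costs $\sum_{x\in F_j}d(x,y_j)+(\card(F_j)-1)\,d(x_j,y_j)$, which still has the extra term. If the minimisation does not close the gap, I would instead route the merge through a point not in $X$. One merge sends $F_j$ to a new point $z$ which is neither in $X$ nor in $Y$, and then the bijection moves $z$ to $y_j$; when $y_j\notin X\setminus F_j$ one can simply take $z=y_j$. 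Merging directly at $z=y_j$ costs exactly $\sum_{x\in F_j}d(x,y_j)$ plus zero in the bijection, which matches $\ell(R)$ fiber by fiber. The only difficulty is then a collision, when $y_j$ already lies in $X\setminus F_j$. I would handle this by ordering the merges so that a colliding point $y_j\in F_{j'}$ has already been merged away, or by merging at a point arbitrarily close to $y_j$ and absorbing the resulting $\epsilon$ through the fact that the weight is at most $\omega(\card(R))$. A genuine cycle of collisions, where every target $y_j$ lies in another fiber that is still to be merged, is the delicate case I expect to need the most care.
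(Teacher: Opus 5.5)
Your construction works when there is no collision: if $y_j\notin X\setminus F_j$, merging all of $F_j$ at $y_j$ costs exactly $\sum_{x\in F_j}d(x,y_j)$ and nothing in the bijection. This is the first case of the paper's induction on the number of non-injective fibres. Your remark that merge targets must lie in $X$ misreads the definition: a merge only has to fix the non-merged points, and its target may be a new point, as you later use.

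The gap is the collision case $y_j\in X\setminus F_j$. It is the real content of the lemma, and neither of your fixes handles it. Take $X=\{0,1,2\}\subset\R$, $f(0)=f(2)=1$, $f(1)=5$ and $\omega\equiv 1$, so that $\elo(R)=6$.
\begin{itemize}
\item Reordering fails. The colliding point $1$ forms a one-point fibre, so it is never merged away. Reordering also cannot break a cycle such as $y_1\in F_2$, $y_2\in F_1$.
\item Perturbing fails. Merging $\{0,2\}$ at any $z\neq 1$ and then moving $z\mapsto 1$, $1\mapsto 5$ costs $|z|+|2-z|+|z-1|+4>6$. The lemma demands $\leq\elo(R)$ exactly, not up to $\epsilon$. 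A constant weight leaves no slack to absorb the excess, since every step carries the same weight as $R$. In a general metric space there may even be no point of $M\setminus X$ near $y_j$.
\end{itemize}

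The missing idea is that configurations are unlabelled sets, so points can exchange roles. When $y_j\in X\setminus F_j$, pick any $z\in F_j$ and merge $(F_j\setminus\{z\})\cup\{y_j\}$ at the existing point $y_j$, leaving $z$ in place. This is a legitimate merge: $y_j$ lies in its own fibre and every other point is fixed.

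Consider the remaining surjection, the restriction of $f$ to $(X\setminus F_j)\cup\{z\}$. In it, $z$ is the unique preimage of $y_j$, the point $y_j$ still goes to $f(y_j)$, and all other fibres are unchanged.

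The merge costs $\sum_{x\in F_j\setminus\{z\}}d(x,y_j)$. The terms $d(z,y_j)$ and $d(y_j,f(y_j))$ are carried by the remaining surjection. So merge plus remaining surjection has unweighted length exactly $\ell(R)$, while every weight involved is at most $\omega(\card(X))$.

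Treating one fibre at a time, in any order, by induction on the number of non-injective fibres, proves the lemma with no ordering or $\epsilon$ issues. In the example, merge $2$ into $1$, then biject $0\mapsto 1$, $1\mapsto 5$. For any weight this costs $\omega(3)+5\omega(2)\leq 6\omega(3)=\elo(R)$.
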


\begin{proof}
    Consider the following partitions of $X$ and $Y$ :
    \begin{align*}
        Y_{-}=\{y\in Y\mid \card(f^{-1}(y))=1\},&\quad&  X_{-}=f^{-1}(Y_{-}),\\
        Y_{+}=\{y\in Y\mid \card(f^{-1}(y))>1\},&& X_{+}=f^{-1}(Y_{+}).
    \end{align*}
    The map $f$ restricts to a (possibly empty) bijection $f_-\colon X_-\to Y_-$ and a nowhere injective, surjective map $f_+\colon X_+\to Y_+$. We will show the claim by induction on $\card(Y_+)$. 

    If $\card(Y_+)=0$, then $f$ is a bijection, and there is nothing to prove. Now, let $l\geq 0$, be such that the result is known whenever $\card(Y_+)\leq l$. Assume $\card(Y_+)=l+1$, and fix some $y\in Y_+$. We distinguish two cases.
    \begin{itemize}
        \item If $y\not \in X\setminus f^{-1}(y)$, define $\widetilde{X}=X\setminus f^{-1}(y)\cup \{y\}$, together with a map $f_0\colon X\to \widetilde{X}$ such that $f_0(x)=x$ for all $x\not\in f^{-1}(y)$ and $f_0(x)=y$ for all $x\in f^{-1}(y)$. By construction, $R_0$, the graph of $f_0$ is a merge, and we have
        \begin{equation*}
            \elo(R_0)=\omega(\card(X))\sum_{x\in f^{-1}(y)}\dm(x,y)
        \end{equation*}
        On the other hand, we define $\widetilde{f}\colon \widetilde{X}\to \widetilde{Y}=Y$ via $\widetilde{f}(x)=f(x)$ for $x\not = y$, and $\widetilde{f}(y)=y$, and we have for $\widetilde{R}$ the graph of $\widetilde{f}$,
        \begin{equation*}
            \elo(\widetilde{R})=\omega(\card(\widetilde{X}))\sum_{x\in X\setminus f^{-1}(y)}\dm(x,f(x))
        \end{equation*}
        Now, just as before, define $\widetilde{Y}_+\subset \widetilde{Y}$ to be the subset of points at which $\widetilde{f}$ is non-injective, and observe that $\widetilde{Y}_+=Y_+\setminus \{y\}$, thus $\card(\widetilde{Y}_+)\leq l$, and we can apply the induction hypothesis to produce a combinatorial chain $(R_1,\dots,R_{k-1})$ of the desired form, and satisfying
        \begin{equation*}
            \elo(R_1,\dots,R_{k-1})\leq \elo(\widetilde{R})
        \end{equation*}
        Now, the concatenation $(R_0,\dots,R_{k-1})$ is of the desired form, and we have
        \begin{align*}
            \elo(R_0,\dots,R_{k-1})&=\elo(R_0)+\elo(R_1,\dots,R_{k-1})\\
            &\leq \elo(R_0)+\elo(\widetilde{R})\\
            &\leq \omega(\card(X))\sum_{x\in f^{-1}(y)}\dm(x,y)+\omega(\card(\widetilde{X}))\sum_{x\in X\setminus f^{-1}(y)}\dm(x,f(x))\\
            &\leq \omega(\card(X))\sum_{x\in X}\dm(x,f(x))\\
            &\leq \elo(R)
        \end{align*}
        Which concludes the proof in this case.
        \item If $y\in X\setminus f^{-1}(y)$, choose some $z\in f^{-1}(y)$, and define $\widetilde{X}=X\setminus f^{-1}(y)\cup \{z\}$, together with a map $f_0\colon X\to \widetilde{X}$ which satisfies $f_0(x)=y$ if $x\in f^{-1}(y)\setminus z$ and $f_0(x)=x$ otherwise. Observe that $f_0$ is a merge, since $f_0^{-1}(y)=f^{-1}(y)\cup \{y\}\setminus \{z\}$ is of cardinality at least $2$. Now, let $\widetilde{f}\colon \widetilde{X}\to \widetilde{Y}=Y$ be the map defined by $\widetilde{f}(x)=f(x)$ for all $x\in \widetilde{X}$. Observe that $\widetilde{Y}_+=Y_+\setminus\{y\}$, since $\widetilde{f}$ is injective at $y$, and we can thus apply the induction hypothesis to $\widetilde{f}$. Furthermore, observe that we have
        \begin{align*}
            \elo(f_0)+\elo(\widetilde{f}) &=\omega(\card(X))\left(\sum_{x\in f^{-1}(y)\setminus \{z\}}d(x,y)\right)
            +\omega(\card(\widetilde{X}))\left(\sum_{x\in X\setminus f^{-1}(y)\cup \{z\}}\dm(x,f(x))\right)\\
            &\leq \omega(\card(X))\sum_{x\in X}\dm(x,f(x))\\
            &\leq \elo(f)
        \end{align*}
        This gives the induction step.
    \end{itemize}
\end{proof}

Then we prove that we can essentially change the order of merges and bijections to always start with the merges, and end with a bijection.
\begin{lem}\label{lem:Merge_Bijections_Exchange}
    Let $((X,Y,Z),(R_0,R_1))$ be a combinatorial chain where $R_0$ is a bijection and $R_1$ is a merge, then there exists a combinatorial chain $(P_0,\dots,P_{k-1})$ between $X$ and $Z$ such that
    \begin{itemize}
        \item $P_i$ is a merge, for all $0\leq i<k-1$,
        \item $P_{k-1}$ is a bijection,
        \item $\elo(P_0,\dots,P_{k-1})\leq \elo(R_0,R_1)$
    \end{itemize}    
\end{lem}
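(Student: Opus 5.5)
Let $\phi\colon X\to Y$ be the bijection whose graph is $R_0$, and $g\colon Y\to Z$ the merge whose graph is $R_1$. Let $y_0\in Z$ be the merge point, $A=g^{-1}(y_0)\subset Y$ with $\card(A)\geq 2$, and $g(y)=y$ for $y\notin A$. The plan is to consider the composite $f=g\circ\phi\colon X\to Z$. It is a surjective map, and its graph $R$ has $\card(R)=\card(X)=\card(Y)$. I will apply \cref{lem:Surjections_Are_Merges} to $f$, which produces a chain of merges followed by a bijection, of length at most $\elo(R)$. It then suffices to prove
\begin{equation*}
\elo(R)\leq \elo(R_0)+\elo(R_1).
\end{equation*}

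For this inequality I use the triangle inequality pointwise:
\begin{equation*}
\sum_{x\in X} d(x,f(x))\leq \sum_{x\in X} d(x,\phi(x))+\sum_{x\in X} d(\phi(x),g(\phi(x))) = \ell(R_0)+\ell(R_1),
\end{equation*}
since $\phi$ is a bijection and the second sum equals $\sum_{y\in Y}d(y,g(y))$. The weights also match: $\elo(R)=\omega(\card(X))\,\ell(R)$ with $\card(R)=\card(X)$, while $\card(R_0)=\card(X)$ and $\card(R_1)=\card(Y)=\card(X)$. So all three relations carry the same weight $\omega(\card(X))$, and multiplying through gives $\elo(R)\leq\elo(R_0)+\elo(R_1)$.

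The main obstacle is only formal. \cref{lem:Surjections_Are_Merges} is stated for the graph of an arbitrary surjective map, so it applies to $f$ directly. Its output chain has merges in positions $0,\dots,k-2$ and a bijection in position $k-1$, which is exactly the required shape, and its total length is at most $\elo(R)$. Combining this with the inequality above concludes the proof. One degenerate case needs checking: if that lemma returns only a bijection ($k=1$), this is still of the required form with no merges. That case cannot actually occur here, because $f$ is non-injective ($\card(A)\geq 2$ and $\phi$ is a bijection), but it is harmless either way.
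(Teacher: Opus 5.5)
Your proposal is correct and follows the paper's proof essentially verbatim. You compose the bijection with the merge, observe that $R$, $R_0$ and $R_1$ all have cardinality $\card(X)$ and hence the same weight $\omega(\card(X))$, bound $\elo(R)\leq\elo(R_0,R_1)$ by the pointwise triangle inequality, and conclude with \cref{lem:Surjections_Are_Merges}; your check of the degenerate case $k=1$ is harmless but not needed.
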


\begin{proof}
    Since $R_0$ is the graph of a bijective map $f_0\colon X\to Y$ and $R_1$ is the graph of a surjective map $f_1\colon Y\to Z$, we may consider the composition $f_1\circ f_0$, whose graph we will denote $R$. It is a surjection, by construction, and since $f_0$ is a bijection, we have $\card(R)=\card(X)=\card(R_0)=\card(Y)=\card(R_1)$. Thus, we have
    \begin{align*}
        \elo(R)&=\omega(\card(X))\sum_{(x,z)\in R}\dm(x,z)\\
        &=\omega(\card(X))\sum_{(x,y)\in R_0}\dm(x,f_1(y))\\
        &\leq\omega(\card(X))\sum_{(x,y)\in R_0}\left(\dm(x,y)+\dm(y,f_1(y))\right)\\
        &\leq\omega(\card(X))\left(\sum_{(x,y)\in R_0}\dm(x,y)+ \sum_{(y,z)\in R_1}\dm(y,z)\right)\\
        &\leq \elo(R_0,R_1)
    \end{align*}
    We conclude by applying \cref{lem:Surjections_Are_Merges} to $R$.
\end{proof}

By iterating this lemma, we can get the following corollary for an arbitrary number of merges and bijections.
\begin{corol}\label{cor:multiple_Merge_Bijections_Exchange}
    Let $X,Y\in\Ran(M)$ be two configurations and $(R_0,\dots,R_{k-1})$ be a combinatorial chain between $X$ and $Y$, such that $R_i$ is a bijection or a merge for all $0\leq i\leq k-1$. Then there exists a chain $(P_0,\dots,P_{l-1})$ between $X$ and $Y$ such that
    \begin{itemize}
        \item $P_i$ is a merge, for all $0\leq i<l-1$,
        \item $P_{l-1}$ is a bijection,
        \item $\elo(P_0,\dots,P_{l-1})\leq \elo(R_0,\dots,R_{k-1})$
    \end{itemize}    
\end{corol}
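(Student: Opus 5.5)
The plan is to argue by induction, moving every bijection to the end of the chain by repeated use of \cref{lem:Merge_Bijections_Exchange}. It is cleaner to prove a slightly stronger statement: for any chain $(R_0,\dots,R_{k-1})$ of merges and bijections, there is a chain of the required form (merges followed by a single final bijection) with $\omega$-length at most $\elo(R_0,\dots,R_{k-1})$. I will induct on $k$. For $k=0$ the empty chain is completed by the identity bijection, which has length $0$. For $k=1$, a merge is completed by appending the identity bijection on $Y$, and a bijection is already of the required form.

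For the induction step, take a chain $(R_0,\dots,R_{k-1})$ of length $k\geq 2$.
\begin{enumerate}
\item Apply the induction hypothesis to $(R_1,\dots,R_{k-1})$. This produces a chain $(Q_0,\dots,Q_{m-1})$ of merges ending in a bijection, with no greater length.
\item If $R_0$ is a merge, the concatenation $(R_0,Q_0,\dots,Q_{m-1})$ is already of the desired form.
\item If $R_0$ is a bijection, first compose it with the next relation. When $Q_0$ is a merge, \cref{lem:Merge_Bijections_Exchange} applied to $(R_0,Q_0)$ gives a chain of merges ending in a bijection $B$, with length at most $\elo(R_0,Q_0)$. When $Q_0$ is a bijection (so $m=1$), compose $R_0$ and $Q_0$ into a single bijection; its length is at most the sum, by the triangle inequality and because the cardinalities agree.
\item In the first subcase we are left with the bijection $B$ followed by the merges $Q_1,\dots,Q_{m-2}$ and the final bijection $Q_{m-1}$. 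This tail is strictly shorter than the original chain only if the exchange lemma does not lengthen things, so I will not induct on $k$ here. Instead, observe that every merge strictly decreases cardinality. Hence the number of merges in any chain from $X$ to $Y$ is at most $\card(X)-\card(Y)$.
\end{enumerate}

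So the induction should really be a double one: first on the number of merges appearing in the chain, and within that on the position of the first bijection that is followed by a merge. The exchange lemma replaces the pair (bijection, merge) with merges followed by a bijection. This either moves the bijection strictly later or lowers the number of steps remaining after it. Whenever two bijections become adjacent, compose them into one, as in the step above. A natural measure that strictly decreases is the number of merges occurring after the first bijection, with ties broken by the number of bijections.

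The main obstacle is termination of this rewriting. The exchange lemma goes through \cref{lem:Surjections_Are_Merges} and may output several merges in place of one. The key check is that the merges preceding the new bijection still total exactly $\card(X)-\card(Z)$ in cardinality drop, so that merges remaining after the bijection strictly decrease in number. Once termination is settled, the length inequality follows by summing the inequalities from each exchange and composition.
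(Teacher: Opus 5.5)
Your argument is correct and is essentially the paper's: iterate \cref{lem:Merge_Bijections_Exchange} to push bijections past merges and compose adjacent bijections via the triangle inequality. Your lexicographic measure (merges after the first bijection, then number of bijections) supplies the termination argument the paper leaves implicit, provided you always rewrite at the first bijection. Your worry that the exchange might emit several merges is moot: the composite of a bijection and a merge has a single non-injective fiber, so \cref{lem:Surjections_Are_Merges} returns exactly one merge followed by one bijection, and the cardinality-drop check is unnecessary.
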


\begin{proof}
    We just need to iterate \cref{lem:Merge_Bijections_Exchange} to get a shorter combinatorial chain between $X$ and $Y$ that starts with merges, and ends with bijections. But then, composing all the bijections into one gives the desired result.
\end{proof}

Now we also need to switch the order of merges and splitting, this is the object of the following lemma.
\begin{lem}
\label{lem:switch split merge}
Let $((X_0,X_1,X_2),(R_0,R_1))$ be a combinatorial chain where $R_0$ is a splitting, and $R_1$ is a merge, then there exists a combinatorial chain $((X_0,Y_1,X_2),(P_0,P_1))$ such that $P_0$ is either a merge or a bijection, $P_1$ is either a bijection or a splitting, and $\elo(P_0,P_1)\leq\elo(R_0,R_1)$.
\end{lem}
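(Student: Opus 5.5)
We will work directly with the maps underlying the two relations. Write $n=\card(X_1)$. Since $R_0$ is a splitting, $R_0^{\op}$ is the graph of a merge $g\colon X_1\to X_0$: there is $x_0\in X_0$ with $S:=g^{-1}(x_0)$ of cardinality $\geq 2$, and $g$ fixes every point of $X_1\setminus S$. Since $R_1$ is a merge, it is the graph of $f\colon X_1\to X_2$ with $T:=f^{-1}(z_0)$ of cardinality $\geq 2$, and $f$ fixes every point of $X_1\setminus T$. Then $\card(R_0)=\card(R_1)=n$, so
\begin{equation*}
\elo(R_0,R_1)=\omega(n)\Big(\sum_{s\in S}\dm(x_0,s)+\sum_{t\in T}\dm(t,z_0)\Big).
\end{equation*}
Every intermediate configuration we build has cardinality at most $n$, and every new relation is the graph of a map or its opposite, so its cardinality is at most $n$. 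Hence, since $\omega$ is non-decreasing, it suffices to bound the unweighted lengths $\ell(P_0)+\ell(P_1)$ by the bracket above.

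The idea is to discard the part $S\cap T$ that is split and then immediately re-merged. Points of $T\setminus S$ lie in $X_1\setminus S$, so $g$ fixes them and they belong to $X_0$. Points of $S\setminus T$ lie in $X_1\setminus T$, so $f$ fixes them and they belong to $X_2$. We set
\begin{equation*}
Y_1=\big(X_0\setminus (T\setminus S)\big)\cup\{z_0\}.
\end{equation*}
We let $P_0$ be the graph of the map $X_0\to Y_1$ that sends $T\setminus S$ to $z_0$ and fixes the other points. We let $P_1^{\op}$ be the graph of the map $X_2\to Y_1$ that sends $S\setminus T$ to $x_0$ and fixes the other points. One checks that $P_0$ and $P_1$ are surjective relations whose composite sends $X_0$ to $X_2$. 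By the triangle-free bookkeeping, their lengths are
\begin{equation*}
\ell(P_0)+\ell(P_1)=\sum_{t\in T\setminus S}\dm(t,z_0)+\sum_{s\in S\setminus T}\dm(s,x_0),
\end{equation*}
which is bounded by the bracket. The terms indexed by $S\cap T$ are simply dropped. When $S\cap T=\emptyset$, one should instead keep $x_0$ and $z_0$ separate as above, which is exactly the same formula.

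It remains to identify the types of $P_0$ and $P_1$. A map fixing all but a group $G$ of points, which it sends to one point, is a merge if it is non-injective, and a bijection otherwise. For $P_0$, this happens according to whether $z_0\in X_0$ or $\card(T\setminus S)\geq 2$. Symmetrically, $P_1$ is a splitting or a bijection.

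I expect the main obstacle to be the coincidence cases, where the formulas above fail to define legitimate merges because a target point is already present. These cases are:
\begin{itemize}
\item $z_0\in X_0\setminus (T\setminus S)$, possibly $z_0=x_0$;
\item $x_0\in X_2$ while $S\setminus T$ must also go to $x_0$;
\item $T\setminus S=\emptyset$ or $S\setminus T=\emptyset$.
\end{itemize}
In each case I would verify that the map is still of the form ``one fibre of size $\geq 1$ collapsing, everything else fixed''. That means it is a merge when some fibre has size $\geq 2$, including the case where a point already at $z_0$ joins the group, and a bijection when all fibres are singletons. In the latter case, if the target was already occupied, I would check that the definition of a bijection (any bijective graph) still applies.

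The cardinality estimate $\card(Y_1)\leq \card(X_0)\leq n$ also holds in all cases, so the weight comparison goes through unchanged.
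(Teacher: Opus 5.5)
There is a genuine gap, in the cases where one of $S$, $T$ contains the other.

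When neither $S\subseteq T$ nor $T\subseteq S$, your construction is exactly the paper's third case: you have the same $Y_1=(X_1\setminus(S\cup T))\cup\{x_0,z_0\}$ and the same two maps. There it works, including the coincidences $z_0=x_0$ and $z_0\in T\setminus S$.

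You list the inclusion cases among the coincidences, but they break the recipe. Suppose $T\subseteq S$ and $z_0\neq x_0$. Since $X_0=(X_1\setminus S)\cup\{x_0\}$, and any $z_0\in X_1$ must lie in $T\subseteq S$, we get $z_0\notin X_0$. So $Y_1=X_0\cup\{z_0\}$, and $P_0$ is the graph of the inclusion $X_0\hookrightarrow Y_1$. The fibre over $z_0$ is empty, not of size $\geq 1$, so $P_0$ is not surjective. Symmetrically, if $S\subseteq T$ and $x_0\neq z_0$, then $P_1^{\op}$ misses $x_0$.

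This cannot be patched while keeping your length formula. Take $X_0=\{x_0\}$, $X_1=S=T=\{a,b\}$ and $X_2=\{z_0\}$. Your formula gives $\ell(P_0)+\ell(P_1)=0$, but every chain from $\{x_0\}$ to $\{z_0\}$ has $\omega$-length at least $d(x_0,z_0)>0$.

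The missing idea is that in these cases $x_0$ must be joined to $z_0$ directly. The terms indexed by $S\cap T$ are not dropped; they pay for this extra displacement.

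Suppose $T\subseteq S$. Take $Y_1=X_0$ and $P_0=\Id_{X_0}$, which is a bijection. Let $P_1$ send $x_0$ to $(S\setminus T)\cup\{z_0\}$ and fix $X_0\setminus\{x_0\}=X_1\setminus S$, which is contained in $X_2$. Then $P_1$ is a splitting, or a bijection when $S=T$. Its unweighted length is $\sum_{s\in S\setminus T}d(x_0,s)+d(x_0,z_0)$.

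Pick any $y\in T=S\cap T$; this set is nonempty because $\card(T)\geq 2$. The triangle inequality $d(x_0,z_0)\leq d(x_0,y)+d(y,z_0)$ bounds the length above by your bracket. The weight is handled by $\card(P_1)=\card(X_2)\leq n$.

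The case $S\subseteq T$ is symmetric, with $Y_1=X_2$ and $P_1=\Id_{X_2}$. The paper's proof uses exactly this three-way case split.
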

\begin{proof}
    Since $R_0$ is a splitting, its opposite $R_0^{\op}$ is the graph of a map $f_0\colon X_1\to X_0$, such that there exists $x_0 \in X_0$ such that $X_1'=f_0^{-1}(\{x_0\})$ has a cardinality $>1$ and $f_0$ restricts to the identity on $X_1\setminus X_1'$. Symmetrically, $R_1$ is the graph of a map $f_1\colon X_1\to X_2$, and there is a point $x_2\in X_2$ such that $X_1''=f_1^{-1}(\{x_2\})$ has a cardinality $>1$ and $f_1$ restricts to the identity on $X_1\setminus X_1''$. Remark that we thus have $\card(R_0)=\card(R_1)=\card(X_1)\geq \card(X_0),\card(X_2)$
    
    Now, we distinguish three cases
    \begin{itemize}
        \item If $X_1''\subset X_1'$, then observe that we can define a surjective relation $P_1\subset X_0\times X_2$ as follows
    \begin{equation*}
        xP_1y \iff (x=x_0 \text{ and } (y \in X_1'\setminus X_1'' \text{ or } y=x_2)) \text{ or } (x\in X_0\setminus \{x_0\} \text{ and } x=y)
    \end{equation*}
    and that it is either a bijection or a splitting, so that $\card(P_1)=\card(X_2)\leq \card(X_1)$. We can thus define $P_0$ as the graph of $\Id_{X_0}$ and $((X_0,X_0,X_2),(P_0,P_1))$ is of the desired form. Moreover
    \begin{align*}
        \elo(P_0,P_1)=\elo(P_1)= &\card(P_1)\sum_{y\in X_1'\setminus X_1''} d(x_0,y) + \card(P_1)d(x_0,x_2)\\
        \leq& \card(X_1) \sum_{y\in X_1'\setminus X_1''} d(x_0,y) + \card(X_1)\sum_{y\in X_1''} d(x_0,x_2) \\
        \leq & \card(X_1) \sum_{y\in X_1'\setminus X_1''} d(x_0,y) + \card(X_1)\sum_{y\in X_1''} (d(x_0,y)+ d(y,x_2))\\
        \leq & \card(X_1) \sum_{y\in X_1'} d(x_0,y) + \card(X_1)\sum_{y\in X_1''}d(y,x_2)\\
        \leq &\elo(R_0,R_1)
    \end{align*}
    \item If $X_1'\subset X_1''$ the case is the symmetric of the preceding one.
    \item Otherwise there exists $y_0\in X_1''\setminus X_1'\subset X_0\cap X_1$ and $y_2\in X_1'\setminus X_1''\subset X_1\cap X_2$. Remark that we must have $y_0\neq x_0$ because if $x_0\in X_1$ then we must have $x_0\in X_1'$. Symmetrically we have $y_2\neq x_2$. We can thus define $Y_1=X_1\setminus \left(X_1' \cup X_1''\right) \cup \{x_0,x_2\}$ and $P_0$ as the graph of 
    \begin{align*}
        g_0\colon X_0 &\to Y_1\\
        x & \mapsto \begin{cases}
        x_2 \quad &\text{if } x\in X_1''\setminus \{x_0\}\\
        x & \text{otherwise}
        \end{cases}
    \end{align*}
    Observe that $P_0$ is surjective: if $y\in Y_1$, then either $y=x_0$ and $g_0(y)=y$, or $y=x_2$ and $g_0(y_0)=y$ or $y\in Y_1\setminus \{x_0,x_2\}\subset X_1\setminus X_1'$, so that $y=f_0(y)\in X_0$. In the latter case, we have furthermore,  $y\in Y_1\setminus \{x_0,x_2\}\subset X_1\setminus X_1''$ so that $y=f_0(y)\in X_0\setminus X''_1$ and finally $g_0(y)=y$. It is indeed a merge or a bijection, so that $\card(P_0)=\card(X_0)$. Symmetrically, we can define $P_1$ such that $P_1^{\op}$ is the graph of
    \begin{align*}
        g_1\colon X_2 &\to Y_1\\
        x & \mapsto \begin{cases}
        x_0 \quad& \text{if } x\in X_1'\setminus \{x_2\}\\
        x & \text{otherwise}
        \end{cases}
    \end{align*}
    so that $P_1$ is a splitting or a bijection and $\card(P_1)=\card(X_2)$. Finally
    \begin{align*}
        \elo(P_0,P_1)&=\card(X_0)\sum_{y\in X_1''\setminus \{x_0\}} d(y,x_2) + \card(X_2)\sum_{y\in X_1'\setminus \{x_2\}} d(x_0,y)\\
        &\leq \card(X_1)\sum_{y\in X_1''} d(y,x_2) + \card(X_1)\sum_{y\in X_1'} d(x_0,y) \\
        &\leq \elo(R_0,R_1)
    \end{align*}
    \end{itemize}
\end{proof}

We can now iterate \cref{lem:Merge_Bijections_Exchange} and \cref{lem:switch split merge} to get the following.
\begin{corol}
\label{cor:exchange_merge_chain}
Let $X,Y\in\Ran(M)$ be two configurations and  $(R_0,\dots,R_{k-1})$ be a combinatorial chain between $X$ and $Y$, such that $R_i$ is a bijection or a splitting for all $0\leq i\leq k-2$ and $R_{k-1}$ is a merge. There exists $0\leq m \leq l$ and a combinatorial chain $(P_0,\dots,P_{l-1})$ between $X$ and $Y$ such that 
\begin{itemize}
    \item $P_i$ is a merge for all $0\leq i \leq m-1$
    \item $P_i$ is a bijection or a splitting for all $m\leq i\leq l-1$
    \item $\elo(P_0,\dots,P_{l-1})\leq\elo(R_0,\dots,R_{k-1})$ 
\end{itemize}
\end{corol}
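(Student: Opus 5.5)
The plan is an induction on $k$, the number of relations in the chain: at each step we push the final merge one position to the left by applying either \cref{lem:Merge_Bijections_Exchange} or \cref{lem:switch split merge}. For $k=1$ the chain is the single merge $R_0$. We take $m=l=1$ and $P_0=R_0$; the bijection/splitting part is then empty, and there is nothing more to prove.

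For the inductive step, assume the statement holds for all chains with fewer than $k$ relations of the given shape. Look at the last two relations $R_{k-2}$ and $R_{k-1}$. By hypothesis $R_{k-1}$ is a merge and $R_{k-2}$ is either a bijection or a splitting, so there are two cases.

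\textbf{Case 1: $R_{k-2}$ is a splitting.} Apply \cref{lem:switch split merge} to $(R_{k-2},R_{k-1})$. This gives $(P',P'')$ with the same endpoints, $P'$ a merge or a bijection, $P''$ a bijection or a splitting, and $\elo(P',P'')\leq\elo(R_{k-2},R_{k-1})$.
\begin{itemize}
\item If $P'$ is a bijection, the chain $(R_0,\dots,R_{k-3},P',P'')$ consists entirely of bijections and splittings. We then take $m=0$ and are done.
\item If $P'$ is a merge, the shorter chain $(R_0,\dots,R_{k-3},P')$ satisfies the hypotheses with $k-1$ relations. The induction hypothesis turns it into merges followed by bijections/splittings, with no increase in length. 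Appending $P''$ keeps the required shape.
\end{itemize}

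\textbf{Case 2: $R_{k-2}$ is a bijection.} Apply \cref{lem:Merge_Bijections_Exchange} to $(R_{k-2},R_{k-1})$. This produces a chain of merges $Q_0,\dots,Q_{s-2}$ followed by one bijection $Q_{s-1}$, again no longer than before.
\begin{itemize}
\item If $s=1$ there are no merges, and the whole chain is made of bijections and splittings, so we are done with $m=0$.
\item Otherwise, the last merge $Q_{s-2}$ must be moved past $R_0,\dots,R_{k-3}$. The earlier merges $Q_0,\dots,Q_{s-3}$ also need to be moved.
\end{itemize}
The natural way to handle this is to apply the induction hypothesis repeatedly, to prefixes of the form $(R_0,\dots,R_{k-3},Q_0)$, then to the resulting bijection/splitting tail followed by $Q_1$, and so on.

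The main obstacle is that this nested use of the induction is not obviously well-founded. After the first application, the tail of bijections and splittings may be longer than $k-2$, so the $(k-1)$-hypothesis no longer applies. To get around this, I will induct on a measure that actually decreases instead of on $k$. The first choice to try is the number of relations that lie before the last merge. The alternative is the lexicographic pair (number of merges appearing after some non-merge, total cardinality of the configurations). In Case 2 I would first compose consecutive bijections into one, and then argue as follows:
\begin{itemize}
\item each application of \cref{lem:switch split merge} either removes a merge or moves it strictly to the left;
\item each application of \cref{lem:Merge_Bijections_Exchange} also moves merges to the left, past a bijection;
\item the cardinalities of the intermediate configurations in front of a merge cannot grow without bound, since merges strictly decrease cardinality and the number of splittings is fixed by the rewriting.
\end{itemize}
The crux is to check that this measure really decreases. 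It is routine to check that the lengths do not increase, because every lemma used gives a non-increasing $\elo$ and $\elo$ is additive under concatenation.
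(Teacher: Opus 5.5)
Your base case and Case~1 are correct, and they are the intended argument: the paper proves this corollary just by iterating \cref{lem:switch split merge} and \cref{lem:Merge_Bijections_Exchange}, pushing the final merge one step to the left each time. The gap is Case~2, which you leave open. You identify a well-foundedness problem, but you never exhibit a measure that provably decreases. The two you propose do not decrease if you use \cref{lem:Merge_Bijections_Exchange} only through its statement, which allows $s-1\geq 2$ merges $Q_0,\dots,Q_{s-2}$. The number of relations before the last merge goes from $k-1$ to $k+s-4$. For $k\geq 3$, the number of merges preceded by a non-merge goes from $1$ to $s-1$. Your third bullet does not help either. \cref{lem:switch split merge} can turn a splitting into a bijection, so the number of splittings is not fixed, and a bound on cardinalities alone does not give termination.

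The missing observation removes the difficulty. Since $R_{k-1}$ is a merge and $f_0$ is a bijection, the composite $f_1\circ f_0$ built in the proof of \cref{lem:Merge_Bijections_Exchange} has exactly one point with a fibre of cardinality $\geq 2$. So \cref{lem:Surjections_Are_Merges}, proved by induction on $\card(Y_+)$ (here equal to $1$), returns exactly one merge $Q_0$ followed by one bijection $Q_1$. The case $s=1$ cannot occur, since the cardinality drops. Case~2 is then handled exactly like Case~1: apply the induction hypothesis to $(R_0,\dots,R_{k-3},Q_0)$, which has $k-1$ relations and ends with a merge, then append $Q_1$. Plain induction on $k$ suffices.

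If you prefer to use the lemma as a black box, a measure that does work is the finite multiset of cardinalities of the intermediate configurations, under the well-founded multiset ordering. Rewriting a consecutive pair (bijection or splitting, then merge) replaces the single intermediate configuration between them, say of cardinality $c$, by finitely many configurations of cardinality $<c$, and leaves all others untouched.
\begin{itemize}
\item After a splitting--merge swap, the new middle configuration has cardinality at most that of its predecessor, which is $<c$.
\item After a bijection--merge swap, every new intermediate configuration comes after at least one merge out of a configuration of cardinality $c$.
\end{itemize}
Hence the rewriting terminates, and it can only stop on a chain of merges followed by bijections and splittings.
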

Then again, we can iterate \cref{cor:exchange_merge_chain} to get the following corollary.
\begin{corol}\label{cor:all merges first}
Let $X,Y\in\Ran(M)$ be two configurations and  $(R_0,\dots,R_{k-1})$ be a combinatorial chain between $X$ and $Y$ such that $R_i$ is either a merge, a splitting or a bijection for all $0\leq i \leq k-1$. Then there exists $0\leq m \leq l$ and a combinatorial chain $(P_0,\dots,P_{l-1})$ between $X$ and $Y$ such that
\begin{itemize}
    \item $P_i$ is a merge for all $0\leq i\leq m-1$
    \item $P_i$ is a bijection or a splitting for all $m\leq i\leq l-1$
    \item $\elo(P_0,\dots,P_{l-1})\leq\elo(R_0,\dots,R_{k-1})$
\end{itemize}
\end{corol}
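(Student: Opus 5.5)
The plan is to prove \cref{cor:all merges first} by induction, iterating \cref{cor:exchange_merge_chain} to move merges to the front. To each chain $(R_0,\dots,R_{k-1})$ whose relations are all merges, splittings or bijections, associate the number $N$ of merges that occur after the first non-merge relation. If $N=0$, the chain already has the required shape, with $m$ the index of the first non-merge (or $m=k$ if all relations are merges), and there is nothing to prove. The induction is on $N$, and secondarily on the position of the first offending merge.

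For the inductive step, take $N>0$. Let $m_0$ be the length of the initial block of merges, and let $j>m_0$ be the smallest index such that $R_j$ is a merge. Then $R_{m_0},\dots,R_{j-1}$ are bijections or splittings and $R_j$ is a merge. Apply \cref{cor:exchange_merge_chain} to the subchain $(R_{m_0},\dots,R_j)$, which runs between the corresponding intermediate configurations. It produces a chain of no greater $\omega$-length that consists of merges followed by bijections and splittings. Splice it back in place of the subchain. Since $\elo$ is additive under concatenation, the total length does not increase. The new chain begins with the old merges $R_0,\dots,R_{m_0-1}$ followed by the new merges, and then a block of bijections and splittings, followed by the unchanged tail $R_{j+1},\dots,R_{k-1}$. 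The number of merges after the first non-merge has therefore dropped by exactly one, because the tail contributes $N-1$ of them and the new middle block contributes none. Induction on $N$ then finishes the proof.

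The main obstacle is justifying \cref{cor:exchange_merge_chain} itself, since the corollary at hand is a formal iteration of it. That justification is itself an induction on the number $k-1$ of bijections and splittings preceding the final merge. Use \cref{lem:switch split merge} to swap a splitting and a merge. Use \cref{lem:Merge_Bijections_Exchange} to swap a bijection and a merge; this lemma may output several merges followed by a bijection. Either way the merges move one step left, and any merges produced must be pushed further left in turn.

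One point needs care: \cref{lem:switch split merge} can output a bijection in place of a merge or a splitting. A bijection must therefore be treated as belonging to the trailing block, and the measure used must still decrease. The measure I would use is the multiset of positions of merges, or equivalently the number of pairs (non-merge, later merge). Each swap strictly decreases it, and \cref{lem:Merge_Bijections_Exchange} creates new merges only to the left of the new bijection. I expect checking this well-foundedness carefully to be the only delicate point.
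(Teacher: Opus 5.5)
Your induction on $N$ is correct and is exactly the paper's argument, which simply iterates \cref{cor:exchange_merge_chain}. The one fix is that $N$ drops by \emph{at least} one rather than exactly one: \cref{cor:exchange_merge_chain} allows $m=l$, and in that case the first non-merge moves into the tail.

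Your sketch of \cref{cor:exchange_merge_chain} is not needed for this statement, and its measure does not strictly decrease as you claim once several merges are present. If \cref{lem:Merge_Bijections_Exchange} outputs $r\geq 2$ merges after $a\geq 1$ non-merges, the inversion count changes by $(r-1)a-1\geq 0$. If \cref{lem:switch split merge} turns the merge into a bijection while $b$ merges lie further right, after $a$ earlier non-merges, it changes by $b-a-1$, which can be $\geq 0$. The repair is easy. In \cref{lem:Merge_Bijections_Exchange} the composite map has a single non-trivial fibre, so \cref{lem:Surjections_Are_Merges} produces exactly one merge. Hence the chain never contains more than one merge, and the measure is just the number of non-merges preceding it, which does strictly decrease.
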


Finally we show that we can decompose any merge into a chain of simple merges.
\begin{lem}\label{lem:Simple_Merges}
    Let $X,Y\in \Ran(M)$ be two configurations and $R$ a merge from $X$ to $Y$. Then, there exists a combinatorial chain from $X$ to $Y$, $(R_0,\dots,R_{k-1})$, such that
    \begin{itemize}
        \item $R_i$ is a simple merge for all $0\leq i\leq k-1$,
        \item $\elo(R_0,\dots,R_{k-1})\leq \elo(R)$.
    \end{itemize}
\end{lem}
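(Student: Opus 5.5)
We argue by induction on $m=\card(f^{-1}(y_0))$, where $R$ is the graph of $f\colon X\to Y$, $y_0\in Y$ is the merge point, and $f$ is the identity on $X\setminus f^{-1}(y_0)$. If $m=2$, $R$ is already simple and there is nothing to prove. If $m\geq 3$, we peel off one simple merge and reduce to a merge of multiplicity $m-1$ with the same target $Y$.

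Write $A=f^{-1}(y_0)=\{a_1,\dots,a_m\}$. The point $y_0$ is either outside $X$ or it lies in $A$; it cannot lie in $X\setminus A$, since there $f$ is the identity and $y_0$ would have only itself as preimage.

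In the first case, $y_0\notin X$. Choose two points $a_1,a_2\in A$ and let $X_1=X\setminus\{a_1,a_2\}\cup\{y_0\}$, with $R_0$ the simple merge of $\{a_1,a_2\}$ at $y_0$. Since $y_0\notin X$, we have $\card(X_1)=\card(X)-1$. The remaining relation $R'$ from $X_1$ to $Y$ merges $\{y_0,a_3,\dots,a_m\}$ at $y_0$. This set has cardinality $m-1\geq 2$, so $R'$ is a merge.

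In the second case, $y_0\in A$, say $y_0=a_1$. Take $R_0$ to be the simple merge of $\{a_1,a_2\}$ at $a_1$, landing in $X_1=X\setminus\{a_2\}$. The remaining relation merges $A\setminus\{a_2\}$, of cardinality $m-1\geq 2$, at $y_0$.

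In both cases the length bound is immediate. We have $\card(R_0)=\card(X)$ and $\card(R')=\card(X_1)\leq\card(X)$, and $\omega$ is non-decreasing. Hence
\begin{equation*}
\elo(R_0)+\elo(R')\leq \omega(\card(X))\Big(d(a_1,y_0)+d(a_2,y_0)+\sum_{i\geq 3}d(a_i,y_0)\Big)=\elo(R).
\end{equation*}
The term $d(y_0,y_0)=0$ appearing in $R'$ contributes nothing. Applying the induction hypothesis to $R'$ and concatenating its chain after $R_0$ gives the result.

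The only delicate point is choosing the intermediate configuration so that it is genuinely a set of the right cardinality. This is handled by the case distinction above, which mirrors the one in \cref{lem:Surjections_Are_Merges}. One must also check that the points $a_i$ merged in the first step are not confused with fixed points of $f$; this holds because $A$ and $X\setminus A$ are disjoint. No step looks like a real obstacle.
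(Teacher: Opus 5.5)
Your proof is correct and follows essentially the same argument as the paper. You induct on the multiplicity $\card(f^{-1}(y_0))=\card(X)-\card(Y)+1$, which is equivalent to the paper's induction on $\card(X)-\card(Y)$, and you use the same case split ($y_0\notin X$ versus $y_0\in f^{-1}(y_0)$), peeling off one simple merge and bounding the length by monotonicity of $\omega$; your explicit check that $y_0\notin X\setminus f^{-1}(y_0)$, which makes the intermediate configuration have cardinality $\card(X)-1$, is a detail the paper leaves implicit.
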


\begin{proof}
    We proceed by induction on $\card(X)-\card(Y)$. Since $R$ is a merge, we must have $\card(X)>\card(Y)$. Now, if $\card(X)=\card(Y)+1$, then $R$ is a simple merge, and there is nothing to prove. Otherwise, let $f\colon X\to Y$ be the map whose graph is $R$, and let $y\in Y$ be the unique element at which $f$ is non-injective. $f^{-1}(y)$ is of cardinality $\card(X)-\card(Y)+1\geq 3$. We distinguish two cases.
    \begin{itemize}
        \item If $y\not\in f^{-1}(y)$, let $x_1,x_2\in f^{-1}(y)$, and define $\widetilde{X}=X\setminus\{x_1,x_2\}\cup \{y\}$, together with a map $f_0\colon X\to \widetilde{X}$ which satisfies $f_0(x_1)=f_0(x_2)=y$ and $f_0(x)=x$ otherwise. In particular, the graph of $f_0$, $R_0$, is a simple merge, and satisfies
        \begin{equation*}
            \elo(R_0)=\omega(\card(X))(\dm(x_1,y)+\dm(x_2,y)).
        \end{equation*}
        On the other hand, define $\widetilde{f}\colon \widetilde{X}\to Y$ which satisfies $\widetilde{f}(x)=f(x)$ whenever $x\not=y$, and $\widetilde{f}(y)=y$. Then, $\widetilde{R}$, the graph of $\widetilde{f}$ is again a merge and satisfies, $\card(\widetilde{X})-\card(Y)=\card(X)-\card(Y)-1$, we may thus apply the induction hypothesis, to produce a chain of simple merges $(R_1,\dots,R_{k-1})$. Finally, we have a chain of simple merges, $(R_0,\dots,R_{k-1})$, and we compute its length :
        \begin{align*}
            \elo(R_0,\dots,R_{k-1})&= \elo(R_0)+\elo(R_1,\dots,R_{k-1})\\
            &\leq \elo(R_0)+\elo(\widetilde{R})\\
            &\leq \omega(\card(X))(\dm(x_1,y)+\dm(x_2,y))+\omega(\card(\widetilde{X}))\left(\sum_{x\in \widetilde{f}^{-1}(y)}\dm(x,y)\right)\\
            &\leq \omega(\card(X))\left(\sum_{x\in f^{-1}(y)}\dm(x,y)\right)\\
            &\leq \elo(R).
        \end{align*}
        Which concludes the proof in this case.
        \item if $y\in f^{-1}(y)$, pick some $x_0\not= y\in f^{-1}(y)$, and define $\widetilde{X}=X\setminus\{x_0\}$. The proof is analogous to the previous case, with the first step being a simple merge between $x_0$ and $y$.
    \end{itemize}
\end{proof}
We are now ready to prove \cref{theo:MBS}
\begin{proof}[Proof of \cref{theo:MBS}]
    By combining \cref{cor:Combinatorial_Chain_Graph} and \cref{lem:Surjections_Are_Merges}, we see that any combinatorial chain between $X$ and $Y$, can be shortened by passing to a chain of the form $(R_0,\dots,R_{k-1})$, where $R_i$ is either a merge, a bijection or a splitting. Then we can apply \cref{cor:all merges first} to obtain $0\leq m\leq l$ and a shorter combinatorial chain $(P_0,\dots,P_{l-1})$ such that $P_i$ is a merge for all $0\leq i \leq m$, and a splitting or a bijection for all $m\leq i \leq l-1$. By symmetry of merge and splittings, we can apply \cref{cor:multiple_Merge_Bijections_Exchange} to $(P_m,\dots,P_{l-1})$ to get a shorter chain $(Q_m,\dots,Q_{l'-1})$ such that $Q_m$ is a bijection and $Q_i$ is a splitting for all $m<i\leq l'-1$. By concatenating we get a MBS-chain $(P_0,\dots,P_{m-1},Q_m,\dots,Q_{l'-1})$ between $X$ and $Y$. We then conclude by applying \cref{lem:Simple_Merges} to all $P_i$, $0\leq i \leq m-1$ and to all $Q_i$, $m<i\leq l'-1$ to get a shorter simple MBS-chain between $X$ and $Y$.
\end{proof}

\subsection{Existence of geodesic chains}
\label{sec:Geodesic_chains}
Finally, in this section, we use \cref{theo:MBS} to prove that if $M$ satisfies the Heine-Borel property, then the infimum in \cref{def:weighted-distance} is computed over a compact set and is thus attained for a given MBS-chain. This is the subject of the following theorem.

\begin{theo}\label{theo:Geodesic_Chain}
    Assume that $M$ is a metric space, satisfying the Heine-Borel property (i.e. in which bounded and closed sets are compact). Then, for any configurations $X,Y\in \Ran(M)$, and any weight $\omega$, there exists a combinatorial chain $(R_0,\dots,R_{k-1})$ between $X$ and $Y$ such that
    \begin{equation*}
        \elo(R_0,\dots,R_{k-1})=\dom(X,Y)
    \end{equation*}
    Furthermore, the chain $(R_0,\dots,R_{k-1})$ may be assumed to be a simple MBS-chain, in which case, we call it a \textbf{geodesic chain} between $X$ and $Y$.
\end{theo}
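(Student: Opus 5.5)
By \cref{theo:MBS}, $\dom(X,Y)$ is the infimum of $\elo$ over simple MBS-chains from $X$ to $Y$. The plan is to show that this infimum can be taken over finitely many families of chains. Each family is parametrized by a compact subset of some $M^N$, and on it $\elo$ is a continuous function. A minimum over finitely many compact families is then attained.

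First I will fix the combinatorial data. In a simple MBS-chain every merge lowers the cardinality by exactly one and every splitting raises it by one. The chain therefore merges from $\card(X)$ down to some $m\geq 1$, applies one bijection, and splits up to $\card(Y)$. Its length is $k=(\card(X)-m)+1+(\card(Y)-m)$, which is bounded by $\card(X)+\card(Y)$. The combinatorial type of such a chain consists of:
\begin{itemize}
\item $m$;
\item for each simple merge, which two points of the current configuration are merged, and whether the image is one of them or a new point;
\item the same data for the splittings;
\item the bijection, as a labelling.
\end{itemize}
After labelling the points, there are only finitely many types. For a fixed type, the chain is determined by the positions of the new points created: the image point of each merge, the images under the bijection, and the points created by each splitting. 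This gives a tuple $\underline{z}\in M^N$ for some fixed $N$.

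I will allow these new points to collide with existing points, so that the intermediate configurations may be degenerate. The relations are then images of the labelled relations, and their cardinalities can only drop. Because $\omega$ is non-decreasing and distances can only shrink under such identification, the honest chain obtained from a degenerate tuple has length at most the formula evaluated at that tuple. Using the formula
\begin{equation*}
L_T(\underline{z})=\sum_j \omega(c_j)\sum d(\cdot,\cdot),
\end{equation*}
with the fixed labelled cardinalities $c_j$, I get that $L_T$ is continuous on $M^N$ and satisfies
\begin{equation*}
\dom(X,Y)\leq L_T(\underline{z}),
\end{equation*}
because each $L_T(\underline{z})$ bounds the length of an honest chain. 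Every genuine simple MBS-chain of type $T$ realizes $L_T$ exactly. Hence $\dom(X,Y)=\inf_T\inf_{\underline{z}} L_T(\underline{z})$.

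Next comes compactness. Let $D=\dom(X,Y)$ and fix $x\in X$. It suffices to consider tuples with $L_T(\underline{z})\leq D+1$. Since $\omega\geq 1$, each new point is linked to $X$ or $Y$ through a path of relation pairs whose total distance is at most $D+1$. So every coordinate of $\underline{z}$ lies in the closed ball $\overline{B}(x,\diam(X\cup Y)+D+1)$. By the Heine-Borel property this ball is compact. Intersecting with the closed sublevel set $\{L_T\leq D+1\}$ gives a compact set on which the continuous function $L_T$ attains its infimum. Taking the best of the finitely many types $T$ produces a tuple realizing $D$.

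The main obstacle is converting the minimizing tuple back into a genuine simple MBS-chain. At a degenerate tuple, merged points may coincide with others, and a "merge" may become a bijection or an identity. The honest chain obtained by identifying coincident points has length at most $L_T=D$, so it still realizes the distance. However, it is only a chain whose relations are merges, bijections and splittings, possibly with trivial steps. I will remove the trivial steps and rerun the reductions used in the proof of \cref{theo:MBS}: \cref{cor:all merges first}, \cref{cor:multiple_Merge_Bijections_Exchange} and \cref{lem:Simple_Merges}. Each of these never increases length. This yields a simple MBS-chain of length at most $D$, hence exactly $D$. Checking carefully that the identification of coincident points really produces surjective relations of no larger weighted length is the step needing the most care.
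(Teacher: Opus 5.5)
Your proof is correct and follows essentially the paper's route. You reduce to simple MBS-chains via \cref{theo:MBS}, use that there are finitely many combinatorial shapes, confine the intermediate points to a Heine--Borel ball as in \cref{lem:Sequences_In_Balls}, and minimize a continuous length on a compact set; where the paper encodes each shape by the constrained $2k$-tuples of \cref{lem:Sequence_tuples}, you use free coordinates of the new points, and you are more careful than the paper about degenerate minimizers (where only $\elo(R_0,\dots,R_{k-1})\leq\sum_i\omega(n_i)\dsum(\underline{x_{2i}},\underline{x_{2i+1}})$ holds) and about re-reducing to a simple MBS-chain. One small correction: after identifying coincident points the relations need not be merges, bijections or splittings (if a surviving point at $q$ coincides with one of two points $q,r$ merged to $p\neq q$, you get $\{(q,p),(r,p),(q,q)\}$, which is neither a map nor the opposite of one), so you should rerun the whole proof of \cref{theo:MBS}, starting with \cref{cor:Combinatorial_Chain_Graph} and \cref{lem:Surjections_Are_Merges}, which applies to arbitrary chains and never increases length.
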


\begin{proof}
    By \cref{theo:MBS}, the infimum defining $\dom(X,Y)$ can be computed over simple MBS-chains, thus it is enough to show that there exists a simple MBS-chain between $X$ and $Y$ with minimal length. Now observe that if $(R_0,\dots,R_{k-1})$ is a simple MBS-chain between $X$ and $Y$, then there exists some $0\leq m\leq k-1$, such that $\card(R_i)=\card(R_{i+1})+1$ for all $i\leq m-1$, and $\card(R_i)=\card(R_{i-1})+1$ for all $m<i\leq k-1$. In particular, the value of $k$ determines $\card(R_i)$ for all $0\leq i\leq k-1$. We deduce that for any simple MBS-chain between $X$ and $Y$, we must have $k\leq \card(X)+\card(Y)-1$. This gives
    \begin{equation*}
        \dom(X,Y)=\min_{1\leq k\leq \card(X)+\card(Y)-1}\{\inf\{\elo(R_0,\dots,R_{k-1})\}\}
    \end{equation*}
    Thus it is enough to show that for any fixed $k\geq 1$, there is a simple MBS-chain $(R_0,\dots,R_{k-1})$ between $X$ and $Y$ of minimal length among simple MBS-chains with $k$ relations.

    Given $k\geq 1$, there is a unique $k$-tuple of integer $(n_0,\dots,n_{k-1})$ satisfying $\card(R_i)=n_i, \forall i$, for any simple MBS-chain $(R_0,\dots,R_{k-1})$ between $X$ and $Y$. Now, by \cref{lem:Sequence_tuples}, such a chain can be encoded as the data of a $2k$-tuple $(\underline{x_0},\dots,\underline{x_{2k-1}})$ satisfying conditions~\ref{item:Correct_Cardinal},\ref{item:Boundary} and~\ref{item:Matching}, and its length can be recovered as
    \begin{equation*}
        \elo(R_0,\dots,R_{k-1})=\sum_{i=0}^{k-1}\omega(n_i)\dsum(\underline{x_{2i}},\underline{x_{2i+1}}).
    \end{equation*}
Thus, if we define $M^{(n_0,\dots,n_{k-1})}=\prod_{i=0}^{k-1}(M^{n_i}\times M^{n_i})$, we are left with proving that the function $(\underline{x_0},\dots,\underline{x_{2k-1}})\mapsto\sum_{i=0}^{k-1}\omega(n_i)\dsum(\underline{x_{2i}},\underline{x_{2i+1}})$, considered on the subset of $M^{(n_0,\dots,n_{k-1})}$ of $2k$-tuples $(\underline{x_0},\dots,\underline{x_{2k-1}})$ satisfying conditions~\ref{item:Boundary} and~\ref{item:Matching}, admits a minimum. Now, let $\epsilon >0$ and set $r=\dom(X,Y)+\epsilon$. Then, there must exist a simple MBS-chain of $\omega$-length less than $r$ between $X$ and $Y$. We may assume that there exists one with $k$ terms, otherwise there is nothing to prove. But now, by \cref{lem:Sequences_In_Balls} if $(R_0,\dots,R_{k-1})$ is such a simple MBS-chain, the corresponding $2k$-tuple $(\underline{x_0},\dots,\underline{x_{2k-1}})$ is such that
    \begin{equation*}
        (\underline{x_0},\dots,\underline{x_{2k-1}})\in K=\prod_{i=0}^{k-1}\left((\overline{B}_M(X,r))^{n_i}\times (\overline{B}_M(X,r))^{n_i}\right)
    \end{equation*}
    Finally, observe that $\overline{B}_M(X,r)=\cup_{x\in X}\overline{B}_M(x,r)$ is an union of closed balls. Since we assumed that $M$ satisfies the Heine-Borel property, $\overline{B}_M(X,r)$ is thus a finite union of compact subsets of $M$, thus it is compact, and $K$ is compact, as a product of compact. Now, observe that the subset of $K$ of $2k$-tuple satisfying conditions~\ref{item:Boundary} and~\ref{item:Matching}, call it $K'$, is a closed subset (it is the locus of solutions of equations specified by continuous maps), hence it is also compact. Finally, since the function $(\underline{x_0},\dots,\underline{x_{2k-1}})\mapsto \sum_{i=0}^{k-1}\omega(n_i)\dsum(\underline{x_{2i}},\underline{x_{2i+1}})$ is continuous, and bounded below by zero, it must reach a minimum on the compact $K'$. By the converse of \cref{lem:Sequence_tuples}, this produces the geodesic chain we are looking for.
\end{proof}
The proof of the previous theorem proceeds by exhibiting a compact set over which we compute the minimum. Since $M$ satisfies the Heine-Borel property, we construct this compact set from closed balls in $M$. We thus translate our optimization problem over the set of configuration chains into an another optimization problem over a set of tuples in a product of $M$.
\begin{defin}
    Let $n\geq 1$, and $\underline{x}=(x^{(1)},\dots,x^{(n)})$ and $\underline{y}=(y^{(1)},\dots,y^{(n)})$ be two points in $M^n$. Write $\pi_n(\underline{x})=X$ and $\pi_n(\underline{y})=Y$, and define the relation $R_{\underline{x},\underline{y}}\subset X\times Y$ as follows. For $(x,y)\in X\times Y$,
    \begin{equation*}
        xR_{\underline{x},\underline{y}}y\Leftrightarrow \exists i,\ x=x^{(i)} \text{ and }y=y^{(i)}
    \end{equation*}
\end{defin}

\begin{lem}\label{lem:Relations_lift}
    Let $X,Y\in \Ran(M)$ be two configurations and $R$ be a surjective relation between $X$ and $Y$. Write $n=\card(R)$. Then, there exists $\underline{x},\underline{y}\in M^n$ such that \begin{itemize}
        \item $\pi_n(\underline{x})=X$,
        \item $\pi_n(\underline{y})=Y$,
        \item $R_{\underline{x},\underline{y}}=R$
    \end{itemize} 
    In that case, we have
    \begin{equation*}
        \elo(R)=\omega(n)\dsum(\underline{x},\underline{y})=\omega(n)\sum_{i=1}^n\dm(x^{(i)},y^{(i)})
    \end{equation*}
\end{lem}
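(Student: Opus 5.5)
The plan is to enumerate the pairs of $R$ directly. Since $R\subset X\times Y$ is a finite set of cardinality $n=\card(R)$, fix a bijection $\sigma\colon\{1,\dots,n\}\to R$ and write $\sigma(i)=(x^{(i)},y^{(i)})$. We then set $\underline{x}=(x^{(1)},\dots,x^{(n)})\in M^n$ and $\underline{y}=(y^{(1)},\dots,y^{(n)})\in M^n$. Everything then reduces to unwinding definitions.

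First we check $\pi_n(\underline{x})=X$. The inclusion $\{x^{(1)},\dots,x^{(n)}\}\subset X$ holds because each $x^{(i)}$ is the first coordinate of a pair in $R\subset X\times Y$. Conversely, surjectivity of $R$ gives, for every $x\in X$, some $y$ with $xRy$. Hence $(x,y)=\sigma(i)$ for some $i$, and so $x=x^{(i)}$. The symmetric argument, using the second surjectivity condition, gives $\pi_n(\underline{y})=Y$.

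Next we check $R_{\underline{x},\underline{y}}=R$. By definition, $xR_{\underline{x},\underline{y}}y$ holds if and only if there is an $i$ with $(x,y)=(x^{(i)},y^{(i)})=\sigma(i)$. This is equivalent to $(x,y)\in R$, because $\sigma$ is onto $R$.

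Finally, for the length formula we use that $\sigma$ is a bijection, so summing over $R$ is the same as summing over indices:
\begin{equation*}
\ell(R)=\sum_{(x,y)\in R}d(x,y)=\sum_{i=1}^n d(x^{(i)},y^{(i)})=\dsum(\underline{x},\underline{y}).
\end{equation*}
Multiplying by $\omega(\card(R))=\omega(n)$ gives $\elo(R)=\omega(n)\dsum(\underline{x},\underline{y})$.

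There is no real obstacle here. The only point needing care is that the index set must be exactly $R$ rather than, say, $X$ or $Y$, so that pairs are counted without repetition and $\card(R)=n$ matches the dimension of $M^n$. The same care also explains the phrase "in that case": the formula holds for any lift with $R_{\underline{x},\underline{y}}=R$, provided the $n$ pairs $(x^{(i)},y^{(i)})$ are pairwise distinct, which is automatic since they enumerate $R$ and $\card(R)=n$.
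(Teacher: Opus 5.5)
Your proof is correct and follows the same route as the paper: fix a bijection $\{1,\dots,n\}\to R$, read off $\underline{x},\underline{y}$ from its coordinates, use surjectivity of $R$ for $\pi_n(\underline{x})=X$ and $\pi_n(\underline{y})=Y$, and use the definition of $\elo$ for the length formula. You also spell out the check $R_{\underline{x},\underline{y}}=R$, and you note that any lift with $R_{\underline{x},\underline{y}}=R$ necessarily enumerates $R$ bijectively; the paper leaves both points implicit.
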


\begin{proof}
    Fix some bijection $\phi\colon \{1,\dots,n\}\to R\subset X\times Y$ and write $\phi(i)=(x^{(i)},y^{(i)})$. Surjectivity of $R$ guarantees that $\pi_n(\underline{x})=X$ and $\pi_n(\underline{y})=Y$. Finally, the last equality follows from the definition of $\elo$.
\end{proof}

\begin{lem}\label{lem:Sequence_tuples}
    Let $X,Y\in \Ran(M)$ be two configurations. If $(R_0,\dots,R_{k-1})$ is a combinatorial chain between $X$ and $Y$, write $n_i=\card(R_i)$, for $0\leq i\leq k-1$. There exists a $2k$-tuple $(\underline{x_0},\dots,\underline{x_{2k-1}})$ such that
    \begin{enumerate}
        \item \label{item:Correct_Cardinal} $\underline{x_{2i}},\underline{x_{2i+1}}\in M^{n_i}$, for all $0\leq i\leq k-1$
        \item\label{item:Boundary} $\pi_{n_0}(\underline{x_{0}})=X$ and $\pi_{n_{k-1}}(\underline{x_{2k-1}})=Y$
        \item\label{item:Matching} $\pi_{n_i}(\underline{x_{2i}})=\pi_{n_{i-1}}(\underline{x_{2i-1}})$, $1\leq i\leq k-1$
        \item\label{item:Induce_Relations} $R_i=R_{\underline{x_{2i}},\underline{x_{2i+1}}}$, $0\leq i\leq k-1$.
        \item\label{item:Compute_Distance} $\elo(R_0,\dots,R_{k-1})=\sum_{i=0}^{k-1}\omega(n_i)\dsum(\underline{x_{2i}},\underline{x_{2i+1}})$
    \end{enumerate}
    Conversely, given two tuples $(n_0,\dots,n_{k-1})$ and $(\underline{x_0},\dots,\underline{x_{2k-1}})$ which verifies conditions~\ref{item:Correct_Cardinal}, \ref{item:Boundary} and~\ref{item:Matching}, there exists a chain $(R_0,\dots,R_{k-1})$ between $X$ and $Y$ which satisfied~\ref{item:Induce_Relations} and~\ref{item:Compute_Distance}.
\end{lem}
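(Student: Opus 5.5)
The plan is to treat the two directions separately, building everything from \cref{lem:Relations_lift}.

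\textbf{Forward direction.} Apply \cref{lem:Relations_lift} to each relation $R_i\subset X_i\times X_{i+1}$ separately. Set $n_i=\card(R_i)$. Fix a bijection $\phi_i\colon\{1,\dots,n_i\}\to R_i$ and write $\phi_i(j)=(x_{2i}^{(j)},x_{2i+1}^{(j)})$. This produces $\underline{x_{2i}},\underline{x_{2i+1}}\in M^{n_i}$, which is condition~\ref{item:Correct_Cardinal}.

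Surjectivity of $R_i$ gives $\pi_{n_i}(\underline{x_{2i}})=X_i$ and $\pi_{n_i}(\underline{x_{2i+1}})=X_{i+1}$. Taking $i=0$ and $i=k-1$ yields~\ref{item:Boundary}. Comparing the two descriptions of $X_i$, coming from $R_{i-1}$ and from $R_i$, yields~\ref{item:Matching}.

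By construction $R_{\underline{x_{2i}},\underline{x_{2i+1}}}=R_i$, which is~\ref{item:Induce_Relations}. Since $\phi_i$ is a bijection, summing over indices is the same as summing over pairs of $R_i$. Hence $\elo(R_i)=\omega(n_i)\dsum(\underline{x_{2i}},\underline{x_{2i+1}})$, and summing over $i$ gives the equality~\ref{item:Compute_Distance}.

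\textbf{Converse.} Given tuples satisfying \ref{item:Correct_Cardinal}--\ref{item:Matching}, set $X_0=X$, $X_k=Y$ and $X_i=\pi_{n_i}(\underline{x_{2i}})=\pi_{n_{i-1}}(\underline{x_{2i-1}})$ for $1\leq i\leq k-1$. Define $R_i=R_{\underline{x_{2i}},\underline{x_{2i+1}}}$. Every $x\in X_i$ equals some $x_{2i}^{(j)}$ and is therefore related to $x_{2i+1}^{(j)}$, and symmetrically on the other side. So each $R_i$ is a surjective relation $X_i\to X_{i+1}$, and $(R_0,\dots,R_{k-1})$ is a chain from $X$ to $Y$ satisfying~\ref{item:Induce_Relations} by definition.

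\textbf{Main obstacle.} The delicate point is~\ref{item:Compute_Distance}. The indexing map $j\mapsto(x_{2i}^{(j)},x_{2i+1}^{(j)})$ is onto $R_i$, so $\card(R_i)\leq n_i$. Equality holds exactly when this map is injective, that is, when the index pairs are pairwise distinct. In that case the map is a bijection $\{1,\dots,n_i\}\to R_i$, so $\omega(\card R_i)=\omega(n_i)$ and the sum over indices equals the sum over $R_i$. This gives exactly $\elo(R_i)=\omega(n_i)\dsum(\underline{x_{2i}},\underline{x_{2i+1}})$.

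If some pair is repeated, the same computation only gives $\elo(R_i)\leq\omega(n_i)\dsum(\underline{x_{2i}},\underline{x_{2i+1}})$. This uses that $\omega$ is non-decreasing and that a repeated pair is counted once in $\ell(R_i)$. So equality in~\ref{item:Compute_Distance} for the converse genuinely requires the distinctness hypothesis, which is automatically satisfied by the tuples produced in the forward direction.

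For the use in \cref{theo:Geodesic_Chain}, this inequality is harmless: a minimizing tuple yields a chain whose $\omega$-length is at most the minimal value. That value is itself at most $\dom(X,Y)$, so the chain realizes the distance, and a fortiori the inequality is an equality there.
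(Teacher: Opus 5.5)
Your forward direction is the paper's argument: index each $R_i$ bijectively via \cref{lem:Relations_lift} and read off (a)--(e). Your construction of the chain in the converse is also the paper's.

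You depart from the paper at exactly one point, and there you are right and the paper is wrong. Its proof says (e) in the converse is ``a direct computation following from \cref{lem:Relations_lift}''. But that lemma's identity $\elo(R)=\omega(n)\dsum(\underline{x},\underline{y})$ relies on $n=\card(R)$, which fails as soon as an index pair repeats. Since (d) determines each $R_i$ uniquely, the converse as stated is false, not merely under-proved. Take $k=1$, $X=\{a\}$, $Y=\{b\}$ with $a\neq b$, $n_0=2$, $\underline{x_0}=(a,a)$ and $\underline{x_1}=(b,b)$. Then (a)--(c) hold, and the only relation allowed by (d) is $R_0=\{(a,b)\}$. Yet $\elo(R_0)=d(a,b)<2\omega(2)d(a,b)=\omega(2)\dsum(\underline{x_0},\underline{x_1})$.

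Your corrected converse is the right statement: $\elo(R_0,\dots,R_{k-1})\leq\sum_i\omega(n_i)\dsum(\underline{x_{2i}},\underline{x_{2i+1}})$, with equality when the index pairs are pairwise distinct. A harmless overstatement: distinctness is sufficient but not strictly necessary for equality. For instance, repeated pairs of the form $(x,x)$ still give equality when $\omega$ is constant on the relevant range.

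Your remark on \cref{theo:Geodesic_Chain} is also correct, and it is the right repair. Simply adding distinctness of index pairs as a hypothesis would not work there. That condition is open, while the compactness argument needs the closed set $K'$, whose minimizer may well have repeated pairs. With the inequality, take $k$ to be the number of terms attaining $\dom(X,Y)=\min_k\inf\{\dots\}$. A minimizer over $K'$ for that $k$ gives a chain of length at most $\dom(X,Y)$, hence exactly $\dom(X,Y)$.

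That chain need not itself be a simple MBS-chain. To recover the ``furthermore'' clause of \cref{theo:Geodesic_Chain}, add one sentence: the length-nonincreasing reductions in the proof of \cref{theo:MBS} turn it into a simple MBS-chain of the same length.
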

\begin{proof}
    Given a chain $((X_0,\dots,X_k),(R_0,\dots,R_{k-1}))$, we can apply \cref{lem:Relations_lift} to each of the $R_i$ to produce a $2k$-tuple $(\underline{x_0},\dots,\underline{x_{2k-1}})$. By construction, this tuple satisfies conditions~\ref{item:Correct_Cardinal}, \ref{item:Boundary} and~\ref{item:Induce_Relations}. Furthermore, $\pi_{n_i}(\underline{x_{2i}})=X_i=\pi_{n_{i-1}}(\underline{x_{2i-1}})$, thus condition~\ref{item:Matching} is also satisfied. Finally, condition~\ref{item:Compute_Distance} also follows from \cref{lem:Relations_lift}.

    For the converse, observe that condition~\ref{item:Induce_Relations} unambiguously characterizes the relations $R_i$, which are surjective by construction. On the other hand, condition~\ref{item:Matching} guarantees that $(R_0,\dots, R_{k-1})$ forms a combinatorial chain and condition~\ref{item:Boundary} implies that it is a chain between $X$ and $Y$. Finally, condition~\ref{item:Compute_Distance} is simply a direct computation following from \cref{lem:Relations_lift}.
\end{proof}

Finally, we exhibit an upper bound for the distance of intermediate configurations to one of the end of a combinatorial chain. This will be necessary to prove that we can compute the distance over a closed and bounded set in a product of $M$, hence a compact set.
\begin{lem}\label{lem:Sequences_In_Balls}
    Let $X,Y\in \Ran(M)$ be two configurations, and $(R_0,\dots,R_{k-1})$ a combinatorial chain between $X$ and $Y$, with $\card(R_i)=n_i$. Then, if $(\underline{x_0},\dots,\underline{x_{2k-1}})$ is a $2k$-tuple obtained from \cref{lem:Sequence_tuples}, we have, for all $0\leq i\leq k-1$
    \begin{equation*}
        \underline{x_{2i}},\underline{x_{2i+1}}\in \left(\overline{B}_M(X,\elo(R_0,\dots,R_{k-1}))\right)^{n_i},
    \end{equation*}
    Where, for $X\subset M$ and $r\geq 0$, 
    \begin{equation*}
        \overline{B}_M(X,r)=\{x\in M\mid \dm(x,X)\leq r\}
    \end{equation*}
\end{lem}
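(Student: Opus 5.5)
The plan is to track individual points along the chain and bound their displacement from $X$ by the total length. Write $L=\elo(R_0,\dots,R_{k-1})$, and let $X=X_0,X_1,\dots,X_k=Y$ be the configurations of the chain. By conditions~\ref{item:Boundary} and~\ref{item:Matching} of \cref{lem:Sequence_tuples}, every coordinate of $\underline{x_{2i}}$ lies in $X_i$, and every coordinate of $\underline{x_{2i+1}}$ lies in $X_{i+1}$. It therefore suffices to prove that $X_j\subset \overline{B}_M(X,L)$ for all $0\leq j\leq k$.

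Fix $j$ and $z\in X_j$. Surjectivity of the relations lets us walk backwards through the chain: since $R_{j-1}$ is surjective, there is $z_{j-1}\in X_{j-1}$ with $z_{j-1}R_{j-1}z$. Iterating this gives a sequence $z_0,z_1,\dots,z_j=z$ with $z_i\in X_i$ and $z_iR_iz_{i+1}$, and in particular $z_0\in X$. This is the same argument as in the proof that $\dom$ is a distance, run in the reverse direction.

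By the triangle inequality,
\begin{equation*}
d(z,X)\leq d(z_0,z)\leq \sum_{i=0}^{j-1}d(z_i,z_{i+1}).
\end{equation*}
Each term satisfies $d(z_i,z_{i+1})\leq \ell(R_i)\leq \omega(n_i)\ell(R_i)=\elo(R_i)$, using $\omega\geq 1$. Summing over $i$ gives $d(z,X)\leq \sum_{i}\elo(R_i)\leq L$, so $z\in\overline{B}_M(X,L)$.

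Applying this to the coordinates of $\underline{x_{2i}}$ and $\underline{x_{2i+1}}$ gives the claim. There is no real obstacle. The only points needing care are that the coordinates of the lifted tuples indeed lie in the corresponding configurations, which is exactly what conditions~\ref{item:Boundary} and~\ref{item:Matching} state, and that $\omega\geq1$ holds by the definition of a weight.
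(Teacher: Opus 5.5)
Your proof is correct and is essentially the paper's argument: you walk backwards from a point of $X_{i+1}$ through the surjective relations to a point of $X$, then bound its distance from $X$ using the triangle inequality and $\omega\geq 1$. One small precision: conditions~\ref{item:Boundary} and~\ref{item:Matching} alone do not give that the coordinates of $\underline{x_{2i+1}}$ lie in $X_{i+1}$; that fact comes from condition~\ref{item:Induce_Relations} (surjectivity of $R_i=R_{\underline{x_{2i}},\underline{x_{2i+1}}}$ onto $X_{i+1}$), or equivalently from the construction via \cref{lem:Relations_lift}.
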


\begin{proof}
    Since, for all $1\leq i\leq k-1$, every coordinate of $\underline{x_{2i}}$ is equal to one of the coordinates of $\underline{x_{2i-1}}$, it is enough to prove the claim for the terms $\underline{x_0}$ and $\underline{x_{2i+1}}$, $0\leq i\leq k-1$. Since $\pi_{n_0}(\underline{x_0})=X$, the claim is clear for $\underline{x_0}$. Now, it remains to be proven that for any $0\leq i\leq k-1$, and $1\leq j\leq n_i$, $\dm\left(x_{2i+1}^{(j)},X\right)\leq \elo(R_0,\dots,R_{k-1})$. 
    Now, write $X_l=\pi_{n_l}(\underline{x_{2l}})$, for $0\leq l\leq k-1$. Since the relations $R_l$ are surjective, there must exist $y_0,\dots,y_i$, such that $y_l\in X_l$, for $0\leq l\leq i$, and $y_i=x_{2i+1}^{(j)}$, and such that $y_lR_ly_{l+1}$, for all $0\leq l\leq i-1$. But then, we have
    \begin{align*}
        \dm\left(x_{2i+1}^{(j)},X\right)&\leq \dm(y_i,y_0)\\
        &\leq \sum_{l=0}^{i-1}\dm(y_l,y_{l+1})\\
        &\leq \sum_{l=0}^{i-1}\left(\omega(n_l)\sum_{xR_ly}\dm(x,y)\right)\\
        &\leq \elo(R_0,\dots,R_{i-1})\\
        &\leq \elo(R_0,\dots,R_{k-1})
    \end{align*}
\end{proof}

\section{Length structure}
\label{sec:PathMetricsOnRan}
The goal of this section is to show that the weighted distances on $\Ran(M)$ that we introduced in \cref{sec:Combinatorial_Distance} are in fact length metrics (see \cref{sec:PathMetricSpaces}), provided $M$ is a length space itself. This is the content of \cref{theo:PathDistanceEqualCombDistance}. This allows for an interpretation of the distances in $\Ran(M)$ as being induced by a set of geodesics, for which we provide an explicit description in \cref{sec:GeodesicPaths}.

In the following $M$ is a length space (see \cref{sec:PathMetricSpaces}) with length structure $\cP$ and $\ell$, and $\omega$ is a weight.

\subsection{Paths and lifts}
\label{sec:Lifts}
The goal of this section is to define a length structure on $\Ran(M)$. One may wonder what a continuous path on $\Ran(M)$ may be depending on which topology is chosen. Indeed, the set of continuous path on $\Ran(M)$ are distinct if one considers the Hausdorff topology, or the final topology, or even one of the topologies induced by the weighted distances. However, paths in $\Ran(M)$ that we consider to be admissible will be continuous for all of those topologies (see \cref{rem:Chemins_Continus_Hausdorff_Faible}), thus the ambiguity on what continuous means will be alleviated immediately.

\begin{exa}
    Before specifying to which paths in $\Ran(M)$ we will associate a length, let us first give a few examples of what continuous paths in $\Ran(M)$ may look like. Consider $M=\R$, in that case, any path can be plotted as the graph of a multi-valued function, as displayed on \cref{fig:exemples_chemins}. Given a path $\gamma\colon [a,b]\to \R$, we also plot the integer-valued function $t\mapsto\card(\gamma(t))$. Note that, even over a closed interval, path of unbounded cardinality may be continuous for the Hausdorff topology, however they will be excluded from the definition of admissible paths. All paths pictured here will in fact be admissible, for the usual length structure on $\R$, induced by its structure of Riemannian manifold. 
\end{exa}

    \begin{figure}[htp]
        \centering
        \input{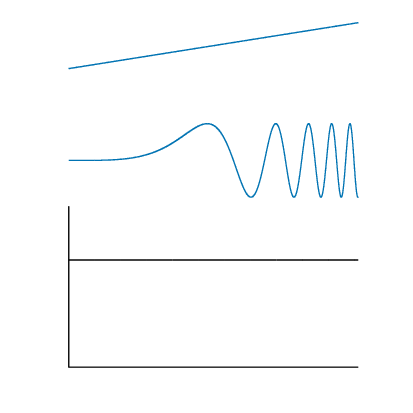}
        \input{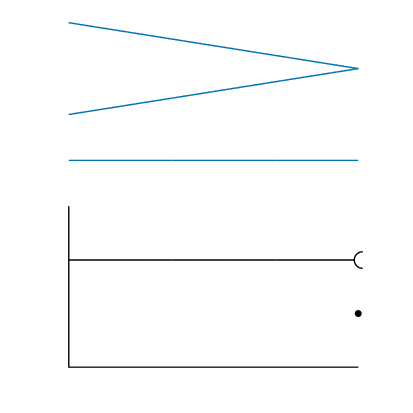}
        \input{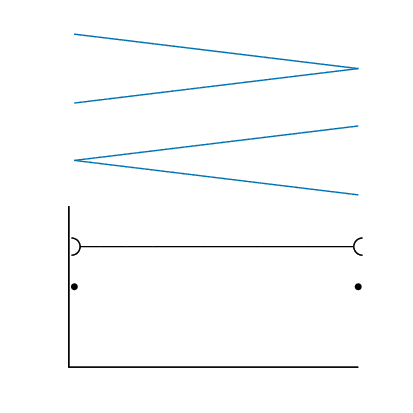}
        \input{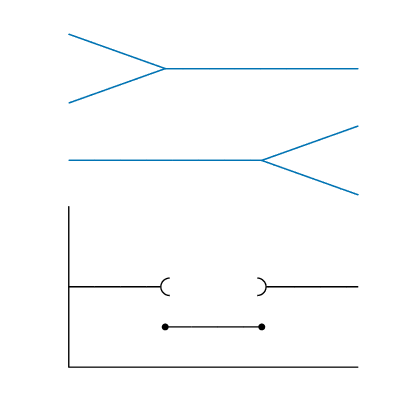}
        \input{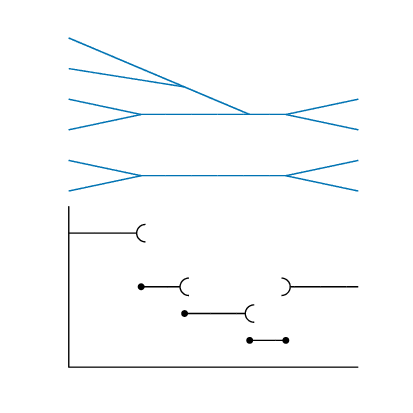}
        \input{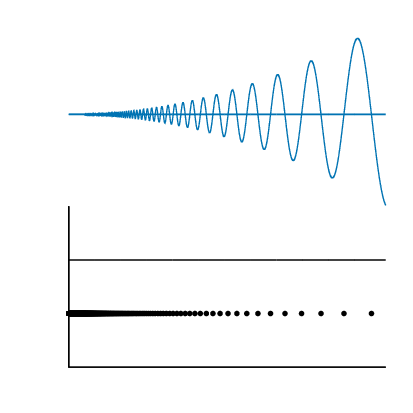}
        \caption{Examples of continuous paths in $\Ran(\R)$ and their cardinality.}
        \label{fig:exemples_chemins}
    \end{figure}

\label{sec:Chemin_Relevement_C1_Morceaux}
\begin{defin}\label{def:relevements}
    Let $a\leq b\in \R$, and $\Gamma\colon [a,b]\to \Ran(M)$ a continuous path. A lift of $\Gamma$ is given by
    \begin{itemize}
        \item a subdivision of $[a,b]$, $a=t_0<\dots < t_k=b$, with $k\geq 0$,
        \item for all $0\leq i\leq k-1$, a continuous path, $\underline{\gamma_i}\colon [t_i,t_{i+1}]\to M^{n_i}$, with $n_i\geq 1$, such that $\pi_{n_i}(\underline{\gamma_i}(t))=\Gamma(t)$ for all $t\in [t_i,t_{i+1}]$. 
    \end{itemize} 
    Such a lift is written $((t_0,\dots,t_k),(\underline{\gamma_0},\dots,\underline{\gamma_{k-1}}))$, or just $(\underline{\gamma_0},\dots,\underline{\gamma_{k-1}})$. This lift is said to be admissible if all the $\underline{\gamma_i}$ are admissible paths in $ M^{n_i}$ (see \cref{prop:Length_structure_Mn}).
\end{defin}

\begin{defin}\label{def:Chemin_C1_Par_Morceaux}
    Let $\Gamma\colon [a,b]\to \Ran(M)$. We say that $\Gamma$ is admissible, if it is continuous and admits an admissible lift.
\end{defin}

\begin{rem}
    \label{rem:Chemins_Continus_Hausdorff_Faible}
    The assumption of continuity for an admissible path is redundant, though we keep it for clarity. Indeed, if $\Gamma\colon [a,b]\to \Ran(M)$ is a function, not necessarily continuous, and if there exits a lift of $\Gamma$, $((t_0,\dots,t_k),(\underline{\gamma_0},\dots,\underline{\gamma_{k-1}}))$ such that every $\underline{\gamma_i}\colon [t_i,t_{i+1}]\to M^{n_i}$, $0\leq i\leq k-1$ is continuous, then, by construction, 
    $\Gamma\colon [a,b]\to \Ran(M)$ is continuous for the final topology and thus for the Hausdorff topology. Indeed, continuity of $\Gamma$ on the intervals $[t_i,t_{i+1}]$ is clear, and continuity at $t_i$ follows from the condition the $\pi_{n_i}(\underline{\gamma_i}(t_i))=\pi_{n_{i-1}}(\underline{\gamma_{i-1}}(t_i))$, which is the definition of $(\underline{\gamma_0},\dots, \underline{\gamma_{k-1}})$ being a lift.
    
    In particular, the set of continuous paths in $\Ran(M)$ which admit a lift (admissible or otherwise), does not depend on whether we consider the final topology or the Hausdorff topology. Note however that this is no longer the case if we drop the hypothesis of admitting a lift. Indeed, there exist continuous paths whose cardinality is unbounded both for the Hausdorff topology and for weighted topologies.
    Let us show how results that we will prove latter allow one to construct such a path. Consider the sequence of unbounded cardinality, $(X_n)_{n\in \N}$ which converges to $\{0\}$ in $\Ran([0,1])$ for $\dom$,
    introduced in \cref{exa:config_convergente}. Up to extracting a subsequence, we may assume that $\sum_{n=0}^{\infty}\dom(X_n,X_{n+1})<\infty$.
    Since this is a sequence in $\Ran([0,1])$, \cref{cor:existence geodesic path} applies, and up to reparametrization,
    we may find continuous paths between $X_n$ and $X_{n+1}$, $\Gamma_n\colon [1-\frac{1}{2^n},1-\frac{1}{2^{n+1}})\to \Ran(M)$
    whose length will be $\dom(X_n,X_{n+1})$.
    Concatenating those paths gives a continuous map $\Gamma\colon [0,1)\to \Ran(M)$ of unbounded cardinality, but the convergence of the sequence $(X_n)_{n\in \N}$ to $\{0\}$ (together with the fact that $\sum_{k=n}^{\infty}\elo(\Gamma_k)$ tends to $0$) implies that we can extend this map by continuity by defining $\Gamma(1)=\{0\}$.
    This is an example of the desired form for the weighted topology, but since it refines the Hausdorff topology (\cref{prop:tom raffine tH}) it is also an example for the Hausdorff topology. Such a path cannot be continuous for the final topology however, since if it were, we could deduce that $(X_n)$ converges to $\{0\}$ for the final topology, which is impossible by \cref{prop:unbounded cardinal diverges in final}. 
    
    Furthermore, path of the above form do not admit any lift (admissible or otherwise). Since we are only interested in admissible paths here, which admit a lift by definition, we may thus unambiguously speak of their continuity without specifying the choice of topology on $\Ran(M)$.
    
    The previous comment also extends to the topologies induced by the weighted distances. In fact, observe that, by definition, admissible paths in $\Ran(M)$ must factor through $M^n$ for some large enough $n$, and this factorization must in fact be continuous. Now, observe that the projection $M^n\to \Ran(M)$ is continuous for every topology we considered on $\Ran(M)$. Thus, admissibility of a path reduces to the existence of a continuous (global) lift to $M^n$ for some $n$. The reason we did not define admissibility this way from the start is that the above definition of a lift is better suited for the definition of the length of an admissible path. 
\end{rem}

\begin{defin}\label{def:longueur_rel}
    Let $\Gamma\colon [a,b]\to \Ran(M)$, and $((t_0,\dots,t_k),(\underline{\gamma_0},\dots,\underline{\gamma_{k-1}}))$ be an admissible lift of $\Gamma$. The length of the lift relative to $\omega$ is defined as
    \begin{equation*}
        \elo(\underline{\gamma_0},\dots,\underline{\gamma_{k-1}})=\sum_{i=0}^{k-1} \omega(n_i)\ell^{n_i}(\underline{\gamma_i})
    \end{equation*}
\end{defin}

\begin{defin}\label{def:longueur_chemin}
    Let $\Gamma\colon [a,b]\to \Ran(M)$ be an admissible path. Its length relative to $\omega$ is defined as
    \begin{equation*}
        \elo(\Gamma)=\inf\{\elo(\underline{\gamma_0},\dots,\underline{\gamma_{k-1}})\}
    \end{equation*}
    where the infimum is taken over the admissible lifts of $\Gamma$.
\end{defin}

\begin{rem}\label{rem:subdivision_concatenations_longueurs}
    Observe that the length of a lift is conserved when refining the subdivision. In particular, the length of a concatenation of lifts is the sum of the lengths of each concatenated piece. Similarly the length of a concatenation of paths is the sum of the paths length.
\end{rem}

We can now show that the weighted distance $\dom$ is in fact a length metric, for the length structure where $\cP$ is the set of paths admitting an admissible lift, and the length is $\elo$.
\begin{theo}\label{theo:PathDistanceEqualCombDistance}    
    Let $X,Y\in \Ran(M)$. The weighted distance, relative to $\omega$, between $X$ and $Y$ is equal to
    \begin{equation*}
        \dom(X,Y)=\inf\{\elo(\Gamma)\mid \Gamma\colon [a,b]\to \Ran(M), \Gamma(a)=X,\Gamma(b)=Y\}
    \end{equation*}
    where the infimum is computed over all admissible paths from $X$ to $Y$.
\end{theo}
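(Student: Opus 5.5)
We prove the two inequalities separately. Write $d^{\ell}_\omega(X,Y)$ for the infimum on the right-hand side, taken over admissible paths.

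\textbf{Step 1: $\dom\leq d^\ell_\omega$.} Let $\Gamma\colon[a,b]\to\Ran(M)$ be an admissible path from $X$ to $Y$, and fix an admissible lift $((t_0,\dots,t_k),(\underline{\gamma_0},\dots,\underline{\gamma_{k-1}}))$. We turn this lift into a combinatorial chain in two moves.
\begin{itemize}
\item On each piece $\underline{\gamma_i}\colon[t_i,t_{i+1}]\to M^{n_i}$, take the two endpoint tuples $\underline{\gamma_i}(t_i)$ and $\underline{\gamma_i}(t_{i+1})$.
\item At each subdivision point $t_i$, the lift condition gives $\pi_{n_{i-1}}(\underline{\gamma_{i-1}}(t_i))=\pi_{n_i}(\underline{\gamma_i}(t_i))$.
\end{itemize}
The resulting $2k$-tuple $(\underline{\gamma_0}(t_0),\underline{\gamma_0}(t_1),\underline{\gamma_1}(t_1),\dots,\underline{\gamma_{k-1}}(t_k))$ therefore satisfies conditions~\ref{item:Correct_Cardinal}, \ref{item:Boundary} and~\ref{item:Matching}. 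The converse of \cref{lem:Sequence_tuples} then produces a chain between $X$ and $Y$ of $\omega$-length
\begin{equation*}
\sum_i\omega(n_i)\dsum(\underline{\gamma_i}(t_i),\underline{\gamma_i}(t_{i+1})).
\end{equation*}
Since $\dsum=d^{\ell^{n_i}}$ by \cref{prop:Length_structure_Mn}, we have $\dsum(\underline{\gamma_i}(t_i),\underline{\gamma_i}(t_{i+1}))\leq \ell^{n_i}(\underline{\gamma_i})$. This bounds the chain length by $\elo(\underline{\gamma_0},\dots,\underline{\gamma_{k-1}})$. Taking the infimum over lifts and then over paths gives the inequality.

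\textbf{Step 2: $d^\ell_\omega\leq\dom$.} Fix $\epsilon>0$ and a combinatorial chain $(R_0,\dots,R_{k-1})$ with $\elo(R_0,\dots,R_{k-1})<\dom(X,Y)+\epsilon$. By \cref{lem:Sequence_tuples}, this chain corresponds to tuples $\underline{x_{2i}},\underline{x_{2i+1}}\in M^{n_i}$ with $\elo(R_i)=\omega(n_i)\dsum(\underline{x_{2i}},\underline{x_{2i+1}})$.

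Because $M$ is a length space, for each coordinate $j$ we choose an admissible path $\gamma_i^{(j)}$ from $x_{2i}^{(j)}$ to $x_{2i+1}^{(j)}$ with $\ell(\gamma_i^{(j)})<d(x_{2i}^{(j)},x_{2i+1}^{(j)})+\epsilon/(k n_i\omega(n_i))$. We reparametrize these by affine maps so that all coordinates of step $i$ live on the common interval $[i,i+1]$. This is allowed by the affine invariance in \cref{def:Length_structure}, and it gives a path $\underline{\gamma_i}\in\cP^{n_i}$.

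Next we check that $(\underline{\gamma_0},\dots,\underline{\gamma_{k-1}})$ is the lift of a well-defined map $\Gamma\colon[0,k]\to\Ran(M)$, given by $\Gamma(t)=\pi_{n_i}(\underline{\gamma_i}(t))$ on $[i,i+1]$. At the junction $t=i$ the two definitions agree, because $\pi_{n_i}(\underline{x_{2i}})=\pi_{n_{i-1}}(\underline{x_{2i-1}})$ by condition~\ref{item:Matching}. By \cref{rem:Chemins_Continus_Hausdorff_Faible}, $\Gamma$ is continuous and hence admissible. Its endpoints are $X$ and $Y$ by condition~\ref{item:Boundary}. Finally,
\begin{equation*}
\elo(\Gamma)\leq\sum_i\omega(n_i)\ell^{n_i}(\underline{\gamma_i})<\elo(R_0,\dots,R_{k-1})+\epsilon<\dom(X,Y)+2\epsilon .
\end{equation*}
Letting $\epsilon\to0$ gives the inequality.

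\textbf{Main obstacle.} Both directions are fairly formal; the delicate point is bookkeeping at the junctions. One must make sure the concatenated pieces define a single-valued function into $\Ran(M)$, even though consecutive pieces are lifted to different powers $M^{n_i}$; this is exactly what the matching condition~\ref{item:Matching} guarantees. One must also use the definition of admissible lift without requiring the pieces to agree as tuples at the junctions, only their images under the $\pi_{n_i}$. The remaining subtlety is that $d^\ell$ on $M$ is an infimum, so in Step~2 it is approximated rather than attained. If one prefers to take the infimum over paths with a fixed parameter interval, this is handled by the same affine reparametrization.
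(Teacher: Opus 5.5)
Your proof is correct and is essentially the paper's argument. In one direction you read off the relations $R_i$ from the endpoint pairs of an admissible lift and bound $d$ by $\ell$ coordinatewise. In the other you lift a near-optimal chain to tuples and concatenate near-minimal coordinate paths on the intervals $[i,i+1]$, using the matching condition at the junctions.

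One small caution about Step~1. The length formula~\ref{item:Compute_Distance} in the converse of \cref{lem:Sequence_tuples} only holds as an inequality $\leq$ in general, because endpoint pairs may repeat, which makes $\card(R_i)<n_i$. Since $\omega$ is non-decreasing, $\leq$ is exactly the direction your argument needs.
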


\begin{proof}
    Let $X,Y\in\Ran(M)$, and let us write 
    \begin{equation*}
        I=\{\Gamma \colon [a,b]\to\Ran(M)\mid a<b\in\R\text{ and $\Gamma$ admissible path from $X$ to $Y$}\}.
    \end{equation*} 
    
    Let us first show that $\inf_{\Gamma\in I}\{\elo(\Gamma)\}\geq\dom(X,Y)$. Let $\epsilon>0$ and $\Gamma\in I$ such that 
    \begin{align*}
        \elo(\Gamma)\leq \inf_{\Lambda\in I}\{\elo(\Lambda)\}+\frac{\epsilon}{2}.
    \end{align*} 
    Let $((a=t_0,\dots,t_k=b),(\underline{\gamma_0},\dots,\underline{\gamma_{k-1}}))$ be an admissible lift of $\Gamma$ such that
     \begin{align*}
        \elo(\underline{\gamma_0},\dots,\underline{\gamma_{k-1}})\leq \elo(\Gamma) + \frac{\epsilon}{2}.
    \end{align*}
    For all $0\leq i\leq k-1$, we write explicitly
    \begin{align*}
        \underline{\gamma_i}\colon[t_i,t_{i+1}]&\to M^{n_i}\\
        t&\mapsto (\gamma_i^{(1)}(t),\dots,\gamma_i^{(n_i)}(t))
    \end{align*}
   
    Let us define $((X_0,\dots,X_k),(R_0,\dots,R_{k-1}))$ as 
    \begin{align*}
        &X_i=\Gamma(t_i), \quad \text{for all } 0\leq i\leq k\\ 
        &R_i=\left\{ \left( \gamma_i^{(j)}(t_i),\gamma_i^{(j)}(t_{i+1}) \right)\mid 1\leq j\leq n_i\right\}, \quad \text{for all } 0\leq i\leq k-1.
    \end{align*}
    One easily checks that, since $(\underline{\gamma_0},\dots,\underline{\gamma_{k-1}})$ is a lift of $\Gamma$, this defines a combinatorial chain from $X$ to $Y$. Let $0\leq i\leq k-1$. For any $1\leq j\leq n_i$ we have
    \begin{align*}
        \dm\left(\gamma_i^{(j)}(t_i),\gamma_i^{(j)}(t_{i+1})\right)&=\inf\{\ell(\lambda) \} \leq \ell(\gamma_i^{(j)}),
    \end{align*}
    where is the infimum is taken over all admissible paths in $M$ from $\gamma_i^{(j)}(t_i)$ to $\gamma_i^{(j)}(t_{i+1})$. Hence
    \begin{align*}
        \elo(R_i)&=\omega(n_i)\sum_{j=1}^{n_i} \dm\left(\gamma_i^{(j)}(t_i),\gamma_i^{(j)}(t_{i+1})\right)\leq \omega(n_i) \sum_{j=1}^{n_i}\ell(\gamma_i^{(j)}) =\elo(\underline{\gamma_i}).
    \end{align*}
    We finally get
    \begin{align*}
        \dom(X,Y)\leq \elo(R_0,\dots,R_{k-1})\leq \elo(\underline{\gamma_0},\dots,\underline{\gamma_{k-1}})\leq \inf_{\Lambda\in I}\left\{\elo(\Lambda)\right\} + \epsilon.
    \end{align*}
    Since this is true for any $\epsilon>0$, we have the desired inequality.

    Let us now show that $\dom(X,Y)\geq \inf_{\Lambda\in I}\left\{\elo(\Lambda)\right\}$. Let $\epsilon>0$. By definition, there exists $((X=X_0,\dots,X_k=Y),(R_0,\dots,R_{k-1}))$ a combinatorial chain from $X$ to $Y$ such that
    \begin{align*}
        \elo(R_0,\dots,R_{k-1})\leq \dom(X,Y) + \frac{\epsilon}{2}.
    \end{align*}
    Let $0\leq i\leq k-1$, write $n_i=\card(R_i)$ and let us fix an order on $R_i=\{(x^{(j)},y^{(j)})\}_{1\leq j\leq n_i}$. We then choose an admissible path in $M^{n_i}$, $\underline{\gamma_i}\colon [i,i+1]\to M^{n_i}$ from $\underline{x}=(x^{(1)},\dots,x^{(n_i)})$ to $\underline{y}=(y^{(1)},\dots,y^{(n_i)})$ which we may assume to satisfy
    \begin{align*}
\ell^{n_i}(\underline{\gamma_i}) \leq \dsum(\underline{x},\underline{y})+ \frac{\epsilon}{2k\omega(n_i)}.
    \end{align*}
    Then, we directly have
    \begin{align*}
        \elo(\underline{\gamma_i})=\omega(n_i)\ell^{n_i}(\underline{\gamma_i}) \leq \frac{\epsilon}{2k} + \omega(n_i)\sum_{j=1}^{n_i} \dm(x^{(j)},y^{(j)}) = \frac{\epsilon}{2k} + \elo(R_i).
    \end{align*}
    Moreover, since $R_i$ is surjective, we also have $\pi_{n_i}(\underline{\gamma_i}(i))=X_i$ and $\pi_{n_i}(\underline{\gamma_i}(i+1))=X_{i+1}$, so that $((0,\dots,k),(\underline{\gamma_0},\dots,\underline{\gamma_{k-1}}))$ defines an admissible lift of the path $\Gamma$ defined as
    \begin{align*}
        \Gamma\colon [0,k]&\to\Ran(M)\\
        t&\mapsto \pi_{n_i}(\underline{\gamma_i}(t)) \quad \text{if } i\leq t \leq i+1,
    \end{align*}
    which is by construction an admissible path from $X$ to $Y$.
    We finally have
    \begin{align*}
        \inf_{\Lambda\in I}\left\{\elo(\Lambda)\right\}\leq \elo(\Gamma)\leq \elo(\underline{\gamma_0},\dots,\underline{\gamma_{k-1}}))\leq \elo(R_0,\dots,R_{k-1}) + \frac{\epsilon}{2} \leq \dom(X,Y)+\epsilon.
    \end{align*}
    Since this is true for any $\epsilon>0$, this concludes this argument, and this proof.
\end{proof}

\begin{corol}
\label{cor:ran length space}
 The metric space $(\Ran(M),\dom)$ is a length space.
\end{corol}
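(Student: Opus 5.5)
The plan is to check that the data consisting of the admissible paths of \cref{def:Chemin_C1_Par_Morceaux} together with the length $\elo$ of \cref{def:longueur_chemin} is a length structure on $\Ran(M)$ in the sense of \cref{def:Length_structure}. Once this is done, \cref{theo:PathDistanceEqualCombDistance} says that the associated metric $d^{\elo}$ is exactly $\dom$. Since the topology we put on $\Ran(M)$ is $\tom$, the topology induced by $\dom$, this makes $(\Ran(M),\dom)$ a length space in the sense of \cref{defin:PathMetricSpaces}.

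The first three axioms are essentially formal.

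\textbf{Concatenation and restriction.} Given $c\in[a,b]$, an admissible lift of $\Gamma$ can be refined by adding $c$ to its subdivision; its restrictions to $[a,c]$ and $[c,b]$ are then admissible lifts of the restrictions of $\Gamma$, because admissibility in $M^{n}$ is stable under restriction. Conversely, admissible lifts of $\Gamma_{|[a,c]}$ and $\Gamma_{|[c,b]}$ concatenate into an admissible lift of $\Gamma$, since lifts only need to agree after applying $\pi_n$. By \cref{rem:subdivision_concatenations_longueurs}, lengths of lifts add. Taking infima separately over the two halves then gives $\elo(\Gamma)=\elo(\Gamma_{|[a,c]})+\elo(\Gamma_{|[c,b]})$.

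\textbf{Reparametrization.} Invariance under affine reparametrization follows by precomposing each $\underline{\gamma_i}$ with the affine map and using the same property of $\ell^{n_i}$.

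\textbf{Continuity of $t\mapsto\elo(\Gamma_{|[a,t]})$.} Additivity reduces this to showing $\elo(\Gamma_{|[t,t+h]})\to 0$ as $h\to0$ (and symmetrically on the left). Fix one admissible lift. On a short interval $[t,t+h]$ inside a single piece $[t_i,t_{i+1}]$ it gives $\elo(\Gamma_{|[t,t+h]})\le\omega(n_i)\ell^{n_i}(\underline{\gamma_i}{}_{|[t,t+h]})$. The right-hand side tends to $0$ by the continuity axiom for $\ell$ applied to each coordinate path.

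\textbf{The last axiom.} Here I will use \cref{rem:checking_length_structure} rather than checking the axiom directly. By \cref{theo:PathDistanceEqualCombDistance}, $d^{\elo}=\dom$ is a genuine distance. The topology it induces is, by definition, $\tom$, the topology carried by $\Ran(M)$. So the metric topology refines (indeed equals) the underlying topology, and the remark yields the last axiom.

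The only delicate point I anticipate is this self-referential choice of topology, since the statement is about the metric space $(\Ran(M),\dom)$ and it must be clear which topology the admissible paths are continuous for. \cref{rem:Chemins_Continus_Hausdorff_Faible} handles this: every admissible path is continuous for $\tpi$, hence for $\tom$, which is coarser. If one instead insisted on the Hausdorff topology, one would additionally need $\tom$ to refine $\tH$, which is the result cited later as \cref{prop:tom raffine tH}.

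With all four axioms verified, $(\Ran(M),\dom)$ is a length space.
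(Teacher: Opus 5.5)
Your proposal is correct and follows the same route as the paper. The first three axioms of \cref{def:Length_structure} follow formally from additivity of lengths of lifts and from the corresponding properties of $\ell$ on $M$, and the last axiom comes from \cref{rem:checking_length_structure} once \cref{theo:PathDistanceEqualCombDistance} identifies $d^{\elo}$ with $\dom$; your choice of $\tom$ itself as the underlying topology, rather than the paper's phrasing that $d^{\elo}$ refines the Hausdorff topology, is if anything the cleaner way to make the ``induces the topology'' clause of \cref{defin:PathMetricSpaces} immediate.
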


\begin{proof}
    We have checked all but the last two conditions in \cref{def:Length_structure}. The continuity assumption follows from that of the length structure on $M$. And by \cref{rem:checking_length_structure}, the last assumption follows from the fact that $d^{\elo}$ is indeed a distance refining the Hausdorff topology.
\end{proof}

\subsection{Geodesic paths}
\label{sec:GeodesicPaths}
In this section we show that the infimum in \cref{theo:PathDistanceEqualCombDistance} can be taken over a smaller class of path that have a piecewise constant cardinality, and for which the infimum in \cref{def:longueur_chemin} is realized (that is, they admit a lift with the correct length). We call these paths crenelated paths. Then we show that we can restrict even further to paths whose cardinality first decreases and then increases. By similarity with MBS-chains, we call these paths MBS-paths. Finally, we use these results to show that, provided geodesic chains exist in $\Ran(M)$, and geodesics between any two points of $M$ exist, then the infimum in \cref{theo:PathDistanceEqualCombDistance} is realized by a MBS-path.

We start by showing that path with a constant cardinality always admit a lift of minimal dimension.
\begin{lem}\label{lem:Relevement_continu_card_constant}
Let $n\geq 1$, $I$ an interval (closed, open or semi-open), and $\Gamma\colon I\to \Ran(M)$ a continuous path, such that $\card(\Gamma(t))=n$, for all $t\in I$. Then there exists a continuous lift of $\Gamma$, $\underline{\gamma}\colon I\to M^n$. Moreover, if $\underline{\lambda}\colon I\to M^m$ is such that $\pi_m(\underline{\lambda}(t))=\Gamma(t)$, $\forall t\in I$, then there exists a surjection $f\colon \{1,\dots,m\}\to \{1,\dots,n\}$ such that $\lambda=f\cdot\gamma$.
\end{lem}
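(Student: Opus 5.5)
Since $\Gamma$ stays inside the stratum $\Conf_n(M)$, the plan is to lift it through the covering $\Conford_n(M)\to\Conf_n(M)$ of \cref{prop:quotient conford to conf covering}. First observe that the restriction of $\Gamma$ to $I$ is a continuous map into $\Conf_n(M)$ with its own topology. This holds for every topology considered, because they all induce the quotient topology on the truncation $\Ran_{\leq n}(M)$ (\cref{prop:topologie_troncation}), and hence on the stratum $\Conf_n(M)$. The identification of the subspace topology on $\Conf_n(M)\subset\Ran_{\leq n}(M)$ with the quotient topology coming from $\Conford_n(M)$ can be checked as follows: $\Conford_n(M)$ is open in $M^n$ and saturated for $\pi_n$ restricted to $\Ran_{\leq n}$, since the preimage of $\Conf_n(M)$ under $\pi_n$ is exactly $\Conford_n(M)$, and the restriction of a quotient map to an open saturated subset is again a quotient map. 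An interval is simply connected and locally path connected, so the lifting criterion for coverings gives a continuous lift $\underline{\gamma}\colon I\to\Conford_n(M)\subset M^n$. Concretely, choose $t_0\in I$ and a preimage of $\Gamma(t_0)$; unique path lifting then extends the lift to both sides of $t_0$, which also handles open and semi-open $I$, since each is a union of compact subintervals containing $t_0$ on which the lifts agree by uniqueness.

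For the second part, take $\underline{\lambda}\colon I\to M^m$ with $\pi_m\circ\underline{\lambda}=\Gamma$; I read the hypothesis as including continuity of $\underline{\lambda}$. For each $t$ and each $j\in\{1,\dots,m\}$, the point $\lambda^{(j)}(t)$ lies in $\Gamma(t)=\{\gamma^{(1)}(t),\dots,\gamma^{(n)}(t)\}$, and these $n$ points are pairwise distinct. So there is a unique index $f_t(j)$ with $\lambda^{(j)}(t)=\gamma^{(f_t(j))}(t)$, and $f_t$ is surjective because $\pi_m(\underline{\lambda}(t))=\Gamma(t)$.

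The key step is to show that $t\mapsto f_t$ is locally constant; connectedness of $I$ then makes it constant, equal to some $f$, which yields $\underline{\lambda}=f\cdot\underline{\gamma}$. Fix $t$ and let $\delta=\min_{i\neq i'}d(\gamma^{(i)}(t),\gamma^{(i')}(t))>0$. By continuity of $\underline{\gamma}$ and $\underline{\lambda}$, for $s$ near $t$ every coordinate moves by less than $\delta/3$. Then $\lambda^{(j)}(s)$ lies within $\delta/3$ of $\lambda^{(j)}(t)=\gamma^{(f_t(j))}(t)$, hence within $2\delta/3$ of $\gamma^{(f_t(j))}(s)$, and at distance more than $\delta/3$ from every other $\gamma^{(i)}(s)$. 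This forces $f_s(j)=f_t(j)$.

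The main obstacle I expect is the topological bookkeeping in the first step: making sure that continuity of $\Gamma$ into $\Ran(M)$ indeed means continuity into the quotient $\Conf_n(M)$, so that the covering space theory applies. The remaining steps are routine.
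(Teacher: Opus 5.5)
Your proof is correct and follows the paper's route: you lift through the covering $\Conford_n(M)\to\Conf_n(M)$, then fix the index assignment at one time and propagate it to all of $I$ by connectedness. The differences are minor. You prove local constancy of $t\mapsto f_t$ by a $\delta/3$ estimate, whereas the paper writes the agreement set as $g^{-1}(0)=h^{-1}((0,\infty))$ for two continuous functions. You also spell out two things the paper leaves implicit: the topological check on $\Conf_n(M)$, and the assumption that $\underline{\lambda}$ is continuous.
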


\begin{proof}
The path $\Gamma$ has constant cardinality, so that its image is a subset of $\Conf_n(M)$. Since $\pi_n\colon \Conford_n(M)\to\Conf_n(M)$ is a covering, there exists a lift of $\Gamma$, $\underline{\gamma}=(\gamma^{(1)},\dots,\gamma^{(n)})\colon I\to \Conford_n(M)\subset M^n$.

Let $\underline{\lambda}=(\lambda^{(1)},\dots,\lambda^{(m)})\colon I\to M^m$ such that $\pi_m(\underline{\lambda}(t))=\Gamma(t)$, $\forall t\in I$. Since the cardinality of $\Gamma(t)$ is equal to $n$ for all $t\in I$, we must have $m\geq n$.

Let us fix $t\in I$. Since $\pi_m(\underline{\lambda}(t))=\Gamma(t)$, for all $i\in\{1,\dots,m\}$, $\lambda^{(i)}(t)\in\Gamma(t)$ and since $\Gamma(t)=\pi_n(\underline{\gamma}(t))$ and $\card(\Gamma(t))=n$, there exists a unique $j_i\in\{1,\dots,n\}$ such that $\lambda^{(i)}(t)=\gamma^{(j_i)}(t)$.
We can thus define a map $f\colon\{1,\dots,m\}\to\{1,\dots,n\}$ such that $f(i)=j_i$, which will automatically be surjective. Then, $\lambda(t)=f\cdot\gamma(t)$.

We then define $\Omega=\{s\in I\mid \lambda(s)=f\cdot\gamma(s)\}$. Let $s\in I$, we have $s\in \Omega$ if and only if,
     \begin{enumerate}
         \item for all $i\in \{1,\dots,m\}$, $\lambda^{(i)}(s)=\gamma^{(f(i))}(s)$,
         \item for all $i\in\{1,\dots,m\}$, $j\in\{1,\dots,n\}$, $j\neq f(i) \Rightarrow \lambda^{(i)}(s)\neq\gamma^{(j)}(s)$.
     \end{enumerate}

Note that both conditions are actually equivalent. Indeed, if $s\in I$ satisfies the first condition, then, as the $\gamma^{(j)}(s)$ are pairwise distinct, $s$ also satisfies the second condition. And if $s\in I$ satisfies the second condition then, for all $i\in\{1,\dots,m\}$, since $\lambda^{(i)}(s)\in \pi_n(\underline{\gamma}(s))$, there exists $j\in\{1,\dots,n\}$ such that $\lambda^{(i)}(s)=\gamma^{(j)}(s)$, which implies $\lambda^{(i)}(s)=\gamma^{(f(i))}(s)$.

Let us consider the following maps:
     \begin{align*}
         g\colon I&\to \R_{\geq 0}\\
         s&\mapsto \sum_{i=1}^m \dm(\lambda^{(i)}(s),\gamma^{(f(i))}(s))\\
         h\colon I&\to \R_{\geq 0}\\
         s&\mapsto \prod_{i=1}^m\prod_{\substack{j=1\\ j\neq f(i)}}^n \dm(\lambda^{(i)}(s),\gamma^{(j)}(s))
     \end{align*}
The maps $g$ and $h$ are continuous. Moreover, using the previous remarks, we have $\Omega=g^{-1}(0)=h^{-1}((0,+\infty))$. We deduce that $\Omega$ is both open and closed. Since $t\in \Omega$ by definition, and since $\Omega\subset I$ which is connected, we have $\Omega=I$.
\end{proof}

Since we want to consider paths that have a piecewise constant cardinality, we want to extend the previous result to path whose cardinality is only constant over an open interval.
\begin{prop}\label{prop:relevement_unique_permutation_pres}
    Let $n\geq 0$, $a<b$ and $\Gamma\colon [a,b]\to \Ran(M)$ an admissible path such that $\card(\Gamma(t))=n$ for all $t\in (a,b)$. Then there exists an admissible lift of $\Gamma$, $\underline{\gamma}\colon [a,b]\to M^n$. Moreover, if $\underline{\lambda}\colon [a,b]\to M^n$ is another admissible lift of $\Gamma$, then there exists a permutation $\phi\in\Sigma_n$ such that $\underline{\lambda}=\phi\cdot\underline{\gamma}$. 
\end{prop}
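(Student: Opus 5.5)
The plan has three steps: build the lift on the open interval, extend it to the endpoints, and prove uniqueness up to a permutation. On $(a,b)$ the path has constant cardinality $n$, so \cref{lem:Relevement_continu_card_constant} gives a continuous lift $\underline{\gamma}\colon(a,b)\to M^n$, which in fact lands in $\Conford_n(M)$. Two things remain. First, $\underline{\gamma}$ must extend continuously to $[a,b]$ and be admissible. Second, it must be unique up to permutation.

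For the extension I will use the given admissible lift $((t_0,\dots,t_k),(\underline{\lambda_0},\dots,\underline{\lambda_{k-1}}))$ of $\Gamma$. I restrict attention to the first piece $\underline{\lambda_0}\colon[a,t_1]\to M^{m}$. On $(a,t_1)$, the second part of \cref{lem:Relevement_continu_card_constant}, applied to the open interval, gives a surjection $f\colon\{1,\dots,m\}\to\{1,\dots,n\}$ with $\underline{\lambda_0}=f\cdot\underline{\gamma}$ on $(a,t_1)$. Choose a section $s$ of $f$, i.e. $f(s(j))=j$. Then $\gamma^{(j)}=\lambda_0^{(s(j))}$ on $(a,t_1)$. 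Since the right-hand side is admissible and continuous on the closed interval $[a,t_1]$, this formula extends $\underline{\gamma}$ to $[a,t_1]$. The extension is admissible there, because each coordinate is a coordinate of an admissible path. Also $\pi_n(\underline{\gamma}(a))=\pi_m(\underline{\lambda_0}(a))=\Gamma(a)$, since $f$ is surjective and hence the set of coordinates is preserved. The same argument on each $[t_i,t_{i+1}]$ shows that $\underline{\gamma}$ restricted to $[t_i,t_{i+1}]$ equals a selection of coordinates of $\underline{\lambda_i}$. These restrictions glue, because they all agree with the single continuous lift on $(a,b)$. By the concatenation axiom for admissible paths in $M^n$ (\cref{prop:Length_structure_Mn}), the glued path $\underline{\gamma}\colon[a,b]\to M^n$ is admissible.

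For uniqueness, let $\underline{\lambda}$ be another lift in $M^n$. Apply \cref{lem:Relevement_continu_card_constant} on $(a,b)$ with $m=n$. It gives a surjection, hence a bijection, $\phi$ of $\{1,\dots,n\}$ with $\underline{\lambda}=\phi\cdot\underline{\gamma}$ on $(a,b)$. Both sides are continuous on $[a,b]$, so the equality extends to the endpoints by continuity, using that $M$ is Hausdorff.

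The main obstacle I expect is admissibility at the endpoints. Near $a$, the coordinates of $\underline{\gamma}$ may collide, so the covering argument alone gives no control there. That is why I would avoid taking limits directly. Instead, the plan is to identify each coordinate of $\underline{\gamma}$ with a coordinate of the given admissible lift on a whole closed piece, which makes both continuity and admissibility at the endpoints automatic. The only subtlety is applying \cref{lem:Relevement_continu_card_constant} on the open subintervals $(t_i,t_{i+1})$ and checking that these pieces agree on overlaps. They do, since they are all restrictions of the one lift on $(a,b)$.
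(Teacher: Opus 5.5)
Your argument is correct and is essentially the paper's. You lift on $(a,b)$ via \cref{lem:Relevement_continu_card_constant}, use the surjection from its second part to rewrite each coordinate of the lift as a coordinate of the given admissible lift on each piece (which yields both the extension to the endpoints and admissibility), and extend the permutation identity from $(a,b)$ to $[a,b]$ by continuity. The only difference is organizational: you use a single lift on all of $(a,b)$ rather than piecewise lifts glued by permutations at the $t_i$. You should also state explicitly that $\underline{\lambda_0}=f\cdot\underline{\gamma}$ persists at $a$ by continuity and Hausdorffness of $M$, since that, and not surjectivity of $f$ alone, is what gives $\pi_n(\underline{\gamma}(a))=\Gamma(a)$.
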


\begin{proof}
We start by showing unicity. Let $\underline{\lambda},\underline{\mu}\colon [a,b]\to M^n$ two admissible lifts of $\Gamma$. By \cref{lem:Relevement_continu_card_constant}, there exists a continuous lift of $\Gamma_{|(a,b)}$, $\underline{\gamma}\colon (a,b)\to M^n$. 
Moreover, since $\underline{\lambda}_{|(a,b)}$ and $\underline{\mu}_{|(a,b)}$ are two continuous lifts of $\Gamma_{|(a,b)}$, there exist two bijections $\phi,\psi\colon \{1,\dots,n\}\to\{1,\dots,n\}$ such that 
\begin{equation*}
\underline{\lambda}_{|(a,b)}=\phi\cdot \underline{\gamma} \quad \text{ and, }\quad \underline{\mu}_{|(a,b)}=\psi\cdot\underline{\gamma}
\end{equation*}
We thus get $\underline{\lambda}_{|(a,b)}=(\phi\psi^{-1})\cdot \underline{\mu}_{|(a,b)}$. By continuity of $\underline{\lambda},\underline{\mu}$, and of the group action of $\Sigma_n$ on $M^n$, we can extend the equality on the closed interval $[a,b]$, which is the desired result.

For the existence property, since $\Gamma$ is an admissible path, there exists an admissible lift, $((a=t_0<\dots<t_k=b),(\underline{\gamma_0},\dots,\underline{\gamma_{k-1}}))$, where for any $0\leq i\leq k-1$, $\underline{\gamma_i}\colon [t_i,t_{i+1}]\to M^{n_i}$ is an admissible lift of $\Gamma_{|[t_i,t_{i+1}]}$. Now let us construct the desired lift on each subset $[t_i,t_{i+1}]$. Let $0\leq i\leq k-1$. 

Observe that, for any $1\leq i\leq k-2$, the restriction of $\Gamma$ to the interval $[t_i,t_{i+1}]$ is of constant cardinality, $n$. Thus, by \cref{lem:Relevement_continu_card_constant}, we may find lifts $\underline{\lambda_i}\colon [t_i,t_{i+1}]\to M^n$, for all $1\leq i\leq k-2$.
On the other hand, the restriction of $\Gamma$ to the interval $(t_0,t_1]$ is of constant cardinality, $n$, thus by the same lemma, we may find a lift $\lambda_0\colon (t_0,t_1]\to M^n$, together with a surjective map $f\colon \{1,\dots,n_0\}\to \{1,\dots,n\}$ such that $\gamma_0^{(i)}(t)=\lambda_0^{f(i)}(t)$ for all $t\in (t_0,t_1]$. Continuity of $\underline{\lambda_0}$, together with the fact that $M$ is Hausdorff implies that if $i,i'\in \{0,\dots,n_0\}$ are such that $f(i)=f(i')$, then $\lambda_0^{(i)}=\lambda_0^{(i')}$ on the entire closed interval $[t_0,t_1]$. In particular, we may unambiguously define the continuous lift $\underline{\lambda_0}\colon [t_0,t_1]\to M^n$ via $\lambda_0^{(j)}(t)=\gamma_0^{(i)}(t)$ for all $t\in [t_0,t_1]$ for any $i$ such that $f(i)=j$. This will still be a lift of the restriction of $\Gamma$ to $[t_0,t_1]$ by surjectivity of $f$, and it will be admissible since each of its component is a component of $\underline{\gamma_0}$ which was assumed to be admissible.
The symmetric construction yields a lift $\underline{\lambda_{k-1}}\colon [t_{k-1},t_k]\to M^n$. Then, up to choosing a permutation for each subdivided lift, we may concatenate the $\underline{\lambda_l}$ to yield a global lift $\underline{\lambda}\colon [t_0,t_k]\to M^n$.

\end{proof}

Then, we show that the lift of minimal dimension is also minimal in length.
\begin{prop}\label{prop:Chemin_card_constant_relevement}
    Let $n\geq 0$, $a<b\in \R$, $\Gamma\colon [a,b]\to \Ran(M)$ an admissible path such that $\card(\Gamma(t))=n$ for all $t\in (a,b)$. Then there exists an admissible lift of $\Gamma$, $\underline{\gamma}\colon [a,b]\to M^n$. In addition, any such lift satisfies
    \begin{equation*}
        \elo(\Gamma)=\elo(\underline{\gamma}).
    \end{equation*}
\end{prop}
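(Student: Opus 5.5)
Existence of the lift is exactly \cref{prop:relevement_unique_permutation_pres}, and that proposition also shows that any two admissible lifts $[a,b]\to M^n$ differ by a permutation of coordinates. Since $\ell^n$ is a sum over coordinates, it is invariant under permutations. Hence $\elo(\underline{\gamma})=\omega(n)\ell^n(\underline{\gamma})$ does not depend on which lift of dimension $n$ we pick. The inequality $\elo(\Gamma)\leq\elo(\underline{\gamma})$ is immediate from \cref{def:longueur_chemin}, since $\underline{\gamma}$ (with the trivial subdivision) is itself an admissible lift. So the plan reduces to the reverse inequality: for every admissible lift $((t_0,\dots,t_k),(\underline{\lambda_0},\dots,\underline{\lambda_{k-1}}))$ of $\Gamma$, we show $\elo(\underline{\lambda_0},\dots,\underline{\lambda_{k-1}})\geq \elo(\underline{\gamma})$.

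We fix such a lift, with $\underline{\lambda_i}\colon[t_i,t_{i+1}]\to M^{n_i}$. By \cref{rem:subdivision_concatenations_longueurs}, the length of $\underline{\gamma}$ is additive over the same subdivision, so it suffices to compare piece by piece: we want $\omega(n_i)\ell^{n_i}(\underline{\lambda_i})\geq \omega(n)\ell^n(\underline{\gamma}_{|[t_i,t_{i+1}]})$ for each $i$.

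On the open interval $(t_i,t_{i+1})\subset(a,b)$ the path has constant cardinality $n$. By \cref{lem:Relevement_continu_card_constant} there is a surjection $f_i\colon\{1,\dots,n_i\}\to\{1,\dots,n\}$ with $\underline{\lambda_i}=f_i\cdot\underline{\mu_i}$ on the open interval, where $\underline{\mu_i}$ is a continuous lift in $M^n$. By continuity (and $M$ Hausdorff), this identity extends to the closed interval, as in the proof of \cref{prop:relevement_unique_permutation_pres}. We then define $\underline{\mu_i}$ on $[t_i,t_{i+1}]$ coordinatewise by $\mu_i^{(j)}=\lambda_i^{(l)}$ for any $l\in f_i^{-1}(j)$. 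This gives an admissible lift of $\Gamma_{|[t_i,t_{i+1}]}$ in $M^n$, and by the uniqueness part of \cref{prop:relevement_unique_permutation_pres} it equals $\underline{\gamma}_{|[t_i,t_{i+1}]}$ up to a permutation. Hence $\ell^n(\underline{\gamma}_{|[t_i,t_{i+1}]})=\sum_j\ell(\mu_i^{(j)})$.

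Since $f_i$ is surjective, every coordinate of $\underline{\mu_i}$ appears at least once among the coordinates of $\underline{\lambda_i}$. This gives $\ell^{n_i}(\underline{\lambda_i})\geq\ell^n(\underline{\mu_i})$ and $n_i\geq n$. Because $\omega$ is non-decreasing, $\omega(n_i)\geq\omega(n)$, and multiplying the two inequalities yields the piecewise bound. Summing over $i$ and taking the infimum over all lifts gives $\elo(\Gamma)\geq\elo(\underline{\gamma})$.

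The only delicate point is the endpoints $a$ and $b$, where the cardinality may drop. This is handled entirely by extending the identity $\underline{\lambda_i}=f_i\cdot\underline{\mu_i}$ by continuity from the open interval to the closed one, exactly as in the proof of \cref{prop:relevement_unique_permutation_pres}. Consequently, the ends of the subdivision need no separate case analysis.
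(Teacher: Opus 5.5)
Your proposal is correct and follows the paper's proof essentially step by step. Existence and permutation invariance come from \cref{prop:relevement_unique_permutation_pres}, and the lower bound comes from a piecewise comparison over the subdivision, using the surjection of \cref{lem:Relevement_continu_card_constant} extended to the closed interval by continuity together with $\omega(n_i)\geq\omega(n)$. The only cosmetic difference is that the paper applies the lemma directly with $\underline{\gamma}_{|[t_i,t_{i+1}]}$ as the reference lift, which spares you the detour through an auxiliary $\underline{\mu_i}$ and the uniqueness-up-to-permutation step.
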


\begin{proof}
The existence is given by \cref{prop:relevement_unique_permutation_pres}.
The same proposition, ensures that any such pair of lifts $\underline{\gamma},\underline{\lambda}\colon [a,b]\to M^n$ there exists a permutation $\phi\in \Sigma_n$ such that $\underline{\gamma}=\phi\cdot\underline{\lambda}$.
Since $\Sigma_n$ acts isometrically on $M^n$, we get $\elo(\underline{\gamma})=\elo(\underline{\lambda})$. 

Now let $((t_0,\dots,t_k),(\underline{\mu_0},\dots,\underline{\mu_{k-1}}))$ an admissible  lift of $\Gamma$.
Let us show that $\elo(\underline{\mu_0},\dots,\underline{\mu_{k-1}})\geq \elo(\underline{\gamma})$. By \cref{rem:subdivision_concatenations_longueurs}, we just have to show that $\elo(\underline{\mu_i})\geq \elo(\underline{\gamma}_{[t_i,t_{i+1}]})$ for all $0\leq i\leq k-1$.
For some given $i$, we have the lift $\underline{\mu_i}\colon [t_i,t_{i+1}]\to M^{n_i}$. 
Using \cref{lem:Relevement_continu_card_constant}, and extending by continuity, we get the existence of a surjection $f\colon \{1,\dots,n_i\}\to \{1,\dots,n\}$ such that $\underline{\mu_i}=f\cdot \underline{\gamma}_{|[t_i,t_{i+1}]}$.
We thus have :
\begin{align*}
\elo(\underline{\gamma}_{|[t_i,t_{i+1}]})&=\omega(n)\sum_{j=1}^{n}\ell(\gamma^{(j)}_{|[t_i,t_{i+1}})\\
&\leq\omega(n_i)\sum_{k=1}^{n_i}\ell(\gamma^{(f(k))}_{|[t_i,t_{i+1}]})  \\
&\leq \omega(n_i)\sum_{k=1}^{n_i}\ell(\mu_i^{(k)})\\
&\leq\elo(\underline{\mu_i})
\end{align*}
which concludes this proof.
\end{proof}

We can now concatenate paths whose cardinality is constant over an open interval to obtain a path whose cardinality is piecewise constant. This is the object of the following definition.

\begin{defin}\label{def:Chemin_Crenele}
Let $\Gamma\colon [a,b]\to \Ran(M)$ an admissible path. We say that $\Gamma$ is \textbf{crenelated} if there exists a subdivision $a=t_0<\dots<t_k=b$, and integers $n_0,\dots,n_{k-1}$ such that, for all $0\leq i\leq k-1$, and for all $t\in(t_i,t_{i+1})$, $\card(\Gamma(t))=n_i$.
\end{defin}

We can thus concatenate minimal lifts to obtain a minimal lift of crenelated paths.
\begin{prop}\label{prop:longueur_realisee}
Let $\Gamma\colon[a,b]\to\Ran(M)$ a crenelated path. The infimum in  \cref{def:longueur_chemin} is realized. 
    
Moreover, if $a=t_0<\dots<t_k=b$ is a subdivision of $[a,b]$ such that $\card(\Gamma(t))=n_i$ for all  $t\in (t_i,t_{i+1})$ and $i\in \{0,\dots,k-1\}$, then any lift of the form $((t_0,\dots,t_k),(\underline{\gamma_0},\dots,\underline{\gamma_{k-1}}))$, with $\underline{\gamma_i}\colon [t_i,t_{i+1}]\to M^{n_i}$, for $0\leq i\leq k-1$, realizes the infimum.
\end{prop}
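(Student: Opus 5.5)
We first prove the second claim and then deduce the first from it. Fix a subdivision $a=t_0<\dots<t_k=b$ with $\card(\Gamma(t))=n_i$ on $(t_i,t_{i+1})$. Each restriction $\Gamma_{|[t_i,t_{i+1}]}$ is again admissible: restricting an admissible lift of $\Gamma$ to $[t_i,t_{i+1}]$, refining the subdivision if needed, gives an admissible lift, since $\cP$ is stable under restriction. So \cref{prop:Chemin_card_constant_relevement} applies to each piece. It gives an admissible lift $\underline{\gamma_i}\colon[t_i,t_{i+1}]\to M^{n_i}$, and every such lift satisfies $\elo(\underline{\gamma_i})=\elo(\Gamma_{|[t_i,t_{i+1}]})$. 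Hence any lift $((t_0,\dots,t_k),(\underline{\gamma_0},\dots,\underline{\gamma_{k-1}}))$ of the prescribed form is admissible and has length
\begin{equation*}
    \elo(\underline{\gamma_0},\dots,\underline{\gamma_{k-1}})=\sum_{i=0}^{k-1}\elo(\Gamma_{|[t_i,t_{i+1}]}).
\end{equation*}
At least one such lift exists, so the second claim implies the first.

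It therefore remains to show the additivity
\begin{equation*}
    \elo(\Gamma)=\sum_{i}\elo(\Gamma_{|[t_i,t_{i+1}]}).
\end{equation*}
For $\leq$, pick for each $i$ a lift of $\Gamma_{|[t_i,t_{i+1}]}$ of length within $\epsilon/k$ of the infimum. Their concatenation is a lift of $\Gamma$, because the compatibility at each $t_i$ only requires the images in $\Ran(M)$ to agree, and both sides equal $\Gamma(t_i)$. Its length is the sum of the lengths of the pieces.

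For $\geq$, take an arbitrary admissible lift $((s_0,\dots,s_m),(\underline{\mu_0},\dots,\underline{\mu_{m-1}}))$ of $\Gamma$. Refine its subdivision by adding the points $t_i$; by \cref{rem:subdivision_concatenations_longueurs} this does not change its length. The refined lift then splits into admissible lifts of each $\Gamma_{|[t_i,t_{i+1}]}$, and its length is the sum of their lengths. Each of these is at least $\elo(\Gamma_{|[t_i,t_{i+1}]})$, and taking the infimum over lifts of $\Gamma$ gives $\geq$.

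The main point to check is the refinement step: splitting a lift component $\underline{\mu_j}$ at an interior point $t_i$ must preserve both admissibility and length. This follows from the concatenation axiom of the length structure on $M^{n_j}$ from \cref{prop:Length_structure_Mn}, applied to each coordinate, which is exactly what \cref{rem:subdivision_concatenations_longueurs} records.
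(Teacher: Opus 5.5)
Your proposal is correct and follows the paper's proof: additivity of $\elo$ over the subdivision, then \cref{prop:Chemin_card_constant_relevement} on each piece $\Gamma_{|[t_i,t_{i+1}]}$, then concatenation. The one difference is that you actually prove the additivity $\elo(\Gamma)=\sum_i\elo(\Gamma_{|[t_i,t_{i+1}]})$ by refining and concatenating lifts, whereas the paper simply asserts it as holding by definition. (A minor point: \cref{prop:Chemin_card_constant_relevement} only concerns admissible lifts, so your claim that every lift of the prescribed form is automatically admissible does not follow from it; either add that the uniqueness argument of \cref{prop:relevement_unique_permutation_pres} uses only continuity, so any continuous lift is a coordinate permutation of an admissible one, or read \emph{lift} as \emph{admissible lift}, since lengths are only defined for those.)
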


\begin{proof}
By hypothesis, there exists $a=t_0<\dots<t_k=b$, a subdivision of $[a,b]$, and $(n_i)_{i\in \{1,\dots,k-1\}}$ integers, such that for all $i\in\{0,\dots,k-1\}$, and for all $t\in (t_i,t_{i+1})$, $\card(\Gamma(t))=n_i$. 
Now, by definition of $\elo$ on admissible paths in $\Ran(M)$, we have the equality
\begin{equation*}
    \elo(\Gamma)=\sum_{i=0}^{k-1}\elo(\Gamma_{|[t_i,t_{i+1}]}).
\end{equation*}
By \cref{prop:Chemin_card_constant_relevement} the length $\elo(\Gamma_{|[t_i,t_{i+1}]})$ is realized by an (unique up to permutation) admissible lift $\underline{\gamma_i}\colon [t_i,t_{i+1}]\to M^{n_i}$. Concatenating the $\underline{\gamma_i}$ gives the desired lift.
\end{proof}

Now that we know how to compute the length of admissible paths, we want to restrict the set of paths over which we compute the infimum in \cref{theo:PathDistanceEqualCombDistance}. As in \cref{sec:Geodesic_chains}, we will use this result to show that the infimum can, under some hypotheses, be realized. First we define properly the path equivalent of MBS-chains.
\begin{defin}
    Let $\Gamma\colon[a,b]\to\Ran(M)$ be a path in $\Ran(M)$. It is called a MBS-path if it is crenelated and if there exists $a\leq c\leq b$ such that $\card(\Gamma)_{|[a,c]}$ is non increasing and $\card(\Gamma)_{|[c,b]}$ is non decreasing.
\end{defin}
Then we show that we can restrict the computation of the infimum in \cref{theo:PathDistanceEqualCombDistance} over MBS-paths.
\begin{prop}
    \label{prop: inf sur chemin crenele}
    Let $X,Y\in\Ran(M)$. The infimum in \cref{theo:PathDistanceEqualCombDistance} can be computed over all MBS-paths. 
\end{prop}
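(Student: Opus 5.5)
Since every MBS-path is in particular an admissible path from $X$ to $Y$, the infimum over MBS-paths is at least the infimum in \cref{theo:PathDistanceEqualCombDistance}, which equals $\dom(X,Y)$. So it suffices to show that for every $\epsilon>0$ there exists an MBS-path $\Gamma$ from $X$ to $Y$ with $\elo(\Gamma)\leq\dom(X,Y)+\epsilon$. The plan is to redo the second half of the proof of \cref{theo:PathDistanceEqualCombDistance}, but starting from a good chain given by \cref{theo:MBS} instead of an arbitrary one.

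First, I would use \cref{theo:MBS} to pick a simple MBS-chain $((X_0,\dots,X_k),(R_0,\dots,R_{k-1}))$ from $X$ to $Y$ with $\elo(R_0,\dots,R_{k-1})\leq\dom(X,Y)+\epsilon/2$. Let $l$ be the index of the bijection, as in the first form of \cref{def:MBS}. For each $i$, set $n_i=\card(R_i)$. Since each relation is a merge, a bijection or a splitting, $n_i=\max(\card(X_i),\card(X_{i+1}))$. As in the proof of \cref{theo:PathDistanceEqualCombDistance}, I would order $R_i$ and choose an admissible path $\underline{\gamma_i}\colon[i,i+1]\to M^{n_i}$ between the corresponding tuples, with $\omega(n_i)\ell^{n_i}(\underline{\gamma_i})\leq\elo(R_i)+\epsilon/(2k)$. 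I would then define $\Gamma(t)=\pi_{n_i}(\underline{\gamma_i}(t))$ on $[i,i+1]$. This is an admissible path from $X$ to $Y$ with $\elo(\Gamma)\leq\elo(\underline{\gamma_0},\dots,\underline{\gamma_{k-1}})\leq\dom(X,Y)+\epsilon$.

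The main obstacle is that $\Gamma$ need not be crenelated, and its cardinality need not have the right monotonicity. In the interior of $[i,i+1]$, the points $\gamma_i^{(j)}(t)$ may collide or separate infinitely often, so $\card(\Gamma(t))$ can oscillate below $n_i$. To fix this, I would modify the paths so that on each open interval $(i,i+1)$ the cardinality is exactly $n_i$.

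For a merge $R_i$ ($i<l$), I would let the merging pair travel along a single path. If $x_1,x_2$ merge at $y$, I take $\gamma^{(1)}$ from $x_1$ to $y$ and $\gamma^{(2)}$ from $x_2$ to $y$, and every other point stays constant. The image then has cardinality at most $n_i$, but I want it constant on the open interval. One option is to perform the move in two successive time slots. First move $x_1$ to $y$ while everything else is constant, then move $x_2$ to $y$. Coincidences with the other, fixed points can still occur, though. Since only a non-increasing profile is needed on $[0,l]$, I would instead argue as follows. Whenever a moving point touches another point, I cut the path at the first such time and replace the remainder by a merge there. This only decreases length, in the same spirit as \cref{prop:Chemin_card_constant_relevement}, because the weight is evaluated at a cardinality no larger than before.

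The cleanest version should be a direct argument with a first-hitting-time truncation. On the merge part, I replace $\Gamma$ by the path in which points that have met are kept glued from then on. The resulting path is still a lift of something from $X_i$ to $X_{i+1}$, with length at most the original, because glued components follow one of the two original paths: I take whichever of the two has the shorter remaining length, and the triangle inequality handles the endpoints. On this part the cardinality is then non-increasing, and it is piecewise constant because only finitely many gluings occur. I treat the splitting part symmetrically by reversing time. For the bijection step, I glue on first contact as well, and then continue with a shortest remaining path until the endpoint. Finiteness of the number of cardinality changes gives crenelation, and the profile is non-increasing up to the bijection step and non-decreasing after it, so the result is an MBS-path. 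I expect the detailed verification of this gluing step to be the delicate part.
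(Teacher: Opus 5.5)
Your reduction to producing, for each $\epsilon>0$, an MBS-path of length at most $\dom(X,Y)+\epsilon$ is fine, and so is the construction of an admissible path from a near-optimal simple MBS-chain. The gap is in the repair step.

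Under your rule, points that meet stay glued and follow one of the two remaining paths. So every component ends at the endpoint of some original path, and the modified step ends in a \emph{subset} of $X_{i+1}$, possibly a proper one. Such meetings cannot be avoided in a general length space, even along geodesic chains. Take $M=\R$ and $\omega\equiv 1$. The single bijection $0\mapsto 2$, $1\mapsto 1$ is a geodesic chain from $\{0,1\}$ to $\{1,2\}$: its length $2$ equals $\dom(\{0,1\},\{1,2\})$. The mover must pass through $1$, and your rule then ends at $\{1\}$ if it follows the constant path, or at $\{2\}$ if it follows the mover.

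Restoring a lost target requires an increase of cardinality, which is forbidden in the merge half of the path. If instead you let points pass through rather than glue, you get momentary dips in cardinality. Two such dips already break the MBS profile, for example two crossings at different times in the bijection step, or any crossing inside a merge step. The remark that ``the triangle inequality handles the endpoints'' does not address this. Moreover, in the bijection step all $n_l$ coordinates move at once, and you have no control over their collisions.

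The idea you are missing is to let only one point move at a time and to reorder its trajectory rather than glue. The paper does this in two stages.
\begin{itemize}
\item It first refines the MBS-chain into atomic relations (\cref{lem:AtomicMBS}). A simple merge becomes an atomic bijection followed by an atomic merge. A bijection either decomposes into atomic bijections or, if it moves a point of $X\cap Y$, is replaced by a merge--bijection--split of no greater length whose bijection has lower cardinality; this is iterated.
\item It then realizes each atomic relation by a path (\cref{lem:AtomicToPath}). The path $\gamma$ of the single mover from $x$ to $y$ is shortcut so that it meets each point of $X\cup Y$ at most once, say at $x_m,\dots,x_1$ in this order. Its pieces are then traversed in reverse order, with the crossed point taking over: $x_1$ travels the last piece to $y$, then $x_2$ travels to the former position of $x_1$, and so on, until finally $x$ travels to $x_m$.
\end{itemize}
No collision occurs in any time slot. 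Hence the cardinality is constant for an atomic bijection and drops exactly once for an atomic merge, while the total length is still $\omega(n)\ell(\gamma)$.

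Your first-hitting-time truncation would actually work for an atomic merge: there $Y=X\setminus\{x\}$, so the mover may merge into whichever point it hits first. It is the bijective moves that need the reordering.
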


The proof relies on the construction, from a combinatorial chain, of a crenelated path with a shorter length (this relies on \cref{lem:AtomicMBS,lem:AtomicToPath}). To control the cardinality of the constructed path, it is easier to work with the following definition.
\begin{defin}
    Let $X,Y\in\Ran(M)$ and $R$ a relation between $X$ and $Y$. We say that $R$ is \textbf{atomic} if it is surjective and if there exists at most a unique $(x,y)\in R$ such that $x\neq y$. We say that $R$ is an atomic relation from $x$ to $y$. 
\end{defin}

\begin{rem}
    If $R$ is an atomic relation, then $R$ is either (i) the graph of the identity, or (ii) the graph of a map that is the identity everywhere except in one point, which is either a bijection or a simple merge, or (iii) $R^\mathrm{op}$ is the graph of a map that is the identity everywhere except in one point, so that $R$ is either a bijection or a simple split.
    
    Furthermore, observe that if $R$ is an atomic relation from $x$ to $y$, then the $\omega$-length of $R$ is equal to $\elo(R)=\omega(\card(R))\dm(x,y)$.
\end{rem}
    
\begin{proof}
    Let $\epsilon>0$. By \cref{theo:MBS} there exists a simple MBS-chain $((X=X_0,\dots,X_k=Y),(R_0,\dots,R_{k-1}))$ from $X$ to $Y$ such that $\elo(R_0,\dots,R_{k-1})\leq \dom(X,Y)+\frac{\epsilon}{2}$.
    By \cref{lem:AtomicMBS}, we may replace the chain by a chain $(A_0,\dots,A_{l-1})$ with the same length or shorter, but such that for some $0\leq j\leq l$, $A_i$ is either an atomic merge or an atomic bijection for all $i<j$ and either an atomic split or an atomic bijection for all $i\geq j$. By \cref{lem:AtomicToPath}, each of those atomic relation can be realized by an admissible path $\Gamma_i$, with $\elo(\Gamma_i)\leq \elo(A_i)+\frac{\epsilon}{2l}$. For convenience, assume that $\Gamma_i$ is parametrized by the interval $[i,i+1]$. We know that for $0\leq i\leq j$, $\card(\Gamma_j(t))$ is non increasing on $[i,i+1]$, thus the concatenation of the $\Gamma_i$ for $0\leq i\leq j$ gives a map $[0,j+1]\to \Ran(M)$ whose cardinality is non increasing. Similarly, the concatenation of the $\Gamma_i$ for $j<i\leq l$ gives a map $[j+1,l]\to \Ran(M)$ whose cardinality is non decreasing. Thus, the overall concatenation $\Gamma\colon [0,l]\to \Ran(X)$ is a MBS-path. Furthermore, we have
    \begin{align*}
        \elo(\Gamma)&=\sum_{i=0}^{l-1}\elo(\Gamma_i)\\
        &\leq \sum_{i=0}^{l-1}\left(\elo(A_i)+\frac{\epsilon}{2l}\right)\\
        &\leq \frac{\epsilon}{2}+\elo(A_0,\dots,A_{l-1})\\
        &\leq \frac{\epsilon}{2}+\elo(R_0,\dots,R_{k-1})\\
        &\leq \dom(X,Y)+\epsilon
    \end{align*}
    Thus, there exists MBS-path in $\Ran(M)$ between $X$ and $Y$ of $\omega$-length arbitrary close to $\dom(X,Y)$.
\end{proof}

We need to decompose the relations of MBS-chains into atomic relations, without increasing their length. This is object of the following lemma. 
\begin{lem}\label{lem:AtomicMBS}
Let $X,Y\in \Ran(M)$ and $(R_0,\dots,R_{k-1})$ be a MBS-chain between $X$ and $Y$. Then, there exists another combinatorial chain, $(A_0,\dots,A_{l-1})$, between $X$ and $Y$, together with some $0\leq j\leq l$ such that:
\begin{itemize}
    \item $\elo(A_0,\dots,A_{l-1})\leq \elo(R_0,\dots,R_{k-1})$,
    \item $A_i$ is either an atomic merge or an atomic bijection, for all $i<j$,
    \item $A_i$ is either an atomic split or an atomic bijection, for all $i\geq j$
\end{itemize}
\end{lem}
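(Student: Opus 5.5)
The plan is to treat each relation of the MBS-chain separately, replace it by a chain of atomic relations of no greater $\omega$-length, and then concatenate. If every merge becomes atomic merges and atomic bijections, the bijection becomes atomic bijections, and every splitting becomes atomic splits and atomic bijections, the concatenation has the required shape. We take $j$ to be the index right after the last atom coming from the merges, or from the bijection, which may be placed on either side. By the remark following \cref{def:MBS} we may assume the chain is merges, then one bijection $R_l$, then splittings. The splitting case is the opposite of the merge case, since reversing a chain preserves $\omega$-length and turns atomic merges into atomic splits. So only merges and bijections need treatment.

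\textbf{Merges.} Let $R$ be the graph of $f\colon X\to Y$, with $F=f^{-1}(y_0)$ and $f$ the identity elsewhere. List $F\setminus\{y_0\}=\{x_1,\dots,x_m\}$ and move $x_1,\dots,x_m$ to $y_0$ one at a time. If $y_0\notin X$, the first move $X\to X\setminus\{x_1\}\cup\{y_0\}$ is an atomic bijection, and the later moves are atomic merges because $y_0$ is then already present. If $y_0\in X$, then $y_0\in F$ and all moves are atomic merges. Each atom has cardinality at most $\card(X)=\card(R)$, and $\omega$ is non-decreasing. Hence the total length is at most
\begin{equation*}
\omega(\card(R))\sum_{x\in F}\dm(x,y_0)=\elo(R).
\end{equation*}
The last configuration is $X\setminus F\cup\{y_0\}=Y$.

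\textbf{Bijection.} This is the main obstacle. Moving the points of $\phi\colon X\to Y$ one at a time can make a point land on a point of $X$ that has not moved yet, and a swap cannot be realized atomically. I will first modify $\phi$ without increasing its length. Consider the functional graph of $\phi$ on $X$, where $x\mapsto\phi(x)$ is followed as long as the image lies in $X$.
\begin{itemize}
\item It decomposes into cycles contained in $X\cap Y$, and maximal paths $x_0\mapsto x_1\mapsto\dots\mapsto x_m\mapsto\phi(x_m)$ with $x_0\in X\setminus Y$, $x_1,\dots,x_m\in X\cap Y$, and $\phi(x_m)\in Y\setminus X$.
\item Define $\psi$ as the identity on every point of a cycle and on $x_1,\dots,x_m$, and set $\psi(x_0)=\phi(x_m)$.
\item Then $\psi$ is still a bijection $X\to Y$ of the same cardinality.
\item By the triangle inequality, $\dm(x_0,\phi(x_m))\le\sum_{i}\dm(x_i,\phi(x_i))$, so $\elo(\psi)\le\elo(\phi)$.
\end{itemize}
Now every point $x$ moved by $\psi$ lies in $X\setminus Y$, and its target lies in $Y\setminus X$, with distinct targets. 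So moving these points one at a time never causes a collision, and each step is an atomic bijection. Each step costs $\omega(\card(X))\,\dm(x,\psi(x))$, and these costs sum to $\elo(\psi)$.

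Finally, concatenate the atomic pieces in their original order. The total length is at most $\elo(R_0,\dots,R_{k-1})$, and the shape condition on either side of $j$ holds by construction.
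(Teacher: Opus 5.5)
Your proof is correct. The merge and splitting steps match the paper's in spirit, but for the bijection step you take a genuinely different route.

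\textbf{Bijection step.} The paper splits into two cases.
\begin{itemize}
\item If $\phi$ fixes $X\cap Y$ pointwise, it moves the points one at a time, exactly as in your final step.
\item Otherwise, it picks $x\in X\cap Y$ with $\phi(x)\neq x$ and replaces $R$ by a merge--bijection--split chain. This chain merges $\phi^{-1}(x)$ into $x$, applies a bijection of cardinality $n-1$, then splits $x$ into $x$ and $\phi(x)$, and uses $\omega(n-1)\le\omega(n)$ to compare lengths. The paper then inducts on the cardinality of the bijection step, inserting new merges and splits into the chain along the way.
\end{itemize}
Your functional-graph argument collapses the cycles in $X\cap Y$ to the identity. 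It replaces each maximal path by a single jump $x_0\mapsto\phi(x_m)$, justified by the triangle inequality. This reduces directly to the pointwise-fixed case with no induction, and leaves the merges and splits of the original chain untouched. It also never does worse and can do strictly better. For a transposition of two points of $X=Y$, you obtain the identity at cost $0$, whereas the paper's surgery returns a chain of the same length as $R$.

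\textbf{Merge step.} The paper only decomposes simple merges, into an atomic bijection followed by an atomic merge. That suffices for its application to the simple MBS-chains coming from \cref{theo:MBS}. For a non-simple merge, it would first need \cref{lem:Simple_Merges}. Your one-point-at-a-time argument handles arbitrary merges directly, using that $\omega$ is non-decreasing along the decreasing cardinalities. It therefore proves the lemma as literally stated, for any MBS-chain.
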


\begin{proof}
     We will prove the result in two parts. First, we will show that any simple merge $R_i$ can be decomposed, without increasing its length, into an atomic bijection followed by an atomic merge. This will imply the corresponding statement for splits. Secondly, we will show that any bijection $R$ between configurations $Z$ and $W$ must satisfy one of the following two properties
     \begin{itemize}
         \item $R$ can be decomposed, without increasing its length into a sequence of atomic bijections.
         \item There exists a MBS-chain between $Z$ and $W$, $(Q_0,\dots,Q_{l-1})$, with $l\geq 2$ and which is shorter than $R$.
     \end{itemize}
     Then, to obtain the statement, first consider the bijection step in $(R_0,\dots,R_{k-1})$. Either it can be decomposed into atomic bijections, or it can be shortened by replacing it by some MBS-chain $(Q_0,\dots,Q_{l-1})$. In the latter case, since $l\geq 2$, the bijection step in $(Q_0,\dots,Q_{l-1})$ must be of strictly smaller cardinality than that in $(R_0,\dots,R_{k-1})$. Since any bijection of cardinality $1$ is an atomic bijection, the previous construction always yields, in finitely many steps, a MBS-chain $(R'_0,\dots,R'_{k-1})$ whose bijection step decomposes into atomic bijections. Then, decomposing all merges (resp. splits) into atomic merges (resp. atomic splits) and atomic bijections gives the desired result.
     
     Now, for the first step, assume that $R$ is a simple merge between $X$ and $Y$. Then there exists $x\neq x'\in X$ and $y\in Y$ such that $(x,y),(x',y)\in R$. If either $x=y$ or $x'=y$ then $R$ is already atomic. Otherwise, $y$ is not in $X$ (since if it were, the merge would not be simple) and we can decompose $R$ into two atomic relations as follows. Let us define $X'=X\setminus\{x\}\cup\{y\}$ and $P=R\setminus\{(x',y)\}\cup \{(x',x')\}$ and $P'=R\setminus\{(x,y)\}$. Then $P$ is an atomic bijective relation from $X$ to $X'$, and $P'$ is an atomic merge from $X'$ to $Y$. Furthermore, if $n=\card(R)$, then $\elo(R)=\omega(n)(d(x,y)+d(x',y))$ and on the other hand, $\elo(P)=\omega(n)d(x,y)$ and $\elo(P')=\omega(n)d(x',y)$, which gives $\elo(R)=\elo(P,P')$.
     
    For the second step, let $R$ be a bijection between $X$ and $Y$, and write $\phi\colon X\to Y$ for the corresponding map. Let us first assume that for all $x\in X\cap Y$, $\phi(x)=x$. Let us fix an order on the elements in $X\setminus(X\cap Y)=\{x_1,\dots,x_n\}$. For $0\leq i\leq n$, let $X_i= (X\cap Y)\cup \{\phi(x_1),\dots,\phi(x_i),x_{i+1},\dots,x_n\}$, and define $\phi_i\colon X_i\to X_{i+1}$ by $\phi_i(x_{i+1})=\phi(x_{i+1})$ and $\phi_i(z)=z$ for all $z\in X_i\setminus\{x_{i+1}\}$. Then, by construction, $X=X_0$, $Y=X_n$, and each of the $X_i$ is of the same cardinality as $X$, and each of the $\phi_i$ is a bijection, whose graph we will denote $R_i$. Furthermore, $\elo(R_i)=\omega(\card(R))d(x_{i+1},\phi(x_{i+1}))$ and hence, $\elo(R)=\elo(R_0,\dots,R_{n-1})$, which completes the proof in that case.
    
    Now, assume that there exists some $x\in X\cap Y$ such that $\phi(x)\not=x$. Let $\psi\colon Y\to X$ denote the inverse of $\phi$, and consider the sets $X_1=X\setminus\{\psi(x)\}$ and $X_2=Y\setminus \{\phi(x)\}$, together with $X_0=X$ and $X_3=Y$. We define $R_0\subset X_0\times X_1$ as the merge of $x$ and $\psi(x)$ at $x$, and $R_2\subset X_2\times X_3$ as the splitting of $x$ into $x$ and $\phi(x)$. Finally, define $R_1\subset X_1\times X_2$ as follows. For $(y,z)\in X_1\times X_2$ we have $(y,z)\in R_1$ if either $(y,z)\in R$ or $(y,z)=(x,x)$. Note that $R_1$ is still the graph of a bijection. Thus, we have decomposed $R$ into the MBS-chain $(R_0,R_1,R_2)$. Furthermore, setting $n=\card(R)$, we have
    \begin{align*}
        \elo(R_0,R_1,R_2)&=\elo(R_0)+\elo(R_1)+\elo(R_2)\\
        &=\omega(n)d(\psi(x),x)+\omega(n-1)\left(\sum_{x\in X\setminus \{x,\psi(x)\}}d(x,\phi(x))\right)+\omega(n)d(x,\phi(x))\\
        &\leq \omega(n)\sum_{x\in X}d(x,\phi(x))=\elo(R),
    \end{align*}
    which completes the proof.
\end{proof}

Finally, this last technical lemma allows to construct, from an atomic relation, a crenelated path whose length is as close as wanted from the length of the relation, and with well-behaved cardinality. The technicality resides in avoided crossings between distinct components of the constructed path. The proof is illustrated in \cref{fig:path without crossing}. 
\begin{lem}
   \label{lem:AtomicToPath}
   Let $X,Y\in\Ran(M)$ and let $R$ be an atomic relation from $X$ to $Y$. For any $\epsilon > 0$, there exists a crenelated path $\Gamma\colon[a,b]\to\Ran(M)$ from $X$ to $Y$ such that $\elo(\Gamma)\leq \elo(R) + \epsilon$, and such that either
   \begin{itemize}
       \item The cardinality of $\Gamma$ is constant, if $R$ is a bijection.
       \item The cardinality of $\Gamma$ is non increasing, if $R$ is a simple merge.
       \item The cardinality of $\Gamma$ is non decreasing, if $R$ is a simple split.
   \end{itemize}
\end{lem}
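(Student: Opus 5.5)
The idea is to realise the single non-trivial pair $(x,y)\in R$ by an almost-shortest admissible path $\gamma\colon[0,1]\to M$ from $x$ to $y$, and keep every other point of the configuration fixed. Set $n=\card(R)$. Since $M$ is a length space, there is $\gamma\in\cP$ with $\ell(\gamma)\leq d(x,y)+\epsilon/\omega(n)$. We lift to $M^n$ by ordering $R=\{(x,y),(z_2,z_2),\dots,(z_n,z_n)\}$ and taking $\underline{\gamma}(t)=(\gamma(t),z_2,\dots,z_n)$. This is an admissible path in $M^n$, since constant paths are admissible with length $0$ by additivity and reparametrisation invariance. Define $\Gamma=\pi_n\circ\underline{\gamma}$. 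Then $\Gamma(0)=X$ and $\Gamma(1)=Y$ by surjectivity of $R$, and
\begin{equation*}
\elo(\Gamma)\leq \omega(n)\ell(\gamma)\leq \elo(R)+\epsilon .
\end{equation*}
If $R$ is the identity, we simply take the constant path.

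The difficulty is controlling $\card(\Gamma(t))$ and making $\Gamma$ crenelated. The moving point $\gamma(t)$ may pass through some fixed point $z_j$, which makes the cardinality drop and rise again. It may even do so on a complicated closed set of times, which would break crenelation. I handle this by truncating the path at first hitting times. Let $F=\{z_2,\dots,z_n\}$, a finite set. Let $t^*=\inf\{t\mid \gamma(t)\in F\}$ if this set is non-empty. This infimum is attained by closedness.

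\textbf{Case $R$ is a simple merge.} Here $y=z_j$ for some $j$, so $y\in F$. Replace $\gamma$ by $\gamma_{|[0,t^*]}$ followed by an almost-shortest admissible path from $\gamma(t^*)$ to $y$ that avoids $F\setminus\{\gamma(t^*)\}$. More simply, put $w=\gamma(t^*)\in F$ and observe that moving $x$ to $w$ is itself a merge at $w$. So I reinterpret $\Gamma$ on $[0,t^*]$ as merging $x$ into $w$: on $[0,t^*)$ the cardinality is constant $\card(X)$, and at $t^*$ it drops by one. I then still have to move the merged point from $w$ to $y$. This is the awkward part, because $w$ is a point of $X$ that must survive. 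Instead, I restrict to $\gamma_{|[0,t^*]}$ only when $w=y$.

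When $w\neq y$, I swap roles: the point $x$ stops at $w$, and $w$ continues along $\gamma_{|[t^*,1]}$. Using the relabelling, the configuration obtained is the same as the one given by the lift in which coordinate $1$ stays at $w$ and coordinate $j$ (originally at $w$) follows $\gamma$. The new path has no greater total length, since its total length is still $\ell(\gamma)$. The remaining portion $\gamma_{|[t^*,1]}$ hits $F$ strictly fewer times in the relevant sense, because we have removed $w$ from the set of obstacles. Induction on $\card(F)$ then gives a path along which a single point moves and only meets fixed points when it is finally merged. This yields constant cardinality followed by one drop at the end, so the cardinality is non-increasing and the path is crenelated with at most two pieces.

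\textbf{Case $R$ is a bijection.} Here $y\notin F$. The same exchange trick at every first hitting time reduces to a concatenation of finitely many pieces, at most $\card(F)+1$ by the induction. On each piece a single point moves without meeting $F$ except at endpoints, where it swaps identity with the point hit. The cardinality is therefore constant, since two coordinates never coincide: at a hitting time one coordinate stops as the other departs.

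\textbf{Case $R$ is a simple split.} Apply the merge case to $R^{\op}$ and reverse the path.

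\textbf{Main obstacle.} The main obstacle is making the exchange argument rigorous, in particular that after swapping, the new moving segment starts \emph{at} a point of $F$ yet must not be counted as hitting it. I would therefore define hitting times using $F$ minus the current starting point and check finiteness of the induction through the decreasing obstacle sets.
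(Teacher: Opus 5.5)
Your proposal has a genuine gap at the exchange step. Swapping which coordinate of the lift follows $\gamma$ is only a relabelling: as you note yourself, the configuration at every time $t$ is still $F\cup\{\gamma(t)\}$, so the path $\Gamma$ in $\Ran(M)$ is unchanged. In particular, at a hitting time $t^*$ with $w=\gamma(t^*)\in F$, one has $\Gamma(t^*)=F\cup\{w\}=F$, which has one point fewer than $\Gamma(t)$ for nearby $t\neq t^*$. The instant where ``one coordinate stops as the other departs'' is exactly the instant where the two coordinates coincide.

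Consequently, in the bijection case the cardinality dips and recovers. In the merge case it dips, recovers, and drops again at the end. So neither conclusion of the lemma holds. These dips are not cosmetic, because the lemma is used to assemble paths whose cardinality is first non-increasing and then non-decreasing. Two further points fail as well:
\begin{itemize}
\item Your fallback, a path avoiding $F\setminus\{w\}$, need not exist in a length space (take $M=\R$).
\item Truncating at the \emph{first} hitting time does not stop $\gamma_{|[t^*,1]}$ from returning to $w$ (now occupied again) or to other points of $F$, possibly infinitely often. So the induction on $\card(F)$ is not well founded as stated.
\end{itemize}

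What is missing is a reordering of the motions \emph{in time}, not of the labels. This is what the paper does, in two steps.

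First, shorten $\gamma$ so that it meets each point of $X\cup Y$ at most once. For each such $z$, cut out the portion of $\gamma$ between its first and last visit to $z$; this keeps admissibility and does not increase $\ell(\gamma)$. Let $x_1,\dots,x_m$ be the points of $X\setminus\{x,y\}$ that $\gamma$ meets, at times $0<t_m<\dots<t_1<1$.

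Second, move the \emph{last} obstacle first. The point $x_1$ travels along $\gamma_{|[t_1,1]}$ to $y$. Then $x_2$ travels along $\gamma_{|[t_2,t_1]}$ into the position $x_1$ just vacated, and so on. Finally $x$ travels along $\gamma_{|[0,t_m]}$ into $x_m$. At every instant the moving point sits at a position not occupied by the others, so the cardinality is constant. The $\omega$-length is $\omega(n)\ell(\gamma)\leq\elo(R)+\epsilon$.

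In the merge case ($y\in X$), the first piece ends with $x_1$ landing on $y$, which is the only drop in cardinality. What remains is an atomic bijection of cardinality $n-1$ sending $x$ to $x_1$, realised along $\gamma_{|[0,t_1]}$ by the bijection case. The total is at most $\omega(n)\ell(\gamma_{|[t_1,1]})+\omega(n-1)\ell(\gamma_{|[0,t_1]})\leq\omega(n)\ell(\gamma)$. The split case then follows by reversal, as you propose.
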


\begin{proof}
    Let us fix $\epsilon>0$ and let us denote $n=\card(R)$. Let us start with the case where $R$ is an atomic bijection. If $R$ is the graph of the identity, then any constant path works. Otherwise, there exists a unique $(x,y)\in R$ such that $x\neq y$. Let us fix an order $X=\{x,x_1,\dots,x_{n-1}\}$, and $Y=\{y,x_1,\dots,x_{n-1}\}$.

    There exists an admissible path $\gamma\colon[0,1]\to M$ from $x$ to $y$ such that $\ell(\gamma)\leq \dm(x,y)+\frac{\epsilon}{\omega(n)}$. Note that we cannot simply define our desired path $\Gamma$ as 
    \begin{align*}
        [0,1]&\to \Ran(M)\\
        t&\mapsto X\setminus\{x\}\cup \{\gamma(t)\}
    \end{align*}
    because there could be "crossings", i.e. times $t\in[0,1]$ such that there exists $1\leq i\leq n-1$ such that $\gamma(t)=x_i$. Note however that we can chose $\gamma$ such that, for any $z\in X\cup Y$, there exists at most one $t\in[0,1]$ such that $\gamma(t)=z$. Indeed, if there is more that one, they form a closed set of $[0,1]$, so that we can take their minimum $t_\mathrm{min}=\min\{t\in [0,1]\mid \gamma(t)=z\}$ and similarly for the maximum $t_\mathrm{max}$, and define a shorter, continuous path, with a single crossing with $z$.
    \begin{align*}
       [0,1]&\to M\\
       t&\mapsto 
       \begin{cases}
          \gamma(2t_\mathrm{min}t)  & \text{if } 0\leq t\leq \frac{1}{2} \\
          \gamma\Big(2(1-t_\mathrm{max})(t-1) + 1\Big) & \text{if }\frac{1}{2} \leq t\leq 1 
        \end{cases}.
    \end{align*}   
    By repeating this process for every $z\in X\cup Y$, we obtain a path $\gamma$ which crosses each of the $z\in X\cup Y$ at most one time. 
    
    Let $m$ be the number of "crossings", i.e. the number of times $t\in(0,1)$ such that there exists $j\in\{1,\dots,n-1\}$ such that $\gamma(t)=x_j$. Up to a reordering of the $x_j$, we can suppose that for all $1\leq j \leq m$ there exists a unique $t_j\in(0,1)$ such that $\gamma(t_j)=x_j$, and that $0< t_m \leq \dots \leq t_1< 1 $. 
    If $m=0$, then we simply define
    \begin{align*}
        \Gamma \colon [0,1]&\to \Ran(M)\\
        t&\mapsto \{\gamma(t),x_1,\dots,x_{n-1}\}
    \end{align*}
    which verifies that $\card(\Gamma(t))=n$ for all $t\in[0,1]$. Moreover, we have an admissible lift of $\Gamma$ of which we know the length, so that
    \begin{align*}
        \elo(\Gamma)\leq \omega(n)\ell(\gamma) \leq \omega(n)\dm(x,y)+\epsilon = \elo(R) +\epsilon.
    \end{align*}  
    Otherwise, we define
    \begin{align*}
        \Gamma_0\colon[0,1-t_1]&\to\Ran(M)\\
        t&\mapsto \{x,\gamma(t+t_1),x_2,\dots,x_{n-1}\}\\
        \Gamma_m\colon[1-t_m,1]&\to\Ran(M)\\
        t &\mapsto \{y,x_1,\dots,x_{m-1},\gamma(t+t_m-1),x_{m+1},\dots,x_{n-1}\}
    \end{align*}
    and, for all $1\leq i\leq m-1$,
    \begin{align*}
        \Gamma_i\colon[1-t_i,1-t_{i+1}] & \to \Ran(M)\\
        t&\mapsto \{x,y,x_1,\dots,x_{i-1},\gamma(t+t_i+t_{i+1}-1),x_{i+2},\dots,x_{n-1}\}
    \end{align*}
    One can indeed check that for any $0\leq i\leq m$, the path $\Gamma_i$ has a constant cardinality. Furthermore, $\Gamma_0$ gives a path between $X$ and $\{x,y,x_2,\dots,x_{n-1}\}$, $\Gamma_m$ gives a path between $\{x,y,x_1,\dots,x_{m-1},x_{m+1},\dots,x_{n-1}\}$ and $Y$, and $\Gamma_i$ gives a path between $\{x,y,x_1,\dots,\widehat{x_i},x_{i+1},\dots,x_{n-1}\}$ and $\{x,y,x_1,\dots,x_i,\widehat{x_{i+1}},\dots,x_{n-1}\}$, so that they can be concatenated into a path $\Gamma$ between $X$ and $Y$ of constant cardinality. Furthermore, each of the $\Gamma_i$ is admissible by construction, and thus so is $\Gamma$, and we have: 
    \begin{align*}
        \elo(\Gamma)=\sum_{i=0}^{m}\elo(\Gamma_i)=\omega(n)\ell(\gamma)\leq \elo(R)+\epsilon.
    \end{align*}
    
    Let us now treat the case where $R$ is an atomic merge. There exists a unique $(x,y)\in R$ such that $x\neq y$, but this time we have $y\in X$, so that $Y\subset X$ and $\card(Y)=n-1$. As before, there exists an admissible path $\gamma\colon[0,1]\to M$ from $x$ to $y$ such that $\ell(\gamma)\leq \dm(x,y)+\frac{\epsilon}{\omega(n)}$. Furthermore, we may repeat the previous construction to ensure that, for any $z\in X$, there exists at most one $t\in[0,1]$ such that $\gamma(t)=z$. Order the elements of $X$ so that $X=\{x,y,x_1,\dots,x_{n-2}\}$, and let $m$ be the number of crossings occurring in $(0,1)$. If $m=0$, then we may simply define
        \begin{align*}
        \Gamma \colon [0,1]&\to \Ran(M)\\
        t&\mapsto \{\gamma(t),y,x_1,\dots,x_{n-2}\}
    \end{align*}
        which is a crenelated path, and satisfies $\card(\Gamma(t))=n$ for all $t\in[0,1)$ and $\card(\Gamma(1))=n-1$, which is indeed non increasing. Moreover, we have as before
    \begin{align*}
        \elo(\Gamma)\leq \omega(n)\ell(\gamma) = \elo(R) +\epsilon.
    \end{align*}
    
    Otherwise, $m>0$, and let $0<t_m<\dots<t_1<1$ be such that $\gamma(t_i)\in X$. Note that we must have $\gamma(t_i)\not= x,y$ by construction, and we may once more reorder the elements in $X$ such that $\gamma(t_i)=x_i$ for $1\leq i\leq m$.

   Once more, let us define
    \begin{align*}
        \Gamma_0\colon[0,1-t_1]&\to\Ran(M)\\
        t&\mapsto \{x,y,\gamma(t+t_1),x_2,\dots,x_{n-2}\}
    \end{align*}
    which verifies that $\card(\Gamma_0(t))=n$ for all $t\in[0,1-t_1)$ and $\card(\Gamma(1-t_1))=n-1$. Furthermore, it is admissible by construction, and $\elo(\Gamma_0)\leq \omega(n)\ell(\gamma_{|[t_1,1]})$. Now we can define $X_1=\Gamma_0(1-t_1)=\{x,y,x_2,\dots,x_{n-2}\}$ and $\phi\colon X_1\to Y$ the atomic bijection sending $x$ to $x_1$. The restriction $\gamma_{|[0,t_1]}$ provides an admissible path from $x$ to $x_1$, thus, by the first part of this proof, there exists an admissible path $\Gamma_1$ from $X_1$ to $Y$ which satisfies $\elo(\Gamma_1)\leq \omega(n-1)\ell(\gamma_{|[0,t_1]})$. Concatenating $\Gamma_0$ and $\Gamma_1$ provides an admissible path $\Gamma\colon [0,1]\to \Ran(M)$ between $X$ and $Y$, such that $\card(\Gamma(t))$ is $n$ if $t\in [0,1-t_1)$ and $n-1$ if $t\in [1-t_1,1]$.
    Furthermore, we have
    \begin{align*}
        \elo(\Gamma)&=\elo(\Gamma_0)+\elo(\Gamma_1)\\
        &\leq \omega(n)\ell(\gamma_{|[t_1,1]})+\omega(n-1)\ell(\gamma_{|[0,t_1]})\\
        &\leq \omega(n)\ell(\gamma)\\
        &\leq \omega(n)d(x,y)+\epsilon\\
        &=\elo(R)+\epsilon,
    \end{align*}
    which concludes the proof in the case of an atomic merge. The case of an atomic split follows by symmetry.
\end{proof}

\begin{figure}
\centering
\begin{tikzpicture}[xscale=5.5, domain=0:1]
\draw[->] (-0.1,0) -- (1.1,0) node[below]{$t$} ;
\node (10) at (1,0) {};
\draw (10.north) -- (10.south) node[below] {1};
\node (0) at (0,0) {};
\draw (0.north) -- (0.south) node[below] {0};
\node [color=T-D-S2,left] at (0,0.3) {$\gamma(t)$};
\draw[color=T-D-S4, domain=0:0.275,line width=1.5pt]   plot (\x,{4*sin(\x r *0.5*pi)^2+0.3})    node[right] {};
\draw[color=T-D-S3, domain=0.275:0.452,line width=1.5pt]   plot (\x,{4*sin(\x r *0.5*pi)^2+0.3})    node[right] {};
\draw[color=T-D-S2, domain=0.452:0.823,line width=1.5pt]   plot (\x,{4*sin(\x r *0.5*pi)^2+0.3})    node[right] {};
\draw[color=T-D-S1, domain=0.823:1,line width=1.5pt]   plot (\x,{4*sin(\x r *0.5*pi)^2+0.3})    node[right, black] {$y$};
\draw (0,1) node [left] {$x_3$} -- (1,1) node [right] {$x_3$};
\draw (0,2) node [left] {$x_2$} -- (1,2) node [right] {$x_2$};
\draw (0,4) node [left] {$x_1$} -- (1,4) node [right] {$x_1$};
\draw (0.275,1) node(x1) [circle,fill=T-D-S4,inner sep=2pt] {};
\draw (0.452,2) node(x2) [circle,fill=T-D-S3,inner sep=2pt] {};
\draw (0.823,4) node(x3) [circle,fill=T-D-S2,inner sep=2pt] {};
\draw [dotted] (x1) -- (0.275,0) node [below] {$t_3$};
\draw [dotted] (x2) -- (0.452,0) node [below] {$t_2$};
\draw [dotted] (x3) -- (0.823,0) node [below] {$t_1$};
\node [below,yshift=-0.3cm,white] at (0,0) {$1-t_2$};
\node[above left] at (0,5) {$X$};
\node[above right] at (1,5) {$Y$};
\end{tikzpicture}
\hfill
\begin{tikzpicture}[xscale=5.5]
\draw[->] (-0.1,0) -- (1.1,0) node[below]{$t$};
\node (10) at (1,0) {};
\draw (10.north) -- (10.south) node[below] {1};
\node (0) at (0,0) {};
\draw (0.north) -- (0.south) node[below] {0};
\draw (0,0.3) node [left] {$x$} -- (0.725,0.3);
\draw (0,1) node [left] {$x_3$} -- (0.548,1);
\draw (0,2) node [left] {$x_2$} -- (0.177,2);
\node [left] at (0,4) {$x_1$};
\draw (0.725,2) -- (1,2) node [right]{$x_2$};
\draw (0.548,4) -- (1,4) node [right]{$x_1$};
\draw (0.177,4.3) -- (1,4.3) node [right] {$y$};
\draw[color=T-D-S1, domain=0:0.177,line width=1.5pt]   plot (\x,{4*sin((\x + 0.823)r *0.5*pi)^2+0.3})   ;
\draw[color=T-D-S2, domain=0.177:0.548,line width=1.5pt]   plot (\x,{4*sin((\x + 0.275)r *0.5*pi)^2+0.3})    ;
\draw[color=T-D-S3, domain=0.548:0.725,line width=1.5pt]   plot (\x,{4*sin((\x - 0.273)r *0.5*pi)^2+0.3})    ;
\draw[color=T-D-S4, domain=0.725:1,line width=1.5pt]   plot (\x,{4*sin((\x - 0.725)r *0.5*pi)^2+0.3})  node [right,black] {$x_3$}  ;
\draw [dotted] (0.177,4.3) -- (0.177,0) node [below] {$1-t_1$};
\draw [dotted] (0.548,4) -- (0.548,0) node [below,yshift=-0.3cm] {$1-t_2$};
\draw [dotted] (0.725,2) -- (0.725,0) node [below] {$1-t_3$};
\node[above,rotate=90,yshift=0.5cm] at (0,2.15) {$\Gamma(t)$};
\node[above left] at (0,5) {$X$};
\node[above right] at (1,5) {$Y$};
\end{tikzpicture}
\\\vspace{1cm}
\begin{tikzpicture}[xscale=5.5, domain=0:1]
\draw[->] (-0.1,0) -- (1.1,0) node[below]{$t$} ;
\node (10) at (1,0) {};
\draw (10.north) -- (10.south) node[below] {1};
\node (0) at (0,0) {};
\draw (0.north) -- (0.south) node[below] {0};
\node [color=T-D-S2,left] at (0,0.2) {$\gamma(t)$};
\draw[color=T-D-S4, domain=0:0.9,line width=1.5pt]   plot (\x,{0.016*((10*(\x-0.5))^3 + (10*(\x-0.5))^2 + (10*(\x-0.5)))+1.82} ) ;
\draw[color=T-D-S1, domain=0.9:1,line width=1.5pt]   plot (\x,{0.016*((10*(\x-0.5))^3 + (10*(\x-0.5))^2 + (10*(\x-0.5)))+1.82} ) ;
\draw (0,1.484) node [left] {$x_3$} -- (1,1.484) node [right] {$x_3$};
\draw (0,1.814) node [left] {$x_2$} -- (1,1.814) node [right] {$x_2$};
\draw (0,3.164) node [left] {$x_1$} -- (1,3.164) node [right] {$x_1$};
\draw (0,4.3) node [left] {$y$} -- (1,4.3)node [right] {$y$};
\draw (0.2,1.484) node(x1) [circle,fill=T-D-S3,inner sep=2pt] {};
\draw (0.45,1.814)node(x2) [circle,fill=T-D-S3,inner sep=2pt] {};
\draw (0.9,3.164) node(x3) [circle,fill=T-D-S3,inner sep=2pt] {};
\draw (1,4.3) node(x4) [circle,fill=T-D-S1,inner sep=2pt] {};
\draw [dotted] (x1) -- (0.2,0) node [below] {$t_3$};
\draw [dotted] (x2) -- (0.45,0) node [below] {$t_2$};
\draw [dotted] (x3) -- (0.9,0) node [below] {$t_1$};
\node[above left] at (0,5) {$X$};
\node[above right] at (1,5) {$Y$};
\end{tikzpicture}
\hfill
\begin{tikzpicture}[xscale=5.5, domain=0:1]
\draw[->] (-0.1,0) -- (1.1,0) node[below]{$t$} ;
\node (10) at (1,0) {};
\draw (10.north) -- (10.south) node[below] {1};
\node (0) at (0,0) {};
\draw (0.north) -- (0.south) node[below] {0};
\draw[color=T-D-S4, domain=0.1:1,line width=1.5pt]  plot (\x,{0.016*((10*(\x-0.6))^3 + (10*(\x-0.6))^2 + (10*(\x-0.6)))+1.82} ) node[right,black]{$x_1$} ;
\draw[color=T-D-S1, domain=0:0.1,line width=1.5pt]   plot (\x,{0.016*((10*(\x+0.4))^3 + (10*(\x+0.4))^2 + (10*(\x+0.4)))+1.82} ) ;
\draw (0,0.14) node [left] {$x$} -- (0.1,0.14);
\draw (0,1.484) node [left] {$x_3$} -- (1,1.484) node [right] {$x_3$};
\draw (0,1.814) node [left] {$x_2$} -- (1,1.814) node [right] {$x_2$};
\draw (0,4.3) node [left] {$y$} -- (1,4.3) node [right] {$y$};
\draw (0.3,1.484) node(x1) [circle,fill=T-D-S3,inner sep=2pt] {};
\draw (0.55,1.814)node(x2) [circle,fill=T-D-S3,inner sep=2pt] {};
\draw [dotted] (0.1,4.3) -- (0.1,0) node [below,xshift=0.4cm] {$1-t_1$};
\node[left] at (0,3.164) {$x_1$};
\node[above left] at (0,5) {$X$};
\node[above right] at (1,5) {$Y$};
\end{tikzpicture}
\caption{Sketch of the proof of \cref{lem:AtomicToPath} in the case $M=\R$. The upper panels show the case of an atomic bijection from $X$ to $Y$ in $\Ran(\R)$. The upper left panel shows the path $\gamma\colon[0,1]\to\R$ from $x$ to $y$ that crosses other points in $X$. The upper right panel shows the path $\Gamma$ from $X$ to $Y$ in $\Ran(\R)$ constructed from $\gamma$. Its cardinality is constant and equal to 4 on $[0,1]$. The lower panels show the case of an atomic merge from $X$ to $Y$. The lower left panel shows the path $\gamma$ from $x$ to $y$ that crosses other points in $X$. The lower right panel shows the first step in constructing a path $\Gamma\colon [0,1]\to \Ran(M)$, with $\Gamma_0\colon[0,1-t_1]\to\Ran(\R)$ from $X$ to $X_1$, and  $\Gamma_1$ constructed by applying the bijection case to the remainder of the path.}
\label{fig:path without crossing}
\end{figure}

\begin{theo}
\label{cor:existence geodesic path}
    Let $M$ be a length space satisfying the Heine-Borel property, and such that any for any two points $x,y\in M$ there exists an admissible path $\gamma$ between $x$ and $y$ which realizes the distance between $x$ and $y$.
    
    Then, for any $X,Y\in \Ran(M)$ and any weight $\omega$, there exists a MBS-path between $X$ and $Y$, $\Gamma\colon [a,b]\to \Ran(M)$, such that $\elo(\Gamma)=\dom(X,Y)$.
\end{theo}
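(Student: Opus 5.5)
We combine the existence of geodesic chains (\cref{theo:Geodesic_Chain}) with the chain-to-path machinery of the proof of \cref{prop: inf sur chemin crenele}. The $\epsilon$ slack there came from two sources: approximating $d(x,y)$ by admissible paths, and the atomic decomposition. The first disappears under the geodesic hypothesis on $M$, and the second, as we will check, was never lossy.

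First, since $M$ satisfies the Heine-Borel property, \cref{theo:Geodesic_Chain} provides a simple MBS-chain $(R_0,\dots,R_{k-1})$ from $X$ to $Y$ with $\elo(R_0,\dots,R_{k-1})=\dom(X,Y)$. Next we apply \cref{lem:AtomicMBS} to obtain a chain $(A_0,\dots,A_{l-1})$ of atomic merges and bijections followed by atomic splits and bijections, with
\begin{equation*}
\elo(A_0,\dots,A_{l-1})\leq \elo(R_0,\dots,R_{k-1})=\dom(X,Y).
\end{equation*}
By definition of $\dom$, this length is also at least $\dom(X,Y)$, so equality holds and the atomic chain is itself geodesic.

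The main step is an exact version of \cref{lem:AtomicToPath}. For an atomic relation $A$ from $x$ to $y$ with $n=\card(A)$, we take an admissible path $\gamma$ from $x$ to $y$ with $\ell(\gamma)=d(x,y)$, which exists by hypothesis, in place of an $\epsilon$-approximate one. The crossing-removal surgery in that proof only shortens $\gamma$ (and it reparametrizes affinely on pieces, which preserves admissibility). Hence the modified path still has length exactly $d(x,y)$, since it cannot be shorter than the distance. The rest of the proof then runs verbatim. In the bijection case we get $\elo(\Gamma)\leq\omega(n)\ell(\gamma)=\elo(A)$. In the merge case we get
\begin{equation*}
\elo(\Gamma)\leq \omega(n)\ell(\gamma_{|[t_1,1]})+\omega(n-1)\ell(\gamma_{|[0,t_1]})\leq \omega(n)d(x,y)=\elo(A),
\end{equation*}
where the bijection subcase is invoked with the exact geodesic piece $\gamma_{|[0,t_1]}$. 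That piece is again a geodesic, since subpaths of distance-realizing paths realize distance. The split case follows by symmetry. In each case the cardinality of the resulting crenelated path has the required monotonicity.

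Finally, we concatenate these paths $\Gamma_i$ as in the proof of \cref{prop: inf sur chemin crenele} to obtain an MBS-path $\Gamma$. By additivity of length (\cref{rem:subdivision_concatenations_longueurs}),
\begin{equation*}
\elo(\Gamma)\leq\sum_i\elo(A_i)=\dom(X,Y).
\end{equation*}
The reverse inequality $\elo(\Gamma)\geq\dom(X,Y)$ is \cref{theo:PathDistanceEqualCombDistance}, so $\elo(\Gamma)=\dom(X,Y)$.

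The only delicate point is verifying that the surgery in \cref{lem:AtomicToPath} and its recursive use in the merge case preserve exactness: at every stage the relevant subpath must still realize the distance between its endpoints. I expect this to be straightforward, because the recursion only ever uses subpaths of $\gamma$.
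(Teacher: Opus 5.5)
Your proposal is correct and follows essentially the same route as the paper. You take a geodesic simple MBS-chain from \cref{theo:Geodesic_Chain}, refine it via \cref{lem:AtomicMBS} into an atomic chain (necessarily of the same length), run the construction of \cref{lem:AtomicToPath} with exact geodesics of $M$, and concatenate. Your added checks — that crossing removal preserves exactness, the merge-case estimate, and the reverse inequality from \cref{theo:PathDistanceEqualCombDistance} — only make explicit what the paper's shorter argument leaves implicit.
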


\begin{proof}
    Since $M$ satisfies the Heine-Borel property, \cref{theo:Geodesic_Chain} gives the existence of a simple MBS chain $(R_0,\dots,R_{k-1})$ between $X$ and $Y$ such that $\elo(R_0,\dots,R_{k-1})=\dom(X,Y)$. By \cref{lem:AtomicMBS}, we may consider instead a chain $(P_0,\dots,P_{l-1})$ with the same length (since it can't possibly be shorter) but such that the $P_i$ are all atomic merges, bijections or splits. Now, for each $P_i$, consider the unique pair $x_i\not=y_i$ such that $(x_i,y_i)\in P_i$. By assumption, there exists an admissible path in $M$, $\gamma_i$ between $x_i$ and $y_i$ whose length is exactly $d(x_i,y_i)$. Observe that providing such a path in the proof of \cref{lem:AtomicToPath} yields a path in $\Ran(M)$ whose length is exactly $\elo(P_i)$. Concatenation of those paths thus yields the desired path in $\Ran(M)$.
\end{proof}

\begin{rem}
 The hypothesis in \cref{cor:existence geodesic path} are certainly not sharp. Consider for example the space $M=(0,1)$ and $N=[0,1]$, both with the expected metric and length structure, and some weight $\omega$. Geodesics exist in both $M$ and $N$, yet $M$ does not satisfy the Heine-Borel property, hence \cref{cor:existence geodesic path} does not apply to it. On the other hand, if $X,Y\subset M$ are two configurations, then applying \cref{cor:existence geodesic path} to $N$ gives a geodesic path $\Gamma\colon [0,1]\to \Ran(N)$ between $X$ and $Y$. It is clear that such a geodesic is in fact contained in $\Ran(M)$, and in turn that it is a geodesic in $\Ran(M)$, since distances in $\Ran(M)$ are bounded below by distances in $\Ran(N)$. This shows that geodesics exist in $\Ran(M)$ despite $M$ not satisfying the Heine-Borel property. This hypothesis is certainly already too strong in \cref{theo:Geodesic_Chain}, which we use in the proof of \cref{cor:existence geodesic path}, though in the former it is unclear what weaker hypothesis could take its place. In \cref{cor:existence geodesic path} however, we conjecture that it is already enough for geodesics to exist in the underlying space $M$.
\end{rem}

\section{Examples and computations}
\label{sec:Examples_Computations}
This section aims to be less abstract and more practical. We illustrate the definitions and results of the previous sections on some examples, and with some drawings. Finding the actual infimum in the definition of the weighted distance may be a non-trivial optimization problem in some cases. We thus present simple cases, for which we can compute the distance and find the geodesic chain/path that realises it, or at least give a combinatorial chain that reaches a reasonable upper bound of the distance. Note that, even if identifying a geodesic chain in $\Ran(M)$ may be quite intuitive in some case, showing that it is indeed geodesic often turns out to be a bit more difficult.

In the following sections, we will assess the topological properties of $(\Ran(M),\dom)$ based on the convergence properties of sequences in $\Ran(M)$. We thus spend some time in this section to define some illustrative sequences in $\Ran(M)$, and assess their convergence properties with respect to various topologies. In particular, \cref{exa:config_convergente} will allow to distinguish between distinct weighted topologies, and \cref{exa:Cantor} shows that $(\Ran(M),\dom)$ is not complete, as a metric space.

Eventually, we discuss the topological space $(\Ran(M),\tom)$ equipped with the topology induced by $\dom$, seen as an invariant of $M$. In particular, we show that, unlike the Hausdorff or final topology, the weighted topology does not allow to lift arbitrary continuous maps $f\colon (M,d_M)\to (N,d_N)$ to continuous maps between the corresponding Ran spaces. Therefore $(\Ran(M),\tom)$ is not necessarily an invariant of the homeomorphism type of $M$, but a finer invariant. Besides, this also shows that $(\Ran(-),\tom)$ is not functorial on Top, the category of topological spaces and continuous maps. Nevertheless, we show in \cref{prop:Lipschitz_Map_lift} that a locally Lipschitz applications $f\colon (M,d_M)\to (N,d_N)$ lifts to a locally Lipschitz application $\Ran(f)\colon (\Ran(M),\tom)\to (\Ran(N),\tom)$, showing that $(\Ran(-),\tom)$ does define a functor on the category of metric spaces and locally Lipschitz maps. To prove this, we define the notion of locality, that simplifies the computation of distances in the neighborhoods of configurations of cardinality $n\geq 2$. This notion of locality will be very useful in \cref{sec:conicity}.

\subsection{Examples}
\label{sec:Examples_Examples}
We first give examples where we can compute the $\omega$-distance and find the associated geodesic chain/path, or a combinatorial chain that gives a reasonable upper bound of the distance. Through these practical computations and drawings, we hope to give the reader an intuitive picture of the important properties of the space  $(\Ran(M),\tom)$, and of the distance $\dom$.

We first illustrate, on a very simple example for $M=\R^2$, that the geodesic chain that realises the distance may depend on the chosen weight. 
\begin{exa}
\label{exa:peace_and_love}
\begin{figure}[ht]
    \centering
    \begin{tikzpicture}[xscale=4,yscale=3]
    \draw [->] (-0.2,0) -- (1.2,0) ;
    \draw [->] (0,-1.2) -- (0,1.2) ;
    \draw (0,0) node(O)[below left] {0};
    \node (10) at (1,0) {};
    \draw (10.north) -- (10.south) node[below] {1};
    \node (01) at (0,1) {};
    \draw (01.east) -- (01.west) node[left] {1};
    \node (0minus1) at (0,-1) {};
    \draw (0minus1.east) -- (0minus1.west) node[left] {-1};
    \draw (0,0) node(X)[circle,fill=T-D-S11,inner sep=3pt]{};
    \draw (X.north west) node[above left,T-D-S11] {$X$};
    \draw (1,1) node(Y1)[circle,fill,inner sep=3pt]{};
    \draw (Y1.north east) node[above right] {$Y$};
    \draw (1,-1) node(Y2)[circle,fill,inner sep=3pt]{};
    \draw (Y2.north east) node[above right] {$Y$};
    \draw (0.42265,0) node(Z1) [circle,fill=T-D-S5,inner sep=3pt] {};
    \draw(Z1.north) node[above,blue] {$Z_1$};
    \draw (0.7418,0) node(Z2) [circle,fill=T-D-S3,inner sep=3pt] {};
    \draw(Z2.north) node[above,blue] {$Z_2$};
    \draw (0.949937,0) node(Z10) [circle,fill=T-D-S1,inner sep=3pt] {};
    \draw(Z10.north east) node[above right,blue] {$Z_{10}$};
    \draw [dotted,T-D-S5,line width=1pt] (Y1) -- (Z1) -- (Y2);
    \draw [dotted,T-D-S5,line width=1pt] (Z1) -- (X);
    \draw [dotted,T-D-S3,line width=1pt] (Y1) -- (Z2) -- (Y2);
    \draw [dotted,T-D-S3,line width=1pt] (Z2) -- (Z1) -- (X);
    \draw [dotted,T-D-S1,line width=1pt] (Y1) -- (Z10) -- (Y2);
    \draw [dotted,T-D-S1,line width=1pt] (Z10) -- (Z2) -- (Z1) -- (X);
    \end{tikzpicture}
    \caption{Sketch of \cref{exa:peace_and_love}. Configurations $X=\{(0,0)\}$ is in red, configuration $Y=\{(1,1),(1,-1)\}$ is in black, and the intermediate configuration $Z_{\omega(2)}$ is in blue for three values of $\omega(2)=1$, $2$ and $10$. Dotted lines indicate the corresponding geodesic paths between $Y$ and $X$.}
    \label{fig:peace and love}
\end{figure}
This example illustrates that, given two configurations $X,Y\in\Ran(M)$, the geodesic chain between $X$ and $Y$, if it exists, may depend on the weight $\omega$ chosen to compute the distance. Let us illustrate this for $M=\R^2$, and $X=\{(0,0\}$, $Y=\{(1,1),(1,-1)\}$. By \cref{theo:Geodesic_Chain}, for any weight $\omega$, the distance $\dom(X,Y)$ can be computed as the minimum over the $\omega$-length of MBS-chains between $Y$ and $X$, i.e. a merge $R_\omega$ followed by a, possibly constant, bijection $P_\omega$: $((Y,Z_\omega,X),(R_\omega,P_\omega))$.
Writing $Z_\omega=(z_1,z_2)$, we get the following (recall that $\omega(1)=1$ by definition).
\begin{align*}
    \elo(R_\omega,P_\omega)=\omega(2)\left[\sqrt{(1-z_1)^2+(1-z_2)^2}+\sqrt{(1-z_1)^2+(1+z_2)^2} \right] + \sqrt{z_1^2+z_2^2}
\end{align*}
First note that we can take $Z_\omega$ in the convex hull of $X\cup Y$. Second, note that $\elo(R_\omega,P_\omega)$ is even in $z_2$, and thus has an extremum when $z_2=0$. It is easy to see that it is indeed a minimum (one can for example compute the partial second derivative with respect to $z_2$ and check that it is non negative). We can thus restrict ourselves to the case $z_1\in[0,1]$ and $z_2=0$ to get
\begin{align*}
    \elo(R_\omega,P_\omega)=2\omega(2)\sqrt{(1-z_1)^2+1} + z_1
\end{align*}
We finally get a minimum when 
\begin{align*}
z_1=1-\frac{1}{\sqrt{4\omega(2)^2-1}}
\end{align*}

This example is illustrated on \cref{fig:peace and love}, where the intermediate configuration $Z_\omega$ is displayed for $\omega(2)=1,2,10$. We see that, as $\omega(2)$ increases, the intermediate singleton $Z_\omega$ gets closer and closer to $(1,0)$.

This example is also useful to better understand the importance of the Heine-Borel hypothesis in \cref{theo:Geodesic_Chain,cor:existence geodesic path}. Indeed, if, for a given $\omega$, we remove the point of $Z_\omega$ from $\R^2$, then $N=\R^2\setminus Z_\omega$ is still locally compact, but it does not satisfy the Heine-Borel property. And we directly see that no geodesic chain exist in $\Ran(N)$ between $X$ and $Y$ for $\dom$, although one exists for any other weight $\chi$ such that $\chi(2)\neq\omega(2)$. Similarly, no geodesic path exist in $\Ran(N)$ between $X$ and $Y$ for $\dom$, although one exists for any other weight $\chi$ such that $\chi(2)<\omega(2)$. Now if we remove the entire $\R_+^*$ axis, then $N=\R^2\setminus(0,1]\times\{0\}$ is no longer locally compact nor does it satisfy Heine-Borel, and this time, no geodesic chain nor path exist between $X$ and $Y$ for any choice of weight. A similar case is obtained for the locally compact space $\R^2\setminus Z_1$, with $Z_1$ the singleton corresponding to the minimum weight $\omega(2)=1$.
\end{exa}

We then illustrate that, even in one of the most simple examples one could think of, i.e. $M=[0,1]$, it is not always straightforward to compute the $\omega$-distance between configurations.
\begin{exa}
\label{exa:aligned_points}
Let $\omega$ be a weight, and consider $M=[0,1]$, equipped with the distance given by $\dm(x,y)=|x-y|$. Let us estimate the distance of a configuration $X\in\Ran(M)$ to the singleton $X_0=\{x_0=0\}$. Let us define $n=\card(X)$, and $X=\{x_1<\dots < x_n\}$. 

Let us first consider the MBS-chain, $(R_{n},\dots,R_1)$, where $R_i$ merges $\{x_{i},x_{i-1}\}$ at $x_{i-1}$ for any $2\leq i \leq n$, and $R_1=\{(x_1,0)\}$ is a, possibly constant, bijection.
This allows us to give an upper bound of $\dom(X,\{0\})$, as
    \begin{align*}
        \dom(X,\{0\})&\leq \elo(R_{n},\dots,R_1)\\
        &=\sum_{i=1}^{n}\omega(i)\elo(R_i)\\
        &=\sum_{i=1}^{n}\omega(i)\dm(x_i,x_{i-1})\\
        &\leq \omega(n)\dm(x_n,x_0) = \omega(n)x_n.
    \end{align*}

In the case where $\omega$ is the constant weight equal to $1$, let us show that we actually have $\dom(X,\{0\})=x_n$. Indeed, by \cref{theo:Geodesic_Chain}, there exists a geodesic chain $((X=Y_0,\dots,Y_k=\{0\}),(P_0,\dots,P_{k-1}))$ from $X$ to $\{0\}$. By surjectivity of the $P_i$, for $0\leq i\leq k-1$, there must exist a sequence of $y_i\in Y_i$, such that $y_0=x_n$, $y_k=0$ and for $0\leq i\leq k-1$, $y_iP_iy_{i+1}$. We thus have
    \begin{align*}
        \dom(X,\{0\})=\elo(P_0,\dots,P_{k-1})\geq \sum_{i=0}^{k-1} \omega(\card(P_i))\dm(y_i,y_{i+1})\geq \dm(x_{n},0)=x_n
    \end{align*}
So that the MBS-chain $(R_{n},\dots,R_1)$ defined above is actually a geodesic chain.

This does not hold in the general case. Consider the case $X=\{0,0.1,1\}$. The previously defined MBS-chain gives
    \begin{align*}
        \dom(X,\{0\})&\leq \elo(R_{3},R_2,R_1)\\
        &=0.9\omega(3)+0.1\omega(2).
    \end{align*}
    with $R_1$ being the identity.
Now let us define the chain $(Q_1,Q_2,Q_3)$ where $Q_1$ merges $\{0,0.1\}$ at 0.1, $Q_2$ merges $\{1,0.1\}$ at 0.1, and $Q_3$ is the bijection sending 0.1 to 0. 
\begin{align*}
    \elo(Q_1,Q_2,Q_3)=0.1\omega(3)+0.9\omega(2)+0.1.
\end{align*}
Note that, as soon as $\omega(3)> \omega(2)+0.125$, then $\elo(Q_1,Q_2,Q_3)<\elo(R_3,R_2,R_1)$, and the latter is not a geodesic chain anymore.
\end{exa}

Nevertheless, if the points of the configuration $X\in\Ran([0,1])$ are such that their pairwise distances are non increasing, i.e. if they get closer to each other as they get further away from $0$, as sketched on \cref{fig:points_alignes}, then we can compute the distance to the singleton $\{0\}$. This is the subject of the following example.

\begin{exa}\label{exa:Telescoping_points_arbitrary_weight}
Consider the previous example, in the particular case where $x_2-x_1\geq x_3-x_2\geq \dots \geq x_n-x_{n-1}$.  We will show that the chain $(R_n,\dots,R_1)$ previously defined is indeed a geodesic chain. So that we have
\begin{align*}
    \dom(X,\{0\})=\sum_{k=1}^n \omega(k)d(x_{k-1},x_k).
\end{align*}

By \cref{theo:Geodesic_Chain} there exists a geodesic chain $((X=Y_n,\dots,Y_0=\{0\}),(P_n,\dots,P_1))$ from $X$ to $\{0\}$, where $P_k$ is a simple merge for any $2\leq k\leq n$ and $P_1$ is a possibly constant bijection. Let us first show that, for any $0\leq m \leq n-1$ we have 
\begin{align*}
    \sum_{k=n-m}^n \ell(R_k) \leq \sum_{k=n-m}^n \ell(P_k).
\end{align*}
The case $m=n-1$ corresponds to the constant weight equal to $1$, and was covered in the previous example. Let $0\leq m < n-1$. Since the $P_k$ are all merges for $n-m\leq k\leq n$, they all are graphs of surjective maps $p_k\colon Y_k\to Y_{k-1}$. Let us denote $p=p_{n-m}\circ \dots \circ p_n$, and remark that $\{p^{-1}(y)\mid y\in Y_{n-m-1}\}$ forms a partition of $X$. For $y\in Y_{n-m-1}$ let us define $x_\mathrm{min}(y)=\min \{p^{-1}(y)\}$ and $x_\mathrm{max}(y)=\max\{p^{-1}(y)\}$, and observe that if $y\in X$ and $y$ has not been merged with any other points of $X$ after $m$ step, then $x_{\mathrm{max}}(y)=x_{\mathrm{min}}(y)=y$.
Now, by definition, we have
\begin{align*}
    \sum_{j=n-m}^n \ell(P_j) &= \sum_{j=n-m}^n \sum_{x \in Y_j} \dm(x,p_j(x)) \\
    &= \sum_{x\in Y_n=X} \dm(x,p_n(x))+ \sum_{x\in Y_{n-1}} \dm(x,p_{n-1}(x))+\dots+ \sum_{x\in Y_{n-m}} \dm(x,p_{n-m}(x)).
\end{align*}
Let us construct a very coarse bound on this sum. Consider the elements in $Y_n$ of the form $x_\mathrm{max}(y)$ and $x_\mathrm{min}(y)$, for some $y\in Y_{n-m-1}$. If $x_\mathrm{max}(y)=x_\mathrm{min}(y)$, then nothing has merged at $y$ yet, and $y$ does not contribute in any way to the sum.
Otherwise, we may consider the sequences $x_\mathrm{max}(y)=z_n,\dots, z_{m-n-1}=y$ and $x_\mathrm{min}(y)=w_n,\dots,w_{m-n-1}=y$, where $p_j(z_j)=z_{j-1}$ and $p_j(w_j)=w_{j-1}$, for $n\geq j\geq n-m$. Observe that, if $z_j\not = w_j$, then both $d(z_j,z_{j-1})$ and  $d(w_j,w_{j-1})$ appear in the sum. Furthermore, for some $j$, at the latest for $j=n-m-1$, we must have $z_j=w_j$. 
Thus, by the triangular inequality, those terms must sum to something bigger than $d(x_\mathrm{max}(y),x_\mathrm{min}(y))$. Observing that those sequences are disjoint for distinct values of $y\in Y_{n-m-1}$ gives
\begin{equation*}
    \sum_{j=n-m}^n\ell(P_j)\geq \sum_{y\in Y_{n-m-1}}d(x_\mathrm{max}(y),x_\mathrm{min}(y))
\end{equation*}

Now, by assumption, each of the $P_j$ is a simple merge, and thus $\card(Y_{n-m-1})=n-m-1$. Thus, the partition of $X$, $\{p^{-1}(y)\mid y\in n-m-1\}$ must satisfy :
\begin{equation*}
    \sum_{y\in Y_{n-m-1}}\left(\card(p^{-1}(y))-1\right)=n-(n-m-1)=m+1
\end{equation*}
Now, given some $y\in Y_{n-m-1}$ whose preimage under $p$ is not reduced to a singleton, we can consider the order on $p^{-1}(y)=\{x_{i_1}<\dots<x_{i_{k_y}}\}$ with $x_{i_1}=x_{\mathrm{min}}(y)$ and $x_{i_{k_y}}=x_{\mathrm{max}}(y)$. Observe that
\begin{equation*}
    d(x_\mathrm{max}(y),x_\mathrm{min}(y))=\sum_{j=1}^{k_y-1}(x_{i_{j+1}}-x_{i_{j}})
\end{equation*}
and that exactly $\card(p^{-1}(y))-1$ term appear in this sum. Furthermore, if $y'\not=y\in Y_{n-m-1}$, then $d(x_\mathrm{max}(y'),x_\mathrm{min}(y'))$ is equal to a similar sum, but with no $x_i$ in common. We can summarize this as follows. Consider $S$ the set of pairs $(x,x')$ where $x,x'\in p^{-1}(y)$ for some $y\in Y_{n-m-1}$, and $x$ is the successor of $x'$ in $p^{-1}(y)$. By the previous observations, $\card(S)=m+1$, each $x\in X$ appears at most once as the first coordinate in a pair in $S$, and
\begin{equation*}
    \sum_{j=n-m}^n\ell(P_j)\geq \sum_{(x,x')\in S}d(x,x').
\end{equation*}
Now, let $T=\{(x_{j},x_{j-1})\mid n\geq j\geq n-m\}$. We have, by construction
\begin{equation*}
    \sum_{j=n-m}^n\ell(R_j)=\sum_{(x,x')\in T}d(x,x').
\end{equation*}
To conclude the first part of this proof, define orders on $S$ and $T$, via $(x,x')\preceq (z,z') \Leftrightarrow x\geq z$. Then, listing the pair in order, the $k$-th pair in $S$, $(x,x')$ must satisfy $x\leq x_{n-k+1}$, and $x'<x$. Thus, by hypothesis on $X$, we must have $d(x,x')\geq d(x_{n-k+1},x_{n-k})$. And finally, the $k$-th pair in $S$ is further apart than the $k$-th pair in $T$, which gives the desired inequality :
\begin{equation*}
    \sum_{j=n-m}^n\ell(P_j)\geq \sum_{j=n-m}^n\ell(R_j).
\end{equation*}
We can now rearrange a sum:
\begin{align*}
   \elo(R_n,\dots,R_1) = & \sum_{k=1}^n \omega(k) \ell(R_k)\\
    = & \sum_{k=1}^n \left[\omega(1)+\sum_{m=2}^k \left(\omega(m)-\omega(m-1)\right) \right] \ell(R_k) \\
    = & \sum_{k=2}^n \left[\sum_{m=2}^k \left(\omega(m)-\omega(m-1)\right) \right] \ell(R_k) +\sum_{k=1}^n\omega(1)\ell(R_k)\\
    =& \sum_{m=2}^n\left[ \left(\omega(m)-\omega(m-1)\right) \sum_{k=m}^n \ell(R_k)\right] + \omega(1)\sum_{k=1}^n \ell(R_k)\\ 
    \leq & \sum_{m=2}^n\left[ \left(\omega(m)-\omega(m-1)\right) \sum_{k=m}^n \ell(P_k)\right] + \omega(1)\sum_{k=1}^n \ell(P_k)\\
    = & \elo(P_n,\dots,P_1).
\end{align*}
and finally $\elo(R_n,\dots,R_1)\leq \elo(P_n,\dots,P_1)$ thus $(R_n,\dots,R_1)$ is indeed a geodesic chain, and $\dom(X,\{0\})=\sum_{k=1}^n \omega(k)\dm(x_{k-1},x_k)$.
\end{exa}

We now have everything to define a sequence of configurations of increasing cardinality, that converges to a singleton for a given weight, but that may not converge for other, larger, weights. This is done in the following example and will be useful later as it will allow to distinguish between weighted topologies associated to different weights.
\begin{exa}
\label{exa:config_convergente}
    
    Let $\omega$ be a weight, and let $M=[0,1]$. Let us fix a sequence of positive real numbers which converges to $0$, $(u_n)_{n\in \N}$, and let us define, for $n\in \N^*$, $1\leq k\leq n$, and $1\leq i\leq n$ the sequences 
    \begin{align*}
        \epsilon_{n,k}&=\frac{u_n}{n\omega(k)}\\
        x_{n,i}&=\sum_{k=1}^i\epsilon_{n,k}
    \end{align*}
    and consider the sequence of configurations in $M$,
    \begin{equation*}
        X_n=\left\{x_{n,i}\mid 1\leq i\leq n\right\}
    \end{equation*}
    A sketch is given on \cref{fig:points_alignes}. 

    Observe that we have, for any $n\geq1$, $x_{n,2}-x_{n,1}\geq x_{n,3}- x_{n,2} \geq \dots\geq x_{n,n}-x_{n,n-1}$. We can thus use \cref{exa:Telescoping_points_arbitrary_weight} to show that 
    \begin{align*}
        \dom(X_n,\{0\})&=\sum_{i=1}^n\omega(i)d(x_{n,i-1},x_{n,i})\\
        &=\sum_{i=1}^n\omega(i)\frac{u_n}{n\omega(i)}\\
        &=u_n
    \end{align*}
    which converges to zero. Thus, the sequence $X_n$ converges to $\{0\}$ for the topology $\tom$. Note however that $\card(X_n)=n$ diverges to infinity. 
    
    Finally, remark that the sequence $(X_n)$ may not converge for other weighted topologies. For example if we have an increasing sequence $(v_n)_{n\in\N}$ in $\R$ such that $(u_nv_n)_{n\in\N}$ does not go to zero, and if we define the weight $\chi(n)=v_n\omega(n)$ then $\dchi(X_n,\{0\})$ does not go to zero, so the sequence $(X_n)$ does not converge towards $\{0\}$ for the topology $\tchi$. This will be useful in the following section, when we compare the different weighted topologies.
\end{exa}

    \begin{figure}[ht]
    \centering
    \begin{tikzpicture}
    \path (-0.5,0) node (0) [circle,inner sep=2pt] {} 
    (0,0) node(lim) [rectangle,fill=red,inner sep=4pt] {}
    (7,0) node(1) [circle,fill,inner sep=2pt] {}
    (6.7,0) node(2) [circle,fill,inner sep=2pt] {}
    (6.3,0) node(3) [circle,fill,inner sep=2pt] {}
    (5.5,0) node(4) [circle,fill,inner sep=2pt] {}
    (3.5,0) node(5) [circle,fill,inner sep=2pt] {};
    \draw [dotted] (0.center) -- +(8,0);
    \foreach \i in {1,...,5}{
      \node (x\i) [below=0.3 of \i.center] {};
      \draw (x\i.south) -- (x\i.north);
    }
    \node (xlim) [below=0.3 of lim.center] {};
    \draw (xlim.south) -- (xlim.north) ;
    \draw [<->] (xlim.center) -- (x5.center) node[midway,below] {$\epsilon_{5,1}$};
    \draw [<->] (x5.center) -- (x4.center) node[midway,below] {$\epsilon_{5,2}$};
    \draw [<->] (x2.center) -- (x1.center) node[midway,below] {$\epsilon_{5,5}$};
    \end{tikzpicture}
    \caption{Sketch of two configurations of \cref{exa:config_convergente}, $\{0\}$ as a red square $\textcolor{red}{\blacksquare}$ and $X_5$ as black dots \faCircle.}
    \label{fig:points_alignes}
\end{figure}

This example illustrates that, similarly to the Hausdorff topology and unlike the final topology, the weighted topology may have sequences that converge even though their cardinality goes to infinity. The subtlety is that, as we will see in \cref{prop:convergence_cardinal_borne_tpi}, for any such sequence, we can always construct a weighted topology for which it will not converge. This will ensure that the only sequences that converge for all the weighted topology have a bounded cardinality. In particular, the weighted topologies collectively mimic the behavior of the final topology. However, since the data of converging sequences is not sufficient to characterize a topology in general, some more work will be needed to show that the final topology is the limit of the weighted topologies.

Another similarity between the weighted and Hausdorff topology is that the metric space $(\Ran(M),\dom)$ is not complete. This is illustrated in the following example by defining a Cauchy sequence for $\dom$ that converges for $\dH$ to a Cantor set. Consequently, it cannot converge for $\dom$ in $\Ran(M)$. The convergence and limits of Cauchy sequences for various weights will be more thoroughly studied in \cref{sec:completeness}. In particular we will show later that the completion of $(\Ran(M),\dom)$, as a set, may be identified with a subset of $\Comp(M)$.
\begin{exa}
\label{exa:Cantor}

\begin{figure}[ht]
    \centering
    \begin{tikzpicture}[xscale=10]
    \draw[->] (-0.1,0) -- (1.1,0) ;
    \node[circle,fill=OI6,inner sep=2pt] (0) at (0,0) {} ;
    \node[rectangle,fill=OI5,inner sep=2.5pt] (1) at (1,0) {} ;
    \node[above] at (0.north) {0};
    \node[above] at (1.north) {1};
    \node at (-0.2,0) {$X_0$};
    \node at (-0.2,-1) {$X_1$};
    \node[circle,fill=OI6,inner sep=2pt] (L10) at (0,-1) {} ;
    \node[circle,fill=OI6,inner sep=2pt] (L11) at (0.75,-1) {} ;
    \node[rectangle,fill=OI5,inner sep=2.5pt] (R10) at (0.25,-1) {} ;
    \node[rectangle,fill=OI5,inner sep=2.5pt] (R11) at (1,-1) {} ;
    \node at (-0.2,-2) {$X_2$};
    \node[circle,fill=OI6,inner sep=2pt] (L20) at (0,-2) {} ;
    \node[circle,fill=OI6,inner sep=2pt] (L21) at (0.2,-2) {} ;
    \node[circle,fill=OI6,inner sep=2pt] (L22) at (0.75,-2) {} ;
    \node[circle,fill=OI6,inner sep=2pt] (L23) at (0.95,-2) {} ;
    \node[rectangle,fill=OI5,inner sep=2.5pt] (R20) at (0.05,-2) {} ;
    \node[rectangle,fill=OI5,inner sep=2.5pt] (R21) at (0.25,-2) {} ;
    \node[rectangle,fill=OI5,inner sep=2.5pt] (R22) at (0.8,-2) {} ;
    \node[rectangle,fill=OI5,inner sep=2.5pt] (R23) at (1,-2) {} ;
    \node at (-0.2,-3) {$X_3$};
    \node[circle,fill=OI6,inner sep=2pt] (L30) at (0,-3) {} ;
    \node[circle,fill=OI6,inner sep=2pt] (L31) at (0.035,-3) {} ;
    \node[circle,fill=OI6,inner sep=2pt] (L32) at (0.2,-3) {} ;
    \node[circle,fill=OI6,inner sep=2pt] (L33) at (0.235,-3) {} ;
    \node[circle,fill=OI6,inner sep=2pt] (L34) at (0.75,-3) {} ;
    \node[circle,fill=OI6,inner sep=2pt] (L35) at (0.785,-3) {} ;
    \node[circle,fill=OI6,inner sep=2pt] (L36) at (0.95,-3) {} ;
    \node[circle,fill=OI6,inner sep=2pt] (L37) at (0.985,-3) {} ;
    \node[rectangle,fill=OI5,inner sep=2.5pt] (R30) at (0.015,-3) {} ;
    \node[rectangle,fill=OI5,inner sep=2.5pt] (R31) at (0.05,-3) {} ;
    \node[rectangle,fill=OI5,inner sep=2.5pt] (R32) at (0.215,-3) {} ;
    \node[rectangle,fill=OI5,inner sep=2.5pt] (R33) at (0.25,-3) {} ;
    \node[rectangle,fill=OI5,inner sep=2.5pt] (R34) at (0.765,-3) {} ;
    \node[rectangle,fill=OI5,inner sep=2.5pt] (R35) at (0.8,-3) {} ;
    \node[rectangle,fill=OI5,inner sep=2.5pt] (R36) at (0.965,-3) {} ;
    \node[rectangle,fill=OI5,inner sep=2.5pt] (R37) at (1,-3) {} ;
    \draw[->,OI7!75!black] (0) -- (L10) node[midway,left,OI7!75!black]{$S_1$};
    \draw[->,OI7!75!black] (0) -- (R10);
    \draw[->,OI7!75!black] (1) -- (L11);
    \draw[->,OI7!75!black] (1) -- (R11);
    \draw[->,OI7!75!black] (L10) -- (L20) node[midway,left,OI7!75!black]{$S_2$};
    \draw[->,OI7!75!black] (L10) -- (R20);
    \draw[->,OI7!75!black] (R10) -- (L21);
    \draw[->,OI7!75!black] (R10) -- (R21);
    \draw[->,OI7!75!black] (L11) -- (L22);
    \draw[->,OI7!75!black] (L11) -- (R22);
    \draw[->,OI7!75!black] (R11) -- (L23);
    \draw[->,OI7!75!black] (R11) -- (R23);
    \draw[->,OI7!75!black] (L20) -- (L30) node[midway,left,OI7!75!black]{$S_3$};
    \draw[->,OI7!75!black] (L20) -- (R30);
    \draw[->,OI7!75!black] (R20) -- (L31);
    \draw[->,OI7!75!black] (R20) -- (R31);
    \draw[->,OI7!75!black] (L21) -- (L32);
    \draw[->,OI7!75!black] (L21) -- (R32);
    \draw[->,OI7!75!black] (R21) -- (L33);
    \draw[->,OI7!75!black] (R21) -- (R33);
    \draw[->,OI7!75!black] (L22) -- (L34);
    \draw[->,OI7!75!black] (L22) -- (R34);
    \draw[->,OI7!75!black] (R22) -- (L35);
    \draw[->,OI7!75!black] (R22) -- (R35);
    \draw[->,OI7!75!black] (L23) -- (L36);
    \draw[->,OI7!75!black] (L23) -- (R36);
    \draw[->,OI7!75!black] (R23) -- (L37);
    \draw[->,OI7!75!black] (R23) -- (R37);
    \end{tikzpicture}
    \caption{Sketch of the Cauchy sequence in \cref{exa:Cantor}. The "left" sequence $L_n$ are drawn as red dots \textcolor{OI6}{\faCircle}, the "right" sequence $R_n$ as blue squares $\textcolor{OI5}{\blacksquare}$, and the split $S_n$ from $X_{n-1}$ to $X_n$ as purple arrows $\textcolor{OI7!75!black}{\to}$.}
    \label{fig:Cantor}
\end{figure}

Here we consider $M=[0,1]$ and we give an example of a Cauchy sequence in $(\Ran(M),\tom)$ that does not converge. Note that this proves that $(\Ran(M),\tom)$ is not complete. 

Let $\omega$ be a weight. We will define our Cauchy sequence as the union of two "left" and "right" sequences. Note that, in the case where $\omega$ is the constant weight equal to one, then these sequences actually correspond to the left and right boundaries of the intervals used to define the Cantor set. Let us define $L_0=\{0\}$, $R_0=\{1\}$, and, for any $n\geq 1$
\begin{align*}
L_n&=L_{n-1}\bigcup \left( R_{n-1}- \frac{1}{3^n\omega(2^{n+1})}\right)\\
R_n&=R_{n-1}\bigcup \left( L_{n-1}+ \frac{1}{3^n\omega(2^{n+1})}\right).
\end{align*}
Finally, for any $n\geq 0$, let us define $X_n=L_n\cup R_n$. Note that $\card(X_n)=2^{n+1}$. Let us first show that it is indeed a Cauchy sequence. Let $n\geq 1$, then there exists a split $S_n$ from $X_{n-1}$ to $X_n$ that send every point in $X_{n-1}$ to its closest point in $X_n$. This corresponds to splitting each point in $L_{n-1}$ into itself plus its copy shifted by $\frac{1}{3^n\omega(2^{n+1})}$, and symmetrically for $R_{n-1}$. This reads
\begin{align*}
    S_n=\left\{\left(x,x+\frac{1}{3^n\omega(2^{n+1})}\right)\mid x\in L_{n-1}\right\}\bigcup \left\{\left(x,x-\frac{1}{3^n\omega(2^{n+1})}\right)\mid x\in R_{n-1}\right\},
\end{align*}
where we omitted the identity terms. We thus have
\begin{align*}
    \dom(X_{n-1},X_n)\leq \elo(S_n)=\omega(2^{n+1})\sum_{(x,y)\in S_n}\dm(x,y)=\omega(2^{n+1}) \frac{2^{n}}{3^n\omega(2^{n+1})}=\left(\frac{2}{3}\right)^n.
\end{align*}
Since this is geometric, $X_n$ is indeed a Cauchy sequence for $\dom$.

Let us now show that this sequence cannot converge in $(\Ran(M),\tom)$. Let us suppose that $X\in\Ran(M)$ is a limit of $X_n$, and let us denote $m=\card(X)$, and let $N\geq \log_2(m)$. Since $\card(X_N)=2^{N+1}> m$, there must exist $x_N\in X_N$ such that $x_N\not\in X$. Now, for any $n\geq N$, we have $X_N\subset X_n$, so that we also have $x_N\in X_n$. So that $\dom(X_n,X)\geq\dH(X_n,X)\geq \dm(x_N,X)$, which cannot converge to zero. So that $X_n$ cannot converge to $X$.

Note that all the previously defined sequences are also Cauchy sequences for the Hausdorff distance. In that case, we can show that $X_n$ converges in the set of compact subset of $M$, for the Hausdorff distance.
\end{exa}

\begin{rem}
\label{rem:generalization of examples}
   We can generalize \cref{exa:aligned_points,exa:Telescoping_points_arbitrary_weight,exa:config_convergente,exa:Cantor} to the case where $M$ is a length space such that there exist $x_0\neq x_1\in X$ and an admissible path $\gamma\colon[a,b]\to M$ such that $\gamma(a)=x_0$, $\gamma(b)=x_1$, and $\ell(\gamma)=\dm(x_0,x_1)=L$. We will say that such a space is \textbf{equipped with a geodesic}. Up to a rescaling of the length structure on $M$ we can suppose that $L=1$. Then, remark that, for any $0\leq t\leq 1$ there exists a unique $x_t\in\gamma([a,b])$ such that $\dm(x,x_0)=t$. Let us write $f\colon[0,1]\to M$ the application that assigns $x_t$ to $t\in[0,1]$. Finally, we just need to apply $f$ to the previous examples.
\end{rem}

Finally this last example illustrates why $(\Ran(-),\tom)$ is not functorial on Top, and why it is not, in general, an invariant of the homeomorphism type of $M$. This constitutes a remarkable distinction with the Hausdorff and final topology.  
\begin{exa}
\label{exa:racine pas relevable sur ran omega}
Let $M=\R^+$, and let $f\colon\R^+\to\R^+$ be the square root function. Then $f$ is indeed an homeomorphism, but it is not locally Lipschitz at zero. Let us show that there exists a weight $\omega$ such that the map
\begin{align*}
    \Ran(f)\colon (\Ran(M),\tom)&\to(\Ran(M),\tom)\\
    X&\mapsto f(X)
\end{align*}
is not continuous at $\{0\}$.

Let us define the sequence of configurations $(X_n)$ in $\Ran(M)$, where for any $n\in\N$ $X_n=\{ku_n\mid 1\leq k\leq n\}$, $u_n=\e^{-3n/2}$. And let us define the weight $\omega$ as $\omega(n)=\e^n$ for any $n\in\N$. Remark that, for any $n\in\N$, we can apply \cref{exa:Telescoping_points_arbitrary_weight} to get
\begin{align*}
    \dom(X_n,\{0\})&=\sum_{k=1}^{n}\omega(k)u_n\\
    &\leq \omega(n) n u_n =n\e^{-n/2}.
\end{align*}
Therefore, $X_n$ converges to $\{0\}$ in $(\Ran(M),\tom)$.

Now, remark that we can also apply \cref{exa:Telescoping_points_arbitrary_weight} to compute the distance from $f(X_n)$ to $f(\{0\})=\{0\}$ to get
\begin{align*}
    \dom(f(X_n),\{0\})&= \sum_{k=1}^n \omega(k)\left( \sqrt{ku_n}-\sqrt{(k-1)u_n}\right)\\
    &\geq \omega(n)\sqrt{u_n}\left( \sqrt{n}-\sqrt{(n-1)}\right)= \omega(n)\frac{\sqrt{u_n}}{\sqrt{n}+\sqrt{n-1}}
    &\geq \omega(n)\frac{\sqrt{u_n}}{2\sqrt{n}}=\frac{\e^{n/4}}{2\sqrt{n}},
\end{align*}
so that $f(X_n)$ does not converge to $f(\{0\})$. Therefore, $\Ran(f)$ is not continuous at $\{0\}$.
\end{exa}

\subsection{Local computations}
\label{subsec:locality}
In this section we define the notion of locality, that is very handy to make calculations close to configurations of cardinality greater that 1. The idea is to identify the size of neighborhoods $U$ of $X\in\Ran(M)$ under which the configurations $Y\in U$ can be considered "localized" around the points of $X$, i.e. every point in $Y$ is close to a unique point in $X$ and every point in $X$ is close to at least a point in $Y$. In the following, we will call \textbf{clusters} the elements of the induced partition on $Y$. 

We also extend the notion of locality to combinatorial chains by asking that the chain only sends points localized around a given $x\in X$ to points localized around the same $x\in X$. This will be useful to lift local properties of $M$ and of applications on $M$ to local properties of $\Ran(M)$ and of applications on $\Ran(M)$.

Throughout this section, we fix $(M,d)$ a metric space.

Let us state a useful lemma, that, together with the following corollary, give the intuition why any small enough neighborhood of a given configuration $X\in\Ran(M)$ only contains configurations of cardinality greater than $M$.

\begin{lem}
    \label{lem:minorant distance}
    Let $X,Y\in \Ran(M)$. If $\card(X)>\card(Y)$ then 
    \begin{equation*}
        \dom(X,Y)\geq \omega(\card(X))\min_{x\neq z\in X}\{\dm(x,z)\}.
    \end{equation*}
\end{lem}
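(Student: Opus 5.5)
The plan is to reduce to simple MBS-chains via \cref{theo:MBS} and show that every such chain from $X$ to $Y$ must begin with a simple merge out of $X$. That first step alone already costs at least $\omega(\card(X))\min_{x\neq z\in X}\dm(x,z)$. Write $n=\card(X)$ and $\delta=\min_{x\neq z\in X}\dm(x,z)$, which is positive; the hypothesis $\card(X)>\card(Y)\geq 1$ forces $n\geq 2$, so $\delta$ is defined. By \cref{theo:MBS}, $\dom(X,Y)$ is the infimum of $\elo(R_0,\dots,R_{k-1})$ over simple MBS-chains $((X_0,\dots,X_k),(R_0,\dots,R_{k-1}))$ from $X$ to $Y$. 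It therefore suffices to show $\elo(R_0,\dots,R_{k-1})\geq \omega(n)\delta$ for every such chain.

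First I check that $R_0$ is a simple merge. Along an MBS-chain, the cardinalities $\card(X_i)$ strictly decrease through the merge steps, stay constant at the bijection, and strictly increase through the splitting steps. Suppose the chain had no merge, so that it starts with a bijection or directly with splittings. Then $\card(X_i)\geq \card(X_0)=n$ for all $i$, and in particular $\card(Y)\geq n$, contradicting the hypothesis. Hence the index $l$ in \cref{def:MBS} is at least $1$, and $R_0$ is a simple merge from $X_0=X$. In particular $k\geq 1$, since $X\neq Y$.

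Next I bound the cost of that merge. Since $R_0$ is a simple merge, it is the graph of a map $f\colon X\to X_1$ that identifies exactly two distinct points $x\neq x'$ of $X$ at some $y\in X_1$. Moreover $\card(R_0)=\card(X)=n$. By the triangle inequality,
\begin{equation*}
\elo(R_0)\geq \omega(n)\bigl(\dm(x,y)+\dm(x',y)\bigr)\geq \omega(n)\dm(x,x')\geq \omega(n)\delta .
\end{equation*}
All the other terms $\elo(R_i)$ are nonnegative, so $\elo(R_0,\dots,R_{k-1})\geq\omega(n)\delta$. Taking the infimum over all simple MBS-chains then gives the claim.

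The only point requiring care is the use of \cref{theo:MBS}. Without it, a general chain could first split points and use relations of larger cardinality before merging, and the first relation need not be a merge at all. The MBS normal form removes this issue. Since everything else is a one-line triangle inequality, I do not expect any real obstacle beyond correctly reading off the cardinality profile of an MBS-chain.
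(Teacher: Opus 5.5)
Your proposal is correct and follows essentially the same route as the paper: reduce to simple MBS-chains via \cref{theo:MBS}, note that the cardinality drop forces $R_0$ to be a simple merge of two distinct points of $X$, and bound $\elo(R_0)$ below by $\omega(\card(X))\min_{x\neq z\in X}\dm(x,z)$ using the triangle inequality. The only cosmetic difference is that you bound every simple MBS-chain and then take the infimum, while the paper takes an $\epsilon$-optimal MBS-chain and lets $\epsilon\to 0$. You also spell out two steps the paper leaves implicit: why the first relation must be a merge, and the triangle-inequality step.
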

\begin{proof}
Let $\epsilon>0$. There exists a MBS-chain $(R_0,\dots,R_{k-1})$ from $X$ to $Y$ such that $\elo(R_0,\dots,R_{k-1})\leq \dom(X,Y)+\epsilon$. Since $\card(X)>\card(Y)$, then $R_0$ must be a simple merge between two points $x\neq w\in X$. We thus have
\begin{align*}
    \dom(X,Y)&\geq \elo(R_0,\dots,R_{k-1}) -\epsilon \\
    &\geq \elo(R_0) - \epsilon \\
    &\geq\omega(\card(X))\dm(x,w) -\epsilon \\
    &\geq \omega(\card(X))\min_{x\neq z\in X}\{\dm(x,z)\}-\epsilon
\end{align*}
Since this is true for any $\epsilon>0$, we have the desired result.
\end{proof}

This lemma allows to derive the following corollary, which is a concrete way of interpreting \cref{cor:truncations closed}.
\begin{corol}\label{cor:neighborhood_min_cardinal}
    Let $X\in \Ran(M)$ be a configuration, with $\card(X)=n$. Then any small enough neighborhood of $X$ in $\Ran(M)$, is contained in $\Ran_{\geq n}(M)$.
\end{corol}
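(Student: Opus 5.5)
The plan is to read \cref{cor:neighborhood_min_cardinal} as a statement about the metric topology $\tom$, and to derive it directly from \cref{lem:minorant distance}. Here $\Ran_{\geq n}(M)$ denotes the set of configurations of cardinality at least $n$. It then suffices to exhibit a radius $r>0$ such that the ball $\Bom(X,r)$ contains no configuration of cardinality $<n$.

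First dispose of the case $n=1$. There the claim is empty, since every configuration has cardinality at least $1$, so any neighborhood works.

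For $n\geq 2$, set
\begin{equation*}
    \delta=\min_{x\neq z\in X}\dm(x,z),
\end{equation*}
which is strictly positive because $X$ is a finite set with at least two points. Take $r=\omega(n)\delta>0$. If $Y\in\Ran(M)$ satisfies $\card(Y)<n=\card(X)$, then \cref{lem:minorant distance} gives $\dom(X,Y)\geq\omega(n)\delta=r$, so $Y\notin\Bom(X,r)$. Hence $\Bom(X,r)\subset\Ran_{\geq n}(M)$. Any neighborhood contained in this ball, in particular any ball of smaller radius, has the same property; this is what ``small enough'' means in the statement.

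The same argument can be run for the Hausdorff distance, using the radius $\delta/2$ from the proof that $\card$ is continuous for $\tH$. Since the final topology refines $\tH$, the corollary then also holds for $\tpi$, and it is consistent with \cref{cor:truncations closed}.

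There is no real obstacle here. The only point requiring care is that \cref{lem:minorant distance} needs the strict inequality $\card(X)>\card(Y)$, which is exactly the case we must exclude from the ball, and that $\delta>0$ requires $n\geq 2$, which is why the case $n=1$ is treated separately.
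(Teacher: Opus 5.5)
Your proof is correct and is essentially the paper's argument: take $r=\omega(n)\min_{x\neq z\in X}\dm(x,z)$ and apply \cref{lem:minorant distance} to exclude every configuration of smaller cardinality from $\Bom(X,r)$. Your separate treatment of $n=1$ (where that minimum is over an empty set) and your remark that the Hausdorff radius $\delta/2$ gives the same conclusion for $\tH$, hence for $\tpi$, are small additions the paper leaves implicit.
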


\begin{proof}
   Let $r=\omega(\card(X))\min_{x\neq y\in X}\{\dm(x,y)\}.$ By \cref{lem:minorant distance}, any $Y\in \Bom(X,r)$ must have cardinality at least $n$.
\end{proof}

The notion of locality actually derives from the Hausdorff distance. It will therefore be useful to compare the weighted distance to the Hausdorff distance.
\begin{prop}
\label{prop:dH leq dom}
    Let $\omega\colon \N^*\to [0,\infty)$ be a weight, and $X,Y\in\Ran(M)$. Then $\dH(X,Y)\leq\dom(X,Y)$. In particular, for any $r>0$, $\Bom(X,r)\subset\BH(X,r)$.
\end{prop}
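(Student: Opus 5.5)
The plan is to bound the Hausdorff distance by the $\omega$-length of an arbitrary combinatorial chain, and then take the infimum. By \cref{def:weighted-distance}, it suffices to show that for every combinatorial chain $((X=X_0,\dots,X_k=Y),(R_0,\dots,R_{k-1}))$ we have $\dH(X,Y)\leq \elo(R_0,\dots,R_{k-1})$.

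The key step is the path-following argument already used in the proof that $\dom$ is a distance. Fix $x\in X$. By surjectivity of each $R_j$, there is a sequence $x=x_0,x_1,\dots,x_k$ with $x_i\in X_i$ and $x_{i}R_{i}x_{i+1}$. The last point satisfies $x_k\in Y$. By the triangle inequality, together with $\omega\geq 1$ and the nonnegativity of all terms, we get
\begin{equation*}
d(x,Y)\leq d(x_0,x_k)\leq \sum_{i=0}^{k-1}d(x_i,x_{i+1})\leq \sum_{i=0}^{k-1}\omega(\card(R_i))\ell(R_i)=\elo(R_0,\dots,R_{k-1}).
\end{equation*}
The symmetric argument applies to any $y\in Y$: follow the chain backwards, using surjectivity of $R_i$ onto the left factor, or equivalently apply the same reasoning to the reversed chain $(R_{k-1}^{\op},\dots,R_0^{\op})$, which has the same length. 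This gives $d(X,y)\leq \elo(R_0,\dots,R_{k-1})$.

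Taking the maximum over $x\in X$ and $y\in Y$ yields $\dH(X,Y)\leq\elo(R_0,\dots,R_{k-1})$, and taking the infimum over chains gives $\dH\leq\dom$. The ball inclusion $\Bom(X,r)\subset\BH(X,r)$ is then immediate: if $\dom(X,Y)<r$ then $\dH(X,Y)<r$.

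The only point needing care is that the hypothesis in the statement writes $\omega\colon\N^*\to[0,\infty)$. I would use the paper's definition of a weight, under which $\omega\geq\omega(1)=1$ by monotonicity, so the comparison $d(x_i,x_{i+1})\leq\omega(\card(R_i))\ell(R_i)$ holds. There is no real obstacle beyond this.
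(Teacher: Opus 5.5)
Your proof is correct and is essentially the paper's argument: follow a point through the chain using surjectivity of each relation, apply the triangle inequality, and use $\omega\geq 1$. The differences are cosmetic. The paper fixes a pair $(x,y)$ realizing $\dH(X,Y)$, reduces by symmetry to $d(x,y)=d(x,Y)$, and works with an $\epsilon$-optimal chain, whereas you bound $\dH$ by the length of every chain and then take the infimum; your reading of the codomain $[0,\infty)$ in the statement as the paper's $[1,\infty)$ is also right.
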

\begin{proof}
    There exists $(x,y)\in X\times Y$ such that $\dH(X,Y)=\dm(x,y)$. By symmetry, we may assume that $d(x,y)=d(x,Y)$, so that we have $d(x,y')\geq d(x,y)$ for any $y'\in Y$. Let $\epsilon>0$. There exists a combinatorial chain $((X=X_0,\dots,X_k=Y),(R_0,\dots,R_{k-1}))$ from $X$ to $Y$ such that $\elo(R_0,\dots,R_{k-1})\leq \dom(X,Y)+\epsilon$. By surjectivity of $R_i$ for all $0\leq i \leq k-1$, there must exist a sequence $(x=x_0,\dots,x_k)$ such that $x_i\in X_i$ for any $0\leq i\leq k$ and $x_i R_i x_{i+1}$ for any $0\leq i \leq k-1$. We thus have
    \begin{align*}
        \dom(X,Y)+\epsilon&\geq \elo(R_0,\dots,R_{k-1})\\
        &\geq \sum_{i=0}^{k-1} \omega(\card(R_i)) \dm(x_i,x_{i+1})\\
        &\geq \dm(x,x_k)\geq\dH(X,Y)
    \end{align*}
    Since this is true for any $\epsilon>0$, we finally get $\dH(X,Y)\leq\dom(X,Y)$.
\end{proof}

We arrive at the core notion of this section. Intuitively, we use the notion of locality to lift local properties of $M$ onto $\Ran(M)$. We start by estimating the size of a neighborhood $X\in U$ we need to ensure that the distance between configurations in $U$ can be computed from combinatorial chains that stay sufficiently close to $X$. 
\begin{lem}\label{lem:Omega_distance_2epsilon_ball}
    Let $X\in \Ran(M)$, $\epsilon>0$, and $\omega$ be a weight. Then, for any $Y,Z\in \Bom(X,\epsilon)$, the distance $\dom(Y,Z)$ can be computed from chains in $\Bom(X,2\epsilon)$. In particular it can be computed from chains in $\BH(X,2\epsilon)$.
\end{lem}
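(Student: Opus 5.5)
The plan is to show that every near-optimal chain between $Y$ and $Z$ can be taken to stay inside $\Bom(X,2\epsilon)$. More precisely: for every $\delta>0$ there is a chain from $Y$ to $Z$ of $\omega$-length at most $\dom(Y,Z)+\delta$ all of whose intermediate configurations lie in $\Bom(X,2\epsilon)$. The idea is that intermediate configurations of a short chain are close to one of its endpoints.

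The key observation is a ``prefix/suffix'' bound. Let $((Y=X_0,\dots,X_k=Z),(R_0,\dots,R_{k-1}))$ be any combinatorial chain. The truncated chain $(R_0,\dots,R_{i-1})$ goes from $Y$ to $X_i$, so
\begin{equation*}
\dom(Y,X_i)\leq \elo(R_0,\dots,R_{i-1}).
\end{equation*}
Symmetrically, $\dom(X_i,Z)\leq \elo(R_i,\dots,R_{k-1})$. Adding the two gives
\begin{equation*}
\dom(Y,X_i)+\dom(X_i,Z)\leq \elo(R_0,\dots,R_{k-1}).
\end{equation*}
Hence $\min\{\dom(Y,X_i),\dom(X_i,Z)\}\leq \tfrac12\elo(R_0,\dots,R_{k-1})$.

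Now I choose the chain with $\elo(R_0,\dots,R_{k-1})\leq \dom(Y,Z)+\delta$. Since $\dom(Y,Z)\leq \dom(Y,X)+\dom(X,Z)$, the triangle inequality gives, say when $\dom(Y,X_i)$ is the smaller of the two,
\begin{equation*}
\dom(X,X_i)\leq \dom(X,Y)+\tfrac12\big(\dom(Y,X)+\dom(X,Z)+\delta\big).
\end{equation*}
Write $a=\dom(X,Y)<\epsilon$ and $b=\dom(X,Z)<\epsilon$. The right-hand side is then $\tfrac32 a+\tfrac12 b+\tfrac\delta2$. The other case gives $\tfrac12 a+\tfrac32 b+\tfrac\delta2$. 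In either case this is strictly less than $2\epsilon$ once $\delta<4\epsilon-3\max(a,b)-\min(a,b)$, and that threshold is positive since $a,b<\epsilon$. So every intermediate configuration of such a near-optimal chain lies in $\Bom(X,2\epsilon)$. The infimum over chains contained in $\Bom(X,2\epsilon)$ is then at most $\dom(Y,Z)+\delta$ for all small $\delta$. It is trivially at least $\dom(Y,Z)$, so the two infima agree.

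The statement about $\BH(X,2\epsilon)$ follows immediately from \cref{prop:dH leq dom}, since $\Bom(X,2\epsilon)\subset\BH(X,2\epsilon)$.

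The main point of care is interpretation: ``chains in $\Bom(X,2\epsilon)$'' should mean that all configurations $X_i$ lie in the ball. If one also wants the individual points of relations controlled, the Hausdorff inclusion already handles this. One should also note that the endpoints themselves trivially lie in the ball. The only potential subtlety is the strictness of the inequality, which is handled by taking $\delta$ small as above.
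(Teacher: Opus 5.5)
Your proof is correct and follows essentially the paper's argument: you bound $\dom(Y,X_i)+\dom(X_i,Z)$ by the $\omega$-length of the chain and then apply the triangle inequality through $X$. The paper is slightly more economical, observing that \emph{any} chain of $\omega$-length $<2\epsilon$ (one exists since $\dom(Y,Z)<2\epsilon$) must stay in $\Bom(X,2\epsilon)$, because an intermediate configuration $Y_k$ outside the ball would force $\dom(Y,Y_k)>\epsilon$ and $\dom(Y_k,Z)>\epsilon$; this avoids your $\delta$-threshold bookkeeping.
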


\begin{proof}
    This is an easy application of the triangular inequality. Since $\dom(Y,Z)\leq \dom(Y,X)+\dom(Y,Z)<2\epsilon$, there must be some combinatorial chain $(R_0,\dots,R_n)$ between $Y$ and $Z$ whose $\omega$-length is strictly bounded by $2\epsilon$. Assume that some intermediary configuration, $Y_k$ lies outside of $\Bom(X,2\epsilon)$. Then, 
    \begin{align*}
        2\epsilon &>\elo(R_0,\dots,R_k)+\elo(R_{k+1},\dots,R_n)\\
        &\geq \dom(Y,Y_k)+\dom(Y_k,Z)\\
        &>2\epsilon
    \end{align*}
    Hence, the distance $\dom(Y,Z)$ can be computed as the infimum of the $\omega$-length of chains between $Y$ and $Z$ which lie entirely in $\Bom(X,2\epsilon)$. Finally, we get the special case by observing that, by \cref{prop:dH leq dom} $\Bom(X,2\epsilon)\subset \BH(X,2\epsilon)$
\end{proof}

We can deduce the following corollary that will be useful in \cref{sec:conicity}.
\begin{corol}\label{corol:Opens_induce_open_embeddings}
    Let $(M,d)$ be a metric space, and $V\subset M$ be an open subspace. Then, the inclusion $V\to M$ induces a map, $\Ran(V)\to \Ran(M)$, which is continuous for the $\omega$ topologies, for any weight $\omega$. Furthermore, it is an open embedding.
\end{corol}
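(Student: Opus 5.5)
The map is $\iota\colon\Ran(V)\to\Ran(M)$, $X\mapsto X$. We equip $\Ran(V)$ with $\dom_V$, the weighted distance built from the restriction of $d$ to $V$. Every combinatorial chain in $V$ is also a chain in $M$ of the same $\omega$-length, so $\dom_M(X,Y)\leq \dom_V(X,Y)$. Hence $\iota$ is $1$-Lipschitz and in particular continuous. It is injective, and its image $\Ran(V)=\{X\in\Ran(M)\mid X\subset V\}$ is open in $(\Ran(M),\tom)$. To see this, fix $X\subset V$ and take $r>0$ with $B(x,r)\subset V$ for all $x\in X$. By \cref{prop:dH leq dom}, any $Y\in\Bom_M(X,r)$ satisfies $\dH(X,Y)<r$, so every $y\in Y$ lies within $r$ of some point of $X$, hence in $V$. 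So it remains to show that $\iota$ is open onto its image, i.e. that $\dom_V$ and $\dom_M$ define the same topology on $\Ran(V)$ near each point.

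Fix $X\in\Ran(V)$ and $r$ as above, and set $\epsilon=r/2$. I claim that for $Y,Z\in\Bom_M(X,\epsilon)$ we have $\dom_V(Y,Z)=\dom_M(Y,Z)$. Granting this, the $\dom_V$-ball of radius $\delta\leq\epsilon$ around $Y\in\Bom_M(X,\epsilon/2)$ contains the $\dom_M$-ball of radius $\min(\delta,\epsilon/2)$ around $Y$ intersected with $\Ran(V)$. So $\dom_V$-open sets are $\dom_M$-open, and $\iota$ is a homeomorphism onto an open subset.

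To prove the claim, apply \cref{lem:Omega_distance_2epsilon_ball}: $\dom_M(Y,Z)$ is the infimum of $\elo$ over chains in $M$ all of whose intermediate configurations lie in $\Bom_M(X,2\epsilon)=\Bom_M(X,r)$. By the openness argument above, these configurations are subsets of $V$. Each relation in such a chain then only pairs points of $V$, so the chain is a chain in $V$ with the same length, computed with the same $d$. Therefore $\dom_V(Y,Z)\leq\dom_M(Y,Z)$, and the reverse inequality was shown in the first paragraph.

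The main point of care is this last step. We must check that the restriction in \cref{lem:Omega_distance_2epsilon_ball} constrains every intermediate configuration, not just the endpoints, so that no chain escapes $V$. We must also check that the lemma's strict inequality $<2\epsilon$ matches the choice $2\epsilon=r$; if it is needed, taking $\epsilon=r/3$ avoids any boundary issue.
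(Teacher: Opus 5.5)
Your proof is correct and takes essentially the paper's approach. Both use \cref{lem:Omega_distance_2epsilon_ball} together with $\Bom_M(X,2\epsilon)\subset\BH(X,2\epsilon)\subset\Ran(V)$ to show that $\dom_V$ and $\dom_M$ coincide near each configuration, and your write-up is slightly more complete than the paper's, since you make explicit the global bound $\dom_M\leq\dom_V$, the openness of the image, and the fact that $\iota$ is a homeomorphism onto that image, all of which the paper leaves implicit.
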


\begin{proof}
 We extend the inclusion $V\to M$ to a map between Ran spaces in the obvious way. That is, we consider the map sending a configuration $X\subset V$ to the configuration $X\subset M$. Now, to show that it is continuous, consider $X\in \Ran(V)$ a configuration. Since $V$ was assumed to be open, there exists some $\epsilon>0$ such that we have an inclusion $\cup_{x\in X}B(x,2\epsilon)\subset V$. This means that $\BH(X,2\epsilon)\subset \Ran(V)$. But now, by \cref{lem:Omega_distance_2epsilon_ball}, we know that $\Bom_V(X,\epsilon)=\Bom_M(X,\epsilon)$, and that distances in both coincide. Thus, the inclusion induces an isometry when restricted on $\Bom_V(X,\epsilon)$. In particular, it is continuous at $X$.
\end{proof}

To estimate if the points in $Y$ are localized around the points in $X$ we need an estimate of the minimal distance between distinct points in $X$. We call it the \textbf{merging radius}.
\begin{defin}
\label{def:merging radius}
    We define the \textbf{merging radius} function as
    \begin{align*}
        \merg\colon \Ran(M)&\to (0,+\infty]\\
        X &\mapsto
            \begin{cases}
            \frac{1}{2}\min_{x\neq y\in X}\{\dm(x,y)\}  \quad &\text{if }\card(X)>1\\
            1 &\text{if }\card(X)=1
        \end{cases}.
    \end{align*}
\end{defin}

\begin{rem}
    Note that the merging radius function has discontinuities around merges. Note also that the value chosen for cardinality one is arbitrary and doesn't affect any of the following proofs.
\end{rem}

We can now define more precisely the locality of configuration, relations, and combinatorial chains.
\begin{defin}
    Let $X\in\Ran(M)$ and $0<r\leq\merg(X)$. We say that a configuration $Y\in\Ran(M)$ is \textbf{$\BH(X,r)$-local} if $Y\subset \bigcup_{x\in X} B(x,r)$ and $Y\cap  B(x,r)\neq\emptyset$ for all $x\in X$.
    
    If $Y,Z\in\Ran(M)$ are two $\BH(X,r)$-local configuration, we say that a relation $R\subset X \times Y $ between $Y$ and $Z$ is $\BH(X,r)$-local if $yRz$ implies that there exists $x\in X$ such that $y,z\in B(x,r)$. 
    
    We say that a combinatorial chain $((X_0,\dots,X_k),(R_0,\dots,R_{k-1}))$ in $M$ is $\BH(X,r)$-local if $X_i$ is $\BH(X,r)$-local for any $0\leq i\leq k$ and if $R_i$ is $\BH(X,r)$-local for any $0\leq i\leq k-1$.
\end{defin}

\begin{rem}
\label{rem:card local}
        Let $X\in\Ran(M)$ and $0<r\leq\merg(X)$. A configuration $Y\in\Ran(M)$ is $\BH(X,r)$-local if and only if $Y\in\BH(X,r)$. Moreover, if $Y\in\Ran(M)$ is $\BH(X,r)$-local then $\card(Y)\geq\card(X)$.
\end{rem}

The previous \cref{lem:Omega_distance_2epsilon_ball} gives a condition that ensures that we can compute the length from chains whose configurations are local. In the following lemma we extend this result to the relations of the chains, to ensures that we can compute the length from chains that are local. Note that the hypothesis is slightly stronger.

\begin{lem}
\label{lem:boules omega locales}
Let $X\in\Ran(M)$, $0<r\leq\frac{\merg(X)}{2}$ and $\omega$ a weight. For any $Y,Z\in\Bom(X,r)$, the distance $\dom(Y,Z)$ can be computed over the MBS-chains from $Y$ to $Z$ that are $\BH(X,2r)$-local.
\end{lem}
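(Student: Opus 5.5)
By \cref{lem:Omega_distance_2epsilon_ball} and \cref{theo:MBS}, it suffices to take an MBS-chain $((Y=X_0,\dots,X_k=Z),(R_0,\dots,R_{k-1}))$ with $\elo(R_0,\dots,R_{k-1})<2r$ and show that it is automatically $\BH(X,2r)$-local. Since $\dom(Y,Z)<2r$, the infimum over MBS-chains can be taken over chains of length $<2r$. So the statement reduces to this claim: \emph{every} combinatorial chain from $Y$ to $Z$ of $\omega$-length $<2r$ is $\BH(X,2r)$-local.

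First I check that the configurations are local. The argument of \cref{lem:Omega_distance_2epsilon_ball} gives more than the bound $\dom(X,X_i)<2r$. Splitting the chain at $X_i$ gives $\dom(Y,X_i)+\dom(X_i,Z)<2r$. Combined with $\dom(X,Y),\dom(X,Z)<r$, the triangle inequality gives
\begin{equation*}
2\dom(X,X_i)\leq \dom(X,Y)+\dom(Y,X_i)+\dom(X_i,Z)+\dom(Z,X) < 4r,
\end{equation*}
so $\dom(X,X_i)<2r$ for all $i$. By \cref{prop:dH leq dom}, $\dH(X,X_i)<2r\leq\merg(X)$. By \cref{rem:card local}, each $X_i$ is therefore $\BH(X,2r)$-local.

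Next, the relations. Take $yR_iz$. Since $X_i$ and $X_{i+1}$ are $\BH(X,2r)$-local, there are unique $x,x'\in X$ with $y\in B(x,2r)$ and $z\in B(x',2r)$. I must show $x=x'$. Suppose $x\neq x'$. Then $d(y,z)\geq d(x,x')-d(x,y)-d(x',z)$. A direct bound only gives $d(x,x')\geq 2\merg(X)\geq 4r$, which does not force $d(y,z)>0$ large, so I use path-tracing instead. By surjectivity of the relations, extend $(y,z)$ to a sequence $y_0R_0y_1\cdots R_{k-1}y_k$ with $y_i=y$, $y_{i+1}=z$, $y_0\in Y$ and $y_k\in Z$. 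Since $\omega\geq1$, the $\omega$-length of the chain is at least $\sum_j d(y_j,y_{j+1})$.

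Now $y_0\in Y$ lies within distance $<r$ of some point of $X$, because $\dH(X,Y)\leq\dom(X,Y)<r$. The same holds for $y_k\in Z$. The path $y_0,\dots,y_k$ passes from $B(x,2r)$ to $B(x',2r)$ in a single jump. I would then bound the total: the distance travelled from $y_0$ to $y$, plus $d(y,z)$, plus the distance from $z$ to $y_k$, must be at least the distance needed to go from near one point of $X$, through the gap between $B(x,2r)$ and $B(x',2r)$, to near another point of $X$ (or back again). In every case this is at least roughly $d(x,x')-2r\geq 2r$, which contradicts length $<2r$.

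The main obstacle is this last estimate. The jump itself need not be long: $y$ could sit near the boundary of $B(x,2r)$ and $z$ near the boundary of $B(x',2r)$. The endpoint $y_0$ could be near $x$ and $y_k$ near $x'$, giving total $\geq d(x,x')-2r$. Alternatively, both endpoints could be near $x$, in which case the path must also return, doubling the cost. I would handle these cases separately with the triangle inequality, using $d(x,x')\geq 2\merg(X)\geq 4r$. If the constants are too tight, I would fall back on replacing $r$ by a slightly smaller radius, exploiting the strict inequalities.
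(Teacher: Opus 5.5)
Your reduction and the locality of the intermediate configurations are fine. The approach is also the paper's: trace a thread $y_0R_0y_1\cdots R_{k-1}y_k$ through the pair, and play its length against the separation $2\merg(X)\geq 4r$ between distinct points of $X$. What is missing is the decisive estimate, which you leave at ``roughly''. It closes exactly with the given constants, so no shrinking of $r$ is needed.

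Write $L=\sum_j d(y_j,y_{j+1})\leq \elo(R_0,\dots,R_{k-1})<2r$. Choose $\xi_0,\xi_k\in X$ with $d(y_0,\xi_0)<r$ and $d(y_k,\xi_k)<r$.
\begin{itemize}
\item If $\xi_0\neq\xi_k$, then $L\geq d(y_0,y_k)\geq d(\xi_0,\xi_k)-d(y_0,\xi_0)-d(y_k,\xi_k)>4r-2r=2r$, a contradiction. Hence $\xi_0=\xi_k=:\xi$.
\item Suppose some $y_j$ lies in $B(x,2r)$ with $x\neq\xi$. Then $d(y_j,\xi)\geq d(x,\xi)-d(y_j,x)>4r-2r=2r$. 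This gives $L\geq d(y_0,y_j)+d(y_j,y_k)\geq 2d(y_j,\xi)-d(y_0,\xi)-d(y_k,\xi)>4r-2r=2r$, again a contradiction.
\end{itemize}
So every $y_j$, in particular $y$ and $z$, lies in $B(\xi,2r)$, which gives $x=x'$. Strictness comes for free from $d(y_0,\xi_0)<r$ and $d(y_j,x)<2r$.

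The paper avoids your case split. It takes $x_j\in X$ nearest to $y_j$ and bounds $d(y_j,x_j)\leq\dom(Y_j,X)\leq\dom(Y_j,Z)+\dom(Z,X)$. This yields $d(x_0,x_j)\leq \dom(X,Y)+\elo(R_0,\dots,R_{j-1})+\dom(Y_j,Z)+\dom(Z,X)<r+2r+r=4r\leq 2\merg(X)$, which forces $x_j=x_0$ for all $j$. The return trip that you pay for by following the thread to $y_k$ is paid for there by $\dom(Y_j,Z)$. Otherwise the two arguments are the same.
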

\begin{proof}
    Let $Y,Z\in\Bom(X,r)$. First, remark that $\dom(Y,Z)\leq \dom(Y,X)+\dom(X,Z)<2r=\merg(X)$. Let $0<\epsilon<2r-\dom(Y,Z)$. By \cref{lem:Omega_distance_2epsilon_ball} there exists a MBS chain $((Y_0,\dots,Y_k),(R_0,\dots,R_{k-1}))$ from $Y$ to $Z$ such that $\elo(R_0,\dots,R_{k-1})\leq\dom(Y,Z) + \epsilon$ and such that $Y_0,\dots,Y_k\in\Bom(X,2r)\subset\BH(X,2r)$. 

    Let $i\in\{0,\dots,k-1\}$, and $(y_i,y_{i+1})\in R_i$. Since $((Y_0,\dots,Y_k),(R_0,\dots,R_{k-1}))$ is a combinatorial chain from $Y$ to $Z$, there exists $(y_0,\dots,y_k)\in Y_0\times\dots\times Y_k$ such that $y_j R_j y_{j+1}$ for all $j\in\{0,\dots,k-1\}$. In addition, for all $j\in\{0,\dots,k\}$, since $Y_j$ is $\BH(X,2r)$-local, there exists a unique $x_j\in X$ such that $\dm(y_j,x_j)=\min_{x\in X}\{\dm(y_j,x)\}\leq \dH(Y_j,X)\leq\dom(Y_j,X)<2r$. If $\card(X)=1$, then $x_j=x_0$ for all $j\in\{0,\dots,k\}$. Otherwise 
    \begin{align*}
        \dm(x_0,x_j)&\leq \dm(x_0,y_0) + \dm(y_0,y_j) + \dm(y_j,x_j)\\
        &\leq \dom(X,Y) + \elo(R_0,\dots,R_{j-1}) + \dom(Y_j,X)\\
        &\leq \dom(X,Y) + \elo(R_0,\dots,R_{j-1}) + \dom(Y_j,Z) + \dom(Z,X)\\
        &< 2 \merg(X)=\min_{z\neq z'\in X}\{d(z,z')\}.
    \end{align*}
    and thus $x_j=x_0$ for all $j\in\{0,\dots,k\}$. In particular $x_i=x_{i+1}$, so that $(y_i,y_{i+1})\in \left(B(x_i,2r)\right)^2$ and $R_i$ is $\BH(X,2r)$-local. The MBS chain $((Y_0,\dots,Y_k),(R_0,\dots,R_{k-1}))$ is thus $\BH(X,2r)$-local. We can thus compute the infimum over the $\BH(X,2r)$-local MBS-chains.
\end{proof}

Finally, the following lemma will be useful to show that we can lift a locally Lipschitz map on $M$ to a locally Lipschitz map on $\Ran(M)$. It will also be useful in \cref{sec:conicity} to compute the distance between configurations that are transformed under a homothety or a scaling on $\Ran(M)$.
\begin{lem}
\label{lem:local scaling}
    Let $X\in\Ran(M)$, $0<r<\merg(X)$, and let $\phi\colon \bigsqcup_{x\in X} B(x,r)\to M$ be a map such that there exists $s\in[0,+\infty)$ such that $\dm(\phi(y),\phi(z))\leq s\dm(y,z)$ for all $(y,z)\in\bigsqcup_{x\in X} B(x,r)^2$. Let $Y,Z\in\Ran(M)$ be two $\BH(X,r)$-local configurations.
    
    If the $\omega$-distance between $Y$ and $Z$ can be computed over $\BH(X,r)$-local combinatorial chains from $Y$ to $Z$ then $\dom(\phi(Y),\phi(Z))\leq s\dom(Y,Z)$.
\end{lem}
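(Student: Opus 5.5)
The plan is to push any $\BH(X,r)$-local combinatorial chain from $Y$ to $Z$ forward along $\phi$, and then compare the $\omega$-lengths relation by relation. Fix $\epsilon>0$. By hypothesis there is a $\BH(X,r)$-local chain $((Y=Y_0,\dots,Y_k=Z),(R_0,\dots,R_{k-1}))$ with $\elo(R_0,\dots,R_{k-1})\leq \dom(Y,Z)+\epsilon$. Every $Y_i$ is $\BH(X,r)$-local, so $Y_i\subset\bigsqcup_{x\in X}B(x,r)$ and $\phi(Y_i)$ is defined. We set $Y_i'=\phi(Y_i)$ and let $R_i'=\{(\phi(y),\phi(z))\mid yR_iz\}\subset Y_i'\times Y_{i+1}'$ be the image of $R_i$ under $\phi\times\phi$.

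Next we check that this is a combinatorial chain from $\phi(Y)$ to $\phi(Z)$. If $y'=\phi(y)\in Y_i'$, surjectivity of $R_i$ gives some $z$ with $yR_iz$, so $y'R_i'\phi(z)$; the other direction is the same. Hence each $R_i'$ is a surjective relation. Note that $\phi$ need not be injective, so configurations and relations may shrink under $\phi$.

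Then we compare lengths. Since $R_i'$ is the image of $R_i$, we have $\card(R_i')\leq\card(R_i)$, and since $\omega$ is non-decreasing, $\omega(\card(R_i'))\leq\omega(\card(R_i))$. By locality of $R_i$, each pair with $yR_iz$ lies in a common ball $B(x,r)$, so $d(\phi(y),\phi(z))\leq s\,d(y,z)$. This is where the $\BH(X,r)$-locality of relations is used: $\phi$ is only assumed Lipschitz within each ball, and pairs lying in different balls are not controlled. Each pair of $R_i'$ is the image of at least one pair of $R_i$, so summing over the pairs gives
\[
\ell(R_i')\leq\sum_{(y,z)\in R_i}d(\phi(y),\phi(z))\leq s\,\ell(R_i),
\]
and hence $\elo(R_i')\leq s\,\elo(R_i)$. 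Summing over $i$ yields $\dom(\phi(Y),\phi(Z))\leq s(\dom(Y,Z)+\epsilon)$, and letting $\epsilon\to0$ concludes.

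The only delicate point is the non-injectivity of $\phi$, which merges pairs in $R_i'$. It is handled by the two monotonicities above: cardinality only decreases, and the sum over $R_i'$ is bounded by the sum over preimage pairs. The case $s=0$ is covered, since then all distances between image points inside each ball vanish.
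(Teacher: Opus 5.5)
Your proposal is correct and follows essentially the same argument as the paper. You push an $\epsilon$-optimal $\BH(X,r)$-local chain forward along $\phi$, check that the image relations are surjective, and bound the lengths using $\card(\phi^2(R_i))\leq\card(R_i)$, the monotonicity of $\omega$, and the ball-wise Lipschitz bound, which applies because the relations are local.
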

\begin{proof}
    Let $\epsilon>0$ and let $((Y_0,\dots,Y_k),(R_0,\dots,R_{k-1}))$ be a $\BH(X,r)$-local combinatorial chain from $Y$ to $Z$ such that $\elo(R_0,\dots,R_{k-1})\leq\dom(Y,Z) +\epsilon$. Let us denote $P_i=\phi^2(R_i)$ for all $0\leq i\leq k-1$, and let show that $((\phi(Y_0),\dots,\phi(Y_k)),(P_0,\dots,P_{k-1}))$ is a combinatorial chain from $\phi(Y)$ to $\phi(Z)$. Let $i\in\{0,\dots,k-1\}$, and let $z\in\phi(Y_i)$, there exists $y\in Y_i$ such that $\phi(y)=z$. Since $R_i$ is surjective, there exists $y'\in Y_{i+1}$ such that $y R_i y'$, and thus $z P_i \phi(y')$. Similarly, we show that for any $z\in \phi(Y_{i+1})$, there exists $z'\in \phi(Y_i)$ such that $z' P_i z$. This proves that $P_i$ is surjective, and that $((\phi(Y_0),\dots,\phi(Y_k)),(P_0,\dots,P_{k-1}))$ is indeed a combinatorial chain from $\phi(Y)$ to $\phi(Z)$. We can compute its length:
    \begin{align*}
        \elo(P_0,\dots,P_{k-1})&=\sum_{i=0}^{k-1} \omega(\card(P_i))\sum_{(z,z')\in P_i} \dm(z,z')\\
        &\leq \sum_{i=0}^{k-1} \omega(\card(R_i))\sum_{(y,y')\in R_i} \dm(\phi(y),\phi(y')) \\
        &\leq \sum_{i=0}^{k-1} \omega(\card(R_i))\sum_{(y,y')\in R_i} s\dm(y,y')\\
        &\leq s \elo(R_0,\dots,R_{k-1}).
    \end{align*}
    Finally 
    $$\dom(\phi(X),\phi(Y))\leq \elo(P_0,\dots,P_{k-1})\leq s\elo(R_0,\dots,R_{k-1})\leq s\dom(X,Y) + s\epsilon$$ 
    Since this is true for any $\epsilon>0$, we have $\dom(\phi(X),\phi(Y))\leq s\dom(X,Y)$. 
\end{proof}

\subsection{Functoriality and invariance}
\label{subsec:Functoriality}
We can now use the previous examples and computations to show some general properties of the weighted topologies. In particular \cref{exa:racine pas relevable sur ran omega} illustrates that the lift $\Ran(f)$ of a continuous map $f$ is not always continuous. From this, we deduce two important differences with the Hausdorff and final topologies. Firstly the weighted topologies do not, in general, induce a functor on the category of metric spaces and continuous maps. Nevertheless, if one considers the subcategory of metric spaces and locally Lipschitz maps, $\mathcal{C}$, then for any weight, $\omega$, we do have a functor $\Ran^\omega\colon\mathcal{C}\to \mathcal{C}$. Secondly, the homeomorphism type of $\Ran^\omega(M)$ is not necessarily an invariant of the homeomorphism type of $M$, however it is still invariant in a more rigid sense, see \cref{rem:Invariance_locally_bi_Lipschitz}.

\begin{prop}\label{prop:Lipschitz_Map_lift}
    Let $(M,d_M)$ and $(N,d_N)$ be two metric spaces and $f\colon M\to N$ a locally Lipschitz map. Then for any $\omega$, the lifted map 
    \begin{align*}
        \Ran(f)\colon \Ran(M)&\to \Ran(N)\\
        X&\mapsto f(X)
    \end{align*}
    is locally Lipschitz for the distances induced by $\omega$ on $\Ran(M)$ and $\Ran(N)$.
\end{prop}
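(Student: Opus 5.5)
The plan is to use the locality machinery of \cref{subsec:locality}, namely \cref{lem:boules omega locales} and \cref{lem:local scaling}. Fix $X\in\Ran(M)$. Since $f$ is locally Lipschitz, each $x\in X$ has a radius $r_x>0$ and a constant $s_x$ such that $f$ is $s_x$-Lipschitz on $B(x,r_x)$.

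First choose a single radius
\begin{equation*}
r\leq\min\left\{\tfrac{\merg(X)}{4},\ \min_{x\in X} r_x\right\}
\end{equation*}
and set $s=\max_{x\in X}s_x$. Then the map $\phi=f$ restricted to $\bigsqcup_{x\in X}B(x,2r)$ satisfies
\begin{equation*}
d_N(\phi(y),\phi(z))\leq s\,d_M(y,z)
\end{equation*}
whenever $y,z$ lie in the same ball $B(x,2r)$. This is precisely the form of hypothesis used in \cref{lem:local scaling}: the Lipschitz bound is only ever applied to pairs inside a common ball, since local relations only relate points within a common ball. Note that $2r\leq \merg(X)/2<\merg(X)$, so the balls are disjoint and the lemma applies.

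Next take $Y,Z\in\Bom(X,r)$. By \cref{lem:boules omega locales}, applicable because $r\leq\merg(X)/2$, the distance $\dom(Y,Z)$ is computed over $\BH(X,2r)$-local MBS-chains. By \cref{prop:dH leq dom} and \cref{rem:card local}, $Y$ and $Z$ are themselves $\BH(X,2r)$-local. Applying \cref{lem:local scaling} with radius $2r$ then gives
\begin{equation*}
\dom(f(Y),f(Z))\leq s\,\dom(Y,Z).
\end{equation*}
Hence $\Ran(f)$ is $s$-Lipschitz on the open ball $\Bom(X,r)$, which proves the local Lipschitz property.

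The main obstacle is checking that \cref{lem:local scaling} genuinely needs the Lipschitz bound only within individual balls. Looking at its proof, the estimate is used only on pairs $(y,y')\in R_i$, and these lie in a common $B(x,2r)$ by locality. So even though $f$ is not globally Lipschitz across different balls, the argument should go through unchanged. A second point is that applying $\phi$ may collapse points, so $\card(P_i)\leq\card(R_i)$; this is harmless because $\omega$ is non-decreasing, which that proof already uses. Finally, nothing requires $f(Y)$ to be local in $N$: the image chain only needs to be a combinatorial chain, and this is shown in that lemma's proof.
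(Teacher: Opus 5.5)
Your approach is the same as the paper's: you localize with \cref{lem:boules omega locales} and then push chains forward along $f$ with \cref{lem:local scaling}. There is, however, a factor-of-two slip that makes one step false as written.

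You impose only $r\le r_x$, but then assert that $f$ is $s$-Lipschitz on each $B(x,2r)$. Since $2r$ may exceed $r_x$, the ball $B(x,2r)$ need not lie inside $B(x,r_x)$, where the Lipschitz bound is known. The bound really is needed out to radius $2r$: for $Y,Z\in\Bom(X,r)$, \cref{lem:boules omega locales} only gives $\BH(X,2r)$-local chains, so the pairs $(y,y')\in R_i$ to which you apply the estimate can sit anywhere in $B(x,2r)$.

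The fix is to take $r\le\frac12\min_{x\in X}r_x$, together with your condition $r\le\merg(X)/4$. The paper does exactly this: it sets $\epsilon\le\min_x\epsilon_x$, works in $\Bom(X,\epsilon/2)$, and uses $\BH(X,\epsilon)$-local chains. With this correction, the rest of your argument goes through as you wrote it. Your remarks that the Lipschitz bound is only used within a single ball, that collapsing points is harmless because $\omega$ is non-decreasing, and that $2r<\merg(X)$ holds strictly are all correct.
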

\begin{proof}
    Let $X\in\Ran(M)$. For any $x\in X$ there exists $\epsilon_x>0$ and $p_x$ such that for any $y,z\in B_M(x,\epsilon_x)$, $d_N(f(y),f(z))\leq p_x d_M(y,z)$. Let us define $\epsilon=\min \left(\{\epsilon_x \mid x\in X\}\cup \{\merg(X)\}\right)$, and $p=\max\{p_x\mid x\in X\}$. Then $f_{|\bigcup_{x\in X} B(x,\epsilon)}$ is such that for any $(y,z)\in \bigcup_{x\in X} \left(B(x,\epsilon)^2\right)$, we have $d_N(f(y),f(z))\leq p d_M(y,z)$. 
    
    Let $Y,Z\in\Bom(X,\frac{\epsilon}{2})$. By \cref{lem:boules omega locales}, the distance $\dom(Y,Z)$ can be computed over $\BH(X,\epsilon)$-local MBS-chains, so that we can apply \cref{lem:local scaling} to directly get that $\dom(f(Y),f(Z))\leq p\dom(Y,Z)$.
\end{proof}

\begin{corol}\label{corol:Ran_omega_functor}
Let $\mathcal{C}$ be the category whose objects are metric spaces, and whose morphisms are locally Lipschitz maps, then, for any weight $\omega$ the following functor is well defined
\begin{align*}
    \Ran^\omega\colon \mathcal{C} &\to \mathcal{C}\\
    (M,d)&\mapsto (\Ran(M),\dom)\\
    f\colon M\to N &\mapsto \Ran(f)\colon \Ran^\omega(M) \to \Ran^\omega(N)
\end{align*}
\end{corol}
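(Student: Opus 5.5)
Functoriality here reduces to three checks: that $\Ran^\omega$ is well defined on objects, that it sends morphisms of $\mathcal{C}$ to morphisms of $\mathcal{C}$, and that it preserves identities and composition.

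\textbf{Objects.} For any metric space $(M,d)$, the pair $(\Ran(M),\dom)$ is a metric space, since we proved earlier that $\dom$ is a distance. It is therefore an object of $\mathcal{C}$.

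\textbf{Morphisms.} Let $f\colon (M,d_M)\to(N,d_N)$ be locally Lipschitz. The map $\Ran(f)\colon X\mapsto f(X)$ is well defined as a map of sets. Indeed, $f(X)$ is finite, and it is nonempty because $X$ is, so $f(X)\in\Ran(N)$. By \cref{prop:Lipschitz_Map_lift}, $\Ran(f)$ is locally Lipschitz for $\dom$ on both sides, so it is a morphism of $\mathcal{C}$. This is the only nontrivial input, and it has already been established.

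\textbf{Identities and composition.} Both follow from the set-level identities $\Id_M(X)=X$ and $(g\circ f)(X)=g(f(X))$ for finite subsets $X\subset M$. These give $\Ran(\Id_M)=\Id_{\Ran(M)}$ and $\Ran(g\circ f)=\Ran(g)\circ\Ran(f)$.

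\textbf{Well-definedness of $\mathcal{C}$.} We should also note that $\mathcal{C}$ is a category. Identities are Lipschitz. A composite of locally Lipschitz maps is locally Lipschitz: around a point $x$, choose a ball on which $g$ is Lipschitz near $f(x)$. By continuity of $f$, shrink the ball on which $f$ is Lipschitz around $x$ so that $f$ maps it into the ball for $g$. On this smaller ball the constants multiply.

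\textbf{Main obstacle.} There is essentially none. Everything reduces to \cref{prop:Lipschitz_Map_lift}, and the functor laws are set-theoretic identities.
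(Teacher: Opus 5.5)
Your proposal is correct and follows the paper's route. The paper gives no separate argument: it treats the statement as an immediate consequence of \cref{prop:Lipschitz_Map_lift} and the fact that $\dom$ is a distance, with identities and composition holding at the level of sets. Your extra check that locally Lipschitz maps compose (using continuity of $f$ to shrink the ball) is a harmless addition that the paper takes for granted.
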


\begin{rem}\label{rem:Invariance_locally_bi_Lipschitz}
    The previous corollary implies, in particular, that if $(M,d)$ and $(N,d')$ are two metric spaces related by a locally bi-Lipschitz homeomorphism, then $\Ran^{\omega}(M,d)$ and $\Ran^{\omega}(N,d')$ are related by a locally bi-Lipschitz homeomorphism.    
\end{rem}

\section{Distances and topologies}
\label{sec:Distances_Topologies}
Now that we defined and characterized new metrics on $\Ran(M)$, we spend some time, in this section, to compare the induced topologies between themselves, and to the previously known Hausdorff and final topologies. In particular, we show that there are uncountably many distinct weighted topologies on $\Ran(M)$ (\cref{rem:Uncountably_many_topologies}), that they all refine the Hausdorff topology and are all refined by the final topology \cref{prop:tom raffine tH,tpi rafine tom}. If the case where the metric space $M$ is locally compact, we also show that the weighted balls $\Bom(X,\epsilon)$ form a basis of the final topology, i.e. that $(\Ran(M),\tpi)$ is the limit in $\Top$ of the $(\Ran(M),\tom)$ \cref{theo:topologie limite}. Finally, we use these tools to give new interpretations of some known properties of the final topology.

\subsection{Weighted topologies}
\label{subsec:comparing_weighted_topologies}
We first compare the weighted topologies.

\begin{defin}\label{defin:Functor_Topology}
    Let $\W$ be the preorder whose objects are weights, $\omega\colon \N^*\to [1,\infty)$ and where there is a relation $\omega\preceq \chi$ if and only if, there exists some $N\in \N$ such that for all $n\geq N$, $\omega(n)\leq \chi(n)$. We also write $\W$ for the categorification of the preorder $\W$, and define the functor
    \begin{align*}
        \Ran^{-}(M)\colon \W^{\op}&\to \Top\\
        \omega&\mapsto (\Ran(M),\tau^{\omega})\\
        \omega\preceq \chi &\mapsto \Ran(\Id)\colon (\Ran(M),\tom)\to (\Ran(M),\tau^{\chi})
    \end{align*}
\end{defin}

\begin{prop}\label{prop:Smaller_Weights_continuous}
    If $\omega,\chi\colon \N^* \to [1,\infty)$ are two weights such that $\omega\preceq \chi$, then the identity induces a continuous map
    \begin{equation*}
        (\Ran(M),\tau^{\chi})\to (\Ran(M),\tau^{\omega}).
    \end{equation*}
    In particular, the functor $\Ran^{-}(M)$ is well-defined.
\end{prop}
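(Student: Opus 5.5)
We need to show: if $\omega(n)\le\chi(n)$ for $n\ge N$, then every $\dom$-ball around a configuration $X$ contains a $\dchi$-ball around $X$. This gives continuity of the identity $(\Ran(M),\tchi)\to(\Ran(M),\tom)$. Functoriality of $\Ran^{-}(M)$ is then automatic: composites of identities are identities, and the relation $\preceq$ is a preorder.

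The plan is to find a constant $C\geq 1$ with $\elo(R)\le C\,\ell^{\chi}(R)$ for every surjective relation $R$. Set
\begin{equation*}
C=\max\left(1,\max_{1\le n< N}\frac{\omega(n)}{\chi(n)}\right),
\end{equation*}
which is finite since there are finitely many indices and $\chi\ge 1$. For $\card(R)=n\ge N$ we have $\omega(n)\le\chi(n)$. For $n<N$ we have $\omega(n)\le C\chi(n)$ by the choice of $C$. Since the length of a relation is $\omega(\card(R))\ell(R)$, we get $\elo(R)\le C\,\ell^{\chi}(R)$ in both cases.

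Summing over the relations of any combinatorial chain gives $\elo(R_0,\dots,R_{k-1})\le C\,\ell^{\chi}(R_0,\dots,R_{k-1})$. Taking the infimum over all chains between $X$ and $Y$ (the set of chains is the same for both weights, by \cref{def:weighted-distance}) yields
\begin{equation*}
\dom(X,Y)\le C\,\dchi(X,Y)
\end{equation*}
for all $X,Y\in\Ran(M)$. Hence $\Bchi(X,r/C)\subset\Bom(X,r)$, and the identity $(\Ran(M),\dchi)\to(\Ran(M),\dom)$ is $C$-Lipschitz, in particular continuous.

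The only step requiring care is the finite-index comparison. The eventual inequality $\omega\preceq\chi$ says nothing about small cardinalities, and this is absorbed by the constant $C$ using only that $\chi$ is bounded below by $1$. No use of the MBS reduction or of any hypothesis on $M$ is needed, since the argument compares the two distances chain by chain.
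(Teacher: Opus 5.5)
Your proposal is correct and follows the paper's argument. The paper proves \cref{lem:Topology_Weights_Bounded} by bounding $\omega(n)\le m\,\chi(n)$ uniformly, comparing the two lengths chain by chain, and passing to the infimum to get a Lipschitz identity; it then deduces the proposition from the boundedness of $\omega/\chi$ when $\omega\preceq\chi$, and your explicit constant $C$ simply spells out that boundedness step.
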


The proof will follow immediately from the following lemma
\begin{lem}\label{lem:Topology_Weights_Bounded}
    Let $\omega,\chi\colon \N^*\to [1,\infty)$ be two weights. If the sequence $\frac{\omega(n)}{\chi(n)}$ is bounded, then $\Id\colon(\Ran,\tchi)\to(\Ran,\tom)$ is Lipschitz and {\tchi} is finer than {\tom}. In particular, if $\omega(n)=\chi(n)$ for all but finitely many $n$, then the topologies $\tom$ and $\tchi$ coincide.
\end{lem}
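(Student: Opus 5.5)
The plan is to compare chain lengths term by term. Set $C=\sup_{n\geq 1}\frac{\omega(n)}{\chi(n)}$, which is finite by hypothesis. It is also at least $1$, since $\omega(1)=\chi(1)=1$. For any surjective relation $R$ we have
\begin{equation*}
\elo(R)=\omega(\card(R))\ell(R)\leq C\,\chi(\card(R))\ell(R)=C\,\ell^{\chi}(R).
\end{equation*}
Summing over the relations of a combinatorial chain gives $\elo(R_0,\dots,R_{k-1})\leq C\,\ell^{\chi}(R_0,\dots,R_{k-1})$. Both distances are infima over the same set of combinatorial chains between $X$ and $Y$ (\cref{def:weighted-distance}), so taking infima yields
\begin{equation*}
\dom(X,Y)\leq C\,\dchi(X,Y) \quad\text{for all } X,Y\in\Ran(M).
\end{equation*}
This says exactly that $\Id\colon(\Ran(M),\dchi)\to(\Ran(M),\dom)$ is $C$-Lipschitz.

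Continuity of the identity then follows, since Lipschitz maps between metric spaces are continuous. Concretely, $\Bchi(X,r/C)\subset\Bom(X,r)$ for every $X$ and $r>0$, so every $\tom$-open set is $\tchi$-open, and $\tchi$ is finer than $\tom$.

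For the final claim, suppose $\omega(n)=\chi(n)$ for all $n\geq N$. Then both ratios $\omega/\chi$ and $\chi/\omega$ are bounded: each ratio equals $1$ for $n\geq N$, and on the finitely many remaining $n$ each ratio is a finite positive number because weights take values in $[1,\infty)$. Applying the first part in both directions shows that each topology refines the other, so they coincide.

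\cref{prop:Smaller_Weights_continuous} follows in the same way. If $\omega\preceq\chi$, then $\omega/\chi\leq 1$ for $n\geq N$, and the finitely many earlier terms are bounded, so the ratio is bounded. Functoriality of $\Ran^{-}(M)$ is then automatic, because every map in its image is the identity on the underlying set.

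I expect no real obstacle here. The one point to check is that the infimum comparison is legitimate, that is, that both distances range over literally the same set of chains with only the weights changing. That holds directly from the definitions, so nothing like \cref{theo:MBS} is needed.
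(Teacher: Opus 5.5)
Your proposal is correct and follows essentially the same route as the paper. You bound $\elo(R)\leq C\,\ell^{\chi}(R)$ relation by relation, sum over a chain, and pass to the infimum; the paper does this through an $\epsilon$-optimal $\chi$-chain rather than a direct comparison of infima, which amounts to the same argument. For the final claim, both you and the paper bound $\omega/\chi$ and $\chi/\omega$ in the same way (equal to $1$ for large $n$, finite on the remaining indices) and apply the first part in both directions.
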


\begin{proof}
    Let $\omega,\chi\colon \N^*\to [1,\infty)$ be two weights such that the sequence $\frac{\omega(n)}{\chi(n)}$ is bounded. There exists $m> 0$ such that, for all $n\in \N^*$, $\omega(n)\leq m \chi(n)$ Now, let $X,Y\in \Ran(M)$ be two configurations, $\epsilon >0$, and $(R_1,\dots,R_n)$ be a chain whose $\chi$-length is at most $d^{\chi}(X,Y)+\epsilon$.
    Then, we compute
    \begin{align*}
        \dom(X,Y)&\leq \elo(R_1,\dots,R_n)\\
        &=\sum_{k=1}^n\omega(\card(R_k))\ell(R_k)\\
        &\leq \sum_{k=1}^nm\chi(\card(R_k))\ell(R_k)\\
        &=m\ell^{\chi}(R_1,\dots,R_n)\\
        &\leq md^{\chi}(X,Y)+m\epsilon
    \end{align*}
    Since this holds for any $\epsilon >0$, we must have $\dom(X,Y)\leq m d^{\chi}(X,Y)$, which proves that $\Id\colon(\Ran(M),\chi)\to(\Ran(M),\tom)$ is Lipschitz and that {\tchi} is finer than {\tom}.
    
    If $\omega(n)=\chi(n)$ for all but finitely many $n$ then we can define $m_1=\max\{\frac{\omega(n)}{\chi(n)}\mid n\in \N^*\}$ and $m_2=\max\{\frac{\chi(n)}{\omega(n)}\mid n\in \N^*\}$. Thus both $\frac{\omega(n)}{\chi(n)}$ and $\frac{\chi(n)}{\omega(n)}$ are bounded, so that {\tchi} is finer than {\tom} and {\tom} is finer than {\tchi}, i.e. they coincide.
\end{proof}

\begin{proof}[Proof of \cref{prop:Smaller_Weights_continuous}]
    Let $\omega$ and $\chi$ be two weights, and assume that $\omega\preceq \chi$. Then the sequence $\frac{\omega(n)}{\chi(n)}$ is bounded so that, by \cref{lem:Topology_Weights_Bounded}, $\tchi$ refines $\tom$ and $\Ran(\Id)$ is indeed continuous.\end{proof}

\begin{prop}\label{prop:omega_strictement_plus_fine}
    Let $M$ be a length space equipped with a geodesic. Let $\omega,\chi$ two weights. If the sequence $(v_n)_{n\in\N^*}$ defined as
    \begin{align*}
    v_n = \frac{\chi(n)}{\omega(n)}
    \end{align*}
    goes to infinity, then {\tchi} is stricly finer than {\tom}. In particular, for any weight $\omega$, there exists a weight $\chi$ such that {\tchi} is strictly finer than \tom.
    \end{prop}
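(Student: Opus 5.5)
First, note that $v_n\to\infty$ implies in particular that $\omega(n)/\chi(n)$ is bounded. By \cref{lem:Topology_Weights_Bounded}, $\tchi$ therefore refines $\tom$. What remains is to show that the refinement is strict. The plan is to exhibit a sequence that converges in $\tom$ but not in $\tchi$, using \cref{exa:config_convergente} transported into $M$ through \cref{rem:generalization of examples}.

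Since $M$ is equipped with a geodesic, rescale as in \cref{rem:generalization of examples} so that we get an isometric map $f\colon[0,1]\to M$, $t\mapsto x_t$, with $d(x_s,x_t)=|s-t|$. Choose $u_n=1/\sqrt{v_n}$ (for $n$ large, so that $u_n\leq 1$ and the points stay in $[0,1]$). Then $u_n\to 0$ while $u_nv_n=\sqrt{v_n}\to\infty$. Build $X_n=\{x_{n,i}\}\subset[0,1]$ as in \cref{exa:config_convergente} with respect to the weight $\omega$, and set $Y_n=f(X_n)$.

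The upper bound $\dom(Y_n,\{x_0\})\leq u_n$ is obtained directly from the explicit merge chain $(R_n,\dots,R_1)$ of \cref{exa:aligned_points}, pushed forward by $f$. Pushforward preserves lengths because $f$ is an isometry, so no geodesic argument is needed here. Consequently $Y_n\to\{x_0\}$ in $\tom$.

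The harder step is the lower bound for $\dchi(Y_n,\{x_0\})$, since chains in $M$ may leave the image of $f$ and \cref{exa:Telescoping_points_arbitrary_weight} was proved only in $[0,1]$. I would avoid the geodesic-chain computation and instead project. The map $p\colon M\to[0,1]$, $p(y)=\min(1,d(x_0,y))$, is $1$-Lipschitz and satisfies $p\circ f=\mathrm{id}$. Pushing any chain in $M$ forward along $p$ (as in the proof of \cref{lem:local scaling}, which works globally for a $1$-Lipschitz map) does not increase the $\chi$-length. Hence $\dchi(Y_n,\{x_0\})\geq \dchi_{[0,1]}(X_n,\{0\})$. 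In $[0,1]$ the gaps of $X_n$ are non-increasing, so the formula of \cref{exa:Telescoping_points_arbitrary_weight} holds for every weight, in particular for $\chi$. This gives
\[
\dchi(X_n,\{0\})=\sum_{i=1}^n\chi(i)\frac{u_n}{n\,\omega(i)}.
\]
One must check that this sum does not tend to $0$. The terms with $i\geq n/2$ contribute at least $\tfrac12 u_n\min_{i\geq n/2}v_i$, which tends to infinity because $v_i\to\infty$ and $u_n=1/\sqrt{v_n}$. To make this uniform, I would instead take $u_n=1/\sqrt{\min_{i\geq n/2}v_i}$, which still tends to $0$. With this choice, $Y_n$ does not converge to $\{x_0\}$ in $\tchi$, so $\tchi\neq\tom$.

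For the last claim, given any weight $\omega$, take $\chi(n)=n\,\omega(n)$. It is non-decreasing, satisfies $\chi(1)=1$, and $v_n=n\to\infty$. Here the main obstacle is just the lower bound above; the Lipschitz projection $p$ is what lets us reduce to the one-dimensional computation.
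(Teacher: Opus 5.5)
Your proof is correct and uses the same counterexample as the paper: the sequence of \cref{exa:config_convergente}, carried into $M$ along the geodesic, converges to $\{x_0\}$ for $\tom$ but not for $\tchi$. The two places where you deviate are exactly where the paper's own argument is defective, so they are not cosmetic.

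First, the choice of $u_n$. The paper takes $u_n=\omega(n)/\chi(n)=1/v_n$ and writes $\dchi(X_n,\{x_0\})=u_n\sum_{i\le n}v_i\ge u_nv_n=1$. This drops the factor $1/n$ coming from $\epsilon_{n,k}=u_n/(n\omega(k))$. The correct value is, as you compute, $\frac{u_n}{n}\sum_{i=1}^n v_i$, and with $u_n=1/v_n$ it can tend to $0$. For instance, with $\chi(n)=2^{n-1}\omega(n)$ and $M=[0,1]$ it equals $(2^n-1)/(n2^{n-1})\to 0$. So the paper's sequence does not separate the two topologies in general. Your choice $u_n=1/\sqrt{m_n}$, with $m_n=\min_{i\ge n/2}v_i$, gives $\dchi\ge\frac12\sqrt{m_n}\to\infty$, which repairs this.

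Second, the transfer from $[0,1]$ to $M$. The paper moves the exact formula of \cref{exa:Telescoping_points_arbitrary_weight} into $M$ simply by applying the isometric embedding $f$. That only justifies upper bounds in $\mathrm{Ran}(M)$, since chains in $M$ may leave the image of $f$. Your $1$-Lipschitz retraction $p(y)=\min(1,d(x_0,y))$, with $p\circ f=\mathrm{id}$, is precisely what makes the lower bound $\dchi(Y_n,\{x_0\})\ge d^{\chi}_{[0,1]}(X_n,\{0\})$ rigorous.

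One thing to clean up. Your first choice $u_n=1/\sqrt{v_n}$ comes with the claim that $\frac12u_n\min_{i\ge n/2}v_i\to\infty$. That claim is false for fast-growing $v$; for example, with $v_n=2^{n^2-1}$ this quantity tends to $0$. In the final write-up, state only the second choice of $u_n$.
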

    
    \begin{proof}
    Since $\frac{\chi(n)}{\omega(n)}$ goes to infinity, then $\frac{\omega(n)}{\chi(n)}$ is bounded. Thus, by \cref{lem:Topology_Weights_Bounded}, $\tchi$ is finer that $\tom$. Let us show that the two topologies differ.
    For this, let us define the sequence $u_n=\frac{\omega(n)}{\chi(n)}$ which converges to zero.  By \cref{rem:generalization of examples}, we can generalize \cref{exa:config_convergente} to define a sequence $X_n$ of configurations in $\Ran(M)$ that converges towards $\{x_0\}$ for the topology $\tom$, using the sequence $u_n$.
     
    Besides, as shown in \cref{exa:config_convergente}, we have, for any $n\geq 1$,
    \begin{align*}
    \dchi(X_n,\{x_0\})= \sum_{i=1}^n \chi(i)d(x_{i-1},x_i)= u_n \sum_{i=1}^n \frac{\chi(i)}{\omega(i)} \geq u_n \frac{\chi(n)}{\omega(n)}\geq 1
    \end{align*}
    which does not go to zero. Therefore, the sequence $X_n$ does not converge towards $\{x_0\}$ for the $\tchi$ topology, so that these two topologies do not coincide.
    \end{proof}
    
    \begin{rem}\label{rem:Uncountably_many_topologies}
    There exists an uncountable infinity of distinct topologies (we can for example take the geometric progressions of common ratio $q\geq 1$).
    \end{rem}

We also show that sequences of unbounded cardinality cannot converge for all weights.
\begin{prop}
\label{prop:unbounded card does not converge omega}
    Let $(M,d)$ be a metric space and let $(X_k)_{k\in \N}$ be a sequence of elements of $\Ran(M)$. Assume that the sequence of cardinalities $(\card(X_k))_{k\in \N}$ is unbounded, then there exists some weight $\omega$ such that the sequence $(X_k)_{k\in \N}$ is not Cauchy for $\dom$.
\end{prop}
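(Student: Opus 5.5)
Pass to a subsequence and build a weight $\omega$ that grows fast enough that consecutive terms stay far apart in $\dom$. A subsequence of a Cauchy sequence is Cauchy, so it suffices to find a subsequence that is not Cauchy for $\dom$. Extract $(X_{k_j})_j$ with $n_j=\card(X_{k_j})$ strictly increasing. Rename it $(X_j)$.

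The key lower bound is \cref{lem:minorant distance}. If $\card(X)>\card(Y)$, then
\begin{equation*}
\dom(X,Y)\geq \omega(\card(X))\,2\merg(X),
\end{equation*}
where $\merg(X)>0$ since $\card(X)\geq 2$. Set $\delta_j=2\merg(X_j)=\min_{x\neq z\in X_j} d(x,z)>0$ for $j\geq 1$ (so that $n_j\geq 2$). We want $\omega(n_j)\delta_j\geq 1$ for every $j\geq 1$.

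Define the weight inductively along the cardinalities. Put $\omega(n)=1$ for $n<n_1$. For $n_j\leq n<n_{j+1}$, put
\begin{equation*}
\omega(n)=\max\Bigl(\omega(n_j-1),\,\tfrac{1}{\delta_j}\Bigr),
\end{equation*}
which is equivalently $\omega(n)=\max\bigl(1,\max_{i\leq j}1/\delta_i\bigr)$. This is non-decreasing, takes values in $[1,\infty)$, and satisfies $\omega(1)=1$ provided $n_1>1$, which we can arrange by dropping the first term. So $\omega$ is a weight, and $\omega(n_j)\geq 1/\delta_j$.

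Now, for $j\geq 1$ and any $i<j$, we have $\card(X_j)=n_j>n_i=\card(X_i)$. \Cref{lem:minorant distance} then gives $\dom(X_j,X_i)\geq\omega(n_j)\delta_j\geq 1$. Hence for every $j>i\geq 1$ we get $\dom(X_i,X_j)\geq 1$, so the subsequence is not Cauchy. It follows that $(X_k)$ is not Cauchy for $\dom$ either.

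There is no serious obstacle. The only points to check are the monotonicity of the constructed $\omega$ and the normalization $\omega(1)=1$, both handled by taking running maxima and starting at cardinality at least $2$.
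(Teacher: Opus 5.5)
Your proof is correct and follows the paper's argument. The paper also extracts a subsequence with strictly increasing cardinalities, then uses \cref{theo:MBS} to observe that the first step of any chain from $X_k$ to $X_{k-1}$ is a merge costing at least $\omega(n_k)\merg(X_k)$; this is exactly the content of \cref{lem:minorant distance}, and your running-maximum construction of $\omega$ explicitly verifies the weight axioms (monotonicity and $\omega(1)=1$) that the paper leaves implicit.
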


\begin{proof}
    Up to extracting a subsequence, we may assume that $(\card(X_k))_{k\in \N}$ is strictly increasing, and write $n_k$ for the corresponding sequence of cardinalities. Now, for $k\in \N$, let $d_k=\merg(X_k)$. By \cref{theo:MBS}, for any weight $\omega$ and any $k>0$, the distance $\dom(X_k,X_{k-1})$ can be computed from MBS-chains from $X_k$ to $X_{k-1}$. The first step must be a merge, since $n_{k}>n_{k-1}$, and thus the length of the first step is at least $\omega(n_k)d_k$. Now, we see that defining $\omega$ such that for all $k$, $\omega(n_k)\geq \frac{1}{d_k}$ gives a distance for which the sequence $(X_k)$ verifies $\dom(X_k,X_{k-1})\geq 1$ for any $k>0$, which cannot be Cauchy.
\end{proof}

\subsection{Comparison to classical topologies}
\label{subsec:Final_limite}
We now compare the weighted topologies to the previously known Hausdorff and final topologies. We first show that the weighted topologies lie in between the Hausdorff and the final topology.

\begin{prop}
\label{prop:tom raffine tH}
    Let $\omega$ be a weight. The topology $\tom$ is finer that $\tH$.
\end{prop}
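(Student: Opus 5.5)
The plan is to deduce this directly from \cref{prop:dH leq dom}, which gives $\dH(X,Y)\leq\dom(X,Y)$ for all $X,Y\in\Ran(M)$. In other words, the identity map $(\Ran(M),\dom)\to(\Ran(M),\dH)$ is $1$-Lipschitz, hence continuous. Continuity of the identity in this direction is exactly the statement that every $\tH$-open set is $\tom$-open, that is, $\tom$ is finer than $\tH$.

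To make this explicit in terms of open sets, I would take $U\subset\Ran(M)$ open for $\tH$ and a point $Y\in U$. There is then some $r>0$ with $\BH(Y,r)\subset U$. By \cref{prop:dH leq dom} we have $\Bom(Y,r)\subset\BH(Y,r)\subset U$. Hence every point of $U$ has a $\dom$-ball around it contained in $U$, so $U$ is open in $\tom$.

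I do not expect any real obstacle, since the inequality $\dH\leq\dom$ has already been established. The only point to check is that \cref{prop:dH leq dom} holds for every weight, which it does: its proof uses only $\omega\geq 1$ and surjectivity of the relations in a chain to follow a single point along the chain.
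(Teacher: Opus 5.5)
Your proposal is correct and matches the paper's argument: the paper also deduces the statement directly from \cref{prop:dH leq dom}, i.e.\ from $\dH\leq\dom$, which gives $\Bom(Y,r)\subset\BH(Y,r)$ for every ball. Your explicit open-set argument and your remark that the inequality uses only $\omega\geq 1$ and surjectivity of the relations are both accurate.
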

\begin{proof}
    This a direct consequence of \cref{prop:dH leq dom}.
\end{proof}

\begin{prop}
\label{tpi rafine tom}
    Let $\omega$ be a weight. The topology $\tpi$ is finer that $\tom$. 
\end{prop}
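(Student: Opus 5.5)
To show that $\tpi$ refines $\tom$, I will show that every open ball $\Bom(X,r)$ is open in $\tpi$. By the definition of the final topology, this amounts to checking that $\pi_n^{-1}(\Bom(X,r))$ is open in $M^n$ for every $n\geq 1$. Equivalently, it suffices to show that each map $\pi_n\colon (M^n,\dsum)\to(\Ran(M),\dom)$ is continuous. In fact I expect it to be Lipschitz with constant $\omega(n)$.

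The key estimate is the following. Let $\underline{x},\underline{y}\in M^n$, and set $X=\pi_n(\underline{x})$ and $Y=\pi_n(\underline{y})$. The relation $R_{\underline{x},\underline{y}}\subset X\times Y$ defined before \cref{lem:Relations_lift} is surjective, since every element of $X$ is some $x^{(i)}$, which is related to $y^{(i)}$, and symmetrically for $Y$. Its cardinality is at most $n$, because it is the image of $\{1,\dots,n\}$. Each pair $(x,y)\in R_{\underline{x},\underline{y}}$ can be written as $(x^{(i)},y^{(i)})$ for at least one index $i$, so
\[
\ell(R_{\underline{x},\underline{y}})\leq \sum_{i=1}^n\dm(x^{(i)},y^{(i)})=\dsum(\underline{x},\underline{y}).
\]
Since $\omega$ is non-decreasing, this gives
\[
\dom(X,Y)\leq \elo(R_{\underline{x},\underline{y}})\leq \omega(n)\,\dsum(\underline{x},\underline{y}).
\]

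Openness then follows directly. Take $\underline{y}\in\pi_n^{-1}(\Bom(X,r))$ and set $\epsilon=r-\dom(X,\pi_n(\underline{y}))>0$. By the triangle inequality, the $\dsum$-ball of radius $\epsilon/\omega(n)$ around $\underline{y}$ is mapped into $\Bom(X,r)$. Since $\dsum$ induces the product topology, $\pi_n^{-1}(\Bom(X,r))$ is open in $M^n$. This holds for every $n$, so $\Bom(X,r)$ is open in $\tpi$. The open balls form a basis of $\tom$, hence $\tom\subset\tpi$.

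The only point needing care is the cardinality bound $\card(R_{\underline{x},\underline{y}})\leq n$ when $\underline{x}$ or $\underline{y}$ has repeated coordinates. With that bound, monotonicity of $\omega$ applies and the rest is immediate.
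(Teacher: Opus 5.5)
Your proof is correct and follows the paper's argument. The paper likewise shows that each ball $\Bom(X,\epsilon)$ is $\tpi$-open: it verifies that $\pi_n^{-1}(\Bom(X,\epsilon))$ contains the $\dsum$-ball of radius $(\epsilon-\dom(X,Y))/\omega(n)$ around each of its points, using the one-step bound $\dom(\pi_n(\underline{z}),\pi_n(\underline{y}))\leq\omega(n)\dsum(\underline{z},\underline{y})$ (later isolated as \cref{lem:pi omega Lipschitz}), and your check that $\card(R_{\underline{x},\underline{y}})\leq n$ and $\ell(R_{\underline{x},\underline{y}})\leq\dsum(\underline{x},\underline{y})$ under repeated coordinates makes explicit a step the paper leaves implicit.
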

\begin{proof}
    Let $X\in\Ran(M)$ and $\epsilon>0$ and let us show that $\Bom(X,\epsilon)$ is open in {\tpi}. Let us fix $n\in\N^*$, and let us show that $\pi_n^{-1}(\Bom(X,\epsilon))$ is an open set of $M^n$. Let $\underline{y}=(y^{(1)},\dots,y^{(n)})\in\pi_n^{-1}(\Bom(X,\epsilon))$, $Y=\pi_n(\underline{y})$, and let us define $\delta=\frac{\epsilon-\dom(X,Y)}{\omega(n)}$. Let $\underline{z}=(z^{(1)},\dots,z^{(n)})\in M^n$ such that $\dsum(\underline{y},\underline{z})<\delta$. We have
    \begin{align*}
        \dom(\pi_n(\underline{z}),X)&\leq \dom(\pi_n(\underline{z}),Y)+\dom(Y,X)\\
        &\leq \sum_{j=1}^n \omega(n)\dm(z^{(j)},y^{(j)}) +\dom(Y,X)\\
        &\leq \omega(n) \dsum(\underline{z},\underline{y}) + \dom(Y,X)\\
        &< \omega(n)\delta  + \dom(Y,X)\\
        & <\epsilon.
    \end{align*}
    So that $\pi_n(\underline{z})\in \Bom(X,\epsilon)$ and thus $\Bsum(\underline{y},\delta)\subset \pi_n^{-1}(\Bom(X,\epsilon))$. This proves that $\pi_n^{-1}(\Bom(X,\epsilon))$ is indeed open for any $n\geq 1$ and thus that $\Bom(X,\epsilon)$ is open in \tpi.
\end{proof}

We now show that in the case where $M$ is locally compact then the final topology is the limit of the weighted topologies.
\begin{theo}
\label{theo:topologie limite}
    If the metric space $(M,d)$ is locally compact, then the functor $\Ran^{-}(M)$ defined in \cref{defin:Functor_Topology} admits $(\Ran(M),\tpi)$ as a limit.
\end{theo}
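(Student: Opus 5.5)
The limit of the functor $\Ran^{-}(M)\colon \W^{\op}\to\Top$ has as underlying set $\Ran(M)$, with the coarsest topology finer than every $\tom$. That is the topology generated by the union of all $\tom$. The preorder $\W$ is directed, since $\max(\omega,\chi)$ is again a weight and refines both by \cref{lem:Topology_Weights_Bounded}. Hence this union is already a basis, namely the balls $\Bom(X,r)$. By \cref{tpi rafine tom} the identity from $(\Ran(M),\tpi)$ to each $(\Ran(M),\tom)$ is continuous, so $\tpi$ is at least as fine as the limit topology. The whole proof therefore reduces to the converse: for every $\tpi$-open $U$ and every $X\in U$, I will construct a weight $\omega$ and some $r>0$ with $\Bom(X,r)\subset U$.

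Fix $X\in U$ with $\card(X)=n_0$. By local compactness, choose $0<r_0\leq \merg(X)/2$ such that every closed ball $\overline{B}(x,2r_0)$, $x\in X$, is compact, and put $K=\bigcup_{x\in X}\overline{B}(x,2r_0)$. By \cref{lem:boules omega locales}, for any weight and any $r\leq r_0$, distances between points of $\Bom(X,r)$ can be computed on $\BH(X,2r)$-local MBS-chains. All intermediate configurations of such chains lie in $K$. Moreover, by \cref{rem:card local} they have cardinality at least $n_0$, so no merges occur. Any $Y\in\Bom(X,r)$ with $\card(Y)=n$ is therefore approached by chains consisting of a bijection at cardinality $n_0$ followed by simple splits at cardinalities $n_0+1,\dots,n$. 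The split at cardinality $k$ costs $\omega(k)$ times the distance moved.

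The weight is built inductively, together with nested compact sets $C_k\subset U\cap\Ran_{\leq k}(K)$ that are neighbourhoods of $X$ relative to the truncation $\Ran_{\leq k}(K)$. Each truncation $\Ran_{\leq k}(K)=\pi_k(K^k)$ is compact, and $U\cap\Ran_{\leq k}(K)$ is relatively open because $\pi_k^{-1}(U)$ is open. For $k=n_0$, take $C_{n_0}$ to be a small closed $\dH$-ball around $X$ inside the truncation, set $\omega(k)=1$ for $k\leq n_0$, and choose $r$ so that the bijection steps stay in $C_{n_0}$. Given $C_{k-1}$, use compactness of $C_{k-1}\times K$ and continuity of $\pi_k$ to obtain a Lebesgue-type $\eta_k>0$. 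It should guarantee that any simple split of a configuration within $\eta_k$ of $C_{k-1}$ (in total displacement) lands in a compact set $C_k\subset U$, which contains $C_{k-1}$. Then choose $\omega(k)\geq\max(\omega(k-1),\, r/\eta_k)$.

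With this choice, a chain of $\omega$-length less than $r$ moves points at cardinality $k$ by total distance less than $\eta_k$. By induction every intermediate configuration stays in $C_k\subset U$, hence $\Bom(X,r)\subset U$.

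The main obstacle is the inductive step: the margins $\eta_k$ must be uniform over a compact set. The difficulty is that consecutive splits accumulate error, so the sets $C_k$ must be chosen so that errors at cardinality $k$ do not push configurations out of the region controlled at the next stage. I would handle this by letting $C_k$ be the closed set of all configurations reachable from $C_{k-1}$ by splits of displacement at most $\eta_k$. Compactness of $C_k$ follows because it is the image of a compact subset of $C_{k-1}\times K$ under a continuous map. The smallness of $\eta_k$ comes from $U\cap\Ran_{\leq k}(K)$ being open and containing the compact set $C_{k-1}$. This is exactly where local compactness is used.
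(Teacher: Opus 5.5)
Your proof is correct and runs on the same engine as the paper's. The weight is built one cardinality at a time, with $\omega(k)$ large compared to a compactness margin at level $k$, so that the step of an MBS-chain sitting at that level is forced to be small.

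The organisation differs in two respects. First, the paper treats compact $M$ and then reduces to it. It shows $\Bom_M(X,\epsilon)=\Bom_K(X,\epsilon)$ for a compact neighbourhood $K$ via the triangle-inequality \cref{lem:Omega_distance_2epsilon_ball}, and excludes cardinalities below $\card(X)$ with \cref{lem:minorant distance}. You work in $M$ directly and let the stronger locality statement \cref{lem:boules omega locales} do both jobs at once: it confines the chain to $\Ran(K)$ and, via \cref{rem:card local}, rules out merges below $\card(X)$.

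Second, the paper's invariant is $\Bom_{\leq k}(X,\epsilon)\subset\pi_k(K_k)\subset U$. Its inductive step only uses the \emph{first} merge of an MBS-chain from $Y$ towards $X$, which lands in $\Bom_{\leq k}(X,\epsilon)$, so nothing about the rest of the chain has to be tracked. You follow the whole chain outward from $X$. This works precisely because a local simple MBS-chain starting at $X$ has one bijection at level $\card(X)$ and exactly one split per higher level, so errors cannot pile up within a level.

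One point should be tightened. $C_k$ is not naturally a continuous image of $C_{k-1}\times K$, since a split requires choosing which point splits and two new positions. Simply take $C_k=\{Z\in\Ran_{\leq k}(K)\mid \dH(Z,C_{k-1})\leq\eta_k\}$. It is compact as a closed subset of $\Ran_{\leq k}(K)$. It lies in $U$ for small $\eta_k$, because $C_{k-1}$ is compact and $U\cap\Ran_{\leq k}(K)$ is relatively open for $\tH$ (\cref{prop:topologie_troncation}). It contains every split of an element of $C_{k-1}$ with displacement at most $\eta_k$, because such a split moves the Hausdorff distance by at most its displacement.
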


\begin{rem}
The result of \cref{theo:topologie limite} actually induces a uniformity on $(\Ran(M),\tpi)$. We detail this point of view in the following section.
\end{rem}

\begin{proof}
    Given that all maps in the diagram $\Ran^{-}(M)$ are underlied by identities, one needs to show that any map $f\colon N\to \Ran(M)$ is continuous for the final topology if and only if it is continuous for $\tom$ for all $\omega$. One direction is clear, since the map $(\Ran(M),\tpi)\to (\Ran(M),\tom)$ is always continuous. For the reciprocal, it will be sufficient to show that the collection of open balls for all weights form a basis for the final topology.
    
    To show this, let us first assume that $M$ is compact. Let $U\subset \Ran(M)$ be an open for the final topology, and $X\in U$. We need to exhibit a weight $\omega$ and some $\epsilon> 0$ such that $\Bom(X,\epsilon)\subset U$. We will proceed inductively. Given a weight $\omega$, $\epsilon >0$ and $k\geq 1$, let us denote $\Bom_{\leq k}(X,\epsilon)$ for the intersection $\Bom(X,\epsilon)\cap \Ran_{\leq k}(M)$. Observe that, to define $\Bom_{\leq k}(X,\epsilon)$ one needs only know the value of $\omega$ up to $k$. In particular, we may construct $\omega$ inductively, and prove that $\Bom_{\leq k}(X,\epsilon)\subset U$ before constructing all of $\omega$. We will proceed as follows. 

    Let $n=\card(X)$. The case $n=1$ is somewhat of a special case, so we leave it for the end. Assume $n\geq 2$, and set $\epsilon =\min_{x\not= y\in X}\{d(x,y)\}$. Then, if $Y\in \Ran(M)$ is such that $\dom(X,Y)<\epsilon$, for some weight $\omega$, then we must have $\card(Y)\geq n$ by \cref{lem:minorant distance}. Thus, we see that whatever the weight $\omega$ may be, we will have $\Bom_{\leq k}(X,\epsilon)=\emptyset$, for any $k<n$. For convenience, we may thus define $\omega(n-1)=\dots = \omega(1)=1$. 
    Now, we will construct subsets $K_k\subset M^k$, together with $\omega(k)$, such that, for all $k\geq n$.
    \begin{itemize}
        \item $K_k$ is compact
        \item $\Bom_{\leq k}(X,\epsilon)\subset \pi_k(K_k)\subset U$
        \item $\pi_k(K_k)\subset \pi_{k+1}(K_{k+1})$.
    \end{itemize}
    Which will give the claim, since the inclusion $\Bom_{\leq k}(X,\epsilon)\subset U$ for all $k$ gives $\Bom(X,\epsilon)\subset U$.

    For $k=n$, let $\underline{x}\in M^n$ be such that $\pi_n(\underline{x})=X$. Since $U$ is open in the final topology, $\pi_{n}^{-1}(U)$ must be an open subset of $M^n$, containing $\underline{x}$. 
    We will denote it $U_n$ from now on for simplicity. Thus, there exists some $\epsilon_n>0$ such that $\overline{\Bsum}(\underline{x},\epsilon_n)\subset U_n$. 
    Set $K_n=\overline{\Bsum}(\underline{x},\epsilon_n)$. It is indeed compact since it is a closed subset of the compact space $M^n$ (recall that we assumed $M$ to be compact for now).
    Set $\omega(n)=\max\{\frac{\epsilon}{\epsilon_n},1\}$, and let $Y\in \Bom_{\leq n}(X,\epsilon)$. We have already seen that we must have $\card(Y)=n$, and the distance between $X$ and $Y$ can be computed through bijections only. But since there are finitely many of those, there must be some bijection $\phi\colon X\to Y$ such that
    \begin{equation*}
        \dom(X,Y)=\omega(n)\sum_{x\in X}d(x,\phi(x))<\epsilon
    \end{equation*}
    From which we deduce that 
    \begin{equation*}
        \sum_{x\in X}d(x,\phi(x))<\frac{\epsilon}{\omega(n)}\leq \epsilon_n
    \end{equation*}
    On the other hand, from $\phi$, we deduce a preferred lift of $Y$ to $M^n$: $\underline{y}=(\phi(x^{(1)}),\dots,\phi(x^{(n)}))$, and thus we have
    \begin{equation*}
        \dsum(\underline{x},\underline{y})=\sum_{x\in X}d(x,\phi(x))\leq \epsilon_n
    \end{equation*}
    This proves that $Y=\pi_n(\underline{y})\in \pi_n(K_n)$, and thus that $\Bom_{\leq n}(X,\epsilon)\subset \pi_n(K_n)$.

    Now, assume that the $K_i$ and $\omega(i)$ have all been constructed up to some value $k$, and let us extend the construction to $k+1$. First, observe that as the image of a compact, $\pi_k(K_k)$ is compact, thus closed in $\Ran(M)$ for any weighted topology. In particular, $C=\pi_{k+1}^{-1}(\pi_k(K_k))$ must be a closed subset of $M^{k+1}$ thus a compact subspace, since $M^{k+1}$ is compact by assumption. Let us now consider the following subspaces of $M^{k+1}$ for $i\geq 1$
    \begin{equation*}
        C_i=\{\underline{y}\in M^{n+1}\mid \dsum(\underline{y},C)\leq \frac{1}{i}\}
    \end{equation*}
    By construction, the $C_i$ are closed, thus compact, and we have $\cap_{i\geq 1} C_i=C$. Furthermore, $C\subset U_{k+1}$ since by hypothesis, $\pi_k(K_k)\subset U$. Thus, there must exist some $i\geq 1$ such that $C_i\subset U_{k+1}$. We set $K_{k+1}=C_i$ and $\omega(k+1)=\max\{i\epsilon,\omega(k)\}$. $K_{k+1}$ is compact, and satisfies $\pi_{k+1}(K_{k+1})\subset U$ by construction. Furthermore, we have $\pi_{k+1}^{-1}(\pi_k(K_k))\subset K_{k+1}$, thus $\pi_k(K_k)\subset \pi_{k+1}(K_{k+1})$. Finally, let $Y\in \Bom_{\leq k+1}(X,\epsilon)$. If $\card(Y)\leq k$, then $Y\in \pi_k(K_k)\subset \pi_{k+1}(K_{k+1})$. Otherwise, $\card(Y)=k+1$. Consider a MBS-chain between $Y$ and $X$, $(R_0,\dots,R_l)$, such that $\elo(R_0,\dots,R_l)<\epsilon$. Since $\card(Y)>\card(X)$, the first step must be a merge, giving an intermediate configuration $Z$, with $\card(Z)\leq k$. We thus have
    \begin{align*}
        \dom(Y,X)\leq \dom(Y,Z)+\dom(Z,X)<\epsilon
    \end{align*}
    In particular, $\dom(Z,X)<\epsilon$ and thus $Z\in \Bom_{\leq k}(X,\epsilon)\subset \pi_k(K_k)$. On the other hand, if we chose a lift $\underline{y}\in M^{k+1}$ of $Y$ and let $f\colon Y\to Z$ stand for the map underlying the merge $R_0$, we may take $\underline{z}=(f(y^{(1)}),\dots,f(y^{(k+1)}))$ as a lift for $Z$. This gives 
    \begin{align*}
        \elo(R_0)=\omega(k+1)\sum_{y\in Y}d(y,f(y))=\omega(k+1)\dsum(\underline{y},\underline{z})<\epsilon
    \end{align*}
    From which we deduce that 
    \begin{equation*}
        \dsum(\underline{y},\underline{z})< \frac{\epsilon}{\omega(k+1)}
    \end{equation*}
    Since we already know that $Z\in \pi_k(K_k)$ this gives $\underline{y}\in K_{k+1}$ and thus $Y\in \pi_{k+1}(K_{k+1})$, which proves that $\Bom_{\leq k+1}(X,\epsilon)\subset \pi_{k+1}(K_{k+1})$.
    This completes the induction step, and we have thus constructed $\omega$ and $\epsilon$ such that $\Bom(X,\epsilon)\subset U$ in the case where the ambient space $M$ is compact and $\card(X)\geq 2$.

    If $\card(X)=1$, the first step in the induction needs to be changed. But in that case, there is some $x\in M$ such that $X=\{x\}$. We thus have, $U_1=\pi_1^{-1}(U)\subset M$ is an open subset containing $x$, thus there exists some $\epsilon>0$ such that $B(x,\epsilon)\subset \overline{B}(x,\epsilon)\subset U_1$, and we may take $K_1=\overline{B}(x,\epsilon)$. The rest of the proof goes through unchanged.

    Finally, if $M$ is not compact, but only locally compact. Let $X\in U\subset \Ran(M)$ be a point in an open subset for the final topology. Then, $X=\{x^{(1)},\dots,x^{(n)}\}$ for some points $x^{(i)}\in X$. Since $M$ is locally compact, each of those points admits a compact neighborhood $K_i\subset M$ for $1\leq i\leq n$. Fix in addition, for each $1\leq i\leq n$ some open neighborhood of $x^{(i)}$, $V_i$ such that $V_i\subset K$, and such that $V_i=B(x^{(i)},\epsilon_i)$, for some $\epsilon_i>0$. Now consider the compact subset $K=\cup_i K_i\subset M$, and the subset $V=\cup_iV_i\subset K$ which is open, both in $K$ and in $M$, and let $U'=U\cap \Ran(V)\subset \Ran(M)$. It is an open subset of $\Ran(M)$ since $\Ran(V)$ is one (indeed, it is an open subset of $\Ran(M)$ for the Hausdorff topology). 

    Now, we need to distinguish \textit{a priori} between open balls computed in $\Ran(M)$ and open balls computed in $\Ran(K)$, since the distance between two configurations depends on the ambient space. Let us denote $\Bom_M(X,\epsilon)$ and $\Bom_K(X,\epsilon)$ for the balls computed in $\Ran(M)$ and $\Ran(K)$ respectively. Consider $\epsilon=\frac{1}{2}\min\{\epsilon_i\mid 1\leq i\leq n\}$. Now, observe that the Hausdorff distances coincide, whether they are computed in $\Ran(M)$ or in $\Ran(K)$, and that by construction, $\BH(X,2\epsilon)\subset \Ran(K)$. Let us show the equality $\Bom_M(X,\epsilon)=\Bom_K(X,\epsilon)$. If $Y\in \Bom_K(X,\epsilon)$, we have $Y\in \Ran(M)$, and $\dom_M(X,Y)\leq\dom_K(X,Y)<\epsilon$, which gives the inclusion $\Bom_K(X,\epsilon)\subset \Bom_M(X,\epsilon)$. On the other hand, if $Z\in \Bom_M(X,\epsilon)$ , then $Z\in \BH(X,2\epsilon)\subset \Ran(K)$. Furthermore, $\dom_M(X,Z)$ can be computed in $\BH(X,2\epsilon)\subset \Ran(K)$, by \cref{lem:Omega_distance_2epsilon_ball}, thus we have $\dom_M(X,Z)=\dom_K(X,Z)$, and the inclusion $\Bom_M(X,\epsilon)\subset \Bom_K(X,\epsilon)$.
    
    We can now apply the already proven compact case to $U'\subset \Ran(K)$. There exists $\epsilon'>0$, and we may assume $\epsilon'<\epsilon$, and $\omega$ such that $\Bom_K(X,\epsilon')\subset U'$. By the above discussion, we have $\Bom_M(X,\epsilon')=\Bom_K(X,\epsilon')$, thus we have
    \begin{equation*}
        \Bom_M(X,\epsilon')\subset U'\subset U
    \end{equation*}
    Which is precisely what we set out to prove.
\end{proof}

We can now use the weighted distances and topologies to re-show some properties of the final topology. In particular, we first re-show that only the sequence of bounded cardinality can converge.

\begin{prop}\label{prop:convergence_cardinal_borne_tpi}
    Let $M$ be a metric space. Let $(X_n)_{n\in\N}$ a sequence in $\Ran(M)$ such that $\card(X_n)$ is unbounded. Then $(X_n)_{n\in\N}$ does not converge in $(\Ran(M),\tpi)$.
\end{prop}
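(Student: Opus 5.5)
The plan is to re-derive this from the weighted distances rather than from the colimit description, as the sentence preceding the statement suggests. The two ingredients are \cref{prop:unbounded card does not converge omega}, which produces a weight making the sequence non-Cauchy, and \cref{tpi rafine tom}, which says that the identity $(\Ran(M),\tpi)\to(\Ran(M),\tom)$ is continuous for every weight $\omega$.

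The argument runs by contradiction. Suppose $(X_n)$ converges to some $X$ in $(\Ran(M),\tpi)$. Since $\card(X_n)$ is unbounded, \cref{prop:unbounded card does not converge omega} gives a weight $\omega$ for which $(X_n)$ is not Cauchy for $\dom$. By \cref{tpi rafine tom}, continuity of the identity map then forces $X_n\to X$ in the metric space $(\Ran(M),\dom)$. A sequence converging in a metric space is Cauchy for that metric, so we have a contradiction.

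One subtlety needs care. \cref{prop:unbounded card does not converge omega} is proved after passing to a subsequence along which the cardinalities strictly increase, so what it really shows is that some subsequence is not Cauchy for the constructed $\omega$. The inequality obtained there is $\dom(X_{k},X_{k-1})\geq 1$ along that subsequence. To use it, I will apply the contradiction argument to the subsequence itself. If $(X_n)$ converged in $\tpi$, every subsequence would converge to the same limit $X$, and hence would converge in $\dom$ and be $\dom$-Cauchy. That contradicts the consecutive-distance bound $\dom(X_{k},X_{k-1})\geq 1$.

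This adjustment is the only point that needs checking; the rest is a formal combination of the two earlier results. No hypothesis on $M$ such as local compactness is needed, since \cref{theo:topologie limite} is not used, only the easy refinement \cref{tpi rafine tom}.
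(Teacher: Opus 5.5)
Your proof is correct and is the paper's argument: a weight $\omega$ from \cref{prop:unbounded card does not converge omega} makes $(X_n)$ non-convergent for $\tom$, and since $\tpi$ refines $\tom$ (\cref{tpi rafine tom}) it cannot converge for $\tpi$ either. The subsequence adjustment is harmless but unnecessary, because a sequence with a non-Cauchy subsequence is itself non-Cauchy, so the cited proposition applies to the whole sequence as stated.
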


\begin{proof}
    By \cref{prop:unbounded card does not converge omega}, there exists a weight $\omega$ such that $(X_n)$ does not converge for \tom. Since \tpi is finer that \tom, the sequence $(X_n)$ does not converge for \tpi.
\end{proof}

Finally, \cref{theo:topologie limite} gives an interesting interpretation for the observation that the topology $\tpi$ is not first countable, at least when the underlying space $M$ is suitably nice: given an arbitrary countable collection of open neighborhoods $(U_i)$, a diagonalization argument allows one to construct neighborhoods which contain none of the $U_i$.
\begin{prop}\label{cor:base_denombrable_voisinage}
    Let $M$ be a locally compact length space equipped with a geodesic, then $(\Ran(M),\tpi)$ is not first countable.
\end{prop}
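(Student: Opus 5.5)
We argue by contradiction, using \cref{theo:topologie limite} together with a diagonal choice of weights. Fix $x_0\in M$, the endpoint of the geodesic $\gamma$ of \cref{rem:generalization of examples}, and suppose $(U_i)_{i\in\N}$ is a countable neighborhood basis of $\{x_0\}$ in $(\Ran(M),\tpi)$.

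Since $M$ is locally compact, \cref{theo:topologie limite} says the weighted balls form a basis of $\tpi$. So for each $i$ there are a weight $\omega_i$ and a radius $\epsilon_i>0$ with $\Bom_i(\{x_0\},\epsilon_i)\subset U_i$. It therefore suffices to produce a weight $\chi$ such that, for every $i$, the ball $\Bom_i(\{x_0\},\epsilon_i)$ is \emph{not} contained in $\Bchi(\{x_0\},1)$. The latter is a $\tpi$-open neighborhood of $\{x_0\}$ by \cref{tpi rafine tom}, so then no $U_i$ is contained in it, contradicting the assumption that $(U_i)$ is a basis.

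\textbf{Step 1: the diagonal weight.} Set
\[
\chi(n)=n\max_{i\leq n}\omega_i(n).
\]
This is non-decreasing and takes values in $[1,\infty)$. The value at $n=1$ may differ from $1$, but we can set $\chi(1)=1$; by \cref{lem:Topology_Weights_Bounded} changing finitely many values does not affect the topology, and we keep the inequality $\chi(n)\geq n\,\omega_i(n)$ for all $n\geq \max(i,2)$. Consequently $\chi(n)/\omega_i(n)\to\infty$ for every $i$.

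\textbf{Step 2: a witness inside each ball.} For each fixed $i$, transport \cref{exa:config_convergente} along $\gamma$, as in \cref{rem:generalization of examples}, using the weight $\omega_i$ and the sequence $u_n=\omega_i(n)/\chi(n)\to 0$. This is exactly the argument of \cref{prop:omega_strictement_plus_fine}. It gives configurations $X_n$ with $d^{\omega_i}(X_n,\{x_0\})=u_n\to 0$, while
\[
\dchi(X_n,\{x_0\})\geq u_n\,\frac{\chi(n)}{\omega_i(n)}=1 .
\]
The last estimate uses the telescoping computation of \cref{exa:Telescoping_points_arbitrary_weight}, which still applies because the gaps $\epsilon_{n,k}$ are non-increasing in $k$. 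For $n$ large, $X_n$ therefore lies in $\Bom_i(\{x_0\},\epsilon_i)\subset U_i$ but outside $\Bchi(\{x_0\},1)$. This is what Step 1 required.

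\textbf{Main obstacle.} The only delicate point is that the lower bound on $\dchi$ must come from an exact computation, not merely an upper bound. Here it relies on the geodesic hypothesis, which embeds the configurations of \cref{exa:config_convergente} isometrically into $M$, so that the exact telescoping formula for weighted distances in $[0,1]$ transfers to $M$ via \cref{rem:generalization of examples}. One should check that distances computed in $M$ are no smaller than those in the segment $\gamma$. If that transfer is doubtful, an alternative is a cruder lower bound via \cref{lem:minorant distance} applied to the first merge step, choosing the points $X_n$ with spacing comparable to $u_n/(n\,\omega_i(n))$.
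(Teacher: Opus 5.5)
Your overall architecture is the paper's own: the same diagonal weight, \cref{theo:topologie limite}, and witnesses from \cref{exa:config_convergente} transported along the geodesic. However, the key estimate in Step~2 is off by a factor of $n$. You inherit this slip from the displayed computation in the proof of \cref{prop:omega_strictement_plus_fine}, on which the paper's proof also relies.

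The gaps are $\epsilon_{n,k}=u_n/(n\,\omega_i(k))$, so the telescoping formula of \cref{exa:Telescoping_points_arbitrary_weight} gives
\[
\dchi(X_n,\{x_0\})=\sum_{k=1}^n\chi(k)\,\epsilon_{n,k}=\frac{u_n}{n}\sum_{k=1}^n\frac{\chi(k)}{\omega_i(k)} .
\]
Keeping only the last term yields $\dchi(X_n,\{x_0\})\geq 1/n$, not $\geq 1$. This is not merely a weak bound. With $u_n=\omega_i(n)/\chi(n)$ the right-hand side equals $\frac1n\sum_{k\leq n}v_k/v_n$ with $v_k=\chi(k)/\omega_i(k)$, and this tends to $0$ whenever $v$ grows geometrically. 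For example, take $i=0$, $\omega_0\equiv 1$ and $\omega_j(n)=2^{n-1}$ for $j\geq 1$; then $\chi(k)=k2^{k-1}$ and $\dchi(X_n,\{x_0\})\sim 2/n$. Since the $\omega_i$ are imposed by the putative basis, your witnesses may end up inside $\Bchi(\{x_0\},1)$.

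The repair is to normalize by the whole sum. Take $u_n=n/S_n$ with $S_n=\sum_{k=1}^n\chi(k)/\omega_i(k)$. Since $\chi(k)\geq k\,\omega_i(k)$ for $k\geq i$, $S_n$ grows at least quadratically. Hence $d^{\omega_i}(X_n,\{x_0\})=u_n\to 0$ while $\dchi(X_n,\{x_0\})=1$ exactly. Also $x_{n,n}\leq u_n\leq 1$ for large $n$, so the points stay on the unit segment. The same normalization repairs \cref{prop:omega_strictement_plus_fine} in general, since Ces\`aro means of a sequence $\chi/\omega\to\infty$ also tend to $\infty$.

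Your transfer worry is legitimate, because \cref{rem:generalization of examples} only asserts it, but it is easily settled. The map $\rho\colon M\to[0,1]$, $\rho(y)=\min\{d(x_0,y),1\}$, is $1$-Lipschitz and satisfies $\rho\circ f=\mathrm{id}_{[0,1]}$ for the arc-length parametrization $f$ of that remark. Pushing a combinatorial chain forward along a $1$-Lipschitz map never increases its $\omega$-length: relation cardinalities can only drop and $\omega$ is non-decreasing. So the Ran space of the segment embeds isometrically into $(\Ran(M),\dom)$ for every weight.

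Finally, your fallback via \cref{lem:minorant distance} does not work with your $\chi$. It only gives $\dchi(X_n,\{x_0\})\geq\frac{\chi(n)}{n\,\omega_i(n)}\,u_n$, and $\chi(n)/(n\,\omega_i(n))$ need not tend to infinity (it equals $1$ when all the $\omega_j$ coincide). With your $u_n$ this bound is again $1/n$. The fallback does work if you enlarge the diagonal weight to $\chi(n)=n^2\max_{j\leq n}\omega_j(n)$ and take $n$ points at spacing $1/\chi(n)$ along the geodesic. The sequential-merge chain then gives $d^{\omega_i}\leq n\,\omega_i(n)/\chi(n)\leq 1/n$, and \cref{lem:minorant distance} gives $\dchi\geq 1$. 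This needs no exact computation and no transfer argument.
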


\begin{proof}
Let $X_0=\{x_0\}\in\Ran(M)$ be a one point configuration in $\Ran(M)$, and $(U_i)_{i\in \N}$ a countable collection of neighborhoods of $X_0$. Let us show that it cannot constitute a basis of neighborhoods.

By \cref{theo:topologie limite}, for any $i\in \N$, there exist a weight $\omega_i$ and $\epsilon_i>0$ such that $B^{\omega_i}(X_0,\epsilon_i)\subset U_i$. We then define a weight $\chi$ as
    \begin{equation*}
        \chi(k)=\max\{{\omega_i(k)k\mid i\leq k}\},
    \end{equation*}
for all $k\geq 1$. By construction, for all $i\in \N$, and for all $k\geq i$, we have 
    \begin{equation*}
        \frac{\chi(k)}{\omega_i(k)}\geq k
    \end{equation*}
so that, by \cref{prop:omega_strictement_plus_fine}, the topology $\tchi$ is strictly finer that $\tau^{\omega_i}$, for all $i\in \N$. We deduce that for any $\epsilon >0$, we have for all $i\in \N$:
    \begin{equation*}
        B^{\omega_i}(X_0,\epsilon_i)\not\subset B^{\chi}(X_0,\epsilon).
    \end{equation*}
    Indeed, using \cref{exa:config_convergente}, we can construct points arbitrarily close to $X_0$ for $d^{\omega_i}$ which are arbitrarily far away from $X_0$ for $d^{\chi}$. 
We conclude that $(U_i)_{i\in \N}$ is not a basis of neighborhoods of $X_0$.

\end{proof}

\section{Completeness of the final topology}
\label{sec:completeness}
The previous \cref{theo:topologie limite} is worth interpreting from the perspective of uniform spaces \cite{Bourbaki-topology,Guenard-Lelievre-uniform-spaces}: it directly allows to construct a uniformity on $(\Ran(M),\tpi)$ by defining entourages from the weighted distances (see \cref{def:entourages Ran}). Therefore, even if $(\Ran(M),\tpi)$ is not metrizable, we can still meaningfully discuss its completeness. However, the notion is a bit more subtle for uniform spaces since it does not only require Cauchy sequences to converge, but also Cauchy filters. The convergence of Cauchy sequences is actually a direct consequence of \cref{prop:unbounded card does not converge omega}:
\begin{prop}\label{prop:cauchy_tal_tpi}
    Let $M$ be a complete metric space, and let $(X_n)_{n\in\N}$ be a sequence in $\Ran(M)$. If $(X_n)$ is a Cauchy sequence for all distances $\dom$, then $(X_n)$ converges in \tpi.
\end{prop}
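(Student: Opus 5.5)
The plan is to first reduce to bounded cardinality using \cref{prop:unbounded card does not converge omega}, and then to conclude with the completeness of the truncations (\cref{prop:hausdorff_complete_troncation}). Since $(X_n)$ is Cauchy for every $\dom$, its sequence of cardinalities must be bounded. Indeed, if it were unbounded, \cref{prop:unbounded card does not converge omega} would produce a weight $\omega$ for which $(X_n)$ is not Cauchy, contradicting the hypothesis. One subtlety is that the proposition is proved after extracting a subsequence: a subsequence of a Cauchy sequence is Cauchy, so this causes no trouble. Hence there is some $N$ with $X_n\in\Ran_{\leq N}(M)$ for all $n$.

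Next, we fix any weight $\omega$, for instance the constant weight $1$. By \cref{prop:dH leq dom} we have $\dH\leq\dom$, so $(X_n)$ is Cauchy for $\dH$ and lies in $\Ran_{\leq N}(M)$. Since $M$ is complete, \cref{prop:hausdorff_complete_troncation} shows that $(\Ran_{\leq N}(M),\dH)$ is complete. Therefore $(X_n)$ converges for $\dH$ to some $X\in\Ran_{\leq N}(M)$.

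Finally, we transfer this convergence to $\tpi$. By \cref{prop:topologie_troncation}, the topologies $\tH$ and $\tpi$ induce the same subspace topology on $\Ran_{\leq N}(M)$. So $X_n\to X$ in $(\Ran_{\leq N}(M),\tpi)$, and since the inclusion of the truncation into $(\Ran(M),\tpi)$ is continuous, $X_n\to X$ in $\tpi$.

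The only step needing care is the first one, namely checking that the contrapositive of \cref{prop:unbounded card does not converge omega} really gives bounded cardinality for the whole sequence and not only for a subsequence. This holds directly: if the cardinalities were unbounded, the hypotheses of that proposition would be met verbatim, and it would yield a weight for which the full sequence is not Cauchy. The remaining steps are direct applications of earlier results.
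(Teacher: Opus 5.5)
Your proof is correct and follows the same route as the paper. You bound the cardinalities via \cref{prop:unbounded card does not converge omega}, pass to a $\dH$-Cauchy sequence via \cref{prop:dH leq dom}, use completeness of $(\Ran_{\leq N}(M),\dH)$ from \cref{prop:hausdorff_complete_troncation}, and transfer convergence to $\tpi$ through \cref{prop:topologie_troncation}. Your remark that the subsequence extraction in \cref{prop:unbounded card does not converge omega} is harmless is a correct detail that the paper leaves implicit: a non-Cauchy subsequence forces the whole sequence to be non-Cauchy.
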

\begin{proof}
    By \cref{prop:unbounded card does not converge omega}, if $(X_n)_{n\in\N}$ is a Cauchy sequence for all the distances $\dom$, then there exists $N\geq 1$ such that $\card(X_n)\leq N$ for all $n\in\N$. 
    By \cref{prop:dH leq dom}, $(X_n)$ is a Cauchy sequence for the Hausdorff distance.
    By \cref{prop:hausdorff_complete_troncation} the Hausdorff topology is complete on the truncation $\Ran_{\leq N}(M)$, so that $(X_n)$ converges in $\Ran_{\leq N}(M)$ for \tH. Besides, by \cref{prop:topologie_troncation}, {\tpi} and {\tH} coincide on the truncations, so that $(X_n)$ converges for {\tpi}.
\end{proof}
On the other hand, convergence of Cauchy filters requires some more work. Our proof relies on \cite[Corollary of Prop. 10, Chap II.3.5]{Bourbaki-topology} and shows that we can obtain $\Ran(M)$ as a limit of complete uniform spaces. This requires a few definitions and lemmas to better characterize Cauchy sequences and their limits for the weighted topologies. 

We start by extending the notion of combinatorial chain to define chains with an infinite number of relations and a finite length. This allows to extend the weighted distance $\dom$ to the completion $\mathcal{R}_\omega$ of $(\Ran(M),\dom)$ and thus evaluate the distance between elements of $\mathcal{R}_\omega$ not only abstractly but from actual chains that we can manipulate. This then allows us to show in \cref{prop:same limit cauchy} that if two sequences are Cauchy for $\dom$ and converge, for the Hausdorff distance, to the same compact subspace $K\subset M$, then their $\omega$-distance goes to zero. We can thus interpret $\mathcal{R}_\omega$ as a subset of $\Comp(M)$, and if $\omega\preceq\chi$ then we can interpret $\mathcal{R}_\chi$ as a subset of $\mathcal{R}_\omega$. It therefore makes sense to consider the intersection $\bigcap_{\omega\in\W}\mathcal{R}_\omega\subset \Comp(M)$. Moreover we also show in \cref{prop:limits_all_omega_finite} that any element in this intersection must actually have a finite cardinality, so that $\bigcap_{\omega\in\W}\mathcal{R}_\omega=\Ran(M)$. 

Finally, we show that the uniformity on $\Ran(M)$ obtained from \cref{theo:topologie limite}, is the same as the one obtained on $\bigcap_{\omega\in\W}\mathcal{R}_\omega$ by taking the limit of the $\mathcal{R}_\omega$ in the category of uniform spaces and uniformly continuous maps, which allows us to conclude.

For the entirety of \cref{sec:completeness}, we assume that $M$ is a complete and locally compact metric space.

\subsection{Infinite Chains in $\Ran(M)$}
\label{subsec:Infinite_Chains}
We start by defining infinite chains and use them to give upper bounds for the $\omega$-distance between their end points, or finite subsets of their end points, in $\Ran(M)$.

\begin{defin}
    Let $(X_k)_{k\in \N}$ be an infinite sequence of elements of $\Ran(M)$, and $(R_k)_{k\in \N}$ a sequence such that for all $k$, $R_k$ is a surjective relation from $X_k$ to $X_{k+1}$. We say that $((X_k)_{k\in \N},(R_k)_{k\in \N})$ is an \textbf{infinite $\omega$-chain} if
    \begin{equation*}
        \sum_{k=0}^{\infty}\elo(R_k)<\infty
    \end{equation*}
    
    If, in addition, there exists a compact subset $X_\infty\subset M$ such that $\dH(X_k,X_\infty)\to 0$, then we say that $((X_k)_{k\in \N},(R_k)_{k\in \N})$ is an infinite $\omega$-chain \textbf{between $X_0$ and $X_{\infty}$}.
    
    In any case, we define the $\omega$-length of the chain by
    \begin{equation*}
        \elo((R_k)_{k\in \N})=\sum_{k=0}^\infty \elo(R_k)
    \end{equation*}
\end{defin}

\begin{rem}
    We show in the following proposition that all infinite $\omega$-chains are actually infinite $\omega$-chains between some $X_0\in\Ran(M)$ and some $X_\infty \in \Comp(M)$. We also give an explicit construction of the limit $X_\infty$ from sequences extracted from the infinite chain.
\end{rem}

\begin{prop}\label{prop:Infinite_Chain_to_Hausdorff_Limit}
    Let $((X_k)_{k\in \N},(R_k)_{k\in \N})$ be an infinite $\omega$-chain. Let $S$ denote the set of sequences $(x_k)_{k\in \N}$ such that $x_k\in X_k$ and $x_kR_kx_{k+1}$ for all $k$, and let us define
    \begin{equation*}
        X_{\infty}=\{\lim(x_k)\mid (x_k)_{k\in \N}\in S\}.
    \end{equation*}
    Then
    \begin{itemize}
        \item $(X_k)_{k\in \N}$ is a Cauchy sequence for $\dom$
        \item all the sequences in $S$ are Cauchy sequences
        \item $((X_k)_{k\in \N},(R_k)_{k\in \N})$ is an infinite $\omega$-chain between $X_0$ and $X_{\infty}$
    \end{itemize}
\end{prop}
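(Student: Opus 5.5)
The plan is to prove the three claims in order, using only the triangle inequality, the bound $\elo(R_k)\geq \ell(R_k)$ (since $\omega\geq 1$), and completeness of $M$ and of $(\Comp(M),\dH)$.

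\textbf{Step 1: $(X_k)$ is Cauchy for $\dom$.} For $k<l$, the finite chain $(R_k,\dots,R_{l-1})$ goes from $X_k$ to $X_l$. Hence
\begin{equation*}
\dom(X_k,X_l)\leq \sum_{j=k}^{l-1}\elo(R_j)\leq \sum_{j\geq k}\elo(R_j),
\end{equation*}
which is the tail of a convergent series and so tends to $0$.

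\textbf{Step 2: every sequence in $S$ is Cauchy.} For $(x_k)\in S$ and $k<l$,
\begin{equation*}
d(x_k,x_l)\leq \sum_{j=k}^{l-1}d(x_j,x_{j+1})\leq \sum_{j=k}^{l-1}\ell(R_j)\leq \sum_{j\geq k}\elo(R_j),
\end{equation*}
since each pair $(x_j,x_{j+1})$ is one term of $\ell(R_j)$. As $M$ is complete, every such sequence converges, so $X_\infty$ is well defined. It is nonempty: $S\neq\emptyset$ because surjectivity of the $R_k$ lets us start at any point of $X_0$ and extend step by step.

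\textbf{Step 3: $X_\infty$ is compact and $\dH(X_k,X_\infty)\to 0$.} This is the main step. By Step 1 and \cref{prop:dH leq dom}, $(X_k)$ is Cauchy for $\dH$. Since $(\Comp(M),\dH)$ is complete, it converges to some compact $K$, so it suffices to show $K=X_\infty$. Write $\rho_k=\sum_{j\geq k}\elo(R_j)$.

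First I would show that $X_k$ and $X_\infty$ are uniformly close, in two directions.
\begin{itemize}
\item \emph{Every point of $X_\infty$ is close to $X_k$.} Given $\lim x_j\in X_\infty$, the point $x_k$ lies in $X_k$ and, by Step 2, $d(x_k,\lim x_j)\leq\rho_k$.
\item \emph{Every point of $X_k$ is close to $X_\infty$.} Given $y\in X_k$, surjectivity lets us extend forward to $y=y_k R_k y_{k+1}R_{k+1}\cdots$. Surjectivity backward (as in the proof of \cref{lem:Sequences_In_Balls}) gives predecessors $y_0,\dots,y_{k-1}$. The resulting sequence lies in $S$, and its limit is within $\rho_k$ of $y$.
\end{itemize}
Thus $\sup_{z\in X_\infty}d(z,X_k)\leq\rho_k$ and $\max_{y\in X_k}d(y,X_\infty)\leq\rho_k$.

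It remains to identify $X_\infty$ with $K$, and the subtle point is that $X_\infty$ might fail to be closed. I would avoid checking closedness directly. The two bounds above show that the Hausdorff pseudo-distance on $\mathcal{P}(M)$ satisfies $\dH(X_k,X_\infty)\leq\rho_k\to 0$. Together with $\dH(X_k,K)\to0$, the triangle inequality for this pseudo-distance gives $\dH(X_\infty,K)=0$, so $\overline{X_\infty}=K$.

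To conclude $X_\infty=K$ I would prove that $X_\infty$ is closed, by a diagonal argument. Take $z_m=\lim_j x^{m}_j\in X_\infty$ with $z_m\to z$. Each $X_k$ is finite, and $S$ is the set of infinite paths in a locally finite tree-like graph, so by K\H{o}nig's lemma we can pass to a subsequence along which the paths $x^m$ eventually agree with a single path $x\in S$ on every initial segment $0,\dots,k$. For that path, $d(x_k,z)\leq \rho_k+d(z_m,z)$ for suitable $m$, so $\lim x_k=z$ and $z\in X_\infty$.

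Therefore $X_\infty=K$ is compact, $\dH(X_k,X_\infty)\to0$, and the chain is an infinite $\omega$-chain between $X_0$ and $X_\infty$.
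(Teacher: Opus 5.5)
Your proposal is correct and follows essentially the same route as the paper. Both proofs use the tail bound $\rho_k=\sum_{j\geq k}\elo(R_j)$ for the first two claims, then a two-sided estimate $\dH(X_k,X_\infty)\leq\rho_k$, with compactness obtained from closedness of $X_\infty$ together with the compact Hausdorff limit $K$ supplied by completeness of $(\Comp(M),\dH)$. Two of your steps are more careful than the paper's: your K\"onig-type diagonal extraction actually proves the closedness of $X_\infty$, which the paper only asserts via ``diagonalizing sequences of sequences'', and you explicitly extend a point of $X_k$ backward to index $0$ so that its limit genuinely lies in $X_\infty$.
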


\begin{proof}
    For the first claim, observe that for any $n>k$, $(R_k,\dots,R_{n-1})$ is a combinatorial chain, in the sense of \cref{def:combinatorial chain}, between $X_k$ and $X_n$, and we have $\dom(X_k,X_n)\leq \elo(R_k,\dots,R_n)\leq \elo((R_i)_{i\geq k})$, the latter tends to $0$ as $k$ goes to infinity, which proves that $(X_k)_{k\in \N}$ is Cauchy for $\dom$.
    
    For the second claim, first note that if $(x_k)_{k\in \N}$ is a sequence in $S$, and $k<n$, then since $x_iR_ix_{i+1}$ for any $k\leq i<n$, we have
    \begin{align*}
        d(x_k,x_n)&\leq \sum_{i=k}^{n-1}d(x_i,x_{i+1})\\
        &\leq \sum_{i=k}^{n-1}\elo(R_i)\\
        &\leq \sum_{i=k}^{\infty}\elo(R_i)
    \end{align*}
    Since this last terms converges to $0$ as $k$ grows, $(x_k)$ is Cauchy, and
    thus, all elements of $S$ are Cauchy sequences in $M$.
    
    Lastly, for the third claim, first remark that $M$ was assumed to be complete, we thus may define $X_\infty=\{\lim(x_k)\mid (x_k)_{k\in \N}\in S\}\subset M$. As a set of limits, $X_\infty$ is automatically a complete and thus closed subspace of $M$, as can be deduced by diagonalizing sequences of sequences. Moreover, recalling the proof of \cref{prop:compact sets completion of ran hausdorff}, we see that $X_\infty\subset K$ where $K\subset M$ is the compact subset containing all limits of all Cauchy sequences $(x_k)_{k\in \N}$ with $x_k\in X_k$ for all $k\in \N$. Since we know $K$ to be compact, $X_\infty$ must also be compact.
    
    Now, let us prove that $\dH(X_k,X_\infty)$ converges to $0$, by showing that
    \begin{equation*}
        \dH(X_k,X_\infty)\leq \sum_{i\geq k}\elo(R_i)
    \end{equation*}
    Indeed, consider a sequence $(x_i)_{i\geq k}$ with $x_iR_ix_{i+1}$. Such a sequence must be Cauchy, and thus admits a limit, $x_\infty\in X_\infty$, and by the triangular inequality, we have
    \begin{align*}
        d(x_k,x_{\infty})&\leq \sum_{i=k}^{\infty}d(x_i,x_{i+1})\\
        &\leq \sum_{i=k}^{\infty}\elo(R_i)
    \end{align*}
    Since every element of $X_k$ can be completed into such a sequence, and every element in $X_\infty$ is reached as the limit of such a sequence, this gives the desired inequality, and concludes this proof.
\end{proof}

We now show that, if the end points of the infinite chain have a finite cardinality, then their distance minors the length of the chain. This shows that we can extend the set over which the infimum is taken in \cref{def:weighted-distance} to infinite chains.

\begin{lem}\label{lem:Infinite_Chain_Between_config}
 Let $(R_k)_{k\in \N}$ be an infinite chain between $X\in \Ran(M)$ and $X_{\infty}$. Assume that $X_{\infty}\in \Ran(M)$, then
 \begin{equation*}
     \dom(X,X_{\infty})\leq \elo((R_k)_{k\in \N})
 \end{equation*}
\end{lem}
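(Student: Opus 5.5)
The strategy is to truncate the infinite chain at a large index $N$ and close it up with a short finite chain from $X_N$ to $X_\infty$. Write $L=\elo((R_k)_{k\in \N})$ and $n=\card(X_\infty)$, and fix $\epsilon>0$. By \cref{prop:Infinite_Chain_to_Hausdorff_Limit} we have
\begin{equation*}
\dH(X_N,X_\infty)\leq \sum_{i\geq N}\elo(R_i),
\end{equation*}
and this tail tends to $0$ as $N$ grows. The finite chain $(R_0,\dots,R_{N-1})$ runs from $X$ to $X_N$ and has length at most $L$. By the triangle inequality for $\dom$,
\begin{equation*}
\dom(X,X_\infty)\leq L+\dom(X_N,X_\infty).
\end{equation*}
It therefore suffices to show that $\dom(X_N,X_\infty)\leq \epsilon$ for some suitable $N$.

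The main obstacle is that $X_N$ may have very large cardinality, since nothing forces the $\card(X_k)$ to stay bounded. A naive merge of $X_N$ onto $X_\infty$ then costs roughly $\omega(\card(X_N))\cdot \card(X_N)\cdot\dH(X_N,X_\infty)$, which need not be small. The remedy is to avoid weighting the whole configuration at once and instead to reuse the tail relations $R_k$ for $k\geq N$.

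The plan is to follow the tail relations only until the configuration has fewer points than $X_\infty$ would allow to collapse, and then finish with a single cheap map. Set $r=\merg(X_\infty)$ (or $1$ if $n=1$), and choose $N$ so that $\sum_{i\geq N}\elo(R_i)<\min(\epsilon, r/2)/(2\omega(n))$ (the factor $\omega(n)$ is harmless). Then every sequence $x_N R_N x_{N+1}\cdots$ stays inside a ball $B(z,r/2)$ around its limit $z\in X_\infty$. Consequently each $X_k$ with $k\geq N$ is $\BH(X_\infty,r/2)$-local, and each $R_k$ is local in the sense of \cref{subsec:locality}.

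We then build a finite chain from $X_N$ to $X_\infty$ of length at most about $2\sum_{i\geq N}\elo(R_i)$. For each $k\geq N$, let $p_k\colon X_k\to X_\infty$ send a point to the unique nearest point of $X_\infty$; locality makes this well defined and compatible with the relations $R_k$. The projection $X_N\to X_\infty$ is a surjective map, and its graph has cardinality $\card(X_N)$, which is exactly the problem identified above.

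The fix is to split the work into an absolute bound and a weighted bound. First, $\sum_{x\in X_N} d(x,p_N(x))$ is bounded by $\sum_{i\geq N}\ell(R_i)$, since each point of $X_N$ starts some sequence in $S$ whose steps are counted in the $R_i$. Second, the weights $\omega(\card(R_i))\geq\omega(\card(X_N))$ whenever $\card(X_i)\geq\card(X_N)$.

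I would first attempt to prove $\card(X_k)\leq \card(R_k)$ and compare $\omega(\card(X_N))\sum_x d(x,p_N(x))$ against $\sum_{i\geq N}\omega(\card(R_i))\ell(R_i)$. Pairing the terms this way uses only monotonicity of $\omega$ together with the fact that every point of $X_N$ sits at the start of a path through all later $R_i$ with $\card(R_i)\geq\card(X_N)$. The comparison can fail once cardinalities drop below $\card(X_N)$ further along the tail. If it does, I would handle this by choosing $N$ along a subsequence where $\card(X_N)$ is minimal among the later terms, i.e.\ $\card(X_N)\leq\card(X_i)$ for all $i\geq N$. Such indices exist: either $\liminf_k\card(X_k)$ is finite and is attained infinitely often, or the cardinalities tend to infinity, in which case there are always later indices whose cardinality is a running minimum.

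With such an $N$, the projection's length satisfies $\elo(p_N)\leq\sum_{i\geq N}\elo(R_i)<\epsilon$. This gives $\dom(X_N,X_\infty)<\epsilon$, and letting $\epsilon\to 0$ proves the statement.
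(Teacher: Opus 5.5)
Your truncation reduction and the locality of the tail are both fine. The step that carries the proof fails, though. The inequality $\sum_{x\in X_N}d(x,p_N(x))\le\sum_{i\ge N}\ell(R_i)$ is false, and choosing $N$ at a running minimum of $\card(X_N)$ does not repair it. Sequences issued from distinct points of $X_N$ may coalesce after a merge. Their common continuation is counted once in $\sum_i\ell(R_i)$, but once per starting point on your left-hand side. Because a merge in one place can be offset by a split elsewhere, the condition $\card(X_i)\ge\card(X_N)$ for $i\ge N$ gives no protection.

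In fact the direct projection can be expensive for every $N$. Take $M=\R$, $\omega\equiv1$, and for $j\ge2$ let $C_j=\{2^{-j}+t\,8^{-j}\mid 0\le t<2^j\}$, a cluster of $2^j$ points of diameter $<4^{-j}$. Run the chain in phases, starting from $C_j\cup C_{j+1}$:
merge $C_j$ onto $2^{-j}$ (cost $<2^{-j}$);
move that point to $2^{-j-2}$ (cost $<2^{-j}$);
split it into $C_{j+2}$ (cost $<2^{-j-2}$), arriving at $C_{j+1}\cup C_{j+2}$.
This is an infinite $\omega$-chain of finite length with $X_\infty=\{0\}$. Yet every configuration in phase $j$ contains the untouched cluster $C_{j+1}$, so $\elo(p_N)\ge 2^{j+1}\cdot 2^{-j-1}=1$ for all $N$, while $\sum_{i\ge N}\elo(R_i)\to0$.

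Your argument does work when $\liminf_k\card(X_k)=c<\infty$, via the crude bound $\elo(p_N)\le\omega(c)\,c\,d_{\mathrm H}(X_N,X_\infty)$ at indices with $\card(X_N)=c$. The case $\card(X_k)\to\infty$ is exactly where it breaks. Note also that bounding $\dom(X_N,X_\infty)$ by the tail is just the lemma itself for the tail chain, so the truncation buys nothing on its own.

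The missing idea, which is how the paper argues, is never to pay for all of $X_N$.
\begin{itemize}
\item Fix one sequence $(x^{(i)}_k)$ following the $R_k$ through each point of some $X_{k_0}$. Choose $k_0$ so that the sequences converging to the $n$ points of $X_\infty$ are pairwise distinct from $k_0$ on.
\item Whenever tracked sequences collide, keep only the least index of each collision class. This happens only finitely often.
\item The tracked pairs form surjective sub-relations $P_k\subset R_k$. By \cref{lem:included relation shorter}, they give a chain from $X_{k_0}$ to the tracked subconfiguration $Y_{k_\infty}$ that is no longer than $(R_{k_0},\dots,R_{k_\infty-1})$.
\item From $k_\infty$ on, the surviving sequences are pairwise distinct, so their pairs $(x^{(i)}_k,x^{(i)}_{k+1})$ are distinct elements of $R_k$. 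This removes the double counting and gives $\card(R_k)\ge\card(Y_{k_\infty})$.
\item Hence the map sending each tracked point to its limit has $\omega$-length at most $\sum_{k\ge k_\infty}\elo(R_k)$.
\end{itemize}
In the example above only two sequences survive: one through $C_j,C_{j+2},\dots$ and one through $C_{j+1},C_{j+3},\dots$.
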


\begin{proof}
    Let us fix some ordering of the points of $X_{\infty}=\{y^{(1)},\dots,y^{(n)}\}$. Then, by \cref{prop:Infinite_Chain_to_Hausdorff_Limit}, there exists $n$ sequences $(x^{(1)}_k)_{k\in \N},\dots,(x^{(n)}_k)_{k\in \N}$ such that $x^{(i)}_k\in X_k$ and $x^{(i)}_kR_kx^{(i)}_{k+1}$ for all $k\geq 0$ and all $1\leq i\leq n$ and such that $x^{(i)}_k$ converges to $y^{(i)}$ for all $i$. Now, let us construct a chain between $X_0$ and $X_{\infty}$ as follows. First, observe that since for any $i\not=j$ the sequence $x^{(i)}_k$ and $x^{(j)}_k$ are Cauchy sequences converging to distinct limits, there must be some $k_{i,j}$ such that for all $k'>k_{i,j}$ $x^{(i)}_{k'}\not= x^{(j)}_{k'}$. Take $k_0$ to be the max of all such $k_{i,j}$. 
    We already have a (finite) chain from $X=X_0$ to $X_{k_0}$, together with $n$ distinct elements in $X_{k_0}$, given by the $x^{(i)}_{k_0}$. 
    Let $m\geq n$ be the cardinality of $X_{k_0}$, and let us order its elements $\{x^{(1)}_{k_0},\dots,x^{(n)}_{k_0},\dots,x^{(m)}_{k_0}\}$. 
    We may choose Cauchy sequences $(x^{(i)}_k)_{k\geq k_0}$ for all $n<i\leq m$ which converge to some point $x^{(i)}_{\infty}$ in $X_{\infty}$ and satisfy $x^{(i)}_kR_kx^{(i)}_{k+1}$ for all $k\geq k_0$. 
    For now, assume that for all $k\geq k_0$, $\{x^{(1)}_{k},\dots,x^{(n)}_{k},\dots,x^{(m)}_{k}\}$ is of cardinality $m$. Then, for any $k\geq k_0$, we have
    \begin{equation*}
        \elo(R_k)\geq \sum_{i=1}^m d(x^{(i)}_k,x^{(i)}_{k+1}),
    \end{equation*}
    and in particular $\card(R_k)\geq m$.
    Now, define the surjective map $f\colon X_{k_0}\to X_{\infty}$ sending $x^{(i)}_{k_0}$ to the limit of the sequence $x^{(i)}_{k}$ for all $1\leq i\leq m$. Its graph gives a surjective relation $R'$ and we have a chain $(R_0,\dots,R_{k_0-1},R')$ between $X_0$ and $X_{\infty}$ which satisfies
    \begin{align*}
        \elo(R_0,\dots,R_{k_0-1},R')&=\elo(R_0,\dots,R_{k_0-1})+\omega(m)\sum_{i=1}^m d(x^{(i)}_{k_0},x^{(i)}_{\infty})\\
        &\leq \elo(R_0,\dots,R_{k_0-1})+\omega(m)\sum_{i=1}^m\left(\sum_{k=k_0}^{\infty}d(x^{(i)}_k,x^{(i)}_{k+1})\right)\\
        &\leq \elo(R_0,\dots,R_{k_0-1})+\sum_{k=k_0}^\infty\elo(R_k)\\
        &\leq \elo((R_k)_{k\in \N})
    \end{align*}
    which completes the special case.
    
    Now, if there exists some $k>k_0$ such that $\{x^{(1)}_{k},\dots,x^{(n)}_{k},\dots,x^{(m)}_{k}\}$ is of cardinality strictly less than $m$, let $k_1$ be the smallest such $k$. For all $k_0\leq k<k_1$, define $Y_k=\{x^{(i)}_k\mid 1\leq i\leq m\}$. 
    Consider the partition of $\{1,\dots,m\}$ defined by the equivalence relation $i\sim j\Leftrightarrow x^{(i)}_{k_1}=x^{(j)}_{k_1}$. This partition is non-trivial, by definition of $k_1$ and for any $i<j$ with $i\sim j$, we must have $j>n$ by definition of $k_0$. Now, let $I_1\subset \{1,\dots,m\}$ be the strict subset containing only the least element in each set of the partition. 
    By the previous remark, $\{1,\dots,n\}\subset I_1$.
    Let $Y_{k_1}=\{x^{(i)}_{k_1}\mid i\in I_1\}$. Now, either the sequence of sets $\{x^{(i)}_k\mid i\in I_1\}$ is of constant cardinality, for $k\geq k_1$, or we may iterate the process finitely many times to obtain some $k_{\infty}\in \N$, $\{1,\dots,n\}\subset I_{\infty}\subset \{1,\dots,m\}$, and such that the sequence $\{x^{(i)}_k\mid i\in I_{\infty}\}$ has constant cardinality for all $k\geq k_{\infty}$. 
    In either case, to get a finite chain between $X_0$ and $Y_{k_\infty}$, first consider the chain $(R_0,\dots,R_{k_0-1})$ from $X_0$ to $Y_{k_0}=X_{k_0}$.
    Then consider the sequence of bijections, followed by a single merge between $Y_{k_j}$ and $Y_{k_{j+1}}$ given by the maps sending $x^{(i)}_k$ to $x^{(i)}_{k+1}$ for all $k_j\leq k\leq k_{j+1}-1$ concatenating each of those gives a chain $(P_{k_0},\dots, P_{k_{\infty}-1})$. The key observation is that, by construction $Y_k\subset X_k$ for all $k_0\leq k\leq k_{\infty}$ and $P_k\subset R_k$ for all $k_0\leq k\leq k_{\infty}-1$. 
    Thus, by \cref{lem:included relation shorter}, we get
    \begin{align*}
        \dom(X_0,Y_{k_{\infty}})&\leq \elo(R_0,\dots,R_{k_0-1},P_{k_0},\dots,P_{k_{\infty}-1})\\
        &\leq\elo(R_0,\dots,R_{k_0-1})+\elo(R_{k_0},\dots,R_{k_{\infty}-1})
    \end{align*}
    Finally, the argument of the special case still applies to give a chain $R'$ between $Y_{k_{\infty}}$ and $X_{\infty}$ whose length is bounded by $\sum_{k\geq k_{\infty}}^{\infty}\elo(R_k)$, which completes the proof.
\end{proof}

Now, given an infinite $\omega$-chain between $X_0$ and $X_{\infty}$, if $X_{\infty}$ is not in $\Ran(M)$ we cannot directly compute the distance $\dom(X_0,X_\infty)$. Nevertheless, the distance $\dom$ can formally be extended to the completion $\mathcal{R}_\omega$ of $(\Ran(M),\dom)$, and we will see in the following that it actually makes sense to think of the end-point $X_\infty$ as the limit, for $\dom$, of the infinite $\omega$-chain. 

First we can compute the distance between the starting point $X_0$ and some finite subsets $Y\subset X_\infty$ of the end-point, and show that it minors the length of the chain. 
\begin{lem}
\label{lem:distance to subset of limit}
 Let $((X_k)_{k\in \N},(R_k)_{k\in \N})$ be an infinite $\omega$-chain between $X=X_0\in \Ran(M)$ and $X_{\infty}$. For each $x\in X$, choose a sequence $(u^{(x)}_k)_{k\in\N}$ such that $u^{(x)}_0=x$, $u^{(x)}_k\in X_k$ and $u^{(x)}_k R_k u^{(x)}_{k+1}$ for all $k\geq 0$. Then for any $Y\in \Ran(M)$ such that
 \begin{itemize}
     \item $\{\lim(u^{(x)}_k) \mid x\in X_0\}\subset Y$
     \item $Y\subset X_\infty$
 \end{itemize}
 we have
 \begin{equation*}
     \dom(X,Y)\leq \elo((R_k)_{k\in \N})
 \end{equation*}
\end{lem}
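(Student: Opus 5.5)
The plan is to reduce to \cref{lem:Infinite_Chain_Between_config} by building a new infinite $\omega$-chain that starts at $X$ and ends at $Y$, with each relation contained in the corresponding $R_k$. Write $Y=\{y^{(1)},\dots,y^{(m)}\}$. Since $Y\subset X_\infty$, the definition of $X_\infty$ in \cref{prop:Infinite_Chain_to_Hausdorff_Limit} lets us pick, for each $j$, a sequence $(v^{(j)}_k)_{k\in\N}\in S$ converging to $y^{(j)}$. Together with the chosen sequences $(u^{(x)}_k)$ for $x\in X$, this gives a finite family $\mathcal{F}$ of threads in $S$.

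The first condition on $Y$ says that every thread in $\mathcal{F}$ has its limit in $Y$. The threads $u^{(x)}$ cover $X_0=X$. Every point of $Y$ is the limit of some thread in $\mathcal{F}$.

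Define $Y_k=\{w_k\mid w\in\mathcal{F}\}\subset X_k$ and $P_k=\{(w_k,w_{k+1})\mid w\in\mathcal{F}\}$.
\begin{itemize}
\item $Y_0=X$, because the threads $v^{(j)}$ start at points of $X_0=X$, so they add nothing new.
\item $P_k\subset R_k$, because every thread lies in $S$.
\item $P_k$ is a surjective relation between $Y_k$ and $Y_{k+1}$, because each point of $Y_k$ and of $Y_{k+1}$ is hit by some thread.
\end{itemize}
By \cref{lem:included relation shorter}, $\elo(P_k)\le\elo(R_k)$. Hence $((Y_k),(P_k))$ is an infinite $\omega$-chain with $\elo((P_k))\le\elo((R_k))$.

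Next we identify its endpoint. The set of limits of threads of the new chain contains $Y$. It is also contained in the set of limits of threads of $\mathcal{F}$: any thread of the new chain stays, at each step, on $Y_k$, which is a finite union of the $\mathcal{F}$-threads. To make this inclusion rigorous I will argue directly with the Hausdorff distance rather than with abstract thread limits. Let $\delta_k=\sum_{i\ge k}\elo(R_i)$. Every point of $Y_k$ lies on some $w\in\mathcal{F}$ whose limit is in $Y$ at distance at most $\delta_k$, and every point of $Y$ is within $\delta_k$ of $Y_k$. So $\dH(Y_k,Y)\le\delta_k\to 0$, and the chain is between $X$ and $Y$.

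Since $Y\in\Ran(M)$, \cref{lem:Infinite_Chain_Between_config} applies and gives $\dom(X,Y)\le\elo((P_k))\le\elo((R_k))$. The main obstacle is this endpoint identification: we have to show that the limit is exactly $Y$ and not some bigger set coming from threads that switch between elements of $\mathcal{F}$. The Hausdorff estimate settles this, since the proof of \cref{lem:Infinite_Chain_Between_config} uses only that the Hausdorff limit is the finite set $Y$, together with threads converging to its points.
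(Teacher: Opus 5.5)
Your proposal is correct and is essentially the paper's proof: you restrict to the finitely many threads through $X$ and through the points of $Y$, obtaining $P_k\subset R_k$. You then bound its length via \cref{lem:included relation shorter} and conclude with \cref{lem:Infinite_Chain_Between_config}. The one difference is how you identify the endpoint. You use the direct estimate $\dH(Y_k,Y)\le\sum_{i\ge k}\elo(R_i)$, whereas the paper uses a pigeonhole argument: any thread of the new chain coincides along a subsequence with one of the finitely many chosen threads. Your version matches the Hausdorff-distance definition of a chain ``between'' $X$ and $Y$ slightly more directly.
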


\begin{proof}
  Let us fix some ordering of the points of $Y=\{y^{(1)},\dots,y^{(n)}\}$. Then, by \cref{prop:Infinite_Chain_to_Hausdorff_Limit}, there exists $n$ sequences $(x^{(1)}_k)_{k\in \N},\dots,(x^{(n)}_k)_{k\in \N}$ such that $x^{(i)}_k\in X_k$ and $x^{(i)}_kR_kx^{(i)}_{k+1}$ for all $k\geq 0$ and all $1\leq i\leq n$ and such that $x^{(i)}_k$ converges to $y^{(i)}$ for all $i$. 
Now, let $Y_k=\{x^{(i)}_k\mid 1\leq i\leq n\}\subset X_k$, and $Z_k=\{u^{(x)}_k\mid x\in X_0\}\cup Y_k$. Observe that by construction, $Z_0=X_0$, and $Z_k\subset X_k$. Furthermore, for $k\geq 0$, we define $Q_k\subset Z_k\times Z_{k+1}$ as $Q_k=\{(u^{(x)}_k,u^{(x)}_{k+1})\mid x\in X_0\}\cup \{(x^{(i)}_k,x^{(i)}_{k+1})\mid 1\leq i\leq n\}$. Observe that $Q_k\subset R_k$, by construction, and $Q_k$ is indeed a surjective relation between $Z_k$ and $Z_{k+1}$, thus, by \cref{lem:included relation shorter}, $\elo(Q_k)\leq \elo(R_k)$. Let us show that $(Q_k)_{k\in \N}$ is an infinite chain from $X$ to $Y$. First of all $\elo((Q_k)_{k\in \N})<\infty$ by the previous observation. Let $Z_{\infty}\subset M$ be defined as in \cref{prop:Infinite_Chain_to_Hausdorff_Limit}. For any $k\in \N$, $x^{(i)}_kQ_kx^{(i)}_{k+1}$, for all $i$, and thus, $Y\subset Z_{\infty}$. 
On the other hand, assume that $(v_k)_{k\in \N}$ is a Cauchy sequence in $M$ such that $v_k\in Z_k$ and $v_kQ_kv_{k+1}$ for all $k\in \N$. 
Then since $Z_k$ is of bounded cardinality, there must either be some $x\in X$ or some $1\leq i\leq n$ such that some subsequence of $v_k$ is equal to some subsequence of either $(u_k^{(x)})_{k\in \N}$ or $(x^{(i)}_k)_{k\in \N}$. 
In either case, we can conclude that $\lim(v_k)\in Y$. Thus $Z_{\infty}=Y$, and $(Q_k)_{k\in \N}$ is an infinite chain between $X$ and $Y$. The inclusion $Q_k\subset R_k$, together with \cref{lem:Infinite_Chain_Between_config} gives
\begin{equation*}
    \dom(X,Y)\leq \elo((Q_k)_{k\in \N})\leq\elo((R_k)_{k\in \N})
\end{equation*}  
\end{proof}

We can now show that if two infinite $\omega$-chains have the same end-point then we can minor the sum of their length by the distance of their starting points.

\begin{lem}
\label{lem:distance majoré par longueur chaine infinie}
    Let $X,Y\in \Ran(X)$ and $K\subset M$ be a compact, and $(R_k)_{k\in \N}$ and $(P_k)_{k\in \N}$ be two infinite chains from $X$ to $K$ and from $Y$ to $K$ respectively. Then,
    \begin{equation*}
        \dom(X,Y)\leq \elo((R_k)_{k\in \N})+\elo((P_k)_{k\in \N}).
    \end{equation*}
\end{lem}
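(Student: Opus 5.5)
We will reduce the statement to \cref{lem:distance to subset of limit} by passing through a common finite intermediate configuration $Z\subset K$. The plan is to show that for every $\eta>0$ there is some $Z\in\Ran(M)$ with
\begin{equation*}
    \dom(X,Z)\leq \elo((R_k)_{k\in \N})+\eta\quad\text{and}\quad \dom(Y,Z)\leq \elo((P_k)_{k\in \N})+\eta .
\end{equation*}
The triangle inequality in $(\Ran(M),\dom)$ then gives $\dom(X,Y)\leq \elo((R_k)_{k\in \N})+\elo((P_k)_{k\in \N})+2\eta$, and we conclude by letting $\eta\to 0$. In fact we expect to get the bound with $\eta=0$ directly.

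To construct $Z$, we first choose, for every $x\in X$, a sequence $(u^{(x)}_k)_{k\in\N}$ with $u^{(x)}_0=x$ and $u^{(x)}_kR_ku^{(x)}_{k+1}$. Such sequences exist by surjectivity of each $R_k$. By \cref{prop:Infinite_Chain_to_Hausdorff_Limit}, each of these sequences is Cauchy, hence converges, and its limit lies in $X_\infty$. By hypothesis $X_\infty=K$, since the Hausdorff limit is unique among compact sets. We do the same for every $y\in Y$ with sequences $(v^{(y)}_k)_{k\in\N}$ along the chain $(P_k)_{k\in\N}$, whose limits also lie in $K$.

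We then set
\begin{equation*}
    Z=\{\lim u^{(x)}_k\mid x\in X\}\cup\{\lim v^{(y)}_k\mid y\in Y\}.
\end{equation*}
This is a finite subset of $K$ and contains the limit points of both families of chosen sequences. We now apply \cref{lem:distance to subset of limit} twice. Applied to $(R_k)_{k\in\N}$ with the sequences $u^{(x)}$, its two hypotheses hold: the limits of the $u^{(x)}$ belong to $Z$, and $Z\subset K=X_\infty$. This gives $\dom(X,Z)\leq \elo((R_k)_{k\in\N})$. Applied to $(P_k)_{k\in\N}$ with the sequences $v^{(y)}$, the same reasoning gives $\dom(Y,Z)\leq\elo((P_k)_{k\in\N})$. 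Adding these two inequalities via the triangle inequality gives the claim.

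The main point to check carefully is that the end-point $X_\infty$ from \cref{prop:Infinite_Chain_to_Hausdorff_Limit} coincides with the given compact set $K$ for both chains, so that the finite set $Z$ is simultaneously a subset of both limit sets. This follows because $\dH$ is a genuine metric on $\Comp(M)$, so the Hausdorff limit of $(X_k)$ is unique and equals $K$; the same argument applies to the second chain. The remaining hypothesis of \cref{lem:distance to subset of limit}, that the chosen limit points lie in $Z$, holds by the construction of $Z$. No other obstacle is expected.
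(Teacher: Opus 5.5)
Your proposal is correct and follows the paper's proof essentially verbatim. Like the paper, you choose $R$- and $P$-sequences starting at each point of $X$ and of $Y$, let $Z\subset K$ be the finite set of their limits, apply \cref{lem:distance to subset of limit} once to each chain, and conclude by the triangle inequality. Your remark that the set of limits from \cref{prop:Infinite_Chain_to_Hausdorff_Limit} coincides with $K$, by uniqueness of Hausdorff limits in $\Comp(M)$, makes explicit a point the paper leaves implicit; the $\eta$ is unnecessary.
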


\begin{proof}
   Fix orderings on $X$ and $Y$, $X=\{x^{(1)}_0,\dots,x^{(n)}_0\}$ and $Y=\{y^{(1)}_0,\dots,y^{(m)}_0\}$ and choose sequences $x^{(i)}_k\in X_k$ and $y^{(j)}_k\in Y_k$, such that $x^{(i)}_kR_kx^{(i)}_{k+1}$ and $y^{(j)}_kP_ky^{(j)}_{k+1}$, for all $k\geq 0$, $1\leq i\leq n$ and $1\leq j\leq m$. As observed in \cref{prop:Infinite_Chain_to_Hausdorff_Limit} those sequence admit limits in $K$ that we denote $x^{(i)}_{\infty}$ and $y^{(j)}_{\infty}$ respectively. Let $Z=\{x^{(i)}_{\infty}\mid 1\leq i\leq n\}\cup \{y^{(j)}_{\infty}\mid 1\leq j\leq m\}\subset K$. Then, $Z$ together with either infinite chain satisfies the hypothesis of \cref{lem:distance to subset of limit}, thus we  have
   \begin{align*}
       \dom(X,Y)\leq \dom(X,Z)+\dom(Z,Y)\leq \elo((R_k)_{k\in \N})+\elo((P_k)_{k\in \N})
   \end{align*}
\end{proof}

\subsection{Cauchy sequences in $\Ran(M)$ and their limits}
\label{subsec:Cauchy_Ran_omega}
We now use infinite $\omega$-chains to characterize the behavior of Cauchy sequences in $(\Ran(M),\dom)$. Specifically, we use it to give a precise meaning to the idea that cauchy sequences in $(\Ran(M),\dom)$  converge to compact subsets of $M$. This allows us to interpret the completion $\mathcal{R}_\omega$ of $(\Ran(M),\dom)$ as a subset of $\Comp(M)$. Then, we show that if a compact $K\in \Comp(M)$ belongs to all of the $\mathcal{R}_\omega$, it must have a finite cardinality.

\begin{defin}
Let $(X_n)_{n\in \N}$ be a sequence in $\Ran(M)$, and $K\subset M$ be a compact. If $\dH(X_n,K)$ tends to $0$ as $n$ grows, we say that the sequence $X_n$ converges to $K$ for the Hausdorff topology. If, in addition $(X_n)$ is a Cauchy-sequence for $\dom$, for some weight $\omega$, we say that $(X_n)$ \textbf{converges to} $K$ \textbf{for} $\dom$.
\end{defin}

\begin{rem}\label{rem:Cauchy_omega_Converging_To_Compact}
    By \cref{prop:Infinite_Chain_to_Hausdorff_Limit}, if $((X_n)_{n\in \N}),(R_n)_{n\in \N})$ is an infinite $\omega$-chain between $X_0\in \Ran(M)$ and $K\subset M$, then $(X_n)$ converges to $K$ for $\dom$.
    
    Furthermore, if $(X_n)_{n\in \N}$ is a Cauchy sequence for $\dom$, it must be one for $\dH$, and thus it converges to some compact $K\subset M$ by \cref{prop:compact sets completion of ran hausdorff} for both the Hausdorff topology and $\dom$.
\end{rem}

The next proposition shows that the terminology "$(X_n)$ converges to $K$ for $\dom$" is reasonable.

\begin{prop}
\label{prop:same limit cauchy}
Let $(X_n)_{n\in\N}$ and $(Y_n)_{n\in\N}$ be two sequences in $\Ran(M)$ that converge to the same compact subset $K\subset M$ for $\dom$, then the distance $\dom(X_n,Y_n)$ goes to zero as $n$ grows.
\end{prop}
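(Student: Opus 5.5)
The plan is to build, from the two Cauchy sequences, infinite $\omega$-chains ending at the common limit $K$, and then invoke \cref{lem:distance majoré par longueur chaine infinie}.

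Fix $\eta>0$. Since $(X_n)$ is Cauchy for $\dom$, I will extract a subsequence $(X_{n_j})_{j\geq 0}$ such that $\dom(X_{n_j},X_{n_{j+1}})<\eta 2^{-j-2}$ and $\dom(X_m,X_{n_0})<\eta/4$ for all $m\geq n_0$. For each $j$, pick a combinatorial chain realizing $\dom(X_{n_j},X_{n_{j+1}})$ up to an additional error $\eta 2^{-j-2}$. Concatenating these finite chains (reindexed) gives a sequence of configurations and surjective relations of total $\omega$-length at most $\eta/2$, that is, an infinite $\omega$-chain starting at $X_{n_0}$. By \cref{prop:Infinite_Chain_to_Hausdorff_Limit} it is an infinite chain between $X_{n_0}$ and some compact $X_\infty$, with $\dH$ from the intermediate configurations to $X_\infty$ tending to zero.

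The point I expect to be the main obstacle is identifying $X_\infty$ with $K$. The intermediate configurations of the finite chains are not the $X_{n_j}$ themselves, but the subsequence $(X_{n_j})$ does appear among them and converges to $K$ in $\dH$. Since a subsequence of the configurations of the chain converges to $X_\infty$ in $\dH$, uniqueness of Hausdorff limits in $\Comp(M)$ gives $X_\infty=K$. This step requires the convergence statement of \cref{prop:Infinite_Chain_to_Hausdorff_Limit} for the full sequence of configurations, not just for the endpoints, and I would check that the bound $\dH(X_k,X_\infty)\leq\sum_{i\geq k}\elo(R_i)$ proven there indeed applies to every term of the chain.

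Doing the same construction for $(Y_n)$, possibly enlarging $n_0$ so that it serves both sequences, gives an infinite chain from $Y_{n_0}$ to $K$ of length at most $\eta/2$. Then \cref{lem:distance majoré par longueur chaine infinie} yields $\dom(X_{n_0},Y_{n_0})\leq\eta$. For $m\geq n_0$, the triangle inequality gives
\[
\dom(X_m,Y_m)\leq \dom(X_m,X_{n_0})+\dom(X_{n_0},Y_{n_0})+\dom(Y_{n_0},Y_m)<\tfrac{3\eta}{2},
\]
and since $\eta$ is arbitrary this concludes the argument.
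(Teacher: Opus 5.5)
Your argument is correct and is essentially the paper's. You concatenate near-optimal finite chains along a fast subsequence to get infinite $\omega$-chains from $X_{n_0}$ and $Y_{n_0}$ to $K$, bound $\dom(X_{n_0},Y_{n_0})$ via the lemma on two infinite chains ending at the same compact set, and finish with the triangle inequality; the paper runs the same argument at every index of a common extraction $\varphi$ rather than fixing $\eta$, and your explicit identification $X_\infty=K$ through $\dH(X_k,X_\infty)\leq\sum_{i\geq k}\elo(R_i)$ fills in a step the paper leaves implicit.

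The only slip is arithmetic. Each concatenated block has length less than $\eta 2^{-j-1}$, so the infinite chain has length less than $\eta$ rather than $\eta/2$. This only changes your final bound to $5\eta/2$.
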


\begin{proof}
    Since the sequences $(X_n)_{n\in \N}$ and $(Y_n)_{n\in \N}$ are Cauchy, we can extract two subsequences $(X_{\varphi(n)})_{n\in \N}$ and $(Y_{\varphi(n)})_{n\in \N}$ such that we have both $\sum \dom(X_{\varphi(n)},X_{\varphi(n+1)})<\infty$ and $\sum\dom(Y_{\varphi(n)},Y_{\varphi(n+1)})<\infty$. Now, by construction, the quantities
    \begin{equation*}
        \sum_{n=k}^{\infty}\dom(X_{\varphi(n)},X_{\varphi(n+1)})\text{ and } \sum_{n=k}^{\infty}\dom(Y_{\varphi(n)},Y_{\varphi(n+1)})
    \end{equation*}
    both tend to $0$ as $k$ goes to infinity. Now, let $k\geq 0$. For any $n\geq k$, we may construct a chain $(R_0,\dots, R_l)$ between $X_{\varphi(n)}$ and $X_{\varphi(n+1)}$,
    whose length can be chosen to be arbitrarily close to $\dom(X_{\varphi(n)},X_{\varphi(n+1)})$.
    Concatenating each of those, for $n\geq k$ gives an infinite $\omega$-chain between $X_{\varphi(k)}$ and $K$ whose length can be assumed to be arbitrarily close to $\sum_{n\geq k}\dom(X_{\varphi(n)},X_{\varphi(n+1)})$.
    Proceeding similarly for $(Y_{\varphi(n)})_{n\geq k}$, we can apply \cref{lem:distance majoré par longueur chaine infinie}, to deduce that, for any $k\geq 0$, 
    \begin{equation*}
        \dom(X_{\varphi(k)},Y_{\varphi(k)})\leq \sum_{n=k}^{\infty}\dom(X_{\varphi(k)},X_{\varphi(k+1)})+\sum_{n=k}^{\infty}\dom(Y_{\varphi(k)},Y_{\varphi(k+1)})
    \end{equation*}
    Since the right hand sides tends to $0$ as $k$ grows, we have that $\dom(X_{\varphi(k)},Y_{\varphi(k)})$ does too. But now, for any $k\geq 0$,
    \begin{equation*}
        \dom(X_k,Y_k)\leq \dom(X_k,X_{\varphi(k)})+\dom(X_{\varphi(k)},Y_{\varphi(k)})+\dom(Y_{\varphi(k)},Y_k).
    \end{equation*}
    Since $(X_n)_{n\in \N}$ and $(Y_n)_{n\in \N}$ are Cauchy sequences, the first and last terms in the right hand side tends to $0$ too as $k$ grows, which gives that $\dom(X_k,Y_k)\to 0$.
\end{proof}

We can identify the limits of Cauchy sequences in $(\Ran(M),\dom)$ with compact subsets of $M$. So that it makes sense to consider the intersection of those, for varying $\omega$, in $\Comp(M)$, i.e. the compact sets $K$ such that, for any weights $\omega$, we can find a sequence in $\Ran$ that converges to $K$. The following couple lemmas show that such compact sets must have a finite cardinality, allowing to identify this intersection, as a set, with $\Ran(M)$.

\begin{lem}\label{lem:Subsets_limits_are_limits}
Let $K\subset M$ be a compact, and $L\subset K$ a compact subspace. If there exists a sequence $(X_n)_{n\in \N}$ which converges to $K$ for $\dom$, then there exists a sequence $(Y_n)_{n\in \N}$ which converges to $L$ for $\dom$. \end{lem}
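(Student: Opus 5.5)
We will build $(Y_n)$ from $(X_n)$ by keeping only the part of each $X_n$ that ends up near $L$, after first replacing $(X_n)$ by a well-behaved infinite $\omega$-chain.

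\emph{Reduction to an infinite chain.} As in the proof of \cref{prop:same limit cauchy}, we pass to a subsequence with $\sum_n \dom(X_n,X_{n+1})<\infty$. We then concatenate near-optimal chains between consecutive terms, with lengths at most $\dom(X_n,X_{n+1})+2^{-n}$. This gives an infinite $\omega$-chain $((Z_k)_{k\in\N},(R_k)_{k\in\N})$ whose configurations contain the $X_n$ as a subsequence. By \cref{prop:Infinite_Chain_to_Hausdorff_Limit}, its endpoint $Z_\infty$ is the set of limits of the sequences in $S$. Since the $X_n$ converge to $K$ in the Hausdorff distance, and the $Z_k$ converge to $Z_\infty$ along the same subsequence, we get $Z_\infty=K$.

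\emph{Restricting the chain.} Let $S_L\subset S$ be the set of threads $(z_k)$ with $z_kR_kz_{k+1}$ and $\lim z_k\in L$. Set $W_k=\{z_k\mid (z_k)\in S_L\}\subset Z_k$, and let $Q_k$ be the set of pairs $(z_k,z_{k+1})$ coming from threads in $S_L$.
\begin{itemize}
\item By construction $Q_k\subset R_k$, and $Q_k$ is a surjective relation between $W_k$ and $W_{k+1}$.
\item Each $W_k$ is nonempty, since every point of $L\subset K$ is the limit of some thread.
\item By \cref{lem:included relation shorter}, $\elo(Q_k)\leq\elo(R_k)$. Hence $(Q_k)$ is an infinite $\omega$-chain, and $(W_k)$ is Cauchy for $\dom$.
\item Its endpoint consists of limits of threads in $S_L$. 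Each such thread is a thread of $(Q_k)$, and conversely every thread of $(Q_k)$ lies in $S_L$. The remaining point to check is that a thread of $(Q_k)$ is built from pairs belonging to possibly different $S_L$-threads, so it is not automatically one of them.
\end{itemize}

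\emph{Main obstacle.} The delicate step is the last one: showing that every thread of $(Q_k)$ has its limit in $L$. A thread of $(Q_k)$ passes through points $w_k$, each lying on some $S_L$-thread. As in the proof of \cref{prop:Infinite_Chain_to_Hausdorff_Limit}, we have
\begin{equation*}
d(w_k,\ell_k)\leq\sum_{i\geq k}\elo(R_i)\to 0
\end{equation*}
for some $\ell_k\in L$. Hence the limit of the thread is a limit of points of $L$, and therefore lies in $L$ because $L$ is closed. Equivalently, \cref{prop:Infinite_Chain_to_Hausdorff_Limit} yields $\dH(W_k,L)\leq\sum_{i\geq k}\elo(R_i)\to0$.

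\emph{Conclusion.} Setting $Y_n=W_n$, the sequence is Cauchy for $\dom$ and converges to $L$ for $\dH$, which is exactly the statement.
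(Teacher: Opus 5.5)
Your proof is correct and follows essentially the same route as the paper: pass to a summable subsequence, concatenate near-optimal chains into an infinite $\omega$-chain ending at $K$, restrict to threads whose limits lie in $L$ (the paper selects one thread per point $y\in L$, while you keep all of them), and use \cref{lem:included relation shorter} together with the tail estimate $\sum_{i\geq k}\elo(R_i)$ to control where the restricted threads end. Your direct bound $\dH(W_k,L)\leq\sum_{i\geq k}\elo(R_i)$, combined with closedness of $L$, is a slightly cleaner substitute for the paper's compactness extraction, and it makes your somewhat muddled fourth bullet unnecessary, since that bullet asserts as known the very point you only prove afterwards.
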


\begin{proof}
    Once more, up to extracting a subsequence, we may assume that for all $n\geq 0$, $\dom(X_n,X_{n+1})<\frac{1}{2^{n+1}}$. Now, for each $n\geq 0$ there exists a chain $(R^n_0,\dots,R^n_{k_n})$ between $X_n$ and $X_{n+1}$ whose $\omega$ length may be assumed to be smaller than $\dom(X_n,X_{n+1})+\frac{1}{2^{n+1}}$, in particular,  
    \begin{equation*}
        \sum_{n=0}^\infty\sum_{i=0}^{k_n}\elo(R^n_i)<\sum_{n=0}^{\infty} \left(\dom(X_n,X_{n+1})+\frac{1}{2^{n+1}}\right)\leq 2
    \end{equation*}
    so that $(R_0^{0},R_1^0,\dots,R_{k_0}^0,R^1_0,\dots)$ forms an infinite $\omega$-chain, between $X_0$ and $K$. For convenience, we relabel the sequence of relations $(P_n)_{n\in \N}$ and the intermediate configurations $(Z_n)_{n\in \N}$.
    Now by \cref{prop:Infinite_Chain_to_Hausdorff_Limit}, for each $y\in L$, we may chose a sequence $(x_n^{(y)})_{n\in \N}$ where $x^{(y)}_n\in Z_n$, and $x^{(y)}_n P_{n}x^{(y)}_{n+1}$ for all $n\geq 0$, and such that $x_n^{(y)}$ converges to $y$.  For $n\geq 0$, define $Y_n=\{x^{(y)}_n\mid y\in L\}$ and $Q_n=\{(x^{(y)}_n,x^{(y)}_{n+1})\mid y\in L\}$. By construction $(Q_n)_{n\in \N}$ forms an infinite $\omega$-chain, since $Q_n\subset P_n$ for all $n\geq 0$, and by \cref{lem:included relation shorter,prop:Infinite_Chain_to_Hausdorff_Limit}, it forms an infinite $\omega$-chain from $Y_0$ to $Y_{\infty}=\{\lim x_k\mid (x_k)_{k\in\N} , x_k\in Y_k \text{ and } x_kQ_k x_{k+1} \forall k\geq 0\}$. By construction $L\subset Y_{\infty}$. 
    To prove the other inclusion, let $(z_n)_{n\in \N}$ be a sequence such that $z_n\in Y_n$, and $z_nQ_nz_{n+1}$ for all $n\geq 0$. By \cref{prop:Infinite_Chain_to_Hausdorff_Limit}, such a sequence must be Cauchy, thus it converges to some limit $z_{\infty}\in K$. On the other hand, for every $n\in \N$, $z_n=x^{(y)}_n$ for some $y\in L$,
    thus we may define a sequence $y_n$ such that $z_n=x^{(y_n)}_n$. Since $L$ is compact, up to extracting a subsequence, we may assume that $y_n$ converges to some $y_{\infty}\in L$. But now, we have
    \begin{align*}
    d(z_{\infty},y_{\infty})&\leq d(z_\infty,z_n)+d(z_n,y_n)+d(y_n,y_{\infty})\\
    &\leq d(z_{\infty},z_n)+\sum_{k=n}^{\infty}d(x^{(y_n)}_k,x^{(y_n)}_{k+1})+d(y_n,y_{\infty})\\
    &\leq d(z_{\infty},z_n)+\sum_{k=n}^{\infty}\elo(Q_k)+d(y_n,y_{\infty})
    \end{align*}
    Since all three terms on the right hand side converge to zero, we must have $z_{\infty}=y_{\infty}$ and $z_n$ does converge to a limit in $L$. Thus, $(Q_n)_{n\in \N}$ is indeed an infinite $\omega$-chain between $Y_0$ and 
    $L$, and the sequence $(Y_n)$ converges to $L$ for $\dom$.
\end{proof}

\begin{prop}\label{prop:limits_all_omega_finite}
    Let $K\subset M$ be a compact subset. Assume that for all weight $\omega$, there exists some sequence in $\Ran(M)$, $(X_n^{\omega})_{n\in \N}$ such that $(X^{\omega}_n)$ converges to $K$ for $\dom$, then $K$ is finite. 
\end{prop}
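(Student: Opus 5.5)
We argue by contradiction: assume $K$ is infinite. Since $K$ is compact and infinite, it has an accumulation point, so we can extract from $K$ a countable compact subset $L=\{y_0\}\cup\{y_j\mid j\geq 1\}$ with the $y_j$ pairwise distinct and $y_j\to y_0$. By \cref{lem:Subsets_limits_are_limits}, for every weight $\omega$ there is still a sequence converging to $L$ for $\dom$. So it suffices to construct a single weight $\omega$ for which no sequence converges to $L$ for $\dom$. The point is that $L$ is determined in advance, so we can tune $\omega$ to the geometry of $L$, in the spirit of the proof of \cref{prop:unbounded card does not converge omega}.

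Next we derive a lower bound. Let $(X_n)$ converge to $L$ for $\dom$. Pass to a subsequence with summable consecutive distances and, as in the proof of \cref{lem:Subsets_limits_are_limits}, concatenate near-optimal chains, so that $X_n$ is joined to $L$ by an infinite $\omega$-chain of length $\ell_n\to 0$. Take $m$ large and consider the $m$ points $F_m=\{y_1,\dots,y_m\}\subset L$, with separation $\delta_m=\frac12\min_{i\neq j\leq m} d(y_i,y_j)>0$. Once $\ell_n$, and hence $\dH(X_n,L)$, is much smaller than $\delta_m$, each of the $m$ points of $F_m$ is reached by a path of the chain starting from $X_n$. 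I then expect to show that some relation along the chain has cardinality at least $m$ while it carries a total displacement bounded below by a quantity $c_m>0$ depending only on $L$ and $m$. Concretely, I would use the fact that the chain has to end up accumulating on $y_0$ as well as on $F_m$: the intermediate configurations close to $L$ in Hausdorff distance must contain at least $m+1$ well-separated points, and between $X_n$ and $L$ points must keep being created or moved near $y_{m+1},y_{m+2},\dots$. If this holds, then $\ell_n\geq \omega(m)c_m$.

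The cleaner route may be the following. For any $n$ the configuration $X_n$ is finite, so infinitely many points of $L$ have to be generated by splittings along the chain. Fix $m>\card(X_n)$. Each path ending at $y_i$ with $i\leq m$ must separate from the others at some relation, and at that stage the relation has cardinality at least the number of already separated clusters. Summing, the $\omega$-length is at least something like $\omega(m)\cdot\min_{i\neq j\leq m}d(\cdot,\cdot)$-type terms that do not vanish. Choose $\omega(m)\geq 1/c_m$ increasing. Then every chain from any $X_n$ to $L$ has length at least $1$, which contradicts $\ell_n\to 0$.

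I expect the main obstacle to be the rigorous lower bound: showing that an infinite chain ending at $L$ must contain a relation of cardinality at least $m$ with length at least $c_m$, where $c_m$ is uniform in $n$. The difficulty is that points may split while very close to each other and only later drift apart. I would handle this by tracking, for each $i\leq m$, a path converging to $y_i$, and looking at the first index after which these $m$ paths stay at mutual distance at least $\delta_m$. From then on, every relation has cardinality at least $m$. Moreover, the total remaining motion is at least the distance needed to bring in infinitely many further points converging toward $y_0$, which forces positive length at cardinality at least $m$. I would try to bound this from below by $\delta_{m}'$, the distance from $y_{m+1}$ to $F_m$, possibly after reindexing so that these distances are controlled.
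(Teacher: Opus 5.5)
There is a genuine gap. Your reduction to $L=\{y_0\}\cup\{y_j\mid j\geq 1\}$ via \cref{lem:Subsets_limits_are_limits}, the passage to infinite $\omega$-chains, and the plan to tune $\omega$ to the separations in $L$ all match the paper. However, the decisive inequality $\ell_n\geq\omega(m)c_m$ is never established, and the mechanism you propose for it does not work.

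Consider the index $k^*$ after which the paths to $y_1,\dots,y_m$ stay $\delta_m$-separated. The configuration $X_{k^*}$ may have arbitrarily large cardinality and may already contain points very close to $y_{m+1},\dots,y_M$ for any $M$. The points still to be created can then all lie near $y_0$, and they can be produced with displacements of order $d(y_0,y_j)$ for $j>M$. Such displacements are not bounded below by $\delta'_m=d(y_{m+1},F_m)$, nor by any quantity depending on $m$ alone. So ``bringing in further points toward $y_0$'' yields no uniform lower bound at cardinality $\geq m$. Note also that $m$ must be chosen after $n$, with $m>\card(X_n)$. Your first paragraph fixes $m$ first and then takes $n$ large; in that case $X_n$ may already have a point near each $y_i$, and no cost is forced.

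The missing idea is to extract the cost from the separation of the tracked paths themselves. Since $m>\card(X_n)$, two of the $m$ tracked paths start at the same point. Their limits are pairwise at distance $\geq 2\delta_m$, so there is a last index $k$ at which some pair $a,b$ satisfies $d(a_k,b_k)<\delta_m$. For every $j\geq k$, the configuration $X_{j+1}$ contains $m$ distinct tracked points, so $\card(R_j)\geq m$. Moreover $(a_j,a_{j+1})$ and $(b_j,b_{j+1})$ are distinct elements of $R_j$. Hence
$\sum_{j\geq k}\elo(R_j)\geq\omega(m)\bigl(d(a_k,a_\infty)+d(b_k,b_\infty)\bigr)\geq\omega(m)\bigl(2\delta_m-\delta_m\bigr)=\omega(m)\delta_m.$
Since $\delta_m$ is non-increasing, $\omega(m)=\max(1,1/\delta_m)$ is a valid weight, and this gives the contradiction with $\ell_n\to 0$.

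The paper avoids path-tracking altogether. It picks a finite $Y\subset K$ with $\card(Y)>\card(X_0)$ that contains the limits of all paths issued from $X_0$. Restricting the chain to $Y$ gives $\dom(X_0,Y)\leq\elo$ by \cref{lem:Infinite_Chain_Between_config}. It then applies the merge lower bound \cref{lem:minorant distance}, $\dom(Y,X_0)\geq\omega(\card(Y))\min_{y\neq y'\in Y}d(y,y')$, with $\omega(n)=\max(1/\epsilon_n,1)$, which makes this at least $1$.
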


\begin{proof}
    Let $K\subset M$ be an infinite compact subspace. Then there exists a sequence of pairwise distinct points in $K$, $(u_n)_{n\in \N}$. Since $K$ is compact, up to extracting a subsequence, we may assume that $(u_n)$ converges to some point $u_{-1}\in K$ so that $\{u_n\mid n\in \N\}\cup \{u_{-1}\}\subset K$. Now, for all $n\geq 1$, define
    \begin{equation*}
        \epsilon_n=\min\{\dm(x,y)\mid x\not=y\in \{u_{-1},u_0,\dots,u_n\}\}
    \end{equation*}
    Then $\epsilon_n$ is non-increasing, and converges to $0$ since $(u_n)$ must be a Cauchy sequence. Let us consider the weight $\omega(n)=\max(1/\epsilon_n,1)$, and show that no sequence in $\Ran(M)$ converges to $K$ for $\dom$.
    By \cref{lem:Subsets_limits_are_limits}, if there exists a sequence converging to $K$ there also exists one converging to $\{u_n\mid -1\leq n\}$ thus we may assume without loss of generality that $K=\{u_n\mid -1\leq n\}$.
    By contradiction, assume that such a sequence exits.
    As we have seen several times already, we may transform it into an infinite $\omega$-chain $((X_n)_{n\in \N},((R_n)_{n\in \N}))$ between $X_0$ and $K$ whose $\omega$-length is less than $1/2$. Now, let $N=\card(X_0)$, and enumerate the elements of $X_0$, $\{x^{(1)},\dots,x^{(N)}\}$,
    consider sequences $x^{(i)}_n$ where $x^{(i)}_0=x^{(i)}$, and for all $n\geq 0$ $x^{(i)}_n\in X_n$ and $x^{(i)}_nR_nx^{(i)}_{n+1}$. By \cref{prop:Infinite_Chain_to_Hausdorff_Limit}, those sequences exist and converge to limits, $x^{(i)}_{\infty}$ in $K$. Now, let $I=\max\{k\mid \exists i,\ x^{(i)}_{\infty}=u_k\}$ and $J=\max\{I,N\}$. Let $Y_{\infty}=\{u_k\mid -1\leq k\leq J\}$. Once more, by \cref{prop:Infinite_Chain_to_Hausdorff_Limit}, for each $-1\leq k\leq J$, there exists a sequence $(y^{(k)}_n)_{n\in \N}$ such that $y^{(k)}_n\in X_n$ and $y^{(k)}_nR_ny^{(k)}_{n+1}$ for all $n\geq 0$ and such that $y^{(k)}_n$ converges to $u_k$. Now for $n\geq 0$, define
    \begin{equation*}
        Y_n=\{y^{(k)}_n\mid -1\leq k\leq J\}\cup\{x^{(i)}_n\mid 1\leq i\leq N\}
    \end{equation*}
    \begin{equation*}
        P_n=\{(y^{(k)}_n,y^{(k)}_{n+1})\mid -1\leq k\leq J\}\cup \{(x^{(i)}_n,x^{(i)}_{n+1})\mid 1\leq i\leq N\}
    \end{equation*}
    Then, $P_n\subset R_n$ for all $n\geq 0$, and thus, by \cref{lem:included relation shorter}, $(P_n)_{n\in \N}$ is an infinite $\omega$-chain. Furthermore, by construction $Y_0=X_0$, and $(Y_n)$ converges to $Y_{\infty}$. Thus, by \cref{lem:Infinite_Chain_Between_config}, we must have
    \begin{equation*}
        \dom(X_0,Y)\leq \elo((P_n)_{n\in \N})\leq \elo((R_n)_{n\in \N})\leq \frac{1}{2}
    \end{equation*}
    On the other hand, $\card(Y)>\card(X_0)$, thus by \cref{lem:minorant distance} $\dom(Y,X_0)\geq \omega(J+2)\epsilon_J\geq 1$, which gives the desired contradiction.
\end{proof}

\subsection{Uniform structure and completeness}
\label{subsec:Completeness_proof}
We are now equipped to show that $(\Ran(M),\tpi)$ can be interpreted as the limit of the completion of the $(\Ran(M),\dom)$ in the category of uniform spaces. 

Throughout, $\Uni$ denotes the category of uniform spaces and uniformly continuous maps \cite{Bourbaki-topology}.

Given a space $U$ and some (pseudo)-metric on $U$, $d$, we will simply write $(U,d)$ for the uniformity induced on $U$ by the (pseudo)-metric $d$. Explicitly, a basis of this uniformity is given by the set $\{\{(x,y)\mid d(x,y)<\epsilon\}\mid\epsilon>0\}$.

In \cref{theo:topologie limite}, we showed that $(\Ran(M),\tpi)$ is the limit of a diagram $\Ran^{-}(M)\colon \W\to\Top$. This diagram takes values in metric spaces and uniformly continuous maps, and can thus be interpreted as a diagram $\Ran^{-}(M)\colon \W\to\Uni$ instead. Doing so equips $(\Ran(M),\tpi)$ with a uniformity, which we now describe explicitly.

\begin{defin}
\label{def:entourages Ran}
    Let $\omega$ be a weight and $\epsilon>0$, define the entourage $V^{\omega}_{\epsilon}$ in $\Ran(M)$, as follows.
        \begin{equation*}
        V^{\omega}_{\epsilon}=\{(X,Y)\mid \dom(X,Y)<\epsilon\}\subset \Ran(M)\times \Ran(M)
    \end{equation*}
    And define the following basis of uniformity on $\Ran(M)$. 
    \begin{equation*}
        \B=\{V^{\omega}_{\epsilon}\mid \omega\in \W, \epsilon>0\}
    \end{equation*}
\end{defin}

\begin{prop}
\label{prop:Uniformity_Ran_finale}
    The basis $\B$ is a basis of the uniformity of the limit of $\Ran^{-}(M)\colon \W\to \Uni$.
\end{prop}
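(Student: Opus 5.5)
The plan is to compute the limit of $\Ran^{-}(M)\colon \W^{\op}\to\Uni$ explicitly, using the standard description of limits in $\Uni$. First I form the limit as a set. All transition maps are identities on $\Ran(M)$, and $\W^{\op}$ is connected: any two weights $\omega,\chi$ are both dominated by $\max(\omega,\chi)$, which is again a weight. Hence the underlying set of the limit is $\Ran(M)$ itself, with cone maps the identities $\Ran(M)\to(\Ran(M),\dom)$.

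Next I recall that the limit uniformity in $\Uni$ is the \emph{initial} uniformity for these cone maps. It is the coarsest uniformity making every $\Id\colon\Ran(M)\to(\Ran(M),\dom)$ uniformly continuous. A subbasis for it is given by the preimages of basic entourages of each $(\Ran(M),\dom)$, which are exactly the sets $V^{\omega}_{\epsilon}$. So the limit uniformity is generated, as a filter, by finite intersections of the form $V^{\omega_1}_{\epsilon_1}\cap\dots\cap V^{\omega_k}_{\epsilon_k}$. To show that $\B$ itself is a basis, it remains to show that such finite intersections contain an element of $\B$. By induction it suffices to treat two of them, $V^{\omega}_{\epsilon}\cap V^{\chi}_{\delta}$.

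For that step I set $\psi=\max(\omega,\chi)$. This is non-decreasing, at least $1$, and satisfies $\psi(1)=1$, so it is a weight. By \cref{lem:Topology_Weights_Bounded} (the ratio bound with constant $m=1$), we get $\dom\le d^{\psi}$ and $d^{\chi}\le d^{\psi}$. Therefore $V^{\psi}_{\min(\epsilon,\delta)}\subset V^{\omega}_{\epsilon}\cap V^{\chi}_{\delta}$. This gives the needed inclusion, and $\B$ is a filter basis generating the limit uniformity.

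Finally, I would check that $\B$ satisfies the uniformity-basis axioms directly. Each $V^{\omega}_{\epsilon}$ contains the diagonal and is symmetric, since $\dom$ is a metric. For the composition axiom, the triangle inequality gives $V^{\omega}_{\epsilon/2}\circ V^{\omega}_{\epsilon/2}\subset V^{\omega}_{\epsilon}$. I would also note, for consistency with \cref{theo:topologie limite}, that the induced topology has the balls $\Bom(X,\epsilon)$ as a neighbourhood basis, so when $M$ is locally compact it is $\tpi$. The main obstacle is only a bookkeeping one: being precise that limits in $\Uni$ carry the initial uniformity, and that the indexing category is directed, so that the sets $V^{\omega}_{\epsilon}$ alone suffice without taking finite intersections.
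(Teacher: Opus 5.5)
Your proposal is correct and follows the same route as the paper, whose proof is a single citation of Bourbaki (Chapter II, \S 2.7, Proposition 10), i.e.\ the description of the entourages of an inverse limit of uniform spaces over a directed index set. You have simply unpacked that citation for this case: the limit carries the initial uniformity, and directedness, supplied by the weight $\max(\omega,\chi)$ together with $\dom\le d^{\max(\omega,\chi)}$ from \cref{lem:Topology_Weights_Bounded}, lets the single sets $V^{\omega}_{\epsilon}$ replace their finite intersections.
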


\begin{proof}
    This follows immediately from \cite[chapter II.2.7 proposition 10]{Bourbaki-topology}.
\end{proof}

We now define the completion functor $\mathcal{R}_{-}(M)$ whose value on a weight $\omega$ is the completion of $(\Ran(M),\dom)$. Since the completion of a metric space can be seen as a quotient of its space of Cauchy sequences, it is also useful to define the corresponding functor $\mathcal{C}_{-}(M)$.

\begin{defin}
\label{def:completion functor}
 For any weight $\omega\in\W$ we define the uniform space $(\mathcal{C}_\omega(M),\dom)$ of sequences in $\Ran(M)$ that are Cauchy for $\dom$, with the uniform structure induced by the pseudometric $\dom((X_n),(Y_n))=\lim(\dom(X_n,Y_n))$. We define the functor
\begin{align*}
    \mathcal{C}_{-}(M)\colon\W^{\op}&\to \Uni\\
        \omega&\mapsto (\mathcal{C}_\omega(M),\dom)
\end{align*}

We also define $(\mathcal{R}_\omega(M),\dom)$ the completion of the uniform space $(\Ran(M),\dom)$, and the functor
\begin{align*}
    \mathcal{R}_{-}(M)\colon\W^{\op}&\to \Uni\\
        \omega&\mapsto (\mathcal{R}_\omega(M),\dom)
\end{align*}
\end{defin}

\begin{rem}
    The space $(\mathcal{R}_\omega(M),\dom)$ can be seen as the quotient of $(\mathcal{C}_\omega(M),\dom)$ by the equivalence relation $(X_n)\sim_\omega (Y_n)\Leftrightarrow \dom(X_n,Y_n)\to 0 \Leftrightarrow \dom((X_n)_{n\in \N},(Y_n)_{n\in \N})=0$. The associated quotient map $\pi_\omega\colon(\mathcal{C}_\omega(M),\dom)\to(\mathcal{R}_\omega(M),\dom)$ is uniformly continuous. This construction gives a complete uniform space precisely because $(\Ran(M),\dom)$ was a metric space. For uniform spaces in general, it is not enough to add limits of Cauchy sequences and one needs to consider limits of Cauchy filters. 
\end{rem}

\begin{prop}\label{prop:Smaller_Weights_uniformly_continuous}
    If $\omega,\chi\colon \N^* \to [1,\infty)$ are two weights such that $\omega\preceq \chi$, then the identity $\Id\colon(\Ran(M),\tchi)\to(\Ran(M),\tom)$ induces a uniformly continuous inclusion
    \begin{equation*}
        (\mathcal{C}_\chi(M),\dchi)\hookrightarrow (\mathcal{C}_\omega(M),\dom).
    \end{equation*}
    and a uniformly continuous injection
    \begin{equation*}
        (\mathcal{R}_\chi(M),\dchi)\to (\mathcal{R}_\omega(M),\dom).
    \end{equation*}
    In particular, the functors and $\mathcal{C}_{-}(M)$ and $\mathcal{R}_{-}(M)$ are well-defined.
\end{prop}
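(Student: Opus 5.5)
The plan is to reduce everything to the Lipschitz estimate already proved in \cref{lem:Topology_Weights_Bounded}. Since $\omega\preceq\chi$, there is some $N$ with $\omega(n)\leq\chi(n)$ for $n\geq N$. Because weights are bounded below by $1$, the ratio $\omega(n)/\chi(n)$ is then bounded, by $m=\max\{1,\omega(1),\dots,\omega(N)\}$. \cref{lem:Topology_Weights_Bounded} therefore gives
\begin{equation*}
\dom(X,Y)\leq m\,\dchi(X,Y)\quad\text{for all } X,Y\in\Ran(M).
\end{equation*}
In other words, $\Id\colon(\Ran(M),\dchi)\to(\Ran(M),\dom)$ is $m$-Lipschitz.

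For the sequence spaces, the Lipschitz bound shows that any $\dchi$-Cauchy sequence is $\dom$-Cauchy, so $\mathcal{C}_\chi(M)\subset\mathcal{C}_\omega(M)$ as sets. I would note that the pseudometrics are well defined, since the limit $\lim\dom(X_n,Y_n)$ exists: the sequence $\dom(X_n,Y_n)$ is Cauchy in $\R$ by the triangle inequality. Passing to the limit in the pointwise inequality gives
\begin{equation*}
\dom((X_n),(Y_n))\leq m\,\dchi((X_n),(Y_n)).
\end{equation*}
Hence the inclusion is $m$-Lipschitz, and in particular uniformly continuous.

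For the completions, the same inequality shows that $\dchi((X_n),(Y_n))=0$ implies $\dom((X_n),(Y_n))=0$. So the inclusion is compatible with the equivalence relations $\sim_\chi$ and $\sim_\omega$, and it descends to a uniformly continuous, indeed $m$-Lipschitz, map $\mathcal{R}_\chi(M)\to\mathcal{R}_\omega(M)$. Equivalently, this map is the unique extension of the Lipschitz identity to the completions.

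The main obstacle is injectivity. We need to show that if two $\dchi$-Cauchy sequences satisfy $\dom(X_n,Y_n)\to0$, then $\dchi(X_n,Y_n)\to0$, and no reverse inequality between the metrics is available. I would use \cref{rem:Cauchy_omega_Converging_To_Compact}. Both sequences are $\dH$-Cauchy and converge in $\Comp(M)$ to compacts $K$ and $K'$. Since $\dH\leq\dom$ by \cref{prop:dH leq dom}, we get $\dH(X_n,Y_n)\to0$, hence $K=K'$. The two sequences are thus $\dchi$-Cauchy with the same Hausdorff limit $K$, that is, both converge to $K$ for $\dchi$. \cref{prop:same limit cauchy}, applied to the weight $\chi$, then gives $\dchi(X_n,Y_n)\to0$, which proves injectivity.

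Functoriality is then immediate. Composites of inclusions are inclusions, and the induced maps on the quotients compose accordingly. The identity case is trivial. This gives well-definedness of $\mathcal{C}_-(M)$ and $\mathcal{R}_-(M)$.
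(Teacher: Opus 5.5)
Your proposal is correct and follows essentially the same route as the paper. The bound $\dom\leq m\,\dchi$ from \cref{lem:Topology_Weights_Bounded} gives the uniformly continuous inclusion of Cauchy sequences and the induced map on completions, and injectivity comes from identifying the common Hausdorff limit $K$ and then applying \cref{prop:same limit cauchy} to the weight $\chi$. The only difference is cosmetic: you obtain $\mathcal{R}_\chi(M)\to\mathcal{R}_\omega(M)$ by descending the inclusion to the quotients, whereas the paper uses the universal property of the completion and then checks compatibility with $\pi_\chi$ and $\pi_\omega$.
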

\begin{proof}
By \cref{prop:Smaller_Weights_continuous} the identity $\Id\colon(\Ran(M),\tchi)\to(\Ran(M),\tom)$ is Lipschitz so that it indeed induces a uniformly continuous inclusion $(\mathcal{C}_\chi(M),\dchi)\hookrightarrow (\mathcal{C}_\omega(M),\dom)$.

In addition, the composition    
\begin{align*}
    (\Ran(M),\dchi)\xrightarrow{\Id}(\Ran(M),\dom)\hookrightarrow (\mathcal{R}_\omega(M),\dom)
\end{align*}
is a uniformly continuous map from $(\Ran(M),\dchi)$ to a complete uniform space. The universal property of the completion $(\mathcal{R}_\chi(M),\dchi)$ gives a unique uniformly continuous map $f\colon (\mathcal{R}_\chi(M),\dchi)\to (\mathcal{R}_\omega(M),\dom) $ such that the following diagram commutes
\begin{equation*}
    \begin{tikzcd}
    (\Ran(M),\dchi) \arrow{r}{\Id} \arrow[hook]{d} &(\Ran(M),\dom) \arrow[hook]{d}\\
    (\mathcal{R}_\chi(M),\dchi) \arrow{r}{f} &(\mathcal{R}_\omega(M),\dom)
\end{tikzcd}
\end{equation*}
Remark that $f$ also makes the following diagram commutes.
\begin{equation*}
    \begin{tikzcd}
    (\mathcal{C}_\chi(M),\dchi) \arrow[hook]{r} \arrow{d}{\pi_\chi} &(\mathcal{C}_\omega(M),\dom) \arrow{d}{\pi_\omega}\\
    (\mathcal{R}_\chi(M),\dchi) \arrow{r}{f} &(\mathcal{R}_\omega(M),\dom)
\end{tikzcd}
\end{equation*}
To show that $f$ is injective, we thus need to show that if two sequences $(X_n)_{n\in\N},(Y_n)_{n\in\N}\in \mathcal{C}_\chi(M)$ and are such that $\dom(X_n,Y_n)$ goes to zero, then $\dchi(X_n,Y_n)$ also goes to zero. Let us take two such sequences. By \cref{prop:dH leq dom} the two sequences are also Cauchy for $\dH$, and $\dH(X_n,Y_n) \leq \dom(X_n,Y_n)$ goes to zero. By \cref{prop:compact sets completion of ran hausdorff} $X_n$ and $Y_n$ converge to the same compact subset $K\in \Comp(M)$ for $\dH$. And finally, by \cref{prop:same limit cauchy}, $\dchi(X_n,Y_n)$ goes to zero, i.e. $(X_n)_{n\in\N}$ and $(Y_n)_{n\in\N}$ correspond to the same element in $\mathcal{R}_\chi(M)$.
\end{proof}

We can finally combine our previous results to show that $(\Ran(M),\tpi)$, seen as a uniform space, can be seen as the limit of the completion of the $(\Ran(M),\dom)$.

\begin{prop}\label{prop:Final_limit_completion}
    The limit of the functor $\mathcal{R}_{-}(M)\colon \W^{\op}\to \Uni$ is $\Ran(M)$ equipped with the uniformity generated by $\B$.
\end{prop}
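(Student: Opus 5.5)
The plan is to compute the limit in $\Uni$ concretely. In $\Uni$, the limit of a diagram is the subset of the product of compatible families, with the initial uniformity for the projections (Bourbaki, Chapter II.2.7). By \cref{prop:Smaller_Weights_uniformly_continuous}, all transition maps $\mathcal{R}_\chi(M)\to\mathcal{R}_\omega(M)$ for $\omega\preceq\chi$ are injective. So a compatible family is determined by an element lying in every $\mathcal{R}_\omega(M)$, provided we fix a common ambient set in which all the completions embed compatibly.

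That ambient set is $\Comp(M)$. By \cref{rem:Cauchy_omega_Converging_To_Compact}, every $\dom$-Cauchy sequence converges to some compact $K$ for $\dH$. By \cref{prop:same limit cauchy}, two $\dom$-Cauchy sequences with the same Hausdorff limit are $\sim_\omega$-equivalent. Conversely, if $\dom(X_n,Y_n)\to0$ then $\dH(X_n,Y_n)\to0$ by \cref{prop:dH leq dom}, so the two Hausdorff limits agree. This yields an injection $\iota_\omega\colon\mathcal{R}_\omega(M)\to\Comp(M)$. It is compatible with the transition maps, since the map $f$ in \cref{prop:Smaller_Weights_uniformly_continuous} is induced by the identity on Cauchy sequences. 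Hence the limit, as a set, is identified with $\bigcap_{\omega\in\W}\iota_\omega(\mathcal{R}_\omega(M))$.

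The next step is to show this intersection equals $\Ran(M)$. The inclusion $\supseteq$ holds because constant sequences are Cauchy for every $\dom$. For $\subseteq$, take $K$ in the intersection. Then for every $\omega$ there is a sequence converging to $K$ for $\dom$, so $K$ is finite by \cref{prop:limits_all_omega_finite}, hence $K\in\Ran(M)$. The compatible family attached to $K$ is then the image of the constant sequence in each $\mathcal{R}_\omega(M)$. Thus the underlying set of the limit is $\Ran(M)$, and each projection is the canonical inclusion $\Ran(M)\hookrightarrow\mathcal{R}_\omega(M)$.

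Finally, we identify the uniformity. The limit uniformity is initial for the projections $\Ran(M)\to\mathcal{R}_\omega(M)$. The uniformity of $\mathcal{R}_\omega(M)$ restricts to that of $(\Ran(M),\dom)$, since a metric space embeds isometrically in its completion. Hence a basis of the limit uniformity consists of finite intersections of the entourages $V^{\omega}_{\epsilon}$.

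These finite intersections are already refined by single elements of $\B$. Given $V^{\omega_1}_{\epsilon_1},\dots,V^{\omega_k}_{\epsilon_k}$, set $\chi=\max_i\omega_i$, which is again a weight. By the proof of \cref{lem:Topology_Weights_Bounded}, $\dom_i\le\dchi$, so $V^{\chi}_{\min\epsilon_i}\subset\bigcap_iV^{\omega_i}_{\epsilon_i}$. The uniformity is therefore generated by $\B$, matching \cref{prop:Uniformity_Ran_finale}.

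The main obstacle is the set-level identification. One must check that the maps $\iota_\omega$ are well defined and injective, and that they commute with the transition maps, so that the abstract limit (a set of compatible families) really is the intersection inside $\Comp(M)$. The remaining steps are direct applications of earlier results.
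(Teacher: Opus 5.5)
Your proposal is correct and follows essentially the same route as the paper. Both arguments embed each $\mathcal{R}_\omega(M)$ injectively and compatibly into $\Comp(M)$ via Hausdorff limits, identify the limit with the intersection of these images, reduce that intersection to $\Ran(M)$ with \cref{prop:limits_all_omega_finite}, and read off the uniformity from the entourages $V^{\omega}_{\epsilon}$ restricted to $\Ran(M)$. The only difference is that you spell out two details the paper leaves to \cref{prop:same limit cauchy} and Bourbaki's Proposition 10: well-definedness of $\iota_\omega$ via $\dH\le\dom$, and the refinement of finite intersections of entourages by a single $V^{\chi}_{\epsilon}$ with $\chi=\max_i\omega_i$.
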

\begin{proof}

    Let us write $\mathcal{R}_\infty(M)$ for the limit of the functor $\mathcal{R}_{-}(M)$. 
    First, observe that the compositions
    \begin{equation*}
        (\Ran(M),\B)\to (\Ran(M),\dom)\to \mathcal{R}_{\omega}(M),
    \end{equation*}
    for all $\omega$, produce a unique map to the limit $(\Ran(M),\B)\to \mathcal{R}_{\infty}(M)$ which we will prove to be an isomorphism. 
    
    Let $\omega$ be a weight, and $(X_n)_{n\in \N}$ be a Cauchy sequence for $\dom$. Then, by \cref{rem:Cauchy_omega_Converging_To_Compact}, $X_n$ must converge to some compact $K\subset M$ giving a well-defined map of sets, $\mathcal{C}_{\omega}(M)\to \Comp(M)$. By \cref{prop:same limit cauchy}, this map descends to the quotient to give an injective map $\mathcal{R}_{\omega}(M)\hookrightarrow \Comp(M)$. Since, by \cref{prop:Smaller_Weights_uniformly_continuous}, any map $\chi\to \omega$ in $\W^{\op}$ induces an injective map $\mathcal{R}_{\chi}(M)\to\mathcal{R}_{\omega}(M)$, which commute with the inclusions into $\Comp(M)$, we may freely identify the underlying sets of $\mathcal{R}_{\omega}(M)$ with subsets of $\Comp(M)$, corresponding to compact subsets of $M$ appearing as the limit of some Cauchy sequence for $\dom$. After such an identification, we see that the underlying set of $\mathcal{R}_{\infty}(M)$ is given by
    \begin{equation*}
        \mathcal{R}_{\infty}(M)=\bigcap\limits_{\omega\in \W}\mathcal{R}_{\omega}(M)\subset\Comp(M)
    \end{equation*}
    By \cref{prop:limits_all_omega_finite}, this intersection reduces to finite subset of $M$, or in other words, to $\Ran(M)$, and thus the map $(\Ran(M),\B)\to \mathcal{R}_{\infty}(M)$ is a bijection. Now, by \cite[chapter II.2.7 proposition 10]{Bourbaki-topology}, a basis of the uniformity on $\mathcal{R}_{\infty}(M)$ is given by the set
    \begin{equation*}
        \B'=\{U^{\omega}_{\epsilon}\cap (\Ran(M))^2\mid \omega\in \W, \epsilon >0\}
    \end{equation*}
    where, for any $\omega\in \W$, $\{U^{\omega}_{\epsilon}\mid \epsilon >0\}$ is the basis of uniformity generated by the metric $\dom$. That is, 
    \begin{equation*}
        U^{\omega}_{\epsilon}=\{(X,Y)\mid \dom(X,Y)<\epsilon\}\subset (\mathcal{R}_{\omega}(M))^2.
    \end{equation*}
    But now, observe that the distance $\dom$ on $\mathcal{R}_{\omega}(M)$ restricts to $\dom$ on $\Ran(M)$ (and the abuse in notation does not in fact induce contradictions). In particular, we have $U^{\omega}_{\epsilon}\cap (\Ran(M))^2=V^{\omega}_{\epsilon}$, and thus $\B=\B'$.
\end{proof}

We can now conclude.
\begin{corol}
\label{cor:ran finale complet}
The uniform space $(\Ran(M),\B)$ is a complete uniform space.
\end{corol}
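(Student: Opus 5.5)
The plan is to deduce completeness formally from \cref{prop:Final_limit_completion}, using the general fact from \cite[Corollary of Prop.~10, Chap.~II.3.5]{Bourbaki-topology}: a limit in $\Uni$ of complete Hausdorff uniform spaces is complete. Concretely, $(\Ran(M),\B)$ is isomorphic in $\Uni$ to $\mathcal{R}_\infty(M)=\lim_{\omega}\mathcal{R}_\omega(M)$. This limit is realised as the subspace of the product $\prod_{\omega\in\W}\mathcal{R}_\omega(M)$ consisting of compatible families, equipped with the induced uniformity.

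The steps are as follows.

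First, each $(\mathcal{R}_\omega(M),\dom)$ is complete and Hausdorff, being the completion of the metric space $(\Ran(M),\dom)$.

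Second, the product $\prod_\omega \mathcal{R}_\omega(M)$ is complete, since a product of complete uniform spaces is complete (Bourbaki II.3.5, Prop.~10). Its Hausdorffness is clear.

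Third, the limit is the set of families $(z_\omega)$ with $f_{\chi\omega}(z_\chi)=z_\omega$ whenever $\omega\preceq\chi$. Each transition map $f_{\chi\omega}$ of \cref{prop:Smaller_Weights_uniformly_continuous} is uniformly continuous, hence continuous. Each target $\mathcal{R}_\omega(M)$ is Hausdorff. So each condition $f_{\chi\omega}\circ \mathrm{pr}_\chi=\mathrm{pr}_\omega$ defines a closed subset of the product, and the limit is an intersection of closed sets, hence closed. A closed subspace of a complete uniform space is complete, so $\mathcal{R}_\infty(M)$ is complete.

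Finally, transport completeness along the isomorphism of uniform spaces $(\Ran(M),\B)\cong\mathcal{R}_\infty(M)$ given by \cref{prop:Final_limit_completion}. Completeness is preserved by uniform isomorphisms, since Cauchy filters correspond.

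The only point requiring care is that \cref{prop:Final_limit_completion} provides a genuine isomorphism in $\Uni$ and not just a bijection. That proposition shows the canonical map is bijective and that the two uniformity bases coincide, $\B=\B'$. I would make explicit that $\B'$ is precisely the trace on the limit of the product uniformity. It is enough to use one index $\omega$ per entourage, because $\W$ is directed: $\max(\omega,\chi)$ is a weight dominating both. With that checked, the rest is routine.
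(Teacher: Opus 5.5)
Your proof is correct and follows the paper's route: you identify $(\Ran(M),\B)$ with the projective limit of the complete Hausdorff spaces $\mathcal{R}_\omega(M)$ via \cref{prop:Final_limit_completion}, then invoke Bourbaki's corollary on projective limits. You simply unpack that corollary (the product is complete, the limit is closed in it because the targets are Hausdorff, and closed subspaces of complete spaces are complete), which also shows that directedness of $\W$ is needed only to describe the basis $\B'$ of the limit uniformity, not for completeness itself.
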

\begin{proof}
    By \cref{prop:Final_limit_completion}, $(\Ran(M),\B)$ is the inverse limit of the functor $\mathcal{R}_{-}(M)\colon \W^{\op}\to \Uni$. Since $\mathcal{R}_{-}(M)$ takes value into complete Hausdorff uniform spaces, and since $\W$ is a directed set, by \cite[Corollary of Prop. 10, Chap II.3.5]{Bourbaki-topology} $(\Ran(M),\B)$ is a complete uniform space.
\end{proof}

\begin{rem}
    There is another common point of view on uniform spaces, for which uniform spaces are spaces equipped with a collection of continuous pseudo-metrics, which jointly induce the topology. Given such a collection of pseudo-metrics on a space $U$, $(d_i)_{i\in I}$, a basis of uniformity is then given by the set
    \begin{equation*}
        \{\{(x,y)\mid d_i(x,y)<\epsilon\}\mid i\in I, \epsilon>0\}.
    \end{equation*}
    It can be shown (see for example the proof of \cite[theorem 16 p 35]{Guenard-Lelievre-uniform-spaces}) that any uniformity on a Hausdorff space can in fact be specified through a family of pseudo-metrics. In particular, the fact that the uniformity we obtain on $(\Ran(M),\tpi)$ is induced by a collection of (pseudo)-metrics is not surprising, though the fact that those pseudo-metrics all happen to be metrics deserves mention.
\end{rem}

\section{Conicality}
\label{sec:conicity}
This section is dedicated to showing that if $M$ is a Riemannian manifold then $\Ran(M)$ with the weighted topology is conically stratified. Since this is a local property on $\Ran(M)$, the section is written in the case where $M$ is a Euclidean space, and the generalization to manifolds is done only at the very end (see \cref{Corol:Ran_Riemannian_Conical}). We first recall the definition of a conically stratified space, and give a brief summary of the proof. Sections~\ref{subsec:locality_Conical} and~\ref{subsec:scaling} are devoted to introducing technical tools. Specifically, we define a generalization of homotheties to $\Ran(M)$ that we call $\Shift$ and show that it is continuous. This function is in turn used in section~\ref{subsec:conicity_Euclidean} to construct a stratified homeomorphism between neighborhoods in $\Ran(M)$ and a stratified cone which proves that $\Ran(M)$ is a conically stratified space when $M$ is either a Euclidean space (\cref{Corol:Ran_Euclidean_Conical}) or a Riemannian manifold (\cref{Corol:Ran_Riemannian_Conical}). The very last subsection shows that while $\Ran(M)$ equipped with the final topology is not conically stratified, it satisfies a close analogue to this property.

\subsection{Preliminaries}
\label{subsec:Prelim_conical}
We start by recalling the definition of a conically stratified space.

\begin{defin}
\label{def:Stratified_Cone}
    Let $\varphi\colon X\to P$ be a stratified space. The \textbf{cone} on $X$, denoted $c(X)$ is a space stratified over $P\cup\{-\infty\}$, where $-\infty < p $ for any $p\in P$. Its underlying space is defined as $X\times [0,1)/_{X\times\{0\}}$ equipped with the \textbf{teardrop} topology defined from the following basis $\{ \pi(U)\mid U\in \mathrm{open}(X\times (0,1) )  \} \cup \{ \pi(X\times [0,\epsilon)) \mid 0<\epsilon < 1 \}$, where $\pi\colon X\times [0,1)\to c(X)$ is the quotient map. The stratification on the cone is defined as
    \begin{align*}
        c(\varphi)\colon c(X)&\to P\cup\{-\infty\}\\
 [(x,t)]&\mapsto 
 \begin{cases}
 \varphi(x)  \quad  &\text{if } t>0\\
 -\infty &\text{if } t=0
 \end{cases}        
    \end{align*}
\end{defin}

\begin{defin}
    Let $P$ be a poset, and $p\in P$. We define the following subsets of $P$:
    \begin{align*}
        &P_{>p}=\{q\in P \mid q > p \} \\
        &P_{\geq p}=\{q\in P \mid q \geq p \}
    \end{align*}
    Remark that $P_{>p}\cup \{-\infty\}$ is isomorphic to $P_{\geq p}$.
\end{defin}

\begin{defin}
    Let $\varphi\colon X\to P$ be a stratified space. We say that it is \textbf{conically stratified} if, for any $x\in X$, $p=\varphi(x)$, there exists
\begin{itemize}
    \item a neighborhood $U\subset X$ of $x$. 
    \item an open neighborhood of $x$ in $\varphi^{-1}(p)$,  $E\subset \varphi^{-1}(p)$
    \item a stratified space $\varphi_L\colon L \to P_{>p}$
    \item a stratified homeomorphism $E\times c(L)\xrightarrow{\simeq} U$, i.e. an homeomorphism such that the following diagram commutes
\end{itemize}
    \begin{equation*}
        \begin{tikzcd}
            E\times c(L)
            \arrow{r}{\simeq}
            \arrow{d}{c(\varphi_L)}
            & U
            \arrow{d}{\varphi_{|U}}
            \\
            P_{\geq p}
            \arrow[hook]{r}
            & P
        \end{tikzcd}
    \end{equation*}
\end{defin}

\begin{rem}
    In this section, we restrict to the case where $M$ is a manifold. Specifically, $M$ will be a Euclidean space for most of the section, and finally, a Riemannian manifold in \cref{Corol:Ran_Riemannian_Conical}. In both cases, $\Ran_{=n}(M)$ is a manifold of dimension $n$ times that of $M$ and so each point admits a neighborhood isomorphic to $\R^k$ for some $k$. We aim to prove that in both those cases $\Ran(M)$, equipped with the weighted topology $\tom$, is a conically stratified space. This amounts to exhibiting, for every points $X\in \Ran(M)$ specific neighborhoods in both $\Ran(M)$ and $\Ran_{=n}(M)$, where $\card(X)=n$. By the previous observation, we may think of $E$ as a finite-dimensional vector space, instead of as an explicit neighborhood in $\Ran_{=n}(M)$. Furthermore, in the case where $M=\R^p$, this vector space will be homeomorphic to $(\R^p)^n$, but equipped with a specific norm $\Nsum{-}$ (see \cref{subsec:locality_Conical}), instead of the usual one, which means that $E$ will be homeomorphic to a Euclidean space, though not isometric to one. We will still use the terminology of Euclidean space in the following.
\end{rem}

\begin{rem}
    Readers familiar with pseudo-manifolds may wonder, in view of the previous remark, why we did not state the definition of a pseudo-manifold instead, since we will be considering conically stratified spaces with manifold strata. Observe that we aim to prove that $\Ran(M)\to \N$ is conically stratified, and notably $\N$ does not have a maximal element. In particular, there is no regular stratum in $\Ran(M)$ whose closure would recover all of $\Ran(M)$ and thus it is not, in fact, a pseudo-manifold. However, proving that $\Ran(M)$ is conically stratified implies that so is $\Ran_{\leq n}(M)$ for all $n\in \N$. From which we can immediately recover the fact that the latter is in fact a pseudo-manifold (see for example \cite[Definition 2.4.1]{FriedmanBook}).
\end{rem}

\subsection{Sketch of the proof}
\label{subsec:sketch}
Since this section is long and technical, we give here a brief summary of the proof of the main result, with a few drawings, for the case where $M=\R^p$ is a Euclidean space.

We want to construct, for any $X\in\Ran(M)$, $n=\card(X)$
\begin{enumerate}
    \item\label{it:neigborhood} an open neighborhood $U\subset\Ran(M)$ of $X$. In fact, by \cref{cor:neighborhood_min_cardinal}, we may chose $U$ such that $U\subset \Ran_{\geq n}(M)$.
    \item\label{it:cone} a cone $c(L)$ with the adequate stratification
    \item\label{it:euclidean} a Euclidean space $E$ of dimension $\dim(M^n)$
    \item\label{it:homeo} a stratified homeomorphism $E\times c(L)\xrightarrow{\simeq} U$.
\end{enumerate} 
\begin{figure}[htb]
    \centering
    \begin{tikzpicture}[scale=1.4]
    \tikzset{triangle/.style={shape=isosceles triangle,shape border rotate=#1,isosceles triangle stretches=true,minimum width=9pt,minimum height=7.8pt,inner sep=0pt,fill=blue}}
    \tikzset{empty triangle/.style={shape=isosceles triangle,shape border rotate=#1,isosceles triangle stretches=true,minimum width=7pt,minimum height=6.06pt,inner sep=0pt,draw=blue, thick}}
    \node (x1) at (0,0) [circle,fill=red,inner sep=3pt] {};
    \node at (x1.south) [red, right] {$x^{(1)}$};
    \node (x2) at ({2*sqrt(2)},{2*sqrt(2)}) [circle,fill=red,inner sep=3pt] {};
    \node at (x2.south) [red, right] {$x^{(2)}$};
    \node (x3) at (4.95,0.707) [circle,fill=red,inner sep=3pt] {};
    \node at (x3.south) [red, right] {$x^{(3)}$};
    \draw[thick,dotted](x1) circle (1.5);
    \draw[thick,dotted](x2) circle (1.5);
    \draw[thick,dotted](x3) circle (1.5);
    \node (y11) at ($(x1)+(0.5,0.5)$) [triangle=90] {};
    \node (y12) at ($(x1)+(-0.2,0.4)$) [triangle=90] {};
    \node (y13) at ($(x1)+(0.1,-0.6)$) [triangle=90] {};
    \node (y21) at ($(x2)+(-0.4,0.4)$) [triangle=180] {};
    \node (y31) at ($(x3)+(0.05,0.6)$) [triangle=-90] {};
    \node (y32) at ($(x3)+(0.3,0.3)$) [triangle=-90] {};
    \node at (y12) [below left, blue] {$Y^{(1)}$};
    \node at (y21) [above right, blue] {$Y^{(2)}$};
    \node at (y31) [above left, blue] {$Y^{(3)}$};
    \node (z11) at ($(x1)+2*(y11)-2*(x1)$) [rectangle,inner sep=3pt,fill=violet] {};
    \draw[->,thick,dotted] (y11) -- (z11);
    \node (z12) at ($(x1)+2*(y12)-2*(x1)$) [rectangle,inner sep=3pt,fill=violet] {};
    \draw[->,thick,dotted] (y12) -- (z12);
    \node (z13) at ($(x1)+2*(y13)-2*(x1)$) [rectangle,inner sep=3pt,fill=violet] {};
    \draw[->,thick,dotted] (y13) -- (z13);
    \node (z21) at ($(x2)+2*(y21)-2*(x2)$) [rectangle,inner sep=3pt,fill=violet] {};
    \draw[->,thick,dotted] (y21) -- (z21);
    \node (z31) at ($(x3)+2*(y31)-2*(x3)$) [rectangle,inner sep=3pt,fill=violet] {};
    \draw[->,thick,dotted] (y31) -- (z31);
    \node (z32) at ($(x3)+2*(y32)-2*(x3)$) [rectangle,inner sep=3pt,fill=violet] {};
    \draw[->,thick,dotted] (y32) -- (z32);
    \end{tikzpicture}
    \caption{Sketch of the clustering and scaling functions for $M=\R^2$ equipped with the Euclidean distance. The reference configuration $X=\{x^{(1)},x^{(2)},x^{(3)}\}$ is displayed as red dots \textcolor{red}{\faCircle} for a cardinality $n=3$. The Hausdorff sphere $\BH(X,\merg(X))$ is sketched as dotted circles around the points of $X$. A local configuration $Y\in\BH(X,\merg(X))$ is displayed as blue triangles $\textcolor{blue}{\blacktriangle}$, the orientation of the triangles is different for each of the three clusters $(Y^{(1)},Y^{(2)},Y^{(3)})=\ClustX(Y)$, and the scaled configuration $Z=\ScaleX(2,Y)$ is displayed as violet squares $\textcolor{violet}{\blacksquare}$.}
    \label{fig:scaleX}
\end{figure}

The construction revolves around the central idea of \textbf{locality} which we introduced in \cref{subsec:locality} and refine in \cref{subsec:locality_Conical}: when we work in a sufficiently small neighborhood of $X$, then every local configuration $Y$ is composed of \textbf{clusters} of points localized around the points in $X$, and any geodesic chain between two sufficiently close local configurations is also local, i.e. it does not mix the clusters.
We introduce in \cref{def:clustering} a clustering function $\ClustX$ that gives the clusters of a local configuration. We can then apply some affine transformations to those clusters, using the corresponding points in $X$ as centers. 

Using this idea, we introduce in \cref{def:scaling} the scaling of local configurations around a fixed configuration $X$. This is sketched on \cref{fig:scaleX}, where we fixed a configuration $X$ of cardinality $n=3$, and show a local configuration $Y$, its three clusters $(Y^{(1)},Y^{(2)},Y^{(3)})=\ClustX(Y)$ and the rescaled configuration $Z=\ScaleX(2,Y)$. Observe that such a $Z$ may not always be local anymore and could potentially have a cardinality smaller than that of $Y$, if the scaling factor is too big. This is why we define the notion of bijective shift system in \cref{def:shift system}, for which we prove that the scaling function does not mix clusters (see \cref{prop:shifting equivalence}).

We then show in \cref{prop:homeo cone to w-ball} that this scaling function directly gives a homeomorphism from the cone on a sphere $\Som(X,\epsilon)$ in $(\Ran(M),\tom)$ to a ball $\Bom(X,\epsilon)$ in $(\Ran(M),\tom)$. This homeomorphism is not stratified however, since the cone contains a one point stratum, its vertex, whereas the ball $\Bom(X,\epsilon)$ in $\Ran(M)$ contains many points of the same cardinality as $X$, its center.

 We thus define a subspace of the ball, $C_{\underline{x}}$, whose elements are such that each of their clusters is centered around a point of $X$. This naturally excludes all configurations of cardinality exactly $\card(X)$ except $X$ itself.  Since the teardrop topology allows to identify the cone on a subspace with a subspace of the cone, we then obtain a stratified homeomorphism between the cone on $C_{\underline{x}}\cap \Som(X,\epsilon)$ and $C_{\underline{x}}$ (\cref{prop:strat homeo radial cone ball W})
  Note that, for the above line of reasoning to work, one needs the definition of the "center" of a cluster to be continuous and thus, one cannot simply use the iso-barycenter. We therefore define in \cref{rem:why midpoint,def:midpoint} a more suitable notion of center that  we call \textbf{midpoint}.

\begin{figure}[hp]
    \centering
    \begin{tikzpicture}[scale=1.35]
    \tikzset{triangle/.style={shape=isosceles triangle,shape border rotate=#1,isosceles triangle stretches=true,minimum width=9pt,minimum height=7.8pt,inner sep=0pt,fill=blue}}
    \tikzset{empty triangle/.style={shape=isosceles triangle,shape border rotate=#1,isosceles triangle stretches=true,minimum width=7pt,minimum height=6.06pt,inner sep=0pt,draw=blue, thick}}
    \node (x1) at (0,0) [circle,fill=red,inner sep=3pt] {};
    \node at (x1.north) [red, above right] {$x^{(1)}$};
    \node (x2) at ({4*cos(37)},{4*sin(37)}) [circle,fill=red,inner sep=3pt] {};
    \node at (x2.south) [red, right] {$x^{(2)}$};
    \draw[thick, dotted](x1) circle (1);
    \draw[thick, dotted](x2) circle (1);

    \node (x1up) at ({-2*cos(37)-1.5},6.1) [circle,fill=red,inner sep=3pt] {};
    \node at (x1up.north) [red, above right] {$x^{(1)}$};
    \node (x2up) at ($(x1up)+ ({4*cos(37)},{4*sin(37)})$) [circle,fill=red,inner sep=3pt] {};
    \node at (x2up.south) [red, right] {$x^{(2)}$};
    \draw[thick, dotted](x1up) circle (1);
    \draw[thick, dotted](x2up) circle (1);
    \node (y1) at ($(x1up)+(0.1,-0.4)$) [rectangle,fill=violet,inner sep=3pt] {};
    \node (y2) at ($(x2up)+(-0.4,0.4)$) [rectangle,fill=violet,inner sep=3pt] {};
    \node at (y1) [violet,below right] {$y^{(1)}$};
    \node at (y2) [violet,above right] {$y^{(2)}$};

    \node (g1) at ($(x1up)+ ({2.75+4*cos(37)},{2*sin(37)})$) [rectangle,fill=violet,inner sep=3pt] {};
    \node at (g1.south) [violet, right] {$g^{(1)}$};
    \node (g2) at ($(g1)+(3,0)$) [rectangle,fill=violet,inner sep=3pt] {};
    \node at (g2.south) [violet, right] {$g^{(2)}$};
    \draw[thick, dotted](g1) circle (0.75);
    \draw[thick, dotted](g2) circle (0.75);
    \node (z11) at ($(g1)+(0.35,0.55)$) [triangle=90] {};
    \node (z12) at ($(g1)+(-0.35,0.35)$)[triangle=90] {};
    \node (z13) at ($(g1)+(-0.05,-0.55)$)[triangle=90] {};
    \node (z21) at ($(g2)+(-0.2,-0.2)$)[triangle=90] {};
    \node (z22) at ($(g2)+(0.2,0.2)$)[triangle=90] {};
    
    \node at (z11) [above right, blue] {$Z^{(1)}$};
    \node at (z22) [above left, blue] {$Z^{(2)}$};

    \node (y11) at ($(x1)+(y1)-(x1up)+0.5*(z11)-0.5*(g1)$) [triangle=90] {};
    \node (y12) at ($(x1)+(y1)-(x1up)+0.5*(z12)-0.5*(g1)$) [triangle=90] {};
    \node (y13) at ($(x1)+(y1)-(x1up)+0.5*(z13)-0.5*(g1)$) [triangle=90] {};
    \node (y21) at ($(x2)+(y2)-(x2up)+0.5*(z21)-0.5*(g2)$) [triangle=90] {};
    \node (y22) at ($(x2)+(y2)-(x2up)+0.5*(z22)-0.5*(g2)$) [triangle=90] {};

    \draw [dotted,rounded corners] ($(x1up)+(-1.25,-1.25)$) -| ($(x2up)+(1.25,1.25)$) node[pos=0.75] (ul) {} -- ++({-4*cos(37)-2.5},0) node [midway, above] {$E=\Bsum((x^{(1)},x^{(2)}),\theta)$} -- cycle;
    \draw [dotted,rounded corners] ($(g1)+(-1,{-1.25-2*sin(37)})$) -| ($(g2)+(1,{1.25+2*sin(37)})$) -- ++({-5},0) node [midway, above] {$V=\Bom(G,1)\cap C_{\underline{g}}\simeq c(L)$} -- cycle node[midway] (ur){};
    \draw [dotted,rounded corners] ($(x1)+(-1.25,-1.25)$) -| ($(x2)+(1.25,1.25)$) -- ++({-4*cos(37)-2.5},0) node (U) [midway, above] {$U=f(E\times V)$} -- cycle;
    
   \node at ($0.5*(ul)+0.5*(ur)$) {$^\times$};
   \draw [->] ($(U)+(0,0.8)$) -- (U.north) node[midway,right] (f) {};
   \node at (f) [rotate=-90,right=-4pt,anchor=south] {$\simeq$};
   \node at (f.east) [right] {$f$};
    \end{tikzpicture}
    \caption{Sketch of the homeomorphism $f$ defined in \cref{prop:homeo strat} for $M=\R^2$. The reference configuration $X=\{x^{(1)},x^{(2)}\}$ is displayed as red dots \textcolor{red}{\faCircle}. The Euclidean $E\subset M^2$ is displayed at the top left. 
    A point $(y^{(1)},y^{(2)})\in E$ is displayed as violet squares $\textcolor{violet}{\blacksquare}$. 
    The space $V=\Bom(G,1)\cap C_{\underline{g}}$ is homeomorphic to a stratified cone $c(L)$ and is displayed at the top right, where the configuration $G=\{g^{(1)},g^{(2)}\}$ is displayed as violet squares $\textcolor{violet}{\blacksquare}$, and a configuration $Z=Z^{(1)}\cup Z^{(2)}\in V$ is displayed as blue triangles {$\textcolor{blue}{\blacktriangle}$}. 
    The neighborhood $X\in U\subset \Ran(M)$ is displayed at the bottom. The image $f(\underline{y},Z)$ is displayed as blue triangles $\textcolor{blue}{\blacktriangle}$, its two clusters are centered around $(y^{(1)},y^{(2)})$.}
    \label{fig:homeo cone strat}
\end{figure}

At this stage we are still not finished because  $C_{\underline{x}}$  is not a neighborhood of $X$ in $\Ran(M)$ (in particular, it is not an open subset of $\Ran(M)$). However, we proceed with the proof as follows. 
\begin{itemize}
    \item We introduce a generic configuration of cardinality $n$,  $G\subset \R^p$ so that the subspace $C_{\underline{g}}\subset \Bom(G,\epsilon)$ is a cone, by the aforementioned Proposition. This is the needed cone. \cref{it:cone} 
    \item  $E$ will be a ball in $M^n$ around $\underline{x}$, of adequately small radius. \cref{it:euclidean}
    \item  We define the needed homeomorphism in \cref{prop:homeo strat} in two steps:
    \begin{itemize}
        \item We introduce a map $f\colon E\times C_{\underline{g}}\to \Ran(M)$, show that it is open, and define $U$ to be its image \cref{it:neigborhood}.
        \item We show that the (co)-restriction $f\colon E\times C_{\underline{g}}\to U$ is a stratified homeomorphism \cref{it:homeo}.
    \end{itemize}
\end{itemize}
 The map $f^{-1}$ can be understood as returning two things. The midpoints of a collection of clusters, giving a point in $M^n$ close to $\underline{x}$, i.e. an element of $E$. And the local positions of the clusters around their midpoints, which can be encoded generically as positions around the configuration $G$, whose midpoints are $G$, i.e. elements of $C_{\underline{g}}$. This is sketched in \cref{fig:homeo cone strat}.

\subsection{Locality}
\label{subsec:locality_Conical}
From now on, we fix $p>0$ an integer, and set $M=\R^p$, equipped with the Euclidean distance:
\begin{align*}
    d\colon M\times M & \to \R_+\\
    (x,y)&\mapsto \norm{x-y} .
\end{align*}
For any $n\in\N$ we equip $M^n$ with the norm
\begin{align*}
    \Nsum{\cdot}\colon M^n & \to \R_+\\
    \underline{x}=(x^{(1)},\dots,x^{(n)})&\mapsto \sum_{i=1}^n \norm{x^{(i)}},
\end{align*}
so that the distance $\dsum$ on $M^n$ is inherited from $\Nsum{\cdot}$ as
\begin{align*}
    \dsum\colon M^n\times M^n & \to \R_+\\
    (\underline{x},\underline{y})&\mapsto \Nsum{\underline{x}-\underline{y}} .
\end{align*}

We fix $\omega$ a weight. Throughout this section $\Ran(M)$ will always be equipped with the $\tom$ topology, unless explicitly stated. We will denote continuity with respect to the topology $\tom$ as $\omega$-continuity for short. Note that, when there is no ambiguity, we will also call $\omega$-continuous any continuous maps defined on a product of space, in which every occurrence of a Ran space is equipped with the topology $\tom$.

In this section and the next, we set-up the technical results that will be needed to prove both \cref{Corol:Ran_Riemannian_Conical} and \cref{Ran faible conique}. Since the latter deals with the final topology, we need to be slightly more careful with our technical lemmas. In particular, in this section, we prove a slightly stronger version of \cref{lem:boules omega locales}, to extend its result from the $\omega$ ball onto the Hausdorff ball. This is not strictly needed to prove \cref{Corol:Ran_Riemannian_Conical}, but in order to investigate the final topology, we need to work on a neighborhood that is common to all weighted topologies. This role will be played by the Hausdorff ball.

\begin{lem}
\label{lem:boule hausdorff locale}
    Let $X\in \Ran(M)$. For any $r<\merg(X)$, there exists $\epsilon\geq \merg(X)-r >0$ such that for all $Y,Z\in \BH(X,r)$, if $\dom(Y,Z)<\epsilon$, then any geodesic chain from $Y$ to $Z$ is $\BH(X,r)$-local.
\end{lem}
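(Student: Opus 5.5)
The plan is to prove the statement with $\epsilon=\merg(X)-r$. I would first show that a geodesic chain cannot mix clusters. Then I would show that it cannot leave the balls of radius $r$: if it did, a cluster-wise metric projection would produce a strictly shorter chain. The second step uses the fact that $M=\R^p$, where balls are convex.

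\textbf{Step 1 (no mixing, coarse locality).} Let $Y,Z\in\BH(X,r)$ with $\dom(Y,Z)<\epsilon$, and let $((Y_0,\dots,Y_k),(R_0,\dots,R_{k-1}))$ be a geodesic chain, so its $\omega$-length is $\dom(Y,Z)<\epsilon$. For any point $w\in Y_j$, surjectivity of the $R_i$ gives a sequence $y=w_0R_0w_1\cdots R_{j-1}w_j=w$ with $y\in Y$. As in the proof of \cref{lem:Sequences_In_Balls}, and using $\omega\geq 1$,
\begin{equation*}
d(y,w)\leq \elo(R_0,\dots,R_{k-1})<\epsilon .
\end{equation*}
Since $y\in B(x,r)$ for some $x\in X$, this gives $w\in B(x,r+\epsilon)=B(x,\merg(X))$. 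These balls are pairwise disjoint, so every intermediate point lies in a unique ball $B(x,\merg(X))$. The same estimate applied to a single pair $(u,v)\in R_i$, combined with the path from $Y$ to $u$, shows that $u$ and $v$ lie in the same ball. At this stage the chain is only $\BH(X,\merg(X))$-local, which is weaker than what is claimed.

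\textbf{Step 2 (projection).} Let $r'<r$ be the largest distance from a point of $Y\cup Z$ to its associated point of $X$. Define $\phi$ on $\bigsqcup_{x\in X}B(x,\merg(X))$ by letting $\phi$ be the Euclidean nearest-point projection onto the closed convex ball $\overline{B}(x,r')$ on the component containing $x$. Then:
\begin{itemize}
\item $\phi$ is $1$-Lipschitz on each component;
\item $\phi$ fixes $Y$ and $Z$.
\end{itemize}
By Step 1 every pair in every $R_i$ lies in a single component. So, exactly as in \cref{lem:local scaling}, the images $(\phi(Y_i))$ and $P_i=\phi^2(R_i)$ form a combinatorial chain from $Y$ to $Z$. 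Moreover $\card(P_i)\leq\card(R_i)$, because points may collide, and $\omega$ is non-decreasing, so each weighted length does not increase. Hence $\elo(P_0,\dots,P_{k-1})\leq \elo(R_0,\dots,R_{k-1})$.

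\textbf{Step 3 (strictness, the main obstacle).} Suppose some intermediate point lies outside $\overline{B}(x,r')$ for its ball. Following a path from $Y$, there is a pair $(u,v)\in R_i$ with $u\in\overline{B}(x,r')$ and $v\notin\overline{B}(x,r')$. For the projection onto a closed convex set, with $b$ inside and $a$ outside, one has $\langle a-\phi(a),\phi(a)-b\rangle\geq 0$, and therefore
\begin{equation*}
\norm{a-b}^2\geq \norm{a-\phi(a)}^2+\norm{\phi(a)-b}^2>\norm{\phi(a)-b}^2 .
\end{equation*}
Applied to $a=v$, $b=u$, this gives $d(\phi(u),\phi(v))<d(u,v)$. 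The pair carries the weight $\omega(\card(R_i))\geq 1>0$, so the inequality of Step 2 is strict. This contradicts the fact that the chain is geodesic, i.e. that its length equals $\dom(Y,Z)$.

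Hence every $Y_j$ lies in $\bigcup_x\overline{B}(x,r')\subset\bigcup_x B(x,r)$. Each $Y_j$ also meets every $B(x,r)$: follow any $y\in Y\cap B(x,r)$ along the chain, and by Step 1 the path stays in the component of $x$, hence, by what was just shown, inside $\overline{B}(x,r')$. Each pair of each $R_i$ lies in a single $B(x,r)$. This is exactly $\BH(X,r)$-locality.

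The delicate points are the strict inequality for projections and the bookkeeping of cardinalities under collisions; both are handled as above.
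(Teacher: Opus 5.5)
Your proof is correct and follows the paper's strategy: take $\epsilon=\merg(X)-r$, show the chain cannot mix clusters, then apply a cluster-wise contraction onto $\overline{B}(x,r')$ that fixes $Y$ and $Z$, and use strict contraction on a pair leaving the ball to contradict minimality. The differences are technical substitutions, and both are improvements in care. You use the nearest-point projection onto the convex ball where the paper uses the spherical inversion of \cref{lem:Inversion_phi}, and you use a direct path-length estimate where the paper uses the MBS decomposition of \cref{theo:MBS} for the no-mixing step; your estimate treats every relation uniformly, including the bijection step that the paper only says is handled similarly, and it isolates a pair with one endpoint inside and one outside the ball, which is exactly where strictness is needed.
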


\begin{proof}
    Let $\epsilon=\merg(X)-r$. Since $Y$ and $Z$ are in the open ball $\BH(X,r)$, we may find $r'<r$ such that $Y$ and $Z$ are contained in the closed ball $\overline{B}_H(X,r')$. We will prove that any geodesic chain between $Y$ and $Z$ is contained in $\overline{B}_H(X,r')$. Consider a minimal combinatorial chain $((Y_0,\dots,Y_n),(R_0,\dots,R_{n-1}))$ between $Y$ and $Z$. By \cref{theo:MBS}, we may assume that it is a sequence of simple merges, a bijection and simple splittings. Let $R\subset Y\times Y'$ be the composition of all merges, $B\subset Y'\times Z'$ be the bijection and $S\subset Z'\times Z$ the composition of all splittings, such that $(R,B,S)$ is still a combinatorial chain between $Y$ and $Z$, and consider the following equivalence relation on $Y$. 
    \begin{equation*}
        y\sim y'\Leftrightarrow \exists z\in Y', \ yRz\text{ and } y'Rz
    \end{equation*}
    Now, let $y,y'\in Y$. There must be $x,x'\in X$ such that $y\in \overline{B}(x,r')$ and $y'\in \overline{B}(x',r')$. Now observe that if $y\sim y'$, we must have $x=x'$. Indeed, if it were not the case, we would have $d(y,y')\geq d(x,x')-2r'\geq 2\merg(X)-2r>\epsilon$, which would imply $\dom(Y,Y')>\epsilon$. However, by construction $\dom(Y,Z)=\dom(Y,Y')+\dom(Y',Z')+\dom(Z',Z)<\epsilon$. A symmetric reasoning on $S$ gives that no point in $Z'$ splits into 2 different clusters in $Z$, and we may show similarly that $B$ exchange points in $Y'$ and $Z'$ living in the neighborhood of the same points of $X$. Thus, we may write a partition
    \begin{equation*}
        R_i=\coprod_{x\in X}R_i^x\subset \coprod Y_i ^x\times Y_{i+1}^x
    \end{equation*}
    Now, for $x\in X$, consider the map $\varphi_x\colon M\to \overline{B}(x,r')$, defined by
    \begin{align*}
        \varphi_x\colon M&\to \overline{B}(x,r')\\
        y&\mapsto \left\{
        \begin{array}{cc}
           y  & \text{ if $y\in \overline{B}(x,r')$} \\
           x+r'^2\frac{y-x}{d(x,y)^2}  & \text{ otherwise} 
        \end{array}\right.
    \end{align*}
    By \cref{lem:Inversion_phi}, the application $\varphi_x$ contracts distances, and takes values in $\overline{B}(x,r')$, furthermore, it leaves $\overline{B}(x,r')$ unchanged. Thus, we may define for $0\leq i\leq n$
    \begin{equation*}
        Y'_i=\coprod_{x\in X}\varphi_x(Y_i^x)
    \end{equation*}
    \begin{equation*}
        R'_i=\coprod_{x\in X}\varphi_x(R_i^x)\subset Y'_i\times Y'_{i+1}
    \end{equation*}
    Observe that $Y'_0=Y_0=Y$ and $Y'_n=Y_n=Z$, since $Y,Z\in \overline{B}_H(X,r')$, and each $\varphi_x$ restrict to the identity on each $\overline{B}(x,r')$, for $x\in X$. Furthermore, the new chain $(R'_0,\dots,R'_{n-1})$ is $\overline{B}_H(X,r')$-local, and since $\varphi_x$ contracts distances, it is shorter than $(R_0,\dots,R_{n-1})$. However, since we assumed that $(R_0,\dots,R_{n-1})$ was a geodesic chain, we must have equality between the length of the two chains, which is only possible if $(R_0,\dots,R_{n-1})$ was already $\overline{B}_H(X,r')$-local. 
\end{proof}

\begin{lem}\label{lem:Inversion_phi}
    Let $x\in M$ and $r>0$, the map
    \begin{align*}
        \varphi_x\colon M&\to M\\
        y&\mapsto \left\{
        \begin{array}{cc}
             y& \text{ if $d(x,y)\leq r$} \\
             x+r^2\frac{y-x}{d(x,y)^2}& \text{if $d(x,y)> r$}
        \end{array}
        \right.
    \end{align*}
    satisfies
    \begin{itemize}
        \item $\varphi_x$ restricts to the identity on the closed ball $\overline{B}(x,r)$.
        \item $\varphi_x$ takes values in the closed ball $\overline{B}(x,r)$
        \item for all $y,z\in M$, $d(y,z)\geq d(\varphi_x(y),\varphi_x(z))$. With equality if and only if $y,z\in \overline{B}(x,r)$
    \end{itemize}
\end{lem}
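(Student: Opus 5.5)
The plan is to prove this as a statement about spherical inversion in $\R^p$ and reduce everything to explicit computations with $u=y-x$, $v=z-x$. The first two points are immediate from the formula. If $d(x,y)\leq r$ there is nothing to do. If $d(x,y)>r$, then $\norm{\varphi_x(y)-x}=r^2/\norm{y-x}<r$. So the image lies in $\overline{B}(x,r)$, and in fact in the open ball for points outside the closed ball. Continuity across the sphere $\norm{y-x}=r$ is clear, since both formulas give $y$ there.

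For the contraction property I will split into three cases according to the positions of $y$ and $z$.

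\begin{itemize}
\item[(i)] Both $y,z$ lie in $\overline{B}(x,r)$. This is trivial, with equality.
\item[(ii)] Both lie outside. Use the classical inversion identity
\begin{equation*}
\norm{\varphi_x(y)-\varphi_x(z)}=\frac{r^2\norm{u-v}}{\norm{u}\norm{v}},
\end{equation*}
which follows by expanding $\norm{u/\norm{u}^2-v/\norm{v}^2}^2=\norm{u-v}^2/(\norm{u}^2\norm{v}^2)$. Since $\norm{u}\norm{v}>r^2$, the distance strictly decreases whenever $y\neq z$.
\item[(iii)] $y$ lies inside and $z$ outside. First reduce to a segment argument. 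The segment $[y,z]$ crosses the sphere at some point $w$ with $\norm{w-x}=r$. Then $d(y,z)=d(y,w)+d(w,z)\geq d(\varphi_x(y),\varphi_x(w))+d(\varphi_x(w),\varphi_x(z))\geq d(\varphi_x(y),\varphi_x(z))$, using case (i) for the first term, case (ii) extended by continuity to the boundary for the second, and the triangle inequality at the end.
\end{itemize}

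The main obstacle is strictness in case (iii), and also in case (ii) when only the boundary is involved. In case (iii), the second step is strict whenever $z\neq w$. Indeed, the limit version of the formula gives $d(\varphi_x(w),\varphi_x(z))=r\norm{w-z}/\norm{v}<\norm{w-z}$, because $\norm{v}>r$. So the inequality is strict whenever $z$ lies outside the closed ball. This yields the equality characterization: equality forces neither point to lie strictly outside, i.e. $y,z\in\overline{B}(x,r)$.

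A degenerate situation needs a check: $y=z$ with $y$ outside gives equality $0=0$. So the "only if" part of the equality clause must be read for $y\neq z$, or it needs this caveat. I would note this explicitly, as it is harmless for the application in \cref{lem:boule hausdorff locale}, where only distinct pairs in relations matter.
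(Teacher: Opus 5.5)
Your proof is correct, but it follows a different route from the paper's.

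The paper argues synthetically after writing points as $\lambda v$ with $v\in S(0,r)$. For $y$ in the closed ball and $z$ outside, it projects $y$ orthogonally onto the line $(0,z)$ and uses Pythagoras, which reduces the problem to a one-dimensional comparison along that line. For two outside points, it first treats the radial case and the equal-radius case. It then handles the general case with an isosceles trapezoid $(y,y',z,z')$ all of whose sides shrink under $\varphi$. The last step there, that the diagonal then shrinks too, is left implicit; it follows e.g.\ from the relation $d^2=c^2+ab$ for isosceles trapezoids.

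You replace all of this by the single inversion identity $\norm{\varphi_x(y)-\varphi_x(z)}=r^2\norm{u-v}/(\norm{u}\norm{v})$. This settles the two-outside case in one line and gives an explicit contraction factor. You then reduce the mixed case to it via the point $w$ where $[y,z]$ meets the sphere. No continuity argument is actually needed at that step: since $\norm{w-x}=r$, the inversion formula itself returns $\varphi_x(w)=w$. So the identity applies verbatim and gives the factor $r/\norm{z-x}<1$, and combined with the triangle inequality this yields strictness.

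Your version is shorter and quantitative; the paper's stays within elementary plane geometry and matches its figure.

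Your caveat about $y=z$ outside the ball is also right. The paper's proof likewise only obtains the equality clause for $y\neq z$, as it notes in its special cases. That is all \cref{lem:boule hausdorff locale} needs: a chain that starts and ends in the ball but leaves it must contain a pair with one endpoint in $\overline{B}(x,r)$ and the other strictly outside. Such a pair has distinct endpoints, so the total length strictly decreases.
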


\begin{proof}
    Without loss of generality, we may assume that $x=0$, and consider the sphere $S(0,r)=\{v\in M\mid d(0,v)=r\}$. We will simply write $\varphi$ for $\varphi_0$. Then, any point $y\in M\setminus \{0\}$ can be written uniquely as $y=\lambda v$, with $\lambda\in \R_+$ and $v\in S(0,r)$. The map $\varphi$ then sends $\lambda v$ to either $\lambda v$, if $\lambda\leq 1$ or $\frac{1}{\lambda}v$ if $\lambda \geq 1$. This shows the first two points. For the last part, it is clear that $\varphi_0$ preserves distances in the closed ball, since it preserves all points. Now, let $y\in \overline{B}(0,r)$ and $z\in M\setminus\overline{B}(0,r)$, and consider $y'$ the orthogonal projection of $y$ on the line $(0,z)$. By construction, the triangles $(y,y',z)$ and $(y,y',\varphi(z))$ are both rectangle at $y'$ thus it is enough to show that $d(y',\varphi(z))<d(y',z)$. We have, by construction $z=\lambda v$ for some $\lambda>1$ and $v\in S(0,r)$, $\varphi(z)=\frac{1}{\lambda}v$ and $y'=\lambda'v$, with $\lambda'\in [-1,1]$. If $\lambda'\leq \frac{1}{\lambda}$, $\varphi(z)$ is in between $y'$ and $z$, and thus there is nothing to prove. Otherwise, We compute
    \begin{align*}
        d(z,y')-d(\varphi(z),y')&=\left(\lambda-\lambda'\right)r-\left(\lambda'-\frac{1}{\lambda}\right)r\\
        &=\frac{r}{\lambda}\left(\lambda^2-2\lambda\lambda'+1\right)\\
        &\geq \frac{r}{\lambda}\left(\lambda^2-2\lambda+1\right)\\
        &\geq \frac{r}{\lambda}(\lambda-1)^2>0
    \end{align*}
    which concludes this case.

    If $y,z\in M\setminus\overline{B}(0,r)$, we first consider two particular cases.
    \begin{itemize}
        \item if $y=\lambda_1v$ and $z=\lambda_2v$, with $v\in S(0,r)$, $\lambda_1>1$ and $|\lambda_2|\geq\lambda_1>1$. (This covers all cases where $0,y$ and $z$ are collinear, up to exchanging $y$ and $z$ and replacing $v$ by $-v$). Then 
        \begin{align*}
            d(\varphi(y),\varphi(z))&=\left|\frac{r}{\lambda_1}-\frac{r}{\lambda_2}\right|\\
            &=\frac{|r(\lambda_2-\lambda_1)|}{|\lambda_1\lambda_2|}\\
            &\leq r|\lambda_2-\lambda_1|=d(y,z)
        \end{align*}
        Where the equality is obtained only if $y=z$.
        \item if $y=\lambda v_1$, and $z=\lambda v_2$, with $\lambda>1$ and $v_1,v_2\in S(0,r)$. Then, $\varphi(z)=\frac{z}{\lambda^2}$ and $\varphi(y)=\frac{y}{\lambda^2}$ and thus, $\varphi(z)-\varphi(y)=\frac{z-y}{\lambda^2}$, which implies that $d(\varphi(z),\varphi(y))\leq d(y,z)$ with equality if and only if $y=z$.
    \end{itemize}
    Now, for the general case, consider $y=\lambda_1v_1$ and $z=\lambda_2v_2$. We may assume, $\lambda_1\not=\lambda_2$ and that they are both positive, and that $v_1\not = \pm v_2$, since those cases have been addressed. By symmetry, consider $\lambda_1<\lambda_2$, and consider $y'=\lambda_1v_2$ and $z'=\lambda_2v_1$. Now, observe that $(y,y',z,z')$ is an isosceles trapezoid, and that $d(y,z)$ corresponds to the length of a diagonal. Furthermore, $(\varphi(y),\varphi(y'),\varphi(z),\varphi(z'))$ is also an isosceles trapezoid, and each of its side is strictly shorter than the corresponding side in $(y,y',z,z')$, by the two previous special cases. Furthermore, the sides of the two trapezoid are parallel, directed by $v_1$, $v_2$ and $v_2-v_1$ respectively. This implies that $d(\varphi(y),\varphi(z))<d(y,z)$.
    
    This proof is represented \cref{fig:Inversion_into_Ball}
\end{proof}

\begin{figure}
    \centering
    \begin{tikzpicture}
    \draw (0,0) circle (2.3cm);
    \filldraw (0,0) circle (2pt);
     \node at (0,-0.5) {$0$};
    \draw(-3,5)--(0,0);
    \filldraw (-2.4,4) circle (2pt);
    \node at (-2.1,4.2) {$z$};
    \filldraw (-0.6,1) circle (2pt);
    \node at(-1,0.6) {$\varphi(z)$};
    \filldraw (0,2) circle (2pt);
    \node at (0.3,2) {$y$};
    \draw (-0.6,1)--(0,2)--(-2.4,4);
    \filldraw (-0.88,1.47) circle (2pt);
    \node at (-1.3,1.5) {$y'$};
    \draw[dashed](-0.88,1.47)--(0,2);

    \draw[shift={(7,0)}] (0,0) circle (2.3cm);
    \filldraw[shift={(7,0)}] (0,0) circle (2pt);
     \node[shift={(7,0)}] at (0,-0.5) {$0$};
     \draw[shift={(7,0)}] (0,0)--(-4,4);
     \draw[shift={(7,0)}] (0,0)--(4,4);
     \filldraw[shift={(7,0)}] (-2.6,2.6) circle (2pt);
     \node[shift={(7,0)}] at (-2.9,2.4) {$y$};
     \filldraw[shift={(7,0)}] (4,4) circle (2pt);
     \node[shift={(7,0)}] at (4,4.5) {$z$};
     \draw[shift={(7,0)}](-2.6,2.6)--(4,4);
     \draw[shift={(7,0)}](-2.6,2.6)--(2.6,2.6);
     \filldraw[shift={(7,0)}] (2.6,2.6) circle (2pt);
     \node[shift={(7,0)}] at (2.8,2.4) {$y'$};
     \draw[shift={(7,0)}](-4,4)--(4,4);
     \filldraw[shift={(7,0)}] (-4,4) circle (2pt);
     \node[shift={(7,0)}] at (-4,4.5) {$z'$}; 

     \filldraw[shift={(7,0)}] (1.3,1.3) circle (2pt);
     \node[shift={(7,0)}] at (1,1.6) {$\varphi(y')$};
     \filldraw[shift={(7,0)}] (-1.3,1.3) circle (2pt);
    \node[shift={(7,0)}] at (-1,1.6) {$\varphi(y)$};
    \draw[shift={(7,0)}] (-1.3,1.3)--(1.3,1.3);
    \draw[shift={(7,0)}](-0.8,0.8)--(0.8,0.8);
    \filldraw[shift={(7,0)}] (-0.8,0.8) circle (2pt);
    \filldraw[shift={(7,0)}] (0.8,0.8) circle (2pt);
    \node[shift={(7,0)}] at (-1.3,0.6) {$\varphi(z')$};
    \node[shift={(7,0)}] at (1.3,0.6) {$\varphi(z)$};
    \draw[shift={(7,0)}] (-1.3,1.3)--(0.8,0.8);
    \end{tikzpicture}
    \caption{An illustration of the proof of \cref{lem:Inversion_phi}}
    \label{fig:Inversion_into_Ball}
\end{figure}

\subsection{Scaling}
\label{subsec:scaling}
In this section, we use the notion of locality to define the equivalent of a homothety on $\Ran(M)$. We will use two slightly different points of view of this notion: (i) in \cref{def:shifting function} we define an application called $\Shift$ that has four variables: the "centers" of the antecedent and image configurations, the scaling factor, and the configuration in $\Ran(M)$ to which the homothety is applied; (ii) alternatively, in \cref{def:shift system,def:shift system scaling} we define an application on $M$ indexed by a set of parameter $\Sigma$ called a \textbf{shift system}. The two point of view are closely related, as outlined in \cref{prop:shifting equivalence} but they serve different purposes. The first point of view is better suited to describe the continuity of the homothety, and the second is better suited to describe bijectivity and inverses. Those are the two properties that we will need to build our homeomorphism. Moreover, the second point of view also enables us to apply \cref{lem:boule hausdorff locale} directly. This ultimately justifies this somewhat double-definition. Note that, what we call a generalized homothety is in fact the composition of a homothety with a translation.

We first generalize some elementary transformations on $M$ onto $\Ran(M)$.
\begin{defin}
    We define translation and scaling function as
    \begin{align*}
        +\colon  \Ran(M)\times M&\to \Ran(M)\\
        (Y,x)&\mapsto Y+x=\{y+x|y\in Y\}\\
        -\colon \Ran(M)\times M &\to \Ran(M)\\
        (Y,x)&\mapsto Y-x=\{y-x|y\in Y\}\\
        \scale\colon \R^+\times \Ran(M)&\to \Ran(M)\\
        (s,X)&\mapsto s\times X=\{sx|x\in X\}
    \end{align*}
    and, for any $n\in\N^*$
    \begin{align*}
        +\colon \left(\Ran(M)\right)^n\times M^n&\to \left(\Ran(M)\right)^n\\
        (\underline{Y},\underline{x})&\mapsto \underline{Y}+\underline{x}=\left(Y^{(1)}+x^{(1)},\dots, Y^{(n)}+x^{(n)}\right)\\
        -\colon \left(\Ran(M)\right)^n \times M^n&\to \left(\Ran(M)\right)^n\\
        (\underline{Y},\underline{x})&\mapsto \underline{Y}-\underline{x}=\left( Y^{(1)}-x^{(1)},\dots,Y^{(n)}-x^{(n)}\right)\\
        \scale\colon \R^+\times \left(\Ran(M)\right)^n&\to \left(\Ran(M)\right)^n\\
        (s,\underline{Y})&\mapsto s\times \underline{Y} =\left(s\times Y^{(1)},\dots,s \times Y^{(n)}\right)
    \end{align*}
\end{defin}

Then we define the \textbf{clustering} function, that gives a partition of a configuration with respect to a reference configuration $X\in\Ran(M)$. We also define the inverse application that gives the union of a family of clusters.
\begin{defin}
\label{def:clustering}
    Let $X\in\Ran(M)$ be a configuration in $\Ran(M)$, $n=\card(X)$, and $\underline{x}=(x^{(1)},\dots,x^{(n)})\in\pi_n^{-1}(\{X\})$. The $\underline{x}$-clustering function is defined as
    \begin{align*}
        \ClustX\colon \BH(X,\merg(X))&\to \prod_{i=1}^n \Ran(B(x^{(i)},\merg(X)))\\
        Y&\mapsto \left(Y \cap B(x^{(i)},\merg(X)) \right)_{1\leq i\leq n}
    \end{align*}
    We also note, for any $\underline{Y}=(Y^{(1)},\dots,Y^{(n)})\in\left(\Ran(M)\right)^n$, 
    \begin{align*}
        &\bigcup \underline{Y}=\bigcup_{i=1}^n Y^{(i)}\\
        &\bigsqcup \underline{Y}=\bigsqcup_{i=1}^n Y^{(i)} \quad \text{ if }  Y^{(i)}\cap Y^{(j)}=\emptyset, \forall i\neq j\in\{1,\dots,n\}.
    \end{align*}
\end{defin}

\begin{rem}
    For any $Y\in\BH(X,\merg(X))$, we have $Y=\bigsqcup\ClustX(Y)$.
\end{rem}

Then we show that this clustering function is continuous.
\begin{prop}
\label{prop: clust continuous}
Let $X\in\Ran(M)$ be a configuration in $\Ran(M)$, $n=\card(X)$, and $\underline{x}=(x^{(1)},\dots,x^{(n)})\in\pi_n^{-1}(\{X\})$. The $\underline{x}$-clustering function is $\omega$-continuous on $\BHc(X,\frac{\merg(X)}{2})$.
\end{prop}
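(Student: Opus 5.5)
We prove the stronger statement that, on $\BHc(X,\frac{\merg(X)}{2})$, the map $\ClustX$ is $1$-Lipschitz for small distances. The target $\prod_{i=1}^n \Ran(B(x^{(i)},\merg(X)))$ is equipped with the product of the $\omega$-topologies, and we use the metric $\sum_i \dom$ on it. Write $r=\frac{\merg(X)}{2}$ and fix $Y\in\BHc(X,r)$. Since $Y$ lies in the closed ball, we may pick $r<r'<\merg(X)$ with $Y\in\BH(X,r')$. Note that $\BH(X,r')$ is open for $\tom$ by \cref{prop:tom raffine tH}.

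We apply \cref{lem:boule hausdorff locale} with radius $r'$, which gives some $\epsilon>0$. We then take $Z\in\BH(X,r')$ with $\dom(Y,Z)<\epsilon$. By \cref{lem:boule hausdorff locale}, every geodesic chain from $Y$ to $Z$ is $\BH(X,r')$-local. To obtain geodesic chains we cannot simply invoke \cref{theo:Geodesic_Chain}, since the restricted space need not satisfy its hypothesis. However $M=\R^p$ has the Heine--Borel property, so \cref{theo:Geodesic_Chain} applies in $M$ itself and gives a geodesic chain $((Y_0,\dots,Y_k),(R_0,\dots,R_{k-1}))$ from $Y$ to $Z$, which is then local.

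Locality means that each $R_j$ splits as a disjoint union $R_j=\coprod_i R_j^{(i)}$, where $R_j^{(i)}$ is a surjective relation between $Y_j\cap B(x^{(i)},r')$ and $Y_{j+1}\cap B(x^{(i)},r')$. These are exactly the clusters, because $r'<\merg(X)$. Each piece satisfies $\card(R_j^{(i)})\le\card(R_j)$, so $\omega$ being non-decreasing gives $\elo(R_j^{(i)})\le\omega(\card(R_j))\ell(R_j^{(i)})$. Summing over $i$ and $j$ yields
\[
\sum_{i=1}^n \dom\bigl(Y\cap B(x^{(i)},\merg(X)),\,Z\cap B(x^{(i)},\merg(X))\bigr)\le \sum_{j}\elo(R_j)=\dom(Y,Z).
\]
Here each cluster chain is a chain in $\Ran(B(x^{(i)},\merg(X)))$, so the distance on the left is the one computed in that Ran space. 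This chain lies in $\Ran(B(x^{(i)},r'))$, which is contained in $\Ran(B(x^{(i)},\merg(X)))$. By \cref{corol:Opens_induce_open_embeddings}, or simply because chains in the subspace are chains in $M$, the distance there is at most the length of the chain, so the inequality holds regardless of which ambient space is used.

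Hence $\ClustX$ is $1$-Lipschitz on the $\tom$-neighbourhood $\BH(X,r')\cap\Bom(Y,\epsilon)$ of $Y$, and in particular it is continuous at $Y$. The main obstacle is the locality step, and it is handled by \cref{lem:boule hausdorff locale}. The one point to check is that clusters defined with radius $\merg(X)$ agree with the local decomposition at radius $r'$. This holds because the open balls $B(x^{(i)},\merg(X))$ are pairwise disjoint, since $\merg(X)$ is half the minimal distance between points of $X$, and every point of a configuration in $\BH(X,r')$ lies in exactly one of them.
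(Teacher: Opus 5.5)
Your proposal is correct and follows essentially the paper's argument: you obtain a geodesic chain in $\R^p$ via \cref{theo:Geodesic_Chain}, use \cref{lem:boule hausdorff locale} to see that it is local, and restrict it cluster by cluster to bound the cluster distances by $\dom(Y,Z)$. Your summed estimate (a local $1$-Lipschitz bound for $\sum_i \dom$) and your use of open neighbourhoods instead of sequences are only minor strengthenings of the same proof.
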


\begin{proof}
Let $(Y_k)_{k\in\N}\in\BHc(X,\frac{\merg(X)}{2})$, be a sequence that converges to $Y\in\BHc(X,\frac{\merg(X)}{2})$ for the distance $\dom$, and let $\left(Y^{(1)},\dots,Y^{(n)}\right)=\ClustX(Y)$ and $\left(Y_k^{(1)},\dots,Y_k^{(n)}\right)=\ClustX(Y_k)$ for all $k\geq 0$, and let us show that $\ClustX(Y_k)\to\ClustX(Y)$, i.e. that $Y_k^{(j)}\to Y^{(j)}$ for all $1\leq j \leq n$. By \cref{lem:boule hausdorff locale} there exists $\epsilon>0$ such that for all $Z\in\BHc(X,\frac{\merg(X)}{2})\subset\BH(X,\frac{3\merg(X)}{4})$, if $\dom(Y,Z)<\epsilon$ then any geodesic chain from $Y$ to $Z$ is $\BH(X,\frac{3\merg(X)}{4})$-local, and thus $\BH(X,\merg(X))$-local. But since $Y_k\to Y$, there exists $K\geq 0$ such that for any $k\geq K$, $\dom(Y_k,Y)<\epsilon$. Let $k\geq K$, then, by \cref{theo:Geodesic_Chain}, there exists a geodesic chain $((Y_k=Z_0,\dots,Z_l=Y),(R_0,\dots,R_{l-1}))$ from $Y_k$ to $Y$, which will necessarily be $\BH(X,\merg(X))$-local.

Let $1\leq j\leq n$, and let $Z_m^{(j)}=Z_m\cap B(x^{(j)},\merg(X))$ for all $0\leq m\leq l$ and $R_m^{(j)}=R_m\cap B^2(x^{(j)},\merg(X))$ for all $0\leq m\leq l-1$. Then $((Z_0^{(j)},\dots,Z_l^{(j)}),(R_0^{(j)},\dots,R_{l-1}^{(j)}))$ is a combinatorial chain from $Y_k^{(j)}$ to $Y^{(j)}$. We thus have
\begin{align*}
    \dom(Y_k^{(j)},Y^{(j)})\leq \elo(R_0^{(j)},\dots,R_{l-1}^{(j)})\leq \elo(R_0,\dots,R_{l-1})=\dom(Y_k,Y)
\end{align*}
 which converges to zero, so that indeed $\ClustX(Y_k)\to\ClustX(Y)$. Since $(\Ran(M),\tom)$ is metric, this concludes this proof.
\end{proof}

We now have everything to define the generalized homothety on $\Ran(M)$ from both perspectives.
\begin{defin}
\label{def:shifting function}
    Let $n\in\N^*$, and $A_n=\{(\underline{x},\underline{y},s,Z)\in M^n\times M^n \times \R^+ \times \Ran(M) \mid \card(\pi_n(\underline{x}))=n, Z\in \BH(\pi_n(\underline{x}),\merg(\pi_n(\underline{x})))\}$. We define the $n$-\textbf{shifting function} as 
    \begin{align*}
        \Shift_n\colon A_n &\to\Ran(M)\\
        (\underline{x},\underline{y},s,Z) & \mapsto \bigcup \left ( s\scale (\ClustX(Z) - \underline{x}) + \underline{y}  \right)
    \end{align*}
    In the following, the index $n$ will often be omitted.
\end{defin}

\begin{defin}
\label{def:shift system}
    Let $n\in\N^*$. A $n$-\textbf{shift system} is a quadruplet $\Sigma=(\underline{x},\underline{y},s,r)\in M^n\times M^n \times \R^+ \times \R^+$ such that $\card(\pi_n(\underline{x}))=n$, and $0<r\leq\merg(\pi_n(\underline{x})).$

    A $n$-shift system  $\Sigma=(\underline{x},\underline{y},s,r)$ is \textbf{bijective} if $\card(\pi_n(\underline{y}))=n$ and $0<s\leq\frac{\merg(\pi_n(\underline{y}))}{r}$.

    The \textbf{dual system} of a bijective $n$-shift system $\Sigma=(\underline{x},\underline{y},s,r)$ is $\overline{\Sigma}=(\underline{y},\underline{x},\frac{1}{s},sr)$.
\end{defin}

\begin{rem}
We will often drop the $n$ when there is no ambiguity.
\end{rem}

\begin{defin}
\label{def:shift system scaling}
Let $n\in\N^*$ and $\Sigma=(\underline{x},\underline{y},s,r)$ be a $n$-shift system. The \textbf{$\Sigma$-shifting function} is 
\begin{align*}
\phi_{\Sigma}\colon \bigsqcup_{i=1}^n B(x^{(i)},r) &\to \bigcup_{i=1}^n B(y^{(i)},sr)\\
z & \mapsto s(z-x^{(i)})+y^{(i)} \quad\text{ if } z\in B(x^{(i)},r).
\end{align*}
\end{defin}

\begin{rem}
This function is well defined. Indeed, since $r\leq \merg(\pi_n(\underline{x}))$, the $B(x^{(i)},r)$ are pairwise disjoint, and since $\dm$ is induced by the Euclidean norm $\dm(\phi_\Sigma(z),y^{(i)})=s\dm(z,x^{(i)})<sr$ for any $z\in B(x^{(i)},r)$, and any $i\in\{1,\dots,n\}$.
\end{rem}

These two points of view are actually equivalent, and we detail the relation from one to the other in the following proposition.
\begin{prop}
\label{prop:shifting equivalence}
    Let $n\in\N^*$, let $\Sigma=(\underline{x},\underline{y},s,r)$ be a $n$-shift system, and let $Z\in \BH(\pi_n(\underline{x}),r)$
    \begin{enumerate}
        \item\label{it:shifting equivalence} $\Shift(\underline{x},\underline{y},s,Z)=\{\phi_\Sigma(z)\mid z\in Z\}=\bm{\phi}_\Sigma(Z)$.
        \item \label{it:shifting bijection} If $\Sigma$ is a bijective shift system, then
        \begin{enumerate}
            \item\label{it:shifting inverse} $\phi_\Sigma$ is a bijection of inverse $\phi_{\overline{\Sigma}}$
            \item\label{it:shift conserves cardinal} $\card(\Shift(\underline{x},\underline{y},s,Z))=\card(Z).$
            \item\label{it:shifting identity} $\Shift\left(\underline{y},\underline{x},\frac{1}{s},\Shift(\underline{x},\underline{y},s,Z)\right)=Z.$
        \end{enumerate}
    \end{enumerate}
\end{prop}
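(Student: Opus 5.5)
The plan is to unwind the definitions, since every claim reduces to elementary facts about the affine maps $z\mapsto s(z-x^{(i)})+y^{(i)}$ on the disjoint balls $B(x^{(i)},r)$. Write $X=\pi_n(\underline{x})$. Because $Z\in\BH(X,r)$ and $r\leq\merg(X)$, every $z\in Z$ lies in exactly one ball $B(x^{(i)},r)\subset B(x^{(i)},\merg(X))$, and each such ball meets $Z$. In particular $Z\in\BH(X,\merg(X))$, so $(\underline{x},\underline{y},s,Z)\in A_n$ and $\Shift$ is defined there.

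For \cref{it:shifting equivalence}, I would note that $\ClustX(Z)^{(i)}=Z\cap B(x^{(i)},\merg(X))=Z\cap B(x^{(i)},r)$. The second equality holds because a point of $Z$ in $B(x^{(i)},\merg(X))$ cannot lie in $B(x^{(j)},r)$ for $j\neq i$, since the big balls are disjoint. It follows that $s\times(\ClustX(Z)^{(i)}-x^{(i)})+y^{(i)}=\{\phi_\Sigma(z)\mid z\in Z\cap B(x^{(i)},r)\}$. Taking the union over $i$ then gives $\phi_\Sigma(Z)$, because the sets $Z\cap B(x^{(i)},r)$ partition $Z$.

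For \cref{it:shifting inverse}, assume $\Sigma$ is bijective, and write $Y=\pi_n(\underline{y})$. Then $sr\leq\merg(Y)$, so the balls $B(y^{(i)},sr)$ are pairwise disjoint and $\overline{\Sigma}=(\underline{y},\underline{x},1/s,sr)$ is a shift system. On $B(x^{(i)},r)$ the map $\phi_\Sigma$ is a bijection onto $B(y^{(i)},sr)$, with inverse $w\mapsto \frac1s(w-y^{(i)})+x^{(i)}$; this inverse is exactly $\phi_{\overline{\Sigma}}$ restricted to $B(y^{(i)},sr)$. Since the domain balls and the target balls are both disjoint families, these pieces glue to a global bijection $\bigsqcup_i B(x^{(i)},r)\to\bigsqcup_i B(y^{(i)},sr)$ whose inverse is $\phi_{\overline{\Sigma}}$.

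Then \cref{it:shift conserves cardinal} follows from \cref{it:shifting equivalence} together with the injectivity of $\phi_\Sigma$. For \cref{it:shifting identity}, set $W=\phi_\Sigma(Z)$. The only point needing care, which I expect to be the main obstacle, is checking that $W\in\BH(Y,sr)$, so that \cref{it:shifting equivalence} applies to the system $\overline{\Sigma}$ and the configuration $W$. To see this, observe that each $w\in W$ lies in some $B(y^{(i)},sr)$, and each ball $B(y^{(i)},sr)$ contains $\phi_\Sigma(z)$ for some $z\in Z\cap B(x^{(i)},r)$, which is nonempty. Then $\dH(W,Y)<sr$ holds, by finiteness and the locality characterization of \cref{rem:card local}, using $sr\leq\merg(Y)$. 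Granting this, $\Shift(\underline{y},\underline{x},1/s,W)=\phi_{\overline{\Sigma}}(\phi_\Sigma(Z))=Z$.
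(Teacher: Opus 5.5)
Your proof is correct and follows the same route as the paper. You unpack $\Shift$ via the clusters, identify $Z\cap B(x^{(i)},\merg(X))$ with $Z\cap B(x^{(i)},r)$, and glue the ball-wise affine bijections; your explicit check that $\phi_\Sigma(Z)\in\BH(\pi_n(\underline{y}),sr)$, which is needed to apply the first item to the dual system $\overline{\Sigma}$, is a detail the paper leaves implicit.
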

\begin{proof}
Let $X=\pi_n(\underline{x})$. By unpacking \cref{def:shifting function}, we have
\begin{align*}
    \Shift(\underline{x},\underline{y},s,Z)=\bigcup_{i=1}^n \{s(z-x^{(i)})+ y^{(i)} \mid z\in Z\cap B(x^{(i)},\merg(X))\}.
\end{align*}
Since $Z\in\BH(X,r)$, and $r\leq\merg(X)$, for any $z\in Z$, there is a unique $i\in\{1,\dots,n\}$ such that $\dm(z,x^{(i)})<\merg(X)$, and $\dm(z,x^{(i)})=\min_{x\in X}\{\dm(z,x)\}\leq \dH(Z,X)<r$. So that $Z\cap B(x^{(i)},\merg(X))=Z\cap B(x^{(i)},r)$ and $Z=\bigsqcup_{i=1}^n Z\cap B(x^{(i)},r)$. We thus have
\begin{align*}
    \Shift(\underline{x},\underline{y},s,Z)=\bigcup_{i=1}^n \{\phi_\Sigma(z) \mid z\in Z\cap B(x^{(i)},r)\}=\{\phi_\Sigma(z)\mid z\in Z\},
\end{align*}
which proves \cref{it:shifting equivalence}.

Let us suppose now that $\Sigma$ is bijective. In that case $\card(\pi_n(\underline{y}))=n$ so that the $y^{(i)}$ are pairwise distinct, for $1\leq i\leq n$. Moreover $sr\leq \merg(\pi_n(\underline{y}))$, so that the balls $B(y^{(i)},sr)$ are pairwise disjoint, for $1\leq i\leq n$. Moreover, for any $i\in\{1,\dots,n\}$, the restriction $\phi_\Sigma|_{B(x^{(i)},r)}^{B(y^{(i)},sr)}$ is a bijection, so that $\phi_\Sigma$ is a bijection of inverse
\begin{align*}
\phi_{\overline{\Sigma}}\colon \bigsqcup_{i=1}^n B(y^{(i)},sr) &\to \bigsqcup_{i=1}^n B(x^{(i)},r)\\
z & \mapsto \frac{1}{s}(z-y^{(i)})+x^{(i)} \quad\text{ if } z\in B(y^{(i)},sr).
\end{align*}
By the previous argument, we thus have 
$$\card(\Shift(\underline{x},\underline{y},s,Z))=\card(\bm{\phi}_\Sigma(Z))=\card(Z),$$
and
$$\Shift\left(\underline{y},\underline{x},\frac{1}{s},\Shift(\underline{x},\underline{y},s,Z)\right)=\bm{\phi}_{\overline{\Sigma}}(\bm{\phi}_\Sigma(Z))=Z.$$
\end{proof}

\begin{rem}
\label{rem:shifting conditions}
Let $(\underline{x},\underline{y},s,Z)\in A_n$. If $\card(\pi_n(\underline{y}))=n$ and $0<s\dH(\pi_n(\underline{x}),Z)< \merg(\pi_n(\underline{y}))$, then, we can set $r=\min\{\merg(\pi_n(\underline{x})),\frac{\merg(\pi_n(\underline{y}))}{s}\}$, so that $(\underline{x},\underline{y},s,r)$ is a bijective system such that $\dH(\pi_n(\underline{x}),Z)<r$ and we can apply \cref{prop:shifting equivalence}. 
\end{rem}

We now take advantage of both points of view to show that the shifting function is continuous. We will need the following upper bounds.
\begin{prop}
\label{prop: shifting upper bounds}
    Let $n\in\N^*$, and $(\underline{x},\underline{y},s,Z)\in A_n$, and $X=\pi_n(\underline{x})$.
    \begin{enumerate}
        \item\label{it: x upper bound} For any $\underline{\hat{x}}\in M^n$ such that $(\underline{\hat{x}},\underline{y},s,Z)\in A_n$ and $\Nsum{\underline{x}-\underline{\hat{x}}}\leq\merg(X)-\dH(X,Z)$, 
        \begin{align*}
            \dom(\Shift(\underline{x},\underline{y},s,Z),\Shift(\underline{\hat{x}},\underline{y},s,Z))\leq s\Nsum{\underline{x}-\underline{\hat{x}}}\card(Z)\omega(\card(Z))
        \end{align*}
        \item\label{it: y upper bound} For any $\underline{\hat{y}}\in M^n$, 
        \begin{align*}
            \dom(\Shift(\underline{x},\underline{y},s,Z),\Shift(\underline{x},\underline{\hat{y}},s,Z))\leq \Nsum{\underline{y}-\underline{\hat{y}}}\card(Z)\omega(\card(Z))
        \end{align*}
        \item\label{it: s upper bound} For any $\hat{s}\geq 0$, 
        \begin{align*}
            \dom(\Shift(\underline{x},\underline{y},s,Z),\Shift(\underline{x},\underline{y},\hat{s},Z))\leq \abs{s-\hat{s}}\card(Z)\omega(\card(Z)) \dom(X,Z)
        \end{align*}
        \item\label{it: Z upper bound} For any $\dH(X,Z)<r<\merg(X)$, there exists $\epsilon\geq \merg(X)-r >0$ such that for any $\hat{Z}\in\BH(X,r)$, if $\dom(Z,\hat{Z})<\epsilon$ then
        \begin{equation*}
            \dom(\Shift(\underline{x},\underline{y},s,Z),\Shift(\underline{x},\underline{y},s,\hat{Z}))\leq s \dom(Z,\hat{Z})
        \end{equation*}
    \end{enumerate}
\end{prop}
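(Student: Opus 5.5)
For each of the four items I will construct an explicit combinatorial chain (in most cases a single relation) between the two shifted configurations and bound its $\omega$-length. Throughout, write $Z=\bigsqcup_i Z^{(i)}$ with $(Z^{(1)},\dots,Z^{(n)})=\ClustX(Z)$.

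For items \ref{it: x upper bound} and \ref{it: y upper bound}, consider the relation
\[
R=\{(s(z-x^{(i)})+y^{(i)},\ s(z-\hat{x}^{(i)})+y^{(i)})\mid z\in Z^{(i)},\ 1\leq i\leq n\},
\]
and its analogue with $\hat y$. Surjectivity is immediate, since each side is by definition the image of $Z$ (using \cref{it:shifting equivalence} of \cref{prop:shifting equivalence}). For item \ref{it: x upper bound}, the hypothesis $\Nsum{\underline{x}-\underline{\hat{x}}}\leq\merg(X)-\dH(X,Z)$ is what makes the clustering of $Z$ around $\underline{\hat x}$ agree with the clustering around $\underline{x}$: each $z\in Z^{(i)}$ satisfies $d(z,\hat x^{(i)})<\merg(X)$ and stays farther from the other $\hat x^{(j)}$. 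I expect this to be the fiddly step. If $\merg(\pi_n(\underline{\hat x}))$ turns out to be slightly smaller than $\merg(X)$, I will instead argue directly that $z$ lies in the ball of index $i$ for the $\underline{\hat x}$-clustering as well. Granting this, $\card(R)\leq\card(Z)$ and each pair is at distance $s\,d(x^{(i)},\hat x^{(i)})\leq s\Nsum{\underline{x}-\underline{\hat{x}}}$. This gives $\elo(R)\leq\omega(\card(Z))\card(Z)\,s\Nsum{\underline x-\underline{\hat x}}$ by monotonicity of $\omega$. Item \ref{it: y upper bound} is identical, with pair distances $d(y^{(i)},\hat y^{(i)})$ and no clustering issue, since $\underline y$ does not enter the clustering.

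For item \ref{it: s upper bound}, I will use the same type of relation, pairing $s(z-x^{(i)})+y^{(i)}$ with $\hat s(z-x^{(i)})+y^{(i)}$. The distance of such a pair is $\abs{s-\hat s}\,d(z,x^{(i)})$. Since $z$ lies in the cluster of $x^{(i)}$, we have $d(z,x^{(i)})=d(z,X)\leq\dH(X,Z)\leq\dom(X,Z)$ by \cref{prop:dH leq dom}. Summing over at most $\card(Z)$ pairs and weighting by $\omega(\card(Z))$ gives the bound.

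For item \ref{it: Z upper bound}, I will apply \cref{lem:boule hausdorff locale} with this $r$ to obtain $\epsilon\geq\merg(X)-r$ such that any geodesic chain from $Z$ to $\hat Z$ with $\dom(Z,\hat Z)<\epsilon$ is $\BH(X,r)$-local. Such a geodesic chain exists by \cref{theo:Geodesic_Chain}, since $\R^p$ has the Heine–Borel property. Next I will take the shift system $\Sigma=(\underline x,\underline y,s,r)$, whose map $\phi_\Sigma$ is $s$-Lipschitz on each ball $B(x^{(i)},r)$ and, by \cref{it:shifting equivalence}, realizes $\Shift$ on $\BH(X,r)$. \Cref{lem:local scaling} then yields $\dom(\phi_\Sigma(Z),\phi_\Sigma(\hat Z))\leq s\,\dom(Z,\hat Z)$. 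The one point to check is that \cref{lem:local scaling} only needs the $s$-Lipschitz estimate for pairs lying in a common ball, which is exactly what locality of the chain provides. The strict inequality $r<\merg(X)$ required there holds by hypothesis.
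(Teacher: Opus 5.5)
Your proposal is correct and follows the paper's proof essentially step for step. The paper uses the same one-relation chain $\{(\phi_\Sigma(z),\phi_{\hat\Sigma}(z))\mid z\in Z\}$ for items (a)--(c). It resolves the clustering issue in (a) exactly as you intend: it shows that $\hat{x}^{(i)}$ is a nearest point of $\pi_n(\underline{\hat x})$ to $z$, and combines this with $Z\in\BH(\pi_n(\underline{\hat x}),\merg(\pi_n(\underline{\hat x})))$, which is what membership in $A_n$ provides. It proves (d) via \cref{lem:boule hausdorff locale}, \cref{theo:Geodesic_Chain} and \cref{lem:local scaling}.

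One small correction: your intermediate claim $d(z,\hat{x}^{(i)})<\merg(X)$ can fail, since the hypothesis only gives $\leq$. That bound is not needed once you close the step with the nearest-point argument above.
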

\begin{proof}
    Let $\underline{\hat{x}}\in M^n$ such that $(\underline{\hat{x}},\underline{y},s,Z)\in A_n$ and $\Nsum{\underline{x}-\underline{\hat{x}}}\leq\merg(X)-\dH(X,Z)$, and let $\hat{X}=\pi_n(\hat{x})$. Let us define the shift systems $\Sigma=(\underline{x},\underline{y},s,\merg(X))$ and $\hat{\Sigma}=(\underline{\hat{x}},\underline{y},s,\merg(\hat{X}))$. Then, by \cref{prop:shifting equivalence} \cref{it:shifting equivalence}, we have
    \begin{align*}
        \Shift(\underline{x},\underline{y},s,Z)&=\bm{\phi}_\Sigma(Z)\\
        \Shift(\underline{\hat{x}},\underline{y},s,Z)&=\bm{\phi}_{\hat{\Sigma}}(Z).
    \end{align*}
    We can thus define the following surjective relation between $\Shift(\underline{x},\underline{y},s,Z)$ and $\Shift(\underline{\hat{x}},\underline{y},s,Z)$:
    \begin{align*}
        R=\{(\phi_\Sigma(z),\phi_{\hat{\Sigma}}(z))\mid z\in Z\}.
    \end{align*}
    We thus have
    \begin{align*}
        \dom(\Shift(\underline{x},\underline{y},s,Z),\Shift(\underline{\hat{x}},\underline{y},s,Z))&\leq \elo(R)\leq \omega(\card(Z))\sum_{z\in Z} \dm(\phi_\Sigma(z),\phi_{\hat{\Sigma}}(z)).
    \end{align*}
    For any $z\in Z$, there is a unique $i\in\{1,\dots,n\}$ such that $\dm(z,x^{(i)})\leq \dH(X,Z) <\merg(X)$, and a unique $j\in\{1,\dots,n\}$ such that $\dm(z,\hat{x}^{(j)})=\min_{k\in\{1,\dots,n\}}\{\dm(z,\hat{x}^{(k)})\}<\merg(\hat{X})$. Let us take $1\leq k\leq n$, such that $k\neq i$, then
    \begin{align*}
        \dm(z,\hat{x}^{(k)})&\geq \dm(z,x^{(k)}) - \dm(x^{(k)},\hat{x}^{(k)})\\
        &\geq \dm(x^{(i)},x^{(k)}) - \dm(z,x^{(i)}) - \dm(x^{(k)},\hat{x}^{(k)}) \\
        &\geq 2\merg(X) - \dH(X,Z) - \Nsum{\underline{x}-\underline{\hat{x}}}\\
        &\geq \merg(X)\\
        &\geq \dH(X,Z) + \Nsum{\underline{x}-\underline{\hat{x}}}\\
        &\geq \dm(z,x^{(i)}) + \dm(x^{(i)},\hat{x}^{(i)})\\
        &\geq \dm(z,\hat{x}^{(i)}).
    \end{align*}
    Therefore $\dm(z,\hat{x}^{(i)})=\min_{k\in\{1,\dots,n\}}\{\dm(z,\hat{x}^{(k)})\}$, and thus $i=j$.
    We thus have
    \begin{align*}
        \dm(\phi_\Sigma(z),\phi_{\hat{\Sigma}}(z))=\dm(s(z-x^{(i)})+y^{(i)},s(z-\hat{x}^{(i)})+y^{(i)})=s\dm(x^{(i)},\hat{x}^{(i)})\leq s\Nsum{\underline{x}-\underline{\hat{x}}},
    \end{align*}
    and finally
    \begin{align*}
        \dom(\Shift(\underline{x},\underline{y},s,Z),\Shift(\underline{\hat{x}},\underline{y},s,Z))&\leq \omega(\card(Z))\sum_{z\in Z} s\Nsum{\underline{x}-\underline{\hat{x}}} \\
        &\leq s\Nsum{\underline{x}-\underline{\hat{x}}} \omega(\card(Z)) \card(Z), 
    \end{align*}
    which yields \cref{it: x upper bound}.

    Let $\underline{\hat{y}}\in M^n$. Then again, let us define the shift system $\hat{\Sigma}=(\underline{x},\underline{\hat{y}},s,\merg(X))$, which, by \cref{prop:shifting equivalence} \cref{it:shifting equivalence} satisfies $\Shift(\underline{x},\underline{\hat{y}},s,Z)=\bm{\phi}_{\hat{\Sigma}}(Z)$, and the relation
    \begin{align*}
        R=\{(\phi_\Sigma(z),\phi_{\hat{\Sigma}}(z))\mid z\in Z\}.
    \end{align*}
    So that we have
    \begin{align*}
        \dom(\Shift(\underline{x},\underline{y},s,Z),\Shift(\underline{x},\underline{\hat{y}},s,Z))&\leq \omega(\card(Z)) \sum_{z\in Z} \dm(\phi_\Sigma(z),\phi_{\hat{\Sigma}}(z)).
    \end{align*}
    For any $z\in Z$, there exists a unique $i\in\{1,\dots,n\}$ such that $z\in B(x^{(i)},\merg(X))$. So we have 
    \begin{align*}
        \dm(\phi_\Sigma(z),\phi_{\hat{\Sigma}}(z))=\dm(s(z-x^{(i)})+y^{(i)},s(z-x^{(i)})+\hat{y}^{(i)})=\dm(y^{(i)},\hat{y}^{(i)})\leq \Nsum{\underline{y}-\underline{\hat{y}}}.
    \end{align*}
    So that finally
    \begin{align*}
        \dom(\Shift(\underline{x},\underline{y},s,Z),\Shift(\underline{x},\underline{\hat{y}},s,Z))&\leq \omega(\card(Z)) \sum_{z\in Z}\Nsum{\underline{y}-\underline{\hat{y}}}\\
        &\leq \Nsum{\underline{y}-\underline{\hat{y}}} \omega(\card(Z))\card(Z),
    \end{align*}
    which gives \cref{it: y upper bound}.

    Let $\hat{s}\geq 0$. Similarly, we can define the shift system $\hat{\Sigma}=(\underline{x},\underline{y},\hat{s},\merg(X))$, which, by \cref{prop:shifting equivalence} \cref{it:shifting equivalence} satisfies $\Shift(\underline{x},\underline{y},\hat{s},Z)=\bm{\phi}_{\hat{\Sigma}}(Z)$, and the relation
    \begin{align*}
        R=\{(\phi_\Sigma(z),\phi_{\hat{\Sigma}}(z))\mid z\in Z\}.
    \end{align*}
    So that we have
    \begin{align*}
        \dom(\Shift(\underline{x},\underline{y},s,Z),\Shift(\underline{x},\underline{y},\hat{s},Z))&\leq \omega(\card(Z)) \sum_{z\in Z} \dm(\phi_\Sigma(z),\phi_{\hat{\Sigma}}(z)).
    \end{align*}
    For any $z\in Z$, there exists a unique $i\in\{1,\dots,n\}$ such that $z\in B(x^{(i)},\merg(X))$. So we have 
    \begin{align*}
        \dm(\phi_\Sigma(z),\phi_{\hat{\Sigma}}(z))=\dm(s(z-x^{(i)})+y^{(i)},\hat{s}(z-x^{(i)})+y^{(i)})=\abs{s-\hat{s}}\dm(z,x^{(i)})\leq \abs{s-\hat{s}}\dH(X,Z).
    \end{align*}
    So that finally
    \begin{align*}
        \dom(\Shift(\underline{x},\underline{y},s,Z),\Shift(\underline{x},\underline{y},\hat{s},Z))&\leq \omega(\card(Z)) \sum_{z\in Z}\abs{s-\hat{s}}\dH(X,Z)\\
        &\leq \abs{s-\hat{s}} \omega(\card(Z))\card(Z)\dH(X,Z),
    \end{align*}
    which gives \cref{it: s upper bound}.

    Finally, let $\dH(X,Z)<r<\merg(X)$ and let us define $\epsilon$ as the one given by \cref{lem:boule hausdorff locale}. Then, for any $\hat{Z}\in \BH(X,r)$, if $\dom(Z,\hat{Z})<\epsilon$ then, by \cref{theo:Geodesic_Chain} and \cref{lem:boule hausdorff locale} there exists a $\BH(X,r)$-local geodesic chain from $Z$ to $\hat{Z}$. Moreover, by \cref{prop:shifting equivalence} \cref{it:shifting equivalence} we directly have $\Shift(\underline{x},\underline{y},s,\hat{Z})=\bm{\phi}_{\Sigma}(\hat{Z})$. And finally by \cref{lem:local scaling}, we have
    \begin{align*}
        \dom(\Shift(\underline{x},\underline{y},s,Z),\Shift(\underline{x},\underline{y},s,\hat{Z}))=\dom(\bm{\phi}_{\Sigma}(Z),\bm{\phi}_{\Sigma}(\hat{Z}))\leq s\dom(Z,\hat{Z}),
    \end{align*}
    which gives \cref{it: Z upper bound}.
\end{proof}

\begin{lem}
\label{lem:pi omega Lipschitz}
    Let $n\in\N$. The projection $\pi_n\colon M^n\to (\Ran(M),\dom)$ is $\omega(n)$-Lipschitz.
\end{lem}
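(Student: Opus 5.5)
The plan is to bound $\dom(\pi_n(\underline{x}),\pi_n(\underline{y}))$ directly by the length of a single surjective relation, with no need to optimise over chains. Fix $\underline{x}=(x^{(1)},\dots,x^{(n)})$ and $\underline{y}=(y^{(1)},\dots,y^{(n)})$ in $M^n$, and write $X=\pi_n(\underline{x})$ and $Y=\pi_n(\underline{y})$. Consider the relation
\begin{equation*}
R_{\underline{x},\underline{y}}=\{(x^{(i)},y^{(i)})\mid 1\leq i\leq n\}\subset X\times Y.
\end{equation*}
It is surjective, because every element of $X$ is some $x^{(i)}$ and every element of $Y$ is some $y^{(i)}$. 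It therefore forms a combinatorial chain with a single relation from $X$ to $Y$.

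Its cardinality satisfies $\card(R_{\underline{x},\underline{y}})\leq n$, since distinct pairs come from distinct indices, although several indices may give the same pair. Two facts then give the bound. First, $\omega$ is non-decreasing, so $\omega(\card(R_{\underline{x},\underline{y}}))\leq\omega(n)$. Second, the sum of $d(x,y)$ over the distinct pairs in $R_{\underline{x},\underline{y}}$ is at most $\sum_{i=1}^n d(x^{(i)},y^{(i)})=\dsum(\underline{x},\underline{y})$, because each pair appears at least once in the indexed sum and all terms are nonnegative. Combining these,
\begin{equation*}
\dom(X,Y)\leq \elo(R_{\underline{x},\underline{y}})\leq \omega(n)\dsum(\underline{x},\underline{y}).
\end{equation*}
This is exactly the claimed $\omega(n)$-Lipschitz property.

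The only point that needs care is that repeated pairs make $\card(R)$ smaller than $n$, rather than equal to it as in \cref{lem:Relations_lift}. Monotonicity of $\omega$ handles the weight factor, and nonnegativity of the distances handles the sum, so there is no genuine obstacle. The same estimate already appears implicitly in the proof of \cref{tpi rafine tom}.
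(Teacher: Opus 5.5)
Your proof is correct and follows essentially the paper's argument: both use the single surjective relation $R=\{(x^{(i)},y^{(i)})\}_{1\leq i\leq n}$ as a one-step chain and bound $\dom(X,Y)$ by $\elo(R)$. Your handling of repeated pairs, via $\card(R)\leq n$, monotonicity of $\omega$ and nonnegativity of distances, is slightly more careful than the paper, which states $\elo(R)=\omega(n)\Nsum{\underline{x}-\underline{y}}$ as an equality where only an inequality holds in general.
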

\begin{proof}
    Let $\underline{x}=(x^{(1)},\dots,x^{(n)}),\underline{y}=(y^{(1)},\dots,y^{(n)})\in M^n$. And let $R=\{(x^{(i)},y^{(i)})\}_{1\leq i\leq n}$, which is a surjective relation from $X$ to $Y$. We thus have 
\begin{align*}
\dom(X,Y)\leq \elo(R)=\omega(n)\Nsum{\underline{x}-\underline{y}}.    
\end{align*}
\end{proof}

We can finally prove the continuity of the shifting function.
\begin{prop}
\label{prop:shift continous}
    Let $n\in\N^*$, and let us define $B_n=\{(\underline{x},\underline{y},s,Z)\in M^n\times M^n \times \R^+ \times \Ran(M) \mid \card(\pi_n(\underline{x}))=n, Z\in \BHc(\pi_n(\underline{x}),\frac{\merg(\pi_n(\underline{x}))}{2})\}$ The function $\Shift$ is $\omega$-continuous on $B_n$.
\end{prop}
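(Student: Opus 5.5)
We fix a point $(\underline{x},\underline{y},s,Z)\in B_n$ and a nearby point $(\underline{\hat{x}},\underline{\hat{y}},\hat{s},\hat{Z})\in B_n$, and bound $\dom(\Shift(\underline{x},\underline{y},s,Z),\Shift(\underline{\hat{x}},\underline{\hat{y}},\hat{s},\hat{Z}))$ by changing one variable at a time. Since $(\Ran(M),\tom)$ is metric and the domain is a subset of a product of metric spaces, it suffices to show this quantity tends to $0$ as the second point tends to the first. We use the triangle inequality along the path
\begin{equation*}
(\underline{x},\underline{y},s,Z)\to(\underline{x},\underline{y},s,\hat{Z})\to(\underline{x},\underline{y},\hat{s},\hat{Z})\to(\underline{x},\underline{\hat{y}},\hat{s},\hat{Z})\to(\underline{\hat{x}},\underline{\hat{y}},\hat{s},\hat{Z}),
\end{equation*}
and estimate each of the four jumps with \cref{prop: shifting upper bounds}. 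Write $X=\pi_n(\underline{x})$.

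First, we need all intermediate points to lie in $A_n$. Since $Z\in\BHc(X,\merg(X)/2)$, we fix some $r$ with $\merg(X)/2<r<\merg(X)$, e.g. $r=\frac34\merg(X)$. Then $\dH(X,Z)<r$, and by \cref{prop:dH leq dom}, $\hat{Z}\in\BH(X,r)$ as soon as $\dom(Z,\hat{Z})<r-\merg(X)/2$. For such $\hat{Z}$, item \cref{it: Z upper bound} with the $\epsilon\ge \merg(X)-r$ it provides gives $\dom(\Shift(\underline{x},\underline{y},s,Z),\Shift(\underline{x},\underline{y},s,\hat{Z}))\le s\,\dom(Z,\hat{Z})$, which tends to zero.

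Second, items \cref{it: s upper bound} and \cref{it: y upper bound}, applied at $\hat{Z}$, give bounds $|s-\hat{s}|\card(\hat{Z})\omega(\card(\hat{Z}))\dom(X,\hat{Z})$ and $\Nsum{\underline{y}-\underline{\hat{y}}}\card(\hat{Z})\omega(\card(\hat{Z}))$. The last jump uses item \cref{it: x upper bound}, which requires $\Nsum{\underline{x}-\underline{\hat{x}}}\le\merg(X)-\dH(X,\hat{Z})$; this holds for $\underline{\hat{x}}$ close enough since $\dH(X,\hat{Z})<r<\merg(X)$. We also need $\card(\pi_n(\underline{\hat{x}}))=n$, which holds for nearby $\underline{\hat{x}}$, and $\hat{Z}$ must lie in the domain of the clustering for $\underline{\hat{x}}$; the latter follows from $\merg$ being continuous on $\Conford_n$. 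That item gives a bound $\hat{s}\Nsum{\underline{x}-\underline{\hat{x}}}\card(\hat{Z})\omega(\card(\hat{Z}))$.

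The main obstacle is that the last three bounds contain the factor $\card(\hat{Z})\omega(\card(\hat{Z}))$, which is a priori unbounded as $\hat{Z}\to Z$. The key point is that, in the $\omega$-topology, the cardinality near $Z$ is bounded. I would first try the following: since $\Ran_{\le N}(M)$ is closed, \cref{lem:minorant distance} gives $\card(\hat{Z})\ge\card(Z)$, but an upper bound is needed. A cleaner route is to compare with a fixed geodesic chain: by \cref{lem:boule hausdorff locale} the chain from $\hat{Z}$ to $Z$ is local, and because $\Shift$ acts by the same generalized homothety on each cluster, one can instead bound the jump $\Shift(\cdot,\cdot,\cdot,\hat Z)$ versus $\Shift(\cdot,\cdot,\cdot,\hat Z)$ with changed parameters by first moving along the image under $\phi_\Sigma$ of that local chain back to $Z$, at cost $O(\dom(Z,\hat{Z}))$ by \cref{lem:local scaling}. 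Concretely, I would reorder the path so that the parameter changes in $\underline{x},\underline{y},s$ are performed at the fixed configuration $Z$, where $\card(Z)\omega(\card(Z))$ is a constant: go $(\underline{x},\underline{y},s,Z)\to(\underline{\hat{x}},\underline{\hat{y}},\hat{s},Z)$ via the three item bounds with constant $\card(Z)\omega(\card(Z))$, then change $Z$ to $\hat{Z}$ using item \cref{it: Z upper bound} at the base point $\underline{\hat{x}}$. This requires the constants $r,\epsilon$ of \cref{lem:boule hausdorff locale} to be uniform in $\underline{\hat{x}}$ near $\underline{x}$. This uniformity holds because $\merg(\pi_n(\underline{\hat{x}}))\to\merg(X)$ and $\dH(\pi_n(\underline{\hat{x}}),Z)\le\dH(X,Z)+\Nsum{\underline{x}-\underline{\hat{x}}}$. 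Checking this uniformity is the step I expect to need the most care; once it is done, summing the four bounds gives the required continuity.
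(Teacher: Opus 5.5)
Your final reordered route is exactly the paper's proof: it changes $\underline{x}$, then $\underline{y}$, then $s$ at the fixed limit configuration $Z$, so the factor $\card(Z)\omega(\card(Z))$ stays constant. It then switches $Z$ to $Z_k$ via the last item of the shifting upper bounds at base point $\underline{x_k}$, using $\merg(X_k)\ge\merg(X)-\Nsum{\underline{x}-\underline{x_k}}$ to take $r_k=\merg(X_k)-\merg(X)/4$ and the uniform $\epsilon=\merg(X)/4$. You were also right that your first ordering fails because $\card(\hat Z)$ need not stay bounded under $\tom$-convergence, so the detour through images of local chains is unnecessary.
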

\begin{proof}
Let $(\underline{x_k},\underline{y_k},s_k,Z_k)_{k\in\N}$ be a sequence in $B_n$ that converges to $(\underline{x},\underline{y},s,Z)$, and let us show that the image by $\Shift$ converges to $\Shift(\underline{x},\underline{y},s,Z)$. Let us write $X=\pi_n(\underline{x})$ and $X_k=\pi_n(\underline{x_k})$ for all $k\geq 0$.

First, let us remark that, for any $k\geq 0$, we have $\dH(X_k,X)\leq \Nsum{\underline{x}-\underline{x_k}}$. If $n=1$, then $\merg(X_k)=\merg(X)$. Otherwise, let $i\neq j\in\{1,\dots,n\}$. We have
\begin{align*}
    \dm(x_k^{(i)},x_k^{(j)})\geq \dm(x^{(i)},x^{(j)}) - \dm(x_k^{(i)},x^{(i)}) - \dm(x_k^{(j)},x^{(j)})\geq 2\merg(X)-\Nsum{\underline{x}-\underline{x_k}},
\end{align*}
so that, in both cases $\merg(X_k)\geq \merg(X)-\Nsum{\underline{x}-\underline{x_k}}$.

Since $(\underline{x_k})_{k\in\N}$ converges to $\underline{x}$ and $(Z_k)_{k\in\N}$ converges to $Z$, there exists $K\geq 0$ such that, for any $k\geq K$, we have $\Nsum{\underline{x}-\underline{x_k}} < \frac{\merg(X)-\dH(X,Z)}{2}$, $\dom(Z,Z_k) < \frac{\dH(X,Z)}{4}$ and $\merg(X_k)>\frac{\merg(X)}{2}$. Let $k\geq K$, then we have
\begin{align*}
    \dH(Z,X_k)&\leq \dH(Z,Z_k)+\dH(Z_k,X_k)\\
    &\leq\dom(Z,Z_k)+\dH(Z_k,X_k) &&\text{by \cref{prop:dH leq dom}}\\
    &< \frac{\dH(X,Z)}{4} + \frac{\merg(X_k)}{2} & &\text{since } (\underline{x_k},\underline{y_k},s_k,Z_k)\in B_n\\
    &<\frac{\merg(X)}{4} - \frac{\Nsum{\underline{x}-\underline{x_k}}}{2} + \frac{\merg(X_k)}{2} &\quad &\text{since } \Nsum{\underline{x}-\underline{x_k}} < \frac{\merg(X)-\dH(X,Z)}{2}\\
    &< \merg(X_k)-\frac{\merg(X)}{4},&\quad&\text{since } \Nsum{\underline{x}-\underline{x_k}} \geq \merg(X) - \merg(X_k) \\
    &<\merg(X_k)
\end{align*}
so that $(\underline{x_k},\underline{y_k},s_k,Z)\in A_n$. Moreover, let $r_k=\merg(X_k)-\frac{\merg(X)}{4}$, and $\epsilon_k\geq \merg(X_k) - r_k \geq \frac{\merg(X)}{4}$. Observe that $\frac{\merg(X_k)}{2}<r_k$. Thus, for any $\hat{Z}\in\BHc(X_k,\frac{\merg(X_k)}{2})\subset \BH(X_k,r_k)$, such that $\dom(Z,\hat{Z})<\frac{\merg(X)}{4}\leq \epsilon_k$, we may apply \cref{prop: shifting upper bounds} \cref{it: Z upper bound} to get
\begin{align*}
    \dom(\Shift(\underline{x_k},\underline{y_k},s_k,Z),\Shift(\underline{x_k},\underline{y_k},s_k,\hat{Z}))\leq s_k\dom(Z,\hat{Z})
\end{align*}
We can thus define $\epsilon=\frac{\merg(X)}{4}\leq \inf_{k\geq K}{\epsilon_k}$. Since $(Z_k)_{k\in\N}$ converges to $Z$, there exists $K'\geq K$ such that, for any $k\geq K'$, we have $\dom(Z_k,Z)<\epsilon$. 

Let $k\geq K'$, we have
\begin{align*}
    \dom(\Shift(\underline{x},\underline{y},s,Z),\Shift(\underline{x_k},\underline{y_k},s_k,Z_k))\leq \; & \dom(\Shift(\underline{x},\underline{y},s,Z),\Shift(\underline{x_k},\underline{y},s,Z))\\
    &+\dom(\Shift(\underline{x_k},\underline{y},s,Z),\Shift(\underline{x_k},\underline{y_k},s,Z))\\
    &+\dom(\Shift(\underline{x_k},\underline{y_k},s,Z),\Shift(\underline{x_k},\underline{y_k},s_k,Z))\\
    &+\dom(\Shift(\underline{x_k},\underline{y_k},s_k,Z),\Shift(\underline{x_k},\underline{y_k},s_k,Z_k)).
\end{align*}
Let us now show that we can apply \cref{prop: shifting upper bounds} to these four terms.

Since $\dH(Z,X_k)<\merg(X_k)$, we can apply \cref{prop: shifting upper bounds} \cref{it: y upper bound} and \cref{it: s upper bound} to the second and third term. Since, moreover, $\Nsum{\underline{x}-\underline{x_k}}<\merg(X)-\dH(X,Z)$,
we can apply \cref{prop: shifting upper bounds} \cref{it: x upper bound} to the first term.

Lastly, since $\dom(Z_k,Z)<\epsilon$, and $Z_k\in\BHc(X_k,\frac{\merg(X_k)}{2})$, by the above discussion, we can apply \cref{prop: shifting upper bounds} \cref{it: Z upper bound} to the fourth term. We get
\begin{align*}
    \dom(\Shift(\underline{x},\underline{y},s,Z),\Shift(\underline{x_k},\underline{y_k},s_k,Z_k))\leq \;& s\Nsum{\underline{x}-\underline{x_k}} \omega(\card(Z)) \card(Z) \\
    &+ \Nsum{\underline{y}-\underline{y_k}} \omega(\card(Z)) \card(Z)\\
    &+ \abs{s-s_k} \omega(\card(Z)) \card(Z)\dH(X_k,Z)\\
    &+ s_k\dom(Z,Z_k).
\end{align*}
We easily check that these four terms go to zero as $k$ goes to infinity, which proves that $\Shift(\underline{x_k},\underline{y_k},s_k,Z_k)$ converges to $\Shift(\underline{x},\underline{y},s,Z)$. Since $B_n$ is metrizable this concludes this proof.
\end{proof}

Finally, we define a restriction of the shifting function where the antecedent and image configurations have the same "center". In that case we only \textbf{scale} the configuration with respect to a reference configuration $X\in\Ran(M)$. We will use this in the following to define an homeomorphism between the cone on a sphere and a ball.
\begin{defin}
\label{def:scaling}
    Let $X\in\Ran(M)$ be a configuration in $\Ran(M)$, $n=\card(X)$, and $\underline{x}\in\pi_n^{-1}(\{X\})$. We define the \textbf{scaling with respect to} $\underline{x}$ as 
    \begin{align*}
        \Scalex\colon \R^+ \times \BH(X,\merg(X)) &\to\Ran(M)\\
        (s,Z) & \mapsto \Scalex(s,Z) = \Shift(\underline{x},\underline{x},s,Z)
    \end{align*}
\end{defin}

\begin{rem}
    The scaling with respect to $\underline{x}$ does not depend on the choice of $\underline{x}\in\pi_n^{-1}(X)$, we will thus call it the \textbf{scaling with respect to} $X$, written $\ScaleX$ in the following.
\end{rem}

Continuity of the scaling function directly follows from continuity of the shifting function.
\begin{prop}
\label{prop:scale continous}
    Let $X\in\Ran(M)$. The function $\ScaleX$ is $\omega$-continuous on $\R^+\times\BHc(X,\frac{\merg(X)}{2})$. 
\end{prop}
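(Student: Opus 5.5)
The plan is to realize $\ScaleX$ as a restriction of the shifting function and invoke \cref{prop:shift continous}. Fix $\underline{x}\in\pi_n^{-1}(\{X\})$ with $n=\card(X)$. By \cref{def:scaling}, $\ScaleX(s,Z)=\Shift(\underline{x},\underline{x},s,Z)$; by the subsequent remark this does not depend on the choice of $\underline{x}$, so we may fix it once and for all. Consider the map
\begin{equation*}
    \iota\colon \R^+\times\BHc\left(X,\tfrac{\merg(X)}{2}\right)\to M^n\times M^n\times\R^+\times\Ran(M),\qquad (s,Z)\mapsto(\underline{x},\underline{x},s,Z).
\end{equation*}

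The first step is to check that $\iota$ lands in $B_n$. We have $\card(\pi_n(\underline{x}))=\card(X)=n$ because the coordinates of $\underline{x}$ are pairwise distinct. Moreover $Z\in\BHc(\pi_n(\underline{x}),\frac{\merg(\pi_n(\underline{x}))}{2})$ holds by assumption. Both facts are immediate from the definition of $B_n$ in \cref{prop:shift continous}.

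The second step is to observe that $\iota$ is continuous when $\Ran(M)$ carries $\tom$ on both sides. The first two coordinates are constant and the last two are the identity, so $\iota$ is continuous for the product topology. Hence $\ScaleX=\Shift\circ\iota$ is a composition of $\omega$-continuous maps.

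There is essentially no obstacle here. The only subtlety is to make sure that the domain of $\ScaleX$, namely $\R^+\times\BHc(X,\frac{\merg(X)}{2})$, sits inside the fibre of $B_n$ over $(\underline{x},\underline{x})$, which the first step verifies. Continuity of a restriction of a continuous map to a subspace then concludes the argument.
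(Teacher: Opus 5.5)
Your proposal is correct and follows essentially the same route as the paper: $\ScaleX$ is the restriction of $\Shift$ to the slice $\{(\underline{x},\underline{x})\}\times\R^+\times\BHc(X,\frac{\merg(X)}{2})\subset B_n$, so continuity follows directly from \cref{prop:shift continous}. You simply make the inclusion map and the membership in $B_n$ explicit, which the paper leaves implicit. One minor point of phrasing: $\card(\pi_n(\underline{x}))=n$ holds because $\pi_n(\underline{x})=X$; the pairwise distinctness of the coordinates is a consequence of this, not the reason for it.
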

\begin{proof}
    The function $\ScaleX$ is a restriction of $\Shift$, which is continuous by \cref{prop:shift continous}.
\end{proof}

If the scaling factor is small enough, the scaling function behaves very well with the $\omega$-distance. This will be useful to show the bijectivity of the homeomorphism between the cone on the sphere and the ball.
\begin{prop}
\label{prop:scaling xlocal equality}
    Let $X\in\Ran(M)$, $Z\in\Bom(X,\frac{\merg(X)}{2})$ and $s\in\R^+$. If $s\dom(X,Z)<\frac{\merg(X)}{2}$ then
    \begin{align*}
    \dom(\ScaleX(s,Z),X)=s\dom(Z,X).
\end{align*}
\end{prop}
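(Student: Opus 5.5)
We prove the two inequalities $\dom(\ScaleX(s,Z),X)\leq s\dom(Z,X)$ and $s\dom(Z,X)\leq\dom(\ScaleX(s,Z),X)$ separately. Both rely on the local scaling \cref{lem:local scaling} applied to the map $\phi_\Sigma$ associated to a shift system $\Sigma=(\underline{x},\underline{x},s,r)$, and on the fact that $\phi_\Sigma$ fixes every point of $X$, so that $\bm{\phi}_\Sigma(X)=X$. Since $\phi_\Sigma$ is $s$-Lipschitz on each ball $B(x^{(i)},r)$, the only thing to arrange each time is that the relevant distance is computed over $\BH(X,r)$-local chains for a suitable $r$.

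For the first inequality, take $r=\merg(X)$. Then $\dom(Z,X)<\frac{\merg(X)}{2}$, so $Z$ and $X$ both lie in $\Bom(X,\frac{\merg(X)}{2})$. By \cref{lem:boules omega locales} (with radius $\frac{\merg(X)}{2}$), $\dom(Z,X)$ is computed over $\BH(X,\merg(X))$-local MBS-chains. \Cref{prop:shifting equivalence} gives $\ScaleX(s,Z)=\bm{\phi}_\Sigma(Z)$. Applying \cref{lem:local scaling} then yields $\dom(\ScaleX(s,Z),X)\leq s\dom(Z,X)$. One point to check: \cref{lem:local scaling} asks for $r<\merg(X)$ strictly. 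This is handled by replacing $r$ with any $r'$ satisfying $\max(\dH(X,Z),\dH \text{ of the chain})<r'<\merg(X)$, which is possible because the local chains from \cref{lem:boules omega locales} actually stay inside $\BH(X,2r_0)$ with $2r_0<\merg(X)$ after shrinking $r_0$ slightly above $\dom(X,Z)$.

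For the reverse inequality, set $W=\ScaleX(s,Z)$. The case $s=0$ is trivial, so assume $s>0$. By the first step, $\dom(W,X)\leq s\dom(Z,X)<\frac{\merg(X)}{2}$, so $W\in\Bom(X,\frac{\merg(X)}{2})$. Hence, again by \cref{lem:boules omega locales}, $\dom(W,X)$ is computed over $\BH(X,r)$-local chains with $r<\merg(X)$. Now use the dual system $\overline{\Sigma}=(\underline{x},\underline{x},\frac{1}{s},sr)$, or rather $\phi_{\overline\Sigma}$ restricted to the balls $B(x^{(i)},r)$, which is the $\frac1s$-Lipschitz map $w\mapsto \frac1s(w-x^{(i)})+x^{(i)}$. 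By \cref{prop:shifting equivalence}, \cref{it:shifting identity}, it sends $W$ back to $Z$. \Cref{lem:local scaling} then gives $\dom(Z,X)\leq\frac1s\dom(W,X)$.

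The main obstacle is bookkeeping of radii in this second step. The local chains for $(W,X)$ live in $\BH(X,r)$ balls whose image under $w\mapsto\frac1s(w-x^{(i)})+x^{(i)}$ has radius $r/s$. For $s<1$ this can exceed $\merg(X)$, so the image balls may no longer be disjoint and the image chain may fail to be a valid chain near $X$. However, \cref{lem:local scaling} only needs the map to be defined and Lipschitz on the disjoint union of the balls $B(x^{(i)},r)$ in the source, with image anywhere in $M$. Surjectivity of the image relations and the endpoints $Z$ and $X$ are preserved, so the argument still goes through; this is what must be verified carefully. A symmetric subtlety arises in the first step when $s>1$, where the hypothesis $s\dom(X,Z)<\frac{\merg(X)}{2}$ is what guarantees $W$ stays local.
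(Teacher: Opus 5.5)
Your proof is correct and follows the paper's argument. The paper first proves $\dom(\ScaleX(s,Z),X)\le s\dom(Z,X)$ for every $s\ge 0$ using the shift system $(\underline{x},\underline{x},s,\merg(X))$, \cref{lem:boules omega locales} and \cref{lem:local scaling}, and then obtains the reverse inequality by applying this bound with factor $\frac1s$ to $\ScaleX(s,Z)$, recovering $Z$ via \cref{prop:shifting equivalence}. Your care about the strict inequality $r<\merg(X)$ in \cref{lem:local scaling} is a small refinement over the paper. One correction: the first inequality needs no hypothesis on $s$ at all, so locality of $W$ plays no role there, and the condition $s\dom(X,Z)<\frac{\merg(X)}{2}$ is used only in the second step.
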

\begin{proof}
    Let us first show that
    \begin{enumerate}
        \item\label{it:scaling inequality} For any $X\in\Ran(M)$, and $Z\in\Bom(X,\frac{\merg(X)}{2})$, we have $$\dom(\ScaleX(s,Z),X)\leq s\dom(Z,X)$$.
    \end{enumerate}
    Let us define the shift system $\Sigma=(\underline{x},\underline{x},s,\merg(X))$. By \cref{prop:shifting equivalence} \cref{it:shifting equivalence} we have 
    \begin{align*}
        \ScaleX(s,Z)=\Shift(\underline{x},\underline{x},s,Z)=\bm{\phi}_{\Sigma}(Z).
    \end{align*}
    By \cref{theo:Geodesic_Chain} and \cref{lem:boules omega locales} there exists a $\BH(X,\merg(X))$-local geodesic chain from $Z$ to $X$. So that, by \cref{lem:local scaling}, we have
    \begin{align*}
        \dom(\ScaleX(s,Z),X)=\dom(\bm{\phi}_{\Sigma}(Z),\bm{\phi}_{\Sigma}(X))\leq s\dom(Z,X),
    \end{align*}
    which shows \cref{it:scaling inequality}. 
    
    Now first remark that if $s=0$ then $\ScaleX(s,Z)=X$. Now if $0<s\dom(X,Z)<\frac{\merg(X)}{2}$, by \cref{rem:shifting conditions}, since $\dH\leq \dom$, we can apply \cref{prop:shifting equivalence} \cref{it:shifting identity} to get
    \begin{align*}
        Z=\ScaleX\left(\frac{1}{s},\ScaleX(s,Z)\right).
    \end{align*}
    By applying \cref{it:scaling inequality} to $\ScaleX(s,Z)$ we thus get
    \begin{align*}
        s\dom(Z,X)=s\dom\left(\ScaleX\left(\frac{1}{s},\ScaleX(s,Z)\right),X\right)\leq \dom(\ScaleX(s,Z)),X).
    \end{align*}
    which concludes this proof.
\end{proof}

\subsection{Conicality}
\label{subsec:conicity_Euclidean}
We now use the shifting function to define an homeomorphism from the product of a Euclidean space and a stratified cone to an open neighborhood of a configuration in $(\Ran(M),\tom)$. First we show that the cone on the sphere is homeomorphic to the ball.

\begin{prop}
\label{prop:homeo cone to w-ball}
    Let $X\in\Ran(M)$ be a configuration in $\Ran(M)$ and let $0<\epsilon <\frac{\merg(X)}{2}$. The following map
    \begin{align*}
        f\colon c(\Som(X,\epsilon)) &\to \Bom(X,\epsilon)\\
        [(t,Y)]&\mapsto \ScaleX(t,Y)
    \end{align*}
    where $c(\Som(X,\epsilon))$ is the cone on the sphere $\Som(X,\epsilon)=\{Z\mid \dom(Z,X)=\epsilon\}$, equipped with the teardrop topology, is an homeomorphism.
\end{prop}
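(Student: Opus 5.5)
Well-definedness comes first. A point $[(t,Y)]$ with $Y\in\Som(X,\epsilon)$ has $\dH(X,Y)\leq\dom(X,Y)=\epsilon<\merg(X)/2$ by \cref{prop:dH leq dom}, so $Y\in\BHc(X,\frac{\merg(X)}{2})$ and $\ScaleX(t,Y)$ is defined. Here $t\in[0,1)$ is the cone parameter. The scale factor satisfies $t\epsilon<\merg(X)/2$, so \cref{prop:scaling xlocal equality} gives $\dom(\ScaleX(t,Y),X)=t\epsilon<\epsilon$. Thus $f$ lands in $\Bom(X,\epsilon)$. Since $\ScaleX(0,Y)=X$ for every $Y$, the map $f$ is constant on $\{0\}\times\Som(X,\epsilon)$ and descends to the quotient.

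For bijectivity I would write down the inverse explicitly. Set $g(X)=[0]$, the cone point, and for $Z\in\Bom(X,\epsilon)\setminus\{X\}$ set
\[
g(Z)=\Bigl[\Bigl(\tfrac{\dom(Z,X)}{\epsilon},\ \ScaleX\bigl(\tfrac{\epsilon}{\dom(Z,X)},Z\bigr)\Bigr)\Bigr].
\]
This is meaningful because the scale factor times $\dom(Z,X)$ equals $\epsilon<\merg(X)/2$. By \cref{prop:scaling xlocal equality}, the configuration $\ScaleX\bigl(\tfrac{\epsilon}{\dom(Z,X)},Z\bigr)$ then lies on $\Som(X,\epsilon)$. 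The composites are identities by \cref{prop:shifting equivalence}, item \cref{it:shifting identity}, applied through \cref{rem:shifting conditions}. The key identity is $\ScaleX(1/s,\ScaleX(s,Z))=Z$, and the hypothesis needed there, $s\,\dH(X,Z)<\merg(X)$, holds in both directions because all distances involved are at most $\epsilon<\merg(X)/2$. Injectivity at the vertex follows from $\dom(f([t,Y]),X)=t\epsilon$, which vanishes only when $t=0$.

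Continuity of $f$ on the open part $(0,1)\times\Som(X,\epsilon)$ follows from \cref{prop:scale continous}. At the vertex, the teardrop basic neighbourhoods $\pi(\Som\times[0,\delta))$ map exactly onto $\Bom(X,\delta\epsilon)$ because $\dom(f([t,Y]),X)=t\epsilon$. This shows continuity of $f$ at the vertex, and also continuity of $g$ there.

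The main obstacle is continuity of $g$ away from $X$. The radial coordinate $Z\mapsto\dom(Z,X)/\epsilon$ is continuous. The sphere coordinate is $\ScaleX$ evaluated at a continuously varying factor $\epsilon/\dom(Z,X)$, which is bounded on compacts away from $X$. However, \cref{prop:scale continous} only gives continuity on $\R^+\times\BHc(X,\frac{\merg(X)}{2})$, and here $Z$ stays within Hausdorff distance $\epsilon<\merg(X)/2$, so it applies. I expect the delicate point to be checking that the composite map into $(0,1)\times\Som(X,\epsilon)$ is continuous into the teardrop topology. This reduces to the product topology on the open part, since there the teardrop basis consists of images of open sets of $X\times(0,1)$.
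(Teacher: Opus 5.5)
Your proposal is correct and follows essentially the same route as the paper. It uses the same explicit inverse $g$ (radial coordinate $\dom(Z,X)/\epsilon$, sphere coordinate $\ScaleX(\epsilon/\dom(Z,X),Z)$), bijectivity via \cref{prop:scaling xlocal equality} and \cref{prop:shifting equivalence} through \cref{rem:shifting conditions}, continuity off the apex by factoring through $(0,1)\times\Som(X,\epsilon)$ and \cref{prop:scale continous}, and continuity at the apex in both directions from $f$ mapping $\pi(\Som(X,\epsilon)\times[0,\delta))$ onto $\Bom(X,\delta\epsilon)$.
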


\begin{proof}
    Let $[(t,Y)]\in c(\Som(X,\epsilon))$. We have $\dom(X,Y)=\epsilon<\frac{\merg(X)}{2}$ and $t\dom(X,Y)=t\epsilon<\frac{\merg(X)}{2}$, so that we can apply \cref{prop:scaling xlocal equality} to get $\dom(f(t,Y),X)=t\dom(Y,X)=t\epsilon < \epsilon$, i.e. $f(t,Y)\in\Bom(X,\epsilon)$. Moreover, if $t=0$, we have $f(0,Y)=X$, so that $f$ is a well-defined map on the cone.

    Let $Z\in\Bom(X,\epsilon)\setminus\{X\}$ and $s=\frac{\epsilon}{\dom(X,Z)}$. Since $\dom(X,Z)<\epsilon<\frac{\merg(X)}{2}$ and $s\dom(X,Z)=\epsilon<\frac{\merg(X)}{2}$ we can apply \cref{prop:scaling xlocal equality} to get $\dom(\ScaleX(s,Z),X)=s\dom(Z,X)=\epsilon$. We can thus define
    \begin{align*}
        g\colon \Bom(X,\epsilon)&\to c(\Som(X,\epsilon))\\
        Z&\mapsto \begin{cases}
            \left[\left(\frac{\dom(X,Z)}{\epsilon},\ScaleX(\frac{\epsilon}{\dom(X,Z)},Z)\right)\right]  &\text{ if } Z\neq X\\
            [(0,Y)], \text{ where } Y \in \Som(X,\epsilon) \quad& \text{ if } Z= X
        \end{cases}.
    \end{align*}
    Let us show that $f$ is bijective, of inverse $g$.

    Let $Z\in\Bom(X,\epsilon)$. If $Z=X$, then $f\circ g(X)=f([(0,Y)])=X$. If $Z\neq X$ then let $[(t,Y)]=g(Z)$. Since $0<t< 1$, \cref{rem:shifting conditions} applies to the tuple $(\underline{x},\underline{x},t,Z)$, and we can thus apply \cref{prop:shifting equivalence} \cref{it:shifting identity} to get:
    \begin{align*}
        f\circ g(Z)=f\left(t,Y\right) = \Shift(\underline{x},\underline{x},t,\Shift(\underline{x},\underline{x},\frac{1}{t},Z)) =Z.
    \end{align*}
    We therefore have $f\circ g=\Id_{\Bom(X,\epsilon)}$.
    
    Let $[(t,Y)]\in c(\Som(X,\epsilon))$ and $[(t',Y')]=g\circ f(t,Y)$. We can apply \cref{prop:scaling xlocal equality} again to get 
    \begin{align*}
    t'=\frac{\dom(f(t,Y),X)}{\epsilon}=\frac{t\dom(Y,X)}{\epsilon}=t.    
    \end{align*}
    If $t=0$, then $[(0,Y)]=[(0,Y')]$. If $t>0$, then $\dom(f(t,Y),X)=t\dom(Y,X)=t\epsilon>0$, so that $f(t,Y)\neq X$ and we can apply \cref{prop:shifting equivalence} \cref{it:shifting identity} to get
    \begin{align*}
        Y'&=\ScaleX\left(\frac{\epsilon}{\dom(f(t,Y),X)},f(t,Y)\right)\\
        &=\ScaleX\left(\frac{1}{t},\ScaleX(t,Y)\right)\\
        &=\Shift(\underline{x},\underline{x},\frac{1}{t},\Shift(\underline{x},\underline{x},t,Y))\\
        &=Y.
    \end{align*}
    We therefore have $g\circ f=\Id_{c(\Som(X,\epsilon))}$, so that $f$ is bijective, of inverse $g$. 

    Let $U=c(\Som(X,\epsilon)\setminus\{[(0,Y)]\}$ and let $h$ denote the following homeomorphism
    \begin{align*}
        h\colon U &\to (0,1)\times \Som(X,\epsilon)\\
        [(t,Y)] & \mapsto (t,Y).
    \end{align*}
    The restriction $f_{|U}$ can be factorized as 
    \begin{align*}
        f_{|U} = \ScaleX \circ h.
    \end{align*}
    By \cref{prop:scale continous}, it is thus continuous as a composition of continuous functions. Since $U$ is an open set of $c(\Som(X,\epsilon))$, $f$ is continuous on $U$. Let us show that it is continuous at $[(0,Y)]$. Let $0<\theta$. Let $\zeta=\frac{\theta}{\epsilon}$, and $[(t,Y)]\in c(\Som(X,\epsilon))$ such that $t<\zeta$. Then $\dom(f(t,Y),X)=t\dom(Y,X)=t\epsilon<\zeta\epsilon=\theta$. We conclude that $f$ is continuous.

    Similarly, the restriction $g_{|\Bom(X,\epsilon)\setminus\{X\}}$ can be factorized. By defining the following map
    \begin{align*}
        \hat{g}\colon \Bom(X,\epsilon)\setminus\{X\} &\to (0,1)\times \Som(X,\epsilon)\\
        Z &\mapsto \left(\frac{\dom(X,Z)}{\epsilon},\ScaleX\left(\frac{\epsilon}{\dom(X,Z)},Z\right)\right),
    \end{align*}
    which is continuous by \cref{prop:scale continous}. We get $g_{|\Bom(X,\epsilon)\setminus\{X\}}=h^{-1}\circ \hat{g}$, which is also continuous. 
    Since $\Bom(X,\epsilon)\setminus\{X\}$ is an open set of $\Bom(X,\epsilon)$, the map $g$ is continuous on $\Bom(X,\epsilon)\setminus\{X\}$. Let us show that it is continuous at $X$. Let $0<\zeta<1$ and let us show that $g^{-1}\left(\{[(t,Y)]\in c\left(\Som(X,\epsilon)\right)\mid t<\zeta\}\right)$ is open. But $g^{-1}\left(\{[(t,Y)]\in c\left(\Som(X,\epsilon)\right)\mid t<\zeta\}\right)=f\left(\{[(t,Y)]\in c\left(\Som(X,\epsilon)\right)\mid t<\zeta\}\right)=\Bom(X,\zeta\epsilon)$ is indeed open. Therefore $g$ is continuous, which concludes this proof.
\end{proof}

\begin{rem}
\label{rem:why midpoint}
    Now it is important to remark that, even if the homeomorphism from the teardrop cone to the ball preserves the cardinality in some sense, it is not the stratified homeomorphism we are looking for. Indeed, we need to consider the stratification on the cone given in \cref{def:Stratified_Cone}. However, this stratification contains a single point stratum, the apex of the cone. The homeomorphism $f$ maps the apex to $X$, the center of the ball $\Bom(X,\epsilon)$, which does not lie alone in its stratum. On the contrary, there are many points in $\Bom(X,\epsilon)$ with the same cardinality as that of $X$, specifically, $\Bom(X,\epsilon)\cap \Ran_=n(M)$ is homeomorphic to a Euclidean space of dimension $p\card(X)$.
    
     We will thus consider another cone, which will be homeomorphic to a slice of the ball, in which $X$ is the only point with its cardinality. To achieve this, we will restrict our attention to configurations where each cluster, in the sense of \cref{def:clustering}, is "centered" around the corresponding point in $X$. 
     In this way, we will produce subobjects $B\subset\Bom(X,\epsilon)$ and $S\subset\Som(X,\epsilon)$, which will be such that $f$ restricts to a bijection from $c(S)$ to $B$. 
     Since the teardrop cone satisfies the property that cone of subspaces are subspaces of the cone, this will be enough to conclude that the restriction of $f$ to $c(S)$ and $B$ is in fact a homeomorphism.

    Now, we need to define an appropriate notion for the "center" of a configuration in $\Ran(M)$. We cannot naively use the iso-barycenter, since it is not continuous on $\Ran(M)$, as can be seen by considering merges. We thus define the \textbf{mid-point} of a configuration, and show that it is continuous.
\end{rem}

\begin{defin}
\label{def:midpoint}
    Recall that $M=\R^p$. The \textbf{mid-point} function is defined as
    \begin{align*}
        \Mid\colon \Ran(M)&\to M\\
        X&\mapsto \frac{1}{2}\left( \min_{x\in X}\{x_1\}+\max_{x\in X}\{x_1\},\dots, \min_{x\in X}\{x_p\}+\max_{x\in X}\{x_p\}\right).
    \end{align*}
    Let $n\in\N^*$. The mid-point function is extended to $n$-tuples as 
    \begin{align*}
        \Mid^n\colon \Ran^n(M)&\to M^n\\
        (X^{(1)},\dots,X^{(n)})&\mapsto \left( \Mid(X^{(1)}),\dots,\Mid(X^{(n)})\right).
    \end{align*}
    Let $\underline{x}\in M^n$ and $X=\pi_n(\underline{x})$. The $\underline{x}$-mid-point function is defined as
    \begin{align*}
        \MidX\colon \BH(X,\merg(X)) &\to M^n\\
        Y &\mapsto \Mid^n(\ClustX(Y)).
    \end{align*}
\end{defin}

\begin{rem}
    We have $\MidX(\pi_n(\underline{x}))=\underline{x}$.
\end{rem}

\begin{rem}
\label{rem:card midx}
    For any $Y\in\BH(X,\merg(X))$, $\card(\pi_n(\MidX(Y)))=n$ by \cref{rem:card local}.
\end{rem}

\begin{prop}
\label{prop:mid p-Lipschitz}
    The mid-point is $p$-Lipschitz for $\dH$ and thus for $\dom$. More specifically, it is continuous on $(\Ran(M),\tH)$ and on $(\Ran(M),\tom)$.
\end{prop}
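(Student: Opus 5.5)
The plan is to reduce to a coordinatewise statement about the Hausdorff distance, and then use \cref{prop:dH leq dom}. For a configuration $X$ and a coordinate $1\leq j\leq p$, set $m_j(X)=\min_{x\in X}\{x_j\}$ and $M_j(X)=\max_{x\in X}\{x_j\}$, so that the $j$-th coordinate of $\Mid(X)$ is $\frac{1}{2}(m_j(X)+M_j(X))$. The key claim is that $m_j$ and $M_j$ are each $1$-Lipschitz from $(\Ran(M),\dH)$ to $\R$.

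For the key claim, let $X,Y\in\Ran(M)$ and choose $x\in X$ realizing $m_j(X)$. By definition of $\dH$ there is $y\in Y$ with $\dm(x,y)\leq\dH(X,Y)$. Hence
\begin{equation*}
m_j(Y)\leq y_j\leq x_j+\abs{y_j-x_j}\leq m_j(X)+\norm{y-x}\leq m_j(X)+\dH(X,Y).
\end{equation*}
Exchanging $X$ and $Y$ gives $\abs{m_j(X)-m_j(Y)}\leq\dH(X,Y)$. The same argument, applied to a point realizing the maximum, handles $M_j$. Consequently each coordinate of $\Mid$ satisfies
\begin{equation*}
\abs{\Mid(X)_j-\Mid(Y)_j}\leq\tfrac{1}{2}\left(\abs{m_j(X)-m_j(Y)}+\abs{M_j(X)-M_j(Y)}\right)\leq\dH(X,Y).
\end{equation*}

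Summing over the $p$ coordinates, and using $\norm{v}\leq\sum_j\abs{v_j}$ for the Euclidean norm on $\R^p$, yields $\dm(\Mid(X),\Mid(Y))\leq p\,\dH(X,Y)$. This is the $p$-Lipschitz bound for $\dH$. Combining it with $\dH\leq\dom$ from \cref{prop:dH leq dom} gives $\dm(\Mid(X),\Mid(Y))\leq p\,\dom(X,Y)$. Continuity for $\tH$ and $\tom$ then follows immediately, since Lipschitz maps are continuous.

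No real obstacle is expected. The only care needed is to pick the witness point $y$ from the correct half of the Hausdorff distance (every point of $X$ is within $\dH(X,Y)$ of $Y$) and to track the constant through the comparison between the Euclidean norm and the $\ell^1$ norm. Note that $\sqrt{p}$ would also suffice as the constant, but $p$ is what is claimed.
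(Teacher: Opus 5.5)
Your proof is correct and takes essentially the same route as the paper. Both arguments show that each coordinatewise minimum and maximum moves by at most $\dH(X,Y)$, by picking a witness point in the other configuration, then sum over the $p$ coordinates using $\norm{v}\leq\sum_j\abs{v_j}$, and finally apply $\dH\leq\dom$. The only difference is cosmetic: the paper makes an explicit case split on which of the two minima is smaller, while your one-sided inequality plus the exchange of $X$ and $Y$ covers both cases at once.
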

\begin{proof}
    Let $X,Y\in\Ran(M)$. By the triangle inequality, we have
    \begin{align*}
        \dm(\Mid(X),\Mid(Y))&=\norm{\Mid(X)-\Mid(Y)}\\
        &\leq \frac{1}{2}\sum_{i=1}^p \abs{\min_{x\in X}\{x_i\}-\min_{y\in Y}\{y_i\}}+\abs{\max_{x\in X}\{x_i\}-\max_{y\in Y}\{y_i\}}.
    \end{align*}
    Let $i\in\{1,\dots,p\}$. There exists $\widetilde{x}\in X$ such that $\widetilde{x}_i=\min_{x \in X}\{x_i\}$ and $\widetilde{y}\in Y$ such that $\widetilde{y}_i=\min_{y\in Y}\{y_i\}$. Let us first suppose that $\widetilde{x}_i\leq \widetilde{y}_i$. There exists $z\in Y$ such that $\dm(\widetilde{x},z)=\dm(\widetilde{x},Y)$. We have
    \begin{align*}
        \abs{\min_{x\in X}\{x_i\}-\min_{y\in Y}\{y_i\}}&=\widetilde{y}_i-\widetilde{x}_i\\
        &\leq z_i-\widetilde{x}_i\\
        &\leq \abs{z_i-\widetilde{x}_i}\\
        &\leq \dm(z,\widetilde{x})\\
        &\leq \dm(\widetilde{x},Y)\\
        &\leq \dH(X,Y).
    \end{align*}
     By symmetry, we get the same results if $\widetilde{x}_i\geq \widetilde{y}_i$. Similarly we get 
     \begin{align*}
         \abs{\max_{x\in X}\{x_i\}-\max_{y\in Y}\{y_i\}}\leq \dH(X,Y).
     \end{align*}
     Finally, we have
     \begin{align*}
         \dm(\Mid(X),\Mid(Y))\leq \sum_{i=1}^p \dH(X,Y) = p \dH(X,Y)\leq  p \dom(X,Y).
     \end{align*}
\end{proof}
\begin{corol}
    \label{cor:midx continuous} Let $n\in\N^*$, $\underline{x}\in M^n$, $X=\pi_n(\underline{x})$. The function $\MidX$ is continuous on $\Bom(X,\frac{\merg(X)}{2})$.
\end{corol}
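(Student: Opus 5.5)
The plan is to write $\MidX$ as the composite $\Mid^n\circ\ClustX$ and check that each factor is continuous on the relevant domain. First I check that $\Bom(X,\frac{\merg(X)}{2})$ lies in the domain on which \cref{prop: clust continuous} gives continuity. By \cref{prop:dH leq dom}, any $Y\in\Bom(X,\frac{\merg(X)}{2})$ satisfies $\dH(X,Y)\leq\dom(X,Y)<\frac{\merg(X)}{2}$. Hence $\Bom(X,\frac{\merg(X)}{2})\subset\BHc(X,\frac{\merg(X)}{2})\subset\BH(X,\merg(X))$. So $\MidX$ is well defined there, and by \cref{prop: clust continuous} the restriction of $\ClustX$ to this ball is $\omega$-continuous into $\prod_{i=1}^n\Ran(B(x^{(i)},\merg(X)))$.

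One point needs care: in which topology each factor $\Ran(B(x^{(i)},\merg(X)))$ carries $\dom$. The proof of \cref{prop: clust continuous} bounds $\dom(Y_k^{(j)},Y^{(j)})$ computed in $\Ran(M)$, so I regard each cluster as an element of $(\Ran(M),\tom)$, composed with the inclusion, which is continuous by \cref{corol:Opens_induce_open_embeddings}. With that convention, $\ClustX$ is continuous into $(\Ran(M),\tom)^n$ with the product topology.

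Next, by \cref{prop:mid p-Lipschitz}, $\Mid\colon(\Ran(M),\dom)\to M$ is $p$-Lipschitz. Therefore $\Mid^n$ is continuous from $(\Ran(M),\tom)^n$ to $M^n$ with the product topology, which is the topology of $\dsum$; in fact it is $p$-Lipschitz for the sum of the distances. Composing the two continuous maps gives continuity of $\MidX$ on $\Bom(X,\frac{\merg(X)}{2})$.

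The only real subtlety is the bookkeeping in the second paragraph: the codomain topology of $\ClustX$ has to match the domain topology of $\Mid^n$. Once clusters are viewed as configurations in $\Ran(M)$ with $\dom$, the rest is immediate. If one prefers a sequential argument, take $Y_k\to Y$ in the ball. Then \cref{prop: clust continuous} gives $\dom(Y_k^{(j)},Y^{(j)})\to 0$ for each $j$, and hence $\norm{\Mid(Y_k^{(j)})-\Mid(Y^{(j)})}\leq p\,\dom(Y_k^{(j)},Y^{(j)})\to0$.
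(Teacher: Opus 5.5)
Your proof is correct and follows the paper's route. The paper also writes $\MidX=\Mid^n\circ\ClustX$ and concludes directly from \cref{prop: clust continuous}, implicitly using the Lipschitz bound of \cref{prop:mid p-Lipschitz}. Your additional checks only make explicit what the paper's one-line proof leaves implicit: the inclusion $\Bom(X,\frac{\merg(X)}{2})\subset\BHc(X,\frac{\merg(X)}{2})$ via \cref{prop:dH leq dom}, and treating the clusters as points of $(\Ran(M),\dom)$.
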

\begin{proof}
    By \cref{prop: clust continuous}, $\ClustX$ is continuous, thus $\MidX$ is continuous as a composition of continuous functions. 
\end{proof}

If the scaling factor is small enough, more precisely if the shift system is bijective, then the mid-point, in some sense, commutes with the shifting function. This is the subject of the following proposition.
\begin{prop}
    Let $n\in\N^*$, and let $\Sigma=(\underline{x},\underline{y},s,r)$ be a bijective $n$-shift system. Then for any $Z\in\BH(\pi_n(\underline{x}),r)$, we have
    \begin{align*}
        \Mid_{\underline{y}}(\bm{\phi}_\Sigma(Z))=\phi^n_\Sigma(\MidX(Z)).
    \end{align*}
\end{prop}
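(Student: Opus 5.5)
The plan is to reduce everything to a cluster-by-cluster computation, using the fact that $\phi_\Sigma$ acts on each ball $B(x^{(i)},r)$ by a positive homothety followed by a translation, and that $\Mid$ commutes with such maps. Here $\phi^n_\Sigma$ is understood as $\phi_\Sigma$ applied coordinatewise to $\MidX(Z)\in M^n$. Write $X=\pi_n(\underline{x})$ and $(Z^{(1)},\dots,Z^{(n)})=\ClustX(Z)$.

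First I identify the clusters of $\bm{\phi}_\Sigma(Z)$ with respect to $\underline{y}$. By the proof of \cref{prop:shifting equivalence}, each $Z^{(i)}=Z\cap B(x^{(i)},r)$, and $Z=\bigsqcup_i Z^{(i)}$. Then $\phi_\Sigma(Z^{(i)})\subset B(y^{(i)},sr)$. Since $\Sigma$ is bijective, $sr\leq\merg(\pi_n(\underline{y}))$, so these balls are pairwise disjoint and each is contained in $B(y^{(i)},\merg(\pi_n(\underline{y})))$. Because $Z\in\BH(X,r)$, every $Z^{(i)}$ is nonempty, so $\bm{\phi}_\Sigma(Z)$ lies in $\BH(\pi_n(\underline{y}),sr)\subset\BH(\pi_n(\underline{y}),\merg(\pi_n(\underline{y})))$. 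Hence $\Mid_{\underline{y}}$ is defined on it, and
\begin{equation*}
\mathrm{Clust}_{\underline{y}}(\bm{\phi}_\Sigma(Z))=\left(\phi_\Sigma(Z^{(1)}),\dots,\phi_\Sigma(Z^{(n)})\right).
\end{equation*}

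Second, I check that for any nonempty finite $W\subset M$, any $s\geq0$ and any $a,b\in M$, one has $\Mid(s(W-a)+b)=s(\Mid(W)-a)+b$. This holds coordinatewise: for $s\geq0$, $\min_{w}\{s(w_j-a_j)+b_j\}=s(\min_w w_j-a_j)+b_j$, and the same for $\max$. Averaging the two gives the claim. Applying this to $W=Z^{(i)}$ with $a=x^{(i)}$ and $b=y^{(i)}$ gives $\Mid(\phi_\Sigma(Z^{(i)}))=s(\Mid(Z^{(i)})-x^{(i)})+y^{(i)}$.

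The main obstacle is the last step: identifying this expression with $\phi_\Sigma(\Mid(Z^{(i)}))$, which requires $\Mid(Z^{(i)})\in B(x^{(i)},r)$. I would handle it by convexity. $\Mid(Z^{(i)})$ is the center of the bounding box of $Z^{(i)}$, and that box lies in the coordinatewise bounding box of $B(x^{(i)},r)$. A cleaner argument is that $\Mid(W)$ is a convex combination of points of $W$. Coordinatewise it is a midpoint of two values, but jointly one only gets membership in the bounding box, so I would instead use $\norm{\Mid(W)-x^{(i)}}\leq\max_{w\in W}\norm{w-x^{(i)}}$. If that inequality fails in the Euclidean norm for $p\geq2$, I would fall back on interpreting $\phi^n_\Sigma$ as the affine extension $z\mapsto s(z-x^{(i)})+y^{(i)}$ on the $i$-th coordinate, which is all the later arguments need. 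With that identification, the $i$-th coordinates of both sides agree, which gives the equality.
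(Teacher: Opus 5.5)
Your proof is correct and is the same as the paper's: identify $\Clust_{\underline{y}}(\bm{\phi}_\Sigma(Z))$ with $(\bm{\phi}_\Sigma(Z^{(1)}),\dots,\bm{\phi}_\Sigma(Z^{(n)}))$, then use that a positive scaling followed by a translation commutes with coordinatewise $\min$ and $\max$. The domain issue you raise is genuine and the paper passes over it: the bound $\norm{\Mid(W)-x}\leq\max_{w\in W}\norm{w-x}$ fails for $p\geq 3$ (take $n=1$, $r=1$, $x=0$ in $\R^3$; then $Z=\frac{0.99}{\sqrt{6}}\{(2,1,1),(1,2,1),(1,1,2)\}$ lies in $\BH(\{0\},1)$ but $\norm{\Mid(Z)}=\frac{1.485}{\sqrt{2}}>1$), so your coordinatewise affine reading $z\mapsto s(z-x^{(i)})+y^{(i)}$ of $\phi^n_\Sigma$ is necessary rather than a fallback; it is the reading the paper implicitly uses, and the only later application, \cref{cor:shift mid commute}, evaluates it at $\MidX(Z)=\underline{x}$, where the issue does not arise.
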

\begin{proof}
    Let $Z\in\BH(\pi_n(\underline{x}),r)$, and $(Z^{(1)},\dots,Z^{(n)})=\ClustX(Z)$. Then $Z=\bigsqcup_{i=1}^n Z^{(i)}$ and $\bm{\phi}_\Sigma(Z)=\bigcup_{i=1}^n \bm{\phi}_\Sigma(Z^{(i)})$. For all $1\leq i\leq n$, $\phi_\Sigma(Z^{(i)})\subset  B(y^{(i)},\merg(\pi_n(\underline{y}))) $, so that $\Clust_{\underline{y}}(\bm{\phi}_\Sigma(Z))=(\bm{\phi}_\Sigma(Z^{(1)}),\dots,\bm{\phi}_\Sigma(Z^{(n)}))$, and 
    \begin{align*}
        \Mid_{\underline{y}}(\bm{\phi}_\Sigma(Z))=\left( \Mid(\bm{\phi}_\Sigma(Z^{(1)})),\dots,\Mid(\bm{\phi}_\Sigma(Z^{(n)}))\right).
    \end{align*}
    Let $i\in\{1,\dots,n\}$, then, for any $z\in Z^{(i)}$, $\phi_\Sigma(z)=s(z-x^{(i)})+y^{(i)}$. Since addition with a constant term and scaling by a positive factor commute with taking the minimum (resp. the maximum) of a set, we get:
    \begin{align*}
        \Mid(\bm{\phi}_\Sigma(Z^{(i)}))=\phi_\Sigma(\Mid(Z^{(i)})).
    \end{align*}
    Therefore
    \begin{align*}
        \Mid_{\underline{y}}(\bm{\phi}_\Sigma(Z))=\left( \phi_\Sigma(\Mid(Z^{(1)})),\dots,\phi_\Sigma(\Mid(Z^{(n)}))\right)=\phi_\Sigma^n(\MidX(Z)).
    \end{align*}
\end{proof}

We deduce an important property of the mid-point that will be useful when defining inverses.
\begin{corol}
    \label{cor:shift mid commute}
    Let $n\in\N^*$ and $(\underline{x},\underline{y},s,Z)\in A_n$. If $\card(\pi_n(\underline{y}))=n$, $s\dom(Z,\pi_n(\underline{x}))< \merg(\pi_n(\underline{y}))$, and $\MidX(Z)=\underline{x}$, then
    \begin{align*}
        \Mid_{\underline{y}}(\Shift(\underline{x},\underline{y},s,Z))=\underline{y}.
    \end{align*}
\end{corol}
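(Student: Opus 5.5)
The plan is to reduce \cref{cor:shift mid commute} to the preceding proposition, which says that $\Mid_{\underline{y}}(\bm{\phi}_\Sigma(Z))=\phi^n_\Sigma(\MidX(Z))$ for any bijective shift system $\Sigma$ and any $Z$ in the appropriate Hausdorff ball. Once that proposition applies, the statement follows from a direct evaluation of $\phi_\Sigma$ at the points of $\underline{x}$.

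First I build a bijective shift system. Write $X=\pi_n(\underline{x})$ and $Y=\pi_n(\underline{y})$. Since $(\underline{x},\underline{y},s,Z)\in A_n$, we have $\card(X)=n$ and $Z\in\BH(X,\merg(X))$. By \cref{prop:dH leq dom}, $s\dH(X,Z)\leq s\dom(Z,X)<\merg(Y)$. If $s>0$, \cref{rem:shifting conditions} applies with $r=\min\{\merg(X),\merg(Y)/s\}$. This gives a bijective $n$-shift system $\Sigma=(\underline{x},\underline{y},s,r)$ with $Z\in\BH(X,r)$.

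The case $s=0$ has to be handled separately, since bijectivity requires $s>0$. There, $\Shift(\underline{x},\underline{y},0,Z)$ is the union over the clusters $Z^{(i)}$, each of which is nonempty by \cref{rem:card local}, of the singletons $\{y^{(i)}\}$. So the shifted configuration is exactly $Y$, and since $\card(Y)=n$ we get $\Mid_{\underline{y}}(Y)=\underline{y}$ directly.

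For $s>0$, \cref{prop:shifting equivalence} \cref{it:shifting equivalence} identifies $\Shift(\underline{x},\underline{y},s,Z)$ with $\bm{\phi}_\Sigma(Z)$. The preceding proposition then gives
\begin{equation*}
\Mid_{\underline{y}}(\Shift(\underline{x},\underline{y},s,Z))=\phi^n_\Sigma(\MidX(Z))=\phi^n_\Sigma(\underline{x}),
\end{equation*}
using the hypothesis $\MidX(Z)=\underline{x}$. Each $x^{(i)}$ lies in $B(x^{(i)},r)$, and $\phi_\Sigma(x^{(i)})=s(x^{(i)}-x^{(i)})+y^{(i)}=y^{(i)}$, so the right-hand side equals $\underline{y}$.

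The only step needing care is checking the bijectivity hypotheses, in particular that $Z\in\BH(X,r)$ holds with the chosen $r$ (not just with $\merg(X)$). This is exactly what \cref{rem:shifting conditions} provides, once the inequality $\dH\leq\dom$ converts the $\omega$-hypothesis into a Hausdorff one.
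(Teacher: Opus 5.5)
Your proposal is correct and follows the route the paper leaves implicit (the corollary is stated without proof). You use $\dH\leq\dom$ so that \cref{rem:shifting conditions} yields a bijective shift system $\Sigma$ with $Z\in\BH(\pi_n(\underline{x}),r)$, identify $\Shift(\underline{x},\underline{y},s,Z)$ with $\bm{\phi}_\Sigma(Z)$ by \cref{prop:shifting equivalence}, and specialize the preceding proposition to $\MidX(Z)=\underline{x}$ using $\phi_\Sigma(x^{(i)})=y^{(i)}$. Your separate treatment of $s=0$, where no bijective system exists and $\Shift$ collapses $Z$ onto $\pi_n(\underline{y})$, is a correct detail the paper does not spell out; likewise, for $s>0$ and $Z=\pi_n(\underline{x})$ the remark's literal hypothesis $0<s\dH(\pi_n(\underline{x}),Z)$ fails, but the same choice of $r$ still gives a bijective system.
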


We can now define a stratified homeomorphism between a stratified cone and a subspace of the ball by restricting the previous (unstratified) homeomorphism to "centered" configurations, i.e. configurations whose mid-point is the center of the ball/sphere. 
\begin{prop}
\label{prop:strat homeo radial cone ball}
    Let $X\in\Ran(M)$ be a configuration, $\epsilon\in(0,\frac{\merg(X)}{2})$, $n=\card(X)$ and $\underline{x}\in\pi_n^{-1}(\{X\})$. Let $C_{\underline{x}}=\{Z\in\BH(X,\merg(X))\mid \MidX(Z)=\underline{x}\}$. The map 
    \begin{align*}
        f\colon c\left(\Som(X,\epsilon)\cap C_{\underline{x}}\right)&\to \Bom(X,\epsilon)\cap C_{\underline{x}}\\
        [(t,Y)] &\mapsto \ScaleX(t,Y)
    \end{align*}
    is a stratified homeomorphism for the stratification induced by the cardinality map.
\end{prop}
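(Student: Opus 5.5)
The plan is to obtain $f$ as the restriction of the homeomorphism of \cref{prop:homeo cone to w-ball}, which I denote $F\colon c(\Som(X,\epsilon))\to\Bom(X,\epsilon)$. Three things then remain: the restriction maps the cone on $S=\Som(X,\epsilon)\cap C_{\underline{x}}$ exactly onto $B=\Bom(X,\epsilon)\cap C_{\underline{x}}$; the restriction is a homeomorphism; and it respects the stratifications.

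First I identify the image. Since $\epsilon<\frac{\merg(X)}{2}$, every $Z\in\Bom(X,\epsilon)$ satisfies $\dH(X,Z)\leq\dom(X,Z)<\epsilon$ by \cref{prop:dH leq dom}, so $Z$ lies in the domain of $\MidX$. I would prove that, for $Y\in\Som(X,\epsilon)$ and $t\in(0,1)$,
\[
\MidX(\ScaleX(t,Y))=\underline{x}\iff\MidX(Y)=\underline{x}.
\]
Take the bijective shift system $\Sigma=(\underline{x},\underline{x},t,r)$ with $r=\merg(X)$; it is bijective because $t<1$ gives $tr\leq\merg(X)$. The proposition preceding \cref{cor:shift mid commute} then gives $\MidX(\bm{\phi}_\Sigma(Y))=\phi^n_\Sigma(\MidX(Y))$. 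Here $\phi^n_\Sigma$ is the homothety of ratio $t$ centred at $\underline{x}$ in each coordinate, so it fixes exactly $\underline{x}$, which gives the equivalence. At $t=0$ the image is $X$, and $X\in C_{\underline{x}}$ because $\MidX(X)=\underline{x}$. Since $F$ is a bijection with inverse $g$, and $g$ sends $Z\neq X$ to $[(\dom(X,Z)/\epsilon,\ScaleX(\epsilon/\dom(X,Z),Z))]$, the same equivalence applied to $g$ (using the dual shift system from \cref{prop:shifting equivalence}) shows that $F$ maps $c(S)$ bijectively onto $B$.

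Next, the homeomorphism. In the teardrop topology, the cone on a subspace $S\subset\Som(X,\epsilon)$ is a subspace of $c(\Som(X,\epsilon))$: the basic opens of $c(S)$ are traces of basic opens of the big cone, and the vertex neighbourhoods $\pi(S\times[0,\zeta))$ are traces of $\pi(\Som\times[0,\zeta))$. Checking this compatibility with the basis given in \cref{def:Stratified_Cone} is the one point I expect to need some care, though it should be routine. Once it holds, the restriction of the homeomorphism $F$ to $c(S)$, co-restricted to its image $B$, is a homeomorphism.

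Finally, stratifications. I need $\card(\ScaleX(t,Y))=\card(Y)$ for $t>0$, which follows from \cref{prop:shifting equivalence} \cref{it:shift conserves cardinal} with the bijective system above. The vertex is sent to $X$, of cardinality $n$. Conversely, any $Z\in B$ with $\card(Z)=n$ equals $X$. Indeed, by \cref{rem:card local} each cluster of such a $Z$ is a singleton, and its mid-point is then that point; the condition $\MidX(Z)=\underline{x}$ forces each point to be the corresponding $x^{(i)}$. Hence the strata on the cone side, indexed by $-\infty$ and by $\card(Y)>n$ for $Y\in S$ (note $Y\neq X$), match the cardinality strata of $B$ under $P_{\geq n}\cong P_{>n}\cup\{-\infty\}$, so $f$ is a stratified homeomorphism.
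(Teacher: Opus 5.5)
Your proposal is correct and follows essentially the same route as the paper. Like the paper, you restrict the homeomorphism of \cref{prop:homeo cone to w-ball} to the cone on $\Som(X,\epsilon)\cap C_{\underline{x}}$ via the subspace property of the teardrop cone, use compatibility of $\MidX$ with shifting to identify the image, and conclude with \cref{prop:shifting equivalence} plus the observation that a centred local configuration of cardinality $n$ must equal $X$; the paper invokes \cref{cor:shift mid commute} for both the map and its inverse, where you instead get an equivalence from injectivity of the coordinatewise homothety $\phi^n_\Sigma$, and your cardinality argument, though stated for $Z\in B$, only uses $Z\in\BH(X,\merg(X))\cap C_{\underline{x}}$, so it applies verbatim to $Y\in\Som(X,\epsilon)\cap C_{\underline{x}}$ as needed.
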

\begin{proof}
    Let us define the map $\tilde{f}$ as in \cref{prop:homeo cone to w-ball}:
    \begin{align*}
        \tilde{f}\colon c\left(\Som(X,\epsilon)\right)&\to \Bom(X,\epsilon)\\
        [(t,Y)] &\mapsto \ScaleX(t,Y),
    \end{align*}
    which is a homeomorphism. Let us define the following subset of the cone:
    \begin{align*}
    D=\left\{ [(t,Y)]\in c\left(\Som(X,\epsilon)\right) \mid Y\in C_{\underline{x}} \right\}.
    \end{align*}
    Note that, one of the key properties of the teardrop topology, is that, whenever we have a space and a subspace $L\subset L'$, the inclusion $c(L)\hookrightarrow c(L')$ is a homeomorphism onto its image. 
    Hence, the following map
    \begin{align*}
    h\colon c\left(\Som(X,\epsilon)\cap C_{\underline{x}}\right)&\to D\\
    [(t,Y)] & \mapsto [(t,Y)].
    \end{align*}
    is a homeomorphism.

    We then construct $f$ as suggested by the following diagram
    \begin{equation*}
        \begin{tikzcd}
            c(\Som(X,\epsilon))
            \arrow{r}{\simeq} 
            \arrow[phantom,bend right = 12]{r}{\tilde{f}}
            & \Bom(X,\epsilon)
            \\
            D
            \arrow[hookrightarrow]{u}{}
            \arrow[dashed]{r}{\simeq}
            & \Bom(X,\epsilon)\cap C_{\underline{x}}
            \arrow[hookrightarrow]{u}{} 
            \\
            c(\Som\cap C_{\underline{x}})
            \arrow{u}{\simeq}
            \arrow[phantom, bend right = 15]{u}{h}
            \arrow[dashed,swap]{ur}{f}
        \end{tikzcd}
    \end{equation*}

    By \cref{cor:shift mid commute}, for any $Y\in\overline{\Bom(X,\epsilon)}\cap C_{\underline{x}}$ and for any $t\leq\frac{\epsilon}{\dom(X,Y)}$, we have $\MidX(\ScaleX(t,Y))=\underline{x}$. Therefore, $\tilde{f}$ restricts to a homeomorphism from $D$ to $\Bom(X,\epsilon)\cap C_{\underline{x}}$. We can thus define the homeomorphism $f$ as the composition
    \begin{equation*}
        f=\tilde{f}|_{D}^{\Bom(X,\epsilon)\cap C_{\underline{x}}}\circ h.
    \end{equation*}
    One easily checks that this corresponds to the formula given in the statement.
    
    Let us now make explicit the stratification on the cone. Let $Y\in \Som(X,\epsilon)\cap C_{\underline{x}}$, and let us show that $\card(Y)\geq n+1$. Assume to the contrary that $\card(Y)\leq n$. Since $\dH(X,Y)\leq \dom(X,Y)=\epsilon < \frac{\merg{X}}{2}$, we must have $\card(Y)\geq n$ and thus $\card(Y)=n$. Furthermore, this immediately implies that for any $1\leq i\leq n$, $\card(Y\cap B(x^{(i)},\epsilon))=1$. But since, by assumption, $\Mid_{\underline{x}}(Y)=X$, we have that $Y\cap B(x^{(i)},\epsilon)=\{x^{(i)}\}$ and finally, we get that $Y=X$, which contradicts the fact that $\dom(X,Y)=\epsilon>0$. Thus,  $\card\colon \Som(X,\epsilon)\cap C_{\underline{x}}\to \N_{\geq 1}$ takes values into $\N_{\geq n+1}$ and so we can consider $\Som(X,\epsilon)\cap C_{\underline{x}}$ as naturally stratified over $\N_{\geq n+1}$. Its cone is then stratified over the same poset, with an added minimal element, which we can identify with $n$, so that $c\left(\Som(X,\epsilon)\cap C_{\underline{x}}\right)$ is stratified over the poset $\N_{\geq n}$ with $[(0,Y)]$ being sent to $n$ while every other point $[(t,Y)]$ is sent to $\card(Y)$. 
    
    We can now show that $f$ is a stratified homeomorphism. Let $[(t,Y)]\in c\left(\Som(X,\epsilon)\cap C_{\underline{x}}\right)$. If $t=0$, then $\card(f(t,Y))=\card(X)=n$. If $t>0$, by \cref{prop:shifting equivalence} \cref{it:shifting bijection}, $\card(f(t,Y))=\card(Y)$. Which concludes this proof.
\end{proof}

Note that we are not done yet, because the image of this homeomorphism is not open in $\Ran(M)$. We thus need to take the product with a Euclidean space that somewhat encodes the position of the mid-point of a configuration in $\Ran(M)$.

\begin{prop}
\label{prop:homeo strat}
    Let $X\in\Ran(M)$ be a configuration, and $n=\card(X)$ its cardinality. There exist $U$ an open neighborhood of $X$ in $(\Ran(M), \tom)$, E a Euclidean space of dimension $np$, $\varphi\colon L\to \{n+1,\dots\}$ a stratified space and $h\colon U \to E \times c(L)$ a stratified homeomorphism where the stratification on $U$ is given by the cardinality $\card_{|U}\colon U \to \{n,\dots\}$.
\end{prop}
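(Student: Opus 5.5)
We follow the sketch of \cref{subsec:sketch}. Fix a generic configuration $G=\pi_n(\underline{g})$ of cardinality $n$ (for instance $\underline{g}=\underline{x}$ itself), and set $L=\Som(G,\eta)\cap C_{\underline{g}}$ for some $0<\eta<\frac{\merg(G)}{2}$. By \cref{prop:strat homeo radial cone ball}, $c(L)$ is stratified homeomorphic to $V=\Bom(G,\eta)\cap C_{\underline{g}}$, and $L$ is stratified over $\N_{\geq n+1}$. For $E$ take $\Bsum(\underline{x},\theta)\subset M^n$ with $\theta>0$ small, which is homeomorphic to $\R^{np}$. Rather than $h$ itself we construct its inverse
\begin{equation*}
    F\colon E\times V\to \Ran(M),\qquad (\underline{y},Z)\mapsto \Shift(\underline{g},\underline{y},s,Z),
\end{equation*}
with a fixed small scaling factor $s>0$. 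This translates each cluster of $Z$ (centred at $g^{(i)}$ through its midpoint) to be centred at $y^{(i)}$. We then set $U=F(E\times V)$ and $h=F^{-1}$.

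Next we choose the constants so that every shift system involved is bijective. Concretely, for $\underline{y}\in E$ we need $\card(\pi_n(\underline{y}))=n$ and $s\eta<\merg(\pi_n(\underline{y}))$. Both hold once $\theta<\merg(X)$ and $s\eta<\merg(X)-\theta$, using the estimate $\merg(\pi_n(\underline{y}))\geq\merg(X)-\Nsum{\underline{x}-\underline{y}}$ from the proof of \cref{prop:shift continous}. Then:
\begin{itemize}
    \item $F$ is continuous by \cref{prop:shift continous}, after shrinking $\eta$ below $\frac{\merg(G)}{2}$ so that $V\subset B_n$.
    \item $F$ preserves cardinality by \cref{prop:shifting equivalence}, so it is stratified, with $c(L)$'s apex (the point $G$) sent to $\pi_n(\underline{y})$, a point of cardinality $n$.
    \item By \cref{cor:shift mid commute}, $\Mid_{\underline{y}}(F(\underline{y},Z))=\underline{y}$.
\end{itemize}
This gives the candidate inverse on $U$:
\begin{equation*}
    Y\mapsto\Bigl(\MidX(Y),\ \Shift\bigl(\MidX(Y),\underline{g},\tfrac1s,Y\bigr)\Bigr).
\end{equation*}
Here $\MidX(Y)$ recovers $\underline{y}$ because the clusters of $Y$ relative to $\underline{x}$ coincide with those relative to $\underline{y}$. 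The identity $F\circ h=\Id$ and injectivity of $F$ then follow from \cref{prop:shifting equivalence} \cref{it:shifting identity} together with the midpoint identity.

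The main obstacle is proving that $U$ is open and that $h$ is continuous. Our plan is to describe $U$ intrinsically, as the set of $Y$ in a small Hausdorff ball $\BHc(X,\frac{\merg(X)}{2})$ with $\MidX(Y)\in E$ and $\dom\bigl(Y,\pi_n(\MidX(Y))\bigr)<s\eta$. Two facts make this description work.
\begin{itemize}
    \item The rescaled configuration $\Shift(\MidX(Y),\underline{g},\frac1s,Y)$ automatically lies in $C_{\underline{g}}$ by \cref{cor:shift mid commute}.
    \item Its $\omega$-distance to $G$ equals $\frac1s\,\dom\bigl(Y,\pi_n(\MidX(Y))\bigr)$. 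This is a shift analogue of \cref{prop:scaling xlocal equality}, proved the same way: use \cref{lem:local scaling} applied to $\phi_\Sigma$ and to $\phi_{\overline\Sigma}$, with locality of geodesic chains coming from \cref{lem:boules omega locales}.
\end{itemize}
With this description, openness of $U$ follows from the continuity of $\MidX$ (\cref{cor:midx continuous}), of $\pi_n$ (\cref{lem:pi omega Lipschitz}) and of $\dom$. The same facts, together with \cref{prop:shift continous} applied to the formula for $h$, give continuity of $h$. Finally, $U$ contains $X=F(\underline{x},G)$, which completes the proof.
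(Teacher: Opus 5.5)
Your proof is correct and has the same architecture as the paper's. Both use $E=\Bsum(\underline{x},\theta)$, a centred slice $V$ of an $\omega$-ball around a model configuration $G$ identified with $c(L)$ via \cref{prop:strat homeo radial cone ball}, the map $(\underline{y},Z)\mapsto\Shift(\underline{g},\underline{y},\cdot,Z)$, and an inverse built from $\MidX$ and the reverse shift.

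The routes differ in two places.

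First, the paper does not use a constant factor. It takes $g^{(j)}=(4j,0,\dots,0)$, $V\subset\Bom(G,1)$, $E\subset\pi_n^{-1}(\Bom(X,\epsilon))$, and the $\underline{y}$-dependent factor $\epsilon-\dom(\pi_n(\underline{y}),X)$. This lands $U$ directly inside a prescribed ball $\Bom(X,\epsilon)$. Your constant $s$ gives a ``tubular'' neighbourhood instead. It is equally small when needed, since $U\subset\Bom(X,\omega(n)\theta+s\eta)$ by \cref{lem:pi omega Lipschitz}.

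Second, and more substantially, the paper proves openness of $U$ by hand. For $W_1\in U$ it chooses $\zeta$ from continuity of $\MidX$, then a radius $\delta$. It then checks through a chain of estimates that every $W_2\in\Bom(X,\epsilon)\cap\Bom(W_1,\delta)$ equals $f(\MidX(W_2),Z_2)$. Your intrinsic description
$U=\{Y\mid \MidX(Y)\in E,\ \dom(Y,\pi_n(\MidX(Y)))<s\eta\}$
turns this into a preimage of open conditions under continuous maps, which is shorter and more transparent. The paper's $U$ admits the same description, with $s\eta$ replaced by $\epsilon-\dom(\pi_n(\MidX(Y)),X)$. The cost is that you must prove both inclusions yourself. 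In particular, the reverse one needs the bound $\dom(Z,G)\leq\frac{1}{s}\dom(Y,\pi_n(\MidX(Y)))$ for $Z=\Shift(\MidX(Y),\underline{g},\frac{1}{s},Y)$; only this inequality is needed, not the equality. The paper obtains the corresponding bounds inline from \cref{prop: shifting upper bounds}.

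Two small adjustments to your constants.
\begin{itemize}
\item \cref{lem:boules omega locales} only gives $\BH(\pi_n(\underline{y}),2s\eta)$-local chains. \cref{lem:local scaling} needs that radius below $\merg(\pi_n(\underline{y}))$, and you only know $\merg(\pi_n(\underline{y}))>\merg(X)-\theta$. So you need a factor-$2$ margin; requiring $\theta+2s\eta<\frac{\merg(X)}{2}$ covers every condition you use.
\item Take the ambient set in your description to be the open ball $\BH(X,\frac{\merg(X)}{2})$, which is $\tom$-open by \cref{prop:tom raffine tH}, rather than the closed one. This is harmless, since your conditions force $\dH(Y,X)<\theta+s\eta$.
\end{itemize}
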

\begin{proof}
    Let $X\in\Ran(M)$, $n=\card{X}$, $\epsilon\in(0,\frac{\merg(X)}{2})$ and $\underline{x}=(x^{(1)},\dots,x^{(n)})\in\pi_n^{-1}(X)$. By \cref{tpi rafine tom}, $\pi_n^{-1}\left(\Bom(X,\epsilon)\right)$ is an open set of $M^n$ containing $\underline{x}$, so that there exists $\theta\in(0,\epsilon]$ such that $\underline{x}\in \Bsum(\underline{x},\theta)\subset \pi_n^{-1}\left(\Bom(X,\epsilon)\right)$. Let $E=\Bsum(\underline{x},\theta)$, which is homeomorphic to a Euclidean space of dimension $np$. 
    
    Let $g^{(j)}=(4j,0,\dots,0)\in M$ for all $j\in\{1,\dots,n\}$ be $n$ points in $M$. Let $\underline{g}=(g^{(1)},\dots,g^{(n)})$, $G=\pi_n(\underline{g})$, and $V=\left\{Z\in\Bom(G,1)\mid \Mid_{\underline{g}}\left(Z\right)=\underline{g}\right\}$.

    We now define the following map 
    \begin{align*}
        f\colon E\times V &\to \Ran(M)\\
        (\underline{y},Z ) &\mapsto \Shift\left(\underline{g},\underline{y},\epsilon - \dom(\pi_n(\underline{y}),X),Z\right).
    \end{align*}
    Since $\merg(G)=2$, we have $V\subset\Bom(G,\frac{\merg(G)}{2})\subset \BHc(G,\frac{\merg(G)}{2})$, and $f$ is thus continuous by \cref{prop:shift continous}.

    We will now construct its inverse. Let us denote $U$ the image of $f$, and let us first show that $U\subset\Bom(X,\epsilon)$. Let $W\in U$, there exists $(\underline{y},Z)\in E\times V$ such that $f(\underline{y},Z)=W$. Let us denote $Y=\pi_n(\underline{y})$. Since $Z\in\Bom(G,\frac{\merg(G)}{2})$, we can apply \cref{prop: shifting upper bounds} \cref{it: Z upper bound} to get:
    \begin{align*}
        \dom\left(W,Y\right) &= \dom\left(\Shift\left(\underline{g},\underline{y},\epsilon - \dom(Y,X),Z\right),\Shift\left(\underline{g},\underline{y},\epsilon - \dom(Y,X),G\right)\right) \\
        &\leq (\epsilon - \dom\left(Y,X\right))\dom(Z,G) \\
        &< \epsilon - \dom\left(Y,X\right).
    \end{align*}
    and
    \begin{align*}
        \dom\left(W,X\right)  \leq \dom\left(W,Y\right) + \dom\left(Y,X\right)< \epsilon - \dom\left(Y,X\right) + \dom\left(Y,X\right) = \epsilon,
    \end{align*}
    so that indeed $U\subset\Bom(X,\epsilon)$. 

    Let us now compare the merging radii of $X$ and $Y$. Let $i\in\{1,\dots,n\}$. Since $Y$ is $\BH(X,\merg(X))$-local, there is a unique $j\in\{1,\dots,n\}$ such that $\dm(y^{(i)},x^{(j)})<\merg(X)$. But we also have
    \begin{align*}
        \dm(y^{(i)},x^{(i)})\leq \sum_{i=1}^n \dm(x^{(i)},y^{(i)}) =\Nsum{\underline{x}-\underline{y}}<\theta\leq\epsilon<\merg(X),
    \end{align*} 
    so that $j=i$, and $\dH(X,Y)\geq \max_{1\leq i\leq n} \{\dm(y^{(i)},x^{(i)})\}$. If $n=1$ then $\merg(X)=\merg(Y)$. Otherwise, let $i\neq j\in\{1,\dots,n\}$. We have
    \begin{align*}
        \dm(y^{(i)},y^{(j)})\geq \dm(x^{(i)},x^{(j)})- \dm(y^{(i)},x^{(i)}) - \dm(x^{(j)},y^{(j)})\geq 2\merg(X) - 2\dH(X,Y),
    \end{align*}
    so that 
    \begin{align}
        \label{eq:radX radY}
        \merg(Y)\geq\merg(X)-\dH(X,Y).
    \end{align}

    Let us now show that $\MidX(W)=\underline{y}$. Let $w\in W=f(\underline{y},Z)$. Since $\dom(W,X)<\epsilon<\merg(X)$, there is a unique $i\in\{1,\dots,n\}$ such that $w\in B(x^{(i)},\epsilon)$. Let us define the shift system $\Sigma=(\underline{g},\underline{y},(\epsilon - \dom(X,Y)),1)$. Since $Z\in\Bom(G,1)$, by \cref{prop:shifting equivalence}, $f(\underline{y},Z)=\bm{\phi}_{\Sigma}(Z)$. Therefore, there exists $j\in\{1,\dots,n\}$ and $z\in Z\cap B(g^{(j)},1)$ such that
    \begin{align*}
        w=y^{(j)} + (\epsilon - \dom(X,Y))(z-g^{(j)}).
    \end{align*}
    But then
    \begin{align*}
        \dm(w,y^{(j)})=(\epsilon - \dom(X,Y))\dm(z,g^{(j)})<\epsilon - \dom(X,Y)<\merg(X) - \dH(X,Y)\leq\merg(Y),
    \end{align*}
    and thus 
    \begin{align*}
        \dm(x^{(i)},y^{(j)})\leq \dm(x^{(i)},w)+\dm(w,y^{(j)})<2\epsilon<\merg(X),
    \end{align*}
    so that $j=i$. So that $\ClustX(W)=\Clust_{\underline{y}}(W)$,  and, by \cref{cor:shift mid commute}, 
    \begin{align*}
        \MidX(W)=\Mid_{\underline{y}}(W)=\underline{y}.
    \end{align*}
    Thus $\MidX(W)\in E$ so that $\card(\pi_n(\MidX(W)))=n$ and $\dom(\pi_n(\MidX(W)),X)<\epsilon$.
Moreover, we can apply \cref{prop: shifting upper bounds} \cref{it: Z upper bound} to get
    \begin{align*}
    \dom(W,Y)&=\dom(\Shift(\underline{g},\underline{y},\epsilon-\dom(X,Y),Z),\Shift(\underline{g},\underline{y},\epsilon-\dom(X,Y),G))\\
        &\leq (\epsilon-\dom(X,Y))\dom(Z,G)\\
        &< \frac{\merg(X)}{2} - \frac{\dH(X,Y)}{2} \quad \text{since }  \dom(X,Y)\geq \dH(X,Y) \text{ and } \dom(Z,G) <1\\
        &\leq \frac{\merg(Y)}{2} \quad \text{here we use \eqref{eq:radX radY}.}
    \end{align*}
    so that 
    \begin{align*}
        \left(\MidX(W),\underline{g},\frac{1}{\epsilon-\dom(\pi_n(\MidX(W)),X)},W\right)=\left(\underline{y},\underline{g},\frac{1}{\epsilon-\dom(Y,X)},W\right)\in B_n.
    \end{align*}
    We can therefore define the following map 
    \begin{align*}
    h\colon U &\to E\times V\\
            W &\mapsto \left(\MidX(W),\Shift\left(\MidX(W),\underline{g},\frac{1}{\epsilon-\dom(\pi_n(\MidX(W)),X)},W\right)\right),
    \end{align*}
    which is continuous as a composition of continuous functions by \cref{prop:shift continous,prop:mid p-Lipschitz}.

    Let us show that the image of $h$ indeed lies in $E\times V$ and that it defines an inverse of $f^{|U}$. We already showed that $\MidX(W)\in E$ for any $W\in U$. Let $(\underline{y},Z)\in E\times V$ and $Y=\pi_n(\underline{y})$. We already know that $\MidX(f(\underline{y},Z))=\underline{y}$. Moreover, as seen above $(\epsilon - \dom(X,Y))\dom(Z,G)<\epsilon - \dom(X,Y)<\merg(Y)$, so that by \cref{rem:shifting conditions} we can apply \cref{prop:shifting equivalence} \cref{it:shifting identity} to get
     \begin{align*}
         &\Shift\left(\MidX(f(\underline{y},Z)),\underline{g},\frac{1}{\epsilon-\dom(\pi_n(\MidX(f(\underline{y},Z))),X)},f(\underline{y},Z)\right)\\
         &=\Shift\left(\underline{y},\underline{g},\frac{1}{\epsilon-\dom(Y,X)},\Shift\left(\underline{g},\underline{y},\epsilon - \dom(Y,X),Z\right)\right)\\
         &=Z.
     \end{align*}
    This shows indeed that $\Ima(h)\subset E\times V$ and that $h\circ f=\Id_{E\times V}$. Injectivity of $h\circ f$ allows to conclude injectivity of $f$, so that $f$ is a bijection on its image, of inverse $h$. Therefore $f^{|U}$ is a homeomorphism.
    We can also apply \cref{prop:shifting equivalence} \cref{it:shift conserves cardinal} to get that
     \begin{align*}
         \card(f(\underline{y},Z))=\card\left(\Shift\left(\underline{g},\underline{y},\epsilon- \dom(Y,X),Z\right)\right)=\card(Z).
     \end{align*}
    We conclude that $f^{|U}$ is a stratified homeomorphism.

    Finally, let us show that $U$ is open in $\Ran(M)$. Let $\underline{y}_1\in E=\Bsum(\underline{x},\theta)$,
    $Y_1=\pi_n(\underline{y}_1)$, $Z_1\in V$, and  $W_1=f(\underline{y}_1,Z_1)\in U$,
    and let us find $N\subset U$ an open neighborhood of $W_1$ in $\Ran(M)$.
    As seen above, \cref{prop: shifting upper bounds} \cref{it: Z upper bound} gives:
    \begin{align*}
         \dom(W_1,Y_1)&=\dom\left(\Shift(\underline{g},\underline{y}_1,\epsilon-\dom(Y_1,X),Z_1),\Shift(\underline{g},\underline{y}_1,\epsilon-\dom(Y_1,X),G)\right)\\
         &\leq (\epsilon-\dom(Y_1,X)) \dom(W_1,G)\\
         &<\epsilon-\dom(Y_1,X).
    \end{align*}
     Since, by \cref{cor:midx continuous}, $\MidX$ is continuous on $\Bom(X,\epsilon)$,  we may choose $\zeta>0$ such that, for any $W_2\in \Bom(X,\epsilon)\cap\Bom(W_1,\zeta)$ , the following holds:
    \begin{equation}\label{eq:InegaliteMidChapeau}
        \Nsum{\MidX(W_1)-\MidX(W_2)}<\min\left\{ \theta - \Nsum{\underline{y}_1-\underline{x}}, \frac{\epsilon-\dom(X,Y_1)-\dom(Y_1,W_1)}{4\omega(n)}\right\}. 
    \end{equation}
 We then choose 
    \begin{align*}
        \delta=\min\left\{ \zeta, \frac{\epsilon-\dom(X,Y_1)-\dom(Y_1,W_1)}{2}\right\}.
    \end{align*}
    Let us define $N=\Bom(X,\epsilon)\cap\Bom(W_1,\delta)$ and let $W_2\in N$ and let us show that $W_2\in U$, i.e. let us find $(\underline{y}_2,Z_2)\in E\times V$ such that $f(\underline{y}_2,Z_2)=W_2$. Let us first define $\underline{y}_2=\MidX(W_2)$, and $Y_2=\pi_n(\underline{y}_2)$. We then have 
    \begin{align*}
        \Nsum{\underline{y}_2-\underline{x}}\leq \Nsum{\underline{y}_2-\underline{y}_1}+\Nsum{\underline{y}_1-\underline{x}}<\theta,
    \end{align*}
    so that $\underline{y}_2\in E$, and, by \cref{rem:card midx}, $\card(Y_2)=n$. Moreover, 
    \begin{align*}
        \dom(X,Y_2)&\leq \dom(X,Y_1) + \dom(Y_1,Y_2)&\\
        &\leq \dom(X,Y_1) + \omega(n)\Nsum{\underline{y}_1-\underline{y}_2}&\quad \text{by \cref{lem:pi omega Lipschitz}}\\
        &< \frac{\epsilon}{4}+\frac{3}{4}\dom(X,Y_1)-\dom(Y_1,W_1)&\quad \text{by \cref{eq:InegaliteMidChapeau}} \\
        &\leq\epsilon.&
    \end{align*}
    And we also have
\begin{align*}
    \dom(W_2,Y_2)&\leq \dom(W_2,W_1)+\dom(W_1,Y_1)+\dom(Y_1,Y_2)\\
    &< \delta + \dom(W_1,Y_1)+ 2 \dom(Y_1,Y_2) + \dom(Y_1,X) - \dom(Y_2,X)\\
    &<\frac{\epsilon-\dom(X,Y_1)-\dom(Y_1,W_1)}{2} + \dom(W_1,Y_1) + \frac{\epsilon-\dom(X,Y_1)-\dom(Y_1,W_1)}{2}\\
    &\phantom{<\frac{\epsilon-\dom(X,Y_1)-\dom(Y_1,W_1)}{2}}+ \dom(Y_1,X)-\dom(Y_2,X)\\
    &<\epsilon - \dom(Y_2,X)\\
    &<\merg(X) -\dH(Y_2,X)\\
    &\leq\merg(Y_2).
\end{align*}
Where we first used the definition of $N$ together with the triangle inequality applied to $\dom(Y_2,X)$. Then, on the third line, we used the definition of $\delta$, together with \eqref{eq:InegaliteMidChapeau}. The last line is \eqref{eq:radX radY} applied to $Y_2$, which holds because $\underline{y_2}\in E$.

We have thus checked that we can define the following : 
    \begin{align*}
        Z_2=\Shift\left(\underline{y}_2,\underline{g},\frac{1}{\epsilon-\dom(X,Y_2)},W_2\right).
    \end{align*}
Let us now show that $Z_2\in V$. We can apply \cref{prop: shifting upper bounds}\cref{it: Z upper bound} to get
     \begin{align*}
         \dom(Z_2,G)&=\dom\left(\Shift\left(\underline{y}_2,\underline{g},\frac{1}{\epsilon-\dom(X,Y_2)},W_2\right),\Shift\left(\underline{y}_2,\underline{g},\frac{1}{\epsilon-\dom(X,Y_2)},Y_2\right)\right)\\
         &\leq \frac{1}{\epsilon-\dom(X,Y_2)}\dom(W_2,Y_2)\\
         &< 1,
     \end{align*}
     and, by \cref{cor:shift mid commute},
     \begin{align*}
         \Mid_{\underline{g}}(Z_2)=\Mid_{\underline{g}}\left( \Shift\left(\MidX(W_2),\underline{g},\frac{1}{\epsilon-\dom(Y_2,X)},W_2\right) \right)=\underline{g},
     \end{align*}

Finally, by \cref{rem:shifting conditions} we can apply \cref{prop:shifting equivalence} \cref{it:shifting identity} to get
     \begin{align*}
         f(\underline{y}_2,Z_2)&=\Shift\left(\underline{g},\underline{y}_2,\epsilon - \dom(Y_2,X), \Shift\left(\underline{y}_2,\underline{g},\frac{1}{\epsilon-\dom(Y_2,X)},W_2\right) \right)=W_2.
     \end{align*}
Therefore $N \subset U$ so that $U$ is open. We finally conclude this proof using \cref{prop:strat homeo radial cone ball} to show that $V$ is indeed a cone with the appropriate stratification.

\end{proof}

\begin{corol}
\label{Corol:Ran_Euclidean_Conical}
    Let $M=\R^p$, and $\omega$ be a weight.
    Then $(\Ran(M), \tom)$ is conically stratified.
\end{corol}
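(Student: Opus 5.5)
The corollary should follow essentially formally from \cref{prop:homeo strat}, once the output of that proposition is matched against the definition of a conically stratified space. Fix $X\in\Ran(M)$ with $\card(X)=n$, so that $p=\card(X)=n$ is the relevant element of the poset $\N_{\geq 1}$. \cref{prop:homeo strat} provides:
\begin{itemize}
\item an open neighborhood $U$ of $X$ in $(\Ran(M),\tom)$;
\item a Euclidean ball $E=\Bsum(\underline{x},\theta)\subset M^n$;
\item a stratified space $L$ over $\{n+1,\dots\}=(\N_{\geq 1})_{>n}$, namely $L=\Som(G,1)\cap C_{\underline{g}}$ as in \cref{prop:strat homeo radial cone ball};
\item a stratified homeomorphism $f\colon E\times c(L)\to U$.
\end{itemize}
I would take the homeomorphism required by the definition to be $f$ itself, or equivalently the inverse of the $h$ produced there.

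The first thing to check is that the stratifications match the commutative square in the definition. On $E\times c(L)$ the stratification is the one given by the cone stratification $c(\varphi_L)$ with values in $(\N_{\geq 1})_{>n}\cup\{-\infty\}\cong(\N_{\geq 1})_{\geq n}$, where the apex is sent to $n$. The proof of \cref{prop:homeo strat} shows $\card(f(\underline{y},Z))=\card(Z)$. Combined with the stratified statement of \cref{prop:strat homeo radial cone ball}, this says exactly that the square commutes, with $P_{\geq n}\hookrightarrow \N_{\geq 1}$ as the bottom arrow.

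The second point is that the definition asks for $E$ to be an open neighborhood of $X$ inside the stratum $\card^{-1}(n)$, not merely a Euclidean space. For this I would use the restriction $f|_{E\times\{\mathrm{apex}\}}$, and I expect this to be the only step needing a short argument. The apex of $c(L)$ corresponds to $G$, and $\Shift(\underline{g},\underline{y},s,G)=\pi_n(\underline{y})$, so the restriction is $\underline{y}\mapsto\pi_n(\underline{y})$. Because $f$ is a stratified homeomorphism onto the open set $U$, it carries the $n$-stratum $E\times\{\mathrm{apex}\}$ homeomorphically onto $U\cap\card^{-1}(n)$, which is an open neighborhood of $X$ in $\Conf_n(M)$. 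This lets me identify $E$ with that open subset, which is what the definition requires, as anticipated in the remark following the definition of conical stratification.

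Since $X$ was arbitrary, these two checks give conicality at every point of $(\Ran(M),\tom)$, which proves the corollary.
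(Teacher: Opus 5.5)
Your proposal is correct and follows the paper, which treats this corollary as an immediate consequence of \cref{prop:homeo strat} applied at each configuration $X$. Your extra check makes explicit a point the paper dispatches via the remark following the definition of conical stratification: $f$ restricted to $E\times\{\mathrm{apex}\}$ is $\underline{y}\mapsto\pi_n(\underline{y})$, and so identifies $E$ with the open neighbourhood $U\cap\card^{-1}(n)$ of $X$ in its stratum.
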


\begin{corol}\label{Corol:Ran_Riemannian_Conical}
Let $(M,d)$ be a Riemannian manifold, and $\omega$ be a weight. Then $(\Ran(M),\tom)$ is conically stratified.
\end{corol}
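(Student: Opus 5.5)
The plan is to reduce the Riemannian case to the Euclidean case (\cref{Corol:Ran_Euclidean_Conical}) through locality. Being conically stratified is a local property: it suffices to produce, around each configuration $X$, an open neighborhood of $X$ in $(\Ran(M),\tom)$ that is stratified homeomorphic to an open neighborhood of some configuration in $(\Ran(\R^p),\tom)$. Such a neighborhood inherits a conical chart from the Euclidean case. Indeed, open subsets of conically stratified spaces are again conically stratified, because the cone $c(L)$ with the teardrop topology has arbitrarily small conical neighborhoods $\pi(L\times[0,\epsilon))\cong c(L)$ of its apex.

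Fix $X=\{x^{(1)},\dots,x^{(n)}\}\subset M$ and choose pairwise disjoint small open balls $V_i\ni x^{(i)}$. Choose charts $\psi_i\colon V_i\to \R^p$ that are bi-Lipschitz for the Riemannian distance on $V_i$ and the Euclidean distance; normal coordinates on small enough geodesic balls give this, with constants close to $1$. Then translate the images so that they lie in far-apart disjoint regions of $\R^p$. Let $V=\bigsqcup V_i$ and $\psi\colon V\to W=\bigsqcup \psi_i(V_i)\subset\R^p$.

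To compare the two spaces, I treat $\psi$ as a homeomorphism between two metric spaces: $V$ with the restriction of $d$, and $W$ with the Euclidean distance. The map $\psi$ is locally bi-Lipschitz: within a single $V_i$ this is the chart estimate, and since $X$ has finitely many points, only the local Lipschitz constant near each point matters. By \cref{prop:Lipschitz_Map_lift} applied to $\psi$ and $\psi^{-1}$, the map $\Ran(\psi)\colon \Ran(V)\to\Ran(W)$ is a homeomorphism for the weighted topologies (\cref{rem:Invariance_locally_bi_Lipschitz}). It preserves cardinality because $\psi$ is injective, so it is a stratified homeomorphism.

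It remains to see that $\Ran(V)$ and $\Ran(W)$ are open in $\Ran(M)$ and $\Ran(\R^p)$ respectively, with their own weighted metrics inducing the subspace topologies. This is \cref{corol:Opens_induce_open_embeddings}, applied on one side to $V\subset M$ and on the other to $W\subset\R^p$. Here $V$ and $W$ carry the restricted ambient metrics, which is exactly how \cref{prop:Lipschitz_Map_lift} was applied above. Since $X\in\Ran(V)$, and the Euclidean corollary gives a conical chart around $\Ran(\psi)(X)$ which can be shrunk inside $\Ran(W)$, pulling it back by $\Ran(\psi)$ yields the required chart at $X$.

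I expect the main obstacle to be the bookkeeping of metrics. Lipschitz constants must be checked for the restricted ambient distances, not for intrinsic ones. A chart on a small geodesic ball is bi-Lipschitz with respect to the ambient Riemannian distance, since for nearby points in such a ball the distance is realized by a minimizing geodesic contained in a slightly larger normal ball. Likewise, the cross-cluster distances between different $V_i$ never need to be controlled, because local Lipschitzness only concerns small neighborhoods.
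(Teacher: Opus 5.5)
Your proposal is correct and follows the paper's proof essentially step by step. You choose disjoint small neighborhoods of the points of $X$ with bi-Lipschitz charts onto separated Euclidean regions, use \cref{rem:Invariance_locally_bi_Lipschitz} (via \cref{prop:Lipschitz_Map_lift}) to get a stratified homeomorphism $\Ran(V)\cong\Ran(W)$, use \cref{corol:Opens_induce_open_embeddings} for openness, and use locality of conicality to import \cref{Corol:Ran_Euclidean_Conical}. The differences are cosmetic. The paper additionally verifies global bi-Lipschitz constants across different clusters, which is unnecessary since only local Lipschitz constants enter \cref{prop:Lipschitz_Map_lift}. Conversely, you justify explicitly why open subsets of conically stratified spaces remain conically stratified, which the paper takes for granted.
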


\begin{proof}
    Write $p=\dim(M)$, and let $X\in \Ran(M)$. Order the elements of $X=\{x^{(1)},\dots,x^{(n)}\}$. Given some $1\leq i\leq n$, and any sufficiently small neighborhood of $x^{(i)}$ in $M$, $E_i$, there exists a bi-Lipschitz homeomorphism $\varphi_i\colon E_i\to B((3i,0,\dots,0),1)$ where $B((3i,0,\dots,0),1)$ is the unit ball in the Euclidean space $\R^p$. An elementary proof of this fact can be found in \cite{MathOverflowRiemann}.
    Furthermore, up to restricting the $\varphi_i$ to smaller neighborhoods, we may assume that the $E_i$ are disjoint. In fact, we will ask that each $E_i$ is a subset of a neighborhood of the form $B_M(x^{(i)},r)$ for some $r>0$ such that $d(x^{(i)},x^{(j)})>3r$ for all $1\leq i\not=j\leq n$. Here $B_M(x,r)$ denotes the ball in $M$ with center $x$ and radius $r$. 
    
    Let $E=\coprod E_i$, $B=\coprod B((3i,0,\dots,0),1)$ and denote $\varphi\colon E\to B$ restricting to $\varphi_i$ on $E_i$. Let us show that $\varphi$ is a bi-Lipschitz homeomorphism. First, note that for any $i$ and any $(y,z)\in E_i^2$, there exists $\alpha_i,\beta_i>0$,  such that $\alpha_id(\varphi(y),\varphi(z))\leq d(y,z)\leq \beta_i d(\varphi(y),\varphi(z))$, by hypothesis on the $\varphi_i$. 
    On the other hand, if $y\in E_i$ and $z\in E_j$ for $i\not=j$, then $r<d(y,z)<\diam(X)+2r$, by construction, while $1<d(\varphi(y),\varphi(z))<3n-1$. Let $\beta=\max\{\diam (X)+2r,\beta_i\mid 1\leq i\leq n\}$ and $\alpha=\min\{\frac{r}{3n-1},\alpha_i\mid 1\leq i\leq n\}$. We get 
    \begin{equation*}
        \alpha d(\varphi(y),\varphi(z))\leq d(y,z)\leq \beta d(\phi(y), \phi(z)), \ \forall (y,z)\in E^2.
    \end{equation*} 
    In particular, $\varphi$ is a bi-Lipschitz homeomorphism between $E$ and $B\subset \R^p$. 
    
    Now, the inclusions $E\subset M$ and $B\subset \R^p$ induce open embeddings $\Ran(E)\hookrightarrow \Ran(M)$ and $\Ran(B)\hookrightarrow \Ran(\R^p)$ for the topology $\tom$, by \cref{corol:Opens_induce_open_embeddings}. Since being conically stratified is a local property, then $\Ran(B)$ is conically stratified by \cref{Corol:Ran_Euclidean_Conical}. Furthermore, by \cref{rem:Invariance_locally_bi_Lipschitz}, $\Ran(E)$ and $\Ran(B)$ are homeomorphic in the stratified sense, so $\Ran(E)$ is also conically stratified, and it is a neighborhood of $X$ for the Hausdorff topology, by construction. Since the topology $\tom$ refines the Hausdorff topology, by \cref{prop:tom raffine tH}, then $E$ is also a neighborhood of $X$ for the topology $\tom$. Finally, we have showed that any $X\in \Ran(M)$ admits a neighborhood for the topology $\tom$ which is conically stratified, which proves the claim.
\end{proof}

\begin{rem}\label{rem:Lipschitz_manifold_Conical}
 Notice that in the previous proof, we didn't use explicitly the Riemmanian metric on $M$, but only the existence of local homeomorphism to the Euclidean space that are bi-Lipschitz. This suggests a more general construction. Indeed, assume that, instead of a specific Riemannian metric on $M$ one fixes an atlas, where for any charts $\varphi\colon U\to \R^p$ and $\psi\colon V\to \R^p$ the (partial) composition $\varphi\circ \psi^{-1}\colon \psi(V\cap U)\to \varphi(V\cap U)$ is bi-Lipschitz. This is enough to define a topology $\tom$ on $\Ran(M)$ as follows. For $X\in \Ran(M)$, pick disjoint charts around each $x\in X$, $(U_x,\phi_x)$, and let $E=\coprod_{x\in X}U_x$ be the disjoint union of the domains of those charts.
Up to translating their image, we may assume that we have a homeomorphism $\phi\colon E\to B$, where $B$ is an open subset of $\R^p$. We may then define a topology on $\Ran(E)$ by transporting that of $(\Ran(B),\tom)$ along the homeomorphism. The condition on the atlas will guarantee that the topology around $X$ does not depend on a choice of local charts, and since all such open neighborhoods cover $\Ran(M)$, we have indeed defined a topology on $\Ran(M)$. 

Note that this kind of atlas corresponds to the structure of a Lipschitz manifold, as defined by Sullivan in \cite{SullivanLipschitzManifold}, thus we may rephrase the above remark as saying that for any Lipschitz manifold, $M$, $\Ran(M)$ admits a topology $\tom$, which makes $\Ran(M)$ into a conically stratified space.

Furthermore, \cite[Corollary 3]{SullivanLipschitzManifold}, states that \textbf{any} topological manifold $M$, of dimension other than $4$ admits a \textbf{unique} Lipschitz structure, up to a bi-Lipschitz homeomorphism that is close to the identity. This means that given a topological manifold of dimension other than $4$, we may speak of the homeomorphism type of $(\Ran(M),\tom)$ without the need for further data. 

There are subtleties however. First, observe that if $(M,d)$ is a metric space, such that $M$ happens to be a topological manifold, this does not mean that there exists an atlas for $M$ composed of maps which are locally bi-Lipschitz with respect to $d$. In particular, the distance $\dom$, induced by $d$ may not induce the topology $\tom$ on $\Ran(M)$ defined in the previous paragraphs.

Secondly, while the homeomorphism type of $(\Ran(M),\tom)$ is well-defined knowing only that $M$ is a manifold, if we are equipped with two distinct Lipschitz atlases on $M$, they will provide two distinct topologies $\tom_1$ and $\tom_2$ on $\Ran(M)$. And while $(\Ran(M),\tom_1)$ and $(\Ran(M),\tom_2)$ will be homeomorphic stratified spaces, there is no reason to expect that the homeomorphism be given by the identity. Succinctly, while the homeomorphism type is well-defined, the topology may not be.
\end{rem}

\subsection{Naive conicality of the final topology}
\label{subsec:naive_conical}

We showed in the previous section that given a Riemannian manifold  $(M,d)$, the associated Ran space with the weighted topology, $(\Ran(M),\tom)$ is a conically stratified space for any weight $\omega$. A similar statement for the Hausdorff topology was proved by \v{C}epek and Lejay in \cite{CepekLejayExponential}. One may expect this statement to also hold for the final topology $\tpi$, but it is very easy to show that it cannot be true (see \cref{prop:Ran_final_Not_Conical}). We show however that $(\Ran(M),\tpi)$ satisfies a distinct, but somewhat similar property. Specifically,  we can still find an open neighborhood of any configuration that is stratified homeomorphic to the product of a Euclidean space and a stratified cone, except the cone is now equipped with the quotient topology instead of the teardrop topology.

\begin{prop}\label{prop:Ran_final_Not_Conical}
    Let $M$ be a manifold of dimension at least $1$. Then $(\Ran(M),\tpi)$ is not conically stratified.
\end{prop}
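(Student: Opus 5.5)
The argument is by contradiction, through first countability, as suggested in the introduction. Suppose $(\Ran(M),\tpi)$ is conically stratified. Fix a point $x\in M$ and the singleton $X=\{x\}$, which lies in the stratum $\card^{-1}(1)$. By definition there are a neighborhood $U$ of $X$, an open neighborhood $E$ of $X$ in $\Conf_1(M)\cong M$, a stratified space $L\to\N_{\geq 2}$, and a stratified homeomorphism $E\times c(L)\xrightarrow{\simeq}U$. Here $c(L)$ carries the teardrop topology.

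The key observation is that in the teardrop topology, the cone point $*$ has a countable neighborhood basis. This basis is given by the sets $\pi(L\times[0,1/k))$ for $k\geq 1$, which are exactly the basic opens containing $*$ listed in \cref{def:Stratified_Cone}. Since $M$ is a manifold, $E\cong M$ locally is first countable at $x$. Hence the product $E\times c(L)$ is first countable at $(x,*)$. The homeomorphism sends the unique point of stratum $1$ over $x$ to $X$, because strata are preserved and $*$ is the only cone point in stratum $-\infty\leftrightarrow 1$. So $U$ is first countable at $X$.

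Next I would upgrade this to first countability of $\Ran(M)$ at $X$. The definition only asks for $U$ to be a neighborhood of $X$, so I first pass to its interior. Any neighborhood basis of $X$ inside $U$ then consists of neighborhoods of $X$ in $\Ran(M)$, and every neighborhood of $X$ in $\Ran(M)$ contains one of them, namely its intersection with the interior of $U$.

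Finally, since $M$ is a manifold of dimension at least $1$, every point $x\in M$ is an accumulation point. By \cref{prop:final not metrizable}, $(\Ran(M),\tpi)$ is then not first countable at $\{x\}$; the proof there works precisely at $\{x\}$. This contradicts the previous paragraph.

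The only delicate point is checking that the teardrop cone point really has a countable basis. This is immediate from the basis description in \cref{def:Stratified_Cone}: any open set containing $*$ must contain some basic open containing $*$, and those are exactly the sets $\pi(L\times[0,\epsilon))$, which contain $\pi(L\times[0,1/k))$ for large $k$. One should also note that the stratified homeomorphism maps the stratum-$1$ part $E\times\{*\}$ onto $U\cap\Conf_1(M)$, so $(x,*)$ corresponds to $X$ after identifying $E$ with a neighborhood of $X$ in $\Conf_1$.
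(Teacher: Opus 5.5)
Your proof is correct. It follows the first-countability argument that the paper sketches in its introduction, but the proof the paper actually writes out is different.

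The paper argues with sequences. It first shows that $\pi_k^{-1}(U)$ is a nonempty open subset of $M^k$ for all large $k$, and since tuples of pairwise distinct points are dense in $M^k$, it concludes that $U$, and hence $L$, meets infinitely many strata. It then picks $X_k\in L$ with strictly increasing strata and $t_k\to 0$, notes that $[(t_k,X_k)]$ converges to the apex in the teardrop topology, and gets a contradiction from \cref{prop:unbounded cardinal diverges in final}, because the corresponding configurations have unbounded cardinality.

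You instead transfer first countability. The apex has the countable basis $\pi(L\times[0,1/k))$, and the stratum factor $E$ is metrizable, so $X$ would have a countable neighborhood basis. This contradicts the proof of \cref{prop:final not metrizable} at $\{x\}$.

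The two routes buy different things:
\begin{itemize}
\item Your route needs no information about $L$, not even that it meets infinitely many strata, and it reuses an obstruction the paper has already proved.
\item The paper's route makes the mechanism explicit: teardrop neighborhoods of the apex force sequences of unbounded cardinality to converge. This is exactly what motivates passing to the quotient cone $c_q(L)$ later.
\end{itemize}

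One small imprecision: the definition of conically stratified does not say that the homeomorphism restricts to the identity on $E\times\{*\}$, so you cannot assume that $(x,*)$ is sent to $X$. This is harmless. The only points of $E\times c(L)$ in the lowest stratum are those of $E\times\{*\}$, so the preimage of $X$ is $(e,*)$ for some $e\in E$. Your argument gives first countability at every such point.
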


\begin{proof}
    Assume to the contrary that a stratified homeomorphism $f\colon U\to E\times c(L)$ exists, where $U\subset \Ran(M)$ is any non-empty subset, open for the topology $\tpi$. 
    Since $U$ is non-empty, there must be some $N\geq 1$ such that $\pi_N^{-1}(U)\not = \emptyset$. Let $n$ be the least such $N$. For any $k\geq n$, $\pi_k^{-1}(U)$ is an open subset of $M^k$ by assumption. But since $M^{k+1}\setminus M^k$ is dense in $M^{k+1}$ for all $k\geq n$, $U$ must contain a configuration of cardinality $k$ for all $k\geq n$.
    In particular, $c(L)$ must be stratified over $\N_{\geq n}$, and so $L$ must be stratified over $\N_{\geq n+1}$, with at least one point in each stratum. Let us denote by $\varphi\colon L\to \N_{\geq n+1}$ its stratification. Let $(X_k)_{k\in \N}$ be a sequence of elements in $L$ such that $\varphi(X_k)$ is strictly increasing. Now, consider $(t_k)_{k\in \N}$ some sequence in $(0,1)$ which decreases to $0$ as $k$ goes to infinity. By definition of the teardrop topology, the sequence $(t_k,X_k)$ must converge to the apex in $c(L)$. On the other hand, for $k\in \N$,  $f^{-1}(t_k,X_k)$ has cardinality $\varphi(X_k)$, thus the sequence of cardinalities $\card(f^{-1}(t_k,X_k))$ is unbounded, which implies, by \cref{prop:unbounded cardinal diverges in final}, that the sequence $(f^{-1}(t_k,X_k))_{k\in \N}$ does not converge in $(\Ran(M),\tpi)$.  This contradicts the assumption that $f$ is a homeomorphism.
\end{proof}

Now, to construct our homeomorphism, we start by showing that the shifting function is continuous for the final topology.

\begin{prop}
\label{prop:shift continuous tpi}
    Let $M=\R^p$, $n\in\N^*$ and let us write, as in the previous section $B_n=\{(\underline{x},\underline{y},s,Z)\in M^n\times M^n\times \R^+\times\Ran(M)\mid \card(\pi_n(\underline{x}))=n,Z\in \BHc(\pi_n(\underline{x}),\frac{\merg(\pi_n(\underline{x}))}{2})\}$. 
    The function $\Shift$ is continuous from $(B_n,\tpi)$ to $(\Ran(M),\tpi)$, where $(B_n,\tpi)$ is equipped with the product topology of $M^n\times M^n\times \R^+\times(\Ran(M),\tpi)$. 
\end{prop}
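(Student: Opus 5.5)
The plan is to exploit the fact that, inside $B_n$, $\Shift$ only moves points by the maps $\phi_\Sigma$, which are restrictions of affine maps of $M$ and do not merge points. This lets us reduce continuity for $\tpi$ to continuity of explicit maps between finite products $M^k$.

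First we reduce the source side. The product $M^n\times M^n\times\R^+\times(\Ran(M),\tpi)$ is a colimit-type space in its last factor. We would like to check continuity after restricting to $M^n\times M^n\times \R^+\times \Ran_{\leq k}(M)$ for each $k$, and then further after precomposing with $\Id\times\Id\times\Id\times\pi_k$ on $M^k$. There is a subtlety here: a product of a quotient map with a non-locally-compact space need not be a quotient. However, $M^n\times M^n\times\R^+$ is locally compact (Euclidean), so $\Id\times\pi_k$ is a quotient map. Likewise, the product of a locally compact space with an increasing union of closed subspaces carries the union topology. So a map out of $(B_n,\tpi)$ is continuous as soon as each composite
\[
(\underline{x},\underline{y},s,\underline{z})\mapsto \Shift(\underline{x},\underline{y},s,\pi_k(\underline{z}))
\]
is continuous on the open subset $\widetilde{B}_{n,k}$ of $M^n\times M^n\times \R^+\times M^k$ lying over $B_n$. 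The set $B_n$ is open in the appropriate sense, since $\card(\pi_n(\underline{x}))=n$ and the Hausdorff condition are open conditions; we would handle the closed Hausdorff ball by working on the slightly larger open ball where $\Shift$ is still defined. To pass from $\tpi$ to $\Ran_{\leq k}$, we use \cref{prop:contiunuite sur les troncations}-style arguments, or directly \cref{prop:Final_refines_Hausdorff} together with \cref{prop:topologie_troncation}.

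Second, we lift the map. At a point of $\widetilde{B}_{n,k}$, each $z^{(j)}$ lies in a unique ball $B(x^{(i)},\merg(X))$ with $i=i(j)$. By the strict inequality $\dH<\merg$ and continuity in $\underline{x}$, this assignment $j\mapsto i(j)$ is locally constant; the estimate is the one already carried out in \cref{prop: shifting upper bounds} \cref{it: x upper bound}. On such a neighbourhood we then have, by \cref{prop:shifting equivalence} \cref{it:shifting equivalence},
\[
\Shift(\underline{x},\underline{y},s,\pi_k(\underline{z}))=\pi_k\bigl(s(z^{(j)}-x^{(i(j))})+y^{(i(j))}\bigr)_{1\leq j\leq k}.
\]
The inner map into $M^k$ is polynomial, hence continuous. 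Since $\pi_k\colon M^k\to(\Ran(M),\tpi)$ is continuous by definition of $\tpi$, the composite is continuous.

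The main obstacle I expect is the first step: justifying that continuity may be tested on the lifts $M^n\times M^n\times\R^+\times M^k$, i.e. that the product of the locally compact factor with $(\Ran(M),\tpi)$ is the colimit of the products with $\Ran_{\leq k}(M)$, and that these are quotients of the corresponding products with $M^k$. I would cite the standard fact that $-\times K$ preserves quotients and colimits of closed inclusions for $K$ locally compact Hausdorff (Whitehead's lemma). Checking openness of $\widetilde{B}_{n,k}$ is routine.
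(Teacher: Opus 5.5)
Your argument is correct, but it takes a different route from the paper. The paper's proof is a two-line reduction. Since $\tpi$ refines every $\tom$ (\cref{tpi rafine tom}) and $\Shift$ is $\omega$-continuous on $B_n$ for every weight (\cref{prop:shift continous}), the map $(B_n,\tpi)\to(\Ran(M),\tom)$ is continuous for all $\omega$. \Cref{theo:topologie limite}, available because $\R^p$ is locally compact, then upgrades this to continuity into $(\Ran(M),\tpi)$.

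You bypass both the weighted metrics and the limit theorem. On the source side you use that $\coprod_k M^k\to(\Ran(M),\tpi)$ is a quotient map. This is literally the definition of the final topology, so the detour through truncations and \cref{prop:contiunuite sur les troncations} is unnecessary. You then use that its product with the locally compact Hausdorff space $M^n\times M^n\times\R^+$ is still a quotient map. On the target side you observe that locally $\Shift$ is $\pi_k$ composed with an affine map of the coordinates, so continuity into $\tpi$ is automatic.

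The one point to state carefully is the domain. $B_n$ is not open, so prove continuity on the larger set $A_n$, whose last factor lies in the open Hausdorff ball of radius $\merg(\pi_n(\underline{x}))$. This set is open in $M^n\times M^n\times\R^+\times(\Ran(M),\tpi)$. Indeed, $(\underline{x},Z)\mapsto\merg(\pi_n(\underline{x}))-\dH(\pi_n(\underline{x}),Z)$ is continuous on $\Conford_n(M)\times(\Ran(M),\tH)$, and $\tpi$ refines $\tH$. A quotient map restricts to a quotient map over an open subset. Local constancy of $j\mapsto i(j)$ then follows from continuity of $\merg\circ\pi_n$ on $\Conford_n(M)$.

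The paper's proof is short given the machinery already built, and it shows the intended use of \cref{theo:topologie limite}. However, it rests on the long proof of that theorem and on the locality and geodesic-chain estimates behind \cref{prop:shift continous}. It also inherits from them the restricted domain $\BHc(\pi_n(\underline{x}),\merg(\pi_n(\underline{x}))/2)$. Your proof is elementary and self-contained, needing only Whitehead's lemma. It also gives continuity on all of $A_n$, not just on $B_n$.
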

\begin{proof}
    By \cref{prop:shift continous}, for any weight $\omega$, the following composition is continuous
    \begin{align*}
        (B_n,\tpi)\overset{\Id}{\to} (B_n,\tom) \overset{\Shift}{\to} (\Ran(M),\tom).
    \end{align*}
    By \cref{theo:topologie limite}, we can directly conclude.
\end{proof}

The key technical step which distinguishes this proof from the one from \cref{subsec:conicity_Euclidean} is in the next proposition. In \cref{subsec:conicity_Euclidean}, we considered balls and spheres associated with a weight $\omega$, and considered continuity with respect to the topology $\tom$ associated to the same weight. Here, we are working with the final topology $\tpi$, which is not metrizable and so we do not have access to balls. On the other hand, any ball for any weight $\omega$ is an open set in $(\Ran(M),\tpi)$.  Since we still need to define ball-shaped subspaces of $\Ran(M)$, in order to produce our homeomorphisms, we will consider the balls associated to the \textbf{fixed} weight $\chi$, which we define to be constant equal to $1$. However, those subspaces of $\Ran(M)$ are not equipped with the topology induced from $d^{\chi}$ but instead are equipped with the restriction of $\tpi$.

Now, we know from \cref{prop:Ran_final_Not_Conical} that no open subset of $(\Ran(M),\tpi)$ can be conical, essentially because there are two many open subsets in the final topology. Thus, the maps appearing in \cref{prop:homeo cone to w-ball,prop:strat homeo radial cone ball} will still be open for the topology $\tpi$, but they will fail to be continuous. Replacing the topology on the cone $c(L)$ by the \textbf{quotient topology}, that is, the finest topology making $\pr\colon L\times [0,1)\to c(L)$ continuous, will alleviate this obstruction, which we show in the following proposition. From now on, we write $c_q(L)$ for the cone on $L$ equipped with the quotient topology.
\begin{prop}
\label{prop:strat homeo radial cone ball W}
    Let $M=\R^p$ and $X\in\Ran(M)$ be a configuration, $\epsilon\in(0,\frac{\merg(X)}{2})$, $n=\card(X)$ and $\underline{x}\in\pi_n^{-1}(\{X\})$. Let $C_{\underline{x}}=\{Z\in\BH(X,\merg(X))\mid \MidX(Z)=\underline{x}\}$. The map 
    \begin{align*}
        f\colon c_q\left(\left(\Schi(X,\epsilon),\tpi\right)\cap C_{\underline{x}}\right)&\to \left(\Bchi(X,\epsilon),\tpi\right)\cap C_{\underline{x}}\\
        [(t,Y)] &\mapsto \ScaleX(t,Y)
    \end{align*}
    is a stratified homeomorphism.
\end{prop}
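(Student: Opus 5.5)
The proof has three parts: reuse the bijection from \cref{prop:strat homeo radial cone ball} (with $\chi\equiv 1$), prove continuity of $f$ for the quotient topology, and prove continuity of $f^{-1}$ for $\tpi$.

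\textbf{Bijection and stratification.} Take $\chi\equiv 1$ and use the set-theoretic map $f$ and inverse $g$ from \cref{prop:homeo cone to w-ball,prop:strat homeo radial cone ball} with this weight. The bijectivity argument there is purely metric. It rests on \cref{prop:scaling xlocal equality}, \cref{prop:shifting equivalence} and \cref{cor:shift mid commute}, and it does not use the topology. So $f$ is again a bijection from $c\left(\Schi(X,\epsilon)\cap C_{\underline{x}}\right)$ onto $\Bchi(X,\epsilon)\cap C_{\underline{x}}$. Its inverse is
\[
g(Z)=\left[\left(\tfrac{\dchi(X,Z)}{\epsilon},\ScaleX\left(\tfrac{\epsilon}{\dchi(X,Z)},Z\right)\right)\right] \quad\text{for } Z\neq X,
\]
with $g(X)$ the apex. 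Compatibility with cardinality is proved exactly as before. The only new work is continuity of $f$ and $g$ for the new topologies.

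\textbf{Continuity of $f$.} Since $c_q$ carries the quotient topology, $f$ is continuous as soon as $f\circ \pr\colon (\Schi(X,\epsilon)\cap C_{\underline{x}},\tpi)\times[0,1)\to(\Ran(M),\tpi)$ is continuous. This composite is $(t,Y)\mapsto \Shift(\underline{x},\underline{x},t,Y)$. Its domain lies in $B_n$, because $\dH\leq\dchi=\epsilon<\merg(X)/2$. By \cref{prop:shift continuous tpi} it is therefore continuous for $\tpi$, including at $t=0$.

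This is exactly where the quotient topology helps: continuity at the apex is automatic, with no uniform estimate over $L$ needed. That matters because no such estimate can hold for $\tpi$, as \cref{prop:Ran_final_Not_Conical} shows.

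\textbf{Continuity of $g$.} This is the main obstacle. A subset $O$ of $c_q(L)$ is open iff $\pr^{-1}(O)$ is open in $L\times[0,1)$.

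\emph{Away from $X$.} On $\Bchi(X,\epsilon)\cap C_{\underline{x}}\setminus\{X\}$, the map $g$ factors as $\pr\circ\hat g$ with
\[
\hat g(Z)=\left(\dchi(X,Z)/\epsilon,\ \ScaleX\left(\epsilon/\dchi(X,Z),Z\right)\right).
\]
The first coordinate is continuous because $\tpi$ refines $\tau^\chi$ (\cref{tpi rafine tom}). The second coordinate is continuous by \cref{prop:shift continuous tpi}. So $g$ is continuous there.

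\emph{At $X$.} Let $O\ni$ apex be open in $c_q(L)$. I will show that $g^{-1}(O)=f(O)$ is a $\tpi$-neighbourhood of $X$. Since the target carries the restriction of $\tpi$, it suffices to check this on each truncation $\Ran_{\leq k}(M)$, where $\tpi=\tH$ (\cref{prop:topologie_troncation}). I would argue by contradiction with sequences. Suppose $Z_j\to X$ inside $\Ran_{\leq k}(M)\cap C_{\underline{x}}$ with $Z_j\neq X$. Set $t_j=\dchi(X,Z_j)/\epsilon\to 0$ and $Y_j=\ScaleX(1/t_j,Z_j)$, so that $\card(Y_j)\leq k$. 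The aim is to show $(t_j,Y_j)\in\pr^{-1}(O)$ for $j$ large.

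The key fact is that the set $L_{\leq k}=\Schi(X,\epsilon)\cap C_{\underline{x}}\cap\Ran_{\leq k}(M)$ is compact. It is closed and bounded in $\Ran_{\leq k}(\R^p)$, a quotient of $(\R^p)^k$, and it is Hausdorff-bounded near $X$. It lies in the truncation, which is closed, and $\MidX$ and $\dchi$ are continuous there. The set $\pr^{-1}(O)$ is open and contains $L\times\{0\}$, hence contains $L_{\leq k}\times\{0\}$. A tube-lemma argument then gives $\delta_k>0$ with $L_{\leq k}\times[0,\delta_k)\subset\pr^{-1}(O)$. Therefore $f(O)\cap\Ran_{\leq k}(M)\supset \Bchi(X,\delta_k\epsilon)\cap C_{\underline{x}}\cap\Ran_{\leq k}(M)$, which is a $\tH$-neighbourhood of $X$ in the truncation. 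Hence $f(O)$ is $\tpi$-open at $X$, and $g$ is continuous.

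The step to verify most carefully is the compactness of $L_{\leq k}$: one must confirm that $\tpi$ restricted to $L_{\leq k}$ agrees with the quotient topology from $(\R^p)^k$.
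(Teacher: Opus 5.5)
Your proposal follows the paper's proof step for step: continuity of $f$ from the quotient topology and \cref{prop:shift continuous tpi}, bijectivity and stratification from \cref{prop:strat homeo radial cone ball}, and continuity of $g$ away from $X$ again via \cref{prop:shift continuous tpi}. At $X$ you use compactness of the truncated link $\Schi_{\leq k}(X,\epsilon)\cap C_{\underline{x}}$ and the tube lemma to get $\Bchi_{\leq k}(X,\epsilon\delta_k)\cap C_{\underline{x}}\subset g^{-1}(O)$; that link is the image under the continuous $\pi_k$ of a closed bounded subset of $M^k$, so your closing worry about its topology is moot.

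One phrasing point needs fixing. Being a neighbourhood of $X$ cannot in general be checked truncation by truncation in a colimit topology. You should first combine your estimate with the continuity of $g$ off $X$, which shows that $g^{-1}(O)\cap\Ran_{\leq k}(M)$ is open in each truncation of $A=\Bchi(X,\epsilon)\cap C_{\underline{x}}$. Then use that $A$ is locally closed in $(\Ran(M),\tpi)$: it is an open ball intersected with the condition $\MidX=\underline{x}$, which is closed in that ball. This is exactly how the paper applies \cref{prop:contiunuite sur les troncations}.
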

\begin{proof}
    Let us denote $\pr\colon [0,1)\times \left(\left(\Schi(X,\epsilon),\tpi\right)\cap C_{\underline{x}}\right)\to c_q\left(\left(\Schi(X,\epsilon),\tpi\right)\cap C_{\underline{x}}\right) $  the quotient map. Then, by \cref{prop:shift continuous tpi}, the composition $f\circ \pr$ is continuous as a restriction of the $\Shift$ function, and so is $f$ by definition of the quotient topology.

    Moreover, by \cref{prop:strat homeo radial cone ball}, $f$ is a stratified bijection of inverse 
    \begin{align*}
        g\colon \left(\Bchi(X,\epsilon),\tpi\right)\cap C_{\underline{x}} &\to c_q\left(\left(\Schi(X,\epsilon),\tpi\right)\cap C_{\underline{x}}\right) \\
        Z&\mapsto 
        \begin{cases}
        \left[\left(\frac{\dchi(X,Z)}{\epsilon},\ScaleX\left(\frac{\epsilon}{\dchi(X,Z)},Z\right)\right)\right] & \text{if } Z\neq X\\
        [(0,Y)] \quad\text{where } Y\in \Schi(X,\epsilon) & \text{if } Z=X
        \end{cases}
    \end{align*}
    To show that $g$ is continuous, remark that $\left(\Bchi(X,\epsilon),\tpi\right)\cap C_{\underline{x}}$ is a clopen in $(\Ran(M),\tpi)$. We can thus use \cref{prop:contiunuite sur les troncations}, so that it is enough to show that the restrictions $g_{\leq N}$ of $g$ to the union of strata of cardinality $\leq N$ are continuous for all $N\geq 1$.
    Note that since $\Bchi(X,\epsilon)\subset\BH(X,\epsilon)\subset\Ran_{\geq n}(M)$, we can restrict ourselves to $N\geq n$. Let $N\geq n$. Then, by \cref{prop:shift continuous tpi}, $g_{\leq N}$ is continuous on $\left(\Bchi_{\leq N}(X,\epsilon)\cap C_{\underline{x}}\setminus\{X\},\tpi\right)$ as a restriction and composition of continuous functions. Let us show that it is continuous at $X$. Let $ U \subset c_q\left(\left(\Schi(X,\epsilon),\tpi\right)\cap C_{\underline{x}}\right)$ be an open neighborhood of the apex of the cone, and let us find $V$, an open neighborhood of $X$, in $g^{-1}_{\leq N}(U)\subset \left(\Bchi_{\leq N}(X,\epsilon),\tpi\right)\cap C_{\underline{x}}$. 
    
    Remark that $\Schi_{\leq N}(X,\epsilon)\cap C_{\underline{x}}$ is a closed subset of $(\Ran(M),\tpi)$, so that its preimage by $\pi_N$ is a closed subset of $M^N$. It is also bounded in $M^N$, so that it is compact.
    Since $\pi_N$ is surjective, we have $\Schi_{\leq N}(X,\epsilon)\cap C_{\underline{x}}=\pi_N(\pi_N^{-1}(\Schi_{\leq N}(X,\epsilon)\cap C_{\underline{x}}))$ which is compact as the image of a compact by $\pi_N$. 
    Therefore $\{0\}\times \left(\Schi_{\leq N}(X,\epsilon)\cap C_{\underline{x}}\right)\subset \pr^{-1}(U)$ is compact, 
    hence there exists $0<\delta_N\leq 1$ such that $[0,\delta_N)\times \left(\Schi_{\leq N}(X,\epsilon)\cap C_{\underline{x}}\right)\subset \pr^{-1}(U)$. 
    We can thus define $V=\Bchi_{\leq N}(X,\epsilon\delta_N)\cap C_{\underline{x}}$, which is an open subset of $\Bchi_{\leq N}(X,\epsilon)\cap C_{\underline{x}}$. We have indeed $X\in V$. 
    It remains to be showed that $V\subset g^{-1}_{\leq N}(U)$.
    Let $Z\in V$ and let us show that $Z\in g^{-1}_{\leq N}(U)$, i.e. that $g_{\leq N}(Z)=[(t,Y)]\in U$. There is nothing to check for $Y$, since we already know that $Y\in\Schi_{\leq N}(X,\epsilon)\cap C_{\underline{x}}$. Besides, $t=\frac{\dchi(X,Z)}{\epsilon}<\delta_N$, so that $g_{\leq N}(Z)=[(t,Y)]\in \pr\left([0,\delta_N)\times \left(\Schi_{\leq N}(X,\epsilon)\cap C_{\underline{x}}\right)\right)\subset U$. We conclude that $g_{\leq N}$ is continuous at $X$.
    
    Hence $g_{\leq N}$ is continuous for any $N\geq 1$, which, by \cref{prop:contiunuite sur les troncations} proves that $g$ is continuous. Therefore $f$ is indeed a stratified homeomorphism.
\end{proof}

The rest of the proof is mostly unchanged, so that we can finally conclude.
\begin{theo}
\label{Ran faible conique}
    Let $M=\R^p$ and $X\in\Ran(M)$ be a configuration, and $n=\card(X)$ its cardinality. There exist $U$ an open neighborhood of $X$ in $(\Ran(M), \tpi)$, E a Euclidean space of dimension $np$, $\varphi\colon L\to \{n+1,\dots\}$ a stratified space and $h\colon U \to E \times c_q(L)$ a stratified homeomorphism where $c_q(L)$ is the stratified cone on $L$  equipped with the quotient topology.
\end{theo}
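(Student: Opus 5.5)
The plan is to transport the proof of \cref{prop:homeo strat} to the final topology, replacing the weighted balls by balls for the fixed constant weight $\chi\equiv 1$ endowed with the restriction of $\tpi$, and replacing \cref{prop:strat homeo radial cone ball} by \cref{prop:strat homeo radial cone ball W}. Concretely, fix $\epsilon\in(0,\frac{\merg(X)}{2})$ and $\underline{x}\in\pi_n^{-1}(X)$. Since $\Bchi(X,\epsilon)$ is open in $\tpi$ (\cref{tpi rafine tom}), choose $\theta\in(0,\epsilon]$ with $E=\Bsum(\underline{x},\theta)\subset\pi_n^{-1}(\Bchi(X,\epsilon))$. With $G$, $\underline{g}$ as before, set $V=\{Z\in\Bchi(G,1)\mid \Mid_{\underline{g}}(Z)=\underline{g}\}$ with the topology induced by $\tpi$. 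By \cref{prop:strat homeo radial cone ball W} applied at $G$ (after rescaling so that $\epsilon<\merg(G)/2$, or choosing the $g^{(j)}$ further apart so that radius $1$ is admissible), $V\simeq c_q(L)$ with $L=(\Schi(G,1),\tpi)\cap C_{\underline{g}}$ stratified over $\N_{\geq n+1}$.

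Define $f(\underline{y},Z)=\Shift(\underline{g},\underline{y},\epsilon-\dchi(\pi_n(\underline{y}),X),Z)$ on $E\times V$ and $h(W)=(\MidX(W),\Shift(\MidX(W),\underline{g},\frac{1}{\epsilon-\dchi(\pi_n(\MidX(W)),X)},W))$ on $U=f(E\times V)$. All the set-theoretic and metric verifications in the proof of \cref{prop:homeo strat} (that $U\subset\Bchi(X,\epsilon)$, that $\MidX\circ f=\pr_E$, that $h\circ f=\Id$, that $f$ preserves cardinality, and that $U$ contains a $\dchi$-ball around each of its points) depend only on the weight $\chi$ and go through verbatim. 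Continuity now has to be checked for $\tpi$. For $f$: $\Shift$ is $\tpi$-continuous by \cref{prop:shift continuous tpi}, and $\underline{y}\mapsto\dchi(\pi_n(\underline{y}),X)$ is continuous on $M^n$ by \cref{lem:pi omega Lipschitz}. For $h$: $\MidX$ is continuous for $\tpi$ since it is continuous for $\tchi$ (\cref{cor:midx continuous}) and $\tpi$ refines $\tchi$; the second component is again a composite of $\tpi$-continuous maps by \cref{prop:shift continuous tpi}. Openness of $U$ in $\tpi$ follows because the neighborhood $N=\Bchi(X,\epsilon)\cap\Bchi(W_1,\delta)$ found there is $\dchi$-open, hence $\tpi$-open.

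The main obstacle is that the product $E\times V$ must carry the right topology: $V\simeq c_q(L)$ via the cone map, and we need $E\times c_q(L)\to E\times V$ to be a homeomorphism, i.e. the product of a homeomorphism with $\Id_E$; this is automatic. The subtler point is that $f$ and $h$ are continuous on $E\times V$ with $V$ carrying the subspace topology from $\tpi$ (which is what \cref{prop:strat homeo radial cone ball W} identifies with $c_q(L)$), so no product-with-quotient issue arises as long as we never pass through $E\times[0,1)\times L$ directly. I would verify carefully that the subspace topology of $\tpi$ on $V$ is the one used in \cref{prop:shift continuous tpi}'s product $M^n\times M^n\times\R^+\times(\Ran(M),\tpi)$, which holds since $V\subset\BHc(G,\frac{\merg(G)}{2})$. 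Composing $h$ with $\Id_E\times(\text{cone homeomorphism})^{-1}$ gives the required stratified homeomorphism $U\to E\times c_q(L)$.
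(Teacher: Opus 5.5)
Your proposal is correct and is essentially the paper's own argument: you reuse the maps $f$ and $h$ from the proof of \cref{prop:homeo strat} with the constant weight $\chi$, and you deduce $\tpi$-openness of $U$ from its $\tchi$-openness. You obtain $\tpi$-continuity of $f$ and of its inverse from \cref{prop:shift continuous tpi} together with continuity of $\MidX$ and $\dchi$, and you identify $V$ with $c_q(L)$ via \cref{prop:strat homeo radial cone ball W}, exactly as the paper does.

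Your extra care in spacing the $g^{(j)}$ further apart is a worthwhile minor tidying. The paper's choice $g^{(j)}=(4j,0,\dots,0)$ gives $\merg(G)/2=1$, so radius $1$ sits exactly on the boundary of the hypothesis $\epsilon\in(0,\frac{\merg(G)}{2})$ of that proposition.
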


\begin{proof}
    Recall, in the proof of \cref{prop:homeo strat}, we introduced a stratified homeomorphism (with an arbitrary weight $\omega$ instead of the minimal weight $\chi$), 
    \begin{align*}
        f\colon E\times V &\to U\\
        (\underline{y},Z ) &\mapsto \Shift\left(\underline{g},\underline{y},\epsilon - \dchi(\pi_n(\underline{y}),X),Z\right)
    \end{align*}
    of inverse
    \begin{align*}
    f^{-1}\colon U &\to E\times V\\
            W &\mapsto \left(\MidX(W),\Shift\left(\MidX(W),\underline{g},\frac{1}{\epsilon-\dchi(\pi_n(\MidX(W)),X)},W\right)\right).
    \end{align*}
    where $E=\Bsum(\underline{x},\theta)$, $V=\left\{Z\in B^{\chi}(G,1)\mid \Mid_{\underline{g}}\left(Z\right)=\underline{g}\right\}$, and $U\subset \Ran(M)$ was defined to be the image, which we proved to be open for the topology $\tchi$.
    This implies in particular that $U$ is an open subset for the topology $\tpi$, by \cref{tpi rafine tom}. In addition, as shown in the proof of \cref{prop:homeo strat}, $V\subset\BHc(G,\frac{\merg{G}}{2})$, so that for any $(\underline{y},Z )$ in $E\times V$, $\left(\underline{g},\underline{y},\epsilon - \dchi(\pi_n(\underline{y}),X),Z\right)$ lies in $B_n$. Now, since $\dchi$ is continuous for the topology $\tpi$, $f$ is a composition of the $\Shift$ function with continuous functions, hence by \cref{prop:shift continuous tpi}, $f$ is still continuous when both $V$ and $U$ are equipped with the topology $\tpi$. 
    Moreover, by \cref{cor:midx continuous}, $\MidX$ is continuous for $\tchi$ hence also for $\tpi$. Hence, by \cref{prop:shift continuous tpi} $f^{-1}$ is also continuous, and hence $f$ is indeed a stratified homeomorphism when $V$ and $U$ are equipped with the topology $\tpi$.

    We conclude with \cref{prop:strat homeo radial cone ball W} to show that $V$ is a stratified cone equipped with the quotient topology.
\end{proof}

Just as for the case of the weighted topologies, the generalization to the case of manifolds is immediate.

\begin{corol}\label{cor:RanFinalManifoldNaivementConique}
    Let $M$ be a manifold, then $(\Ran(M),\tpi)$ is \textbf{naively conical}. More explicitly, for any $X\in \Ran(M)$, with $\card(X)=n$, there exists $U\subset \Ran(M)$ a neighborhood of $X$, $E$ a Euclidean space, and $L\to \{n+1,\dots\}$ a stratified space together with a stratified homeomorphism $\varphi\colon U\to E\times c_q(L)$.
\end{corol}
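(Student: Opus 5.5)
The plan mirrors the passage from \cref{Corol:Ran_Euclidean_Conical} to \cref{Corol:Ran_Riemannian_Conical}: cover a neighborhood of $X$ in $\Ran(M)$ by a product of Euclidean balls via charts, transport the naive conical structure of \cref{Ran faible conique}, and use that the final topology is defined purely topologically. Since $\tpi$ is the final topology for the maps $\pi_k\colon M^k\to\Ran(M)$, any homeomorphism $\psi\colon E'\to B'$ between open subsets of manifolds induces a stratified homeomorphism $(\Ran(E'),\tpi)\to(\Ran(B'),\tpi)$. The map $\Ran(\psi)$ is a bijection preserving cardinality, and both it and its inverse are continuous, by \cref{prop:contiunuite sur les troncations} applied to each truncation, where the maps are induced by $\psi^k$ on $E'^k$ and $B'^k$. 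Unlike the weighted case, no bi-Lipschitz condition is needed, so $M$ can be a bare topological manifold.

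First, write $X=\{x^{(1)},\dots,x^{(n)}\}$ and choose pairwise disjoint open chart neighborhoods $E_i\ni x^{(i)}$ with homeomorphisms $\varphi_i\colon E_i\to B((3i,0,\dots,0),1)\subset\R^p$. Set $E'=\coprod E_i$ and $B'=\coprod_i B((3i,0,\dots,0),1)$; the $\varphi_i$ assemble into a homeomorphism $\varphi\colon E'\to B'$. Second, check that $\Ran(E')$ is open in $(\Ran(M),\tpi)$ and that its subspace topology equals its own final topology. Openness holds because $\pi_k^{-1}(\Ran(E'))=E'^k$ is open. For the topologies, a subset $W\subset\Ran(E')$ is open for $(\Ran(E'),\tpi)$ iff each $\pi_k^{-1}(W)\subset E'^k$ is open. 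Since $E'^k$ is open in $M^k$, this happens iff $W$ is open in $(\Ran(M),\tpi)$, and that coincides with the subspace topology because $\Ran(E')$ is open. The same argument applies to $\Ran(B')\subset\Ran(\R^p)$.

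Third, apply \cref{Ran faible conique} to $\varphi(X)\in\Ran(\R^p)$. This gives an open neighborhood $U'$ and a stratified homeomorphism $U'\to E\times c_q(L)$. Replace $U'$ by a smaller neighborhood inside $\Ran(B')$ by intersecting; this is the expected obstacle. Intersecting $U'$ with the open set $\Ran(B')$ need not preserve the product-with-cone shape. The remedy is to shrink the parameters $\epsilon,\theta$ in the proof of \cref{prop:homeo strat}. Taking $\epsilon<1$ guarantees $U'\subset\BH(\varphi(X),\epsilon)$, and that Hausdorff ball lies inside $\Ran(B')$, since each point of a configuration within $\epsilon<1$ of the centers $(3i,0,\dots,0)$ lies in the unit balls. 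Since $\epsilon$ was an arbitrary parameter in $(0,\merg/2)$, the construction goes through unchanged.

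Finally, pull back along $\Ran(\varphi)^{-1}$. Then $U=\Ran(\varphi)^{-1}(U')$ is an open neighborhood of $X$ in $(\Ran(M),\tpi)$, and composing gives the stratified homeomorphism $U\to E\times c_q(L)$, which proves the claim.
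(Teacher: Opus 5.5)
Your proposal follows the paper's route: disjoint charts onto Euclidean balls, \cref{Ran faible conique} applied in $\Ran(\R^p)$, and transport along the homeomorphism of final-topology Ran spaces induced by the charts. Your explicit shrinking of $\epsilon$ in the construction of \cref{prop:homeo strat} also fills in a localization step the paper leaves implicit. One small fix: $\epsilon<1$ gives $\BH(\varphi(X),\epsilon)\subset\Ran(B')$ only if $\varphi_i(x^{(i)})=(3i,0,\dots,0)$, so either center the charts or take $\epsilon$ smaller than the distance from each $\varphi(x^{(i)})$ to the boundary of its ball.
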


\begin{proof}
    The proof is very similar to that of \cref{Corol:Ran_Riemannian_Conical}, so we only sketch it here. Given $X=\{x^{(1)},\dots,x^{(n)}\}\in \Ran(M)$, we may find disjoint open neighborhoods $x^{(i)}\in E_i\subset M$ together with homeomorphisms $\varphi\colon E_i\to B_i\subset \R^p$, which we may assemble to give a homeomorphism $E=\coprod E_i\to \coprod B_i=B\subset \R^p$. $\Ran(B)\subset \Ran(\R^p)$ is then an open subset for the final topology, thus, by \cref{Ran faible conique}, $\Ran(B)$ is naively conical. In turn, $\Ran(E)$ is naively conical and thus all points in $\Ran(M)$ admit a naively conical neighborhood.
\end{proof}

\begin{rem}\label{rem:naive_not_limit}
A plausible, but false, interpretation of the previous result would be the following. Since for all weights $\omega$, $(\Ran(M),\tom)$ is conically stratified, then for any $X\in \Ran(M)$, we must have a neighborhood of $X$ in $\Ran(M)$, $U$ such that $U\simeq E\times c(L)$ for some link $L$ where both $U$ and $L$ are equipped with the topology $\tom$. Then, when passing to the limit over $\omega\in \W$, somehow the topology on $c(L)$ must become the quotient topology. This interpretation is wrong in two places. First, upon inspection of the formulas, one notices that $L$, $U$ and $f$ all depend directly on $\omega$. Thus, there is no single link $L$ for which to compute the limit $(c(L),\tom)$.
But also, if one were to fix a subset $A\subset\Ran(M)$, and compute the limit of the teardrop cones $c(A,\tom)$ over $\omega\in \W$, one would get the teardrop cone $c(A,\tpi)$ (see the next proposition).

The phenomenon actually occurring has to do directly with the teardrop topology. Observe that, in the cone $c(L)$, the neighborhoods of the apex do not depend at all on the topology of $L$ but only on its underlying set. Thus, no common link (even with varying topology) can be found for all topologies $\tom$ since basis of neighborhoods vary when $\omega$ varies. And no link at all can be found such that $E\times c(L)$ embeds into $(\Ran(M),\tpi)$ since whatever $L$ could be, such an embedding would provide a countable basis of neighborhoods of some point in $M$, which we know do not exist by \cref{prop:final not metrizable}.

This sheds no light on why local homeomorphism with quotient cones exist however. Observe that, while we used the minimal weight $\chi$ to define the underlying sets of $U$ and $L$ and the map $f$, any other weight would have worked equally well. This phenomenon is not directly related to \cref{theo:topologie limite}, but rather to the definition of $(\Ran(M),\tpi)$ as the increasing union of the truncations $\Ran_{\leq n}(M)$. Each of those space has an unambiguously defined topology, and any of the map showing the local conicality can be restricted to the truncations. Note that, on the truncations, the links become compact, and thus quotient and teardrop topologies coincide. But then, passing to the colimit, that is, considering the increasing union $\cup_{n\geq 1} (c(L))_{\leq n}$, one gets the set $c(L)$ equipped with the quotient topology. Going through the above process, with conical neighborhoods in $(\Ran(M),\tchi)$ gives exactly the homeomorphism of \cref{Ran faible conique}.
\end{rem}

\begin{prop}
    Let $M$ be a locally compact metric space, and $A\subset \Ran(M)$. Then $\lim_\omega c(A,\tom)\simeq c(A,\tpi)$ where $c$ denotes the cone with the teardrop topology.
\end{prop}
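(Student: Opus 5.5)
Since all the maps in the diagram $\omega\mapsto c(A,\tom)$ are the identity on the underlying set $c(A)=A\times[0,1)/A\times\{0\}$, the limit in $\Top$ is $c(A)$ equipped with the topology $\tau_\infty$ generated by the union of the teardrop topologies $c(A,\tom)$ over all weights. The plan is to show that $\tau_\infty$ coincides with the teardrop topology on $c(A,\tpi)$ by comparing explicit bases.

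\emph{Easy inclusion.} For each $\omega$, \cref{tpi rafine tom} says that $\tpi$ refines $\tom$ on $A$. Hence $A\times(0,1)$ with the $\tpi$-product topology refines the $\tom$-product topology. The apex neighbourhoods $\pi(A\times[0,\epsilon))$ do not depend on the topology of $A$. So the identity $c(A,\tpi)\to c(A,\tom)$ is continuous for every $\omega$, and therefore $c(A,\tpi)\to\lim_\omega c(A,\tom)$ is continuous.

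\emph{Reverse inclusion.} We must show that every basic open set of $c(A,\tpi)$ is open in $\tau_\infty$. Apex basic opens are the same sets in every cone, so we only need to treat sets $\pi(W)$ with $W\subset A\times(0,1)$ open for $\tpi\times$ the usual topology. It suffices to do this locally: fix a point $(Y,t)\in W$. By definition of the product topology, there are an open $O\ni Y$ in $(A,\tpi)$ and an interval $I\ni t$ with $O\times I\subset W$. We now use \cref{theo:topologie limite}, which is where local compactness of $M$ enters. The balls $\Bom(Y,r)$, for varying weights $\omega$ and radii $r$, form a basis of $\tpi$. Restricting to $A$, we may therefore pick $\omega$ and $r$ with $\Bom(Y,r)\cap A\subset O$. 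Then $\pi\bigl((\Bom(Y,r)\cap A)\times I\bigr)$ is open in $c(A,\tom)$, contains $[(Y,t)]$, and is contained in $\pi(W)$. Hence $\pi(W)$ is a union of sets that are open in some $c(A,\tom)$, and so it is open in $\tau_\infty$.

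\emph{Main obstacle.} The key point is that the limit topology is generated by the \emph{union} of the $\tom$-opens, not by finite intersections of opens coming from several weights. This union is a basis because $\W$ is directed (given $\omega,\chi$, the weight $\max(\omega,\chi)$ dominates both) and because of \cref{prop:Smaller_Weights_continuous}. One also has to check that subspace topologies behave well: the limit of $(A,\tom)$ is $(A,\tpi|_A)$. This holds because \cref{theo:topologie limite} gives a basis of the ambient space, and a basis restricts to a basis of any subspace. Finally, the product with $(0,1)$ commutes with this limit, since $(0,1)$ is fixed and products of basic opens form a basis. With these checks, the two topologies on $c(A)$ coincide, which gives $\lim_\omega c(A,\tom)\simeq c(A,\tpi)$.
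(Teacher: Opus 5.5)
Your proof is correct and follows essentially the paper's argument: continuity of $c(A,\tpi)\to c(A,\tom)$ comes from \cref{tpi rafine tom}, apex neighbourhoods do not depend on the topology of $A$, and \cref{theo:topologie limite} handles everything away from the apex. The only difference is presentational: you identify the limit topology explicitly as the one generated by the union of the $c(A,\tom)$-topologies and compare bases, whereas the paper verifies the universal property for maps $f\colon E\to c(A)$ by treating the punctured cone and the apex neighbourhoods separately.
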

\begin{proof}
    Let us first remark that, by \cref{tpi rafine tom} and by functoriality of the teardrop cone, $\Id\colon c(A,\tpi) \to c(A,\tom)$ is continuous for any weight $\omega$.

    Now let us show that $c(A,\tpi)$ satisfies the universal property. Let $f\colon E \to c(A,\tom)$ such that $f$ is continuous for any weight $\omega$, and let us show that $f\colon E \to c(A,\tpi)$ is continuous.

    First remark that $c(A,\tpi)\setminus \{[(0,Y)]\} \simeq (0,1)\times (A,\tpi) \simeq \lim_\omega (0,1)\times (A,\tom)\simeq \lim_{\omega}c(A,\tom)\setminus \{[(0,Y)]\}$. So that $f$ is continuous on $E\setminus f^{-1}(\{[(0,Y)]\})$

   Now, for $\epsilon\in (0,1)$,  let $U=\pi([0,\epsilon)\times A)\subset c(A,\tpi)$ and let us show that $f^{-1}(U)$ is open in $E$. Since $U$ is open in any $c(A,\tom)$ and $f\colon E \to c(A,\tom)$ is continuous, $f^{-1}(U)$ is indeed open in $E$. Thus $f\colon E \to c(A,\tpi)$ is continuous, and $c(A,\tpi)$ is indeed the limit.
\end{proof}

\bibliographystyle{alpha}
    \bibliography{biblio}

\end{document}